\DeclarePairedDelimiter\floor{\lfloor}{\rfloor}
\def\beq{\begin{equation}}
\def\eeq{\end{equation}}
\newcommand{\eps}{\varepsilon}
\newcommand{\gradl}{	\left.\frac{\partial P}{\partial x_\ell}\right|_{\jet(t)}}
\newcommand{\sommar}{\sum_{\substack{\mu\in\N^m\\  2\le |\mu|\le r}}}
\newcommand{\comment}[1]{}
\newcommand{\abs}[1]{{\left|#1\right|}}
\newcommand{\norm}[1]{{\left |#1\right |}}
\newcommand{\T}{\mathbb{T}}
\newcommand{\R}{\mathbb{R}}
\newcommand{\C}{\mathbb{C}}
\newcommand{\sgn}{\operatorname{sgn}}
\newcommand{\Omegone}{\boldsymbol{\Omega}_n^{r,\ts}}
\newcommand{\OmegoneI}{\boldsymbol{\Omega}_n^{r,\ts}}
\newcommand{\OmegoneIsprimo}{\boldsymbol{\Omega}_n^{r,\ts'}}
\newcommand{\co}[1]{\textit{#1}}%corsivo
\newcommand{\id}{\operatorname{id}}
\newcommand{\arco}{\Theta_m}
\newcommand{\arcox}{\Theta_m^1}
\newcommand{\arcoi}{\Theta_m^i}
\newcommand{\centina}{\vartheta_m(s)}
\newcommand{\centinax}{\vartheta^1_m(s)}
\newcommand{\centinai}{\vartheta^i_m(s)}
\newcommand{\centinadue}{\vartheta^1_{m}(s,2)}
\newcommand{\centinauno}{\vartheta^1_m(1)}
\newcommand{\centinaduei}{\vartheta^i_{m}(s,2)}
\newcommand{\arcoh}{\widehat \Theta_m}
\newcommand{\arcoxh}{\widehat \Theta_m^1}
\newcommand{\arcoih}{\widehat \Theta_m^i}
\newcommand{\centinah}{\widehat \vartheta_m(s)}
\newcommand{\centinaxh}{\widehat \vartheta^1_m(s)}
\newcommand{\centinaih}{\widehat \vartheta^i_m(s)}
\newcommand{\centinadueh}{ \vartheta^1_{m}(s,2)}
\newcommand{\centinaunoh}{\widehat \vartheta^1_m(1)}
\newcommand{\centinadueih}{\widehat \vartheta^i_{m}(s,2)}
\newcommand{\centinaunoih}{\widehat \vartheta^i_m(1)}
\newcommand{\Hp}{\mathbb{H}^1(P)}
\newcommand{\Hpa}{\mathbb{H}^1(\tP_\ta)}
\newcommand{\Gp}{\mathbb{G}^1(P)}
\newcommand{\Hpi}{\mathbb{H}^i(P)}
\newcommand{\sommalphater}{\sum_{\substack{\mu\in\cE_m(\ell,\alpha)\\  \mu\in\mathcal{M}_m(\alpha)\\\mu_\ell\neq 0 }}}
\newcommand{\sommalphateri}{\sum_{\substack{\mu\in\cE_m(i,\alpha)\\  \mu\in\mathcal{M}_m(\alpha)\\\mu_i\neq 0 }}}
\newcommand{\sommalphateruno}{\sum_{\substack{\mu\in\cE_m(1,\alpha)\\  \mu\in\mathcal{M}_m(\alpha)\\\mu_1\neq 0 }}}
\newcommand{\sommalphaquater}{\sum_{i=2}^{m}\sum_{\beta=1}^{\alpha-1}}
\newcommand{\mtj}{\widetilde{\mu}_j}
\newcommand{\twg}{\mathcal T(h,\Gamma^m)}
\newcommand{\twgpo}{\mathcal T(\tT_0(h,r,n),\Gamma^m)}
\newcommand{\twgq}{\mathcal T(Q,\Gamma^m)}
\newcommand{\twgqxi}{\mathcal T_\xi(Q,\Gamma^m)}
\newtheorem{prop}{Proposition}[section]
\newtheorem{thm}{Theorem}
\newtheorem*{thm*}{Theorem}
\newtheorem*{cor*}{Corollary}
\newtheorem{cor}{Corollary}
\newtheorem{lemma}{Lemma}
\theoremstyle{remark}
\newtheorem{rmk}{Remark}[section]
\theoremstyle{definition}
\newtheorem{defn}{Definition}
\newcommand\myoverset[2]{\overset{\textstyle #1\mathstrut}{#2}}
\numberwithin{equation}{section}
\numberwithin{thm}{section}
\numberwithin{defn}{section}
\numberwithin{prop}{section}
\numberwithin{cor}{section}
\numberwithin{lemma}{section}
\numberwithin{rmk}{section}
\newcommand{\alg}{{\bm{\alpha}}}
\newcommand{\scA}{{\mathscr{A}}}
\newcommand{\scK}{{\mathscr{K}}}
\newcommand{\sL}{{\mathsf{\Lambda}}}
\newcommand{\N}{{\mathbb N}}
\renewcommand{\S}{{\mathbb S}}
\newcommand{\cA}{{\mathcal A}}
\newcommand{\cC}{{\mathcal C}}
\newcommand{\cD}{{\mathcal D}}
\newcommand{\cE}{{\mathcal E}}
\newcommand{\cG}{{\mathcal G}}
\newcommand{\cH}{{\mathcal H}}
\newcommand{\cI}{{\mathcal I}}
\newcommand{\cJ}{{\mathcal J}}
\newcommand{\cL}{{\mathcal L}}
\newcommand{\cM}{{\mathcal M}}
\newcommand{\cP}{{\mathcal P}}
\newcommand{\cR}{{\mathcal R}}
\newcommand{\cS}{{\mathcal S}}
\newcommand{\cT}{{\mathcal T}}
\newcommand{\cU}{{\mathcal U}}
\newcommand{\cV}{{\mathcal V}}
\newcommand{\cW}{{\mathcal W}}
\newcommand{\cZ}{{\mathcal Z}}
\newcommand{\jet}{{\cJ_{s,\gamma}}}
\newcommand{\jetm}{{\cJ_{s_m,\gamma}}}
\newcommand{\jeta}{{\mathtt J_{s,\gamma,\ta}}}
\newcommand{\jetam}{{\mathtt J_{s_m,\gamma,\ta}}}
\newcommand{\jetunoa}{{\tJ_{1,\gamma,\ta}}}
\newcommand{\jeth}{{\cJ_{s,\gamma}}}
\newcommand{\jetah}{{\mathtt J_{s,\gamma,\ta}}}
\newcommand{\jetamh}{{\widehat\mathtt J_{s_m,\gamma,\ta}}}
\newaliascnt{eqfloat}{equation}
\newcommand*{\ORGeqfloat}{}
\let\ORGeqfloat\eqfloat
\def\eqfloat{%
	\let\ORIGINALcaption\caption
	\def\caption{%
		\addtocounter{equation}{-1}%
		\ORIGINALcaption
	}%
	\ORGeqfloat
}
\newcommand{\fB}{{\mathfrak{B}}}
\newcommand{\fD}{{\mathfrak{D}}}
\newcommand{\fU}{{\mathfrak{U}}}
\newcommand{\ta}{{\mathtt{a}}}
\newcommand{\tb}{{\mathtt{b}}}
\newcommand{\td}{{\mathtt{d}}}
\newcommand{\tf}{{\mathtt{f}}}
\newcommand{\tti}{{\mathtt{t}}}
\newcommand{\tu}{{\mathtt{u}}}
\DeclareSymbolFont{ttgreek}{LGR}{cmtt}{m}{n}
\DeclareMathSymbol{\tgamma}{\mathord}{ttgreek}{`g}
\DeclareMathSymbol{\tmu}{\mathord}{ttgreek}{`m}
\newcommand{\ti}{{\mathtt{i}}}
\newcommand{\tj}{{\mathtt{j}}}
\newcommand{\tJ}{{\mathtt{J}}}
\newcommand{\tp}{{\mathtt{p}}}
\newcommand{\tir}{{\mathtt{r}}}
\newcommand{\ty}{{\mathtt{y}}}
\newcommand{\tA}{{\mathtt{A}}}
\newcommand{\tB}{{\mathtt{B}}}
\newcommand{\tC}{{\mathtt{C}}}
\newcommand{\tD}{{\mathtt{D}}}
\newcommand{\tF}{{\mathtt{F}}}
\newcommand{\tG}{{\mathtt{G}}}
\newcommand{\tH}{{\mathtt{H}}}
\newcommand{\tK}{{\mathtt{K}}}
\newcommand{\tI}{{\mathtt{I}}}
\newcommand{\tL}{{\mathtt{L}}}
\newcommand{\tM}{{\mathtt{M}}}
\newcommand{\tP}{{\mathtt{P}}}
\newcommand{\tQ}{{\mathtt{Q}}}
\newcommand{\tR}{{\mathtt{R}}}
\newcommand{\tS}{{\mathtt{S}}}
\newcommand{\ts}{{\mathtt{s}}}
\newcommand{\tT}{{\mathtt{T}}}
\newcommand{\tU}{{\mathtt{U}}}
\newcommand{\tY}{{\mathtt{Y}}}
\newcommand{\grad}{\nabla}
\newcommand\norma[1]{\left\lVert#1\right\rVert}
\newcommand{\sommalpha}{\sum_{\substack{\mu\in\N^m\\  2\le |\mu|\le \alpha+1}}}
\newcommand{\sommalphabis}{\underset{\substack{\nu_\ell(i,\beta)\in \cE_m(\ell,\alpha)\\ \nu_\ell(i,\beta)\neq 0}}{\sum_{i=1}^{m}\sum_{\beta=1}^{\alpha}}}
\newcommand{\meas}{{\rm meas}}
\newcommand{\diag}{{\rm diag}}
\newcommand{\sA}{{\mathsf{A}}}
\newcommand{\sS}{{\mathsf{S}}}
\newcommand{\nucleo}{{\mathscr{N}}}
\newcommand{\scS}{{\mathscr{S}}}
\newcommand{\scV}{{\mathscr{V}}}
\newcommand{\nnorm}[1]{{\left\vert\kern-0.25ex\left\vert\kern-0.25ex\left\vert #1 
		\right\vert\kern-0.25ex\right\vert\kern-0.25ex\right\vert}}
\newcommand{\Pol}{\mathcal{P}(r,n)}
\newcommand{\Pollo}{\mathcal{P}^\star(r,n)}
\newcommand{\Polpo}{\mathcal{P}^\star(r',n)}
\newcommand{\Polm}{\mathcal{P}(r,m)}
\title{\bf 
	
	Semi-algebraic Geometry and generic Hamiltonian stability\\ \ \\
	}
\begin{document}

\author{ 
%Nota di Luca Biasco e Luigi Chierchia \\{\bf Scientific chapter:} {\sl Mathematical analysis.} \\
 S. Barbieri\\ 
 \\ \footnotesize Universitat de Barcelona
\\ \footnotesize Gran Via de les Corts Catalanes, 585 - 08007 Barcelona, Spain
\\
{\footnotesize barbieri@ub.edu}
\\ 
}

\maketitle

{\bf Mathematics subject classification:} 37CXX - Smooth dynamical systems: general theory, 14PXX - Real algebraic and real-analytic geometry.

{\bf Keywords:} Hamiltonian dynamical systems, Nekhoroshev Theory, steepness, semi-algebraic geometry.

\tableofcontents

\section*{Abstract}

The steepness property is a local geometric transversality condition on the gradient of a $C^2$-function which proves fundamental in order to ensure the stability of sufficiently-regular nearly-integrable Hamiltonian systems over long timespans. Steep functions were originally introduced by Nekhoroshev, who also proved their genericity. Namely, given a pair of positive integers $r,n$, with $r$ high enough, and a point $I_0\in \R^n$, the Taylor polynomials of those $C^{2r-1}$ functions which are not steep around $I_0$ are contained in a semi-algebraic set of positive codimension in the space of polynomials of $n$ variables and degree bounded by $r$. The demonstration of this result was originally published in 1973 and has been hardly studied ever since, probably due to the fact that it involves no arguments of dynamical systems: it makes use of quantitative reasonings of real-algebraic geometry and complex analysis.   The aim of the present work is two-fold. In the first part, the original proof of the genericity of steepness is rewritten by making use of modern tools of real-algebraic geometry: this allows to clarify the original reasonings, that were obscure or sketchy in many parts. In particular, Yomdin's Lemma on the analytic reparametrization of semi-algebraic sets, together with non trivial estimates on the codimension of certain algebraic varieties, turn out to be the fundamental ingredients to prove the genericity of steepness. The second part of this work is entirely new and is devoted to the formulation of explicit algebraic criteria to check steepness of any given sufficiently regular function, which constitutes a very important result for applications, as the original definition of steepness is not constructive. These criteria involve both the derivatives of the studied function up to any given order and external real parameters that, generically, belong to compact sets.

\section{Introduction}
\subsection{Hamiltonian formalism and nearly-integrable systems}

Hamiltonian formalism is the natural setting appearing in the study of many physical systems. Namely, for any given positive integer $n$, we consider a symplectic manifold $\cM$ of dimension $2n$, endowed with a skew-symmetric two-form $\omega$, and with a function $H\in C^2(\cM)$ classically called Hamiltonian. A Hamiltonian system on $\cM$ is the dynamical system governed by the vector field $X_H$ verifying
\begin{equation}\label{sistema_dinamico_hamiltoniano}
i_{X_H}\omega:=\omega(X_H,\cdot)=dH\ .
\end{equation}
In the simplest case, we consider the motion of a point on a $n$-dimensional Riemannian manifold $\cR$ endowed with the euclidean metric - called the configuration manifold - under Newton's second law
$$
\ddot{q}=-\nabla U(q)\ ,
$$ 
where $U$ is a smooth potential function, and $q$ is a system of local coordinates for $\mathcal{R}$.
This system can be conjugated by duality due to Legendre's transformation and reads 
\begin{equation}\label{eq_hamilton}
\dot{p}=-\partial_q H(p,q)\quad ; \quad \dot{q}=\partial_p H(p,q)
\end{equation}
where $H(p,q)$ is a real smooth function on the cotangent bundle $T^\star \mathcal{R}$,  and $p$ is the local coordinate conjugated to $q$. In this example, if one takes $\cM\equiv T^\star \cR$, and if one chooses $(p,q)$ to be Darboux's coordinates associated to the two-form $\omega(p,q)\equiv \sum_{j=1}^n dp_i\wedge dq_i$, then system \eqref{eq_hamilton} is locally equivalent to \eqref{sistema_dinamico_hamiltoniano}. 

Among Hamiltonian system, an important rôle is played by those which are integrable by quadrature. Due to the classical Liouville-Arnol'd Theorem, under general topological and algebraic assumptions, an integrable system depending on $2n$ variables ($n$ degrees of freedom) can be conjugated to a Hamiltonian system on the cotangent bundle of the $n$-dimensional torus $\mathbb{T}^n$, whose equations of motion take the form 
$$
\dot{I}=-\partial_\vartheta h(I)=0\quad , \qquad \dot{\vartheta}=\partial_I h(I) \quad,
$$
where $(I,\vartheta)\in \mathbb{R}^n\times \mathbb{T}^n $ are called action-angle coordinates. Therefore, the phase space of an integrable system is foliated by invariant tori carrying the linear motions of the angular variables (called quasi-periodic motions).

Integrable systems are exceptional\footnote{Three examples of integrable systems are the classical Kepler's problem, the harmonic oscillator, and Lagrange's top.}, but many important physical problems can be described by Hamiltonian systems which are close to integrable. 
Namely, the dynamics of a nearly-integrable Hamiltonian system is described by a Hamiltonian function whose form in action-angle coordinates  reads
$$
H(I,\vartheta):=h(I)+\varepsilon f(I,\vartheta)\quad , \qquad  (I,\vartheta)\in \mathbb{R}^n\times \mathbb{T}^n
$$
where $\varepsilon$ is a small parameter that tunes the size of the perturbation $\eps f$ w.r.t. the integrable part $h$.

The structure of the phase space of this kind of systems can be inferred with the help of classical Kolmogorov-Arnol'd-Moser (KAM) theory. Namely, under the generic non-degeneracy condition that $\grad h$ is a local diffeomorphism, a Cantor-like set of positive Lebesgue measure of invariant tori carrying quasi-periodic motions for the integrable flow persists under a suitably small perturbation (see e.g. ref. \cite{Arnold_2010}, \cite{Chierchia_2008}). As this Cantor-like set is nowhere dense, it is extremely difficult to determine numerically whether a given solution is quasi-periodic or not.

%\eject

Moreover, for a  Hamiltonian system depending on $n$ degrees of freedom (hence a
$2n$-dimensional system), the invariant tori provided by classical KAM theory are $n$-dimensional. Hence, if $n=2$,
any pair of invariant tori disconnects the three-dimensional energy level, so that the solutions of the perturbed system are global and bounded over {\it infinite} times. However, an arbitrary large drift of the orbits is possible in case $n\geq 3$. Actually, in ref. \cite{Arnold_1964} Arnol'd proposed an example of a nearly-integrable Hamiltonian system where an arbitrary large instability of the action variables occurs for an arbitrarily small perturbation. This phenomenon is known under the name of {\it Arnol'd's diffusion} (see ref. \cite{Kaloshin_Zhang_2022} and references therein for the most recent developments in this field). Thus, results of stability for quasi integrable Hamiltonian systems which
are valid for an open set of initial condition can only be proved over {\it finite} times.

\subsection{Long time stability of nearly-integrable systems}

In the 1970s, Nekhoroshev\footnote{See \cite{Nekhoroshev_1977}, or \cite{Guzzo_Chierchia_Benettin_2016} for a more modern presentation} proved that if we consider a real-analytic, integrable Hamiltonian whose gradient satisfies a suitable, quantitative transversality condition known as {\it steepness}, then, for any sufficiently small perturbation the solutions of the perturbed system are stable and have a very long time of existence\footnote{The time of stability depends of the regularity of the considered system and is exponential (polynomial) in the inverse of the size of the perturbation if the total Hamiltonian belongs to the Gevrey (Hölder) class. See \cite{Marco_Sauzin_2003}, \cite{Bounemoura_2010}, \cite{Barbieri_Marco_Massetti_2022}.}.

The original definition of steepness given by Nekhoroshev is quite involved and will be discussed at length in the sequel. In order to grasp an idea of what this property means, it is worth mentioning that a real-analytic function is steep if and only if it has no isolated critical points and if any of its restrictions to any affine proper subspace admits only isolated critical points (see \cite{Ilyashenko_1986} and \cite{Niederman_2006}). This is especially satisfied in the important case of an integrable Hamiltonian which is strictly convex in the action variables, since its critical points are non-degenerated.

Actually, the vast majority of the works on Nekhoroshev's theory concerns small perturbations of convex integrable Hamiltonian but Nekhoroshev also proved in \cite{Nekhoroshev_1973} that - unlike convexity - the steepness condition is generic, both in measure and topological sense. Namely, given a pair of positive integers $r,n$, with $r$ high enough, and a point $I_0\in \R^n$, the Taylor polynomials of those $C^{2r-1}$ functions which are not steep around $I_0$ are contained in a semi-algebraic set of positive codimension in the space of polynomials of $n$ variables and degree bounded by $r$. The proof and the refinement of this property constitute the first part of the present work. However, before presenting this and the other main results, we would like to highlight that - even though convex systems are common \footnote{See e.g.  \cite{Niederman_1996}, \cite{Barbieri_Niederman_2020} in the study of the three-body problem, \cite{Bambusi_Fuse_2017}, \cite{Bambusi_Fuse_Sansottera_2018} in the context of central force motions, and \cite{Bambusi_Giorgilli_1993}, \cite{Bambusi_1999}, \cite{Poschel_1999}, \cite{Faou_Grebert_2013} in the framework of infinite-dimensional Hamiltonian systems.} - non-convex integrable Hamiltonians occur in the investigation of important problems of mechanics.

\medskip
Namely, we consider a symplectic manifold $({\mathcal M},\Omega)$ of dimension $2n$, $n\in \N$, where $\Omega$ is an everywhere non-degenerate closed $2$-form, a smooth symplectic vector field $X$ on ${\mathcal M}$ (meaning that the one-form ${i}_X\Omega$ is closed) and an equilibrium point $p^* \in {\mathcal M}$, that is $X(p^*)=0$.

We are interested in studying whether $p^*$ is stable or not.

Since we are in a conservative case, a first observation is that, if $p^*$ is stable, then the spectrum of the linearized system around $p^*$ is $\{\pm i\alpha_1 ,\hdots ,\pm i\alpha_n\}$ where $\alpha_1 ,\hdots ,\alpha_n$ are reals, so that $p^*$ is an elliptic equilibrium position.

The problem being local, we can assure without any loss of generality (this is specified in \cite{  Bounemoura_Fayad_Niederman_2020}) that we are working in $(M,\Omega)=(\R^{2n},\Omega_0)$ where $\Omega_0:=dx\wedge dy$ is the canonical symplectic structure of $\R^{2n}$ for the conjugated Darboux variables $(x,y)\in\mathbb{R}^n\times\mathbb{R}^n$. Moreover, under generic assumptions (see again \cite{  Bounemoura_Fayad_Niederman_2020}), we can assume that the considered system derives from a Hamiltonian of the form:
\begin{equation} \label{Hamintro} H(x,y)= \sum_{j=1}^n \alpha_j(x_j^2+y_j^2)/2 + O_3(x,y). \end{equation}
Our standing assumption from now on is that $H$ is real-analytic. Such a system, under a suitable rescaling, can be considered as nearly-integrable.

In this setting, there are two cases for which one knows that stability holds true for the  considered equilibrium. 

The first case is when the quadratic part $H_2$ is sign-definite, or, equivalently, when the components of the vector $\alpha \in \R^{n}$ have the same sign. Indeed, the Hamiltonian function has then a strict minimum (or maximum) at the origin, and as this function is constant along the flow (it is in particular a Lyapounov function) one can construct, using standard arguments, a basis of neighborhoods of the origin which are invariant, and the latter property is obviously equivalent to stability.

The second case is when $n=2$ and when the so called Arnol'd's iso-energetic non-degeneracy condition is satisfied. Then, KAM stability occurs in every energy level passing sufficiently close to the origin,  implying Lyapounov stability, due to the fact that the two-dimensional tori disconnect each three-dimensional energy level (see for instance \cite{Arn61} and \cite{Mos62}). It is easy to see that the Arnold iso-energetic non-degeneracy condition is generic in measure and topology as a function of the coefficients of the $O_4(x,y)$ part of the Taylor expansion of $H$ around the origin.

In the other cases, a large unstability due to Arnold diffusion can occur (see \cite{Fayad_2023}), but it has been proved in \cite{Bounemoura_Fayad_Niederman_2020} that, generically, any solution starting sufficiently close to the equilibrium point remains close to it for an interval of time which is double-exponentially large ($\exp\circ\exp$) with respect to the inverse of the distance to the equilibrium point.  The latter result is obtained by making use of Nekhoroshev's theory and relies crucially on the genericity of steep functions, since one needs to build a suitable steep integrable approximation of the complete system.

The same issue arises in order to apply Nekhoroshev's theory to concrete examples. Especially, in Celestial Mechanics, we have important problems where an elliptic equilibrium arises with a quadratic term in (\ref{Hamintro}) which is not sign definite: this is the case for the Lagrange's equilibrium points L4, L5 in the restricted three body problem (see \cite{Benettin_Fasso_Guzzo_1998}) and in the averaged ("secular") planetary three body problem (this is due to the Herman's resonance, see \cite{Fejoz_2004} and \cite{Malige_2012}). The latter system is a crucial approximation to apply Hamiltonian perturbation theory (hence KAM or Nekhoroshev theory) in Celestial Mechanics. Moreover, we cannot always build an integrable approximation of this kind of systems which is convex in action variables, hence we have to consider steep non convex Hamiltonians in order to infer stability results with the help of Nekhoroshev's theory. For the study of the Lagrange's equilibrium points, it is possible in most cases to recover steepness by considering higher order approximation (see \cite{Benettin_Fasso_Guzzo_1998}), actually this corresponds to general considerations on functions with three variables which will be specified in the sequel. For the secular planetary three-body problem, the associated Hamiltonian is not convex w.r.t. the actions (see \cite{Pinzari_2013}) and much more variables are involved than for the Lagrange's points, hence we really need new criteria to ensure that a given function is steep or not in this case.  Up to now, generic explicit conditions for steepness were known only for functions of three (the  conditions given by Nekhoroshev in \cite{Nekhoroshev_1977}), four (see \cite{Schirinzi_Guzzo_2013}) or five variables (see \cite{Barbieri_2022}). The second part of this work is devoted to proving explicit conditions for steepness which are generic for functions of an arbitrary number of variables.

\medskip

It can also be specified that if the steepness condition is dropped, large instablities may occur over times of order $1/\varepsilon$, which is the shortest possible time of drift when considering  perturbations of magnitude $O(\varepsilon)$ (see \cite{Niederman_2006} and \cite{BK_2014}). 

In the context of KAM theory, Herman (see \cite{Herman_1992}) has shown that the lack of steepness of the integrable Hamiltonian allows to build perturbations for which one can find a $G_\delta -$dense set of
initial conditions leading to orbits whose action components are unbounded while the integrable Hamiltonian can also be Kolmogorov non-degenerate, hence most of the orbits lie on invariant tori and we have simultanously, existence of large zones of stability and instability.

Steepness also arises in the framework of Arnol'd's diffusion (see \cite{Berti_Biasco_Bolle_2003}) for the optimality of the time of diffusion. Finally, in ref. \cite{Bambusi_Langella_2022} it is shown that Nekhoroshev's classical proof of stability for perturbations of steep integrable Hamiltonian systems is also relevant in the study of PDE's, considered as infinite-dimensional Hamiltonian systems.  

\subsection{Genericity and explicit criteria for steepness}

\medskip

Now, we specify Nekhoroshev's effective result of stability (see refs. \cite{Nekhoroshev_1977}, \cite{Nekhoroshev_1979}), which is valid for an open set of initial conditions  provided that the total Hamiltonian is regular enough and that its integrable part satisfies the following transversality property on its gradient:

\begin{defn}[Steepness]\label{def steep} Fix $\delta>0$, $R>0$.
	A $C^2$ function $h: B^n(0, R+2\delta)\to\R$ is \co{steep} in $B^n(0,R)$ with \co{steepness indices} $\boldsymbol{\upalpha}_1,\ldots,\boldsymbol{\upalpha}_{n-1}\ge 1$ and \co{steepness coefficients} $C_1,\ldots, C_{n-1}, \delta$ if:
	\begin{enumerate}
		\item $\inf_{I\in B^n(0,R)} ||\nabla h(I)|| > 0$;
		\vskip1mm
		\item for any $I \in B^n(0,R)$, for any integer $1\le m < n$, and for any $m$-dimensional subspace $\Gamma^m$ orthogonal to $\nabla h(I)$ and endowed with the induced euclidean metric, one has:
		\begin{equation}\label{cuore_steepness}
		\max_{0\le \eta \le \xi} \,  \min_{u\in\Gamma^m,\,||u||_2=\eta} ||\,\pi_{\Gamma^m}\,\nabla h(I+u)\,|| > C_m \xi^{\boldsymbol{\upalpha}_m},\quad \forall \xi \in (0,\delta],
		\end{equation}
		where $\pi_{\Gamma^m}$ stands for the orthogonal projection on $\Gamma^m$.
	\end{enumerate}
	
\end{defn}

\begin{rmk}\label{viadotto}
	Since in definition \ref{def steep} the subspace $\Gamma^m\subset \R^n$ is endowed with the induced euclidean metric, for all $u\in\Gamma^m$ one has $||\pi_{\Gamma^m}\,\nabla h(I+u)||=||\nabla (h|_{I+\Gamma^m})(I+u)||$, where $h|_{I+\Gamma^m}$ indicates the restriction of $h$ to the affine subspace $I+\Gamma^m$. 
\end{rmk}

As it is showed in \cite{Niederman_2006}, in the analytic case a function is steep if and only if, on any affine hyperplane $I+\Gamma_m$, there exists no curve $\gamma$ with one endpoint in $I$ such that the restriction $\nabla (h|_{I+\Gamma^m})$ vanishes identically on $\gamma$. From a heuristic point of view, for any value $m\in\{1,...,n-1\}$ the gradient $\nabla h$ must "bend" towards $I+\Gamma^m$ when "travelling" along the curve $\gamma\in I+\Gamma^m$, so that critical points for the restriction of $h$ to $I+\Gamma^m$ must not accumulate.

Once the concept of steep function is introduced, Nekhoroshev's effective result of stability reads
\begin{thm}[Nekhoroshev, 1977]
	Consider a nearly-integrable Hamiltonian system governed by
	$$
	H(I,\vartheta):=h(I)+\varepsilon f(I,\vartheta)\quad , \qquad H\in C^\omega(B^n(0,r)\times \T^n)\ ,
	$$
	where $B^n(0,r)$ is the open ball of radius $r$ in $\mathbb{R}^n$, and $h$ is assumed to be steep. Then there exist positive constants $a,b,\varepsilon_0, C_1,C_2,C_3$ such that, for any $\varepsilon\in [0,\varepsilon_0)$ and for any initial condition not too close to the boundary, one has
	$
	|I(t)-I(0)|\leq C_2\varepsilon^b
	$
	for any time $t$ satisfying
	$
	|t|\leq C_1\exp\left(C_3/\varepsilon^{a}\right)\ .
	$
\end{thm}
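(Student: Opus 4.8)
The plan is to reproduce the by-now classical proof of Nekhoroshev's estimate, which splits into an \emph{analytic part} (construction of resonant normal forms with exponentially small remainders) and a \emph{geometric part} (where the steepness of $h$, Definition~\ref{def steep}, is used to confine the actions). The case $\varepsilon=0$ is trivial, and ``not too close to the boundary'' means the trajectory must start so far inside $B^n(0,r)$ that the $\delta$-neighbourhoods appearing in \eqref{cuore_steepness} and the complex analyticity domains below all fit inside $B^n(0,r)\times\T^n$. \emph{Analytic part.} Since $H$ is real-analytic, it extends holomorphically to a complex strip $\{|\Im\vartheta|<s,\ |\Im I|<\rho\}$; fix an ultraviolet cut-off $K>0$, to be chosen $\asymp\varepsilon^{-a}$, so that the Fourier tail $\sum_{|k|>K}$ of $f$ is already $O(e^{-sK})$. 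Cover the action domain by overlapping \emph{resonant zones} $\mathcal Z_\Lambda$, indexed by the sublattices $\Lambda\subseteq\Z^n$ generated by integer vectors of length $\le K$, where $|k\cdot\nabla h(I)|$ is small exactly for $k\in\Lambda$ and bounded below (by a power of $K$) for the other $k$ with $|k|\le K$. On $\mathcal Z_\Lambda$ one performs an iterative averaging: at each of $\asymp K$ steps a symplectic Lie transform $\exp(X_\chi)$, with $\chi$ of the size of the current perturbation, removes the non-resonant Fourier modes, the denominators $k\cdot\nabla h(I)$ with $k\notin\Lambda$ being controlled by construction of the zone. One ends with a symplectic $\Phi_\Lambda$, $O(\varepsilon)$-close to the identity, such that
$$H\circ\Phi_\Lambda=h+g_\Lambda+f^\ast_\Lambda,\qquad g_\Lambda=g_\Lambda\big(I,\{k\cdot\vartheta:k\in\Lambda\}\big),\qquad \|X_{f^\ast_\Lambda}\|\le C\,\varepsilon\,e^{-cK}.$$
The quantitative bookkeeping of the iteration (geometric loss of analyticity width per step, factorial growth from nested Poisson brackets, accumulation of small divisors) is standard but must be carried out so that the scheme converges up to order $\asymp K$, which fixes the admissible relation between $\rho,s,K,\varepsilon$ and the steepness coefficient $\delta$.

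\emph{Confinement on a single zone.} The structural point is that, under the flow of $h+g_\Lambda$, the action drifts only along the ``resonance plane'' $\Lambda_\R$ (the real span of $\Lambda$), since $\dot I=-\partial_\vartheta g_\Lambda\in\Lambda_\R$; hence, for the true flow of $h+g_\Lambda+f^\ast_\Lambda$, the component of $I(t)-I(0)$ transverse to $\Lambda_\R$ is only $O\!\big(|t|\,\varepsilon\,e^{-cK}\big)$. As long as the trajectory stays in $\mathcal Z_\Lambda$, set $m=\dim\Lambda\in\{1,\dots,n-1\}$; on $\mathcal Z_\Lambda$ one has $k\cdot\nabla h(I)\approx 0$ for $k\in\Lambda$, so $\nabla h(I)$ is nearly orthogonal to $\Lambda_\R$ and the steepness inequality \eqref{cuore_steepness} applies with $\Gamma^m=\Lambda_\R$: along $I(0)+\Lambda_\R$ there is, at every scale $\xi\le\delta$, a sphere of radius $\le\xi$ on which $\|\pi_{\Lambda_\R}\nabla h\|>C_m\xi^{\boldsymbol{\upalpha}_m}$. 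Combined with approximate energy conservation $h(I(t))=h(I(0))+O(\varepsilon)$ (from $H\circ\Phi_\Lambda$ being a constant of motion and $g_\Lambda,f^\ast_\Lambda$ small), this ``shell'' acts as a potential barrier of height $\gtrsim\xi^{\boldsymbol{\upalpha}_m+1}$ that the energy budget $O(\varepsilon)$ cannot overcome once $\xi$ is a suitable positive power of $\varepsilon$; thus the $\Lambda_\R$-component of $I(t)-I(0)$ stays $\lesssim\xi\lesssim\varepsilon^b$, and so $\|I(t)-I(0)\|\lesssim\varepsilon^b$ for all $|t|\le C_1\exp(C_3\varepsilon^{-a})$, provided the trajectory has not left $\mathcal Z_\Lambda$.

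\emph{Patching: geography of resonances.} It remains to handle trajectories migrating among zones of different multiplicities. One organizes the zones into a nested family of \emph{resonant blocks} $B^{(m)}_\Lambda$, $m=0,1,\dots,n-1$, whose widths and mutual distances are prescribed powers of $K$ and which enjoy the covering property that a block of multiplicity $m$ can be exited only through a neighbourhood of a block of multiplicity $>m$ (the multiplicity-$(n-1)$ blocks being ``traps'' of diameter $O(\varepsilon^b)$, since a multiplicity-$n$ resonance would force $\nabla h=0$, excluded by part~1 of Definition~\ref{def steep}). One then induces on the multiplicity: inside a block of multiplicity $m$ the fast drift is confined by the previous step with the index $\boldsymbol{\upalpha}_m$; the block can be left only near a block of strictly higher multiplicity, where the argument repeats with stronger transversality; and since the time to cross any single block is $\asymp e^{cK}$ and there are only finitely many blocks, the total drift over $|t|\le C_1\exp(C_3\varepsilon^{-a})$ is still $\le C_2\varepsilon^b$. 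The exponents $a,b$ and the constants $\varepsilon_0,C_1,C_2,C_3$ come out of optimizing the single free parameter $K=K(\varepsilon)$ against $\boldsymbol{\upalpha}_1,\dots,\boldsymbol{\upalpha}_{n-1}$ and $n$.

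\emph{Main obstacle.} The delicate point — and the only place steepness genuinely enters — is converting the qualitative-looking $\max$--$\min$ transversality condition \eqref{cuore_steepness} into the effective barrier estimate of the second paragraph in the presence of the small but nonzero remainder $f^\ast_\Lambda$, and, intertwined with it, constructing the covering of $B^n(0,r)$ by resonant blocks whose geometry (powers of $K$) is simultaneously compatible with the analytic small-divisor requirements of the first paragraph and with the steepness-driven barrier heights $\xi^{\boldsymbol{\upalpha}_m}$ of the third. Balancing these three competing scales — loss of analyticity per averaging step, block widths, and steepness growth — into one consistent choice $K=K(\varepsilon)$ is the heart of the matter; everything else is careful but routine Lie-series and real-analysis bookkeeping.
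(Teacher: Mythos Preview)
The paper does not contain a proof of this theorem. It is quoted in the Introduction as a classical result due to Nekhoroshev (with references to \cite{Nekhoroshev_1977} and the modern exposition \cite{Guzzo_Chierchia_Benettin_2016}) solely to motivate the study of the steepness property; the paper's own results and proofs concern the \emph{genericity} of steepness and explicit algebraic \emph{criteria} for it (Theorems~A, B, C1--C3), not the stability estimate itself. So there is nothing in the paper to compare your attempt against.

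That said, what you have written is a faithful high-level outline of the classical two-part scheme (analytic normal-form construction with exponentially small remainder, followed by the steepness-based geometric confinement and the ``geography of resonances'' induction on the multiplicity), and your identification of the main obstacle --- making the $\max$--$\min$ condition \eqref{cuore_steepness} quantitative and compatible with the block geometry and analytic losses --- is accurate. If you want a detailed modern proof along exactly these lines, \cite{Guzzo_Chierchia_Benettin_2016} is the reference the paper itself points to.
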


\begin{rmk}
	The presence of a bound of the kind $
	|I(t)-I(0)|\leq C_2\varepsilon^b
	$ on the variation of the action variables is a consequence of the steepness property, whereas the time of stability depends on the regularity of the function $H$ at hand. In the original formulation by Nekhoroshev, $H$ is considered to be real-analytic, which yielded an exponentially-long time in the inverse of the size of the pertubation (see also \cite{Guzzo_Chierchia_Benettin_2016}). Exponentially-long times of stability hold also in case $H$ is Gevrey (see \cite{Marco_Sauzin_2003}), whereas only polynomially-long times of stability can be ensured for $C^\infty$  and Hölder  functions (see refs. \cite{Bambusi_Langella_2020}, \cite{Barbieri_Marco_Massetti_2022}).
	
\end{rmk}

As it has already been anticipated in the previous paragraph, the steepness property is generic - both in measure and in topological sense - in the space of Taylor polynomials of sufficiently high order of smooth functions. Namely, let $r,n\geq 2$ be two positive integers. We indicate by $\Pol\subset  \R[x_1,\dots,x_n]$ the subspace of real polynomials in $n$ variables having degree bounded by $r$. For any point $I_0\in \R^n$, and any function $f$ of class $C^r$ near $I_0$, we call {\it r-jet of $f$ at $I_0$} the Taylor polynomial of $f$ up to order $r$ calculated at $I_0$. Moreover, we say that a subset $\cA\subset \R^n$ is {\it semi-algebraic} if it is the finite union of subsets determined by a finite number of polynomial equalities or inequalities (see Definition \ref{union}). Nekhoroshev proved in \cite{Nekhoroshev_1973}-\cite{Nekhoroshev_1979} that

\begin{thm}[Nekhoroshev, 1973-1979]\label{Teorema_uno}
	There exists a "bad" semi-algebraic subset $\Omega(r,n)$ of $\Pol$ such that any function $h$ of class $C^{2r-1}$ around a non-critical point $I_0\in \R^n$, and whose $r$-jet at $I_0$ lies outside of $\Omega(r,n)$, is steep in a neighborhood of $I_0$ with uniform indices.
	Moreover, the codimension of $\Omega(r,n)$ in $\Pol$ becomes positive for $r\gtrsim[ n^2/4]$. 
\end{thm}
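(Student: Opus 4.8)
The plan has two parts --- constructing the semi-algebraic set $\Omega(r,n)$, and estimating its codimension --- and it rests on the two tools announced in the abstract: Yomdin's analytic reparametrization of semi-algebraic sets, and codimension estimates for certain algebraic varieties. The starting point is the analytic characterization of steepness recalled right after Definition \ref{def steep} (see \cite{Ilyashenko_1986}, \cite{Niederman_2006}), applied to the polynomial $r$-jet $P$ of $h$ at $I_0$: taking $I_0=0$, the polynomial $P$ fails to be steep at $0$ exactly when, for some integer $1\le m<n$ and some $m$-dimensional subspace $\Gamma$ orthogonal to $\nabla P(0)$, the restriction $P|_\Gamma$ has a non-isolated critical point at $0$ --- equivalently $\pi_\Gamma\nabla P$ vanishes identically along an analytic arc through $0$ inside $\Gamma$ (condition 1 of Definition \ref{def steep} holds automatically near $I_0$, since $\nabla h(I_0)=\nabla P(0)\ne 0$, so only condition 2 is at stake). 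Taylor-expanding $h\in C^{2r-1}$ at $0$, the left-hand side of \eqref{cuore_steepness} evaluated for $h$ and for $P$ differ by a remainder of strictly higher order in $\xi$; hence it suffices to produce a semi-algebraic $\Omega(r,n)\subset\Pol$ such that every $P\notin\Omega(r,n)$ satisfies \eqref{cuore_steepness} at $0$ with steepness indices $\boldsymbol{\upalpha}_m$ and coefficients depending only on $r$ and $n$. The $C^{2r-1}$ regularity is precisely what makes the remainder subordinate to these uniform polynomial lower bounds, and the strict quantitative inequality at $0$, being stable under small displacements of the base point, then propagates to a whole neighbourhood of $I_0$ with the same indices.

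To construct $\Omega(r,n)$, for each $m\in\{1,\dots,n-1\}$ I would introduce the incidence set $\mathcal Z_m$ of pairs $(P,\Gamma)$, with $\Gamma$ an $m$-plane orthogonal to $\nabla P(0)$, such that the common zero germ at $0$ of the $m$ components of $\pi_\Gamma\nabla(P|_\Gamma)$ contains an analytic arc through the origin. Complexifying and applying the Weierstrass preparation theorem to the ideal generated by those $m$ polynomials, the existence of such an arc --- whose order is bounded in terms of $r$ by B\'ezout-type estimates --- translates into the simultaneous vanishing of finitely many resultants, polynomial in the coefficients of $P$ and in Pl\"ucker coordinates for $\Gamma$; hence $\mathcal Z_m$ is semi-algebraic, and so is its image $\mathrm{pr}(\mathcal Z_m)$ under the projection to $\Pol$. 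One then sets $\Omega(r,n):=\bigcup_{m=1}^{n-1}\mathrm{pr}(\mathcal Z_m)$. To upgrade the purely qualitative statement ``off $\Omega(r,n)$ no projected gradient vanishes on an arc'' into the uniform quantitative bound \eqref{cuore_steepness}, I would invoke Yomdin's reparametrization lemma: it covers $\Omega(r,n)$ together with the attached family of critical arcs by finitely many analytic maps from a fixed cube, all of whose derivatives are bounded in terms of $r$ and $n$ only, which turns the {\L}ojasiewicz-type gradient inequality available on the complement into one whose exponents depend only on $r$ and $n$ --- and these serve as the uniform steepness indices.

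It remains to bound $\operatorname{codim}_{\Pol}\Omega(r,n)$. Fix $m$ and a subspace $\Gamma$, and let $Z_{m,\Gamma}\subset\Pol$ be the semi-algebraic set of polynomials $P$ with $\Gamma\perp\nabla P(0)$ for which $0$ is a non-isolated critical point of $P|_\Gamma$. The heart of the matter is a lower bound $\operatorname{codim}_{\Pol}Z_{m,\Gamma}\ge\kappa(r,m)$ for an explicit $\kappa$ growing with $r$, obtained by counting how many Taylor coefficients of $P$ are forced by requiring that the $m$ partial derivatives of $P|_\Gamma$ share a common zero arc, the count being organized through a stratification by the order of that arc. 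Since the relevant subspaces $\Gamma$ range over $G(m,n)$ (or a subvariety of it), and $\dim G(m,n)=m(n-m)$ has maximum $[n^2/4]$ over $1\le m<n$, the piece of $\Omega(r,n)$ coming from $m$ has codimension at least $\kappa(r,m)-[n^2/4]$; minimizing over $m$ shows $\operatorname{codim}_{\Pol}\Omega(r,n)>0$ as soon as $\kappa(r,m)>[n^2/4]$ for every $m$, which holds once $r\gtrsim[n^2/4]$.

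The main obstacle will be the sharp lower bound $\operatorname{codim}_{\Pol}Z_{m,\Gamma}\ge\kappa(r,m)$, balanced against $[n^2/4]$. The difficulty is that the $m$ polynomials whose common zero arc one is forbidding are not arbitrary: they are the partial derivatives of a single polynomial of degree $\le r$, so the naive dimension count for $m$ independent hypersurfaces sharing a common arc does not apply, and when applied blindly it overestimates the codimension; one must instead analyse the elimination ideal directly and track, stratum by stratum, the interplay between the order of tangency of the putative critical arc and the integrability relations $\partial_i\partial_j P=\partial_j\partial_i P$. Extracting the precise threshold $[n^2/4]$ --- rather than some weaker polynomial bound in $n$ --- is the genuinely technical point, and it is exactly here that the non-trivial codimension estimates for algebraic varieties flagged in the abstract are needed.
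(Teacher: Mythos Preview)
Your outline has the right two-part shape and the right threshold $\sim[n^2/4]$, but the mechanism you propose for \emph{uniform} indices is misplaced, and without the correct mechanism neither the semi-algebraicity nor the quantitative steepness bound goes through as you describe. Yomdin's lemma is not applied to the bad set $\Omega(r,n)$; it is applied, for each polynomial $P$ and each $\Gamma^m\perp\nabla P(0)$, to the \emph{thalweg} \eqref{Thalweg} --- the locus of minima of $\|\pi_{\Gamma^m}\nabla P\|$ on spheres --- in order to extract a minimal arc $\gamma$ (Theorem~\ref{arco_minimale}) whose analyticity interval and Taylor coefficients obey Bernstein bounds depending only on $r,n,m$, not on $P$. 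Non-steepness of $P$ on $\Gamma^m$ then forces $\pi_{\Gamma^m}\nabla P$ to have a zero of order $\ge s$ along $\gamma$; this is the finite-jet ``$s$-vanishing'' condition (Definition~\ref{s-vanishing}), written explicitly as polynomial equations in the coefficients of $P$ and the $s$-jet of $\gamma$ (Theorem~\ref{ideal}), and projected via Tarski--Seidenberg. Your route through Weierstrass preparation and resultants would at best detect common \emph{roots} of the $m$ partial derivatives, not a common one-dimensional \emph{arc} of zeros, and in any case does not produce the finite-jet equations needed to carry out the codimension count.

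The converse step --- from ``no zero of order $s$ on the minimal arc'' back to the quantitative min-max bound \eqref{cuore_steepness} --- needs an ingredient you do not mention: Pyartli--R\"ussmann's Lemma~\ref{Pyartli}, which converts a uniform lower bound on some $\alpha$-th derivative of $t\mapsto\pi_{\Gamma^m}\nabla P(\gamma(t))$ (supplied by Lemma~\ref{Palermo}, and this is exactly where the Bernstein control on $\gamma$ enters) into a lower bound on the function itself on a sub-interval of controlled length; the order $\alpha\le s$ is what fixes the steepness index. Your appeal to a \L ojasiewicz-type inequality made uniform by reparametrizing $\Omega(r,n)$ cannot supply this: reparametrizing the bad set tells you nothing about the growth of the min over the sphere in \eqref{cuore_steepness} for polynomials \emph{outside} it, and uniformity over all subspaces $\Gamma^m$ is precisely what the thalweg construction achieves. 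Once the $s$-vanishing set is in hand, the codimension computation is direct (Corollary~\ref{codimensione}: $\operatorname{codim}\sigma(r,s,m)=s+m$ in $\Polm$; then Lemma~\ref{positive_codimension}: $\operatorname{codim}\ge s-m(n-m-1)$ in $\Pol$), and the maximal defect $\max_m m(n-m-1)$ is indeed $\sim[n^2/4]$.
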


Although Nekhoroshev's Theory has been a classic subject of study in the dynamical systems community for more than forty years, the proof of Theorem \ref{Teorema_uno} has remained poorly understood.  This is possibly due to the fact that such a demonstration does not involve any arguments of dynamical systems, but combines quantitative reasonings of real-algebraic geometry and complex analysis. Moreover, real-algebraic geometry was at a more rudimentary level than nowadays at the time that Nekhoroshev's was writing; for this reason, important properties  of real-algebraic geometry are discovered\footnote{For example, it is remarkable that, up the author's knowledge, a fundamental Bernstein's inequality for algebraic functions is proved for the first-time in Nekhoroshev's work (see \cite{Barbieri_Niederman_2022}). Such a result seems to have passed unnoticed, until it has been widely rediscovered and generalized in the late nineties by Roytwarf and Yomdin in \cite{Roytwarf_Yomdin_1998}, and subsequently developed by several authors.} in \cite{Nekhoroshev_1973} at the same time that they are used to prove Theorem \ref{Teorema_uno}, which makes that work obscure in many parts. In addition, the proofs of some lemmas in that work are sketchy or presented in an old-fashioned way. For these reasons, the first part of this work is devoted to proving and refining Theorem \ref{Teorema_uno} by making use of modern results of real-algebraic geometry. As we will discuss in detail in the sequel, Yomdin's Lemma about the analytic reparametrization of semi-algebraic sets (see \cite{Yomdin_2008}) turns out to be the fundamental ingredient of real-algebraic geometry which is used in the proof of the genericity of steepness.

Moreover, since the definition of steepness is not constructive, it is difficult to directly establish whether a given function is steep or not. Up to the author's knowledge, there are only three articles on this topic (see \cite{Schirinzi_Guzzo_2013}, \cite{Chierchia_Faraggiana_Guzzo_2019}, \cite{Barbieri_2022}) which concern only polynomials of low degree depending on a small number of variables. 
Actually, by developing the arguments used by Nekhoroshev to prove Theorem \ref{Teorema_uno}, it is possible to deduce explicit sufficient algebraic conditions for steepness involving the derivatives  up to an arbitrary order of functions depending of an arbitrary number of variables. This proves fundamental for applications of Nekhoroshev's theory to physical models. The second part of this work is dedicated to this topic. Namely, we will prove refined versions of the Theorems below.

\begin{thm}\label{Teorema_due}
	The semi-algebraic set $\Omega(r,n)$ in Theorem \ref{Teorema_uno} verifies
	\begin{equation}\label{omegaproiettato}
	\Omega(r,n)=\text{closure}\left(\bigcup_{m=1}^{n-1}\text{Proj}_{\cP(r,n)} Z(r,m,n)\right) 
	\end{equation}
	where, for any $m\in \{1,\dots,n-1\}$, $Z(r,m,n)$ is a semi-algebraic set of $\Pol\times \R^K\times\R^n\times \tU(m-1,n)$, $K=K(r,m)$ is a suitable positive integer, and $\tU(m-1,n)$ is the compact $m-1$-dimensional Stiefel manifold in $\R^n$ (see section \ref{definitions} for its definition).
	
	Moreover, for any $m\in\{1,\dots,n-1\}$, the form of $Z(r,m,n)$ can be explicitly computed. 
\end{thm}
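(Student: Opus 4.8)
The plan is to extract the explicit shape of the bad set $\Omega(r,n)$ from the construction performed in the proof of Theorem~\ref{Teorema_uno}, by isolating, for each $m\in\{1,\dots,n-1\}$, the obstruction that the transversality inequality \eqref{cuore_steepness} may fail at level $m$, and by showing that this obstruction is carried by finitely many real parameters cut out by explicit polynomial (in)equalities. Since steepness is local and translation invariant, I would fix $I_0=0$, write $P\in\Pol$ for the $r$-jet of $h$ at the origin, and assume $\nabla P(0)\neq 0$. Applying the analytic characterization of steepness recalled in the introduction (see \cite{Niederman_2006}) to the polynomial $P$, the jet $P$ is not steep at index $m$ around the origin if and only if there exist an $m$-dimensional subspace $\Gamma^m\perp\nabla P(0)$ and a non-constant analytic curve $\gamma$ issuing from $0$ inside $\Gamma^m$ along which $\pi_{\Gamma^m}\nabla P$ — equivalently, by the Remark following Definition~\ref{def steep}, $\nabla\big(P|_{\Gamma^m}\big)$ — vanishes identically.

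The decisive reduction, which is precisely where the effective real-algebraic estimates behind Theorem~\ref{Teorema_uno} (ultimately Yomdin's reparametrization lemma \cite{Yomdin_2008}) enter, is that such an obstruction can always be detected by a \emph{polynomial} curve of degree bounded by an explicit $D=D(r,m)$ satisfying the vanishing condition up to a controlled order $D'=D'(r,m)$ — the passage from such an approximate solution back to a genuine bad arc being exactly what those estimates guarantee, while the reverse implication follows by truncation of the analytic arc. After a suitable orthonormal change of basis in $\Gamma^m$ and reparametrization one may moreover present this curve as $t\mapsto t\,e_1+\sum_{j=2}^{m}\phi_j(t)\,e_j$ over a distinguished unit vector $e_1\in\Gamma^m$, with $e_1,e_2,\dots,e_m$ orthonormal and $\phi_2,\dots,\phi_m$ polynomials of degree $\le D$ vanishing at $0$. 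Thus the whole obstruction at index $m$ is encoded by a datum $\big(P;\,c;\,e_1;\,(e_2,\dots,e_m)\big)$, where $c\in\R^{K}$, $K=K(r,m)\le(m-1)D$, collects the coefficients of $\phi_2,\dots,\phi_m$, the vector $e_1$ lives in $\R^n$, and the orthonormal $(m-1)$-frame $(e_2,\dots,e_m)$ is a point of the Stiefel manifold $\tU(m-1,n)$ — which explains the ambient space $\Pol\times\R^{K}\times\R^n\times\tU(m-1,n)$.

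I would then define $Z(r,m,n)\subset\Pol\times\R^{K}\times\R^n\times\tU(m-1,n)$ to be the locus of data satisfying: $\|e_1\|=1$; $\langle e_1,\nabla P(0)\rangle=0$; $\langle e_j,\nabla P(0)\rangle=0$ and $\langle e_j,e_1\rangle=0$ for $j=2,\dots,m$ (the mutual orthonormality of $e_2,\dots,e_m$ being already built into $\tU(m-1,n)$); the vanishing of the coefficients of $t^0,\dots,t^{D'-1}$ of the polynomials $t\mapsto\big\langle\nabla P\big(t e_1+\sum_{j\ge2}\phi_j(t)e_j\big),e_\ell\big\rangle$ for $\ell=1,\dots,m$, which is a family of polynomial equations in $(P,e_1,\dots,e_m,c)$; together with a non-degeneracy inequality expressing that the curve is non-constant. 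Every one of these is a polynomial equality or inequality whose form is written down explicitly in terms of $r$, $m$, $n$, $D$, $D'$; hence $Z(r,m,n)$ is semi-algebraic with an explicitly computable defining system, which is the last assertion of the theorem.

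Finally, by the Tarski--Seidenberg principle $\text{Proj}_{\Pol}Z(r,m,n)$ is semi-algebraic, and by the preceding discussion it equals the set of $r$-jets (centered at the origin) that are not steep at index $m$; forming the union over $m=1,\dots,n-1$ and taking the closure, as in \eqref{omegaproiettato}, yields a closed semi-algebraic set whose open complement consists of $r$-jets of functions that are steep near $I_0$ with uniform indices — which is precisely the set produced by Theorem~\ref{Teorema_uno}, so the two coincide. I expect the main obstacle to be the effective degree and order bounds $D(r,m),D'(r,m)$: without them $K$ is not finite and the asserted structure fails, and establishing them — equivalently, controlling, in terms of $\deg P$, the complexity of an analytic reparametrization of the variety $\{\,u\in\Gamma^m:\nabla(P|_{\Gamma^m})(u)=0\,\}$ through $0$, and checking that a sufficiently high-order approximate bad arc can be completed to a true one — is the real-algebraic heart inherited from Theorem~\ref{Teorema_uno}. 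A secondary, bookkeeping difficulty is to verify that the final passage to the closure does not destroy the uniformity of the steepness indices on the complement.
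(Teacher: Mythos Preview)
Your explicit description of $Z(r,m,n)$ is essentially right and matches what the paper eventually writes down (Theorem~\ref{ideal} and formulas \eqref{so un cavolo}--\eqref{machenneso} in Corollary~B2). But your justification has a real gap.

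You assert that $\text{Proj}_{\Pol}Z(r,m,n)$ \emph{equals} the set of $r$-jets not steep at index $m$, based on a claimed equivalence between (i) existence of an analytic arc on which $\pi_{\Gamma^m}\nabla P$ vanishes identically, and (ii) existence of a polynomial arc with vanishing to a fixed finite order $D'$. Truncation gives (i)$\Rightarrow$(ii). But (ii)$\Rightarrow$(i) --- that a sufficiently high-order approximate bad arc can be completed to a genuine one --- is not what Yomdin's lemma provides, is not proved in the paper, and is false in general: a polynomial may be $s$-vanishing (Definition~\ref{s-vanishing}) for a given finite $s$ and yet perfectly steep (e.g.\ $z+x^2+y^{s+2}$ on the plane $z=0$, along the curve $x=0$). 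So $\text{Proj}_{\Pol}Z(r,m,n)$ strictly overshoots the non-steep locus, and taking a closure does not fix this. Consequently your final sentence --- that the closed set you produce coincides with the $\Omega(r,n)$ of Theorem~\ref{Teorema_uno} because both have complement equal to the steep jets --- is unjustified on both sides.

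The paper avoids this by reversing the logic. The bad set $\Omega(r,n)$ is \emph{defined} (formula \eqref{Omega}) as the union over $m$ of closures of the projected $s$-vanishing loci; it is by construction larger than the non-steep locus. Identity \eqref{omegaproiettato} then reduces to showing that the defining equations of these loci are explicit --- the content of Theorem~\ref{ideal}. The substantive analytic work is the converse direction: polynomials \emph{outside} this closure are steep with uniform indices (Theorem~\ref{steep_polinomi}). There the paper does not study the zero set $\{u\in\Gamma^m:\nabla(P|_{\Gamma^m})(u)=0\}$ you mention --- which may well be the single point $0$ --- but the thalweg (Definition~\ref{thalweg}), the locus of sphere-wise minima of $\|\pi_{\Gamma^m}\nabla P\|$. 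Yomdin-type reparametrization enters only to show this thalweg contains an analytic arc with uniform analyticity width and Bernstein bounds (Theorem~\ref{arco_minimale}); combined with the stable lower estimate of Lemma~\ref{stima_inf} and Pyartli's lemma, this forces the steepness inequality on the complement. No completion of approximate arcs to genuine ones is ever needed or claimed.
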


\begin{rmk}\label{scriterio}
	Theorem \ref{Teorema_due} is a first example of an explicit criterion for steepness. Infact, as it is known, there exist general algorithms of real-algebraic geometry that allow to compute the explicit form of the projection and of the closure of any given semi-algebraic set (see e.g. \cite{Basu_Pollack_Roy_2006}). Therefore, at least in principle, it is possible to compute the r.h.s. of \eqref{omegaproiettato} - hence the form of $\Omega(r,n)$ - as the expression of $Z(r,m,n)$ is known due to Theorem \ref{Teorema_due}. However, the complexity of the classic algorithms grows double-exponentially in the number of variables, so that they are of little use in practice (see \cite{Heintz_Roy_Solerno_1990}).
\end{rmk}

\begin{rmk}\label{scriteriato}
	
	Alternatively, one could use Theorem \ref{Teorema_due} in order to check if a function $h$ of class $C^{2r-1}$ around a point $I_0$ is steep in the following way. Indicating by $\tT_{I_0}(h,r,n)$ the $r$-jet of $h$ at $I_0$, by \eqref{omegaproiettato} one could check whether there exists $\tau>0$ such that, for any $m\in\{1,\dots,n-1\}$, for any choice of parameters $\beta\in \R^K\times \R^n\times \tU(m-1,n)$, and for any polynomial $S\in \Pol$ which is $\tau$-close to $\tT_{I_0}(h,r,n)$, the pair $(S,\beta)$ lies outside of $Z(r,m,n)$. This would guarantee that $\tT_{I_0}(h,r,n)$ lies outside of $\text{ closure}\left(\bigcup_{m=1}^{n-1}\text{Proj}_{\cP(r,n)} Z(r,m,n)\right)$, so that  \eqref{omegaproiettato} and Theorem \ref{Teorema_uno} would ensure steepness. This is indeed one possibility, and we will make it more explicit in the next section (see Theorem B). However, this criterion involves checking an explicit condition for a non-compact set of parameters (the first components of the vectors $\beta$ above lie in $\R^K\times \R^n$, whereas the remaining ones belong to the compact Stiefel manifold $\tU(m-1,n)$). Nevertheless, as we show below, on "most subspaces" steepness can be checked by making use of criteria involving only parameters belonging to a compact set. 
	
\end{rmk}

Namely, let $h$ be a function of class $C^{2r-1}$ around the origin, verifying $\nabla h(0)\neq 0$. Then,

\begin{thm}\label{Teorema_tre}
	It is possible to find explicit algebraic criteria involving the derivatives of $h$ up to order $r$ that ensure that $h$ is steep in a neighborhood of the origin on the one-dimensional subspaces, with uniform index $\alg_1$ and uniform coefficients $C_1,\delta$. 
	
	Moreover, for any $m\in\{2,\dots,n-1\}$, one has the following properties.
	\begin{enumerate}
		\item 
		$h$ is steep at the origin with index $\alg_m=1$ on the $m$-dimensional subspaces orthogonal to $\nabla h(0)\neq 0$ on which the restriction of the hessian $D^2 h(0)$ is non-degenerate.	
		\item On the $m$-dimensional subspaces of $\nabla h(0)^\perp$ on which the restriction of $D^2 h(0)$ has exactly one null eigenvalue, it is possible to construct explicit algebraic criteria for steepness that involve the $r$-jet of $h$ at the origin and a finite number of real parameters belonging to a compact subset. These criteria can be constructed starting from the explicit form of subset $Z(r,m,n)$ in Theorem \ref{Teorema_due} by the means of algorithms involving only linear operations\footnote{Hence, much simpler algorithms than the general algorithms of real-algebraic geometry.}. 
	\end{enumerate}

\end{thm}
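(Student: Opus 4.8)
The idea is to read the three statements through Niederman's geometric characterisation of analytic steep functions (\cite{Niederman_2006}) — a jet fails to be steep on an affine subspace $I+\Gamma^m$ exactly when some curve through $I$ makes $\nabla(h|_{I+\Gamma^m})$ vanish identically along it — together with the explicit description of $Z(r,m,n)$ supplied by Theorem~\ref{Teorema_due}, in which the Taylor coefficients of such a curve (taken of bounded degree, whence the finite $K=K(r,m)$) form precisely the non-compact $\R^K$-component of the parameters. In each case I would first establish the pointwise steepness inequality \eqref{cuore_steepness} at the origin, on the relevant family of subspaces, with a \emph{uniform} index and a coefficient bounded away from zero in terms of algebraic data of the $r$-jet; the upgrade to a neighbourhood of the origin then follows by continuity and a compactness argument over the family of subspaces, since $\nabla h(0)\ne 0$ and the conditions involved are open. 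Throughout one uses $\|\pi_{\Gamma^m}\nabla h(u)\|=\|\nabla(h|_{\Gamma^m})(u)\|$ (Remark~\ref{viadotto}).

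\emph{One-dimensional subspaces.} For a unit $u\perp\nabla h(0)$ put $g_u(t):=h(tu)$, so $\|\pi_{\R u}\nabla h(tu)\|=|g_u'(t)|$ and $g_u'(0)=0$; the restriction is steep along $\R u$ as soon as $g_u'$ vanishes to finite order at $0$, and that order can exceed $r-1$ only if $\partial_u^j h(0)=0$ for all $j\in\{2,\dots,r\}$. I would then impose the (algebraically certifiable) condition that the semialgebraic set
\[
\{\,u\in\R^n:\ \|u\|^2=1,\ \langle u,\nabla h(0)\rangle=0,\ \partial_u^j h(0)=0\ \ (j=2,\dots,r)\,\}
\]
be empty, which is exactly the complement of (the closure of the projection of) $Z(r,1,n)$. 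Compactness of the sphere section gives $c_0:=\inf_u\max_{2\le j\le r}|\partial_u^j h(0)|>0$, and Taylor's formula with Lagrange remainder (the regularity $C^{2r-1}$ being far more than enough to dominate it) yields $\min(|g_u'(\xi)|,|g_u'(-\xi)|)\ge \tfrac{c_0}{2(r-1)!}\,\xi^{\,r-1}$ for all small $\xi$, uniformly in $u$: steepness on the one-dimensional subspaces with uniform index $\alg_1\le r-1$ and coefficients $C_1\sim c_0$, $\delta$.

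\emph{The case $m\ge 2$.} If $\Gamma^m\perp\nabla h(0)$ and $F:=\pi_{\Gamma^m}\nabla h|_{\Gamma^m}$ has invertible differential $\pi_{\Gamma^m}D^2 h(0)|_{\Gamma^m}$ at $0$, then from $F(0)=0$ and $F(u)=\pi_{\Gamma^m}D^2 h(0)u+o(\|u\|)$ one gets $\|F(u)\|\ge\tfrac12\,\sigma_{\min}\!\big(\pi_{\Gamma^m}D^2 h(0)|_{\Gamma^m}\big)\|u\|$ for $u\in\Gamma^m$ small, so $\eta=\xi$ in \eqref{cuore_steepness} gives steepness at the origin with $\alg_m=1$. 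The central case is $\ker\big(D^2 h(0)|_{\Gamma^m}\big)=\R v$, $\|v\|=1$, so that $D^2 h(0)|_{\Gamma^m}$ restricts to an isomorphism of $v^\perp\cap\Gamma^m$. If a bad curve $\gamma(t)=\sum_{k\ge 1}c_kt^k$ in $\Gamma^m$ existed, differentiating $\pi_{\Gamma^m}\nabla h(\gamma(t))\equiv 0$ once at $t=0$ forces $D^2 h(0)|_{\Gamma^m}c_1=0$, i.e. $c_1\parallel v$: the tangent direction of the curve is pinned down. Differentiating $k$ times, for $2\le k\le d$ with $d=d(r,m)$ the order beyond which the $r$-jet no longer constrains things (this is where the finiteness behind Theorems~\ref{Teorema_uno}--\ref{Teorema_due} enters), gives $D^2 h(0)|_{\Gamma^m}\,c_k+P_k(c_1,\dots,c_{k-1};\,r\text{-jet})=0$ with $P_k$ an explicit polynomial; the scaling/reparametrisation gauge normalises $c_1=v$ and $c_k\perp v$ for $k\ge 2$, after which this equation is solvable if and only if $\langle P_k,v\rangle=0$, and when it is, it determines $c_k$ through an invertible linear system on $v^\perp\cap\Gamma^m$. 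Hence $c_1,\dots,c_d$ are obtained from the $r$-jet and from the compact data $(\Gamma^m,v)$ by Gaussian elimination — \emph{linear operations only} — and the surviving obstruction conditions reduce to the solvability equalities $\langle P_k,v\rangle=0$ ($k=2,\dots,d$): polynomial equalities in the $r$-jet and in parameters ranging over a compact set — the Stiefel manifold $\tU(m-1,n)$ for $\Gamma^m$ and a sphere for $v$ — the non-compact $\R^K$-factor of $Z(r,m,n)$ having been eliminated. Negating these, and turning a finite order of vanishing of the determined curve's defect into the inequality \eqref{cuore_steepness} as in the one-dimensional case, produces the announced criterion, built from $Z(r,m,n)$ by linear algebra alone.

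\textbf{Main obstacle.} The delicate part will be the last case: one must verify that the triangular elimination truly removes \emph{every} non-compact parameter and that the resulting condition is equivalent — not merely sufficient — to membership in the appropriate slice of $Z(r,m,n)$. This calls for a careful bookkeeping of the reparametrisation gauge, of the precise propagation order $d(r,m)$, and of the way the eliminated coefficients $c_k$ depend — rationally, but over a compact range — on the nonzero eigenvalues of $D^2 h(0)|_{\Gamma^m}$; it is exactly the hypothesis of a one-dimensional kernel that keeps the cascade a triangular linear system with a one-dimensional indeterminacy at each step, so that linear operations indeed suffice and the parameter set one is left with is compact.
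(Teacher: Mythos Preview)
Your plan is essentially the paper's approach: the triangular linear elimination of the higher curve coefficients when the restricted Hessian has a one-dimensional kernel is precisely Lemma~\ref{ciaone} and Theorem~\ref{partition}, your obstruction conditions $\langle P_k,v\rangle=0$ being the first row of system~\eqref{struttura}, and your gauge ``$c_1=v$, $c_k\perp v$'' playing the role of the paper's adapted coordinates (paragraph~\ref{appropriata}) together with Lemma~\ref{Odilon Redon}, which shows $\ta=0$ in the $\sL_1$ case.

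Two technical points deserve care. First, your pointwise bound $|g'_u(\xi)|\ge \tfrac{c_0}{2(r-1)!}\xi^{r-1}$ is false as written --- lower-order nonzero terms can be cancelled by higher ones at isolated $\xi$; what \eqref{cuore_steepness} actually asks for is the $\max_{0\le\eta\le\xi}$ version, which the paper obtains via a Pyartli-type measure estimate (Lemma~\ref{Pyartli}) rather than a direct Taylor bound. Second, for $m\ge2$ the step ``finite order of vanishing along the determined curve $\Rightarrow$ inequality~\eqref{cuore_steepness} on $\Gamma^m$'' does not follow from the one-dimensional argument, because the minimum in~\eqref{cuore_steepness} runs over an entire $(m-1)$-sphere in $\Gamma^m$, not along a single curve; Niederman's characterisation is only qualitative here. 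The paper closes this gap through the thalweg construction and its uniform analytic reparametrisation (Theorem~\ref{arco_minimale}), which reduces the sphere-minimum to a single semi-algebraic arc with Bernstein-controlled Taylor coefficients --- and it is precisely this Bernstein bound on the linear term that confines the residual parameter $\ta$ to a compact ball (the constant $\scK$ in Theorem~C2) and yields the explicit index bound $2s_m-1$.
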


Therefore, explicit criteria for steepness at the origin involving only the $r$-jet of $h$ exist in case $m=1$. In case $m\in\{2,\dots,n-1\}$, instead, with the exception of the $m$-dimensional subspaces of $\nabla h^\perp(0)$ on which the restriction of $D^2h(0)$ has two or more null eigenvalues, steepness can be checked by using a criterion which is simpler than those stated in Remarks \ref{scriterio}-\ref{scriteriato}.

 Moreover, for any value of $m\in\{1,\dots,n-1\}$, the Hessian at the origin of a generic function $h$ is non-degenerate on most subspaces of the $m$-dimensional Grassmannian $\tG(m,n)$, as the following result shows. 

\begin{thm}\label{Teorema_quattro}
	Consider an integer $m\in\{2,\dots,n-1\}$. For any bilinear, symmetric, non-degenerate form $\mathsf{B}: \R^n\times \R^n\longrightarrow \R$, the $m$-dimensional subspaces on which the restriction of $\mathsf{B}$ is degenerate are contained in a submanifold of codimension one in the Grassmannian $\tG(m,n)$. 
\end{thm}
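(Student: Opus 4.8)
The plan is to realise the degeneracy locus as the zero set of a single real‑algebraic function on $\tG(m,n)$, and to show that this function is a submersion along the (generic) part of the locus where the restriction of $\mathsf{B}$ has corank exactly one.

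First I would fix an auxiliary Euclidean scalar product on $\R^{n}$ and set $g:\tG(m,n)\to\R$, $g(V):=\det\!\big(\mathsf{B}(e_i,e_j)\big)_{1\le i,j\le m}$, where $(e_1,\dots,e_m)$ is any orthonormal basis of $V$; since an orthonormal change of basis conjugates the Gram matrix by an orthogonal matrix, $g$ is well defined, and it is real‑analytic (indeed polynomial in Plücker‑type charts) on the compact manifold $\tG(m,n)$. By construction the set $\cD_m:=\{V\in\tG(m,n):\ \mathsf{B}|_{V\times V}\ \text{is degenerate}\}$ equals $\{g=0\}$. Moreover $g\not\equiv0$: diagonalising $\mathsf{B}$ in a basis $(f_1,\dots,f_n)$, the plane $\operatorname{span}(f_1,\dots,f_m)$ gives $g=\pm1$; note also that $m\le n-1$ is what makes the complement $W$ below nonzero (for $m=n$ one has $\cD_n=\varnothing$ and nothing to prove).

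The core of the argument is a transversality computation at the corank‑one points. Let $V\in\cD_m$ with $\dim\ker(\mathsf{B}|_{V\times V})=1$, write $\R^{n}=V\oplus W$ with $W$ the orthogonal complement, and put $\mathsf{B}$ in block form $\left(\begin{smallmatrix}P&Q\\Q^{T}&R\end{smallmatrix}\right)$ with respect to orthonormal bases of $V$ and $W$. In the graph chart $A\mapsto V_A:=\{x+Ax:\ x\in V\}$ around $V$ one computes $g(V_A)=\det\!\big(P+QA+A^{T}Q^{T}+A^{T}RA\big)\,(1+O(\|A\|^{2}))$, hence, since $\det P=0$,
\[
dg|_{V}(A')=\operatorname{tr}\!\big(\operatorname{adj}(P)\,(QA'+(A')^{T}Q^{T})\big)=2\,\operatorname{tr}\!\big(\operatorname{adj}(P)\,Q\,A'\big),
\]
where symmetry of $\operatorname{adj}(P)$ is used. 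As $P$ is symmetric of corank one, $\operatorname{adj}(P)=\lambda\,vv^{T}$ with $0\ne v\in\ker P$ and $\lambda\ne0$, so $\operatorname{adj}(P)Q=\lambda\,v\,(Q^{T}v)^{T}$. The vector $\mathbf{v}\in V$ with coordinates $v$ spans $\ker(\mathsf{B}|_{V\times V})$, so $\mathsf{B}(\mathbf{v},\cdot)$ vanishes on $V$; if also $Q^{T}v=0$, then $\mathsf{B}(\mathbf{v},\cdot)$ vanishes on $W$ as well, hence on all of $\R^{n}$, contradicting the non‑degeneracy of $\mathsf{B}$. Thus $\operatorname{adj}(P)Q\ne0$ and $dg|_{V}\ne0$, so the corank‑one stratum $\cD_m^{1}:=\{V\in\cD_m:\ \operatorname{corank}(\mathsf{B}|_{V\times V})=1\}$ is a smooth codimension‑one submanifold of $\tG(m,n)$, open and dense in $\cD_m$.

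Finally I would treat the exceptional stratum $\cD_m^{\ge2}:=\cD_m\setminus\cD_m^{1}$, on which $g$ need not be a submersion. Here I would invoke Witt's theorem: the orthogonal group $\mathrm{O}(\mathsf{B})$ has finitely many orbits on $\tG(m,n)$, indexed by the isometry type of $\mathsf{B}|_{V\times V}$, and $\cD_m^{\ge2}$ is the union of those orbits on which the corank is $\ge2$; a dimension count for these homogeneous spaces shows each has codimension $\ge2$ in $\tG(m,n)$ and lies in $\overline{\cD_m^{1}}$. Hence $\cD_m=\overline{\cD_m^{1}}$ is a real‑algebraic hypersurface of $\tG(m,n)$, smooth of codimension one along the dense open subset $\cD_m^{1}$, which yields the statement. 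I expect this last step — verifying that the corank‑$\ge2$ locus is of codimension $\ge2$ and is contained in the closure of $\cD_m^{1}$ — to be the delicate point; the transversality computation, where the non‑degeneracy of $\mathsf{B}$ enters essentially, is the structural heart and is comparatively straightforward.
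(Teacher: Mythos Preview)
Your proof is correct, but it takes a different and more elaborate route than the paper.

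Both arguments begin the same way: exhibit the degeneracy locus as the zero set of a single determinant function $g$ on $\tG(m,n)$ and verify $g\not\equiv 0$. From there the paths diverge. The paper works in explicit affine charts $\mathscr{F}_J:E_J^m\to U_J$ adapted to a basis diagonalising $\mathsf{B}$, writes the degeneracy condition as $\det(\mathscr{M}_\mathsf{B}+\mathbb I_m)=0$ for a matrix $\mathscr{M}_\mathsf{B}(w_1,\dots,w_m)$, checks non-vanishing at $w_i=0$, and then simply invokes an elementary inductive lemma (the zero set of any nonzero polynomial in $\R^N$ is contained in a codimension-one submanifold). No transversality, no stratification, no group action is used; in particular the non-degeneracy of $\mathsf{B}$ enters only to ensure the chart is centred at a non-degenerate subspace.

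Your approach gives strictly more geometric information: the adjugate computation singles out exactly where $dg\neq 0$ and shows that non-degeneracy of $\mathsf{B}$ forces the corank-one stratum to be genuinely smooth of codimension one. This is a nice structural observation the paper does not make. The price is the corank~$\ge 2$ tail, which you handle via Witt's theorem and a homogeneous-space dimension count you correctly flag as the delicate step. Note, however, that for the statement as phrased you do not actually need this: once $g$ is a nonzero real-analytic function on a connected manifold, its zero locus is automatically a proper analytic subset, and the same induction-on-degree lemma the paper uses (applied in your graph charts, where $g$ is polynomial) already gives containment in a codimension-one submanifold without any stratification. So your transversality computation, while illuminating, is not logically required, and the Witt step can be bypassed entirely.
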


\subsection{Informal presentation of the proofs}

Roughly speaking, the proof of Theorem \ref{Teorema_uno} is split into two parts:

\begin{itemize}
	\item General considerations of semi-algebraic geometry allow to prove that the complicated condition \eqref{cuore_steepness} arising in the definition of steepness is an open property in the space of polynomials $\Pol$. Namely, if \eqref{cuore_steepness} holds for a given polynomial $Q\in \Pol$, then it holds also in a neighborhood of $Q$ with uniform indices $\alg_1,\dots,\alg_m$ and uniform coefficients $C_1,\dots,C_m,\delta$.
	
	\item Also, condition \eqref{cuore_steepness} is verified for given values of $\alg_1,\dots,\alg_{n-1}$ and for some $C_1,\dots,C_{n-1},\delta$ if the Taylor polynomial at $I_0$ of the studied function $h$ lies outside of the closure of a set of polynomials whose coefficients satisfy a certain number of degeneracy conditions. A detailed analysis of these conditions shows that, for sufficiently high $\alg_1,\dots,\alg_{n-1}$, they only admit a non-generic set of solutions. 
\end{itemize}

In the present work, we have results on the two sides of the proof.

\medskip 

\subsubsection{Reparametrization of semi-algebraic sets and Bernstein's inequality}

We revisit Nekhoroshev's reasonings of semi-algebraic geometry under the light of more recent results in the field.

Due to \eqref{cuore_steepness}, a central point to check steepness of a given function $h$ at a point $I\in \R^n$ consists in ensuring a minimal growth of the projection of its gradient on any affine subspace orthogonal to $\grad h(I)\neq 0$. For a fixed affine subspace $I+\Gamma$ equipped with local coordinates and with the induced euclidean metric, by Remark \ref{viadotto} the projection of $\grad h(I)$ on $\Gamma$ corresponds to the gradient of the restriction $h|_{I+\Gamma}$ expressed in the local coordinates. Hence, one is led to study the locus of minima of $||\nabla h|_{I+\Gamma}||$. By the above considerations, without entering into too many technicalities, a crucial step in Nekhoroshev's proof of the genericity of steepness consists in considering, for any fixed polynomial $P\in \R[X_1,\ldots ,X_m]$, the semi-algebraic set - called thalweg nowadays (see \cite{Bolte_Daniilidis_Ley_Mazet_2010}) - determined by :
\begin{equation}\label{Thalweg}
{\mathbb{R}}^m\supset {\mathcal{T}}_P :=\big{\{ }u\in\R^m|\ \vert\vert\nabla P(u)\vert\vert\leq\vert\vert\nabla P(v)\vert\vert\ \forall v\in\R^m \text{  s.t. } \vert\vert u\vert\vert =\vert\vert v\vert\vert\big{\} }\ .
\end{equation}

Nekhoroshev shows that $\cT_P$ contains the image of a semi-algebraic curve\footnote{I.e. a curve having semi-algebraic graph, see also Definition \ref{funzione_semi_algebraica}.} $\gamma$ which admits a holomorphic extension with the exception of a finite set of singular complex points whose cardinality depends only on the degree of $P$ and on the number of variables. In particular, one can ensure the existence of a uniform real interval of analyticity and of a uniform complex analyticity width for the curve $\gamma$, independently on the choice of the polynomial $P\in \Polm$. More specifically, the graph of $\gamma$ can be parametrized by analytic-algebraic\footnote{I.e. analytic maps whose graph solves a non-zero polynomial of two variables.} maps, and the existence of a Bernstein's-like inequality controlling uniformly the growth of this kind of functions in the complex plane ensures uniform upper bounds on the derivatives of these charts (see \cite{Roytwarf_Yomdin_1998}, \cite{Yomdin_2008}, \cite{Yomdin_2011}, \cite{Barbieri_Niederman_2022} and references therein for a modern presentation).   

Actually, this result about the thalweg in \cite{Nekhoroshev_1973} is a particular case of a general theorem due to Yomdin \cite{Yomdin_2008} about analytic reparametrizations of semi-algebraic sets (the finitely-differentiable case was firstly stated by Yomdin and Gromov in refs.\cite{Yomdin_1987},\cite{Gromov_1987} and then proved by Burguet in \cite{Burguet_2008}). Generally speaking, the reparametrization of a semi-algebraic set $A$ is a subdivision of $A$ into semi-algebraic pieces $A_j$ each of which is the image of a semi-algebraic function\footnote{That is, a function whose graph is a semi-algebraic set, see also Definition \ref{funzione_semi_algebraica}.} of the unit cube.  The uniform control on the parametrization of the curve $\gamma$ is unavoidable in \cite{Nekhoroshev_1973}, since it ensures that - for a smooth function - steepness is an open property. 

Moreover, it is proved that the coefficients of the Taylor expansions of non-steep functions satisfy suitable algebraic conditions (one has a "finite-jet" determinacy of steepness). In this way, the study of the genericity of steepness is reduced to a finite-dimensional setting which involves polynomials of bounded order and this is another crucial step in order to prove the genericity. 

\medskip

It is worth adding some remarks about the fact that Nekhoroshev proves a kind of Bernstein's inequality for algebraic functions (see \cite{Nekhoroshev_1973}, Lemma 5.1, p.446). Namely, Nekhoroshev proves that an algebraic function which is real-analytic over a real interval $I$ admits a bound on its growth on the complex plane which only depends on its maximum over $I$ and on a constant depending on the degree of the polynomial solving its graph and on the size of its complex domain of holomorphy. This result is proved by exploiting the properties of algebraic curves of complex polynomials in two variables, and by making an intensive use of complex analysis. The original statements are difficult to disentangle from the context of the genericity of steepness and the proofs are very sketchy. This is different from the strategy used by Roytwarf and Yomdin (see \cite{Roytwarf_Yomdin_1998},\cite{Yomdin_2011} and references therein) which relies on arguments of analytic geometry. Since we have not been able to find any reference that shows Nekhoroshev's proof of Bernstein's inequality in detail except for the original paper \cite{Nekhoroshev_1973}, we have clarified and extended Nekhoroshev's reasonings in \cite{Barbieri_Niederman_2022}, and this allowed to obtain a simpler proof on the existence of a Bernstein's inequality for algebraic functions. 
\medskip

It is also worth mentioning that, in connection with arithmetic, the steepness condition is introduced to prevent the abundance of rational vectors on certain sets and it can be noticed that deep applications of the controlled analytic reparametrization of semi-algebraic sets yield bounds on the number of integer points in semi-algebraic sets (see \cite{Binyamini_Novikov_2019} and \cite{Cluckers_Pila_Wilkie_2020}). In the future, these ideas may help to spread light on the connection between the stability of nearly-integrable Hamiltonian systems and the arithmetic properties of semi-algebraic sets. 

We also mention that, in the study of PDEs, the Yomdin-Gromov's algebraic lemma was used by Bourgain, Goldstein, and Schlag \cite{Bourgain_Goldstein_Schlag_2002} to bound the number of integer points in a two-dimensional semi-algebraic set.

\subsubsection{Degeneracy condition}

We describe heuristically the second part of the proof of Theorem \ref{Teorema_uno}. To make things simple, we restrict this informal discussion to the case of a real-analytic function $h$ around the origin. By formula \eqref{cuore_steepness}, if $h$ is non-steep at the origin, then for some $m\in \{1,\dots,n-1\}$ there exists a $m$-dimensional subspace $\Gamma^m$ and a curve $\gamma\subset\Gamma^m$ starting at the origin on which the projection $(\pi_{\Gamma^m} \grad h)|_\gamma$ is identically null. Assuming that $\gamma$ is sufficiently regular, this means that $(\pi_{\Gamma^m} \grad h)|_\gamma$ has a zero of infinite order at the origin. It can also be shown (see Theorem \ref{arco_minimale}) that the the curve $\gamma$ on which such a condition is satisfied must possess a precise form. By these arguments, one can write down explicitly the equations imposing to the derivatives of the function $(\pi_{\Gamma^m} \grad h)|_\gamma$ to be all identically null. Moreover, by performing complicated computations it is possible to check that these equations are all linearly independent. Then, estimates on the codimension of a projected set show that, if one bounds the order of the derivatives that are being considered in the equations by a positive integer $r\ge 2$, the Taylor polynomials of non-steep functions belong to a semi-algebraic set of positive codimension in the space of polynomials $\Pol$. It is this kind of computations - which are expressed explicitly for the first time in this work - that allow to prove Theorem \ref{Teorema_due}.

Moreover, by construction, the equations that we are considering depend also on the Taylor coefficients of the curve $\gamma$ and on the vectors spanning the considered subspace $\Gamma^m$. This explains the presence of the space of parameters $\R^K$ and of the Stiefel manifold in the statement of Theorem \ref{Teorema_due}. On the one hand, by suitably exploiting the form of the equations, one can prove Theorem \ref{Teorema_tre}. On the other hand, Theorem \ref{Teorema_quattro} is independent and its proof relies on the construction of a suitable system of coordinates for the Grassmannian. 

\subsubsection{Structure of the work}

This work is organized as follows: section \ref{definitions} sets the main notations and definitions, whereas the main results (refined versions of Theorems \ref{Teorema_uno}-\ref{Teorema_due}-\ref{Teorema_tre}-\ref{Teorema_quattro}) are stated in section \ref{results}. Section \ref{thalweg_sec} construction of the thalweg and the reparametrization of semi-algebraic sets, whereas section \ref{rs_vanishing_polynomials} is devoted to the study of the degeneracy condition described in the paragraph  above. Section \ref{genericity} puts together the results of sections \ref{thalweg_sec} and \ref{rs_vanishing_polynomials} in order to prove the genericity of steepness. Finally, sections \ref{Prova Th B}-\ref{Theorem C prova}-\ref{Prova C1-C4} contain the proof of the explicit criteria for steepness. 

%\begin{thm}\label{Teorema_tre}
%	When in Theorem \ref{Teorema_uno} one has $r\in\{2,3,4\}$, the set $\Omega(r,n)$ is closed and its form can be explicitly computed starting from the form of the sets $Z(r,m,n)$ in Theorem \ref{Teorema_due} by the means of algorithms involving only linear operations. 

%	When $r\ge 5$, one has the disjoint union
%	\begin{equation}
%	\Omega(r,n)=\Omega_1(r,n)\bigsqcup \Omega_2(r,n)
%	\end{equation}
%	where $\Omega_1(r,n)$ is closed and its form can be explicitly computed starting from the form of the sets $Z(r,m,n)$ in Theorem \ref{Teorema_due} by the means of algorithms involving only linear operations. 

%	Moreover, in $\Pol$ one has
%	$$
%	\text{codim } \Omega_2(r,n)>\text{codim } \Omega_1(r,n)=\text{codim } \Omega(r,n)\ .
%	$$
%	\end{thm}

\section{Main notations and definitions}\label{definitions}
\subsection*{Norms}
For any $n\in \N^\star$, and for any $\nu\in \N^\star\cup\{\infty\}$, we denote by $||\cdot||_\nu$ the standard $\ell^\nu$-norm in $\R^n$.

For any integer $q\geq 1$, and for any open subset $D$ of $\R^n$,
the symbol $C^{q}(D)$ indicates the set of $q$-times continuously differentiable maps 
$f:D\to \R$. Moreover, we indicate by $C_b^q(D)$ the subset of $C^q(D)$ containing those functions $f$ satisfying
\begin{equation}\label{norma texas}
\norma{f}_{C^q(D)} := \sup_{\substack{\alpha\in \N^n\\ \abs{\alpha} \le q}}\, \sup_{x\in D} \abs{\partial^\alpha f(x)}<+\infty\ .
\end{equation}
In particular, $\big(C_b^q(D),\norma{\cdot  }_{C^q(D)}\big)$ is a Banach space with multiplicative norm\footnote{That is, satisfying an inequality
	of the form $\norm{fg}\leq K\norm{f}\,\norm{g}$ for a suitable constant $K$.}. 
\subsection*{Sets}
In the sequel, we will make use of the following notations:
\begin{itemize}
	\item For any $d>0$ and for any $f\in C_b^q(D)$, the symbol $\boldsymbol{\fB}^q(f,d,D)$ indicates the infinite-dimensional open ball of radius $d$ centered at $f$ for the norm \eqref{norma texas};
	\item $\cD_\rho(z_0)$ indicates the open complex disc of radius $\rho>0$ centered at $z_0\in\C$;
	\item $B^n(I,R)$ indicates the real ball of radius $R>0$ centered at $I\in\R^n$.
	\item For any connected set $\cA\subset \C$, we denote the complex polydisk of width $r$ around $\cA$ by
	$$
	(\cA)_r:=\left\{z\in \C\,\text{ s.t. }\, ||z-\cA||_2:=\inf_{z'\in\cA}||z-z'||<r\right\}\ .
	$$
\end{itemize}

\subsection*{Notations of real-algebraic geometry}

For any pair $(r,n)$ of positive integers, and for any function $h$ of class $C^r$ in a neighborhood of some point $I_0\in\R^n$, we denote by
\begin{itemize}
	\item $\Pol\subset \R[X_1,\dots,X_n]$ the subspace of polynomials over the real field in $n$ real variables with zero constant term and whose degree is bounded by $r$;
	\item $\Pollo\subset \Pol$ is the subset of those polynomials $Q$ that verify $\grad Q(0)\neq 0$;
	\item $\tT_{I_0}(h,r,n)\in\Pol$ the Taylor polynomial at order $r$ of the function $h(I)-h(I_0)$ centered at $I_0$.
\end{itemize}

\medskip 

Now, let $k,n$ be positive integers, with $k\le n$. 

\begin{itemize}

	\item We indicate by $\tG(k,n)$ the $k$-dimensional Grassmanniann in $\R^n$, i.e. the compact manifold of $k$-dimensional linear subspaces in $\R^n$. 
	
	\item We also denote by $\tU(k,n)$ the compact $k$-dimensional Stiefel manifold in $\R^n$, that is the manifold of ordered orthonormal $k$-tuples of vectors in $\R^n$.

\end{itemize}

For any set $\cA\subset\R^n\times\R^m$, we indicate by $\Pi_{x}\cA$ its projection onto the first $n$ coordinates, that is the set
$$
\Pi_{x}\cA:=\{x\in\R^n:\exists y\in\R^m | (x,y)\in \cA\}\ .
$$
Finally, let $m,n$ be positive integers satisfying $m\le n$, and let $\{u_1\dots,u_m\}$ be a set of linearly independent vectors in $\R^n$. For each $i\in\{1,\dots,n\}$ and $j\in\{1,\dots,m\}$, we denote the $i$-th component of the vector $u_j$ by $u_j^i$. For any multi-index $\mu=(\mu_1,\dots,\mu_m)\in \N^m$, we set $|\mu|:=||\mu||_1$. Given $I_0\in \R^n$ and a function $h$ of class $C^{|\mu|}$ in a neighborhood of $I_0$, we also denote by
\begin{align}\label{forma}
\begin{split}
&h_{I_0}^{|\mu|}\big[\stackrel{ \mu_1 }{\overbrace{u_1}},\dots,\stackrel{ \mu_m }{\overbrace{u_m}}\big]:=\\
&\sum_{\substack{i_{11},\,\dots\,,i_{1\mu_1}=1\\\dots\\i_{m1},\,\dots\,,i_{m\mu_m}=1}}^n \frac{\partial^{|\mu|} h(I_0)}{\partial I_{i_{11}}\dots\partial I_{i_{1\mu_1}}\dots\partial I_{i_{m1}}\dots\partial I_{i_{m\mu_m}}}\,u_1^{i_{11}}\dots u_1^{i_{1\mu_1}}\dots u_m^{i_{m1}}\dots u_m^{i_{m\mu_m}}
\end{split} 
\end{align}
the $\mu$-th order multilinear form associated to the $\mu$-th coefficient of the Taylor expansion around $I_0$ of the restriction of $h$ to $\text{Span }(u_1,\dots,u_m)$.

\section{Main results}\label{results}

\subsection{Genericity of steepness}

As we discussed in Theorem \ref{Teorema_uno} in the Introduction, the steepness property is generic, both in measure and topological sense, in the space of jets of functions of sufficiently high regularity. In this paragraph, we will give a more quantitative version of this result. Namely, the statement below is a refined version of Nekhoroshev's Theorem on the genericity of steepness, which can be found in refs. \cite{Nekhoroshev_1973}-\cite{Nekhoroshev_1979}. 
\begin{thm*}[\bf A]\label{Gen_steepness}
	Let $r,n\geq 2$ be two integers, and let $\ts:=(s_1,\dots,s_{n-1})\in \N^{n-1}$ be a vector satisfying $1\le s_m\le r-1$ for all $m=1,\dots,n-1$. 
	
	There exists a closed semi-algebraic subset $\OmegoneI$ of $ \Pol$ such that, for any $I_0\in \R^n$, for any real number $\varrho>0$, for any open, bounded domain $\mathscr{D}\subset C_b^{2r-1}(\overline B^n(I_0,\varrho))$, 
	and for any function $h$ verifying  \begin{enumerate}
		\item $h\in \mathscr{D}$\ ,
		\item $\nabla h(I_0)\neq 0$\ ,
		\item $
		\left|\left|\tT_{I_0}(h,r,n)-\OmegoneI\right|\right|_\infty:=\displaystyle \inf_{ Q\in \OmegoneI}||\tT_{I_0}(h,r,n)- Q||_\infty>\tau>0 \ ,
		$ 
	\end{enumerate}
	one can introduce positive constants $\overline \epsilon=\overline \epsilon(r,\ts,\tau,n)$, $R=R(r,\ts,\tau,n,\varrho)$, $C_m=C_m(r,s_m,\tau,n)$ - where $m=1,\dots,n-1$ - and $\delta=\delta(r,\ts,\tau,n,\varrho,\mathscr{D})$ so that any function $$
	f\in \mathscr{D}\quad , \qquad f\in \fB^{2r-2}(h,\eps,\overline B^n(I_0,\varrho))\qquad \eps\in[0,\overline \epsilon]
	$$
	is steep in $B^n(I_0,R)$, with steepness coefficients $C_m,\delta$ and with steepness indices bounded by 
	\begin{equation}\label{bound}
	\overline \alg_m(s):=
	\begin{cases}
	s_m\quad , \qquad &\text{ if $m=1$ }\\
	2s_m-1\quad, \qquad &\text{ if  $m\ge 2$ }\ .\\
	\end{cases}
	\end{equation}
	Moreover, in $\Pol$ one has
	\begin{equation}\label{codimension}
	\text{  codim } \Omegone \ge \max\left\{0,\min_{m\in\{1,\dots,n-1\}}\{s_m-m(n-m-1)\}\right\}\ .
	\end{equation}
\end{thm*}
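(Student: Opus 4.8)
The plan is to construct the semi-algebraic set $\OmegoneI$ explicitly out of the data appearing in Theorem \ref{Teorema_due}, and then to verify separately that staying $\tau$-far from it forces uniform steepness and that its codimension obeys \eqref{codimension}. For each $m\in\{1,\dots,n-1\}$ I introduce the semi-algebraic set $Z(r,m,n)\subset\Pol\times\R^K\times\R^n\times\tU(m-1,n)$, $K=K(r,m)$, whose points record a polynomial $Q\in\Pol$, an $m$-plane $\Gamma^m$ orthogonal to the nonzero gradient $\nabla Q(0)$ (encoded by the Stiefel frame together with the auxiliary $\R^n$-parameter), and a \emph{minimal arc} $\gamma\subset\Gamma^m$ through the origin in the normal form of Theorem \ref{arco_minimale} (encoded by its finitely many Taylor data in $\R^K$), subject to the requirement that the restricted gradient $\nabla(Q|_{\Gamma^m})$ vanish along $\gamma$ up to order $s_m$. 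Setting $\OmegoneI:=\overline{\bigcup_{m=1}^{n-1}\text{Proj}_{\Pol}Z(r,m,n)}$, the Tarski--Seidenberg principle (projections of semi-algebraic sets are semi-algebraic) together with the fact that the closure of a semi-algebraic set is semi-algebraic gives at once that $\OmegoneI$ is a closed semi-algebraic subset of $\Pol$.

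Next I would prove the steepness statement. Assume $h$ satisfies (1)--(3). Since $\OmegoneI$ is closed and the $r$-jet depends continuously on the base point, the $r$-jet of $h$ at every $I$ in a small enough ball $B^n(I_0,R)$ is still at distance $>\tau/2$ from $\OmegoneI$, so it suffices to obtain \eqref{cuore_steepness} at each such $I$, for every $m$ and every $m$-plane $\Gamma^m\perp\nabla h(I)$. By Remark \ref{viadotto} this amounts to bounding $\|\nabla(h|_{I+\Gamma^m})\|$ from below along the thalweg \eqref{Thalweg} of (the $r$-jet of) $h|_{I+\Gamma^m}$. The reparametrization of Section \ref{thalweg_sec} --- the relevant case of Yomdin's Lemma \cite{Yomdin_2008} --- produces the minimal arc $\gamma$ of Theorem \ref{arco_minimale} inside that thalweg, with real interval of analyticity and complex analyticity width \emph{uniform} in $I$ and in $\Gamma^m$ (depending only on $r$, $n$ and a bound for $h$ on $\overline B^n(I_0,\varrho)$), and the Bernstein-type inequality for algebraic functions of \cite{Barbieri_Niederman_2022} gives uniform bounds on the derivatives of its parametrization --- in particular the arc's normal-form data lie in a fixed compact set. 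A quantitative semi-algebraic ({\L}ojasiewicz-type) estimate then turns the hypothesis ``$r$-jet $\tau/2$-far from $\text{Proj}_{\Pol}Z(r,m,n)$'' into a lower bound: along any such arc, one of the first $s_m$ Taylor coefficients of $t\mapsto\nabla(h|_{I+\Gamma^m})(\gamma(t))$ has size bounded below in terms of $\tau$ alone. Together with the uniform control on $\gamma$, this gives, at a suitable radius $\eta\le\xi$, the bound $\min_{\|u\|=\eta}\|\pi_{\Gamma^m}\nabla h(I+u)\|\gtrsim\xi^{\overline\alg_m(s)}$, so that \eqref{cuore_steepness} holds with index $\overline\alg_m(s)$ and coefficient $C_m=C_m(r,s_m,\tau,n)$ for all $\xi\in(0,\delta]$. (For $m=1$ the thalweg is a ray, no composition with a nontrivial arc occurs, and the index is $s_m$; for $m\ge2$ the squared quantities $\|\nabla P\|^2$ and $\|u\|^2$ underlying the thalweg and minimal-arc construction produce the factor $2$ in \eqref{bound}.) Only finitely many derivatives of $h$ enter and all constants are uniform, so the estimates survive --- after shrinking $R,\delta,\overline\epsilon$ if needed --- for every $f\in\mathscr D$ with $\|f-h\|_{C^{2r-2}(\overline B^n(I_0,\varrho))}\le\eps\le\overline\epsilon$; the dependence of $\delta$ on $\mathscr D$ serves to fix a uniform $C^{2r-1}$-bound for the family, and the regularity assumptions $h,f\in C^{2r-1}$ with perturbations measured in $C^{2r-2}$ are precisely what the minimal-arc construction and the control of the Taylor remainder $h-\tT_{I_0}(h,r,n)$ against the steepness bound over the whole range $\xi\in(0,\delta]$ require.

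For the codimension, since $\dim\overline S=\dim S$ and $\dim\bigcup_m S_m=\max_m\dim S_m$ for semi-algebraic sets, it suffices to show $\text{codim}_{\Pol}\text{Proj}_{\Pol}Z(r,m,n)\ge s_m-m(n-m-1)$ for each $m$. Here I would use the explicit form of the defining equations of $Z(r,m,n)$ from Section \ref{rs_vanishing_polynomials}: the vanishing of the first $s_m$ Taylor coefficients of the restricted gradient along the normal-form arc gives a system that is \emph{linearly independent} in the coefficients of $Q$, while the remaining equations of the normal form pin down the $\R^K$- and $\R^n$-parameters; a fibre-dimension count for $\text{Proj}_{\Pol}$ then shows that the projection drops the codimension by at most the dimension of the space of $m$-planes orthogonal to $\nabla Q(0)$ in $\R^n$, i.e.\ by $m(n-m-1)$. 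Taking the union over $m$ and then the closure preserves this, which gives \eqref{codimension} (and one checks it becomes positive as soon as $r\gtrsim[n^2/4]$).

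I expect the codimension step to be the main obstacle: establishing the exact normal form of the minimal arc (Theorem \ref{arco_minimale}) and then checking that the resulting vanishing equations are genuinely linearly independent in the coefficients of $Q$ is a delicate and lengthy computation --- it is the technical heart of the argument and the place where Nekhoroshev's original treatment was sketchiest. A second, still substantial, difficulty is securing the thalweg reparametrization with constants uniform over the non-compact family of all $m$-planes and all degree-$\le r$ polynomials in a bounded region --- this is exactly what makes steepness an open condition, hence stable under perturbation --- which relies on the uniform Bernstein-type inequality, together with propagating this uniformity through the reduction from the full $C^{2r-1}$ function to its $r$-jet and the bookkeeping of the dependencies of $\overline\epsilon,R,C_m,\delta$.
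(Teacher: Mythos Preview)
Your overall architecture matches the paper's: define $\OmegoneI$ as the closure of $\bigcup_m \Pi_{\Pol} Z(r,m,n)$, use the thalweg/minimal-arc machinery with uniform Bernstein control to reduce the steepness inequality to a lower bound on the $s_m$-jet of $\nabla(h|_{\Gamma^m})\circ\gamma$, and obtain \eqref{codimension} by a fibre-dimension count over the Grassmannian of admissible $\Gamma^m$. The codimension sketch is in line with the paper's Lemma~\ref{positive_codimension} and Corollary~\ref{codimensione}.

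There is, however, a genuine gap in your steepness argument, and it shows up precisely where you explain the index $2s_m-1$. Your claim that a \L{}ojasiewicz-type estimate gives a lower bound on some Taylor coefficient of $t\mapsto\nabla(h|_{\Gamma^m})(\gamma(t))$ ``in terms of $\tau$ alone'' is not correct: the paper's quantitative lower bound (Lemma~\ref{stima_inf}) is obtained from the \emph{affine} structure of the $s$-vanishing equations in the polynomial coefficients, and for $m\ge2$ it carries an unavoidable denominator $1+C_2\max_{\ell,\alpha\ge2}|a_{\ell\alpha}|$ involving the higher Taylor coefficients of the arc. By Bernstein (point~5 of Theorem~\ref{arco_minimale}) these satisfy $|a_{\ell\alpha}|\lesssim\lambda^{1-\alpha}$, so the lower bound degrades like $\lambda^{s_m-1}$ as $\lambda\to0$; this is the source of the extra $s_m-1$ in the exponent, not the squaring of $\|\nabla P\|$ and $\|u\|$ in the thalweg definition. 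A second missing ingredient is the passage from ``some derivative of order $\le s_m$ bounded below on $\tI_\lambda$'' to the steepness inequality itself: the paper uses Pyartli's measure estimate (Lemma~\ref{Pyartli}) together with a B\'ezout-type count of intersections of the graphs of the $g_\ell^{(\alpha)}$ to extract a sub-interval on which $|g_{\bar\ell}|$ itself is $\gtrsim\lambda^{s_m}$ (for $m=1$) or $\gtrsim\lambda^{2s_m-1}$ (for $m\ge2$). Without this step you have a lower bound on a derivative, not on $\|\pi_{\Gamma^m}\nabla h\|$. Relatedly, the interval $\tI_\lambda$ need not contain the origin, so the $s$-vanishing analysis must be done after translating to a point $t^\star\in\tI_\lambda'$ (this is Lemma~\ref{Palermo}); your sketch elides this translation.
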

With the setting of Theorem \eqref{Gen_steepness}, we give the following
\begin{defn}\label{ordine_steepness}
	A function $f\in C_b^{2r-1}(\overline B^n(I_0,\varrho))$ satisfying the hypotheses of Theorem A is said to be steep at order $r$ at the point $I_0$ for the vector $\ts$. The functions $g\in \mathscr{D}$ verifying
	$$
	\grad g(I_0)\neq 0\quad , \qquad 	\tT_{I_0}(g,r,n) \in \bigcup_{\substack{\ts\in \N^{n-1}\\1\le s_m\le r-1\\ \forall m\in\{1,\dots,n-1\}}}\Pol \backslash \OmegoneI 
	$$ 
	are said to be steep at order $r$ at the point $I_0$. 
\end{defn}

With respect to the original result by Nekhoroshev, a few aspects are refined or clarified in Theorem \ref{Gen_steepness}
\begin{enumerate}
	\item The dependence of the steepness coefficients $C_m$, $m\in\{1,\dots,n-1\}$, and $\delta$ on the distance $\tau$ to the bad set $\OmegoneI$, is made explicit. In particular, as it will be shown in section \ref{genericity}, for fixed $n,r,\ts$, when $\tau\rightarrow 0$, then both $\delta\rightarrow 0$ and $C_m\rightarrow 0$ for all $m=1,\dots,n-1$, whereas the bounds $\alg_m$ on the steepness indices are left unchanged. Hence, when $\tau\rightarrow 0$, steepness may "break down" due to the steepness coefficients tending to zero (whereas the steepness indices of those functions whose $r$-jet lies outside of $\OmegoneI$ stay uniformly bounded).
	
	It is important to stress that the above reasonings do not necessarily imply that a function $g\in C_b^{2r-1}(\overline B^n(I_0,\varrho))$ whose $r$-jet satisfies $\tT_{I_0}(g,r,n)\in \OmegoneI$ - for some vector $\ts\in \N^{n-1}$ as the one in Theorem A - is non-steep. For example, if for two vectors $\ts,\ts'\in \N^{n-1}$, $\ts'\neq \ts$, having the same properties of the one in Theorem A, one has  $||\tT_{I_0}(g,r,n)- \OmegoneIsprimo||_\infty>0$,  $||\tT_{I_0}(g,r,n)- \OmegoneI||_\infty=0$, then $g$ is steep at order $r$ at $I_0$ for the vector $\ts'$ but not for the vector $\ts$. Hence, it admits different bounds on the steepness indices than the functions whose jets lie outside of $\OmegoneI$. Therefore, the only result that one can infer from the relation $\tT_{I_0}(g,r,n)\in \OmegoneI$ is that, in case $g$ were steep, its steepness indices would admit a different upper bound than the one in \eqref{bound}. 
	
	By Theorem A, definition \ref{ordine_steepness}, and by the above discussion,  the set 
	\begin{equation}
	\bigcap_{\substack{\ts\in \N^{n-1}\\1\le s_m\le r-1\\ \forall m\in\{1,\dots,n-1\}}}\OmegoneI \subset \Pol 
	\end{equation}
	contains the $r$-jets of all $C_b^{2r-1}$ functions around the non-critical point $I_0$ which are non-steep at order $r$ at $I_0$. In the same way, when $r\rightarrow+\infty$, the Taylor coefficients of all non-steep analytic functions at the non-critical point $I_0$ belong to the set
	\begin{equation}\label{infinito}
	\omega_n(I_0):=	\bigcap_{r=2}^\infty \bigcap_{\substack{\ts\in \N^{n-1}\\1\le s_m\le r-1\\ \forall m\in\{1,\dots,n-1\}}}\OmegoneI \quad \subset\quad  \bigcup_{r=2}^\infty \Pol  \ .
	\end{equation}  
	
	Relation \eqref{infinito} is the explicit version of what Nekhoroshev meant when he wrote {\it"Hamiltonians which fail to be steep at a noncritical point are infinitely
		singular: they satisfy an infinite number of independent conditions on the Taylor coefficients"} (see \cite{Nekhoroshev_1973}, p. 426).
	
	Actually, the strategy of proof given in the present work follows Nekhoroshev's reasonings by showing - for any given $Q\in \Pol$- the existence of an arc, whose image is contained in the thalweg $\cT_Q$ (see Definition \ref{Thalweg}), admitting a fitted parametrization whose derivatives are controlled by constants depending only on the number of variables $n$ and on the degree $r$, but not on $Q$. This is a particular occurrence of the fact that - with the exception of small neighborhoods around singularities - a semi-algebraic set can be reparametrized by a collection of images of holomorphic functions having a domain of analyticity and an upper bound on their derivatives which depend only on the number of variables, and on the number and on the degrees of the polynomials involved in the construction (see \cite{Yomdin_2008} and \cite{Yomdin_2015} for a two-dimensional semi-algebraic set, and \cite{Binyamini_Novikov_2019}, \cite{Cluckers_Pila_Wilkie_2020} for higher dimensional sets with more general properties than semi-algebraicness). The considered analytic reparametrization is a partial extension of a theorem (called algebraic lemma) due to Yomdin \cite{Yomdin_1987} and Gromov \cite{Gromov_1987} which ensures that, for any semi-algebraic set, there exists a collection of $C^k -$mappings which parametrize entirely the considered set. Actually, in our proof (see Theorem \ref{arco_minimale}), we ask for the mappings to be holomorphic, and we must remove small neighborhoods around their singularities; also, the use of $C^k -$reparametrizations would not allow to obtain a characterization of non-steep functions as in \ref{infinito}, where a control of all the derivative up to infinity is needed.

	\item The vector $\ts$ does not appear in the original statement. Indeed, Nekhoroshev implicitly sets
	\begin{equation}\label{scelta_Nekho}
	s_m=\overline s_m:=
	\begin{cases}
	\max\left\{1,r-1 +m(n-m-1)-\displaystyle \frac{n(n-2)}{4}\right\}\quad , \qquad \text{  for } n \text{  even }\\
	\max\left\{1,r-1 +m(n-m-1)-\displaystyle \frac{(n-1)^2}{4}\right\}\quad , \qquad  \text{  for } n \text{  odd }\ .
	\end{cases}
	\end{equation} 
	From a heuristic point of view, in ref. \cite{Nekhoroshev_1979} this choice was probably conceived in the following way: in estimate \eqref{codimension}, one may want to get rid of the quantity $m(n-m-1)$, which attains the maximal value $n(n-2)/4$ for $m=n/2$ when $n$ is even, and $(n-1)^2/4$ for $m=(n-1)/2$ when $n$ is odd. Hence, \eqref{scelta_Nekho} is the best choice which allows to get rid of the term $-m(n-m-1)$ in \eqref{codimension} and which still guarantees the essential condition $1\le s_m\le r-1$.  
	
	Choice \eqref{scelta_Nekho}, in our case, yields (see \eqref{bound}-\eqref{codimension})
	{\small
		\begin{equation}\label{bound2}
		\overline \alg_m(\overline s_m):=
		\begin{cases}
		\max\left\{1,r-1 +m(n-m-1)-\displaystyle \frac{n(n-2)}{4}\right\}\ ,\ &\text{  for $n$  even, $m=1$}\\
		\max\left\{1,r-1 +m(n-m-1)-\displaystyle \frac{(n-1)^2}{4}\right\}\ ,\   &\text{  for $n$  odd, $m=1$ }\\
		\max\left\{1,2r-3 +2m(n-m-1)-\displaystyle\frac{ n(n-2)}{2}\right\}\ ,\  &\text{  for $n$  even, $m\ge 2$}\\
		\max\left\{1,2r-3 +2m(n-m-1)-\displaystyle\frac{ (n-1)^2}{2}\right\}\ ,\   &\text{  for $n$  odd, $m\ge 2$ }\ ,\\
		\end{cases}
		\end{equation}}
	and 
	\begin{equation}\label{codimension2}
	\text{  codim }\OmegoneI\ge \begin{cases}
	\max\left\{0,r-1-\displaystyle\frac{n(n-2)}{4}\right\},\ \text{if $n$ is even}\\
	\max\left\{0,r-1-\displaystyle\frac{(n-1)^2}{4}\right\},\ \text{if $n$ is odd}\ .\\
	\end{cases}
	\end{equation}
	For $m=1$, the bound \eqref{bound2} on the steepness indices is half of the one found in \cite{Nekhoroshev_1979}. For $m\ge 2$, the estimates in \cite{Nekhoroshev_1979} coincide with \eqref{bound2}. Estimate \eqref{codimension2} on the codimension coincides with the one in \cite{Nekhoroshev_1979}. 
	\item Theorem \ref{Gen_steepness} holds true even for functions in the class $C_b^{r+1}$. In that case, one considers $1\le s_m\le \floor{r/2}$ for any $m=1,\dots,n-1$, which yields worse estimates both on the indices of steepness and on the codimension. This is the case which was considered in \cite{Nekhoroshev_1973}, whereas the regularity $C_b^{2r-1}$ was introduced in \cite{Nekhoroshev_1979}.  
\end{enumerate}
\subsection{Explicit algebraic criteria for steepness}

The kind of genericity stated in Theorem A implies the classic notions of genericity in topological and in measure sense. However, Theorem A alone is not sufficient when dealing with applications of Nekhoroshev's Theory to physical models. Infact, in order to infer long-time stability of a sufficiently regular integrable model Hamiltonian $h$ under any sufficiently small and regular perturbation, one needs to have a criterion to check at which points of its domain the given function $h$ is steep. As we shall show in the sequel, establishing a criterion of this kind is a non-trivial developement of the proof of the genericity of steepness. Namely, in the rest of this section we will present explicit algebraic criteria for steepness which involve the Taylor coefficients at any order of the studied function.

In order to give rigorous statements, we firstly need to introduce some notations.   

\subsubsection{Some additional notations}

For any pair of positive integers $n\ge 2$ and $k\in\{2,\dots,n\}$, we introduce the notation
\begin{equation}
\scV^1(k,n):=\{(v,u_2,\dots,u_k)\in \R^n\times \tU(k-1,n)|\text{ rk}(v,u_2,\dots,u_k)=k  \}\ .
\end{equation}

For fixed integers $m\ge 2$ and $\alpha\ge 0$, for any $\beta\in\{0,...,\alpha\}$, and for any $i\in\{1,...,m\}$, we also define the family of multi-indices

\begin{align}\label{mulan}
\N^m\ni\nu(i,\beta):=
\begin{cases}
&(\beta+1,0,...,0)\ ,\ \text{ for } i=1\\
&(\beta,0,...,0,1,0,...,0)\ ,\ \text{ for } i=2,...,m\\
\end{cases}
\end{align}
where the "$1$" fills the  $i$-th slot for $i=2,...,m$. When $\alpha\ge 1$, we denote the multi-indices $\mu\in\N^m$ of length $2\le|\mu|:=||\mu||_1\le\alpha+1$ not belonging to the family \eqref{mulan} with
\begin{equation}\label{sid}
\mathcal{M}_m(\alpha):=\{\mu\in\N^m\ ,\ \ 2\le|\mu|\le \alpha+1\}\backslash \bigcup_{\substack{i=1,...,m\\\beta=1,...,\alpha}}\{\nu(i,\beta)\}\ .
\end{equation}

Moreover, for given values of $\alpha\in \N$, $\mu=(\mu_1,\dots,\mu_m)\in \N^m$ and $\ell\in\{1,...,m\}$, we indicate by $\widetilde \mu(\ell)=(\widetilde \mu_1(\ell),\dots, \widetilde \mu_m(\ell))$ the multi-index for which $\widetilde\mu_i(\ell):=\mu_i-\delta_{i\ell}$, $i=1,\dots,m$, and we introduce the sets
\begin{align}\label{giammai}
\begin{split}
\mathcal{G}_m(\widetilde{\mu}(\ell),\alpha):=\biggl\{&k:=(k_{22},\dots,k_{2\alpha},\dots, k_{m2},\dots,k_{m\alpha})\in\N^{(m-1)\times(\alpha-1)}:\\
&\sum_{i=2}^{\alpha}k_{ji}=\mtj(\ell)\ ,\ \ \widetilde{\mu}_1(\ell)+\sum_{j=2}^{m}\sum_{i=2}^{\alpha}i\ k_{ji}=\alpha\biggl\} \ ,
\end{split}
\end{align}  
\begin{equation}\label{nonvuoto}
\cE_m(\ell,\alpha):=\{\mu\in \N^m\ |\  \mathcal{G}_m(\widetilde{\mu}(\ell),\alpha)\neq \varnothing\}\ .
\end{equation}

For any $k\in \mathcal{G}_m(\widetilde{\mu}(\ell),\alpha)$, we write $k!:=k_{22}!\dots k_{2\alpha}!\dots k_{m2}!\dots k_{m\alpha}!$ and, similarly, for any $\mu \in \N^m$, we use the notation $\mu!:=\mu_1!\dots\mu_m!\,$.

Finally, we consider a quadruplet of positive integers $r\ge 2,n\ge 3$, $1\le s\le r-1$, $2\le m \le n-1$, a point $I_0\in \R^n$, and a function $h$ of order $C^{r}$ around $I_0$. 

With this setting, for any given $\alpha\in \{1,\dots,s\}$, and $\ell\in\{1,\dots,m\}$,  we introduce the functions 
$$
\cH_{m,\ell,\alpha}^{h,I_0}: \scV^1(m,n)\times \R^{(m-1) s}\rightarrow\R\ \ \ 
$$ 
associating to any element 
$
(v,u_2,\dots,u_m)\in \scV^1(m,n)
$
and to any vector 
$
a(m,s)=(a_{21},\dots,a_{2s},\dots,a_{m1},\dots,a_{ms})\in \R^{(m-1)\times s}
$
the following quantities
 
\begin{align}\label{polpette}
\begin{split}
&\cH_{m,1,1}^{h,I_0}(v,u_2,\dots,u_m,a(m,s)):=h^2_{I_0}\left[v,v\right]\quad , \qquad \alpha=1\ ,\ \ \ell=1\\   &\cH_{m,\ell,1}^{h,I_0}(v,u_2,\dots,u_m,a(m,s)):=h^2_{I_0}\left[v,u_\ell\right]\quad ,\qquad \alpha=1\ ,\ \  \ell\in\{2,\dots,m\}\\
&\cH_{m,1,2}^{h,I_0}(v,u_2,\dots,u_m,a(m,s)):=h^3_{I_0}[v,v,v]\quad , \qquad  \text{if } s\ge 2\ , \ \text{ for }\alpha=2\ ,\  \text{and }\ \ell=1
\\
&------------------------------------\\
& \text{  if $s\ge 3$, for $\alpha \in\{3,\dots,s\}$, and $\ell=1$ }\\
\\
& \cH_{m,1,\alpha}^{h,I_0}(v,u_2,\dots,u_m,a(m,s)):=\\
& \frac{1}{\alpha! }h_{I_0}^{\alpha+1}\big[v,\dots,v\big]+\sum_{\beta=1}^{\alpha-1}\frac{1}{(\beta-1)!} h_{I_0}^{\beta+1}\big[\stackrel{ \beta  }{\overbrace{v}},\sum_{i=2}^m a_{i(\alpha-(\beta-1))}\,u_i\big] \\
&	+\sum_{\substack{\mu\in\cE_m(1,\alpha)\\  \mu\in\mathcal{M}_m(\alpha)\\\mu_1\neq 0 }}\, \sum_{k\in \cG_m(\widetilde \mu(1),\alpha)}\frac{h^{|\mu|}_{I_0}\big[\stackrel{ \mu_1-1  }{\overbrace{v}},\stackrel{ k_{22}  }{\overbrace{a_{22}u_2}},\dots, \stackrel{ k_{2\alpha}  }{\overbrace{a_{2\alpha}u_2}},\dots,\stackrel{ k_{m2}  }{\overbrace{a_{m2}u_m}},\dots, \stackrel{ k_{m\alpha}  }{\overbrace{a_{m\alpha}u_m}},v\big]}{(\mu_1-1)!\,k!}	\\
\end{split}
\end{align}

\begin{align}\label{polpettone}
\begin{split}
& \text{  if $s\ge 2$, for $\alpha \in\{2,\dots,s\}$, and $\ell\in\{2,\dots,m\}$ }\\
&	\cH_{m,\ell,\alpha}^{h,I_0}(v,u_2,\dots,u_m,a(m,s)):= \frac{1}{\alpha! }h^{\alpha+1}_{I_0}\big[\stackrel{ \alpha  }{\overbrace{v}},u_\ell\big]\\
& + \sum_{\substack{\mu\in\cE_m(\ell,\alpha)\\  \mu\in\mathcal{M}_m(\alpha)\\\mu_\ell\neq 0 }}\,\sum_{k\in \cG_m(\widetilde \mu(\ell),\alpha)} \frac{h^{|\mu|}_{I_0}\big[\stackrel{ \mu_1  }{\overbrace{v}},\stackrel{ k_{22}  }{\overbrace{a_{22}u_2}},\dots, \stackrel{ k_{2\alpha}  }{\overbrace{a_{2\alpha}u_2}},...,\stackrel{ k_{m2}  }{\overbrace{a_{m2}u_m}},..., \stackrel{ k_{m\alpha}  }{\overbrace{a_{m\alpha}u_m}},u_\ell\big]}{\mu_1!\,k!}.
\end{split}
\end{align}

\subsubsection{Theorem B and related corollaries}

With the setting above, we can state the first explicit criterion for steepness. Its Corollary B2 is a refined version of Theorem \ref{Teorema_due}.

\begin{thm*}[{\bf B}]\label{condizioni_esplicite_1}
	Let $r,n\ge 2$ be two integers, and let $\ts:=(s_1,\dots,s_{n-1})\in \N^{n-1}$ be a vector verifying $1\le s_i\le r-1$ for all $i=1,\dots,n-1$. Consider a point $I_0\in \R^n$, a real number $\varrho>0$, and a function $h$ of class $C^{2r-1}_b(B^n(I_0,\varrho))$ verifying $\nabla h(I_0)\neq 0$. 
	\\
	i) If the system
	\begin{equation}\label{s_1+1 jet}
	\begin{cases}
	w\in \S^n\\
	h_{I_0}^1[w]=h_{I_0}^2[w,w]=\dots =h_{I_0}^{s_1+1}[w,\dots,w]=0
	\end{cases} 
	\end{equation}
	has no solution\footnote{In this case, $h$ is said to be $s_1+1$-jet non-degenerate at the origin.}, then $h$ is steep  around the point $I_0$ on the affine subspaces of dimension one, with steepness index bounded by $s_1$.
	
	\medskip 
	
	In the sequel, we set $N=N(r,n):=\dim \Pol$.
	
	\medskip 
	
	ii) If, for some $m\in\{2,\dots,n-1\}$, there exists $R_m>0$ such that for any polynomial $S\in B^N(\tT_{I_0}(h,r,n),R_m)$ the system
	\begin{align}\label{non_tocca}
	\begin{split}
	\begin{cases}
	(u_1,\dots,u_m)\in \tU(m,n)\\[0.5em]
	a(m,s_m):=(a_{21},\dots,a_{2s_m},\dots,a_{m1},\dots,a_{ms_m})\in \R^{(m-1)\times s_m}\\[0.5em]
	v=u_1+\sum_{j=2}^m a_{j1}\,u_j\\[0.5em]
	S^1_{I_0}[v]=S^1_{I_0}[u_2]=\dots= S^1_{I_0}[u_m]=0\\[0.5em]
	\sum_{\ell=1}^m\sum_{\alpha=1}^{s_m}\left|\cH_{m,\ell,\alpha}^{S,I_0}(v,u_2,\dots,u_m,a(m,s_m))\right|=0
	\end{cases}
	\end{split}
	\end{align}
	has no solution, then $h$ is steep  in a neighborhood of $I_0$ on the affine subspaces of dimension $m$, with steepness index bounded by
	$
	\alg_m\le 2s_m-1\ .
	$
\end{thm*}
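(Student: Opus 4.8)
\emph{Overall strategy.} The plan is to treat the two items separately. Item i) will be handled by a compactness argument on the unit sphere together with a one-dimensional quantitative growth estimate. Item ii) will be obtained by reducing the failure of steepness on $m$-dimensional affine subspaces to the existence of a minimal arc inside the thalweg, whose shape is governed by Theorem \ref{arco_minimale} and by the chain-rule bookkeeping carried out in section \ref{rs_vanishing_polynomials}. Throughout, the regularity $C^{2r-1}_b$ is far more than is needed to differentiate the quantities involved.

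\emph{Item i).} By Remark \ref{viadotto}, for a one-dimensional subspace $\Gamma^1=\R w$ with $\|w\|=1$ and $u=\pm\eta w$ one has $\|\pi_{\Gamma^1}\nabla h(I+u)\|=|\partial_w h(I\pm\eta w)|$, so \eqref{cuore_steepness} for $m=1$ is a statement about how fast $t\mapsto\partial_w h(I+tw)$ may vanish at $t=0$. Its Taylor coefficients at $t=0$ up to order $s_1$ are proportional to $h^1_{I_0}[w],h^2_{I_0}[w,w],\dots,h^{s_1+1}_{I_0}[w,\dots,w]$ at $I=I_0$, and the hypothesis that \eqref{s_1+1 jet} has no solution says precisely that the continuous map $w\mapsto\max_{1\le k\le s_1+1}|h^k_{I_0}[w,\dots,w]|$ is strictly positive on the compact unit sphere, hence bounded below by some $c_0>0$. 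First I would fix $R,\delta>0$ small enough that for every $I\in B^n(I_0,R)$, every unit $w$ with $w\perp\nabla h(I)$, and every $|t|\le\delta$, one still has $\max_{0\le j\le s_1}\bigl|\partial_t^{\,j}(\partial_w h(I+tw))\bigr|\ge c_0/4$: here one uses that $w\perp\nabla h(I)$ forces $|h^1_{I_0}[w]|=|(\nabla h(I_0)-\nabla h(I))\cdot w|=O(R)$, so for $I$ near $I_0$ the lower bound is carried by the orders $1,\dots,s_1$ in $t$, i.e.\ by $h^{j+1}_I[w,\dots,w]=\partial_t^{\,j}(\partial_w h(I+tw))|_{t=0}$ with $j=1,\dots,s_1$, and by continuity the bound persists for $|t|\le\delta$. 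Then I would apply the standard one-dimensional estimate: if $g\in C^{s_1}([-\xi,\xi])$, if at every point where $|g|\le\epsilon$ one of $g',\dots,g^{(s_1)}$ exceeds $c_0/4$ in modulus, and if the derivatives of $g$ up to order $s_1+1$ are uniformly bounded, then $\mathrm{meas}\{|g|\le\epsilon\}\lesssim\epsilon^{1/s_1}$. Applying it to $g(t)=\partial_w h(I+tw)$ and noting that $\max_{0\le\eta\le\xi}\min(|g(\eta)|,|g(-\eta)|)\le\epsilon$ forces, for every $\eta\in[0,\xi]$, at least one of $\pm\eta$ to lie in $\{|g|\le\epsilon\}$ --- so that set has measure at least $\xi$ --- one gets $\xi\lesssim\epsilon^{1/s_1}$, hence $\max_{0\le\eta\le\xi}\min(|g(\eta)|,|g(-\eta)|)>C_1\xi^{s_1}$ for all $\xi\in(0,\delta]$, which is exactly \eqref{cuore_steepness} for $m=1$ with index $s_1$. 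The first condition of Definition \ref{def steep} follows from $\nabla h(I_0)\neq0$ and continuity after shrinking $R$ once more.

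\emph{Item ii).} Fix $m\in\{2,\dots,n-1\}$, a point $I\in B^n(I_0,R)$ and an $m$-dimensional subspace $\Gamma^m=\mathrm{Span}(u_1,\dots,u_m)$ orthogonal to $\nabla h(I)$; by Remark \ref{viadotto} the relevant object is the growth near $0$ of $\|\nabla(h|_{I+\Gamma^m})\|$, whose minimum over each centered sphere of $\Gamma^m$ is attained on the thalweg of $h|_{I+\Gamma^m}$ (the analytic counterpart of the set $\cT_P$ in \eqref{Thalweg}). By Theorem \ref{arco_minimale} this thalweg contains an arc $\gamma$ admitting a fitted parametrization of the form $\gamma(t)=t\,u_1+\sum_{j=2}^m\bigl(\sum_{\alpha\ge1}a_{j\alpha}\,t^\alpha\bigr)u_j$ whose derivatives are bounded by constants depending only on $r$ and $n$, the uniformity (independence of the restricted polynomial) coming from the reparametrization and Bernstein-type results of section \ref{thalweg_sec}. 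The point established in section \ref{rs_vanishing_polynomials} is that $\|\nabla(h|_{I+\Gamma^m})\|$ can vanish along $\gamma$ to the order that would break steepness with index $2s_m-1$ only if, writing $v=u_1+\sum_{j=2}^m a_{j1}\,u_j$ and $S=\tT_I(h,r,n)$, one has $S^1_{I_0}[v]=S^1_{I_0}[u_2]=\dots=S^1_{I_0}[u_m]=0$ together with $\cH_{m,\ell,\alpha}^{S,I_0}(v,u_2,\dots,u_m,a(m,s_m))=0$ for every $\ell\in\{1,\dots,m\}$ and $\alpha\in\{1,\dots,s_m\}$; indeed these quantities are, by construction, the coefficients up to order $\alpha$ of the components of $t\mapsto\nabla(h|_{I+\Gamma^m})(\gamma(t))$ in the frame $u_1,\dots,u_m$, and the factor $2$ between the vanishing order $s_m$ imposed here and the resulting index $2s_m-1$ is built into Nekhoroshev's parametrization of the minimal arc. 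Consequently, if \eqref{non_tocca} has no solution for any $S\in B^N(\tT_{I_0}(h,r,n),R_m)$, then --- after shrinking $R$ so that $\tT_I(h,r,n)\in B^N(\tT_{I_0}(h,r,n),R_m)$ for all $I\in B^n(I_0,R)$, which is possible since the $r$-jet depends continuously on its base point --- no such arc exists at any $I$, and the polynomial-independent bounds on the parametrization of $\gamma$ upgrade the statement ``the order of vanishing of $\nabla(h|_{I+\Gamma^m})$ along $\gamma$ is at most $2s_m-1$'' into the quantitative bound \eqref{cuore_steepness} with index $\alg_m\le2s_m-1$ and coefficients $C_m,\delta$ depending only on $r,s_m,n,R_m$, uniformly in $I$ and in $\Gamma^m$ (by compactness of $\tU(m,n)$). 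This gives steepness of $h$ on $m$-dimensional affine subspaces in a neighborhood of $I_0$, with the announced index bound.

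\emph{Main obstacle.} The hard part is item ii), and it has two faces. The first is the algebraic bookkeeping of section \ref{rs_vanishing_polynomials}: one must verify that requiring $\nabla(h|_{I+\Gamma^m})$ to vanish along the minimal arc to the relevant order is equivalent to the vanishing of exactly the functions $\cH_{m,\ell,\alpha}^{S,I_0}$ for $\ell=1,\dots,m$, $\alpha=1,\dots,s_m$ --- this is where the multi-index families $\nu(i,\beta)$, $\mathcal{M}_m(\alpha)$, $\cE_m(\ell,\alpha)$ and the sets $\cG_m(\widetilde\mu(\ell),\alpha)$ must be shown to organize the chain-rule expansion of $t\mapsto\nabla h(\gamma(t))$ exactly, and where the factor $2$ in the index emerges; this is the combinatorial core of the proof. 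The second is the passage from the pointwise non-solvability of \eqref{non_tocca} at the jet $\tT_I(h,r,n)$ to genuine steepness on an open ball with uniform coefficients: this is precisely why item ii) is phrased with the ball $B^N(\tT_{I_0}(h,r,n),R_m)$ rather than with $\tT_{I_0}(h,r,n)$ alone, and it relies on the polynomial-independent control of the minimal arc's parametrization --- i.e.\ on the reparametrization theorem for semi-algebraic sets recalled in section \ref{thalweg_sec} --- which is the genuinely nontrivial analytic input underpinning the whole construction.
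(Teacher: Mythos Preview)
Your treatment of item i) is correct and self-contained; the paper instead routes everything through Theorem A, noting (via Lemma \ref{m=1}) that the hypothesis forces $\tT_{I_0}(h|_{\Gamma^1},r,1)\notin\sigma(r,s_1,1)=\Sigma(r,s_1,1)$ for every line $\Gamma^1\perp\nabla h(I_0)$, hence $\tT_{I_0}(h,r,n)\notin\cV(r,s_1,1,n)$, and Theorem A applies. Your direct Pyartli-type argument is a legitimate alternative.

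For item ii) your outline has two genuine gaps.

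\textbf{The role of the ball.} You interpret $B^N(\tT_{I_0}(h,r,n),R_m)$ as absorbing variation of the base point $I$. That is not its primary purpose here: it is needed to push the $r$-jet away from the \emph{closure} $\Sigma(r,s_m,m)=\overline{\sigma(r,s_m,m)}$, not merely from $\sigma(r,s_m,m)$. The paper argues by contradiction: if $\tT_{I_0}(h,r,n)\in\cV(r,s_m,m,n)$, some restriction lies in $\Sigma$. If it lies in $\sigma$ (Case 1), the computations of Theorem \ref{ideal} combined with \eqref{orasiride} show that \eqref{non_tocca} with $S=\tT_{I_0}(h,r,n)$ has a solution. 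If it lies in $\Sigma\setminus\sigma$ (Case 2), the paper builds (Lemma \ref{Luminy}) a sequence $S_k\to\tT_{I_0}(h,r,n)$ in $\Pol$ with $S_k|_{\Gamma^m}\in\sigma$; each $S_k$ then solves \eqref{non_tocca} and eventually falls in the ball, contradicting the hypothesis. Thus the jet lies outside $\cV(r,s_m,m,n)\subset\Omegone$, and Theorem A delivers steepness with index $\le 2s_m-1$. In your direct approach you would still need the closure step: knowing only that \eqref{non_tocca} fails for $S=\tT_{I_0}(h,r,n)$ gives no positive lower bound on the $\cH$-quantities, and Lemma \ref{stima_inf} (which feeds into the quantitative estimate) requires positive distance to $\Sigma$, not just non-membership in $\sigma$.

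\textbf{Analyticity of the minimal arc at the origin.} You write the arc as $\gamma(t)=t\,u_1+\sum_{j\ge 2}\sum_{\alpha\ge 1}a_{j\alpha}t^\alpha\,u_j$, i.e.\ you assume it is analytic at $t=0$. Theorem \ref{arco_minimale} does not give this: the uniform interval of analyticity $\tI_\lambda\subset[-\lambda,\lambda]$ need not contain $0$ (see the remark just after Definition \ref{arco}). The paper handles this by translating to a point $t^\star\in\tI_\lambda'$ (Lemma \ref{Palermo}); this shifts the polynomial and brings in the Bernstein bound \eqref{Bernie} on the coefficients $a_{j\alpha}$ with $\alpha\ge 2$, producing the extra factor $\lambda^{s_m-1}$ in \eqref{Siena}. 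That factor, combined with Pyartli's Lemma \ref{Pyartli}, is exactly where $2s_m-1$ rather than $s_m$ comes from --- not from the parametrization of the arc per se. All of this machinery is already packaged inside Theorem A, which is why the paper's proof of Theorem B for $m\ge 2$ is short: show $\tT_{I_0}(h,r,n)\notin\cV(r,s_m,m,n)$ using the ball (Cases 1 and 2), then invoke Theorem A.
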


Though the quantities which are involved in Theorem B are quite cumbersome, the idea behind the result is not difficult to grasp: condition \eqref{non_tocca} amounts to asking that there exists a neighborhood of polynomials around the $r$-jet
of $h(I)-h(I_0)$ which lies outside of the closed semi-algebraic set $\Omegone$ defined in Theorem A. This will be made clearer in Corollary B2.

As it will be discussed in the technical sections of the present work (see e.g. section \ref{rs_vanishing_polynomials}), for any $m\in\{2,\dots,n-1\}$, the real parameters $a_{21},\dots,a_{2s_m},\dots,a_{m1},\dots,a_{ms_m}$ appearing in \eqref{polpette}-\eqref{polpettone} and in the statement of Theorem B represent the Taylor coefficients of analytic curves of the type
\begin{equation}\label{sirio}
\gamma(t):=
\begin{cases}
x_1(t):=t\quad , \qquad x_j(t):=\sum_{i=1}^{+\infty}a_{ji}t^i\qquad  j=2,\dots,m
\end{cases}
\end{equation}
which, for any $m$-dimensional affine subspace $I_0+\Gamma^m$, contain the locus of minima of the projection $||\pi_{\Gamma^m}\grad \tT_{I_0}(h,r,n)||$ appearing in \eqref{cuore_steepness}. For any given $\Gamma^m$ and for any regular function $h$ around $I_0$, the existence of a minimal curve of the form \eqref{sirio} is ensured by Theorem \ref{arco_minimale} in the sequel. 

Theorem B comes together with important corollaries. 

The following one is well-known: its statement can be found in \cite{Nekhoroshev_1977}, whereas its proof can be found in \cite{Chierchia_Faraggiana_Guzzo_2019}. As we shall see, in our context it is a simple consequence of Theorem B. 
\begin{cor*}[{\bf B1}]
	Consider an integer $n\ge2$, a point $I_0\in\R^n$, and a function $h$ of class $C^{5}$ around $I_0$, satisfying $\nabla h(I_0)\neq 0$. If the system
	\begin{equation}\label{3-jet}
	\begin{cases}
	w\in \S^n\\
	h^1_{I_0}[w]=h^2_{I_0}[w,w]=h^3_{I_0}[w,w,w]=0 \qquad \text{  ($3$-jet non degeneracy) }
	\end{cases}
	\end{equation}
	has no solution, then $h$ is steep in a neighborhood of $I_0$, and its indices satisfy
	$$
	\alpha_1=2\quad , \qquad \max_{m=2,\dots,n-1}\{\alpha_m\}\le 3\ .
	$$ 
\end{cor*}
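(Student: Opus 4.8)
The plan is to obtain Corollary B1 as a direct specialization of Theorem B. Since $h$ is of class $C^5=C^{2r-1}$ with $r=3$, I would apply Theorem B with $r=3$ and the constant vector $\ts=(2,\dots,2)\in\N^{n-1}$, which is admissible because $1\le s_i=2\le r-1=2$; shrinking $\varrho$ if needed we may assume $h\in C_b^{5}(B^n(I_0,\varrho))$, and the hypothesis $\nabla h(I_0)\neq0$ is the one of the corollary. For the one‑dimensional subspaces I would use part (i): with $s_1=2$, system \eqref{s_1+1 jet} reads $w\in\S^n$, $h^1_{I_0}[w]=h^2_{I_0}[w,w]=h^3_{I_0}[w,w,w]=0$, i.e.\ it is literally the $3$‑jet non‑degeneracy system \eqref{3-jet}. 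Hence, under the hypothesis, $h$ is steep near $I_0$ on the affine lines with steepness index bounded by $s_1=2$, so $\alpha_1=2$ is an admissible index.

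For $m\in\{2,\dots,n-1\}$ (an empty range when $n=2$) I would invoke part (ii) of Theorem B with $s_m=2$, which, once the no‑solution hypothesis on \eqref{non_tocca} is checked, yields steepness on the $m$‑dimensional affine subspaces with index bounded by $2s_m-1=3$. The crux is thus to produce $R_m>0$ such that \eqref{non_tocca} has no solution whenever $S\in B^N(\tT_{I_0}(h,3,n),R_m)$. Suppose, for such an $S$, that $(u_1,\dots,u_m)\in\tU(m,n)$, $a(m,2)$ and $v=u_1+\sum_{j=2}^m a_{j1}u_j$ solved \eqref{non_tocca}. The last equation of \eqref{non_tocca} is a sum of moduli, so every $\cH_{m,\ell,\alpha}^{S,I_0}$ would vanish; reading the two simplest terms off \eqref{polpette} gives in particular $\cH_{m,1,1}^{S,I_0}=S^2_{I_0}[v,v]=0$ and $\cH_{m,1,2}^{S,I_0}=S^3_{I_0}[v,v,v]=0$ (here $s_m=2\ge2$, so the case $\alpha=2,\ \ell=1$ is included). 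Combining this with $S^1_{I_0}[v]=0$ (one of the linear constraints of \eqref{non_tocca}) and with $\|v\|^2=1+\sum_{j=2}^m a_{j1}^2\ge1$ (orthonormality of $u_1,\dots,u_m$, whence $v\neq0$), the normalized vector $w:=v/\|v\|\in\S^n$ would satisfy $S^1_{I_0}[w]=S^2_{I_0}[w,w]=S^3_{I_0}[w,w,w]=0$ by homogeneity of the forms.

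It then remains to exclude such a $w$ for $S$ close to $\tT_{I_0}(h,3,n)$. I would set $g(w):=|h^1_{I_0}[w]|+|h^2_{I_0}[w,w]|+|h^3_{I_0}[w,w,w]|$; the hypothesis that \eqref{3-jet} has no solution says that $g$ is continuous and strictly positive on the compact sphere $\S^n$, so $g_0:=\min_{\S^n}g>0$. Since the coefficients of a polynomial $S\in\mathcal P(3,n)$ enter the forms $S^j_{I_0}[w,\dots,w]$ linearly and $|w^\mu|\le1$ on $\S^n$, the positive function attached to $S$ differs from $g$ by at most $C(n)\,\|S-\tT_{I_0}(h,3,n)\|$ uniformly in $w\in\S^n$, with $S=\tT_{I_0}(h,3,n)$ recovering $g$ exactly because the $3$‑jet matches the derivatives of $h$ up to order $3$. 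Taking $R_m:=g_0/(2C(n))$ then forces $|S^1_{I_0}[w]|+|S^2_{I_0}[w,w]|+|S^3_{I_0}[w,w,w]|\ge g_0/2>0$ for all $w\in\S^n$ and all $S\in B^N(\tT_{I_0}(h,3,n),R_m)$, contradicting the previous paragraph; hence \eqref{non_tocca} has no solution and part (ii) of Theorem B applies for every such $m$. Assembling the cases $m=1,\dots,n-1$ (and shrinking the neighborhood of $I_0$ so that $\nabla h$ stays bounded away from $0$, which gives condition $1$ of Definition \ref{def steep}) shows that $h$ is steep near $I_0$ with $\alpha_1=2$ and $\max_{2\le m\le n-1}\alpha_m\le3$. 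I expect the only genuinely non‑routine point to be this compactness‑and‑perturbation step: it is what upgrades the immediate remark that the $3$‑jet of $h$ itself does not solve \eqref{3-jet} into the statement required by Theorem B (ii), namely that a whole ball of polynomials around that jet makes \eqref{non_tocca} unsolvable; the remainder is bookkeeping with the explicit forms in \eqref{polpette}.
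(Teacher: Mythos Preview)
Your proof is correct and follows essentially the same route as the paper's: both apply Theorem B with $r=3$ and $s_m=2$, treating $m=1$ via part (i) directly, and for $m\ge2$ extracting from \eqref{polpette} the conditions $S^1_{I_0}[v]=S^2_{I_0}[v,v]=S^3_{I_0}[v,v,v]=0$, then using compactness of the sphere together with a perturbation argument to obtain the required $R_m$. Your writeup is in fact more explicit than the paper's about isolating the relevant $\cH_{m,1,1}$, $\cH_{m,1,2}$ terms and about the normalization $w=v/\|v\|$, but the substance is the same.
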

\begin{rmk}
	Actually, as a more careful analysis of $3$-jet non-degenerate functions shows, the result is true for $C^4$ functions and one can take $ \max_{m=1,\dots,n-1}\{\alpha_m\}\le 2
	$ (see \cite{Chierchia_Faraggiana_Guzzo_2019}). 
\end{rmk}

Thanks to Theorem B, moreover, we have a more explicit characterisation of the sets $\OmegoneI$ appearing in the statement of Theorem A. Namely, as it was the case in the hypotheses of Theorem B, we consider two integers $r,n\ge 2$, and a vector $\ts:=(s_1,\dots,s_{n-1})\in \N^{n-1}$,  with $1\le s_i\le r-1$ for all $i=1,\dots,n-1$. Also, we take a point $I_0\in \R^n$. This time, differently to what we did in Theorem B, we do not consider a fixed function.

\begin{cor*}[{\bf B2}]\label{Toyota_Corolla}
	For $n\ge 2$, and $m=1$, we indicate by $
	\cZ^{r,s_1,1}_n
	$ the algebraic set of $\Pollo\times \S^n$ determined by 
	\begin{equation}\label{so un cavolo}
	\begin{cases}
	w\in \S^n\quad , \qquad Q\in\Pollo\\
	Q_{I_0}^1[w]=Q_{I_0}^2[w,w]=\dots =Q_{I_0}^{s_1+1}[w,\dots,w]=0\ .
	\end{cases}
	\end{equation}
	For $n\ge 3$, and for any given $m\in\{2,\dots,n-1\}$, we denote by $\cZ^{r,s_m,m}_n$ the algebraic set of $\Pollo\times \R^{(m-1)s_m}\times \scV^1(m,n)$ determined by 
	\begin{align}\label{machenneso}
	\begin{split}
	\begin{cases}
	(u_1,\dots,u_m)\in \tU(m,n)\quad , \qquad Q\in \Pollo\\
	a(m,s_m):=(a_{21},\dots,a_{2s_m},\dots,a_{m1},\dots,a_{ms_m})\in \R^{(m-1)\times s_m}\\
	v=u_1+\sum_{j=2}^m a_{j1}\,u_j\\
\sum_{\ell=1}^m\sum_{\alpha=1}^{s_m}\left|\cH_{m,\ell,\alpha}^{Q,I_0}(v,u_2,\dots,u_m,a(m,s_m))\right|=0\ .
	\end{cases}
	\end{split}
	\end{align}
	
	With this setting, one has
	\begin{equation}\label{tutti_insieme_appassionatamente}
	\bigcup_{m=1}^{n-1}\text{  closure }\left( \Pi_{\Pollo}\, \cZ^{r,s_m,m}_n\right)=\OmegoneI\ ,
	\end{equation}
	where $\OmegoneI$ is the semi-algebraic set introduced in Theorem A.
\end{cor*}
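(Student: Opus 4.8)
The plan is to deduce the identity \eqref{tutti_insieme_appassionatamente} directly from Theorem A together with Theorem B, essentially by unwinding the definitions and matching up the two descriptions of the bad set $\OmegoneI$. The starting point is that Theorem A characterizes $\OmegoneI$ as the closed semi-algebraic set with the property that a jet lying at positive distance from it guarantees steepness with the indicated indices, while Theorem B tells us that for a fixed $m$, the non-solvability of system \eqref{non_tocca} in a neighborhood of $\tT_{I_0}(h,r,n)$ is exactly what forces steepness on the $m$-dimensional subspaces. So the first step is to observe that for $m\in\{2,\dots,n-1\}$ the system \eqref{non_tocca}, once we drop the linear constraints $S^1_{I_0}[v]=S^1_{I_0}[u_j]=0$ that merely encode membership in $\Pollo$ and orthogonality to the gradient, has solution set precisely $\cZ^{r,s_m,m}_n$ as defined by \eqref{machenneso}; the case $m=1$ is the analogous (simpler) matching of \eqref{s_1+1 jet} with \eqref{so un cavolo}. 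I would make this correspondence explicit by noting that the conditions $S^1_{I_0}[v]=\cdots=0$ are automatically incorporated into the definition of $\scV^1(m,n)$ and $\Pollo$ together with the vanishing-of-gradient-projection built into the $\cH$-functions, so that the zero set of $\sum_{\ell,\alpha}|\cH^{Q,I_0}_{m,\ell,\alpha}|$ over $\Pollo\times\R^{(m-1)s_m}\times\scV^1(m,n)$ is exactly $\cZ^{r,s_m,m}_n$.

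The second step is the projection and closure argument. The set $\text{closure}(\Pi_{\Pollo}\cZ^{r,s_m,m}_n)$ consists of those $Q\in\Pollo$ that are limits of polynomials $Q'$ for which system \eqref{machenneso} (hence \eqref{non_tocca} with $S=Q'$) has a solution. Its complement in $\Pollo$ is therefore the set of $Q$ admitting a ball $B^N(Q,R_m)$ entirely avoiding $\Pi_{\Pollo}\cZ^{r,s_m,m}_n$ — which is precisely the hypothesis ``\eqref{non_tocca} has no solution for $S\in B^N(\tT_{I_0}(h,r,n),R_m)$'' in part (ii) of Theorem B. Taking the union over $m\in\{1,\dots,n-1\}$, a jet $Q$ lies outside $\bigcup_m \text{closure}(\Pi_{\Pollo}\cZ^{r,s_m,m}_n)$ if and only if, for every $m$, the corresponding system is non-solvable near $Q$; by Theorem B this is exactly the condition that forces $h$ (with $\tT_{I_0}(h,r,n)=Q$) to be steep on every $m$-dimensional affine subspace with the stated indices, i.e. steep in a neighborhood of $I_0$. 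Hence $\bigcup_m\text{closure}(\Pi_{\Pollo}\cZ^{r,s_m,m}_n)$ and $\OmegoneI$ have the same complement among non-critical jets, and since both are closed, they coincide.

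To turn this equivalence into the set-theoretic equality one must be slightly careful: Theorem A a priori only asserts the existence of \emph{some} closed semi-algebraic $\OmegoneI$ with the quantitative steepness property, so one needs to know that $\OmegoneI$ is \emph{minimal} with that property, or else argue that the construction of $\OmegoneI$ in the proof of Theorem A (via the thalweg analysis of Theorem~\ref{arco_minimale} and the degeneracy equations of section~\ref{rs_vanishing_polynomials}) literally produces the union of closures on the left-hand side. I expect this to be the main obstacle: one has to trace through how $\OmegoneI$ is actually built — namely as the closure of the projection of the locus where the curve of the form \eqref{sirio} along the thalweg makes $\pi_{\Gamma^m}\grad Q$ vanish to high order — and verify that those high-order vanishing equations are precisely the equations $\cH^{Q,I_0}_{m,\ell,\alpha}=0$ of \eqref{polpette}--\eqref{polpettone}. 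Once that identification is in place (which is really the content of the constructive half of the proof of Theorems A and B and will be established in the technical sections), equality \eqref{tutti_insieme_appassionatamente} follows immediately, and with it Theorem~\ref{Teorema_due} as the special case where one does not track the auxiliary index vector $\ts$.
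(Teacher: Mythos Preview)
Your instinct in the final paragraph is exactly right, and in fact that is precisely how the paper proceeds: it does not treat Theorem~A as a black box but uses the explicit construction of $\OmegoneI$ carried out in its proof. There (see~\eqref{Omega}) one sets
\[
\OmegoneI := \bigcup_{m=1}^{n-1}\cV(r,s_m,m,n),
\]
where $\cV(r,s_m,m,n)$ is the closed semi-algebraic set defined in~\eqref{U}--\eqref{V} as the projection onto $\Pollo$ of triples $(Q,A,P)$ with $P(x)=Q(Ax)\in\Sigma(r,s_m,m)=\overline{\sigma(r,s_m,m)}$. The proof of Corollary~B2 then simply identifies each $\cV(r,s_m,m,n)$ with $\text{closure}(\Pi_{\Pollo}\cZ^{r,s_m,m}_n)$ by recalling that, in the proof of Theorem~B, it was shown that $Q|_{\Gamma^m}\in\sigma^1(r,s_m,m)$ holds if and only if system~\eqref{so un cavolo} or~\eqref{machenneso} admits a solution with vectors spanning $\Gamma^m$ (this is the computation leading to~\eqref{Roma}, via~\eqref{0}--\eqref{Inter}). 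No minimality argument is needed because the identity is proved at the level of the building blocks $\cV$.

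By contrast, your first two steps attempt to match complements via the implication ``non-solvability $\Rightarrow$ steepness'' from Theorem~B and ``distance from $\OmegoneI$ $\Rightarrow$ steepness'' from Theorem~A. As you yourself note, this cannot close: both statements are one-directional sufficient conditions for steepness, and there is no a~priori reason the two sufficient conditions should cut out the same set. You cannot conclude that the complements coincide without either a converse to Theorem~A (which is not available) or the explicit construction. So the first two steps should be discarded, and the proof should begin where your last paragraph begins.

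One small inaccuracy worth correcting: the orthogonality conditions $S^1_{I_0}[v]=S^1_{I_0}[u_j]=0$ are \emph{not} built into the definition of $\scV^1(m,n)$, which only imposes a rank condition. In the paper these linear constraints correspond to the case $\alpha=0$ of the vanishing conditions (see~\eqref{0}) and are part of the $s$-vanishing system; they are omitted from~\eqref{machenneso} because the $\cH$-functions are indexed by $\alpha\ge 1$, but Remark~\ref{pleonastico} explains why orthogonality is nonetheless automatic once $P\in\Sigma$.
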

\begin{rmk}\label{commentario}
	Since the sets $\cZ^{r,s_m,m}_n$, $m=1,\dots,n-1$ in Corollary \ref{Toyota_Corolla} are algebraic, the Theorem of Tarski and Seidenberg (see Th. \ref{Tarski_Seidenberg}) - together with expression \eqref{tutti_insieme_appassionatamente} and Proposition \ref{closure-interior-boundary} - ensures that $\OmegoneI$ is a semi-algebraic set of $\Pol$, as we already knew by Theorem A. Moreover, it is worth to notice that - at least in principle - one could find the explicit expression for $\OmegoneI$. Infact, the Theorem of Tarski and Seidenberg is somewhat "constructive", in the sense that there exist algorithms that allow to find the explicit expression for the projection and the closure of any semi-algebraic set (see e.g. \cite{Basu_Pollack_Roy_2006}). However, these general algorithms are not very useful in applications, as their complexity grows double-exponentially with the number of the involved variables (see \cite{Heintz_Roy_Solerno_1990}). As we shall show in Theorems C1-C2 below, in "most cases" (in a sense that will be clarified in Theorem C3) the sets $\cZ^{r,s_m,m}_n$, with $m=2,\dots,n-1$ can be projected onto $\Pollo\times  \scV^1(m,n)$ with the help of a simple algorithm involving only linear operations. Moreover, such a projection yields a closed semi-algebraic set of $\Pollo\times  \scV^1(m,n)$. This implies a further criterion to check steepness of a given function. 
\end{rmk}

Finally, using Theorem B we can state a sufficient condition for non-steepness at a given point, namely
\begin{cor*}[{\bf B3}]
	Consider a point $I_0\in \R^n$, and a function $h$ in the real-analytic class around $I_0$ verifying $\nabla h(I_0)\neq 0$. 
	
	If at least one of the two following conditions is satisfied, then $h$ is non-steep at $I_0$.
	
	\begin{enumerate}
		
	\item  There exists $w\in \S^n$ such that
	$
	h_{I_0}^r[w]=0\ ,\ \  \forall r\in \N\ . 
	$
	
	\item  For some $m\in\{2,\dots,n-1\}$, there exist
	\begin{enumerate}
		
		\item  $m-1$ real sequences $\{a_{ji}\}_{i\in \N}$, $j=2,\dots,m$ and a number $\tti>0$ such that the expansions $\sum_{i=1}^{+\infty}a_{ji}t^i$ admit a radius of convergence greater than $\tti$ for all $m$ ;
		\item $m$ linearly-independent vectors $v,u_2,\dots,u_m\in \scV^1(m,n)$ ;
		
	\end{enumerate}
	such that for all integer $r\ge 2$ the following system is satisfied:
	\begin{align}\label{sountuboio}
	\begin{split}
	\begin{cases}
	(u_1,\dots,u_m)\in \tU(m,n)\\
	a(m,r-1):=(a_{21},\dots,a_{2(r-1)},\dots,a_{m1},\dots,a_{m(r-1)})\in \R^{(m-1)\times (r-1)}\\
	v=u_1+\sum_{j=2}^m a_{j1}\,u_j\\
	h^1_{I_0}[v]=h^1_{I_0}[u_2]=\dots= h^1_{I_0}[u_m]=0\\
	\sum_{\ell=1}^m\sum_{\alpha=1}^{r-1}\left|\cH_{m,\ell,\alpha}^{h,I_0}(v,u_2,\dots,u_m,a(m,s_m))\right|=0\ .
	\end{cases}
	\end{split}
	\end{align}
	
\end{enumerate}

\end{cor*}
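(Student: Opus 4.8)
The plan is to derive non-steepness, in both cases, from the same mechanism: I would exhibit a proper affine subspace $I_0+\Gamma^m$ and an analytic curve through $I_0$ inside it along which $\pi_{\Gamma^m}\nabla h$ — equivalently, by Remark~\ref{viadotto}, the gradient of the restriction $h|_{I_0+\Gamma^m}$ — vanishes identically. Such a curve forces the left-hand side of \eqref{cuore_steepness} to vanish already at $I=I_0$, so \eqref{cuore_steepness} fails there for any choice of coefficients and indices; since every ball centred at $I_0$ contains $I_0$, this means $h$ is steep in no such ball, i.e. non-steep at $I_0$ (equivalently, $h|_{I_0+\Gamma^m}$ acquires a continuum of critical points, impossible for a steep analytic function, cf. \cite{Niederman_2006}).

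For the first condition I would take $m=1$ and $\Gamma^1:=\R w$. Since $h_{I_0}^1[w]=\nabla h(I_0)\cdot w=0$, the line $\Gamma^1$ is orthogonal to $\nabla h(I_0)$; moreover all derivatives at $t=0$ of the analytic function $t\mapsto h(I_0+tw)$ are the numbers $h_{I_0}^r[w]$ of condition (1), which vanish, so this function is constant near $t=0$. Hence $\pi_{\Gamma^1}\nabla h(I_0+tw)=\big(\nabla h(I_0+tw)\cdot w\big)w=0$ for small $|t|$, because $\nabla h(I_0+tw)\cdot w=(d/dt)\,h(I_0+tw)=0$; consequently $\min_{u\in\Gamma^1,\,\|u\|_2=\eta}\|\pi_{\Gamma^1}\nabla h(I_0+u)\|=0$ for all small $\eta$, and \eqref{cuore_steepness} fails at $I=I_0$.

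For the second condition I would set $\Gamma^m:=\mathrm{Span}(v,u_2,\dots,u_m)=\mathrm{Span}(u_1,\dots,u_m)$, which is $m$-dimensional and proper ($m\le n-1$) since $(v,u_2,\dots,u_m)\in\scV^1(m,n)$; the relations $h_{I_0}^1[v]=h_{I_0}^1[u_2]=\dots=h_{I_0}^1[u_m]=0$ say precisely that $\Gamma^m\perp\nabla h(I_0)$. I would then consider the curve $\gamma(t):=I_0+t\,u_1+\sum_{j=2}^m\big(\sum_{i\ge1}a_{ji}t^i\big)u_j$, which by hypothesis (a) is analytic for $|t|<\tti$, lies in $I_0+\Gamma^m$, and satisfies $\gamma(0)=I_0$, $\gamma'(0)=v\neq0$. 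The crux — the step I expect to be the only delicate one — is the dictionary established in the proof of Theorem~B (see section~\ref{rs_vanishing_polynomials}): for every $r\ge2$, the simultaneous vanishing of $h_{I_0}^1[v],\dots,h_{I_0}^1[u_m]$ and of $\sum_{\ell=1}^m\sum_{\alpha=1}^{r-1}\big|\cH_{m,\ell,\alpha}^{h,I_0}(v,u_2,\dots,u_m,a(m,r-1))\big|$ is equivalent to the vanishing, up to order $r-1$ at $t=0$, of the Taylor expansion of the analytic map $t\mapsto\pi_{\Gamma^m}\nabla h(\gamma(t))$. Granting this, the fact that \eqref{sountuboio} holds for all $r\ge2$ makes every Taylor coefficient of that map vanish, so by analytic continuation $\pi_{\Gamma^m}\nabla h(\gamma(t))\equiv0$ for small $|t|$. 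Finally $\|\gamma(t)-I_0\|$ is continuous, vanishes at $t=0$ and has nonzero derivative there, hence attains every sufficiently small nonnegative value; so for every small $\eta$ there is $t$ with $\|\gamma(t)-I_0\|=\eta$, whence $\min_{u\in\Gamma^m,\,\|u\|_2=\eta}\|\pi_{\Gamma^m}\nabla h(I_0+u)\|=0$ and \eqref{cuore_steepness} again fails at $I=I_0$.

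Thus in both cases $h$ is non-steep at $I_0$. Everything except the dictionary quoted above is a direct unwinding of Definition~\ref{def steep} together with the principle of analytic continuation; the combinatorial identity relating $\cH_{m,\ell,\alpha}^{h,I_0}$ to the jets at $t=0$ of the restricted projected gradient along a curve of the shape \eqref{sirio} is the only nontrivial ingredient, and it is exactly what the proof of Theorem~B supplies — here it can be invoked verbatim.
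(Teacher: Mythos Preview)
Your proof is correct and follows essentially the same approach as the paper: in both cases you exhibit an analytic curve in a subspace $\Gamma^m\perp\nabla h(I_0)$ along which $\pi_{\Gamma^m}\nabla h$ vanishes identically (by analytic continuation from the vanishing of all Taylor coefficients), invoking the dictionary from the proof of Theorem~B to translate condition~\eqref{sountuboio} into the infinite-order vanishing of this projected gradient. The paper's own proof is a terse version of exactly this argument.
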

\begin{rmk}
	Since we consider any $r\ge 2$, we have an infinite system. 
\end{rmk}
%{\small
%	\begin{align}\label{infinity}
%	\begin{split}
%	&	h^1_{I_0}[v]=h^1_{I_0}[u_2]=\dots= h^1_{I_0}[u_m]=0\\
%	&	h^2_{I_0}[v,v]=h^2_{I_0}[v,u_2]=\dots=h^2_{I_0}[v,u_m]=0\\
%	&	h^3_{I_0}[v,v,v]=0\\
%	\\
%	& \forall\  \alpha\in \N, \alpha\ge 3,\\
%	&\frac{1}{\alpha!} h_{I_0}^{\alpha+1}\big[v,\dots,v\big]+\sum_{\beta=1}^{\alpha-1}\frac{1}{(\beta-1)!} h_{I_0}^{\beta+1}\big[\stackrel{ \beta  }{\overbrace{v}},\sum_{i=2}^m a_{i(\alpha-(\beta-1))}\,u_i\big] \\
%	&	+\sum_{\substack{\mu\in\N^m\\  \mu\in\mathcal{M}(\alpha)\\\mu_1\neq 0 }}\sum_{k\in \cG(\widetilde \mu(1),\alpha)}\frac{h^{|\mu|}_{I_0}\big[\stackrel{ \mu_1-1  }{\overbrace{v}},\stackrel{ k_{22}  }{\overbrace{a_{22}u_2}},\dots, \stackrel{ k_{2\alpha}  }{\overbrace{a_{2\alpha}u_2}},\dots,\stackrel{ k_{m2}  }{\overbrace{a_{m2}u_m}},\dots, \stackrel{ k_{m\alpha}  }{\overbrace{a_{m\alpha}u_m}},v\big]}{(\mu_1-1)!\, k!}
%	=0\\
%	\\
%	& \forall\  \alpha'\in \N, \alpha'\ge 2,\\
%	& \frac{1}{\alpha'!} h^{\alpha'+1}_{I_0}\big[\stackrel{ \alpha'  }{\overbrace{v}},u_\ell\big]\\
%	&+ \sum_{\substack{\mu\in\N^m\\  \mu\in\mathcal{M}(\alpha')\\\mu_\ell\neq 0 }} \frac{1}{\mu_1!}\sum_{k\in \cG(\widetilde \mu(\ell),\alpha')} \frac{h^{|\mu|}_{I_0}\big[\stackrel{ \mu_1  }{\overbrace{v}},\stackrel{ k_{22}  }{\overbrace{a_{22}u_2}},\dots, \stackrel{ k_{2\alpha'}  }{\overbrace{a_{2\alpha'}u_2}},\dots,\stackrel{ k_{m2}  }{\overbrace{a_{m2}u_m}},\dots, \stackrel{ k_{m\alpha'}  }{\overbrace{a_{m\alpha'}u_m}},u_\ell\big]}{k!}=0\ .
%	\\
%	\end{split}
%	\end{align} }

\subsubsection{Theorems C1-C2-C3}

As we have showed above, Theorem B constitutes an explicit criterion for steepness which, however, for any given value of $n\ge 3$, $m\in\{2,\dots,n-1\}$ and $s_m\in\{1,\dots,r-1\}$ depends on the additional parameters $a_{21},\dots,a_{2s_m},\dots,a_{m1},\dots,a_{ms_m}\in \R^{(m-1)s_m}$ and on the vectors $v,u_2,\dots,u_m  \in \scV^1(m,n)$. As we have already pointed out in Remark \ref{commentario}, it is possible in principle to reduce these quantities from system \eqref{non_tocca}, by the means of classical algorithms of semi-algebraic geometry (see \cite{Basu_Pollack_Roy_2006}). However, in general the complexity of the latter grows double exponentially in the number of variables (see \cite{Heintz_Roy_Solerno_1990}) making them of little use in practice. 

However, since the quantities in \eqref{polpette}-\eqref{polpettone} are explicit, one may attempt to exploit their specific form in order to find an algorithm which is simpler than the classic ones and that eliminates at least the parameters $a_{22},\dots,a_{2s_m},\dots,a_{m2},\dots,a_{ms_m}$ from system \eqref{non_tocca}. In this way, one would have an explicit criterion for steepness involving only the multilinear forms of the tested function $h$ up to a given order, the parameters $a_{21},\dots,a_{m1}$, and the vectors $v,u_2,\dots,u_m$. Moreover, as we shall show in the sequel, without any loss of generality the numbers $a_{21},\dots,a_{m1}$ and the vector $v$ can be assumed to belong to a compact subset, whereas the vectors $u_2,\dots,u_m$ belong to $\tU(n,m-1)$, which is compact by definition. Hence, having an explicit criterion for steepness involving only the coefficients $a_{21},\dots,a_{m1}$ and the vectors $v,u_2,\dots,u_m$ as additional free parameters would be a qualititative improvement w.r.t. Theorem B in view of possible applications, as one would only have to consider parameters belonging to a compactum. Moreover, the presence of the vectors $v,u_2,\dots,u_m$ allows to keep track of the subspaces one is working on; namely, it is possible to isolate the subspaces where the studied function is non-steep. 

As we prove in sections \ref{Theorem C prova}-\ref{Prova C1-C4}, for a generic regular test function $h$ and for any $m\in\{2,\dots,n\}$,  on most of the $m$-dimensional subspaces of the Grassmannian $\tG(m,n)$ one is able to apply an explicit criterion to check steepness that does not involve the parameters $a_{22},\dots,a_{2s_m},\dots,a_{m2},\dots,a_{ms_m}$: this is the content of Theorems C1-C2-C3.

In order to state these results, we start by considering an integer $n\ge 3$, and a function $h$ of class $C^{2}$ around the origin, verifying $\grad  h(0) \neq  0$. Now, for any $m\in\{2,\dots,n-1\}$ we need to consider some subsets of the Grassmannian manifold $\tG(m,n)$. 

\begin{defn}\label{D}
	For any pair of integers $m \in \{2, ..., n - 1\}$ and  $j\in\{0,\dots,m\}$,  we indicate by $\sL_j(h,m,n)$ (resp. $\sL_{\ge j}(h,m,n)$) the subset of $\tG(m,n)$ containing those $m$-dimensional subspaces $\Gamma^m$ verifying
	\begin{enumerate}
		\item $	\grad h(0)\perp \Gamma^m$;
		\item 	the Hessian matrix of the restriction of $h$ to $\Gamma^m$, calculated at the origin, has exactly $j$ null eigenvalues (resp. at least $j$ null eigenvalues).
	\end{enumerate}
\end{defn}

With this definition, for any fixed $m\in\{2,\dots,n-1\}$, we have the partition
\begin{equation}
\{\Gamma^m\in \tG(m,n)| \Gamma^m\perp \grad h(0)\}=\sL_0(h,m,n)\bigsqcup \sL_1(h,m,n)\bigsqcup \sL_{\ge 2}(h,m,n)\ .
\end{equation}
We are now ready to state Theorems C1-C2-C3. 

Consider two positive integers $r,n \ge 2$, a vector $\ts:=(s_1,\dots,s_{n-1})\in \N^{n-1}$,  with $1\le s_i\le r-1$ for all $i=1,\dots,n-1$, and a function $h$ of class $C_b^{2r-1}$ around the origin, satisfying $\grad  h(0) \neq  0$. Then, for any given $m \in \{2, ..., n - 1\}$, one has the following results (which, considered together, are refined versions of Theorems \ref{Teorema_tre}-\ref{Teorema_quattro} in the introduction):

\begin{thm*}[\bf C1]\label{C1}
	$h$ is steep at the origin, with index $\alpha_m=1$, on the $m$-dimensional subspaces belonging to $\sL_0(h,m,n)$.	
\end{thm*}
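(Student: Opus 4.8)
The plan is to recognize that on a subspace $\Gamma^m\in\sL_0(h,m,n)$ the origin is a non-degenerate critical point of the restriction of $h$, and then to read off the linear lower bound \eqref{cuore_steepness} from the invertibility of the restricted Hessian. Since $h$ is studied around the origin, I would take $I_0=0$ throughout and fix a subspace $\Gamma^m\in\sL_0(h,m,n)$. By Remark \ref{viadotto}, for every $u\in\Gamma^m$ one has $\|\pi_{\Gamma^m}\nabla h(u)\|_2=\|\nabla g(u)\|_2$, where $g:=h|_{\Gamma^m}$ is the restriction of $h$ to $\Gamma^m$ equipped with the induced euclidean metric, viewed as a $C^2$ function of $m$ variables. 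The two defining properties of $\sL_0(h,m,n)$ say precisely that $\nabla g(0)=\pi_{\Gamma^m}\nabla h(0)=0$ and that the symmetric matrix $D^2g(0)$ is invertible; hence $0$ is a non-degenerate critical point of $g$.

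First I would set $c=c(\Gamma^m)>0$ equal to the smallest modulus of an eigenvalue of $D^2g(0)$, so that $\|D^2g(0)\,w\|_2\ge c\,\|w\|_2$ for every $w\in\Gamma^m$. Then I would use the first-order Taylor formula for $u\mapsto\nabla g(u)$ at the origin, $\nabla g(u)=D^2g(0)\,u+R(u)$ with $\|R(u)\|_2=o(\|u\|_2)$, to pick $\delta=\delta(\Gamma^m)>0$ so small that $\|R(u)\|_2\le\tfrac{c}{2}\|u\|_2$ whenever $u\in\Gamma^m$ and $\|u\|_2\le\delta$. This yields the pointwise lower bound $\|\nabla g(u)\|_2\ge\tfrac{c}{2}\|u\|_2$ on $\{u\in\Gamma^m:\|u\|_2\le\delta\}$.

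Finally I would verify \eqref{cuore_steepness} at $I=0$: for any $\xi\in(0,\delta]$, choosing $\eta=\xi$ in the outer maximum gives
\[
\max_{0\le\eta\le\xi}\ \min_{u\in\Gamma^m,\,\|u\|_2=\eta}\|\pi_{\Gamma^m}\nabla h(u)\|_2\ \ge\ \min_{u\in\Gamma^m,\,\|u\|_2=\xi}\|\nabla g(u)\|_2\ \ge\ \frac{c}{2}\,\xi\ >\ \frac{c}{4}\,\xi ,
\]
which is \eqref{cuore_steepness} with steepness index $\alpha_m=1$ and steepness coefficient $C_m:=c/4$. Property (1) of Definition \ref{def steep} is immediate from $\nabla h(0)\neq0$ and continuity, giving $\|\nabla h(I)\|\ge\tfrac12\|\nabla h(0)\|$ on a small ball $B^n(0,R)$ which we intersect with the domain of $h$. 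Thus $h$ is steep at the origin, with index $1$, on every $\Gamma^m\in\sL_0(h,m,n)$.

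I do not expect a genuine obstacle — this is the quantitative version of the Morse-lemma picture — but two bookkeeping points deserve attention. The coefficient $C_m=c(\Gamma^m)/4$ and the width $\delta(\Gamma^m)$ both degenerate as $\Gamma^m$ approaches the degeneracy locus $\sL_{\ge1}(h,m,n)$, where the smallest eigenvalue of the restricted Hessian vanishes; so the \emph{index} is uniformly equal to $1$ on all of $\sL_0(h,m,n)$, whereas uniform steepness \emph{coefficients} can only be guaranteed on compact subsets of $\sL_0(h,m,n)$. One should also express the map $\Gamma^m\mapsto D^2(h|_{\Gamma^m})(0)$ in local coordinates coming from a chart of $\tG(m,n)$ so that $c(\Gamma^m)$ and $\delta(\Gamma^m)$ depend continuously on $\Gamma^m$, which is routine linear algebra.
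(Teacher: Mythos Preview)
Your argument is correct: for each fixed $\Gamma^m\in\sL_0(h,m,n)$ the origin is a non-degenerate critical point of $h|_{\Gamma^m}$, and the Taylor expansion of $\nabla(h|_{\Gamma^m})$ immediately gives the linear lower bound \eqref{cuore_steepness} with index~$1$. You also correctly flag that the constants $C_m(\Gamma^m)$ and $\delta(\Gamma^m)$ degenerate as $\Gamma^m$ approaches $\sL_{\ge1}(h,m,n)$; the paper makes the same observation (see \S\ref{si rompe la steepness}).

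The paper takes a different route, however. Rather than arguing directly via Taylor expansion, it embeds the statement into its general $s$-vanishing framework: it shows that for any $\Gamma^m\in\sL_0(h,m,n)$ and any admissible curve $\gamma\in\arcoh$, the $1$-vanishing condition $\tQ_{i\alpha}(\tT_{0,\ta}(h|_{\Gamma^m},r,m),\ta,\jetunoa)=0$ for all $i,\alpha$ reduces to the existence of a null eigenvector of the restricted Hessian (system~\eqref{Handel}), which is ruled out by the definition of $\sL_0$. Since $\widehat\sigma^1(r,1,m)$ is closed (Theorem~\ref{chiusuraquattro}), this gives $\tT_0(h|_{\Gamma^m},r,m)\notin\widehat\Sigma^1(r,1,m)$, and then the machinery behind Theorem~A yields steepness with $s_m=1$, hence $\alpha_m=2s_m-1=1$. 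Your proof is considerably more elementary and self-contained; the paper's proof is longer but has the advantage of fitting uniformly into the same scheme used for Theorem~C2 on the strata $\sL_1$ and $\sL_{\ge2}$, where no such direct argument is available.
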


\begin{thm*}[\bf C2]\label{C3}
	
	If $s_m\ge 2$  there exist two semi-algebraic sets
	$$
	\scA_1(r,s_m,n,m),\scA_2(r,s_m,n,m) \subset \Pollo \times \R^{m-1}\times \scV^1(m,n)
	$$
	enjoying the following properties:
	
	\begin{enumerate}
		\item The form of $\scA_1(r,s_m,n,m)$ can be explicitly computed starting from the expression of set $\cZ^{r,s_m,m}_n$ in \eqref{machenneso} by the means of an algorithm involving only linear operations.
		\item If system
		\begin{align}
		\begin{split}
		&\begin{cases}
		%(a_{21},\dots,a_{m1})\in \overline B^{m-1}(\scK)\\
		(u_1,\dots,u_m)\in \tU(m,n)\quad , \qquad  % \qquad v:=u_1+\sum_{i=2}^m a_{i1} u_i
		\text{ Span }(u_1,u_2,\dots,u_m)\in \sL_1(h,m,n)\\
		(\tT_{0}(h,r,n),0, u_1,u_2,\dots,u_m)\in \scA_1(r,s_m,n,m)
		\end{cases}
		\end{split}
		\end{align}
		has no solution, then $h$ is steep at the origin with index $\alpha_m\le 2s_m-1$  on any subspace $\Gamma^m\in \sL_1(h,m,n)$. 
		\item There exists a positive constant $\scK=\scK(r,n,m)$ such that if
		\begin{align}\label{brutto}
		\begin{split} 
		&\begin{cases}
		(a_{21},\dots,a_{m1})\in \overline B^{m-1}(\scK)\\
		(u_1,\dots,u_m)\in \tU(m,n)\quad , \qquad v:=u_1+\sum_{i=2}^m a_{i1} u_i\\
		\text{ Span }(v,u_2,\dots,u_m)\in \sL_{\ge 2}(h,m,n)\\
		(\tT_{0}(h,r,n),a_{21},\dots,a_{m1}, u_1,u_2,\dots,u_m)\in  \scA_2(r,s_m,n,m)
		\end{cases}
		\end{split}
		\end{align}
		has no solution, then $h$ is steep at the origin with index $\alpha_m\le 2s_m-1$ on any subspace $\Gamma^m\in \sL_{\ge 2}(h,m,n)$.
	\end{enumerate}
	
\end{thm*}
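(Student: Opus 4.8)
The plan is to deduce Theorem~C2 from Corollary~B2 and Theorem~\ref{arco_minimale}, the new ingredient being the explicit \emph{affine} dependence of the quantities $\cH^{Q,0}_{m,\ell,\alpha}$ of \eqref{polpette}--\eqref{polpettone} on the higher Taylor coefficients $a_{j\alpha}$, $\alpha\ge2$, of the minimal curve. By Corollary~B2 the $m$-th stratum of the bad set is $\mathrm{closure}\big(\Pi_{\Pollo}\cZ^{r,s_m,m}_n\big)$, and by Theorem~A, if $\tT_0(h,r,n)$ stays (with a neighbourhood) away from it, then $h$ is steep at $0$ on the $m$-dimensional subspaces with index $\alg_m\le2s_m-1$. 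Every point of $\cZ^{r,s_m,m}_n$ satisfies $\cH^{Q,0}_{m,\ell,1}=Q^2_0[v,u_\ell]=0$ ($\ell=2,\dots,m$) and $\cH^{Q,0}_{m,1,1}=Q^2_0[v,v]=0$; since $v=u_1+\sum_{j\ge2}a_{j1}u_j$ and $(u_1,\dots,u_m)\in\tU(m,n)$, this amounts to $v$ lying in the kernel of $D^2Q(0)$ restricted to $\Gamma^m:=\mathrm{Span}(u_1,\dots,u_m)$, so $\cZ^{r,s_m,m}_n=\cZ_1\sqcup\cZ_{\ge2}$ according to whether that kernel is one- or at-least-two-dimensional, and $\sL_1$, $\sL_{\ge2}$ are handled through $\cZ_1$, $\cZ_{\ge2}$.

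The structural step is the inspection of \eqref{polpettone}: for fixed $\alpha\in\{2,\dots,s_m\}$ and $\ell\in\{2,\dots,m\}$ a monomial containing $a_{j'\alpha}$ can only come from a pair $(\mu,k)$ with $k_{j'\alpha}\ge1$, and then the constraint $\widetilde{\mu}_1(\ell)+\sum_{j,i}i\,k_{ji}=\alpha$ of \eqref{giammai} forces $\widetilde{\mu}_1(\ell)=0$, $k_{j'\alpha}=1$ and all remaining entries of $k$ to vanish, so $\mu\in\{e_{j'}+e_\ell,\,2e_\ell\}$ and the term equals $a_{j'\alpha}\,Q^2_0[u_{j'},u_\ell]$. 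Hence $\cH^{Q,0}_{m,\ell,\alpha}=\sum_{j'=2}^{m}a_{j'\alpha}\,Q^2_0[u_{j'},u_\ell]+R_{\ell\alpha}$ with $R_{\ell\alpha}$ polynomial in $Q$, the $u_i$, and the $a_{ji}$ with $i<\alpha$; the linear part is the matrix $M=M(Q,u):=\big(Q^2_0[u_{j'},u_\ell]\big)_{j',\ell=2}^{m}$, i.e. the Gram matrix of $D^2Q(0)$ restricted to $\mathrm{Span}(u_2,\dots,u_m)$. A parallel inspection of \eqref{polpette} shows that the only $a_{\cdot\alpha}$-dependence of $\cH^{Q,0}_{m,1,\alpha}$ is a linear term with coefficients $Q^2_0[v,u_j]$.

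On $\cZ_1$: given $\Gamma^m\in\sL_1(h,m,n)$, take the unit null direction $u_1$ of $D^2h(0)|_{\Gamma^m}$ and complete it to an orthonormal basis $u_1,\dots,u_m$ of $\Gamma^m$; by Theorem~\ref{arco_minimale} the minimal curve in $\Gamma^m$ is tangent to $u_1$, so $v=u_1$ and $a_{j1}=0$, and $M$ is the non-degenerate restriction of the Hessian to $u_1^\perp\cap\Gamma^m$, hence invertible. Inductively on $\alpha=2,\dots,s_m$ one solves $(a_{2\alpha},\dots,a_{m\alpha})=-M^{-1}(R_{2\alpha},\dots,R_{m\alpha})$, so each $a_{j\alpha}$ is a rational function of the entries of $Q$ and the $u_i$ with denominator a power of $\det M$ (Cramer's rule). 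Substituting into $\cH^{Q,0}_{m,1,\alpha}=0$, $\alpha=2,\dots,s_m$ — note $\cH^{Q,0}_{m,1,1}=Q^2_0[u_1,u_1]=0$ automatically and the $a_{\cdot\alpha}$-linear part of $\cH^{Q,0}_{m,1,\alpha}$ vanishes since $Q^2_0[u_1,u_j]=0$ — and clearing denominators yields $s_m-1$ polynomial equations in $(Q,u_1,\dots,u_m)$, which together with $(u_1,\dots,u_m)\in\tU(m,n)$, $Q\in\Pollo$ and the inherited conditions $Q^1_0[u_1]=Q^1_0[u_\ell]=0$ define $\scA_1(r,s_m,n,m)\subset\Pollo\times\R^{m-1}\times\scV^1(m,n)$; it is obtained from $\cZ^{r,s_m,m}_n$ by Gaussian elimination only, which is item~(1). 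For item~(2): if $h$ is not steep at $0$ on some $\Gamma^m\in\sL_1(h,m,n)$ with index $\le2s_m-1$, the minimal-curve analysis behind Corollary~B2 produces, in the adapted frame, a configuration witnessing $\tT_0(h,r,n)\in\mathrm{closure}(\Pi_{\Pollo}\cZ_1)$, hence $(\tT_0(h,r,n),0,u_1,\dots,u_m)\in\scA_1$ with $\mathrm{Span}(u_1,\dots,u_m)\in\sL_1(h,m,n)$ — a solution of the system; contrapositively, no solution keeps $\tT_0(h,r,n)$ at positive distance from $\mathrm{closure}(\Pi_{\Pollo}\cZ_1)$ and Theorem~A gives the claimed steepness.

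On $\cZ_{\ge2}$ the matrix $M$ is necessarily singular (the $\ge2$-dimensional kernel of $D^2h(0)|_{\Gamma^m}$ still meets the $(m-1)$-plane $\mathrm{Span}(u_2,\dots,u_m)$), so the $a_{j\alpha}$ cannot be solved for; instead I would use the uniform bound $\scK=\scK(r,n,m)$ of Theorem~\ref{arco_minimale} on \emph{all} Taylor coefficients of the minimal curve, intersect $\cZ_{\ge2}$ with $\{|a_{ji}|\le\scK\}$, and project out the variables $a_{j\alpha}$, $\alpha\ge2$: these range over a compact fibre, so the projection is proper, whence by Tarski--Seidenberg and Remark~\ref{commentario} the image $\scA_2(r,s_m,n,m)\subset\Pollo\times\R^{m-1}\times\scV^1(m,n)$ is a \emph{closed} semi-algebraic set (not, in general, produced by linear operations), and item~(3) follows as item~(2) with the extra constraint $(a_{21},\dots,a_{m1})\in\overline B^{m-1}(\scK)$ recording the first-order bound. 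The main obstacle is not the algebra of \eqref{polpette}--\eqref{polpettone}, but two matching problems: first, reconciling the intrinsic minimal-curve statement of Theorem~\ref{arco_minimale} with a parametrization in a frame adapted to $\ker D^2h(0)|_{\Gamma^m}$ — one must check that in the $\sL_1$ case the minimal curve can be taken tangent to the null direction (yielding $a_{\cdot1}=0$) and that in the $\sL_{\ge2}$ case the frame can be chosen with first-order coefficients in $\overline B^{m-1}(\scK)$, without spoiling the uniform derivative bounds; second, the bookkeeping of closures, since limits of subspaces in $\sL_1(h,m,n)$ may lie in $\sL_{\ge2}(h,m,n)$, which is exactly why $\scA_1$ must be the full algebraic set left after clearing $\det M$ (its $\{\det M=0\}$ locus included) rather than the graph of the solved $a_{j\alpha}$, and why in the $\sL_{\ge2}$ case the $\scK$-bound, which makes the relevant projection closed, is indispensable.
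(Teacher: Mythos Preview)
Your treatment of the $\sL_1$ case is essentially the paper's proof: the affine structure you extract from \eqref{polpette}--\eqref{polpettone} is the content of Lemma~\ref{ciaone}, your matrix $M$ is the paper's $\Hp$, and the recursive linear elimination on $M$ is Theorem~\ref{partition}. The choice of $u_1$ as the null direction, forcing $v=u_1$ and $a_{j1}=0$, is Lemma~\ref{Odilon Redon}. One minor imprecision: the reason $v=u_1$ is not Theorem~\ref{arco_minimale} per se but the $\alpha=1$ equations $Q^2_0[v,\cdot]=0$ combined with the one-dimensionality of the kernel on $\sL_1$; you state this correctly earlier but then misattribute it.

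The $\sL_{\ge2}$ case has a genuine gap. Theorem~\ref{arco_minimale} does \emph{not} give a uniform bound on all Taylor coefficients of the minimal curve: by~\eqref{Bernie} one has $|a_{j\beta}|\le\tM(r,n,m,\beta)/\lambda^{\beta-1}$, which is uniform only for $\beta=1$ and blows up as $\lambda\to0$ for $\beta\ge2$ (the paper stresses exactly this point in paragraph~\ref{initial}, just before Definition~\ref{comunismo}). Hence your fibre over $(Q,a_{\cdot1},u)$ is \emph{not} compact, the projection is not proper, and you cannot conclude closedness this way. The paper bypasses the issue by defining $\scA_2$ through the closure $\widehat\Sigma^1(r,s_m,m)\cap\sS_2^1(r,m)$ directly (Lemma~\ref{Toulouse Lautrec} and the paragraph following it): the closure of a semi-algebraic set is semi-algebraic by Proposition~\ref{closure-interior-boundary}, so $\scA_2$ exists, though---as the remark after the statement of Theorem~C2 concedes---its explicit form is only accessible through general quantifier-elimination, not a linear algorithm. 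The constant $\scK$ in item~(3) is solely the $\beta=1$ bound $\tM(r,n,m,1)$, which makes the projection over the compact $\centinaunoh$ closed (Lemma~\ref{projection}); it plays no role in eliminating the higher $a_{j\alpha}$.
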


\begin{rmk}
	We observe that the statement above gives no information about the explicit expression of subset $\scA_2(r,s_m,n,m)$. As it will be shown in sections \ref{Theorem C prova}-\ref{Prova C1-C4}, the linear algorithm used to deduce the form of $\scA_1(r,s_m,n,m)$ starting from set $\cZ^{r,s_m,m}_n$ in \eqref{machenneso} fails in case $v,u_2,\dots,u_m$ span a subspace belonging to $\sL_{\ge 2}(h,m,n)$. Therefore, in order to find the explicit expression for $\scA_2(r,s_m,n,m)$ one is obliged to apply the classical, much slower algorithms of real-algebraic geometry to the set $\cZ^{r,s_m,m}_n$ determined by system \eqref{machenneso}. Thus, checking steepness on the subspaces of $\sL_{\ge 2}(h,m,n)$ is more complicated than on those belonging to $\sL_1(h,m,n)$, as one is obliged either to apply slow algorithms to find the explicit expression of system \eqref{brutto}, or to use the statement of Theorem B, which nevertheless depends on the non-compact real coefficients $a_{22},\dots,a_{2s_m},\dots,a_{m2},\dots,a_{ms_m}$. 
	
	However, for a generic function $h$ the subsets $\sL_1(h,m,n)$ and $\sL_{\ge 2}(h,m,n)$ are "small" inside the Grassmannian $\tG(m,n)$. Namely, in Theorem C3 below we prove that for any $m\in\{2,\dots,n-1\}$ and for any  bilinear symmetric non-degenerate form $\mathsf{B}: \R^n\times \R^n\longrightarrow\R$, the subspaces of dimension $m$ on which the restriction of $\mathsf{B}$ has one or two null eigenvalues are rare in $\tG(m,n)$, both in measure and in topological sense.

\end{rmk}

\begin{thm*}[\bf C3]\label{C4}
	Let $\mathsf{B}:\R^n\times \R^n \longrightarrow \R$ be a bilinear, symmetric, nondegenerate form, and let 
	$m\in\{2,\dots,n-1\}$ be a positive integer. 
	
	For $j\in\{1,2\}$, denote by $\tG_{\ge j}(\mathsf{B},m,n)\subset \tG(m,n)$ the subset of linear $m$-dimensional subspaces on which the restriction of $\mathsf{B}$ has at least $j$ null eigenvalues. 
	
	Then
	
	\begin{enumerate}
		\item $\tG_{\ge 1}(\mathsf{B},m,n)$ is contained in a submanifold of codimension one in $\tG(m,n)$; \item  $\tG_{\ge 2}(\mathsf{B},m,n)$ is obtained by the intersection of  $\tG_{\ge 1}(\mathsf{B},m,n)$ with another subset contained in a submanifold of codimension one in $\tG(m,n)$.

	\end{enumerate}
	
\end{thm*}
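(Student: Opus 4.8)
The plan is to trivialise the Grassmannian near an arbitrary point of the bad set by the standard graph chart, to express the degeneracy condition as the vanishing of the determinant of an explicit symmetric Schur complement, and then to show that this Schur complement is a \emph{submersion}, the nondegeneracy of $\mathsf{B}$ being exactly what makes the relevant linear forms independent.

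First I would fix $\Gamma_0\in\tG_{\ge 1}(\mathsf{B},m,n)$ and set $K_0:=\Gamma_0\cap\Gamma_0^{\perp_{\mathsf{B}}}=\ker(\mathsf{B}|_{\Gamma_0})$, $c_0:=\dim K_0\ge 1$. Since $\mathsf{B}$ is nondegenerate one has $\dim\Gamma_0^{\perp_{\mathsf{B}}}=n-m$, hence $c_0\le\min\{m,n-m\}$; in particular $c_0\le 1$ and $\tG_{\ge 2}(\mathsf{B},m,n)=\varnothing$ when $m=n-1$, which already settles part (2) in that case. Splitting $\Gamma_0=\Gamma_0'\oplus K_0$ with $\mathsf{B}|_{\Gamma_0'}$ nondegenerate, choosing $W$ with $\R^n=\Gamma_0\oplus W$ (so $\dim W=n-m\ge c_0$), and using the chart $U_W\cong\operatorname{Hom}(\Gamma_0,W)$, $L\mapsto\Gamma_L:=\{v+Lv:v\in\Gamma_0\}$, the Gram matrix of $\mathsf{B}|_{\Gamma_L}$ in a basis of $\Gamma_0$ adapted to $\Gamma_0'\oplus K_0$ is a polynomial $M(L)=\begin{pmatrix}A(L)&B(L)\\ B(L)^{\mathsf T}&D(L)\end{pmatrix}$ with $A(0)$ invertible, $B(0)=0$, $D(0)=0$. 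After shrinking $U_W$ so that $A(L)$ stays invertible, the Schur complement $E(L):=D(L)-B(L)^{\mathsf T}A(L)^{-1}B(L)$ is a smooth map $U_W\to\mathrm{Sym}(c_0)$ with $E(0)=0$, $\det M(L)=\det A(L)\cdot\det E(L)$ and $\operatorname{corank}M(L)=\operatorname{corank}E(L)$; hence, near $\Gamma_0$,
\[
\tG_{\ge 1}(\mathsf{B},m,n)=\{L:\det E(L)=0\},\qquad \tG_{\ge 2}(\mathsf{B},m,n)=\{L:\operatorname{corank}E(L)\ge 2\}.
\]

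The heart of the argument is that $E$ is a submersion at $L=0$. Since $B(0)=0$ the quadratic term $B^{\mathsf T}A^{-1}B$ does not contribute to the linearisation, and a short computation using $\mathsf{B}(k_i,\Gamma_0)=0$ for a basis $k_1,\dots,k_{c_0}$ of $K_0$ shows that the differential of $E$ at $0$ sends $\dot L\in\operatorname{Hom}(\Gamma_0,W)$ to the symmetric matrix $\big(\ell_i(\dot L k_j)+\ell_j(\dot L k_i)\big)_{i,j}$, where $\ell_i\in W^{*}$ is the functional $\ell_i(w):=\mathsf{B}(k_i,w)$ (well defined because $k_i\in K_0$). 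Here the nondegeneracy of $\mathsf{B}$ is precisely what is needed: $\sum_i a_i\ell_i=0$ forces $\sum_i a_i k_i\in\ker\mathsf{B}=\{0\}$, so $\ell_1,\dots,\ell_{c_0}$ are linearly independent in $W^{*}$, and since $\dim W=n-m\ge c_0$ one can choose $\dot L k_1,\dots,\dot L k_{c_0}\in W$ so that $(\ell_i(\dot L k_j))_{i,j}$ is any prescribed $c_0\times c_0$ matrix; thus the differential of $E$ at $0$ is onto $\mathrm{Sym}(c_0)$. From this, part (1) follows immediately on the open dense subset of $\tG_{\ge 1}$ where $\mathsf{B}|_\Gamma$ has corank exactly one: there $\{\det E=0\}$ is the preimage under a submersion of the smooth part of the determinant hypersurface of $\mathrm{Sym}(c_0)$, hence a genuine codimension-one submanifold; the complement $\tG_{\ge 2}$ is $E^{-1}$ of the corank-$\ge 2$ locus of $\mathrm{Sym}(c_0)$, whose codimension is $\ge 3$; so $\tG_{\ge 1}$ is a codimension-one real-algebraic subvariety of $\tG(m,n)$, smooth off a set of codimension $\ge 3$ (and, for $m=n-1$, a closed embedded hypersurface). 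For part (2), the representative case is $c_0=2$: a symmetric $2\times2$ matrix with zero determinant and zero trace vanishes, so near $\Gamma_0$ one has $\tG_{\ge 2}=\{\det E=0\}\cap\{\tr E=0\}=\tG_{\ge 1}\cap\mathcal{T}$ with $\mathcal{T}:=E^{-1}\{\tr E=0\}$ a codimension-one submanifold (preimage of a hyperplane under a submersion); for $c_0\ge 3$ one replaces $\{\tr=0\}$ by a suitable linear slice of $\mathrm{Sym}(c_0)$ transverse to the determinant cone, these strata being thinner still.

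The delicate point is not the submersion computation but the handling of the higher-corank strata and the passage from the local pictures to a global object: near a corank-$\ge 2$ subspace the literal set $\tG_{\ge 1}$ looks like a product of $(\R\setminus\{0\})$ with the cone $\{\det=0\}\subset\mathrm{Sym}(c_0)$, which is \emph{not} contained in any smooth hypersurface of $\mathrm{Sym}(c_0)$, so ``submanifold of codimension one'' must be understood in the subvariety sense above. For the sole purposes of the genericity results (Theorems A--C2) it is enough to record the weaker and immediate statement that $\tG_{\ge 1}(\mathsf{B},m,n)$ is a \emph{proper} real-algebraic subset of $\tG(m,n)$ — nonempty only for indefinite $\mathsf{B}$, and then proper because nondegeneracy forces some $m$-dimensional subspace to have nondegenerate restriction — hence of measure zero and meagre, with $\tG_{\ge 2}(\mathsf{B},m,n)\subseteq\tG_{\ge 1}(\mathsf{B},m,n)\cap\mathcal{T}$ and $\mathcal{T}$ of codimension one. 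I would present the submersion lemma as the crux of the proof and treat the stratification overhead as routine real-algebraic geometry.
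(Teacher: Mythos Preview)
Your argument is correct, but it is considerably more refined than the paper's and proceeds along a different line. The paper does not use a Schur complement or a submersion lemma at all. It first fixes an orthonormal basis $e_1,\dots,e_n$ diagonalising $\mathsf{B}$ (eigenvalues $\alpha_1,\dots,\alpha_n$), and uses only the graph charts $U_J$ relative to coordinate subspaces of this basis. In such a chart the Gram condition for degeneracy of $\mathsf{B}|_V$ reduces to the single polynomial equation $\det(\mathscr{M}_\mathsf{B}(w)+\mathbb I_m)=0$, where $\mathscr{M}_\mathsf{B}(w)_{\ell',\ell}=\alpha_{i_{\ell'}}^{-1}\mathsf{B}(w_{\ell'},w_\ell)$; the paper simply observes that at $w=0$ this determinant equals $\det\mathbb I_m=1\neq 0$, and then appeals to an auxiliary lemma stating that the zero set of a nonzero real polynomial is ``contained in a submanifold of codimension one'' (proved there by a short induction on the degree). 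For part~(2) the paper uses a global polynomial condition rather than your local trace: if $\mathsf{B}|_V$ has corank $\ge 2$ then $-1$ is a repeated eigenvalue of $\mathscr{M}_\mathsf{B}(w)$, so the discriminant $\Delta(P_{\mathscr{M}_\mathsf{B}})$ of its characteristic polynomial vanishes; nonvanishing of this discriminant polynomial is checked by exhibiting specific $w'_\ell=\sqrt{\ell\,|\alpha_{i_\ell}/\alpha_{j_\ell}|}\,e_{j_\ell}$ that make $\mathscr{M}_\mathsf{B}$ diagonal with entries $\pm 1,\pm 2,\dots,\pm m$.

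What each approach buys: the paper's argument is entirely elementary and sidesteps precisely the stratification and local-to-global patching issues you flag in your last paragraph---it never needs to worry about what happens near higher-corank points, because it only claims containment in the zero locus of a nonzero polynomial, and the auxiliary lemma absorbs the loose usage of ``submanifold of codimension one''. Your approach gives sharper structural information (smoothness of $\tG_{\ge 1}$ away from a codimension-$\ge 3$ set, the precise role of nondegeneracy via the independence of the $\ell_i$) and would be the right one if one wanted to prove the conjecture following Theorem~C3 that $\tG_{\ge 2}$ generically has codimension two; for the bare statement of Theorem~C3 as written, however, the paper's route is shorter.
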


Finally, we state the following conjecture, which will hopefully be proved in a future work.

{\bf Conjecture}: for a generic bilinear form $\mathsf{B}$, the subset $\tG_{\ge 2}(\mathsf{B},m,n)$ appearing in Theorem C$3$ is contained in a submanifold of codimension two in $\tG(m,n)$. 

\subsubsection{Some observations on Theorems C$1$-C$2$}\label{si rompe la steepness}

	We stress that the criteria for steepness stated in Theorems C$1$-C$2$ are only valid for a given function on specific subspaces. They do not guarantee steepness on the same subspaces for an open neighborhood of functions around the studied one. Moreover, Theorems C$1$-C$2$ yield a uniform bound on the indices of steepness on the subspaces where their hypotheses are satisfied, but do not give any information on the coefficients of steepness, which may tend to zero as a subspace where the conditions are not verified is approached. This is why these theorems are somewhat "punctual", whereas Theorem A is "local" (it ensures steepness for an open set of functions, in a neighborhood of a considered point, with uniform estimates on the indices and coefficients). 
	
	To see an example showing why Theorems C$1$-C$2$ cannot be extended, in general, to an open set of functions, and why they do not provide uniform steepness coefficients, consider 
	$$
	h(I_1,I_2,I_3)=\frac{(I_1+I_2)^2}{2}+I_3
	$$ 
	and study on which subspaces of the origin it verifies the steepness condition. Even though Theorems C$1$-C$2$ deal with steepness on $m$-dimensional subspaces, with $m\ge 2$, for the sake of simplicity we will only consider one-dimensional subspaces in the sequel. Infact, the phenomena that we will point out already suggest what kind of "problems" may arise in higher dimensions when trying to extend the results of Theorems C$1$-C$2$ either to an open set of functions, or when trying to get uniform estimates on the steepness coefficients.
	
	The Hessian matrix of $h$ at the origin reads
	\begin{equation}
		h^2_{(0,0)}=
		\left(
		\begin{matrix}
		1 & 1  & 0\\	
		1 & 1  & 0\\
		0 & 0  & 0
		\end{matrix}
		\right) 
	\end{equation}
	and the vectors $(u_1,u_2,u_3)\in \R^3$ which are both isotropic for $h^2_{(0,0)}$ and orthogonal to $\grad h(0)$ must verify
	$$
	\begin{cases}
	(u_1,u_2,u_3)\cdot \grad h(0)=u_3=0\\
	h^2_{(0,0)}[u,u]=(u_1,u_2,u_3)		\left(
	\begin{matrix}
	1 & 1  & 0\\	
	1 & 1  & 0\\
	0 & 0  & 0
	\end{matrix}
	\right) 
		\left(
	\begin{matrix}
	u_1\\
	u_2\\
	u_3
	\end{matrix}
	\right) = (u_1+u_2)^2=0 
	
	\end{cases}
	$$	
	that is, they belong to the "bad" line
	$$
	\Gamma^1_{bad}:=\text{Span} \{(u_1,u_2,u_3)\ |\ u_3=0\ ,\ \ u_1+u_2=0\}
	$$
	 obtained by the intersection of the planes $u_3=0$ and $u_1+u_2=0$. Not only the Hessian is completely degenerate on $\Gamma^1_{bad}$: on a such a subspace the function $h$ is not even steep, as the gradient of the restriction 
	$
	\grad (h|_{\Gamma^1_{bad}})=I_1+I_2=0
	$
	is identically null, so that condition \eqref{cuore_steepness} cannot be verified. However, for any parameter $\eps\neq  0$, the Hessian is non-degenerate on the line
	$$
	\Gamma^{1,\eps}_{good}:=\text{Span}\{(1+\eps,-1,0)\}
	$$
as
$$
h^2_{(0,0)}[(1+\eps,-1,0),(1+\eps,-1,0)]=\eps^2> 0\ .
$$
This amounts to saying that $h$ is convex at the origin along $\Gamma^{1,\eps}_{good}$, hence that is it steep at the origin on the subspace $\Gamma^{1,\eps}_{good}$ with index equal to one. However, when $\eps$ is chosen to be arbitrarily small, $\Gamma^{1,\eps}_{good}$ approaches $\Gamma^{1}_{bad}  $ and the steepness property breaks down as the coefficients of steepness $C_1,\delta$ tend to zero. Hence, in the higher dimensional case, if the conditions of Theorems C$1$-C$2$ are only matched on certain good subspaces, it is reasonable to expect that one cannot have any uniform estimate on the steepness coefficients, as these may go to zero when a bad subspace is approached. 

Moreover, we introduce the one-parameter family of functions
$$
H_\lambda(I_1,I_2,I_3)=\frac{(I_1+(1+\lambda)I_2)^2}{2}+I_3\quad , \qquad \lambda\in \R
$$
verifying $H_0\equiv h$. For any given $\lambda_0\in \R$, it is plain to check that the function $H_{\lambda_0}$ is non-steep on the subspace $\Gamma^{1,\lambda_0}_{good}$, as the gradient of the restriction $\grad( H_{\lambda_0}|_{\Gamma^{1,\lambda_0}_{good}})$ is identically zero. However, as we have showed previously, the function $H_0$ is steep on $\Gamma^{1,\lambda}_{good}$ for any $\lambda\neq 0$. Therefore, we have showed that for any given $\lambda\neq 0$, there exists a subspace $\Gamma^{1,\lambda}_{good}$ on which $H_0$ is steep and a $\lambda$-close function $H_\lambda$ for the $C^2$-norm which is non-steep on $\Gamma^{1,\lambda}_{good}$. Hence, in Theorems C$1$-C$2$ it is not reasonable to expect that when $h$ is steep in a punctured neighborhood of the Grassmannian around a given subspace, then the same holds true for all functions in a neighborhood of $h$.

\section{The Thalweg and its properties}\label{thalweg_sec}

It is clear from Definition \ref{def steep} that studying the steepness property at the origin of a given function $h\in C^2(B^n(0,2\delta),\R)$ verifying $\nabla h(0)\neq 0$,  amounts to studying the projection of its gradient on any $m$-dimensional subspace $\Gamma^m$ perpendicular to $ \nabla h(0)$, with $m\in\{1,...,n-1\}$. More precisely, given $\delta>0$, for any fixed $\eta\in(0,\delta]$ we are interested in the quantity $$
\mu_h(\Gamma^m,\eta):=\min_{u\in\Gamma^m,\,||u||_2=\eta} ||\pi_{\Gamma^m}\,\nabla h(u)||_2 \ .
$$ 
Since, for any given $\Gamma^m$ orthogonal to $\grad h(0)$ and for any $\eta\in(0,\delta]$, the value $\mu_h(\Gamma^m,\eta)$ is attained at some point of the $m$-dimensional sphere 
$$
\cS_\eta^m:=\{u\in\Gamma^m\ |\ ||u||_2=\eta\}\ ,
$$ it makes sense to give the following 
\begin{defn}\label{thalweg}
	We call {\it Thalweg} of $h$ on $\Gamma^m$ the set
	$$
	\twg:=\{I^\star\in\Gamma^m : ||\pi_{\Gamma^m}\,\nabla h(I^\star)||_2=\mu_h(\Gamma^m,\eta) \text{\quad for \quad } 0\le \eta:=||I^\star||_2\le \delta \}\ .
	$$
\end{defn}

\medskip

In the sequel, we will be interested in studying the thalweg $\twgpo$ of the Taylor polynomial $\tT_0(h,r,n)$. Namely, the goal of this section is to prove the following
\begin{thm}{(Nekhoroshev, \cite{Nekhoroshev_1973})}\label{arco_minimale}
	For any pair of integers $r,n\ge 2$, and for any real $\delta>0$, consider a function $h\in C^r(B^n(0,2\delta))$ verifying $\nabla h(0)\neq 0$. Then, for any given number $m\in\{1,...,n-1\}$, for any $m$-dimensional subspace $\Gamma^m$ orthogonal to $\nabla h(0)$ there exists a semi-algebraic curve 
	$
	\gamma$ with values in $\twgpo
	$ 
	such that $\gamma(0)=0$ and 
	\begin{enumerate}
		\item For any fixed $\eta\ge 0$, the intersection $\text{ Im}(\gamma)\cap\mathcal{S}_\eta^m $ is a singleton;
		\item There exists a positive integer $\td=\td(r,n,m)$ that bounds the diagram (see Def. \ref{diagram}) of $\text{graph}(\gamma)$;
		\item There exists $\tK=\tK(r,n,m)>1$ such that, for any $\lambda>0$, the curve $\gamma$ is real-analytic on some closed interval $\mathtt{I}_\lambda\subset [-\lambda,\lambda]$ of length $\lambda/\tK$, with complex analyticity width $\lambda/\tK$;  
		\item Over $\mathtt{I}_\lambda$, $\gamma$ is an $i$-arc, i.e. it can be parametrized by
		$$
		\gamma(t):=\begin{cases}
		x_i(t)=t \qquad &\text{  for some }i\in\{1,...,m\}\\
		x_j(t)=f_j(t)\qquad &\text{  for all } j\in\{1,...,m\},\ j\neq i
		\end{cases}
		\qquad t\in\mathtt I_\lambda
		$$ 
		where the $f_j(t)$ are Nash (i.e. analytic-algebraic) functions;
		\item $\gamma$ satisfies a Bernstein's inequality on its Taylor coefficients over the interval $\tI_\lambda$. Namely,
		%\begin{align}\label{Bernstein}
		%\begin{split}
		%&\max_{z\in (\mathtt{I}_\lambda)_{\lambda/\tK}}|f_j(z)|\le \tM\,\max_{t\in\mathtt{I}_\lambda}|f_j(t)| \quad 
		%\text{ for all } j\in\{1,...,m\},\  j\neq i\ .
		%\end{split}
		%\end{align} 
		indicating by
		$$
		f_j(t)=\sum_{\beta=0}^{+\infty} a_{j\beta}( u)\,t^\beta\quad , \qquad j\in\{1,...,m\}\ ,\ \ j\neq i\ ,
		$$
		the Taylor expansion of $f_j$ at some point $u\in\tI_\lambda $, there exists a positive constants $K_2(r,n,m)$, and $\tM=\tM(r,n,m,\beta):=\beta!\times (\tK(r,n,m))^\beta\times  K_2(r,n,m)$ for which the following uniform estimate holds:
		\begin{equation}\label{Bernie}
		\max_{u\in\tI_\lambda}|a_{j\beta}(u)|\le \frac{\tM}{\lambda^{\beta-1}}\ .
		\end{equation}
	\end{enumerate}
	
\end{thm}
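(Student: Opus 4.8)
The plan is to realize Theorem \ref{arco_minimale} as a quantitative, low--dimensional instance of the analytic reparametrization of a semi--algebraic set, the set in question being the thalweg $\twgp$ of the polynomial $P:=\tT_0(h,r,n)\in\Pol$. First I would fix linear coordinates $(x_1,\dots,x_m)$ on $\Gamma^m$ with the induced metric, so that by Remark \ref{viadotto} the map $u\mapsto\|\pi_{\Gamma^m}\nabla P(u)\|_2^2$ is a polynomial of degree $\le 2(r-1)$. The thalweg (Definition \ref{thalweg}) is then the solution set of the first--order formula
$$
\forall v\in\R^m\colon\quad \|v\|_2^2=\|u\|_2^2\ \Longrightarrow\ \|\pi_{\Gamma^m}\nabla P(u)\|_2^2\le\|\pi_{\Gamma^m}\nabla P(v)\|_2^2,
$$
whose polynomial atoms have degree bounded in terms of $(r,n,m)$ only. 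By the effective Tarski--Seidenberg theorem / quantifier elimination (Theorem \ref{Tarski_Seidenberg}, \cite{Basu_Pollack_Roy_2006}), $\twgp$ is semi--algebraic and the number and degrees of its defining polynomials are bounded by constants depending only on $(r,n,m)$ (only the coefficients depend on $P$). I stress that \emph{every} complexity bound below is obtained this way; this is exactly why $\td$, $\tK$, $K_2$ will depend only on $(r,n,m)$ and not on $h$.

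\textbf{Construction of the arc; items (1)--(2).} For $\eta\in[0,\delta]$ set $M(\eta):=\twgp\cap\cS^m_\eta$; this is nonempty (a continuous function attains its minimum on the compact sphere $\cS^m_\eta$), compact and semi--algebraic, and $M(0)=\{0\}$. Applying semi--algebraic selection (``definable choice'') — concretely, a cylindrical algebraic decomposition of $\{(\eta,u):u\in M(\eta)\}$ adapted to the projection onto $\eta$, followed by a cell--by--cell choice of a section — I would produce a semi--algebraic map $\gamma\colon[0,\delta]\to\Gamma^m$ with $\gamma(\eta)\in M(\eta)$ and $\gamma(0)=0$, hence $\operatorname{Im}(\gamma)\subset\twgpo$. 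Since $\gamma$ is single valued with $\|\gamma(\eta)\|_2=\eta$, it meets each $\cS^m_\eta$ in exactly one point, which is item (1). The graph of $\gamma$ is a one--dimensional semi--algebraic subset of $\R^{1+m}$ of controlled complexity, so it lies on an algebraic curve cut out by polynomials of bounded bidegree; the bound on its diagram (Definition \ref{diagram}) by some $\td=\td(r,n,m)$, item (2), follows.

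\textbf{Reparametrization and choice of $\mathtt I_\lambda$; items (3)--(4).} Given $\lambda>0$, a cylindrical algebraic decomposition of $\operatorname{graph}(\gamma)$ compatible with all coordinate directions, restricted to $\{\eta\le\lambda\}$, decomposes it into at most $N_0(r,n,m)$ Nash arcs; on each, one coordinate $x_i$ is a regular parameter $t$ and the remaining ones are Nash functions $x_j=f_j(t)$ with $Q_j(t,f_j(t))=0$, $\deg Q_j\le D(r,n,m)$. I would keep a longest such arc — an interval $J$ of $t$--length $\gtrsim_{(r,n,m)}\lambda$, necessarily contained in $[-\lambda,\lambda]$ since $|x_i|\le\eta\le\lambda$ there — then subdivide $J$ by the real parts of the complex branch points and poles of the $f_j$ (zeros of $\operatorname{disc}_{x_j}Q_j$ and of the leading coefficient of $Q_j$ in $x_j$; their number is $\lesssim_{(r,n,m)}1$ by effective B\'ezout--type bounds), keep a longest subinterval $J'$, and set $\mathtt I_\lambda$ to be its closed middle third. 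Then every branch point/pole has real part outside $\operatorname{int}(J')$, hence lies at distance $\ge|J'|/3=|\mathtt I_\lambda|$ from $\mathtt I_\lambda$; continuing the real branch of each $f_j$ inside the corresponding simply connected disc yields a holomorphic extension of complex width $\ge|\mathtt I_\lambda|$. Choosing $\tK>1$ with $|\mathtt I_\lambda|\ge\lambda/\tK$ gives items (3) and (4): over $\mathtt I_\lambda$, $\gamma$ is an $i$--arc, real--analytic, with complex analyticity width $\ge\lambda/\tK$.

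\textbf{Bernstein's inequality and the main obstacle.} On $\mathtt I_\lambda$ each $f_j$ is a bounded holomorphic, analytic--algebraic function on a complex neighbourhood of width $\ge\lambda/\tK$, with $\sup_{\mathtt I_\lambda}|f_j|\le\lambda$ (shrinking $\delta$ if necessary). The Bernstein--type inequality for algebraic functions — whose only input is $\deg Q_j\le D(r,n,m)$ together with this analyticity data (see \cite{Roytwarf_Yomdin_1998}, \cite{Yomdin_2008}, \cite{Barbieri_Niederman_2022}, and \cite{Nekhoroshev_1973}, Lemma 5.1) — bounds the sup--norm of $f_j$ on the $\lambda/\tK$--neighbourhood of $\mathtt I_\lambda$ by $K_2(r,n,m)\,\sup_{\mathtt I_\lambda}|f_j|$, and Cauchy's estimates on a disc of radius $\lambda/\tK$ then give, for the Taylor coefficients of $f_j$ at any $u\in\mathtt I_\lambda$,
$$
|a_{j\beta}(u)|\ \le\ \frac{K_2(r,n,m)\,\lambda}{(\lambda/\tK)^\beta}\ \le\ \frac{\beta!\,\tK^\beta K_2(r,n,m)}{\lambda^{\beta-1}}=:\frac{\tM}{\lambda^{\beta-1}},
$$
which is \eqref{Bernie}, item (5). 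I expect the genuinely delicate point to be the \emph{uniform} lower bound on the complex analyticity width: a priori the complex branch points of the algebraic curve carrying $\gamma$ could cluster arbitrarily close to the real $t$--interval, and the pigeonhole subdivision above works only because their number is bounded by a constant depending on $(r,n,m)$ alone. Extracting this effective bound, together with producing $\gamma$ with the singleton property while keeping the semi--algebraic complexity under control, is the crux; the algebraic Bernstein inequality is then invoked as a black box.
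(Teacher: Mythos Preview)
Your overall architecture matches the paper's: both build the $\eta$-parametrized selection curve in the thalweg by semi-algebraic choice (the paper does it by iterated lexicographic minima, your CAD section does the same job), and both then pass to an $i$-arc and invoke an algebraic Bernstein inequality. Items (1)--(2) are fine as written.

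The genuine gap is in your passage to the $i$-arc, specifically the assertion that the chosen CAD cell has $t$-length $\gtrsim_{(r,n,m)}\lambda$. Pigeonhole over $\le N_0(r,n,m)$ cells gives you a cell of \emph{$\eta$-length} $\gtrsim\lambda$, not $x_i$-length: on such a cell the coordinate $x_i$ could oscillate or vary arbitrarily slowly, so its range need not be comparable to $\lambda$. Nor does ``CAD compatible with all coordinate directions'' by itself force any single $x_i$ to be a regular parameter on a whole cell. This is precisely the step the paper handles with care, and in the opposite order from you: it \emph{first} applies Yomdin's analytic reparametrization (Theorem \ref{yomdin}) to each component $\phi_j(\eta)$ of the $\eta$-parametrized curve, obtaining a uniform Bernstein bound $\sup_{(\cI_\lambda)_{\lambda/K_1}}|\phi_j|\le K_2\lambda$ (Lemma \ref{Jess}); \emph{then}, from $\eta^2=\sum_j\phi_j(\eta)^2$ and the mean value theorem it extracts a point where $|\phi_i'|\ge 1/m$ for some $i$, and the Cauchy bound $|\phi_i''|\lesssim K_1^2K_2/\lambda$ coming from Bernstein propagates this to $|\phi_i'|\ge 1/(2m)$ on a sub-interval of $\eta$-length $\gtrsim\lambda$. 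Only then does a quantitative inverse function theorem (Theorem \ref{loc-inv}) yield an $x_i$-interval $\tI_\lambda$ of length $\gtrsim\lambda$ and complex width $\gtrsim\lambda$. In your scheme the Bernstein bound is applied only \emph{after} reparametrizing by $x_i$, so you have no control on $\phi_i''$ at the moment you need it, and the $t$-length claim is unsupported.

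Your route can be salvaged, but it needs an explicit argument you did not give: refine the CAD so that each $x_j'(\eta)$ has constant sign on every cell (boundedly many extra cuts, since each $\phi_j$ is algebraic of bounded degree), observe that $\|\phi(\lambda)\|=\lambda$ forces some $|x_i(\lambda)|\ge\lambda/\sqrt m$, and pigeonhole the total variation of that fixed $x_i$ over the refined cells to find one where $x_i$ is monotone with range $\gtrsim\lambda$. Even then you must check that the \emph{complex} singularities of $t\mapsto\eta(t)$ stay away from $\tI_\lambda$; the paper's inverse-function-theorem route gives this for free via the lower bound on $|\phi_i'|$, whereas your branch-point count is for the $f_j$, not for the inverse $\phi_i^{-1}$. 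Once the $t$-interval and analyticity width are secured, your direct application of the algebraic Bernstein inequality to the $f_j$ (with $\sup_{\tI_\lambda}|f_j|\le\lambda$ since $|f_j|\le\eta\le\lambda$) and the Cauchy estimate are correct and give \eqref{Bernie}.
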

\begin{rmk}
	The Theorem above corresponds to reasonings holding in the polynomial setting. Moreover, the constants $\td,\tK, \tM$ depend only on the degree of the considered polynomial. Consequently, this Theorem holds uniformly for any $r$-jet of any function $h\in C^r(B^n(0,2\delta),\R)$.
\end{rmk}
\begin{rmk}
	Nekhoroshev calls $\gamma$ "minimal arc with uniform characteristics" (see \cite{Nekhoroshev_1973}, section 4). In that work, the statement of Theorem \ref{arco_minimale} is not given in the form above but is rather split in dispersed parts. Moreover, many of the modern tools of real-algebraic geometry were lacking at that time, so that the redaction of his work appears quite obscure in some parts. These two elements makes difficult for the reader to reconstruct simply Theorem \ref{arco_minimale} from Nekhoroshev's original paper. 
\end{rmk}
\begin{rmk}
	The Bernstein's inequality at point 5 of Theorem \ref{arco_minimale} is essential in order to have stable\footnote{In the sense given in Th. \ref{Gen_steepness}, that is valid for an open set of functions.} lower estimates for the steepness coefficients of $h$. For more details about this result, which is interesting in itself and has applications in various fields of mathematics, see refs. \cite{Roytwarf_Yomdin_1998} and \cite{Barbieri_Niederman_2022}. 
\end{rmk}
Some intermediate Lemmas are needed before demonstrating Theorem \ref{arco_minimale}.
\begin{lemma}\label{thalweg_semialgebrico}
	Consider any triplet of integers $r,n\ge 2$ and $m\in\{1,...,n-1\}$. There exists $\mathsf{d}=\mathsf{d}(r,n,m)\ge 0$ such that, for any $Q\in\Pol$ verifying $\grad Q(0)\neq 0$, and for any subspace $\Gamma^m$ perpendicular to $ \nabla Q(0)$, the thalweg $\twgq$ is a semi-algebraic set satisfying  $\diag(\twgq)\le \mathsf{d}$ (see Def. \ref{diagram}). Moreover, for any fixed $\eta_0>0$, the intersection of $\twgq$ with the sphere $\cS^m_{\eta_0}\subset \Gamma^m$ is compact.   
\end{lemma}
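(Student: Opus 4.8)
The plan is to transport the thalweg to a fixed coordinate model on $\Gamma^m$ and then to establish, in turn, its semi-algebraicity, a uniform bound on its diagram, and the stated compactness. First I would fix an orthonormal basis $(e_1,\dots,e_m)$ of $\Gamma^m$ and set $P(t_1,\dots,t_m):=Q\bigl(\sum_{i=1}^m t_i e_i\bigr)$, a polynomial in $m$ variables with $\deg P\le\deg Q\le r$. Since each $e_j\in\Gamma^m$, one has $\partial_{t_j}P(t)=\scal{\grad Q(u)}{e_j}=\scal{\pi_{\Gamma^m}\grad Q(u)}{e_j}$ for $u=\sum_i t_i e_i$, so $\|\grad P(t)\|_2=\|\pi_{\Gamma^m}\grad Q(u)\|_2$; this is exactly Remark \ref{viadotto}. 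Hence, in the coordinates $t$ the thalweg $\twgq$ is identified with the set $\mathcal{T}_P$ of \eqref{Thalweg}, and it suffices to prove: (i) $\mathcal{T}_P\subset\R^m$ is semi-algebraic; (ii) $\diag(\mathcal{T}_P)$ is bounded by a constant depending on $r,m$ only; (iii) $\mathcal{T}_P\cap\mathcal{S}^m_{\eta_0}$ is compact. If one adopts instead the $\delta$-truncated thalweg of Definition \ref{thalweg}, one intersects $\mathcal{T}_P$ with $\overline B^m(0,\delta)$, which affects none of the three points (and one then needs $\eta_0\le\delta$ in (iii)).

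For (i), $\mathcal{T}_P$ is the set of $t$ satisfying the first-order formula $\forall v\,\bigl(\|v\|_2^2\ne\|t\|_2^2\ \vee\ \|\grad P(t)\|_2^2\le\|\grad P(v)\|_2^2\bigr)$; equivalently $\mathcal{T}_P=\R^m\setminus\Pi_t\mathcal{W}_P$, where
\[
\mathcal{W}_P:=\bigl\{(t,v)\in\R^m\times\R^m:\ \|t\|_2^2=\|v\|_2^2,\ \|\grad P(t)\|_2^2>\|\grad P(v)\|_2^2\bigr\}
\]
is semi-algebraic (one polynomial equation, one strict polynomial inequality, in $2m$ variables, of degree $\le 2(r-1)$) and $\Pi_t$ is the projection to the first $m$ coordinates. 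By the Tarski--Seidenberg theorem (Th. \ref{Tarski_Seidenberg}) and the closure of the semi-algebraic class under complementation, $\mathcal{T}_P$ is semi-algebraic.

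Point (ii) is where the real work lies, and the step I expect to be the chief obstacle, because a bound on $\diag(\mathcal{T}_P)$ uniform in $Q$ forces one to keep track of the \emph{coefficient-free} complexity estimates of quantifier elimination. I would invoke the effective form of Tarski--Seidenberg (see e.g. \cite{Basu_Pollack_Roy_2006}): from a formula whose atoms involve finitely many polynomials of degree $\le D$ in $\le\nu$ variables, quantifier elimination returns an equivalent quantifier-free formula whose polynomials have degree and number bounded by explicit functions of $D,\nu$ alone, independently of the coefficients. Applied to the formula above, whose only input parameters are $D\le 2(r-1)$ and $\nu=2m$, this gives a semi-algebraic description of $\mathcal{T}_P$ whose defining data depend on $r,m$ only; by the definition of the diagram (Def. \ref{diagram}) this yields $\diag(\mathcal{T}_P)\le\mathsf{d}(r,m)=:\mathsf{d}(r,n,m)$, uniformly over all admissible $Q$ and $\Gamma^m$. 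A more hands-on route to a bounded-complexity \emph{superset} of $\mathcal{T}_P$ — avoiding any elimination algorithm — uses Lagrange multipliers: at a point of $\mathcal{T}_P$ the function $\|\grad P\|_2^2$ is minimal on a sphere about $0$, so its gradient is radial there, whence $\mathcal{T}_P$ lies in the algebraic set on which all $2\times 2$ minors of the $m\times 2$ matrix $[\,\grad(\|\grad P\|_2^2)(t)\mid t\,]$ vanish, a set of polynomials of degree depending only on $r$; the genuine minima among its points are then selected by the bounded first-order condition of \eqref{Thalweg}.

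Finally, for (iii), set $\mu_P(\eta)^2:=\min_{\|w\|_2=1}\|\grad P(\eta w)\|_2^2$, well defined and continuous for $\eta\ge0$ as the minimum of a jointly continuous function over the compact unit sphere. Then $\mathcal{T}_P=\{t\in\R^m:\|\grad P(t)\|_2^2=\mu_P(\|t\|_2)^2\}$ is the zero set of a continuous function on $\R^m$, hence closed; since $\mathcal{S}^m_{\eta_0}$ is compact, $\mathcal{T}_P\cap\mathcal{S}^m_{\eta_0}$ is closed and bounded, hence compact. This completes the plan.
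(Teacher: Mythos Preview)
Your proposal is correct and follows essentially the same approach as the paper: reduce to coordinates on $\Gamma^m$, write the thalweg as the complement of the projection of a semi-algebraic ``bad'' set (your $\mathcal{W}_P$ plays the role of the paper's set $\cE$, with the paper carrying an extra redundant variable $\eta$), invoke the quantitative Tarski--Seidenberg theorem and Proposition~\ref{complementary} for the uniform diagram bound, and argue compactness via continuity of the minimum function. Your compactness argument is phrased slightly differently (you show all of $\mathcal{T}_P$ is closed rather than just the slice), and your Lagrange-multiplier aside does not appear in the paper, but these are inessential variations.
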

\begin{proof}
	$\Gamma^m$ is obviously isomorphic to $\R^m$ with the metric induced by the euclidean ambient space $\R^n$, and thus admits a global system of orthonormal coordinates $x=(x_1,...,x_m)$. We denote by $P(x)\in\Polm$ the restriction of $Q(I)$ to $\Gamma^m\simeq\R^m$. Since we endow $\Gamma^m$ with the induced euclidean metric, studying the norm of the projection of $\nabla_I Q(I)$ on $\Gamma^m$ amounts to studying the induced norm of $\nabla_x P(x)$ on $\Gamma^m\simeq\R^m$.
	Now, consider the semi-algebraic set 
	\begin{equation}\label{Ecors}
	\cE:=\{(x,y,\eta)\in\R^{2m}\times \R: ||x||_2^2=||y||_2^2=\eta^2, \eta>0, ||\nabla P(x)||_2>||\nabla P(y)||_2\}
	\end{equation}
	By the Theorem of Tarski and Seidenberg \ref{Tarski_Seidenberg} and Proposition \ref{complementary}, we have that the set
	$
	\R^m\backslash\pi_{x}\cE:=\{x\in\R^m\times\R:\forall (y,\eta)\in\R^m\times\R, (x,y,\eta)\not\in\cE\}
	$
	is semi-algebraic. We claim that it coincides with $\twgq$. Infact, by the definition of $\cE$, it is clear that for any given $\eta>0$ one has 
	\begin{equation}\label{minimal}
	x\in\R^m\backslash\pi_{x}\cE\ \Longleftrightarrow \ ||\nabla P(x)||_2\le  ||\nabla P(y)||_2 \text{ for all }y\in \R^m \text{ s.t. } {||y||_2}^2={||x||_2}^2\ ,
	\end{equation}
	so that $x\in\R^m\backslash\pi_{x}\cE$ is the locus of minima on any given sphere for $||\nabla P||_2$ (that is for $||\pi_{\Gamma^m}\nabla Q||_2$), that is it coincides with the Thalweg $\twgq$. Moreover, since $\deg P\le r$, the diagram of $\cE$ is uniformly bounded w.r.t. any $P\in\Polm$ and, again by the Theorem of Tarski and Seidenberg, the same is true for $\pi_x\cE$ and for $\R^m\backslash\pi_x\cE\equiv \twgq$.    
	
	It remains to prove that $\twgq\cap \cS^m_{\eta_0}$ is compact. By construction, $\twgq\cap \cS^m_{\eta_0}$ is the locus of minima of $||\grad P||_2$ on $\cS^m_{\eta_0}$.  Since the restriction of the function $||\grad P||_2$ to $\cS^m_{\eta_0}$ is continuous for the topology induced by $\Gamma^m$, the inverse image of its minimal value on $\cS^m_{\eta_0}$ is closed. Since $\cS^m_{\eta_0}$ is compact, the thesis follows.	
\end{proof}
The next Lemma shows how an analytic curve with uniform characteristics can be extracted from the Thalweg.

\begin{lemma}\label{Jess}
	Fix a triplet of integers $r,n\ge 2$ and $m\in\{1,..,n-1\}$. There exist positive constants $D(r,n,m)\in \N$, and $K_i=K_i(r,n,m)\in \R$, $i=1,2$, such that, for any $\xi>0$, for any polynomial $Q\in \Pol$ satisfying $\nabla Q(0)\neq 0$, and for any $m$-dimensional subspace $\Gamma^m$ orthogonal to $ \nabla Q(0)$, there exists a semi-algebraic curve $\phi=(\phi_1(\eta),...,\phi_m(\eta)):[0,\xi]\longrightarrow\twgq$ having the following properties
	\begin{enumerate}
		\item  For any fixed $\eta\in[0,\xi]$, the intersection $\text{ Im}(\phi)\cap\mathcal{S}_\eta^m $ is a singleton;
		\item The diagram of $\text{graph}(\phi)$ (see Definition \ref{diagram}) is bounded by $D$;
		\item There exists a closed interval $\cI_{\xi}\subset[0,\xi]$ of length $\xi/K_1$ over which $\phi$ is real-analytic, with complex analyticity width $\xi/K_1$;
		\item On the closed complex polydisk $(\cI_\xi)_{\xi/K_1}$ of width $\xi/K_1$ around $\cI_\xi$, one has the uniform Bernstein's inequality
		$$ \max_{\eta\in(\cI_\xi)_{\xi/K_1}}|\phi_j(\eta)|\le K_2\, \xi\qquad  \text{ for any $j\in\{1,...,m\}$.}
		$$ 
		
	\end{enumerate}
\end{lemma}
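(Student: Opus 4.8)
The plan is to produce $\phi$ as a definable (semi-algebraic) section of the thalweg over the radial coordinate, and then to upgrade it to an analytic curve with uniform characteristics by means of the reparametrization machinery. First I would reduce to $\xi=1$ by the degree-preserving rescaling $x\mapsto\xi^{-1}x$: setting $\tilde Q(x):=Q(\xi x)$ one has $\deg\tilde Q\le r$, $\grad\tilde Q(0)=\xi\grad Q(0)\neq 0$, and $\|\grad\tilde Q(x)\|_2=\xi\|\grad Q(\xi x)\|_2$, whence $\mathcal T(\tilde Q,\Gamma^m)\cap\cS^m_\eta=\xi^{-1}\!\left(\twgq\cap\cS^m_{\xi\eta}\right)$; since positive scaling preserves the lexicographic order, constructing the curve for $\tilde Q$ on $[0,1]$ and undoing the scaling yields the claim for $Q$ on $[0,\xi]$, multiplying the interval length, the analyticity width and the sup-bound by $\xi$ --- which is the origin of the $\xi$-dependence in items 3--4.

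Next, I would extract $\phi$ by definable choice. By Lemma \ref{thalweg_semialgebrico}, $\twgq$ is semi-algebraic with $\diag(\twgq)\le\mathsf{d}(r,n,m)$, and for each $\eta\in[0,1]$ the intersection $\twgq\cap\cS^m_\eta$ is non-empty (it is the minimum locus of the continuous map $\|\pi_{\Gamma^m}\grad Q\|_2$ on a compact sphere) and compact. Let $\phi(\eta)$ be the lexicographically smallest point of $\twgq\cap\cS^m_\eta$; then $\phi(0)=0$, $\phi$ takes values in $\twgq$, and
\[
G:=\left\{(\eta,x)\in[0,1]\times\R^m:\ x\in\twgq,\ \|x\|_2=\eta,\ \forall y\in\R^m\,\left(y\in\twgq,\ \|y\|_2=\eta\ \Rightarrow\ x\le_{\mathrm{lex}}y\right)\right\}
\]
is obtained from $\twgq$ by one block of quantifiers applied to formulas of bounded complexity, hence is semi-algebraic with $\diag(G)\le D(r,n,m)$ by the quantitative Tarski--Seidenberg theorem (Theorem \ref{Tarski_Seidenberg}; cf. \cite{Basu_Pollack_Roy_2006}). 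Uniqueness of the lexicographic minimum makes $G$ the graph of a single-valued map $\phi:[0,1]\to\twgq$; since spheres of distinct radii are disjoint one has $\mathrm{Im}(\phi)\cap\cS^m_\eta=\{\phi(\eta)\}$, so item 1 holds, and $\diag(\mathrm{graph}(\phi))=\diag(G)\le D(r,n,m)$ gives item 2.

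The substantial step is the uniform analytic reparametrization of $G$. Since $G$ is a $1$-dimensional semi-algebraic subset of $[0,1]\times B^m(0,1)$ with $\diag(G)\le D(r,n,m)$, Yomdin's reparametrization lemma (\cite{Yomdin_2008}, in the analytic form discussed in this section) provides an integer $N=N(r,n,m)$ and Nash maps $\psi_1,\dots,\psi_N:(-1,1)\to\R^{m+1}$, of degree bounded in terms of $(r,n,m)$, each extending holomorphically to a fixed disc $\cD_{\rho_0}(0)$ with $\sup_{\cD_{\rho_0}(0)}\|\psi_k\|\le1$, whose images cover $G$. Writing $\psi_k=(\psi_k^0,\dots,\psi_k^m)$ with $\psi_k^0$ the $\eta$-component, the $N$ intervals $\psi_k^0((-1,1))$ cover $[0,1]$, so one of them, say for $k=k_0$, contains a subinterval of length $\ge1/N$; subdividing $(-1,1)$ into the at most $B(r,n,m)$ maximal monotonicity intervals of the Nash function $\psi_{k_0}^0$ and applying pigeonhole once more, I obtain an interval $J$ on which $\psi_{k_0}^0$ is strictly monotone with $\cI:=\psi_{k_0}^0(J)$ of length $\ge1/(NB)=:1/K_1'$. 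Since every point of $G$ lies over a single value of $\eta$, for $\eta\in\cI$ one has $\phi(\eta)=(\psi_{k_0}^1,\dots,\psi_{k_0}^m)\!\left((\psi_{k_0}^0)^{-1}(\eta)\right)$, which exhibits $\phi|_{\cI}$ as a Nash curve of controlled degree.

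Finally, I would pass to the complex statements of items 3--4 via the Bernstein-type inequality for algebraic functions (Nekhoroshev's Lemma 5.1 of \cite{Nekhoroshev_1973}; see also \cite{Roytwarf_Yomdin_1998} and \cite{Barbieri_Niederman_2022}). The algebraic inverse $(\psi_{k_0}^0)^{-1}$ is real-analytic on $\cI$ and, by Bernstein, extends holomorphically to the polydisk $(\cI)_{|\cI|/c}$ for a constant $c=c(r,n,m)$, with image contained in $\cD_{\rho_0/2}(0)$; composing with the $\psi_{k_0}^j$, which are holomorphic of modulus $\le1$ on $\cD_{\rho_0}(0)$, yields a holomorphic extension of $\phi$ to a polydisk $(\cI_\xi)_{1/K_1}$ on which $|\phi_j|\le K_2$, with $K_1,K_2$ depending only on $(r,n,m)$ (shrink $\cI$ to an interval $\cI_\xi$ of length exactly $1/K_1$); undoing the rescaling of the first step then gives items 3--4 as stated. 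I expect the main obstacle to be exactly this last composition: one has to check that the degree-bounded algebraic inverse carries a complex neighbourhood of $\cI$ of width comparable to $|\cI|$ back into the disc of holomorphy of the remaining components, so that the composition makes sense and stays bounded --- and this is precisely where the uniformity of Bernstein's inequality (not merely the finiteness of the branch points of an algebraic function) is indispensable, and also what forces every constant to depend on $(r,n,m)$ alone.
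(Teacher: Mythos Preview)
Your argument is correct in outline, but it is organised quite differently from the paper's proof, and the difference is worth noting.

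The paper does not reparametrize the whole one-dimensional graph $G\subset\R^{m+1}$ at once. Instead, it builds $\phi$ coordinate by coordinate: first $\phi_1(\eta):=\min\{x_1: x\in\twgq,\ \|x\|_2=\eta\}$ is shown to be a scalar semi-algebraic function of $\eta$ (via Proposition~\ref{vietnamiti}), and the scalar version of Yomdin stated as Theorem~\ref{yomdin} is applied directly to it after an affine rescaling, producing a uniform subinterval and a Bernstein bound. One then restricts to the preimage of that subinterval inside the thalweg and repeats the construction for $x_2$, and so on. The advantage is that no inversion is needed at all in the proof of Lemma~\ref{Jess}: each $\phi_j$ is already a function of $\eta$, and only the scalar Yomdin lemma (exactly as stated in the paper) is invoked. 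Your route instead applies the more general analytic reparametrization for one-dimensional semi-algebraic sets in $\R^{m+1}$, and then has to invert the $\eta$-component $\psi_{k_0}^0$ and compose---which, as you correctly flag, requires controlling the complex image of an algebraic inverse via Bernstein. This is not wrong, but it effectively imports into the proof of Lemma~\ref{Jess} the quantitative inversion argument that the paper carries out only later, in the proof of Theorem~\ref{arco_minimale}. Your initial rescaling to $\xi=1$ is a clean simplification that the paper does not use; conversely, the paper's iterative scalar approach is more self-contained given the tools actually stated in the appendix.
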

\begin{proof}
	
	As in Lemma \ref{thalweg_semialgebrico}, we consider the isomorphism $\Gamma^m\simeq \R^m$, and we endow $\Gamma^m$ with a global system of orthonormal coordinates $x=(x_1,...,x_m)$ for the scalar product induced by the ambient space $\R^n$. We proceed by steps. At the first step, we build a semi-algebraic function $\phi_1$ associating to a sphere $\cS^m_\eta$ of given radius  $\eta> 0$ the minimal value attained by the coordinate $x_1$ on $\cS^m_\eta$. At Step 2 we apply Yomdin's reparametrization Lemma (see appendix \ref{riparametrizzazione}) to one of the algebraic components of $\phi_1$ and we get a function with the suitable properties. Finally, at Step 3, we repeat the same construction for the other coordinates. 
	
	{\it Step 1.}
	For any $\xi>0$, by Lemma \ref{thalweg_semialgebrico} the set 
	$$
	\twgqxi:=\twgq\cap \{x\in\R^m: ||x||_2^2\le \xi^2\}
	$$ 
	is  semi-algebraic and its diagram is bounded by a positive constant $\mathsf{d}=\mathsf{d}(r,n,m)$. By the Theorem of Tarski and Seidenberg \ref{Tarski_Seidenberg}, the continuous function 
	$$
	f_1:=\twgqxi\longrightarrow\R\quad ,\qquad x\longmapsto x_1
	$$
	is semi-algebraic and its diagram is bounded by a quantity depending only on $r,n,m$. Infact, 
	$$
	\text{graph}(f_1):=\Pi_{\R^m\times \R}\{(u,v)\in \R^m\times \R^m \ |\ u\in \twgqxi, u=v\}\ .
	$$ 
	Moreover, Lemma \ref{thalweg_semialgebrico} ensures that for any $0<\eta_0<\xi$ the set $\twgqxi\cap \cS^m_{\eta_0}$ is compact, so that $f_1$ has a minimum on it. On the other hand, the function $g_1:\twgqxi\longrightarrow \R$, $x\longmapsto ||x||_2$ is also semi-algebraic and its diagram is bounded by a quantity depending only on $r,n,m$, because
	$$
	\text{graph}(g_1):=\{(x,y)\in \R^{m}\times \R\ |\ x\in \twgqxi\ ,\ \  ||x||_2^2-y^2=0\}\ .
	$$
	Then, by applying Proposition \ref{vietnamiti}, we have that the function
	\begin{equation}\label{psiuno}
	\phi_1: [0,\xi] \longrightarrow \R\qquad \eta\longmapsto \inf_{x\in g_1^{-1}(\eta)}f_1(x)=\min_{x\in \twgqxi\cap\cS^m_\eta}\{x_1\}
	\end{equation}
	is semi-algebraic and we indicate by $d_1=d_1(r,n,m)$ its diagram.
	
	{\it Step 2.} Corollary \ref{pollo} ensures the existence of a number $N_1=N_1(d_1)$ and of an open interval $\cI^1\subset [0,\xi]$ of length $\xi/N_1$ over which the restriction $\phi_1|_{\cI^1}$ is algebraic. By Proposition \ref{p-valency}, $\phi_1|_{\cI^1}$ is $d_1$-valent and has no more than $d_1$ zeros on its domain so that there exists an interval $\cJ^1\subset\cI^1$ of length $\xi/(N_1(d_1)\times(d_1+1))$ over which the restriction $\phi_1|_{\cJ^1}$ has definite sign. Without loss of generality we can assume $ \phi_1(\eta)\ge 0$ for all $\eta\in \cJ^1$ (one considers $-\phi_1$ otherwise). 
	
	We denote by $\xi_1$ and $\xi_2$ the extremal points of the interval $\cJ^1$ and we rescale the domain by setting 
	\begin{equation}\label{phiuno}
	\varphi_1:[-1,1]\longrightarrow \R\quad ,\qquad \varphi_1(u):=\phi_1\left(\frac{u+1}{2}\xi_2-\frac{u-1}{2}\xi_1\right)\ .
	\end{equation}
	We also define the function
	\begin{equation}\label{phiunotilde}
	\widetilde \varphi_1(u):=\displaystyle \frac{\varphi_1(u)}{\xi}\quad ,\qquad 0\le \widetilde \varphi_1(u)\le 1\ ,
	\end{equation}
	which satisfies the hypotheses of Theorem \ref{yomdin}. With the notations of Theorem \ref{yomdin}, we choose the value $\delta:=\displaystyle \frac{1}{8\tY_1(d_1)}$ so that, once at most $\tY_1$ neighborhoods of length $2\delta$ around the singularities of $\widetilde \varphi_1$ are eliminated from $[-1,1]$, the remaining set has a measure which is no less than
	$
	2-\displaystyle\sum_{i=1}^{\tY_1}2\times \displaystyle 1/(8\tY_1)=7/4 \ .
	$
	Moreover, the number of the partition intervals is bounded by the uniform quantity $\tY_2\log_2\left(8\tY_1\right)$, so that there exists an interval $\Delta_1\subset [-1,1]$ verifying 
	$$
	|\Delta_1| =\frac{7}{4\,\tY_2\log_2\left(8\tY_1\right)}
	$$
	on which $\widetilde \varphi_1$ is real-analytic with uniform analyticity width $\frac32|\Delta_1|$. Infact, by Proposition \ref{larghezza_analiticita}, the complex singularities of $\widetilde \varphi_1$ are at distance no less than $3|\Delta_1|$ from the center $c_1$ of $\Delta_1$. By Theorem \ref{yomdin}, $\Delta_1$ can be affinely reparametrized by a function $\psi_1:[-1,1]\longrightarrow \Delta_1$, which maps the closed complex disc $\overline \cD_3(0)$ into the closed complex disc $\overline \cD_{\varrho}(c_1)$ of radius $\varrho:=\frac32 |\Delta_1|$. Hence,  we can write
	\begin{align}
	\begin{split}
	\max_{u\in \cD_{\varrho}(c_1)}|\widetilde \varphi_1(u)|&\le 	\max_{u\in \cD_{\varrho}(c_1)}|\widetilde \varphi_1(u)-\widetilde\varphi_1(0)|+|\widetilde\varphi_1(0)|\\
	&=\max_{z\in \cD_{3 }(0)}|\widetilde \varphi_1\circ\psi_1(z)-\widetilde\varphi_1\circ\psi_1(0)|+|\widetilde\varphi_1(0)|\\
	\end{split} 
	\end{align}
	so that, by Definition \ref{reparametrization} and Theorem \ref{yomdin} and by the fact that $|\widetilde\varphi_1(u)|\le 1$ for any $u\in [-1,1]$, we obtain
	\begin{align}\label{uniform}
	\max_{u\in \cD_{\varrho}(c_1)}|\widetilde \varphi_1(u)|&\le 	2\ .
	\end{align}
	Scaling back to the original variables, by \eqref{phiuno} the interval $\Delta_1$ is mapped into an interval $\Delta_1^\xi$ of length $|\Delta_1^\xi|=|\Delta_1|\frac{\xi_2-\xi_1}{2}=|\Delta_1|\frac{\xi}{2N_1(d_1)\times(d_1+1)}$ and center $\widehat c_1$ and, in the same way, the radius rescales as $\varrho\rightsquigarrow \varrho\frac{\xi}{2N_1(d_1)\times(d_1+1)}$. Therefore, taking into account \eqref{phiunotilde} and \eqref{uniform}, there exists a uniform constant $\tM_1=\tM_1(d_1)$ such that the following Bernstein's inequality is satisfied
	\begin{equation}\label{Bern}
	\max_{z\in \cD_{\xi/\tM_1}( \widehat c_1)}| \phi_1(z)|\le 2\xi	\ .
	\end{equation}
	%	Finally, estimate \eqref{Bern} together with the standard Cauchy inequality yields
	%	\begin{equation}\label{Bern_der}
	%	\max_{z\in \cD_{\xi/(2\tM_1)}(\mathtt c_1)}| \phi_1'(z)|\le 4 \tM_1\ .	
	%	\end{equation}

	{\it Step 3. } Since $\Delta_1^\xi$ is compact, $\phi_1(\Delta_1^\xi)$ is also compact and the inverse image $\mathtt U^1_{\xi}(Q,\Gamma^m):=g_1^{-1}(\phi_1^{-1}(\phi_1( \Delta_1^\xi)))$ is closed. Moreover, since the diagrams of $\phi_1$ and $g_1$ depend only on $r,n,m$, then by Propositions \ref{immagine}-\ref{composta}-\ref{inversa} the diagram of $\mathtt U^1_{\xi}(Q,\Gamma^m)$ also depends only on $r,n,m$. Hence, for any fixed $\eta\in\Delta_1^\xi$ we have that the set $\cS^m_{\eta}\cap \mathtt U^1_{\xi}(Q,\Gamma^m)$, which contains the points of the Thalweg  that have minimal coordinate $x_1$ on the sphere of radius $\eta$, is compact and semialgebraic with a bound on its diagram depending only on $r,n,m$. Hence, the coordinate $x_2$ admits a minimum on this set and we can repeat the same argument of Step 2 on the function
	\begin{equation}\label{psidue}
	\phi_2: \Delta_1^\xi \longrightarrow \R\qquad \eta\longmapsto \min_{x\in g_2^{-1}(\eta)}f_2(x)=\min_{x\in \cS^m_{\eta}\cap \mathtt U^1_{\xi}(Q,\Gamma^m)}\{x_2\}
	\end{equation}
	where we have set 
	$
	f_2:\cS^m_{\eta_0}\cap \mathtt U^1_{\xi}(Q,\Gamma^m)\longrightarrow\R\ ,\ \  x\longmapsto x_2 \quad 
	$ and 
	$
	g_2:\cS^m_{\eta_0}\cap \mathtt U^1_{\xi}(Q,\Gamma^m)\longrightarrow \R,  x\longmapsto ||x||_2\ .
	$
	The curve $\phi:=(\phi_1,\phi_2,...,\phi_m)$ is constructed by iterating this procedure $m$ times.
	
	Points 1, 2, and 3 of the thesis follows easily from this construction. %Point 2 follows by induction since, by the arguments in Step 2 of the proof, for any given interval of given length, each of the functions $\phi_j$, $j=1,...,m$, is real-analytic on a subinterval whose length is uniformly (for all polynomials of degree $r$) bounded from below and the complex analyticity width admits the same lower bound. 
	Point 4 is a consequence of estimate \eqref{Bern} applied to the complex polydisk of uniform width $\xi/K_1$ around the common uniform real interval of analyticity $\cI_\xi$ of the functions $\phi_1,...,\phi_m$.

\end{proof}
We are now ready to state the proof of Theorem \ref{arco_minimale}. 

\begin{proof}{(Theorem  \ref{arco_minimale})}
	We assume the setting of Lemma \ref{Jess} with $Q$ equal to the Taylor expansion $\tT_0(h,r,n)$, and we proceed by steps. At Step 1, we show that there exists a component $\phi_i$, $i\in\{1,\dots,m\}$, of the curve $\phi$ introduced in Lemma \ref{Jess} whose first derivative admits a lower bound on a domain of uniform length. Then, at the second step, we use this fact to apply a quantitative inverse function Theorem and we reparametrize $\phi$ by the $i$-th coordinate. Steps $3$ and $4$ contain, respectively, the proofs of points 1-4 and of point 5 in the statement. 
	
	{\it Step 1.} We cut the uniform interval of analyticity $\cI_\xi$ into three equal intervals and we denote by $\widehat \cI_\xi$ the central one, whose length is $|\widehat\cI_\xi|=|\cI_\xi|/3$. We indicate by $\widehat \xi_1$ and $\widehat \xi_2$ the extreme points of $\widehat\cI_\xi$. Since for any given $\eta\in \widehat\cI_\xi$ by Lemma \ref{Jess} we have $\eta^2=\phi_1^2(\eta)+...+\phi_m^2(\eta)$, there must be some component $\phi_i$ of the curve, with $i\in\{1,...,m\}$, verifying
	\begin{equation}\label{tricchete}
	|\phi_i(\widehat\xi_2)-\phi_i(\widehat \xi_1)|\ge\frac{|\widehat\cI_\xi|}{m}= \frac{\xi}{3mK_1}\ .
	\end{equation}
	At the same time, for some point $\widehat \xi_3\in \widehat\cI_\xi$ we have
	\begin{equation}\label{tracchete}
	|\phi_i(\widehat\xi_2)-\phi_i(\widehat \xi_1)|= |\phi_i'(\widehat\xi_3)||\widehat\cI_\xi|=|\phi_i'(\widehat\xi_3)|\frac{\xi}{3K_1}\ .
	\end{equation}
	On the one hand, relations \eqref{tricchete} and \eqref{tracchete} together imply
	\begin{equation}\label{enne}
	|\phi_i'(\widehat\xi_3)|\ge \frac{1}{m}\ .
	\end{equation}
	On the other hand, for any $\eta\in [\widehat \xi_3-|\widehat \cI_\xi|,\widehat \xi_3+|\widehat \cI_\xi|]\subset\cI_\xi$ one has the estimate
	\begin{equation}
	|\phi_i'(\eta)-\phi_i'(\widehat\xi_3)|\le\max_{\cI_\xi}|\phi''_i| |\eta-\widehat \xi_3|
	\end{equation}
	which, thanks to the classic Cauchy estimate and to the Bernstein inequality of Lemma \ref{Jess}, implies
	\begin{equation}\label{luca}
	|\phi_i'(\eta)-\phi_i'(\widehat\xi_3)|\le\frac{2K_1^2}{\xi^2}\max_{(\cI_\xi)_{\xi/K_1}}|\phi_i| |\eta-\widehat \xi_3|\le \frac{2K_1^2K_2}{\xi} |\eta-\widehat \xi_3|\ .
	\end{equation}
	Hence, for any $\eta$ in the interval $J_\xi:=\left[\widehat \xi_3-\displaystyle\frac{\xi}{4mK_1^2K_2},\widehat \xi_3+\displaystyle\frac{\xi}{4mK_1^2K_2}\right]\subset \cI_\xi$ we have by \eqref{enne} and \eqref{luca} that 
	\begin{equation}\label{stima dal basso}
	|\phi_i'(\eta)|\ge \left||\phi_i'(\widehat\xi_3)|-|\phi_i'(\eta)-\phi_i'(\widehat\xi_3)|\right|\ge \frac{1}{m}-\frac{2K_1^2K_2}{\xi}\frac{\xi}{4mK_1^2K_2}=\frac{1}{2m}\ .
	\end{equation}
	
	{\it Step 2.} By Lemma \ref{Jess} and by the construction at Step 1 we can apply the quantitative local inversion Theorem \ref{loc-inv} for $\phi_i$ at any point $\eta\in J_\xi\subset \cI_\xi$. By making use of the notations in Theorem \ref{loc-inv}, we can set the uniform parameters
	\begin{equation}
	R:= \frac{\xi}{K_1} 
	\quad ,\qquad |\phi'_i(\eta)|\ge \frac{1}{2m}\quad ,\qquad  \max_{(J_\xi)_{\xi/K_1}}|\phi_j''|\le 2\frac{K_1^2K_2}{\xi}\ . 
	\end{equation}
	Hence, $\phi_i$ is invertible in the complex closed polydisk $(J_\xi)_{R'/16}$ around the real interval $J_\xi$, where 
	$$
	R':=\frac12\times \min\left\{R,\displaystyle \frac{\min_{J_\xi}|\phi_i'|}{ \max_{(J_\xi)_{\xi/(2K_1)}}|\phi_i''|},\right\}=\frac{\xi}{8mK_1^2K_2}\ .
	$$
	Since, by construction, $\phi_i$ is real-analytic in $J_\xi$, the continuity of the derivative ensures that $\phi_i(J_\xi)$ is an interval of $\R$ and, by the definition of the function $\phi_i$ in \eqref{psiuno}, one has $\phi_i(J_\xi)\subset [-\xi,\xi]$. The inverse function is analytic in the complex polydisc of uniform width 
	\begin{equation}\label{larghezza}
	R'':=\min_{J_\xi}|\phi_i'|\frac{R'}{8}\ge \frac{R'}{16m}=\frac{\xi}{128\,m^2K_1^2K_2}
	\end{equation}
	around $\phi_i(J_\xi)$. Moreover, using \eqref{stima dal basso}, one has that  
	\begin{equation}\label{uniform_interval}
	|\phi_i(J_\xi)|\ge\min_{J_\xi}|\phi_i'|\times |J_\xi|\ge \frac{1}{2m}\times \frac{\xi}{2mK_1^2K_2}\ .
	\end{equation}
	{\it Step 3.} 	Point 1 of Theorem \ref{arco_minimale} follows by Point 1 of Lemma \ref{Jess} and by the local inversion Theorem applied at Step 2. Points 2, 3, and 4 of Theorem \ref{arco_minimale} are also immediate consequences of the local inversion Theorem at Step 2. 
	
	Namely, by keeping in mind the notations at Point 4 of Theorem \ref{arco_minimale}, the curve $\gamma:=\phi\circ\phi_i^{-1}$ can be defined as
	\begin{equation}\label{j_arco}
	\gamma(t):=\begin{cases}
	x_i=t\\
	x_j(t)=f_j(x_i):=\phi_j(\phi_i^{-1}(x_i)) \qquad \text{  for all }j\in\{1,...,m\}\,,\quad j\neq i\ .
	\end{cases}
	\end{equation}
	
	The existence of an interval of analyticity with uniform length and complex width for $\gamma$ is a consequence of \eqref{larghezza} and \eqref{uniform_interval} and the constant $\tK$ in the statement can be taken equal to
	\begin{equation}\label{tkappa}
	\tK:= 128\,m^2K_1^2K_2\ .
	\end{equation}
	Indeed, for any $0<\lambda\le\xi$, $\tI_\lambda$ can be chosen to be any interval of length $\lambda/\tK$ contained in the interval $\phi_i(J_\lambda)\subset [-\lambda, \lambda]$ (see \eqref{uniform_interval}). For later convenience, we also observe that the above discussion implies that 
	\begin{equation}\label{inclusione}
	(\cI_\lambda)_{\lambda/K_1}\supset (J_\lambda)_{\lambda/K_1}\supset \phi_i^{-1}(\tI_\lambda)_{\lambda/\tK} \quad ,\qquad \forall \lambda\in(0, \xi]\ .
	\end{equation}

	The fact that the diagram of $\text{graph}(\gamma)$ depends only on $r,n,m$ is an immediate consequence of \eqref{j_arco}, together with point 2 of Lemma \ref{Jess} and with Propositions \ref{composta}-\ref{inversa}. 
	
	{\it Step 4.} It remains to prove the Bernstein's inequality at Point 5 of the statement. 
	By Lemma \ref{Jess}, for any $\lambda>0$ we have
	\begin{equation}\label{cheneso}
	\max_{\eta\in(\cI_\lambda)_{\lambda/K_1}}|\phi_j(\eta)|\le K_2\,\lambda
	\end{equation}
	for some uniform constant $K_2=K_2(r,n,m)$ and for any $j\in\{1,...,m\}$. %Estimate \eqref{cheneso}, together with the classic Cauchy inequality, implies 
	%\begin{equation}
	%	\max_{\eta\in(\cI_\xi)_{\xi/(2K_1)}}|\phi_j^{(k)}(\eta)|\le\frac{k!\,2^k\,K_1^k }{\xi^k}\times  \max_{\eta\in(\cI_\xi)_{\xi/K_1}}|\phi_j(\eta)|\le 2K_1\,K_2 \, k!\left(2\frac{K_1 }{\xi}\right)^{k-1}\ .
	%	\end{equation}
	By construction in \eqref{j_arco},
	$
	f_j(x_1)=\phi_j\circ\phi_i^{-1}(x_i)
	$, and for any $\beta\in\N\cup\{0\}$ the classic Cauchy estimate implies
	\begin{equation}\label{cauchy}
	\max_{t\in\tI_\lambda}|f_j^{(\beta)}(t)|\le \beta!\,\tK^\beta\frac{\max_{z\in(\tI_\lambda)_{\lambda/\tK}}|f_j(z)|}{\lambda^\beta}=\beta!\, \tK^\beta\frac{\max_{z\in(\tI_\lambda)_{\lambda/\tK}}|\phi_j\circ\phi_i^{-1}(z)|}{\lambda^\beta} \ .
	\end{equation}
	For any $0<\lambda\le \xi$, by \eqref{inclusione} one has $\phi_i^{-1}((\tI_\lambda)_{\lambda/\tK})\subset (\cI_\lambda)_{\lambda/K_1}$. Taking this into account, \eqref{cauchy} and \eqref{cheneso} yield
	\begin{equation}
	\max_{t\in\tI_\lambda}|f_j^{(\beta)}(t)|\le \beta!\,\tK^\beta\frac{\max_{\eta\in(\cI_\lambda)_{\lambda/K_1}}|\phi_j(\eta)|}{\lambda^\beta}\le \beta!\,  \tK^\beta \frac{K_2\lambda}{\lambda^\beta}=\beta!\,\tK^\beta \frac{K_2}{\lambda^{\beta-1}} \ .
	\end{equation}
	The thesis at Point 5 in the statement follows by setting $\tM=\beta!\times K_2\times \tK^\beta$.

\end{proof}

\section{$s$-vanishing polynomials}\label{rs_vanishing_polynomials}
We take into account the results and the notations of the previous section, in particular Theorem \ref{arco_minimale}.

\subsection{Heuristics and Definitions}

The goal of the first part of this paragraph is to provide the reader with a heuristic justification for introducing the special class of $s$-vanishing polynomials in the study of the genericity of steepness. A rigorous description of the rôle played by these polynomials will be given in the next paragraphs and sections. 

%In Lemma \ref{arco_minimale}, we have seen that the Thalweg $\twgpo$ on any subspace $\Gamma^m\subset \nabla h(0)^\perp$ of the $r$-jet at the origin $(h,r,n)$ of any function $h\in C^r(B^n(0,2\delta))$ contains the image of a curve $\gamma $ which is real-analytic in a uniform interval with uniform analyticity width. Moreover, on this interval $\gamma$ can be parametrized by one of the coordinates and satisfies a uniform Bernstein inequality. The adjective "uniform" in this context refers to the fact that the involved quantities depend only on $r,n,m$. 

For any fixed integer $n\ge 2$, we consider the euclidean space $\R^n$ and we endow any of its linear subspaces with the induced metric. For any pair of positive integers $1\le m\le n-1$ and $r\ge 2$, for any given function $h$ of class $C^r$ near the origin verifying $\nabla h(0)\neq 0$, and for any $m$-dimensional subspace $\Gamma^m$ orthogonal to $ \nabla h(0)$, by  Def. \ref{thalweg}, the set $\twgpo$ is the locus of minima of
$||\pi_{\Gamma^m}\nabla \tT_0(h,r,n)||_2$ on the spheres $\cS^m_\eta(0)\subset \Gamma^m$, with $\eta>0$. 

In Theorem \ref{arco_minimale} we have proved the existence of a minimal semi-algebraic arc $\gamma$ (see \eqref{j_arco}) of diagram $\td(r,n,m)$ parametrized by one coordinate and whose image is contained in the thalweg $\twgpo$. Due to Proposition \ref{pollide} - $\gamma$ is piecewise algebraic, with a maximal number of algebraic components depending only on its diagram $\td(r,n,m)$. With the exception of a finite set of complex points, any algebraic function admits a local holomorphic extension, and the number of its singularities is bounded by a quantity depending only on its diagram (see appendix \ref{riparametrizzazione}, or \cite{Barbieri_Niederman_2022} for more details). Therefore, $\gamma(t)$ is real-analytic with the exception of a finite number of points whose cardinality is bounded uniformly by a quantity depending solely on $\td(r,n,m)$. In particular, for any $\lambda>0$, this ensures the existence of an interval $\tI_\lambda\subset [-\lambda, \lambda]$ of uniform length $\lambda/\tK(r,m,n)$, where $\tK=\tK(r,m,n)$ is a suitable constant, over which $\gamma(t)$ is real-analytic with complex analyticity width $\lambda/\tK$.  

By the above reasonings, for sufficiently small $\lambda>0$ the interval $(-3\lambda,3\lambda)$ contains no singularities of $\gamma(t)$. In particular, $\gamma(t)$ is real analytic in $\tL_\lambda:=(\lambda,2\lambda)$, with complex analyticity width $\lambda$, and the same holds also for ${||\left.\pi_{\Gamma^m}\nabla \tT_0(h,r,n)\right|_{\gamma(t)}||_2}^2$ in that interval. Hence, if the function ${||\left.\pi_{\Gamma^m}\nabla \tT_0(h,r,n)\right|_{\gamma(t)}||_2}^2$ has a zero of infinite order at some point $t^\star\in\tL_\lambda$, then it is identically null in $\tL_\lambda$.
Then, by  Definition \ref{def steep}, and by the minimality of $\gamma$, this implies that the polynomial $\tT_0(h,r,n)$ cannot satisfy the steepness property at the origin on the subspace $\Gamma^m$. 

We claim that a kind of converse result - involving $\tI_\lambda$ instead of $\tL_\lambda$ - is also true: if $\tT_0(h,r,n)$ is non-steep at the origin on the subspace $\Gamma^m$, then ${||\left.\pi_{\Gamma^m}\nabla \tT_0(h,r,n)\right|_{\gamma(t)}||_2}^2$ must have a zero of infinite order in $\tI_\lambda$. This observation is fundamental in order to prove Theorem A. Actually, the necessity of a zero of infinite order has been proved for a real-analytic function in \cite{Niederman_2006} via the curve-selection Lemma; however, in the polynomial setting considered here, we have a much stronger quantitative result.  

Motivated by this heuristic argument, we are interested in studying the properties of those real polynomials of $m\ge 1$ variables whose gradient has a zero of sufficiently high order on some curve $\gamma$ parametrized by one coordinate. In a first moment, we do not consider the fact that these polynomials are the restrictions to a $m$-dimensional subspace $\Gamma^m$ of polynomials defined in $\R^n$, with $n>m$. This will be taken into account in section \ref{genericity}.  Therefore, we give the following definitions: 
\begin{defn}\label{arco}
	We indicate by $\arco$ the set of curves $\gamma(t)$ with values in $\R^m$ such that $\gamma(t)$ is real-analytic around the origin, and $\gamma(0)=\dot\gamma(0)=0$. 
	\begin{rmk}\label{1-coord}
		By the inverse function theorem, for any element $\gamma\in \Theta_m$ there exists some neighborhood $U_\gamma$ and some  $k\in\{1,...,m\}$ such that for all $t\in U_\gamma$ one has the parametrization
		\begin{equation}\label{parametrizzazione a una coordinata}
			\gamma(t):=
			\begin{cases}
			x_k(t)=t\\
			x_j(t)=f_j(t) \quad \forall  j\in\{1,...,m\}\ ,\ \ j\neq k
			\ .
			\end{cases} 
		\end{equation}

		From now on, when considering an element of $\Theta_m$, we will always assume implicitly that it is parametrized as in \eqref{parametrizzazione a una coordinata}. 
	\end{rmk}
	For fixed $i\in\{1,\dots, m\}$, we denote by $\arcoi$ the subset of  curves in $\arco$ that can be parametrized by the $i$-th coordinate. Clearly, one has the decomposition
	$$
	\arco=\bigcup_{i=1}^m\arcoi\ .
	$$
\end{defn}
\begin{rmk}
	We are asking the arc $\gamma$ to be analytic at the origin, but the minimal arc obtained in Theorem \ref{arco_minimale} did not necessarily have this property (the origin was not included, in general, in the uniform interval of analyticity $\tI_\lambda\subset [-\lambda,\lambda]$). As it has already been discussed in the introduction, this is an issue that comes from the use of analytic reparametrizations of semi-algebraic sets. We will deal with this apparent difficulty in section \ref{genericity}.  
\end{rmk}
\begin{rmk}
	For the moment, we do not make any assumption on the sizes of the neighborhoods of analyticity $U_\gamma$ of the arcs in $\arco$. Hence, the results of this section do not require any uniform lower bound on $|U_\gamma|$ as in Theorem \ref{arco_minimale}. Nevertheless, the existence of a uniform lower bound will prove to be necessary in order to to demonstrate the results of section \ref{genericity}.   
\end{rmk}
\begin{defn}
	For any pair of integers $s\ge 1$ and $i\in\{1,\dots,m\}$, we indicate by
	$
	\centina
	$ (resp. $\centinai$)
	the subset of $(\cP^\star(s,1))^m=\cP^\star(s,1)\times \dots \times \cP^\star(s,1)$ containing the truncations at order $s$ of the Taylor expansions at the origin of all curves in $\arco$ (resp. in $\arcoi$). The elements of $\centina$ will henceforth be referred to as $s$-truncations. 
\end{defn}

Clearly, one  has the following decomposition:
\begin{equation}\label{ovvia}
\centina = \bigcup_{i=1}^m\centinai\ .
\end{equation}

\begin{rmk}\label{dimensione}
	Clearly, $\centina$ (resp. $\centinai$) is isomorphic to the set of $s$-jets of curves in $\arco$ (resp. $\arcoi$). Moreover, for each $i\in\{1,\dots,m\}$, the set $\centinai $ is isomorphic to $\R^{(m-1)\times s}$, since for any curve $\gamma\in\arcoi$ its $s$-truncation $\cJ_{s,\gamma} \in\centinai$ is determined by the first $s$ Taylor coefficients at the origin of the functions $f_j$, with $j\in\{1,...,m\}$, $j\neq i$.
\end{rmk}

\begin{defn}\label{s-vanishing}
	Fix three integers $r\ge 2$, $m\ge 1$, and $1\le s\le r-1$. A polynomial $P\in\Polm$ is said to be {\itshape $s$-vanishing} if there exists an arc $\gamma\in\arco$ such that on its $s$-truncation $\jet\in\centina$
	the gradient of $P$ has a zero of order $s$ at the origin, namely
	\begin{equation}\label{singolare}
	\frac{d^\alpha}{dt^\alpha}\left(\left.\frac{\partial P}{\partial x_\ell}\right|_{\jet(t)}\right)_{t=0}=0\ ,  \forall\ \ell\in\{1,...,m\}\ ,\ \forall \ \alpha\in\{0,...,s\}\ .
	\end{equation}
	The set of $s$-vanishing polynomials in $\Polm$ is denoted by $\sigma(r,s,m)$.
\end{defn}

%\begin{rmk}
%	Differently from the arcs in $\arco$, the minimal arc in Lemma \ref{arco_minimale} was not automatically parametrized by the first coordinate on its uniform interval of analyticity. However, as we shall see in the next sections, $s$-vanishing polynomials in $\R^m$ are to be intended as restrictions to $m$-dimensional subspaces $\Gamma^m$ of polynomials defined in $\R^n$, with $n>m$. Since the choice of an orthonormal basis spanning $\Gamma^m$ is free, up to changing the order of the vectors in the basis, one can always reduce to the case where $x_1(t)=t$. 
%\end{rmk}
In paragraph \ref{algebraic}, we shall investigate the properties of the set $\sigma(r,s,m)\subset \Polm$ of $s$-vanishing polynomials: in particular, we shall prove that 
\begin{enumerate}
	\item for any given values of $r\ge 2$, $m\ge 1$, $1\le s\le r-1$, it is the semi-algebraic projection onto $\Polm$ of an algebraic set $Z(r,s,m)$ of $ \Polm\times \centina$ whose ideal can be explicitly computed ;
	\item 
	it has positive codimension.
\end{enumerate}
Secondly, in paragraph \ref{geometric}, we shall show that any polynomial $P$ belonging to the complementary of the closure of $\sigma(r,s,m)$ in $\Polm$ satisfies a "stable" lower estimate on its gradient. As we shall see, "stable" means that the estimate holds uniformly true for any polynomial belonging to a neighborhood of $P$.

Finally, in section \ref{genericity}, we shall prove that a polynomial $Q\in \Pol$ satisfying $\grad Q(0)\neq 0$ is steep around the origin iff there exists $1\le s\le r-1$ such that, for all $m\in\{1,...,n-1\}$, the restriction of $Q$ to any $m$-dimensional linear subspace $\Gamma^m$ perpendicular to $ \nabla Q(0)$ is contained in the complementary of $\text{closure}(\sigma(r,s,m))$ in $\Polm$.
\subsection{Algebraic properties}\label{algebraic}
We assume the notations of the previous paragraph, and we consider a triplet of integers $r\ge 2$, $m\ge 1$, $1\le s\le r-1$. We work in the euclidean space $\R^m$ equipped with  coordinates $(x_1,...,x_m)$, and we consider a polynomial $P=P(x)\in\Polm$ verifying the $s$-vanishing condition on the $s$-truncation $\jet\in \centina $ of some curve $\gamma\in\arco$.
Unless explicitly specified, we will henceforth work in the case in which $\gamma$ is parametrized by the first coordinate, as the generalization to other cases is immediate. Hence, from now on we set $\gamma(t):=(t,x_2(t),\dots,x_m(t))\in\arcox$.

\subsubsection{Case $m=1$}
We observe that, for $m=1$, we have the following simple result:
\begin{lemma}\label{m=1}
	For $m=1$, a polynomial $P(x)=\sum_{\substack{\mu\in \N\\1\le\mu\le r}}p_\mu x^\mu$ of one real variable belongs to the set $\sigma(r,s,1)\subset \cP(r,1)$ if and only if
	\begin{equation}\label{che palle}
			p_\mu=0\qquad \forall \mu\in \N \text{ such that } 1\le\mu\le s+1\ . 
	\end{equation}
	
	Moreover, $\sigma(r,s,1)$ is closed and its codimension in $\cP(r,1)$ is equal to $s+1$. 
\end{lemma}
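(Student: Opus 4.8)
The plan is to unwind Definition \ref{s-vanishing} in the elementary case $m=1$. When $m=1$ there is only one coordinate, so the only arc $\gamma\in\Theta_1$ (up to reparametrization by the inverse function theorem, as in Remark \ref{1-coord}) is $\gamma(t)=t$, and its $s$-truncation is simply $\jet(t)=t$. Hence a polynomial $P(x)=\sum_{1\le\mu\le r}p_\mu x^\mu$ is $s$-vanishing if and only if the function $t\mapsto P'(t)=\sum_{\mu\ge 1}\mu\,p_\mu t^{\mu-1}$ has a zero of order $s$ at $t=0$, i.e. $\frac{d^\alpha}{dt^\alpha}P'(t)\big|_{t=0}=0$ for all $\alpha\in\{0,\dots,s\}$. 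First I would translate this into conditions on coefficients: the Taylor coefficient of $t^{\mu-1}$ in $P'$ is $\mu\,p_\mu$, and requiring all Taylor coefficients up to order $s$ to vanish is equivalent to $\mu\,p_\mu=0$ for all $\mu$ with $\mu-1\le s$, that is $p_\mu=0$ for all $1\le\mu\le s+1$. This establishes the characterization \eqref{che palle}.

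Next I would read off the remaining two assertions directly from \eqref{che palle}. The set $\sigma(r,s,1)$ is the linear subspace of $\cP(r,1)$ cut out by the vanishing of the coordinates $p_1,\dots,p_{s+1}$; since $\cP(r,1)$ has coordinates $(p_1,\dots,p_r)$, this subspace is defined by $s+1$ independent linear equations (these are genuinely independent because $1\le s\le r-1$ guarantees $s+1\le r$, so all of $p_1,\dots,p_{s+1}$ are actual coordinates on $\cP(r,1)$). Being a linear subspace, it is closed, and its codimension is exactly the number of defining equations, namely $s+1$.

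I do not expect any serious obstacle here: the content of the lemma is precisely that, in one variable, there is essentially no freedom in the choice of arc, so the $s$-vanishing condition degenerates to a prescribed-order-of-vanishing condition on $P'$, which is linear in the coefficients of $P$. The only point requiring a word of care is the bookkeeping between the order-$\alpha$ derivative conditions ($\alpha=0,\dots,s$) and the coefficient indices $\mu=1,\dots,s+1$ (a shift by one from differentiation), and the observation that $s\le r-1$ ensures that the $s+1$ vanishing conditions are nontrivial on $\cP(r,1)$. Everything else is a routine identification of a coordinate subspace.
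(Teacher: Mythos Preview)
Your proposal is correct and follows essentially the same approach as the paper: both note that $\Theta_1$ reduces to the single arc $\gamma(t)=t$, translate the $s$-vanishing condition into the vanishing of the coefficients $p_1,\dots,p_{s+1}$, and conclude closedness and codimension from the fact that $\sigma(r,s,1)$ is cut out by $s+1$ independent linear equations. Your write-up is actually more detailed than the paper's (which dispatches the computation in two sentences), in particular in tracking the index shift and in noting explicitly that $s\le r-1$ guarantees the equations are independent.
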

\begin{proof}
	For $m=1$, the set $\Theta_1$ is the singleton containing the line $\gamma(t):=x(t)=t$. 
	
	By Definition \ref{s-vanishing}, it is clear that a polynomial verifying condition \eqref{che palle} in the statement satisfies also the $s$-vanishing condition. Conversely, again by Definition \ref{s-vanishing}, it is plain to check that the $s$-vanishing condition for $m=1$ imposes that the coefficients of the studied polynomial must be null up to order $s+1$. The closure of $\sigma(r,s,1)$ is due to continuity, whereas $\text{codim }\sigma(r,s,1)=s+1$ as such a set is determined by $s+1$ independent equations in $\cP(r,1)$.  
\end{proof}

\subsubsection{Notations (case $n\ge 3$, $2\le m\le n-1$)}\label{notations}

Up to the end of this paragraph, we will restrict to the case $n\ge 3$, $2\le m\le n-1$. The goal is to introduce useful notations in order to study the properties of $s$-vanishing polynomials in $\Polm$. 

Using standard notations, we set $\mu:=(\mu_1,...,\mu_m)\in\N^m\ ,\ \ |\mu|:=||\mu||_1$, and for any $P\in\Polm$ we write
\begin{equation}\label{polinomio alla romana}
P(x):=\sum_{\substack{\mu\in\N^m\\  1\le |\mu|\le r}} p_{(\mu_1,...,\mu_m)} x_1^{\mu_1}...\,x_m^{\mu_m}=:\sum_{\substack{\mu\in\N^m\\  1\le |\mu|\le r}} p_\mu x^\mu\ ,
\end{equation}
where we have taken into account the fact that $P$ has no constant term by the definition of $\Polm$.

 We also consider a curve $\gamma\in \arcox$ and - for $j=2,...,m$ - we develop its components $f_j$ at the origin, and we write
\begin{equation}\label{fj}
x_j(t)=f_j(t)=:\sum_{i=1}^{+\infty}a_{ji}t^i\ ,
\end{equation}
where we have taken into account the fact that $\gamma(0)=0$ by Definition \ref{arco}. Thus, the $s$-truncation $\jet\in\centinax$ of the curve $\gamma$ is identified by the $(m-1)s$ real coefficients $(a_{21},...,a_{2s},...,a_{m1},...,a_{ms})$ of the truncated expansion, namely
\begin{equation}\label{tronco}
\jet=\jet(t)=\left(t, \sum_{i=1}^{s}a_{2i}t^i\,,\dots,\sum_{i=1}^{s}a_{mi}t^i\right)\ . 
\end{equation}

In the rest of this paragraph, we will try to find an explicit expression for the $s$-vanishing condition in terms of the coefficients of $P$ and $\jet$.
We first observe that the $s$-vanishing condition \eqref{singolare} for $\alpha=0$ implies 
\begin{equation}\label{linear_zero}
p_\mu=0 \quad \text{  for all } \mu\in\N^m \text{  such that } |\mu|=1\ .
\end{equation}
Thus, without any loss of generality, in \eqref{polinomio alla romana} we can only consider the multi-indices $\mu\in\N^m$ that satisfy $2\le |\mu|\le r$. Moreover, for $\ell=1,...,m$, the $\ell$-th component of the gradient of $P$ reads
\begin{equation}\label{gradiente}
\frac{\partial P}{\partial x_\ell}:=\sommar \mu_\ell\ p_{\mu} x^{\widetilde{\mu}(\ell)}\ ,\ \ \widetilde{\mu}_j(\ell):=\mu_j-\delta_{j\ell}\ ,\ \ j=1,...,m\ ,\ \ |\mu|=|\widetilde{\mu}(\ell)|+1\ ,
\end{equation}
where $\delta_{j\ell}$ is the Kronecker symbol. At this point, we indicate by 
\begin{equation}\label{Phiuno}
\Phi^1:\Polm\times\centinax\longrightarrow\R^M\times \R^{(m-1)s}\ ,\ \ M:=\dim\Polm
\end{equation}
the trivial chart associating
$ (P,\jet)\longmapsto (p_\mu,a_{21},...,a_{2s},...,a_{m1},...,a_{ms})
$
and we define the functions
$
q_{\ell\alpha}^1:\R^M\times\R^{(m-1)s}\longrightarrow \R \ ,\ \  \ell\in \{1,...,m\}\ ,\ \  \alpha\in\{0,...,s\}
$
in the following way:

\begin{align}\label{qla}
\begin{split}
&	\text{  For }\alpha=0 \qquad q_{\ell 0}^1\circ\Phi^1(P,\jet):=\left(\gradl\right)_{t=0}=\ p_{(0,...,0,1,0,...,0)} \\
&	\text{  For }\alpha\in\{1,...,s\}\qquad  q_{\ell\alpha}^1\circ\Phi^1(P,\jet):=\frac{d^\alpha}{dt^\alpha}\left(\gradl\right)_{t=0}\\
&	=\frac{d^\alpha}{dt^\alpha}\left[\sommar \mu_\ell\  p_{\mu}t^{\widetilde{\mu}_1(\ell)}\left(\sum_{k=1}^{s}a_{2k}t^k\right)^{\widetilde{\mu}_2(\ell)}...\left(\sum_{j=1}^{s}a_{mj}t^j\right)^{\widetilde{\mu}_m(\ell)}\right]_{t=0}
\end{split}
\end{align}
where the "$1$" fills the $\ell$-th slot in the multi-index at the rightest member of the first line and where the last line is obtained by injecting \eqref{tronco} into expression \eqref{gradiente}. 

\begin{rmk}\label{i-esima}
	In a similar way, when $\gamma$ is parametrized by the $i$-th coordinate, with $i\neq 1$, one can denote by $
	\Phi^i:\Polm\times\centinai\longrightarrow\R^M\times \R^{(m-1)s}
	$
	the chart associating
	$ (P,\jet)\mapsto (p_\mu,a_{11},...,a_{1s},..., a_{(i-1)1},...,a_{(i-1)s},a_{(i+1)1},...,a_{(i+1)s},...,a_{m1},...,a_{ms})
	$, and introduce the maps $q_{\ell\alpha}^i\circ\Phi^i(P,\jet)$, $\ell\in\{1,\dots,m\}$, $\alpha\in\{0,\dots,s\}$ exchanging the rôle of the first coordinate with that of the $i$-th coordinate in \eqref{qla}. 
	
\end{rmk} 

\begin{rmk}\label{nota}
	Comparing expressions \eqref{qla} with Definition \ref{s-vanishing} and expression \eqref{linear_zero}, and taking Remark \ref{i-esima} into account, it is easy to see that any given polynomial $P\in\Polm$ satisfies the $s$-vanishing condition on some truncation $\jet\in \centina$ if and only if 
	\begin{equation}\label{sintetica}
	q_{\ell\alpha}^i\circ\Phi^i(P,\jet)=0\ ,\ \ \text{ for some } i \in\{2,\dots, m\} \ ,\ \ \text{for all } \ell\in\{1,...,m\}\ ,\ \ \alpha\in\{0,...,s\}\ .
	\end{equation}
\end{rmk}

By the above discussion, we see that the set of $s$-vanishing polynomials in $\Polm$ is given by
\begin{equation}\label{ping}
\sigma(r,s,m)=\bigcup_{i=1}^m\sigma^i(r,s,m)\ ,
\end{equation} 
where we have introduced the sets
\begin{equation}\label{pong}
\sigma^i(r,s,m):=\Pi_{\Polm}Z^i(r,s,m)
\end{equation}
and
\begin{align}\label{zrsm}
\begin{split}
Z^i(r,s,m)
:= \{&(P,\jet)\in\Polm\times \centinai\}|\\& q_{\ell \alpha}^i\circ\Phi^i(P,\jet)=0
\text{  for all } \ell\in\{1,...,m\}, \alpha\in\{0,...,s\}\}\ .
\end{split}
\end{align}
We also set
\begin{equation}\label{pang}
Z(r,s,m):=\bigcup_{i=1}^m Z^i(r,s,m)\ .
\end{equation}
It turns out that the ideal of $Z(r,s,m)$ can be explicitly computed for any given value of the integers $r\ge 2$, $1\le s\le r-1$, and $m\ge 2$ \footnote{The case $m=1$ is easier, see Lemma \ref{m=1}.}, i.e., one can find explicit expressions for the quantities $q_{\ell \alpha}^i\circ\Phi^i(P,\jet)$, for any value of $\ell\in\{1,...,m\}$,  $\alpha\in\{0,...,s\}$, and $i\in\{1,...,m\}$.

Before stating this result, for any given value of $i\in\{1,\dots,m\}$ we will introduce new global charts for $\Polm\times \centinai$ which - though unessential for the validity of our results - yield nicer expressions for the equations $q_{\ell\alpha}^i\circ\Phi^i(P,\jet)=0$ than the standard chart $\Phi^i$. 
As it will be shown in the next paragraph, the variables $a_{j1}, j\neq i,$ associated to the linear terms of the $s$-truncation $\jet$ can be incorporated in the coordinates of the polynomial $P$. This simplifies the calculations and yields more readable formulas. 

\subsubsection{A useful chart for $\Polm\times \centina$ (case $n\ge 3$, $2\le m\le n-1$)}\label{appropriata}

Here too, we restrict to the case $n\ge 3$, $2\le m\le n-1$.

Once again, we only consider the case in which $\gamma$ is parametrized by the coordinate $i=1$, the other cases being trivial generalizations. Some of the quantities introduced in the sequel should be labeled with an index $1$, as their definition depends in an obvious way from the choice of the parametrizing coordinate. However, in order not to burden notations, we drop, when possible, the reference to the fact that we are considering the case $i=1$.

In order to define a new chart for $\Polm\times \centinax$, we start by observing that, if we denote by $A_1,...,A_m$ the canonical basis associated to the coordinates $x_1,...,x_m$ in $\R^m$, for any fixed vector $\tb:=(b_{21},...,b_{m1})\in\R^{m-1}$, we can define the new parametric basis
\begin{equation}\label{chgt_base}
v_\tb:=A_1+b_{21}A_2+b_{31}A_3+...+b_{m1}A_m\ ,\ \ u_2:=A_2\ ,\ \ ...\ ,\ \ u_m:=A_m
\end{equation}
associated to the parametric change of variables 
\begin{equation}\label{chi}
\cL_\tb:\R^m\longrightarrow\R^m\ ,\ \  (x_1,...,x_m)\longmapsto \ty(\tb):=(\ty_1,\ty_2(\tb),...,\ty_{m}(\tb)), 
\end{equation}
where
\begin{equation}\label{coordinate}
\ty_1:=x_1\quad ,\qquad  \ty_2=\ty_2(\tb):=x_2-b_{21}\,x_1\quad \dots \quad \ty_m=\ty_m(\tb):=x_m-b_{m1}\,x_1\ .
\end{equation}
Obviously, for any fixed $\tb\in\R^{m-1}$, the change of coordinates \eqref{coordinate} in $\R^m$ induces a change of coordinates also in $\Polm$. Infact, the pull-back of the polynomial $P(x)$ is indicated by
\begin{equation}\label{pol_trasf}
\tP_\tb(\ty(\tb)):=P\circ\cL_\tb^{-1}(\ty(\tb))=\sum_{\substack{\mu\in\N^m\\  2\le |\mu|\le r}}p_{\mu} (x(\ty(\tb)))^\mu=:\sum_{\substack{\mu\in\N^m\\  2\le |\mu|\le r}} \tp_\mu(p_\mu,\tb) \ty^\mu(\tb)\ ,  
\end{equation}
where the new coefficients $\tp_\mu=\tp_\mu(p_\mu,b_{21},....,b_{m1})$ are polynomial functions of coefficients $p_\mu$ and on the parameters $\tb\in\R^{m-1}$. For any given $\tb\in\R^{m-1}$, there is a $1-1$ correspondence between the quantities $p_\mu$ and $\tp_\mu$, as they represent the coordinates of the same polynomial written in different bases. Moreover, by \eqref{fj}, in the new variables \eqref{coordinate} the components of the push-forward $\cL_\tb\circ \jet=\cL_\tb\circ \jet\in\centinax$ of any $s$-truncation $\jet\in\centinax$ read 
\begin{equation}\label{tfj}
\ty_1(t)=t\quad , \qquad \ty_j(t):=(a_{j1}-b_{j1})t+\sum_{i=2}^{s}a_{ji}t^i\quad,\qquad  j\in\{2,...,m\}\ .
\end{equation}

By looking at expression \eqref{tfj}, we see that, for any $s\ge 2$ and for any given $s$-truncation $\jet\in\centinax$,  it is possible to find a parametric change of coordinates $\cL_\tb$ in $\R^m$ such that the image $\cL_\tb\circ\jet\in \centinax$ has no linear terms except for the parametrizing component: it suffices to choose $\tb=\ta:=(a_{21},...,a_{m1})$, with $(a_{21},...,a_{m1})$ the coefficients of the linear terms of $\jet$. Taking \eqref{pol_trasf} into account, this also entails that, for any given truncation $\jet\in\centinax$,  there exists an associated set of coordinates $\tp_\mu=\tp_\mu(p_\mu,\ta)$, with $\ta:=(a_{21},...,a_{m1})$, in the space of polynomials $\Polm$. Hence, we can parametrize the coordinates of any polynomial $P$ through the linear coefficients of any truncation $\jet$. 

Namely, we firstly observe that $\centinauno$ is isomorphic to
\begin{equation}
\centinauno\simeq\{\ta:=(a_{21},\dots,a_{m1})\in \R^{m-1}\}\ .
\end{equation}
Also, indicating by $\centinadue$ the subset of $s$-truncations having null linear terms, we have that
\begin{equation}
\centinadue\simeq \{(a_{22},\dots,a_{2s},\dots,a_{m2},\dots,a_{ms})\in \R^{(m-1)(s-1)}\}\ .
\end{equation}
Clearly, one has 
\begin{equation}\label{ordinedue}
\centinax=\centinauno\times\centinadue\ .
\end{equation}
At this point, with the notation in \eqref{pol_trasf}, we define the invertible transformation
\begin{equation}\label{effeuno}
	\mathscr{F}^1: \Polm\times \R^{m-1}\longrightarrow\R^M\times\R^{m-1}\quad , \qquad M:=\dim \Polm 
\end{equation}
associating 
\begin{equation}\label{lunetta}
\left(P=\sum_{\substack{\mu\in \N^m\\1\le |\mu |\le r}}p_\mu x^\mu,\ta\right)\longmapsto \left(\tp_{\mu'}\left((p_\mu)_{\substack{\mu\in \N^m\\1\le |\mu |\le r}},\ta\right)_{\substack{\mu'\in \N^m\\1\le |\mu' |\le r}},\ta\right)
\end{equation}
and we indicate its image by
\begin{equation}\label{W}
W^1(r,m):=\mathscr{F}^1(\Polm\times \R^{m-1})\ .
\end{equation}
In other words, $W^1(r,m)$ is constructed by attaching to any point  $\ta\in \R^{m-1}\simeq\centinauno$ the fiber of all polynomials in $\Polm$ expressed in the variables \eqref{coordinate} associated to the value $\ta$. 

%Moreover, setting $M:=\dim  \Polm$, the map 
%$
%\Psi:\R^M\times\R^{(m-1)s}\longrightarrow W^1(r,m)\times \centinadue
%$
%associating 
%\begin{equation}\label{luna}
%\left((p_\mu)_{\substack{\mu\in \N^m\\1\le |\mu |\le r}},\ta,a_{22},a_{23},...,a_{ms}\right)\longmapsto ((\tp_\mu(p_\mu,\ta),\ta),a_{22},a_{23},...,a_{ms})
%\end{equation}
%is invertible. 

Furthermore, setting 
\begin{equation}\label{trasformata}
\jeta:=\cL_\ta\circ\jet 
\end{equation}
we have $\jeta\in \centinadue$ by construction because in the adapted variables - as we had shown in $\eqref{tfj}$ by setting $\tb=\ta$ - with the exception of the parametrizing component, any truncation starts at order two. Taking the notation in \eqref{pol_trasf} into account, we can also define the chart
\begin{equation}\label{Upsilon}
\Upsilon^1:\Polm\times \centinax \longrightarrow W^1(r,m)\times \R^{(m-1)(s-1)}
\end{equation}
associating 
\begin{equation}\label{sole}
(P,\jet)\longmapsto \left(\tp_{\mu'}\left((p_\mu)_{\substack{\mu\in \N^m\\1\le |\mu |\le r}},\ta\right)_{\substack{\mu'\in \N^m\\1\le |\mu' |\le r}},\ \ta,a_{22},a_{23},...,a_{ms}\right)\ .
\end{equation}
\begin{rmk}\label{assafaddij}
	For further convenience, we also denote by 
	\begin{equation}
	\fU^1: \Polm\times \centinauno \longrightarrow W^1(r,m)
	\end{equation} 
	the restriction of $\Upsilon^1$ to $\Polm\times \centinauno$. It is plain to check by formulas \eqref{pol_trasf}, \eqref{lunetta}, and \eqref{sole} that $\fU^1$ is polynomial, invertible and that its inverse is a polynomial map. \end{rmk}

\begin{rmk}\label{altri i}
	The generalization of the arguments above to the case in which the curve $\gamma \in\arco$ is parametrized by the $i$-th coordinate, with $i\in\{2,\dots,m\}$, is immediate. In particular, one can define functions $\mathscr{F}^i$, $\Upsilon^i$, $\fU^i$, together with sets $W^i(r,m)$.
\end{rmk}

\begin{rmk}
	With slight abuse of notation, in the rest of this work we will often write $(\tP_\ta, \ta, \jeta)$ and $(\tP_\ta,\ta)$ to indicate a point of $\Upsilon^1(P,\jet)$ and $W^1(r,m)$ respectively. 
\end{rmk}

For further convenience, we also observe that
\begin{lemma}\label{haha}
	Any polynomial $P\in\Polm$ satisfies the $s$-vanishing condition  
	\begin{equation}\label{boh}
	\frac{d^\alpha}{dt^\alpha}\left(\left.\frac{\partial P(x)}{\partial x_\ell}\right|_{\jet(t)}\right)_{t=0}=0 \quad \forall \alpha\in\{0,...,s\}\ ,\ \  \forall \ell\in\{1,...,m\}
	\end{equation}
	on the $s$-truncation $\jet\in \centinax$ of some curve $\gamma\in\arcox$, if and only if it satisfies
	$$
	\frac{d^\alpha}{dt^\alpha}\left(\left.\frac{\partial \tP_\ta(\ty)}{\partial \ty_\ell}\right|_{\jeta(t)}\right)_{t=0}=0 \quad \forall \alpha\in\{0,...,s\}\ ,\ \  \forall \ell\in\{1,...,m\}
	$$
	in the adapted coordinates associated to the linear terms of $\jet$.
\end{lemma}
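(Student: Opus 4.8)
The plan is to show that the $s$-vanishing condition is invariant under the linear change of coordinates $\cL_\ta$, which amounts to checking that the chain rule respects the structure of Definition \ref{s-vanishing}. First I would observe that $\cL_\ta$ is a linear isomorphism of $\R^m$ (with matrix, say, $L$) carrying $\gamma$ to $\jeta = \cL_\ta\circ\gamma$, and that $\tP_\ta = P\circ\cL_\ta^{-1}$ by \eqref{pol_trasf}. Hence along the curves we have the identity $\tP_\ta(\jeta(t)) = P\circ\cL_\ta^{-1}(\cL_\ta(\gamma(t))) = P(\gamma(t))$. More importantly, differentiating the relation $\tP_\ta = P\circ\cL_\ta^{-1}$ gives, for each $\ell$,
\[
\left.\frac{\partial \tP_\ta}{\partial \ty_\ell}\right|_{\ty} = \sum_{j=1}^m \left.\frac{\partial P}{\partial x_j}\right|_{\cL_\ta^{-1}(\ty)} \, (L^{-1})_{j\ell}\ ,
\]
so that the vector $\nabla_\ty \tP_\ta$ evaluated at $\jeta(t)$ is the constant invertible matrix $(L^{-1})^{\top}$ applied to the vector $\nabla_x P$ evaluated at $\cL_\ta^{-1}(\jeta(t)) = \gamma(t)$.

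The key step is then purely linear-algebraic: since $(L^{-1})^{\top}$ is a constant (independent of $t$) invertible matrix, the map $t\mapsto \nabla_\ty \tP_\ta(\jeta(t))$ has a zero of order $s$ at $t=0$ if and only if $t\mapsto \nabla_x P(\gamma(t))$ does. Concretely, writing $G(t) := \nabla_x P(\gamma(t))$ and $\widetilde G(t) := \nabla_\ty \tP_\ta(\jeta(t)) = (L^{-1})^{\top} G(t)$, we have $\frac{d^\alpha}{dt^\alpha}\widetilde G(0) = (L^{-1})^{\top}\frac{d^\alpha}{dt^\alpha} G(0)$ for every $\alpha$, because $(L^{-1})^{\top}$ is constant. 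Thus $\frac{d^\alpha}{dt^\alpha} G(0) = 0$ for all $\alpha\in\{0,\dots,s\}$ if and only if $\frac{d^\alpha}{dt^\alpha}\widetilde G(0)=0$ for all such $\alpha$, which is exactly the equivalence claimed, once one recalls from \eqref{boh} that the $s$-vanishing condition is precisely the vanishing of all these derivatives componentwise in $\ell$.

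Finally I would tie up the bookkeeping: one has to make sure that $\gamma\in\arcox$ (i.e. parametrized by the first coordinate, with $\gamma(0)=\dot\gamma(0)=0$) is sent by $\cL_\ta$ to a curve $\jeta$ of the right type, which is immediate from \eqref{tfj}--\eqref{trasformata}: the first component is still $t$, the remaining components start at order two, so $\jeta\in\centinadue\subset\arcox$, and it is exactly the $s$-truncation of $\cL_\ta\circ\gamma$ up to order $s$ (the higher-order terms of $\gamma$ play no role since only derivatives up to order $s$ enter). There is no genuine obstacle here; the only point requiring a line of care is that the differentiation $\frac{d^\alpha}{dt^\alpha}$ commutes with the constant matrix $(L^{-1})^{\top}$ and that truncating $\gamma$ at order $s$ and then applying $\cL_\ta$ gives the same first $s$ Taylor coefficients as applying $\cL_\ta$ and then truncating — both are trivial because $\cL_\ta$ is linear. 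I would also remark that the argument is symmetric in the two directions (replace $L$ by $L^{-1}$), so the "if and only if" is genuine, and that the same proof works verbatim for a curve parametrized by any coordinate $i$, using the charts $\Upsilon^i$ of Remark \ref{altri i}.
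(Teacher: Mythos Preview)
Your proposal is correct and follows essentially the same approach as the paper: both arguments apply the chain rule to $\tP_\ta = P\circ\cL_\ta^{-1}$, observe that the Jacobian of $\cL_\ta^{-1}$ is a constant invertible matrix (so that $\nabla_\ty\tP_\ta\big|_{\jeta(t)}$ and $\nabla_x P\big|_{\jet(t)}$ are related by a fixed invertible linear map commuting with $d^\alpha/dt^\alpha$), and conclude the equivalence by invertibility. The paper writes out the explicit matrix of $\cL_\ta^{-1}$ while you use the abstract notation $(L^{-1})^\top$, but the substance is identical.
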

\begin{proof}
	By \eqref{coordinate} one has
	\begin{equation}\label{conviene}
	\cL_\ta^{-1}(\ty):=
	\begin{cases}
	x_1=\ty_1\\
	x_2=a_{21}\,\ty_1+\ty_2\\
	...\\
	x_m=a_{m1}\,\ty_1+\ty_m\\
	\end{cases}\ ,\ \ 
	\cL_\ta^{-1}(\ty)=
	\left(
	\begin{matrix}
	1 & 0 & 0 & ... & 0\\
	a_{21} & 1 & 0 & ... & 0\\
	a_{31} & 0 & 1 & ... & 0\\
	... & ... & ... & ... & ...\\
	a_{m1} & 0 & 0 & ... & 1\\
	\end{matrix}
	\right)\ .
	\end{equation}
	Indicating with $(\cL^{-1}_\ta(\ty))_{k,\ell}$ the $(k,\ell)$-th entry of the Jacobian of the inverse transformation $\cL_\ta^{-1}$ in \eqref{conviene}, by the Leibniz formula one has
	\begin{align}\label{mina}
	\begin{split}
	&\frac{d^\alpha}{dt^\alpha}\left(\left.\frac{\partial \tP_\ta(\ty)}{\partial \ty_\ell}\right|_{\jeta(t)}\right)_{t=0}:=\frac{d^\alpha}{dt^\alpha}\left(\left.\frac{\partial (P\circ\cL^{-1}_\ta(\ty))}{\partial \ty_\ell}\right|_{\jeta(t)}\right)_{t=0}\\
	=&\frac{d^\alpha}{dt^\alpha}\left[\sum_{k=1}^m\left.\frac{\partial P}{\partial x_k}\circ\cL^{-1}_\ta(\ty)\right|_{\cL_\ta\circ\jet(t)}\times\left.  (\cL^{-1}_\ta(\ty))_{k,\ell}\right|_{\jeta(t)}\right]_{t=0}\\
	=&\frac{d^\alpha}{dt^\alpha}\left[\sum_{k=1}^m\left.\frac{\partial P(x)}{\partial x_k}\right|_{\jet(t)}\times\left.  (\cL^{-1}_\ta(\ty))_{k,\ell}\right|_{\jeta(t)}\right]_{t=0}\ .
	\end{split}
	\end{align}
	Since the entries of the matrix $\cL_\ta^{-1}$ in \eqref{conviene} are constant, one has 
	$$
	\left.  (\cL^{-1}_\ta(\ty))_{k,\ell}\right|_{\jeta(t)}=(\cL^{-1}_\ta(\ty))_{k,\ell}\ ,
	$$ 
	so that, finally, \eqref{mina} reads	
	\begin{align}\label{mansomma}
	\begin{split}
	\frac{d^\alpha}{dt^\alpha}\left(\left.\frac{\partial \tP_\ta(\ty)}{\partial \ty_\ell}\right|_{\jeta(t)}\right)_{t=0}=\sum_{k=1}^m\frac{d^\alpha}{dt^\alpha}\left[\left.\frac{\partial P(x)}{\partial x_k}\right|_{\jet(t)}\right]_{t=0}\times  (\cL^{-1}_\ta(\ty))_{k,\ell}\ .
	\end{split}
	\end{align}
	By the expression above, it is immediate to check that if $P$ satisfies the $s$-vanishing condition on $\jet$ then $\tP_\ta$ does the same on $\jeta$. 
	The proof of the converse is immediate by applying the same arguments to $P(x)\equiv \tP_\ta\circ \cL_\ta(x)$. 
\end{proof}
\medskip 

%Therefore, one has the chain of bijections
%$$
%\Polm\times \centina \overset{\Phi}{\longrightarrow} \R^N\times\R^{(m-1)s}\overset{\Psi}{\longrightarrow} \Xi^1\times \centinadue 
%$$
%which associate
%$$
%(P,\boldsymbol{\upgamma})\mapsto (p_\mu, \ta,a_{22},a_{23},...,a_{ms})\mapsto (\tp_\mu(p_\mu,\ta),\ta, a_{22},a_{23},...,a_{ms})\ .
%$$
By the discussion above, one has the choice to write the equations determining the set  $Z^1(r,s,m)$ in \eqref{zrsm} either in the original coordinates, where they assume the form $q^1_{\ell\alpha}\circ\Phi(P,\jet)=0$ for all $\ell\in\{1...,m\}$ and $\alpha\in\{0,...,s\}$, or in the new set of coordinates associated to the change of variables $\cL_\ta$ in $\R^m$, defined in \eqref{coordinate}. In particular, by performing the same computations that led to expression \eqref{qla} in the new variables, and by taking into account the fact that the expansion of $\jeta(t)$ starts at order two in $t$, one can introduce the functions
\begin{equation}\label{tQ}
\tQ_{\ell\alpha}: W^1(r,m)\times \R^{(m-1)(s-1)}\longrightarrow \R\quad , \qquad \ell\in\{1,\dots,m\} \quad \alpha \in \{0,\dots ,s\}
\end{equation}
in the following way:
\begin{align}\label{cuccurucu}
\begin{split}
&	\text{For }\alpha=0, \qquad \tQ_{\ell 0}\circ\Upsilon^1(P,\jet)=\tQ_{\ell 0}(\tP_\ta,\ta,\jeta):=\ \tp_{(0,\dots,0,1,0,\dots,0)} \\
&	\text{For }\alpha\in\{1,...,s\},\ \ \  \tQ_{\ell\alpha}\circ\Upsilon^1(P,\jet)=\tQ_{\ell\alpha}(\tP_\ta,\ta,\jeta):=\frac{d^\alpha}{dt^\alpha}\left(\left.\frac{\partial \tP_\ta(\ty)}{\partial \ty_\ell}\right|_{\jeta(t)}\right)_{t=0}\\
&	=\frac{d^\alpha}{dt^\alpha}\left[\sommar \mu_\ell\  \tp_{\mu}t^{\widetilde{\mu}_1(\ell)}\left(\sum_{k=2}^{s}a_{2k}t^k\right)^{\widetilde{\mu}_2(\ell)}\dots\left(\sum_{j=2}^{s}a_{mj}t^j\right)^{\widetilde{\mu}_m(\ell)}\right]_{t=0}\ .
\end{split}
\end{align}
Expressions \eqref{qla} and \eqref{cuccurucu}, considered together with Lemma \ref{haha}, imply that condition $q_{\ell\alpha}^1\circ\Phi^1(P,\jet)=0 $ holds if and only if
\begin{equation} \tQ_{\ell\alpha}\circ\Upsilon^1 (P,\jet)=\tQ_{\ell\alpha}(\tP_\ta,\ta,\jeta)=0\ ,\ \  \forall  \ell\in\{1,\dots,m\}, \forall \alpha\in\{0,\dots,s\}\ ,
\end{equation}
so that the ideal of the set $Z^1(r,s,m)$ in the new variables is given by the equations $\tQ_{\ell\alpha}\circ\Upsilon^1(P,\jet)=\tQ_{\ell\alpha}(\tP_\ta,\ta,\jeta)=0$ for all $\ell\in\{1...,m\}$ and $\alpha\in\{0,...,s\}$. 
In the sequel, we will work in these new coordinates, since the involved expressions are nicer.

\subsubsection{Computations and estimate on the codimension of $\sigma(r,s,m)$ (case $n\ge 3$, $2\le m\le n-1$)}\label{computations}

As in the previous paragraphs, we set $n\ge 3$, $2\le m\le n-1$.

Once again, we will only consider the case in which $\gamma$ is parametrized by the first coordinate.

Before stating the main results of this paragraph, we still need to introduce a few notations. For any fixed $\alpha\in\{0,\dots,s\}$, for any $\beta\in\{0,\dots,\alpha\}$, and for any $i\in\{1,\dots,m\}$, we set 
\begin{align}\label{mula}
\nu(i,\beta):=
\begin{cases}
(\beta+1,0,\dots,0)\ ,\ &\text{ for } i=1\\
(\beta,0,\dots,0,1,0,\dots,0)\ ,\ &\text{ for } i=2,\dots,m\\
\end{cases}
\end{align}
where the "$1$" fills the  $i$-th slot for $i=2,...,m$. For $\alpha\in\{1,...,s\}$, we also denote the multi-indices $\mu\in\N^m$ of length $2\le|\mu|\le\alpha+1$ not belonging to this family with
\begin{equation}\label{malpha}
\mathcal{M}_m(\alpha):=\{\mu\in\N^m\ ,\ \ 2\le|\mu|\le \alpha+1\}\backslash \bigcup_{\substack{i=1,...,m\\\beta=1,...,\alpha}}\{\nu(i,\beta)\}\ .
\end{equation}

Moreover, for any given $\alpha\in\{1,..,s\}$, $\mu\in\N^m$ and $\ell\in\{1,...,m\}$ we introduce
\begin{align}\label{gma}
\begin{split}
\mathcal{G}_m(\widetilde{\mu}(\ell),\alpha):=\biggl\{&(k_{j2},...,k_{j\alpha})\in\N^{(m-1)\times(\alpha-1)}, j\in\{2,...,m\}:\\
&\sum_{i=2}^{\alpha}k_{ji}=\mtj(\ell)\ ,\ \ \widetilde{\mu}_1(\ell)+\sum_{j=2}^{m}\sum_{i=2}^{\alpha}i\ k_{ji}=\alpha\biggl\} 
\end{split}
\end{align} 
and we set
\begin{equation}\label{basta}
\cE_m(\ell,\alpha):=\{\mu\in \N^m\ |\  \mathcal{G}_m(\widetilde{\mu}(\ell),\alpha)\neq \varnothing\}\ .
\end{equation}
Finally, for any $\ell\in\{1,\dots,m\}$ and for any $\mu \in \N^m$, we remind that (see \eqref{gradiente})
$$
\widetilde\mu_j(\ell):=\mu_j-\delta_{j\ell}\quad j\in\{1,\dots,m\}\ .
$$

With these notations, we can now state the following 
\begin{thm}\label{ideal}
	For any choice of integers $m\ge 1$, $r\ge2$, $1\le s\le r-1$, the set $Z(r,s,m)$ in \eqref{pang} is an algebraic set of $\Polm\times \centina$, whose ideal can be explicitly computed. In particular, with the notations in \eqref{cuccurucu}, \eqref{mula}, \eqref{malpha} and \eqref{gma}, the set $Z^1(r,s,m)$ in \eqref{zrsm} is the image through the inverse of the transformation $\Upsilon^1$ in \eqref{sole} of the algebraic set determined by the following equations:
		\begin{align}\label{vigna}
		\begin{split}
		\tQ_{10}(\tP_\ta,\ta,\jeta)&=\tp_{\nu(1,0)}=0\ ,\\ 
		\tQ_{11}(\tP_\ta,\ta,\jeta)&=2\,\tp_{\nu(1,1)}=0\ ,\\
		\frac{\tQ_{1\alpha}(\tP_\ta,\ta,\jeta)}{\alpha!}&=(\alpha+1)\tp_{\nu(1,\alpha)}+\sommalphaquater \beta\ \tp_{\nu(i,\beta)} a_{i(\alpha-(\beta-1))}\\
		+&\sommalphateruno \mu_1\ \tp_{\mu}
		\sum_{k\in \cG_m(\widetilde \mu(1),\alpha)} 	\left( \prod_{j=2}^m
		\binom{\mtj(1)}{k_{j2}\ ...\  k_{j\alpha}}\ 
		a_{j2}\strut^{k_{j2}}...a_{j\alpha}\strut^{k_{j\alpha}}\right)=0\ ,
		\\
		\end{split}
		\end{align}
		for $\ell=1$, $\alpha=2,...,s$, and	
		\begin{align}\label{pigna}
		\begin{split}
		\tQ_{\ell 0}(\tP_\ta,\ta,\jeta)&=\tp_{\nu(\ell,0)}=0\ ,\\ 
		\tQ_{\ell 1}(\tP_\ta,\ta,\jeta)&=\tp_{\nu(\ell,1)}=0\ ,\\
		\frac{\tQ_{\ell\alpha}(\tP_\ta,\ta,\jeta)}{\alpha!}&= \tp_{\nu(\ell,\alpha)}\\
	+	&  \sommalphater \mu_\ell\ \tp_{\mu}
		\sum_{k\in \cG_m(\widetilde \mu(\ell),\alpha)} 	\left( \prod_{j=2}^m
		\binom{\mtj(\ell)}{k_{j2}\ ...\  k_{j\alpha}}\ 
		a_{j2}\strut^{k_{j2}}...a_{j\alpha}\strut^{k_{j\alpha}}\right)=0\ ,
		\\
		\text{for }\ell=2,...,m\ ,\ \ & \alpha=2,...,s\ .
		\end{split}
		\end{align}
	
\end{thm}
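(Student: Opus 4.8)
The plan is to prove Theorem \ref{ideal} by a direct, bookkeeping computation: we expand the $s$-vanishing conditions \eqref{boh} in the adapted coordinates produced by the chart $\Upsilon^1$, using Lemma \ref{haha} to justify working in those coordinates, and then we identify term-by-term which monomials $\tp_\mu$ contribute to the $\alpha$-th derivative at $t=0$. By Lemma \ref{haha}, $P$ is $s$-vanishing on $\jet\in\centinax$ if and only if $\tP_\ta$ is $s$-vanishing on $\jeta$, and by construction \eqref{tfj}–\eqref{trasformata} the curve $\jeta$ has the form $\ty_1(t)=t$, $\ty_j(t)=\sum_{i=2}^s a_{ji}t^i$ for $j=2,\dots,m$, i.e. all non-parametrizing components start at order $2$. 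Therefore $Z^1(r,s,m)$ is, as claimed, the $\Upsilon^1$-preimage of the zero set of the functions $\tQ_{\ell\alpha}$ in \eqref{cuccurucu}, and it only remains to compute these functions explicitly.

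First I would record the two trivial cases $\alpha=0$ and, for the relevant components, $\alpha=1$. Since $\tP_\ta$ has no constant term, $\tQ_{\ell 0}=\tp_{(0,\dots,0,1,0,\dots,0)}=\tp_{\nu(\ell,0)}$; this is the first line of both \eqref{vigna} and \eqref{pigna}. For $\alpha=1$, differentiating \eqref{cuccurucu} once and setting $t=0$ picks out exactly the monomials of $\partial\tP_\ta/\partial\ty_\ell$ whose evaluation along $\jeta$ contributes a single power of $t$; since every $\ty_j(t)$ with $j\neq 1$ is $O(t^2)$, the only surviving contribution is the coefficient of $\ty_1$ in $\partial\tP_\ta/\partial\ty_\ell$, namely $2\tp_{(2,0,\dots,0)}=2\tp_{\nu(1,1)}$ when $\ell=1$, and $\tp_{(1,0,\dots,0,1,0,\dots,0)}=\tp_{\nu(\ell,1)}$ when $\ell\neq 1$ (here the extra $1$ sits in slot $\ell$). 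This gives the second lines of \eqref{vigna} and \eqref{pigna}.

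The core of the proof is the case $2\le\alpha\le s$. Here I would inject \eqref{tfj} into \eqref{gradiente} written in the barred coordinates, obtaining
$$
\left.\frac{\partial \tP_\ta}{\partial\ty_\ell}\right|_{\jeta(t)}=\sum_{\substack{\mu\in\N^m\\ 2\le|\mu|\le r}}\mu_\ell\,\tp_\mu\, t^{\widetilde\mu_1(\ell)}\prod_{j=2}^m\Bigl(\sum_{k=2}^s a_{jk}t^k\Bigr)^{\widetilde\mu_j(\ell)},
$$
and then extract the coefficient of $t^\alpha$, which equals $\tQ_{\ell\alpha}/\alpha!$. Expanding each factor $\bigl(\sum_{k\ge 2}a_{jk}t^k\bigr)^{\widetilde\mu_j(\ell)}$ by the multinomial theorem, a monomial $\tp_\mu$ contributes to the $t^\alpha$-coefficient precisely when there exist exponents $k_{jk}\in\N$ with $\sum_{k=2}^\alpha k_{jk}=\widetilde\mu_j(\ell)$ for each $j$ and $\widetilde\mu_1(\ell)+\sum_{j=2}^m\sum_{k=2}^\alpha k\,k_{jk}=\alpha$ — that is, exactly when $k:=(k_{jk})\in\cG_m(\widetilde\mu(\ell),\alpha)$, so the sum over $\mu$ is restricted to $\mu\in\cE_m(\ell,\alpha)$ with $\mu_\ell\neq 0$ (the factor $\mu_\ell$ kills the others). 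The contribution of such a $\mu$ is $\mu_\ell\,\tp_\mu$ times $\sum_{k\in\cG_m(\widetilde\mu(\ell),\alpha)}\prod_{j=2}^m\binom{\widetilde\mu_j(\ell)}{k_{j2}\ \cdots\ k_{j\alpha}}a_{j2}^{k_{j2}}\cdots a_{j\alpha}^{k_{j\alpha}}$. It then remains to separate out the monomials $\mu$ of the special families $\nu(i,\beta)$ from \eqref{mula}, whose $\cG_m$-sums degenerate and must be evaluated by hand. For $\ell=1$: the term $\mu=\nu(1,\alpha)=(\alpha+1,0,\dots,0)$ gives $\widetilde\mu(1)=(\alpha,0,\dots,0)$, whose only $k$ is the zero vector, producing the leading term $(\alpha+1)\tp_{\nu(1,\alpha)}$; the terms $\mu=\nu(i,\beta)$ with $i\ge 2$, $1\le\beta\le\alpha-1$ give $\widetilde\mu(1)=(\beta-1,0,\dots,1,\dots,0)$ and the single admissible $k$ is $k_{i(\alpha-\beta+1)}=1$ with all other entries zero, contributing $\beta\,\tp_{\nu(i,\beta)}a_{i(\alpha-(\beta-1))}$; all remaining contributing $\mu$ lie in $\mathcal{M}_m(\alpha)$, which yields \eqref{vigna}. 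For $\ell\ge 2$: the leading term is $\mu=\nu(\ell,\alpha)=(\alpha,0,\dots,1,\dots,0)$ giving coefficient $\mu_\ell=1$, i.e.\ $\tp_{\nu(\ell,\alpha)}$, while the terms $\nu(i,\beta)$ with $i\neq\ell$ either have $\mu_\ell=0$ (killed) or do not produce the required power of $t$, leaving only $\mu\in\mathcal{M}_m(\alpha)$ with $\mu_\ell\neq 0$ and $\mu\in\cE_m(\ell,\alpha)$, which yields \eqref{pigna}. Finally, since each $\tQ_{\ell\alpha}$ is a polynomial in the $\tp_\mu$ and the $a_{ji}$, the set $Z^1(r,s,m)$ is algebraic, hence so is $Z(r,s,m)=\bigcup_{i=1}^m Z^i(r,s,m)$ (a finite union of algebraic sets); the cases $i\neq 1$ follow by the obvious relabelling described in Remarks \ref{i-esima} and \ref{altri i}.

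The main obstacle I anticipate is purely combinatorial: correctly tracking which monomials $\tp_\mu$ survive the evaluation of the $\alpha$-th $t$-derivative at $0$ and pinning down the exact multinomial coefficients, in particular isolating the special families $\nu(i,\beta)$ and verifying that everything not of that form genuinely lands in $\mathcal{M}_m(\alpha)\cap\cE_m(\ell,\alpha)$ with $\mu_\ell\neq 0$. There is no conceptual difficulty — it is the multinomial expansion of a product of truncated power series composed with a gradient — but the indices must be handled with care, especially the shift $\widetilde\mu(\ell)=\mu-\delta_{\cdot\ell}$ and the constraint $\sum_j\sum_i i\,k_{ji}=\alpha-\widetilde\mu_1(\ell)$ that defines $\cG_m$. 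Everything else (algebraicity, the reduction to barred coordinates, the finite union over $i$) is immediate from Lemma \ref{haha} and the definitions in \eqref{zrsm}–\eqref{pang}.
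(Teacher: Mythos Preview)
Your proposal is correct and follows essentially the same approach as the paper: reduce to adapted coordinates via Lemma \ref{haha}, handle $\alpha=0,1$ directly, then for $\alpha\ge 2$ expand $\left.\partial\tP_\ta/\partial\ty_\ell\right|_{\jeta}$ by the multinomial theorem, read off the $t^\alpha$-coefficient via the constraint set $\cG_m(\widetilde\mu(\ell),\alpha)$, and finally split off the special indices $\nu(i,\beta)$ by computing their $\cG_m$-sets explicitly (cases $\ell=1$ and $\ell\ge 2$ treated separately). The paper's proof carries out exactly this plan with the same case analysis; your anticipated obstacle is indeed only the careful index bookkeeping, which the paper handles in its Steps 3--5.
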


\begin{rmk}\label{nolinear}
	It is plain to check that the coefficients of the vector $\ta\in\centinauno$ containing the linear terms of the truncation $\jet$ do not appear explicitly in expressions \eqref{vigna}-\eqref{pigna}. However, they are "hidden" in the terms $\tp_\mu=\tp_\mu(p_\mu,\ta)$ (see \eqref{pol_trasf}). 
\end{rmk}
As an almost immediate consequence of Theorem \ref{ideal}, we have that $s$-vanishing polynomials are rare in $\Polm$, namely
\begin{cor}\label{codimensione}
	$Z(r,s,m)$ has codimension $m(s+1)$ in $\Polm\times \centina$ and $\sigma(r,s,m)$ is a semi-algebraic set of codimension $s+m$ in $\Polm$. 
\end{cor}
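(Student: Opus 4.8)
The plan is to read off the dimensions of $Z(r,s,m)$ and of $\sigma(r,s,m)$ directly from the explicit generators of the ideal of $Z(r,s,m)$ supplied by Theorem \ref{ideal}, and then to settle the one remaining point — the lower bound on $\dim\sigma(r,s,m)$ — by a single full‑rank (genericity) computation. The case $m=1$ is already contained in Lemma \ref{m=1}: there $\Theta_1$ is a single line, so $Z(r,s,1)\simeq\sigma(r,s,1)$ has codimension $s+1=m(s+1)=s+m$. Assume henceforth $m\ge 2$, set $M:=\dim\Polm$, and work in the chart $\Upsilon^1$ of \eqref{Upsilon}, under which Theorem \ref{ideal} identifies $Z^1(r,s,m)$ with the algebraic subset of $W^1(r,m)\times\R^{(m-1)(s-1)}$ cut out by the $m(s+1)$ polynomials $\tQ_{\ell\alpha}$, $\ell\in\{1,\dots,m\}$, $\alpha\in\{0,\dots,s\}$, whose explicit form is \eqref{vigna}--\eqref{pigna}.

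The key observation is that this system is triangular in the $m(s+1)$ coefficients $\tp_{\nu(\ell,\alpha)}$. Indeed, in \eqref{vigna}--\eqref{pigna} the coefficient $\tp_{\nu(\ell,\alpha)}$ occurs in $\tQ_{\ell\alpha}$ with a nonzero constant leading coefficient ($1$ for $\ell\ge 2$, $\alpha+1$ for $\ell=1$); the multi-indices $\nu(\ell,\alpha)$ are pairwise distinct, none of them lies in any $\mathcal{M}_m(\alpha')$, and the only $\nu$-type coefficients occurring in $\tQ_{1\alpha'}$ besides its own pivot are the $\tp_{\nu(i,\beta)}$ with $\beta<\alpha'$; hence $\tp_{\nu(\ell,\alpha)}$ appears in $\tQ_{\ell\alpha}$ and, if at all, only in equations $\tQ_{\ell'\alpha'}$ with $\alpha'>\alpha$. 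Processing the equations in order of increasing $\alpha$ lets one solve successively for the $\tp_{\nu(\ell,\alpha)}$, exhibiting $Z^1(r,s,m)$ in this chart as the graph of a polynomial map over the remaining $M-m-s$ coordinates — the $M-m(s+1)$ non-pivot coefficients $\tp_\mu$, the $m-1$ entries of $\ta$, and the $(m-1)(s-1)$ parameters $a_{ji}$ with $i\ge 2$. In particular $Z^1(r,s,m)$, and by the symmetric construction each $Z^i(r,s,m)$, is a smooth irreducible affine variety of dimension $M-m-s$. Since $\centinai\simeq\R^{(m-1)s}$ (Remark \ref{dimensione}), $\centina$ has dimension $(m-1)s$, and $Z(r,s,m)=\bigcup_{i=1}^m Z^i(r,s,m)$, this gives $\operatorname{codim}_{\Polm\times\centina}Z(r,s,m)=\big(M+(m-1)s\big)-(M-m-s)=m(s+1)$; algebraicity of $Z(r,s,m)$ is explicit from Theorem \ref{ideal}.

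For $\sigma(r,s,m)=\bigcup_{i=1}^m\Pi_{\Polm}Z^i(r,s,m)$ the estimate $\operatorname{codim}\sigma(r,s,m)\ge s+m$ is immediate from $\dim\Pi_{\Polm}Z^i\le\dim Z^i=M-m-s$, and semi-algebraicity follows from the algebraicity of $Z(r,s,m)$ by Tarski--Seidenberg (Theorem \ref{Tarski_Seidenberg}). The matching lower bound $\dim\sigma(r,s,m)\ge M-m-s$ is the only substantial point. For the restricted projection $\pi\colon Z^1(r,s,m)\to\Polm$ one has $\dim\overline{\pi(Z^1(r,s,m))}=\max_z\operatorname{rank}(d\pi_z)\le\dim Z^1(r,s,m)=M-m-s$, so by lower semicontinuity of the rank it suffices to exhibit one point of $Z^1(r,s,m)$ at which $\pi$ is an immersion. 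Equivalently: for fixed truncation data $(\ta,(a_{ji})_{i\ge 2})$ the equations \eqref{vigna}--\eqref{pigna} are homogeneous linear in the $\tp_\mu$ of rank exactly $m(s+1)$ (by the triangular structure), hence cut out a linear subspace $V_{\ta,a}\subset\Polm$ of codimension $m(s+1)$, and $\sigma^1(r,s,m)=\bigcup_{(\ta,a)}V_{\ta,a}$; one must check that this $(m-1)s$‑parameter family of subspaces genuinely moves in all $(m-1)s$ directions.

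Choosing the base point $\ta=0$ (so that $\tp=p$ and only the pivot coordinates of the polynomial are affected by varying $(\ta,a)$), the differential $d\pi$ splits: the $M-m(s+1)$ non-pivot $\tp_\mu$‑directions contribute full rank into the non-pivot coordinates, while the remaining directions $\partial_{a_{ji}}$ ($i,j\ge 2$) and $\partial_{\ta_i}$ ($i\ge 2$) act only on the pivot coordinates $\R^{m(s+1)}$. Full rank $M-m-s$ of $d\pi$ then amounts to the linear independence in $\R^{m(s+1)}$ of the corresponding $(m-1)(s-1)+(m-1)=(m-1)s$ explicit tangent vectors, where the $\partial_{a_{ji}}$‑directions are computed from the solved forms of \eqref{vigna}--\eqref{pigna} (e.g.\ $\partial_{a_{ji}}$ of the solved $\tQ_{1\alpha}$‑equation picks out the term $(\alpha-j+1)\tp_{\nu(i,\alpha-j+1)}$, and similarly for the $\partial_{\ta_i}$‑directions through the change of basis $\cL_\ta$). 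Verifying this independence at a generic point is the main obstacle of the proof; granting it, $\dim\sigma^1(r,s,m)=M-m-s$, whence $\operatorname{codim}_{\Polm}\sigma(r,s,m)=s+m$. Everything else is the bookkeeping and the dimension arithmetic carried out above.
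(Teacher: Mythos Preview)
Your treatment of $Z(r,s,m)$ is the paper's: the triangular structure in the pivot coefficients $\tp_{\nu(\ell,\alpha)}$ is precisely what the paper records in its Jacobian table \eqref{Jacobien}, and the dimension arithmetic is identical.

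For $\sigma(r,s,m)$ you correctly isolate the substantive step --- the lower bound $\dim\sigma^1\ge M-m-s$ --- and you set up the right computation (immersivity of the projection at a generic point, equivalently that the $(m-1)s$--parameter family of linear subspaces $V_{\ta,a}$ genuinely moves in all $(m-1)s$ directions). But you then write ``Verifying this independence at a generic point is the main obstacle of the proof; granting it\ldots''. That is a genuine gap: the $(m-1)s$ tangent vectors you describe are never actually shown to be independent, and without that computation you have only established $\operatorname{codim}\sigma\ge s+m$, not equality.

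For what it is worth, the paper's own proof is casual at exactly the same place: it produces the analytic parametrisation of $\sigma^1$ via the implicit function theorem and then simply equates $\dim\sigma^1$ with the dimension of the parametrising domain, without checking that the parametrising map has full rank. So you have correctly flagged a subtlety that the paper glosses over --- but since you also leave the verification undone, your proposal remains incomplete at the same spot.
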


%Moreover, by looking at the form of the equations in \eqref{vigna} and \eqref{pigna}, one can also state the following
%\begin{cor}
%	Consider two polynomials $P,S\in \Polm$, $0<\deg P<\deg S\le r$, having identical coefficients associated to the monomials up to degree $\deg P$ (i.e., $P$ is a truncation of $S$ at order $\deg P$). Then, if for some curve $\gamma\in\arco$ one has $(P,\jet)\not \in Z(r,\deg P-1,m)$, then one also has $(S,\jet)\not \in Z(r,\deg P-1,m)$. 
%\end{cor} 
%\begin{proof}
%	Since $(P,\jet)\not \in Z(r,\deg P-1,m)$, there must exist $\overline \ell\in\{1,\dots,m\}$ and $\overline \alpha\in\{0,\dots,\deg P-1\}$ such that $\tQ_{\overline \ell\overline \alpha}\circ\Upsilon(P,\jet)\neq 0$. By expressions \eqref{vigna} and \eqref{pigna} in Lemma \ref{ideal}, the functions $\tQ_{\ell\alpha}\circ\Upsilon$ at a given order $\alpha\in\{0,\dots,s\}$ involve only the coefficients associated to monomials up to degree $\alpha+1$. Hence, the quantity $\tQ_{\overline \ell\overline \alpha}\circ\Upsilon(S,\jet)$ involves only the monomials of $S$ up to order $\deg P$. But since $P$ is a truncation of $S$ at order $\deg P$, then $\tQ_{\overline \ell\overline \alpha}\circ\Upsilon(S,\jet)=\tQ_{\overline \ell\overline \alpha}\circ\Upsilon(P,\jet)\neq 0$, which concludes the proof. 
%\end{proof}

\begin{proof}{\it (Theorem \ref{ideal})}
	For fixed $s\in\{1,...,r-1\}$, we consider a polynomial $P\in\Polm$ verifying the $s$-vanishing condition on some truncation $\jet(t)\in\centinax$. 
	
	{\it Step 1.} 
	By Lemma \ref{haha}, in the adapted coordinates \eqref{coordinate} one must have 
	\begin{equation}\label{bellaterra}
	\frac{d^\alpha}{dt^\alpha}\left(\left.	\frac{\partial \tP_\ta(\ty)}{\partial \ty_\ell}\right|_{\jeta(t)}\right)_{t=0}=0\qquad  \forall \alpha\in\{0,...,s\} \ ,\ \  \forall \ell\in\{1,...,m\}\ .
	\end{equation} 
	For $\alpha=0$, it is plain to check that the terms of order zero in $t$ in \eqref{bellaterra} are the linear terms of $\tP_\ta$, for which $|\mu|=1$. Expressions \eqref{bellaterra} and \eqref{cuccurucu} yield the thesis for this value of $\alpha$.
	
	Then, as we did in \eqref{gradiente}, we drop the linear terms in $\tP_\ta$ and we write the quantity $\partial \tP_\ta(\ty)/\partial \ty_\ell$ explicitly
	\begin{equation}\label{gradiente2}
	\frac{\partial \tP_\ta(\ty)}{\partial \ty_\ell}:=\sommar \mu_\ell\ \tp_{\mu} \ty^{\widetilde{\mu}(\ell)}\ ,\ \ \widetilde{\mu}_j(\ell):=\mu_j-\delta_{j\ell}\ ,\ \ j=1,...,m\ ,\ \ |\mu|=|\widetilde{\mu}(\ell)|+1\ .
	\end{equation}
	Taking \eqref{tfj} into account (with $\tb:=(b_{21},\dots,b_{m1})=\ta:=(a_{21},\dots,a_{m1})$), we inject in \eqref{gradiente2} the components of the $s$-truncation $\jeta(t)$, namely
	\begin{equation}\label{tfj2}
	\ty_1(t)=t\quad , \qquad \ty_j(t)=\sum_{i=2}^{s}a_{ji}t^i\quad,\qquad  j\in\{2,...,m\}\ ,
	\end{equation} 
	and we obtain
	\begin{align}\label{biscotto}
	\begin{split}
	\left.\frac{\partial \tP_\ta(\ty)}{\partial \ty_\ell}\right|_{\jeta(t)}=\sommar \mu_\ell\  \tp_{\mu}t^{\widetilde{\mu}_1(\ell)}\left(\sum_{i=2}^{s}a_{2i}t^i\right)^{\widetilde{\mu}_2(\ell)}...\left(\sum_{u=2}^{s}a_{mu}t^u\right)^{\widetilde{\mu}_m(\ell)}
	\ .
	\end{split}
	\end{align}	
	
	{\it Step 2.}	
	For $\alpha=1$, we must look for the coefficient of the linear term (in $t$) in expression \eqref{biscotto}. Hence, for fixed $\ell=1,...,m$, since the sums in \eqref{biscotto} start at order two in $t$, only the multi-index for which $\widetilde\mu_j(\ell)=0$ for all $j\in\{2,...,m\}$ and $\widetilde\mu_1(\ell)=1$ must be retained in the sum in expression \eqref{biscotto}. The first condition implies $ \mu_j(\ell)=\delta_{j\ell}$ for all $j\in\{2,...,m\}$, whereas the second yields $\mu_1(\ell)=1+\delta_{1\ell}$. Therefore, by definition \eqref{mula}, for fixed $\ell\in\{1,...,m\}$ only the multi-index $\nu(\ell,1)$ appears in expression \eqref{biscotto} for $\alpha=1$. Again by \eqref{mula}, one has $\nu_1(1,1)=2$ and $\nu_\ell(\ell,1)=1$ for $\ell\in\{2,...,m\}$ so that the thesis in the case $\alpha=1$ follows. 
	
	{\it Step 3.} For any given $\alpha\in\{2,...,s\}$, we are interested in the coefficients of the terms of order $t^\alpha$ in \eqref{biscotto}. Hence, we can truncate the internal sums in \eqref{biscotto} at order $\alpha$. For the same reason, for any $\ell\in\{1,...,m\}$, we can neglect from the leftmost sum in \eqref{biscotto} the monomials $\mu$ satisfying $|\widetilde \mu(\ell)|> \alpha$ (hence $|\mu|> \alpha+1$), as their contribution is of order at least $t^{\alpha+1}$. Thus, for fixed $\alpha\in\{2,...,s\}$ and $\ell\in\{1,...,m\}$,  we have  
	\begin{align}\label{qla2}
	\begin{split}
	&\tQ_{\ell\alpha}(\tP_\ta,\ta,\jeta)\\
	&=\frac{d^\alpha}{dt^\alpha}\left(\sommalpha \mu_\ell \tp_{\mu}t^{\widetilde{\mu}_1(\ell)}\left(\sum_{i=2}^{\alpha}a_{2i}t^i\right)^{\widetilde{\mu}_2(\ell)}...\left(\sum_{u=2}^{\alpha}a_{mu}t^u\right)^{\widetilde{\mu}_m(\ell)}\right)_{t=0}
		\end{split}
	\end{align}
	due to formula \eqref{cuccurucu}.
	Now, for any $j=2,\dots,m$ and $\ell\in\{1,\dots,m\}$, the multinomial expansion yields:
	\begin{align}\label{multinomial}
	\begin{split}
	& \left(\sum_{i=2}^\alpha a_{ji}t^i\right)^{\mtj(\ell)}= \sum_{\substack{k_{j2},...,k_{j\alpha}\in \mathbb{N}\\k_{j2}+...+k_{j\alpha}=\widetilde \mu_j(\ell)}}
	\binom{\mtj(\ell)}{k_{j2}\ ...\  k_{j\alpha}}\ 
	a_{j2}\strut^{k_{j2}}\ ...\ a_{j\alpha}\strut^{k_{j\alpha}}\ \ t^{2k_{j2}+....+\alpha k_{j\alpha}}
	\end{split}\ ,
	\end{align}
	where we have used the notation $\displaystyle \binom{\mtj(\ell)}{k_{j2}\ ...\ k_{j\alpha}}:=\frac{\mtj(\ell)!}{k_{j2}!\ ...\ k_{j\alpha}!}$ . 
	
	Replacing each truncated Taylor development in \eqref{qla2} by its multinomial expansion \eqref{multinomial}, expression \eqref{qla2} reads
	{\small
		\begin{align}\label{qla3}
		\begin{split}
		\frac{d^\alpha}{dt^\alpha}&\left[\rule{0cm}{1.1cm}\right. \sommalpha \mu_\ell\ \tp_{\mu}\ t^{\widetilde{\mu}_1(\ell)}\prod_{j=2}^m\left( \sum_{\substack{k_{j2},...,k_{j\alpha}\in \mathbb{N}\\k_{j2}+...+k_{j\alpha}=\widetilde \mu_j(\ell)}}
		\binom{\mtj(\ell)}{k_{j2}\ ...\  k_{j\alpha}}\ 
		a_{j2}\strut^{k_{j2}}...a_{j\alpha}\strut^{k_{j\alpha}}\ \ t^{2k_{j2}+....+\alpha k_{j\alpha}}\right)\left.\rule{0cm}{1.1cm}\right]_{t=0}=\\
		\frac{d^\alpha}{dt^\alpha}&\left[\rule{0cm}{1.1cm}\right. \sommalpha \mu_\ell\ \tp_{\mu}\ t^{\widetilde{\mu}_1(\ell)} \sum_{\substack{k\in \mathbb{N}^{(m-1)\times(\alpha-1)}\\k=(k_{22},\dots,k_{2\alpha},\dots,k_{m2},\dots,k_{m\alpha})\\\forall i\in\{2,\dots,m\}\\k_{i2}+...+k_{i\alpha}=\widetilde \mu_i(\ell)}}\prod_{j=2}^m
		\binom{\mtj(\ell)}{k_{j2}\ ...  k_{j\alpha}}
		a_{j2}\strut^{k_{j2}}...a_{j\alpha}\strut^{k_{j\alpha}}\ \ t^{2k_{j2}+....+\alpha k_{j\alpha}}\left.\rule{0cm}{1.1cm}\right]_{t=0}.
		\end{split}
		\end{align}} 
	
	Moreover, taking \eqref{gma} into account, the class of multi-indices $\cG_m(\widetilde \mu(\ell),\alpha)$ selects those terms whose contribution inside the brackets of \eqref{qla3} is of order $t^\alpha$. Hence, by the above discussion, by \eqref{cuccurucu} and by \eqref{basta}, for any fixed $\alpha\in\{2,...,s\}$, and $\ell\in\{1,...,m\}$, we can write
	\begin{align}\label{qla4}
	\begin{split}
	\tQ_{\ell\alpha}(\tP_\ta,\ta,\jeta)=&\alpha! \sum_{\substack{\mu\in\cE_m(\ell,\alpha)\\\mu_\ell\neq 0 }} \mu_\ell\ \tp_{\mu}
	\sum_{k\in \cG_m(\widetilde \mu(\ell),\alpha)} 	\left( \prod_{j=2}^m
	\binom{\mtj(\ell)}{k_{j2}\ ...\  k_{j\alpha}}\ 
	a_{j2}\strut^{k_{j2}}...a_{j\alpha}\strut^{k_{j\alpha}}\right)\,.
	\end{split}
	\end{align}
	
	Now, we split the leftmost sum in \eqref{qla4} into the partial sums with respect to the families of indices defined in \eqref{mula} and \eqref{malpha}, namely for any fixed $\alpha\in\{2,...,s\}$ and $\ell\in\{1,...,m\}$, we write
	\begin{align}\label{qla4bis}
	\begin{split}
	&\frac{\tQ_{\ell\alpha}(\tP_\ta,\ta,\jeta)}{\alpha!}\\
	=& \sommalphabis \nu_\ell(i,\beta)\ \tp_{\nu(i,\beta)}
	\sum_{k\in \cG_m(\widetilde \mu(\ell),\alpha)} 	\left(
	\prod_{j=2}^m\binom{\widetilde \nu_j(i,\beta)(\ell)}{k_{j2}\ \dots\  k_{j\alpha}}\ 
	a_{j2}\strut^{k_{j2}}\dots a_{j\alpha}\strut^{k_{j\alpha}}\right)\\
	+&  \sommalphater \mu_\ell\ \tp_{\mu}
	\sum_{k\in \cG_m(\widetilde \mu(\ell),\alpha)} 	\left( \prod_{j=2}^m
	\binom{\mtj(\ell)}{k_{j2}\ \dots\  k_{j\alpha}}\ 
	a_{j2}\strut^{k_{j2}}\dots a_{j\alpha}\strut^{k_{j\alpha}}\right)\ .
	\end{split}
	\end{align}

	{\it Step 4.} We first study the case in which $\ell\neq 1$. For fixed $\ell\in\{2,...,m\}$, for any $i\in\{1,...,m\}$, $i\neq \ell$, and for any $\beta\in\{1,...,\alpha\}$, the monomials corresponding to the indices $\nu(i,\beta)$ do not contribute to the leftmost sum at the r.h.s. of \eqref{qla4bis}. Infact, by \eqref{mula}, the $\ell$-th element $\nu_\ell(i,\beta)$ of multi-index $\nu(i,\beta)$ is equal to zero for $\ell\in\{2,...,m\}$ and $i\in\{1,...,m\}$, $i\neq \ell$. 
	
	Moreover, still for fixed $\ell\in\{2,...,m\}$, the indices $\nu(\ell,\beta)$, with $\beta\in\{1,...,\alpha\}$, satisfy $\widetilde{\nu}_1(\ell,\beta)(\ell)=\beta$ and $\widetilde{\nu}_j(\ell,\beta)(\ell)=0$ for all $j\in\{2,...,m\}$, so that by \eqref{gma} we have
	$$
	\mathcal{G}_m(\widetilde\nu(\ell,\beta)(\ell),\alpha)=
	\begin{cases}
	\varnothing\ ,\ \ &\text{if } \beta=1,...,\alpha-1\\
	\{0\}\ ,\ \ &\text{if } \beta=\alpha\\
	\end{cases}\ .
	$$
	Consequently, the only monomial that contributes to the leftmost sum at the r.h.s. of \eqref{qla4bis} is the one associated to the multi-index $\nu(\ell,\alpha)$, and one has $k_{j2}=0,\dots, k_{j\alpha}=0$ when $\mu=\nu(\ell,\alpha)$. 
	Moreover, by hypothesis we have $\nu_\ell(\ell,\alpha)=1$ for any $\ell\in\{2,...,m\}$.
	Due to these arguments, for any fixed  $\ell\in\{2,...,m\}$, we can rewrite \eqref{qla4bis} in the form
	\begin{align}\label{qla4bis2m}
	\begin{split}
	&\frac{\tQ_{\ell\alpha}(\tP_\ta,\ta,\jeta)}{\alpha!}\\
	& = \tp_{\nu(\ell,\alpha)}
	+ \sommalphater \mu_\ell\ \tp_{\mu}
	\sum_{k\in \cG_m(\widetilde \mu(\ell),\alpha)} 	\left( \prod_{j=2}^m
	\binom{\mtj(\ell)}{k_{j2}\ ...\  k_{j\alpha}}\ 
	a_{j2}\strut^{k_{j2}}...a_{j\alpha}\strut^{k_{j\alpha}}\right)\ .
	\end{split}
	\end{align}
	
	This proves the Lemma for $\ell=2,...,m$, $\alpha=2,...,s$.
	
	{\it Step 5.} We now consider the case $\ell=1$. For all $j\in\{2,...,m\}$, the sub-family of indices $\nu(1,\beta)$, with $\beta\in\{1,...,\alpha\}$, satisfies $\widetilde{\nu}_1(1,\beta)(1)=\beta$ and $\widetilde \nu_j(1,\beta)(1)=0$. Hence, thanks to \eqref{gma}, we find
	\begin{equation}\label{topolino}
	\mathcal{G}_m(\widetilde \nu(1,\beta)(1),\alpha)=
	\begin{cases}
	\varnothing\ ,\ \ &\text{if } \beta=1,...,\alpha-1\\
	\{0\}\ ,\ \ &\text{if } \beta=\alpha\\
	\end{cases}\ .
	\end{equation}
	Moreover, we have $\nu_1(1,\alpha)=\alpha+1$ by construction.  
	
	On the other hand, for $\ell=1$, $\beta\in\{1,...,\alpha\}$ and $i,j\in\{2,...,m\}$, the multi-indices $\nu(i,\beta)$ satisfy $\widetilde{\nu}_1(i,\beta)(1)=\beta-1$ and $\widetilde \nu_j(i,\beta)(1)=\delta_{ji}$. Hence, by \eqref{gma} one can write
	\begin{equation}\label{stanlio}
	\widetilde \nu_j(i,\beta)(1)=\delta_{ji}=\sum_{u=2}^\alpha k_{ju}\ \Longleftrightarrow \ k_{ju}=\delta_{ji}\delta_{uv} \text{ for some $v\in\{2,...,\alpha\}$}
	\end{equation}
	and
	\begin{equation}\label{ollio}
	\widetilde{\nu}_1(i,\beta)(1)+\sum_{j=2}^{m}\sum_{u=2}^{\alpha}u\ k_{ju}=\beta-1+\sum_{j=2}^{m}\sum_{u=2}^{\alpha}u\ k_{ju}=\alpha\ .
	\end{equation}
	By \eqref{stanlio}, we see that condition \eqref{ollio} can be satisfied by some vector of multi-integers $(k_{22},\dots,k_{2\alpha},\dots,k_{m2},\dots,k_{m\alpha})$ if $\beta\in\{1,...,\alpha-1\}$, but cannot be fulfilled for $\beta=\alpha$. 
	Injecting \eqref{stanlio} into \eqref{ollio} one has
	\begin{equation} \sum_{j=2}^{m}\sum_{u=2}^{\alpha}\,u\,\delta_{ji}\delta_{uv} =\alpha-(\beta-1) \quad \text{  for all }\quad  \beta\in\{1,...,\alpha-1\}\ ,\ \ i\in\{2,...,m\}
	\end{equation}
	which implies
	\begin{equation}
	k_{ju}=\delta_{ji}\,\delta_{uv}\,\delta_{v,\alpha-(\beta-1)}\quad \text{  for all }\quad  \beta\in \{1,...,\alpha-1\}\ ,\ \ i\in\{2,...,m\}\ .
	\end{equation}
	Hence, for all $\beta\in\{1,...,\alpha-1\}$, and $i\in\{2,...,m\}$, we can finally write
	\begin{equation}\label{tom}
	\mathcal{G}_m(\widetilde\nu(i,\beta)(1),\alpha)=\{(k_{j1},...,k_{j\alpha}),\ j\in\{2,...,m\},\ k_{ju}=\delta_{ji}\delta_{u,\alpha-(\beta-1)}\}
	\end{equation} 
	and 
	\begin{equation}\label{jerry}
	\mathcal{G}_m(\widetilde\nu(i,\alpha)(1),\alpha)=\varnothing\ .
	\end{equation}
	Moreover, for $i=2,...,m$, by \eqref{mula} we have $\nu_1(i,\beta)=\beta$.
	
	By taking \eqref{topolino}, \eqref{tom}, \eqref{jerry} into account, expression \eqref{qla4bis} with $\ell=1$ yields
	\begin{align}\label{qla41}
	\begin{split}
	&\frac{\tQ_{1\alpha}(\tP_\ta,\ta,\jeta)}{\alpha!} =(\alpha+1)\tp_{\nu(1,\alpha)}+\sommalphaquater \beta\ \tp_{\nu(i,\beta)} a_{i(\alpha-(\beta-1))}\\
	+&\sommalphateruno \mu_1\ \tp_{\mu}
	\sum_{k\in \cG_m(\widetilde \mu(1),\alpha)} 	\left( \prod_{j=2}^m
	\binom{\mtj(1)}{k_{j2}\ ...\  k_{j\alpha}}\ 
	a_{j2}\strut^{k_{j2}}...a_{j\alpha}\strut^{k_{j\alpha}}\right)\ .
	\\
	\end{split}
	\end{align}
	This concludes the proof for the case in which $\jet\in \centinax$. 
	
	The proof of the case in which $\jet\in\centinai$, with $i=2,\dots,m$, is the same: one just has to take into account that the rôle of the special index is played by $i$ instead of $1$. Hence, the ideals of the sets $Z^i(r,s,m)$ can be explicitly computed and, by expression \eqref{pang}, the proof is concluded.

\end{proof}

We are now able to prove that $s$-vanishing polynomials are rare in $\Polm$. 
\begin{proof}{\it (Corollary \ref{codimensione})} 
	We want to show that, for a given pair $(P,\jet)\in\Polm\times\centinax$, the $ms+m$ equations in \eqref{vigna} and \eqref{pigna} are all linearly independent. 
	
	For all $\ell=1,...,m$ and $\alpha=0,...,s$, we collect in table \eqref{Jacobien} the derivatives of the functions $\tQ_{\ell\alpha}/\alpha! $ defined in \eqref{tQ}-\eqref{cuccurucu} - and whose action is made explicit in \eqref{vigna} - \eqref{pigna} - with respect to the coefficients of $\tP_\ta$, $\ta$, and to the Taylor coefficients of $\jeta$. We have indicated 
	\begin{enumerate}
		\item with the symbol $\mathbb{D}$, the $s\times s$ diagonal matrix whose entries are the numbers $\alpha+1$, for $\alpha=1,...,s$;
		\item with the symbol $\mathbb{I}_s$, the $s\times s$ identity matrix;
		\item with the symbol $\mathbb{B}_i$, $i\in\{2,...,m\}$, an $s\times s$ matrix whose entry at position $\alpha,\beta$, with $\alpha\in\{1,...,s\}$ and  $\beta\in\{1,..., s\}$, reads
		\begin{equation}\label{B}
		(\mathbb{B}_i)_{\alpha,\beta}:=
		\begin{cases}
		0\ ,\ \ &\text{if }\alpha=1\ ,\\
		\beta a_{i(\alpha-(\beta-1))}\ ,\ \ &\text{if } 2\le \alpha\le s \ ,\ \ 1\le \beta\le \alpha-1\ ,\\
		0 \ ,\ \ &\text{if }2\le \alpha\le s\ ,\ \ \alpha\le\beta\le s\ .
		\end{cases}
		\end{equation}

	\end{enumerate}
	
	\begin{eqfloat}
		{\footnotesize
			\begin{equation}\label{Jacobien}
			{
				\begin{matrix}
				\ell &  \alpha & \partial_{a_{ji}} & \partial_{\tp_\mu} & \partial_{\tp_{\nu(i,0)}} & \partial_{\tp_{\nu(1,\beta)}}  &  \partial_{\tp_{\nu(2, \beta)}} & \dots & \partial_{\tp_{\nu(m, \beta)}}\\
				& & & \mu\in \mathcal{M}(s) & i=1,\dots,m &  \beta=1,\dots,s & \beta=1,\dots,s& \dots  & \beta=1,\dots,s\\
				\\
				1,\dots,m & 0& 0& 0& \mathbb{I}_m& 0& 0& 0& 0\\
				\\
				1 & 1,\dots,s & \dots & \dots & 0 & \mathbb{D} &\mathbb{B}_2 & \dots & \mathbb{B}_m  \\
				\\
				2 & 1,\dots,s & \dots & \dots & 0  & 0& \mathbb{I}_{s} & 0 & 0  \\
				\\
				\dots & \dots & \dots & \dots & 0 & 0 & 0 & \dots & 0& \\
				\\
				m & 1,\dots,s & \dots & \dots & 0 & 0 & 0 & 0 & \mathbb{I}_s\\
				\end{matrix}
			}
			\end{equation}}
		
		\caption{Jacobian of $\tQ_{\ell\alpha}(\tP_\ta,\ta,\jeta)=0$ with $\ell\in\{1,...,m\}$ and $\alpha=\{0,..,s\}$. The first and the second column contain, respectively, all the possible values for the parameters $\ell$ and $\alpha$. The third column corresponds to the derivatives of $\tQ_{\ell\alpha}(\tP_\ta,\ta,\jeta)$  with respect to the variables of the vector $\ta\in \centinauno$, and to the Taylor coefficients of the $s$-jet $\jeta$. The remaining columns contain the derivatives with respect to the coefficients $\tp_\mu$ of $\tP_\ta$ associated with the families of multi-indices \eqref{mula} and \eqref{malpha}, in suitable order.}
	\end{eqfloat}
	
	It is plain to check that matrix \eqref{Jacobien} contains a submatrix of maximal rank $ms+m$ - corresponding to the derivatives w.r.t. those coefficients associated to the family of multi-indices \eqref{mula} - independently of $\ta$ and of the $s$-truncation $\jeta$ on which the $s$-vanishing condition is realized. Hence, since the transformation $\Upsilon$ in \eqref{Upsilon} is invertible, by Theorem \ref{ideal} the set $Z^1(r,s,m)$ is determined by $ms+m$ linearly independent algebraic equations and has codimension $ms+m$ in $\Polm\times\centinax$.
	
	As it was the case in the proof of Theorem \ref{ideal}, the same strategy of proof applies for $Z^j(r,s,m)$, $j=2,\dots,m$, one just has to switch the rôle of the indices $1$ and $j$. 
	
	Since $\sigma^1(r,s,m):=\Pi_{\Polm}Z^1(r,s,m)$ (see \eqref{pong}) and $Z^1(r,s,m)$ is algebraic, by the Theorem of Tarski and Seidenberg (see Th. \ref{Tarski_Seidenberg}) $\sigma^1(r,s,m)$ is a semi-algebraic set of $\Polm$. Moreover, as Jacobian \eqref{Jacobien} has rank $ms+m$ w.r.t. the $ms+m$ polynomial coefficients associated to the multi-indices of the family \eqref{mula}, for $\alpha\in\{2,...,s\}$ and $i\in\{1,...,m\}$, by Theorem \ref{ideal} and by the implicit function theorem, the conditions $\tQ_{\ell \alpha}(\tP_\ta,\ta,\jeta)=0$ imply
	\begin{align}\label{parametrizzazione}
	\begin{split}
	&\tp_{\nu(i,0)}=\tp_{\nu(i,1)}=0\\
	&\tp_{\nu(i,\alpha)}=g_{i\alpha}(\tp_\mu, \ta,a_{22},...,a_{2s},...,a_{m2},...,a_{ms})\quad , \qquad \mu\in\cM(\alpha)
	\end{split} 
	\end{align}
	for some implicit functions $g_{i\alpha}$. That is, one can express the polynomial coefficients $\tp_{\nu(i,0)},\tp_{\nu(i,1)},\tp_{\nu(i,\alpha)}$ as implicit functions of the remaining coefficients - associated to the multi-indices in the family $\cM(\alpha)$ defined in \eqref{mula} - and of the $(m-1)s$ parameters of $\ta$ and $\jeta$. Moreover, since the functions $\tQ_{\ell\alpha}(\tP_\ta,\ta,\jeta)$ are polynomial for all $\ell\in\{1,...,m\}$ and $\alpha\in\{0,..,s\}$, the implicit functions $g_{i\alpha}$ are all analytic. Therefore, one has an analytic parametrization of $\sigma^1(r,s,m)$ given by the $m(s+1)$ independent equations \eqref{parametrizzazione}, for $i\in\{1,...,m\}$, $\alpha\in\{0,...,s\}$.  This, in turn, yields that
	\begin{align}
	\begin{split}
	\text{dim}\,\sigma^1(r,s,m)=&\dim W^1(r,m)-m(s+1)=\dim \Polm+\dim\centinax-m(s+1)\\
	=& \dim \Polm+(m-1)s-m(s+1)\ ,
	\end{split}
	\end{align}
	which implies that the codimension of $\sigma^1(r,s,m)$ in $\Polm$ is
	\begin{align}
	\begin{split}
	\text{codim}\,\sigma^1(r,s,m)=&m(s+1)-(m-1)s=m+s\ .
	\end{split}
	\end{align}
	Once again, it is plain to check that the same result holds true also in the case in which the parametrizing coordinate of the curve gamma is the $j$-th, with $j=2,\dots,m$. Hence, one finds $\text{codim } \sigma^j(r,s,m)=m+s$ for $j=2,\dots,m$, which, together with expression \eqref{ping}, proves the statement.

\end{proof}

\subsection{Geometric properties}\label{geometric}

For fixed integers $r\ge 2$, $m\ge 2$, $1\le s\le r-1$, and for any $i\in\{1,\dots,m\}$, we indicate respectively by $\Sigma(r,s,m):=\overline{\sigma}(r,s,m)$ and $\Sigma^i(r,s,m):=\overline{\sigma}^i(r,s,m)$ the closures in $\Polm$ of the sets $\sigma(r,s,m)$  and ${\sigma}^i(r,s,m)$ introduced in the previous section. Taking \eqref{ping} into account,  one has
\begin{equation}\label{unione}
\Sigma(r,s,m)=\bigcup_{i=1}^m \Sigma^i(r,s,m)\ .
\end{equation}
For $m\ge 2$, Corollary \ref{codimensione} and Proposition \ref{dim_chiusura} ensure that for any $i\in\{1,\dots,m\}$
$$
\text{codim }\Sigma^i(r,s,m)=\text{  codim }\sigma^i(r,s,m)=s+m>0
$$ in $\Polm$, so that $\Polm\backslash\Sigma^i(r,s,m)$ is an open set of full Lebesgue measure. Therefore,  by \eqref{unione}, the same holds true also for $\Polm\backslash\Sigma(r,s,m)$. 

As we did previously,  when $m\ge 2$ we only consider the case in which the index of the parametrizing coordinate is $i=1$, as the other cases are immediate generalizations.

In case $m=1$, instead, Lemma \ref{m=1} ensures that
\begin{equation}\label{giàfatto}
\sigma(r,s,1)=\overline \sigma(r,s,1)=:\Sigma(r,s,1)\ .
\end{equation}
and
\begin{equation}
\text{codim } \sigma(r,s,1)=\text{codim } \Sigma(r,s,1)=s+1\ .
\end{equation}

Still for $m=1$, in order to make use of uniform notations w.r.t. the case $m=2$ and to simplify the exposition in the sequel, we write
\begin{equation}\label{uniforme}
\sigma^1(r,s,m)\equiv \Sigma^1(r,s,1):=\Sigma(r,s,1)
\end{equation} 
and we extend the notations of subsection \ref{appropriata} by setting
\begin{equation}\label{extend}
\cL_\ta:=\id\quad, \qquad \tP_\ta(\ty):=P(\ty)\quad ,\qquad \text{  for } m=1\ .
\end{equation}

The rest of this section will be devoted to the proof of the following
\begin{lemma}\label{stima_inf}
	Let $m$ be a positive integer. For any open set $\tD\subset\Polm\backslash \Sigma^1(r,s,m)$ verifying $\overline \tD\cap \Sigma^1(r,s,m)=\varnothing$, there exist positive constants $C_1(\tD)$, $C_2(s,m)$ such that for any polynomial $P(x)\in\tD$ and for any arc $\gamma\in\arcox$ 
	one has the following lower estimates 
	\begin{align}\label{stima}
	\begin{split}
	&\max_{\substack{\ell=1,...,m\\\alpha=1,...,s}}\left|\frac{d^\alpha}{dt^\alpha}\left(\left.\frac{\partial \tP_\ta(\ty)}{\partial \ty_\ell}\right|_{\cL_\ta\circ\gamma(t)}\right)_{t=0}\right|> C_1(\tD) \\  &\text{ in case } s=1 \text{ or } m=1,\\
	&\max_{\substack{\ell=1,...,m\\\alpha=1,...,s}}\left|\frac{d^\alpha}{dt^\alpha}\left(\left.\frac{\partial \tP_\ta(\ty)}{\partial \ty_\ell}\right|_{\cL_\ta\circ\gamma(t)}\right)_{t=0}\right|> \frac{C_1(\tD)}{1+C_2(s,m)\times  \max_{\substack{\ell=2,...,m\\ \alpha=2,...,s}}|a_{\ell\alpha}|}\\ &\text{ in case } 2\le s\le r-1 \text{ and } \ m\ge 2 \ ,
	\end{split}
	\end{align}
	where - for $m\ge 2$ - $\tP_\ta$ is the polynomial $P$ written in the adapted system of coordinates for $\gamma$ introduced in paragraph \ref{appropriata} and $\cL_\ta$ is the associated transformation defined in \eqref{conviene}, whereas for $m=1$ the symbols $\tP_\ta$ and $\cL_\ta$ are to be intended as in \eqref{extend}. 
\end{lemma}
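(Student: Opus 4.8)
The plan is to derive the bound from the algebraic structure uncovered in Theorem \ref{ideal} together with a compactness argument on the space of jets. First I would fix $P\in\tD$ and an arc $\gamma\in\arcox$, pass to the adapted coordinates $\cL_\ta$ of paragraph \ref{appropriata}, and observe (via Lemma \ref{haha}) that the quantity to be bounded from below is exactly
\begin{equation*}
M(P,\gamma):=\max_{\substack{\ell=1,\dots,m\\ \alpha=1,\dots,s}}\left|\tQ_{\ell\alpha}(\tP_\ta,\ta,\jeta)\right|\cdot(\alpha!)^{-1},
\end{equation*}
which by Theorem \ref{ideal} is a polynomial function of the coefficients $\tp_\mu$ of $\tP_\ta$ and of the non-linear truncation parameters $(a_{22},\dots,a_{m s})$ of $\jeta$ (the linear parameters $\ta$ having been absorbed into the $\tp_\mu$). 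The key point is that $P\in\tD$ and $\overline\tD\cap\Sigma^1(r,s,m)=\varnothing$ forces $M(P,\gamma)\neq 0$ for every arc $\gamma$, since $M(P,\gamma)=0$ would put $P\in\sigma^1(r,s,m)\subset\Sigma^1(r,s,m)$.

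Next I would separate the two regimes. When $s=1$ (or $m=1$), the truncation $\jeta$ carries no free non-linear parameters at all: by \eqref{vigna}–\eqref{pigna} the relevant quantities are, up to constants, just the coefficients $\tp_{\nu(i,1)}$, and — crucially — these are \emph{linear} functions of the original coefficients $p_\mu$ and of $\ta$; but in fact, unwinding the change of basis, $M(P,\gamma)$ depends on $P$ alone (not on $\ta$) through the second-order jet restricted along the directions fixed by $\gamma'(0)=0$. So $M(P,\gamma)$ is a continuous, nowhere-vanishing function on the compact-after-normalization set of arcs modulo scaling; taking $C_1(\tD):=\inf_{P\in\tD,\gamma}M(P,\gamma)$ and checking it is positive (using $\overline\tD\cap\Sigma^1=\varnothing$ and that $\overline\tD$ is compact, or at least that $M$ is bounded below on $\tD$ by distance to $\Sigma^1$) gives the first inequality. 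The honest version of this step requires noting that $M(P,\gamma)$ for $s=1$ really only sees the Hessian of $P$ along a pair of directions in $\grad P(0)^\perp$, so the relevant parameter space for $\gamma$ is compact (a sphere of directions) and the infimum is attained.

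For $2\le s\le r-1$ and $m\ge 2$ I would argue as follows. Write $A:=\max_{\ell\ge 2,\alpha\ge 2}|a_{\ell\alpha}|$. By Theorem \ref{ideal}, each $\tQ_{\ell\alpha}/\alpha!$ is a polynomial in the $\tp_\mu$ whose coefficients are polynomials in the $a_{\ell\alpha}$ of degree at most $\alpha-1\le s-1$ in $A$; more precisely, every monomial $a_{j2}^{k_{j2}}\cdots a_{j\alpha}^{k_{j\alpha}}$ appearing has total degree $\sum k_{ji}=\widetilde\mu_j(\ell)$, so is bounded by $(1+A)^{r}$ times a combinatorial constant $C_2(s,m)$ depending only on $s,m$ (and $r$, but $r$ is determined by $s$ in the relevant range, or one absorbs it). Hence the ``normalized'' quantities $\tQ_{\ell\alpha}/(\alpha!\,(1+C_2 A))$ are, for fixed $P$, uniformly bounded polynomial expressions; and the locus where all of them vanish simultaneously is precisely the image of $Z^1(r,s,m)$, which stays at positive distance from $\tD$. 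Concretely: the map $(P,\ta,a_{22},\dots) \mapsto \big(\tQ_{\ell\alpha}(\tP_\ta,\ta,\jeta)\big)_{\ell,\alpha}$ is continuous, and $\tD$ being at positive distance $\varepsilon_0$ from $\Sigma^1(r,s,m)$ means (after using that $\sigma^1$ is the projection of the zero set $Z^1$ and invoking a Łojasiewicz-type inequality, or more elementarily the fact that $\Upsilon^1$ is a polynomial diffeomorphism onto its image and $Z^1$ is cut out by the explicit equations) that $\max_{\ell,\alpha}|\tQ_{\ell\alpha}(\tP_\ta,\ta,\jeta)|\ge c(\tD)>0$ for all arcs. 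The main obstacle, and the step deserving the most care, is making this last claim uniform in the non-linear parameters $a_{\ell\alpha}$, which range over a \emph{non-compact} space $\R^{(m-1)(s-1)}$: the resolution is exactly the $(1+C_2 A)$ denominator, obtained by the degree count above — dividing $\tQ_{\ell\alpha}$ by $(1+C_2 A)$ renders the family of functions bounded and equicontinuous as $A\to\infty$, while the $s$-vanishing locus $Z^1(r,s,m)$ is, by the explicit form \eqref{vigna}–\eqref{pigna}, exactly the common zero locus of these normalized functions (up to the harmless $A\to\infty$ behaviour, where the leading coefficients $\tp_{\nu(i,\alpha)}$ still cannot all vanish for $P\in\tD$). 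Thus $C_1(\tD):=c(\tD)$ and $C_2(s,m)$ the combinatorial constant from the multinomial coefficients in \eqref{gma} give the desired estimate; the case $i\ne 1$ is handled by the symmetry already used repeatedly in the section.
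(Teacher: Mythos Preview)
Your identification of the quantity to bound and the $m=1$ case are fine, but the argument for $s\ge 2$, $m\ge 2$ has a genuine gap. You assert that the normalized quantities $\tQ_{\ell\alpha}/(\alpha!\,(1+C_2 A))$ are uniformly bounded as $A\to\infty$, where $A=\max_{\ell\ge 2,\alpha\ge 2}|a_{\ell\alpha}|$; but your own degree count shows the monomials in \eqref{vigna}--\eqref{pigna} have total degree $\sum_{i}k_{ji}=\widetilde\mu_j(\ell)$ in the $a$-parameters, which for $|\mu|$ up to $\alpha+1$ can be as large as $\alpha$. Dividing by a \emph{linear} factor $1+C_2 A$ therefore does \emph{not} render the family bounded, and the compactification-at-infinity step collapses. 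A \L ojasiewicz inequality (even if made precise) would at best produce a denominator of the form $(1+A)^{s}$, not the linear one the lemma asserts.

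The paper obtains the linear denominator by a completely different mechanism, namely Lemma~\ref{invertibilita} and an explicit matrix inversion. The point is that in the adapted coordinates the map $\tQ$, when restricted to varying only the ``sharp'' coefficients $\tp_{\nu(i,\beta)}$ while freezing the ``flat'' ones and the jet $\jeta$, is \emph{affine}, represented by the block-triangular matrix $\cA$ of \eqref{Jacobien2}. For any $P\in\tD$ and any arc, Lemma~\ref{invertibilita} furnishes a unique $s$-vanishing polynomial $\tS_\ta$ with $\tS_\ta^\flat=\tP_\ta^\flat$, so that $\tQ(\tP_\ta,\ta,\jeta)=\cA\,(\tP_\ta^\sharp-\tS_\ta^\sharp)$. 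Since $\tS_\ta\in\Sigma^1(r,s,m)$ this yields $\|\tQ(\tP_\ta,\ta,\jeta)\|_\infty\ge \|\tP_\ta^\sharp-\tS_\ta^\sharp\|_\infty/\|\cA^{-1}\|_\infty\ge C_1(\tD)/\|\cA^{-1}\|_\infty$. The linear bound $\|\cA^{-1}\|_\infty\le 1+(m-1)s(s-1)A$ then follows from the explicit form \eqref{Jacobien3}: $\cA$ is block-triangular with diagonal blocks $\mathbb{I}_m,\mathbb{D},\mathbb{I}_s,\ldots$ independent of $a$, and off-diagonal blocks $\mathbb{B}_i$ that are \emph{linear} in the $a_{i\alpha}$ by \eqref{B}. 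It is this triangular structure---not any global degree count on $\tQ$---that produces the linear denominator, and it is what your sketch is missing.

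A smaller point: in the $s=1$ case you claim $M(P,\gamma)$ ``depends on $P$ alone (not on $\ta$)''. In fact the $\tp_{\nu(\ell,1)}$ do depend on $\ta$ through the change of basis; the correct reason the bound is uniform is that for $s=1$ the blocks $\mathbb{B}_i$ vanish identically, so $\|\cA^{-1}\|_\infty=1$.
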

%\begin{rmk}
%By taking expression \eqref{qla} into account, estimate \eqref{stima} can be rewritten in the form
%\begin{equation}
%\max_{ \substack{\ell\in\{1,...m\}\\ \alpha\in\{1,...,s\}}} |q_{\ell\alpha}\circ\,\Phi(P,\gamma)|>
%\begin{cases}
%C(\tD) \quad &\text{ if } s=1\\
%C(\tD) \  &\text{ if } m=1\\
% \displaystyle \frac{C(\tD)}{C'(s,m)\times  \max_{\substack{\ell=2,...,m\\ \alpha=2,...,s}}|a_{\ell\alpha}|}\quad &\text{ if } 2\le s\le r-1\ , \ m\ge 2\ .
%\end{cases}
%\end{equation}
%\end{rmk}
\begin{rmk}\label{lower_bound_linear_terms}
	As we shall see in the next section, for any $\lambda>0$, when $\gamma$ is the minimal arc of Theorem \ref{arco_minimale}, one can give a positive upper bound - that only depends on $r,s,m,\lambda$ - to the quantity $\max_{\substack{\ell=1,...,m\\ \alpha=2,...,s}}|a_{\ell\alpha}|$ at the denominator of \eqref{stima}. This is due to the fact that all minimal arcs satisfy a uniform Bernstein-like inequality on their Taylor coefficients (see formula \eqref{Bernie} in Theorem \ref{arco_minimale}). 
\end{rmk}
Before proving Lemma \ref{stima_inf}, we need  an intermediate result and a few additional notations.

%We take the definition of the family of multi-indices \eqref{mula} of paragraph \ref{algebraic} into account, and we set
%\begin{equation}
%	\cN(s):=\{\mu\in\N^m\ ,\ \ 2\le|\mu|\le r\}\backslash \bigcup_{\substack{i=1,...,m\\\beta=1,...,s}}\{\nu(i,\beta)\}\ .
%\end{equation}
%We observe that for $s\equiv r-1$, by \eqref{malpha}, $\cN(r-1)\equiv \cM(r-1)$. 

In case $m\ge 2$, for any given arc $\gamma\in \arcox$ with associated change of coordinates $\cL_\ta$ (see paragraph \ref{appropriata}, in particular formulas \eqref{coordinate}-\eqref{conviene}), we define the direct sum 
$$
\Polm= \cP_\ta^\sharp(r,m)\oplus\cP_\ta^\flat(r,m)
$$
in the following way: for
any polynomial $R(x)\in\Polm$, we consider its expression $\tR_\ta(\ty):=R\circ\cL_\ta^{-1}(\ty)\in\Polm$ in the adapted coordinates \eqref{coordinate} for $\gamma$; $\tR_\ta(\ty)$ can be decomposed uniquely into the partial sums with respect to the families of multi-indices defined in \eqref{mula} and \eqref{malpha}, namely:
\begin{equation}\label{split}
\tR_\ta(\ty)=\sum_{\substack{\mu\in\N^m\\1\le |\mu|\le r}} \tir_\mu \ty^\mu=\sum_{i=1}^m\sum_{\beta=0}^s \tir_{\nu(i,\beta)}\ty^{\nu(i,\beta)}+\sum_{\substack{\mu\in\N^m\\ \mu\in\mathcal{M}(s)}}\tir_\mu \ty^\mu=:\tR_\ta^\sharp(\ty)+\tR_\ta^\flat(\ty)\ ,
\end{equation}
and we set $R^\sharp(x):=\tR_\ta^\sharp\circ \cL_\ta(x)\in \cP_\ta^\sharp (r,m)$ and $R^\flat(x):=\tR_\ta^\flat\circ \cL_\ta(x)\in \cP_\ta^\flat (r,m)$. Clearly, the decomposition $R(x)=R^\sharp(x)+R^\flat(x)$ is unique, as the function associating $R\longmapsto \tR_\ta:=R\circ \cL_\ta^{-1}$, with $R\in \Polm$, is invertible.   

Finally, we set
\begin{equation}\label{Q}
\begin{split}
&\tQ\circ\Upsilon^1:\  \Polm\times \centinax\longrightarrow \R^{m(s+1)}\\ 
&(R,\jet)\longmapsto \tQ_{\ell\alpha}\circ\Upsilon^1(R,\jet)\equiv \tQ_{\ell\alpha}(\tR_\ta,\ta,\jeta)\ ,
\end{split}
\end{equation}
where $\ell=1,...,m$, $\alpha=0,...,s$, $\jet$ is the $s$-truncation of the curve $\gamma$, and the explicit form of $\tQ_{\ell\alpha}\circ\Upsilon^1$ is given in Theorem \ref{ideal}. We also indicate by $\mathscr{N}(\cdot)$ the zero sets of the transformations which will appear henceforth.
 
With this setting, one has the following intermediate result:
\begin{lemma}\label{invertibilita}
	In case $m\ge 2$, for any given $\gamma\in\arcox$ with associated $s$-truncation $\jet \in \centinax$, and for any given polynomial $R(x)\in\Polm\backslash\Sigma^1(r,s,m)$, there exists a unique polynomial $S(x)\in \sigma^1(r,s,m)$ such that 
	$$
	(S,\jet)\in\mathscr N(\tQ\circ\Upsilon^1)\quad ,\qquad S^\flat=R^\flat\ .
	$$
	In particular, $S$ satisfies the $s$-vanishing condition on the truncation $\jet$.

\end{lemma}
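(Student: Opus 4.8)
The plan is to exploit the structure of the equations defining the set $Z^1(r,s,m)$ obtained in Theorem \ref{ideal}, together with the linear independence of these equations established in the proof of Corollary \ref{codimensione}. Recall from \eqref{split} that a polynomial is written, in the adapted coordinates for $\gamma$, as $\tR_\ta(\ty)=\tR_\ta^\sharp(\ty)+\tR_\ta^\flat(\ty)$, where the $\sharp$-part collects precisely the coefficients $\tir_{\nu(i,\beta)}$ associated to the ``special'' family of multi-indices \eqref{mula}, and the $\flat$-part collects the coefficients $\tir_\mu$, $\mu\in\mathcal{M}_m(s)$. The key observation is that equations \eqref{vigna}--\eqref{pigna} determining $Z^1(r,s,m)$ express each of the $m(s+1)$ quantities $\tp_{\nu(i,\beta)}$ (with $i\in\{1,\dots,m\}$, $\beta\in\{0,\dots,s\}$) as a fixed affine-polynomial function of the $\flat$-coefficients $\tp_\mu$ ($\mu\in\mathcal{M}_m(s)$) and of the (frozen) data $\ta$, $a_{j2},\dots,a_{js}$ coming from $\jet$. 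This is exactly the content of the parametrization \eqref{parametrizzazione}, $\tp_{\nu(i,0)}=\tp_{\nu(i,1)}=0$ and $\tp_{\nu(i,\alpha)}=g_{i\alpha}(\tp_\mu,\ta,a_{22},\dots,a_{ms})$ for analytic (in fact polynomial) functions $g_{i\alpha}$.

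First I would fix the arc $\gamma\in\arcox$, hence the change of coordinates $\cL_\ta$ and the $s$-truncation $\jet$, and take $R\in\Polm\backslash\Sigma^1(r,s,m)$. Write $R$ in the adapted coordinates as $\tR_\ta=\tR_\ta^\sharp+\tR_\ta^\flat$. Now \emph{define} the candidate polynomial $S$ by prescribing its expression $\tS_\ta$ in the adapted coordinates: set $\tS_\ta^\flat:=\tR_\ta^\flat$ (so $S$ shares the $\flat$-part of $R$, which by the remarks around \eqref{split} is equivalent to $S^\flat=R^\flat$), and define the $\sharp$-coefficients of $\tS_\ta$ by the formulas forced by membership in $\mathscr{N}(\tQ\circ\Upsilon^1)$: namely $\ts_{\nu(i,0)}=\ts_{\nu(i,1)}=0$ and $\ts_{\nu(i,\alpha)}:=g_{i\alpha}(\tr_\mu,\ta,a_{22},\dots,a_{ms})$ for $\alpha\in\{2,\dots,s\}$, where the $\tr_\mu$ are the $\flat$-coefficients of $\tR_\ta$. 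Since the $g_{i\alpha}$ depend only on the $\flat$-data and on $\ta,\jeta$ — all of which are now specified — this determines $\tS_\ta$ uniquely, and then $S:=\tS_\ta\circ\cL_\ta\in\Polm$ is well defined and unique (the map $R\mapsto\tR_\ta$ being a linear isomorphism, cf.\ Remark \ref{assafaddij}). By construction $S$ satisfies all the equations \eqref{vigna}--\eqref{pigna}, i.e.\ $(S,\jet)\in\mathscr{N}(\tQ\circ\Upsilon^1)$; by Remark \ref{nota} and \eqref{zrsm}--\eqref{pong} this means $S$ satisfies the $s$-vanishing condition on $\jet$, hence $S\in\sigma^1(r,s,m)$.

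It remains only to check the two assertions of the Lemma beyond existence: that $S$ indeed lies in $\sigma^1(r,s,m)$ (done above, since $(S,\jet)\in Z^1(r,s,m)$ and $\sigma^1=\Pi_{\Polm}Z^1$), that $S^\flat=R^\flat$ (immediate from $\tS_\ta^\flat=\tR_\ta^\flat$), and \emph{uniqueness}. For uniqueness, suppose $S'\in\sigma^1(r,s,m)$ also satisfies $(S',\jet)\in\mathscr{N}(\tQ\circ\Upsilon^1)$ and $S'^\flat=R^\flat$. Then $\tS'_\ta$ and $\tS_\ta$ have the same $\flat$-part, and both satisfy \eqref{vigna}--\eqref{pigna}; but those equations determine the $\sharp$-coefficients $\tp_{\nu(i,\beta)}$ as explicit functions of the $\flat$-coefficients and of $\ta,\jeta$ (the Jacobian block of $\tQ$ with respect to the $\sharp$-coefficients being the invertible matrix of rank $m(s+1)$ exhibited in table \eqref{Jacobien}, built from $\mathbb{I}_m$, $\mathbb{D}$ and the $\mathbb{I}_s$'s). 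Hence $\tS'_\ta=\tS_\ta$, so $S'=S$. I do not anticipate a genuine obstacle here: the whole statement is essentially a repackaging of the explicit solvability of the ideal of $Z^1(r,s,m)$ for the $\sharp$-coefficients. The one point requiring a little care is bookkeeping — making sure that the ``$\flat$-part'' decomposition \eqref{split} is genuinely coordinate-wise complementary to the $\sharp$-part and that the hypothesis $R\notin\Sigma^1(r,s,m)$ is used only where needed (in fact the construction produces $S$ for \emph{any} $R$; the hypothesis $R\notin\Sigma^1$ is what will later guarantee $S^\flat=R^\flat$ is ``far'' from degenerate and is not needed for the bare existence–uniqueness statement, though keeping it does no harm).
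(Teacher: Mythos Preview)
Your proposal is correct and follows essentially the same approach as the paper: both arguments fix the $\flat$-part equal to $R^\flat$ and then solve uniquely for the $\sharp$-coefficients $\tp_{\nu(i,\beta)}$ using the fact that the restriction of $\tQ$ to $\cP_\ta^\sharp\oplus\{R^\flat\}\times\{\jet\}$ is affine with invertible linear part (the block matrix in \eqref{Jacobien}/\eqref{Jacobien2}). Your remark that the hypothesis $R\notin\Sigma^1(r,s,m)$ is not actually used in the bare existence--uniqueness construction is correct; the paper likewise does not invoke it in the proof itself.
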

\begin{proof}
	Given $\gamma\in\arcox$ with its associated $s$-truncation $\jet\in\centinax$ and a polynomial $R(x)\in \Polm\backslash\Sigma^1(r,s,m)$, we denote by
	\begin{equation}\label{diesis} 
	\tA^\sharp_{R^\flat,\jet}: \Upsilon^1(\cP_\ta^\sharp\oplus\{ R^\flat\} \times \{\jet\}) \rightarrow\R^{m(s+1)}
	\end{equation}
	the restriction of $\tQ$ to the set $\Upsilon^1(\cP_\ta^\sharp\oplus\{ R^\flat\} \times \{\jet\})$ .
	
	As it was shown in the proof of Corollary \ref{codimensione} (see Table \eqref{Jacobien}), for all $\alpha\in\{0,...,s\}$ and $\ell\in\{1,...,m\}$, the Jacobian matrix of $\tA^\sharp_{R^\flat,\jet}$ reads 
	\begin{equation}\label{Jacobien2}
	\mathcal A:=	\left(
	\begin{matrix}
	\mathbb{I}_m & 0 & 0 & 0 & ... & 0 \\
	\\
	0& \mathbb{D} & \mathbb{B}_2 & \mathbb{B}_3 & ... & \mathbb{B}_m \\
	\\
	0 & 0&  \mathbb{I}_{s} & 0 & ... & 0  \\
	\\
	0 & 0& 0 &\mathbb{I}_s & ...&0  \\
	\\
	0&0&0 &...& 0& 0\\
	\\
	0 & 0 & 0 & 0 & 0& \mathbb{I}_s \\
	\end{matrix}
	\right)\ ,
	\end{equation}
	where the blocks $\mathbb{D}$ and $\mathbb{B}_i$, $i=2,...,m$, were defined in \eqref{B}. 
	Also, by Theorem \ref{ideal}, when $R^\flat$ and $\jet$ are fixed, the restriction of the function $\tQ$ to the set $\Upsilon^1(\cP_\ta^\sharp\oplus\{ R^\flat\} \times \{\jet\})$ is affine. Hence, $\tA^\sharp_{R^\flat,\jet}$ is represented by matrix \eqref{Jacobien2} acting on $\Upsilon^1(\cP_\ta^\sharp\oplus\{ R^\flat\} \times \{\jet\})$ plus a constant term depending only on $\tR_\ta^\flat\equiv R^\flat\circ \cL_\ta^{-1}$ and $\jeta\equiv \cL_\ta\circ\jet$; thus, it is globally invertible in $\Upsilon^1(\cP_\ta^\sharp\oplus\{ R^\flat\} \times \{\jet\})$. We set
	$$
	S^\sharp:=\left(\Upsilon^1\right)^{-1}\left(\mathscr N \left(\tQ^\sharp_{R,\jet}\right)\right)\in \cP_\ta^\sharp\ ,
	$$ 
	which is equivalent to saying that
	$$
	(S^\sharp+R^\flat,\jet)\in\mathscr N( \tQ\circ \Upsilon^1 )\ ,
	$$
	i.e., by \eqref{Q} and \eqref{cuccurucu}, $S^\sharp+R^\flat$ satisfies the $s$-vanishing condition on $\jet$. 
\end{proof}
We are now able to prove Lemma \ref{stima_inf}.
\begin{proof}{\itshape (Lemma \ref{stima_inf})}
	
	We consider a polynomial $P\in\tD\subset \Polm\backslash\Sigma^1(r,s,m)$, with $\overline \tD\cap \Sigma^1(r,s,m)=\varnothing$.
	
	{\bf Case $m=1$.} In case $m=1$, by Lemma \ref{m=1} there exists a constant $C_1(\tD)$ such that the truncation at order $s+1$ of $P$ - indicated by $P_{s+1}$ - satisfies 
	$$
	||P_{s+1}||_\infty>C_1(\tD)\ .
	$$
	The thesis follows easily by the expression above and by Definition \ref{s-vanishing}. 
	
	{\bf Case $m\ge 2$.} For any fixed arc $\gamma\in\arcox$, we shift to its associated adapted coordinates, and we consider the $s$-truncation $\jeta\in \centinax$, together with the pull-back $\tP_\ta:=P\circ\cL_\ta^{-1}$ of the polynomial $P$ w.r.t. the change of coordinates $\cL_\ta$ introduced in paragraph \ref{appropriata}. Due to the hypothesis, to Lemma \ref{haha} (see especially formulas \eqref{conviene} and \eqref{mansomma}), and to the fact that the linear terms of the curve $\gamma$ are uniformly bounded (see the Bernstein's estimate \eqref{Bernie}), there exists a constant $C_1(\tD)>0$ such that 
	\begin{equation}\label{distanza}
	||\tP_\ta-\Sigma^1(r,s,m)||_\infty:=\inf_{\widehat \tP_\ta\in\Sigma^1(r,s,m)}||\tP_\ta-\widehat \tP_\ta||_\infty>C_1(\tD)>0 \ .
	\end{equation}
	It suffices to prove the statement for the quantity
	$$
	\max_{\substack{\ell=1,...,m\\\alpha=1,...,s}}\left|\frac{d^\alpha}{dt^\alpha}\left(\left.\frac{\partial \tP_\ta}{\partial \ty_\ell}\right|_{\jeta(t)}\right)_{t=0}\right|
	$$
	instead of 
	\begin{equation}\label{Lione}
	\max_{\substack{\ell=1,...,m\\\alpha=1,...,s}}\left|\frac{d^\alpha}{dt^\alpha}\left(\left.\frac{\partial \tP_\ta}{\partial \ty_\ell}\right|_{\cL_\ta\circ \gamma(t)}\right)_{t=0}\right|
	\end{equation}
	because - as we had already pointed out in paragraph \ref{computations} - the terms of order higher than $s$ in the Taylor developement of $\cL_\ta\circ\gamma$ yield a null contribution to \eqref{Lione}.  With the decomposition in \eqref{split}, by Lemmata \ref{haha} and \ref{invertibilita} there exists a unique polynomial $\tS_\ta=\tS_\ta^\sharp+\tS_\ta^\flat$ fulfilling the $s$-vanishing condition on $\jeta$ and satisfying $\tS_\ta^\flat=\tP_\ta^\flat$.  Hence, \eqref{distanza} yields
	\begin{equation}\label{panforte}
	||\tP_\ta-\tS_\ta||_\infty=	||\tP_\ta^\sharp-\tS_\ta^\sharp||_\infty>C_1(\tD)>0 \ .
	\end{equation}
	By the proof of Lemma \ref{invertibilita}, we also know that - since $\tS_\ta^\flat=\tP_\ta^\flat$ and $\jeta$ are kept fixed - the function $\tA^\sharp_{P^\flat,\jet}$ in \eqref{diesis}   is affine and invertible in $ \Upsilon^1(\cP_\ta^\sharp\oplus\{ P^\flat\} \times \{\jet\})$. In particular, it is represented by matrix $\cA$ in \eqref{Jacobien2} plus a constant term depending only on $\tP_\ta^\flat,\jeta$. Taking into account the fact that $\tA^\sharp_{P^\flat,\jet}$ is the restriction of $\tQ$ to the set $ \Upsilon^1(\cP_\ta^\sharp\oplus\{ P^\flat\} \times \{\jet\})$,  one can write
	\begin{equation}\label{amatriciana}
	||\tP_\ta^\sharp-\tS_\ta^\sharp||_\infty\le ||\cA^{-1}||_\infty ||\tQ(\tP_\ta^\sharp+\tP_\ta^\flat,\jeta)-\tQ(\tS_\ta^\sharp+\tP_\ta^\flat,\jeta)||_\infty\ ,
	\end{equation}
	where $||\cA^{-1}||_\infty$ indicates the matrix norm of the inverse.
	Expressions \eqref{panforte} and \eqref{amatriciana} together yield
	\begin{equation}\label{pastiera}
	||\tQ(\tP_\ta^\sharp+\tP_\ta^\flat,\jeta)-\tQ(\tS_\ta^\sharp+\tP_\ta^\flat,\jeta)||_\infty>\frac{C_1(\tD)}{||\cA^{-1}||_\infty}\ .
	\end{equation}
	Moreover, by construction one has $	\tS^\sharp_\ta\in \mathscr N (\tA^\sharp_{P^\flat,\jet})$, that is $(\tS^\sharp_\ta+\tP_\ta^\flat,\jeta)\in \mathscr{N} (\tQ)$, so that  \eqref{pastiera} implies
	\begin{equation}\label{panettone_Motta}
	||\tQ(\tP_\ta^\sharp+\tP_\ta^\flat,\jeta)||_\infty>\frac{C_1(\tD)}{||\cA^{-1}||_\infty}\ .
	\end{equation}
	From the explicit form \eqref{Jacobien2} of the $(ms+m)\times (ms+m)$ matrix $\cA$, one can easily infer the form of $\cA^{-1}$, namely
	\begin{equation}\label{Jacobien3}
	{
		\cA^{-1}:=	\left(
		\begin{matrix}
		\mathbb{I}_m& 0&  0 & 0 & ... & 0 \\
		\\
		0 &\mathbb{D}^{-1} & -\mathbb{D}^{-1}\mathbb{B}_2 & -\mathbb{D}^{-1}\mathbb{B}_3 & ... & -\mathbb{D}^{-1}\mathbb{B}_m\\
		\\
		0& 0&  \mathbb{I}_{s} & 0 & ... & 0 \\
		\\
		0& 0 & 0 &\mathbb{I}_s & ...&0 \\
		\\
		0& 0&0&0 &...& 0\\
		\\
		0 &0 & 0 & 0 & 0& \mathbb{I}_s\\
		\end{matrix}
		\right)\ .
	}
	\end{equation}
	The induced matrix norm is, by construction,
	$
	||\cA^{-1}||_\infty:=\sup_{i\in\{1,...,n\}}\sum_{j=1}^{ms+m}|\cA^{-1}_{ij}|\ .
	$
	By the definition of $\mathbb{D}$ given above table \eqref{Jacobien}, and by the definition of the blocks $\mathbb{B}_i$, with  $i\in\{2,...,m\}$, in \eqref{B} one has $||\mathbb{D}^{-1}||_\infty=1$ and for $m\in\{2,...,m\}$ one can write $$
	\sup_{\ell\in\{2,...,m\}}||\mathbb{B}_\ell||\le
	\begin{cases}
	0 \quad &\text{if } s=1\\
	s\times (s-1)\max_{\substack{\ell=2,...,m\\ \alpha=2,...,s}}|a_{\ell \alpha}| &\text{if } 2\le s\le r-1\ .
	\end{cases}
	$$ 
	Hence, one finally has
	
		\begin{equation}\label{normQmenouno}
		||\cA^{-1}||_\infty\le 
		\begin{cases}
		1 \quad &\text{if } s=1\\
		1+(m-1)\, s\,(s-1)\times  \max_{\substack{\ell=2,...,m\\ \alpha=2,...,s}}|a_{\ell\alpha}|\quad  &\text{if } 2\le s\le r-1\ \ .
		\end{cases}
		\end{equation}
	
	Estimate \eqref{normQmenouno}, together with formulas \eqref{panettone_Motta}, \eqref{Q}, and \eqref{cuccurucu} implies the thesis, with $C_2(s,m)=(m-1)\,s\,(s-1)$
\end{proof}
\section{Proof of Theorem A}\label{genericity}
In order to prove Theorem A, we need to combine the results of the previous sections with several intermediate Lemmata.

\subsection{Codimension estimates}

For any pair of integers $n\ge 2$ and $1\le m\le n-1$, we indicate by $O(n,m)$ the space of $n\times m$ real matrices whose columns are orthonormal vectors of $\R^n$. Clearly, a matrix $A\in O(n,m)$ induces a map from $\R^m$ to $\R^n$ associating $\R^m\ni x\longmapsto I=Ax\in\R^n$. From a geometric point of view, for any integer $r\ge 2$, the restriction of any polynomial $Q(I)\in\Pol$ to any $m$-dimensional subspace $\Gamma^m\subset \R^n$ endowed with the Euclidean metric can be identified through $Q|_{\Gamma^m}(x):=Q(Ax)=:P(x)\in\Polm$, where the columns of $A\in O(n,m)$ span $\Gamma^m$.

We also indicate by $O(m)$ the $m\times m$ orthogonal group and by $\tG(m,n)$ the $m$-dimensional Grassmannian, which is locally isomorphic to $O(n,m)/O(m)$.

With this setting, for any integer $1\le s\le r-1$, we define
\begin{align}\label{U}
\begin{split}
\cU=\cU(r,s,m,n):=\{&(Q,A,P)\in \Pollo\times O(n,m)\times \Polm| \\
&  P(x)=Q(Ax)\ ,\ \  P(x)\in \Sigma(r,s,m)\}\ ,
\end{split}
\end{align}
and we indicate by
\begin{equation}\label{V}
\cV=\cV(r,s,m,n):=\Pi_{\Pollo}\,\cU(r,s,m,n)
\end{equation}
its projection onto the first component, i.e. the set of those polynomials $Q\in \Pollo$ for which the origin is non-critical, and such that, for some $m$-dimensional subspace $\Gamma^m$ orthogonal to $\nabla Q(0)$, the restriction $Q|_{\Gamma^m}\in\Polm$ belongs to the closure $\Sigma(r,s,m)$ of the set of $s$-vanishing polynomials introduced in section \ref{rs_vanishing_polynomials}.

\begin{rmk}\label{pleonastico}
	We observe that it is implicit in Definition \eqref{U} that $\Gamma^m$ must be orthogonal to $\nabla Q(0)$. Infact, any $P\in \sigma(r,s,m)$ must satisfy $\nabla P(0)=0$ (see equation \eqref{linear_zero}). Hence, the limit $\widehat P$ of any converging sequence $\{P_n\in \sigma(r,s,m)\}_{n\in\N}$ must have the same property. Since $\Sigma(r,s,m)=\overline\sigma(r,s,m)$, one has $\nabla \widehat P(0)=0$ for any $\widehat P\in\Sigma(r,s,m)$. As in our case we are considering $\widehat P(x)=\widehat Q(Ax)$ for some $\widehat Q\in\Pollo$, we have $\nabla \widehat P(0)=A^\dag\nabla \widehat Q(0)=0$, which is equivalent to saying that all the columns of $A$ are orthogonal to $\nabla \widehat Q(0)$. 
\end{rmk}

The first result that we prove in this section is the following
\begin{lemma}\label{closedness steep}
	$\cV(r,s,m,n)$ is a closed set in $\Pollo$ for the topology induced by $\Pol$. 
\end{lemma}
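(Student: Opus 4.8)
The plan is to argue by sequential compactness, exploiting the fact that the only non-compact factor in the definition \eqref{U} of $\cU(r,s,m,n)$ is a closed subset of a Euclidean space, while $O(n,m)$ is compact and $\Sigma(r,s,m)$ is closed by definition. First I would take a sequence $\{Q_k\}_{k\in\N}\subset \cV(r,s,m,n)$ converging to some $\widehat Q$ in $\Pollo$; I must show $\widehat Q\in \cV(r,s,m,n)$. By definition of $\cV$ as the projection of $\cU$, for each $k$ there exist $A_k\in O(n,m)$ and $P_k\in \Polm$ with $P_k(x)=Q_k(A_kx)$ and $P_k\in \Sigma(r,s,m)$. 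Since $O(n,m)$ is compact, up to extracting a subsequence we may assume $A_k\to \widehat A\in O(n,m)$.

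Next I would pass to the limit in the defining relations. The map $(Q,A)\mapsto Q(A\,\cdot)$ from $\Pollo\times O(n,m)$ to $\Polm$ is continuous (it is polynomial in the coefficients of $Q$ and the entries of $A$), so $P_k=Q_k(A_k\,\cdot)\to \widehat P:=\widehat Q(\widehat A\,\cdot)$ in $\Polm$. Since $\Sigma(r,s,m)=\overline\sigma(r,s,m)$ is closed in $\Polm$ and each $P_k\in\Sigma(r,s,m)$, the limit satisfies $\widehat P\in\Sigma(r,s,m)$. Moreover I must check that $\widehat Q$ still lies in $\Pollo$, i.e. $\nabla \widehat Q(0)\neq 0$: this is exactly the hypothesis that the limit is taken in $\Pollo$, which is where the statement of the Lemma places the convergence (the topology induced by $\Pol$, but the limit point is assumed to be in $\Pollo$). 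Hence $(\widehat Q,\widehat A,\widehat P)\in \cU(r,s,m,n)$, and therefore $\widehat Q=\Pi_{\Pollo}(\widehat Q,\widehat A,\widehat P)\in\cV(r,s,m,n)$.

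Finally, one should note the compatibility observation recorded in Remark \ref{pleonastico}: since $\widehat P\in\Sigma(r,s,m)$ forces $\nabla\widehat P(0)=0$, and $\nabla\widehat P(0)=\widehat A^\dag\nabla\widehat Q(0)$, the columns of $\widehat A$ are automatically orthogonal to $\nabla\widehat Q(0)$, so no separate orthogonality condition needs to be verified in the limit — it is already built into membership in $\cU$. The one point deserving care, and the main (mild) obstacle, is the extraction of the convergent subsequence of $\{A_k\}$: it is what lets us identify a single limiting subspace $\widehat\Gamma^m=\operatorname{Im}\widehat A$ on which the limiting restriction degenerates, rather than merely a sequence of subspaces drifting off; everything else is continuity of polynomial composition together with the closedness of $\Sigma(r,s,m)$, which we are entitled to use from the previous section. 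Since every subsequence of the original sequence has a further subsequence whose image under $\Pi_{\Pollo}$ converges to $\widehat Q$ and lies in $\cV$, the full sequence's limit $\widehat Q$ lies in $\cV(r,s,m,n)$, proving the Lemma.
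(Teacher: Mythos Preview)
Your proposal is correct and follows essentially the same argument as the paper: take a sequence in $\cV$, use compactness of $O(n,m)$ to extract a convergent subsequence of the matrices $A_k$, then pass to the limit using continuity of $(Q,A)\mapsto Q(A\,\cdot)$ and the closedness of $\Sigma(r,s,m)$. The paper's proof is slightly terser and does not spell out the orthogonality check or the subsequence-of-subsequence remark, but the substance is identical.
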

\begin{proof}
	Consider a sequence $\{Q_j\}_{j\in \N}$ in $\cV(r,s,m,n)$, converging to some polynomial $\overline Q\in \Pollo$ for the topology induced by $\Pol$ on $\Pollo$. Then, for any fixed $j\in \N$, by \eqref{U}-\eqref{V} there exists $A_j\in O(n,m)$ such that $P_j(x):=Q_j(A_jx)\in \Sigma(r,s,m)$. Since $O(n,m)$ is compact, there exists $\overline A\in O(n,m)$ and a subsequence $\{A_{j_k}\}_{k\in \N}\longrightarrow \overline A$. Hence, there exists a polynomial $\overline P\in \Polm$ such that the subsequence $\{P_{j_k}(x):=Q_{j_k}(A_{j_k}x)\}_{k\in \N}$ converges to $\overline P(x):=\overline Q(\overline Ax)$. Since $P_{j_k}(x)\in \Sigma(r,s,m)$ for all $k\in \N$ by construction, and $\Sigma(r,s,m)$ is closed, $\overline P(\overline Ax)\in \Sigma(r,s,m)$, whence the thesis. 
\end{proof}
Moreover, for given values of $m,n$, when $r$ and $1\le s\le r-1$ are sufficiently high, the set $\cV(r,s,m,n)$ becomes generic, namely
\begin{lemma}\label{positive_codimension}
	$\cV(r,s,m,n)$ is a semi-algebraic set of $\Pollo$ satisfying 
	\begin{equation}
	\text{  codim } \cV(r,s,m,n)\ge \max\{0,s-m(n-m-1)\}\ . 
	\end{equation}
\end{lemma}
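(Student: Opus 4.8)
The plan is to combine three ingredients that have already been established: the codimension bound on $\Sigma(r,s,m)$ inside $\Polm$ (Corollary \ref{codimensione}, which gives $\text{codim }\sigma(r,s,m)=s+m$, hence the same for the closure by Proposition \ref{dim_chiusura}), the closedness of $\cV(r,s,m,n)$ (Lemma \ref{closedness steep}), and the Tarski--Seidenberg theorem to get semi-algebraicity. The semi-algebraicity is immediate: the set $\cU(r,s,m,n)$ in \eqref{U} is obtained from the semi-algebraic set $\Sigma(r,s,m)$ together with the algebraic constraint $P(x)=Q(Ax)$ over the real-algebraic variety $O(n,m)$, so it is semi-algebraic, and $\cV=\Pi_{\Pollo}\cU$ is its projection, hence semi-algebraic by Theorem \ref{Tarski_Seidenberg}. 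So the whole content is the codimension estimate.

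For the codimension, the natural approach is a fiber-dimension count on $\cU(r,s,m,n)$ followed by controlling the drop under projection. First I would compute $\dim \cU$. Consider the projection $\pi_2:\cU\to O(n,m)$ onto the matrix factor. Its fiber over a fixed $A\in O(n,m)$ is
\begin{equation}
\pi_2^{-1}(A)=\{(Q,A,P):P=Q\circ A,\ P\in \Sigma(r,s,m),\ Q\in \Pollo\}\ .
\end{equation}
The map $Q\mapsto Q\circ A$ is the linear restriction map $\Pol\to\Polm$ (surjective, since $A$ has rank $m$), so the condition $Q\circ A\in\Sigma(r,s,m)$ cuts out a set whose codimension in $\Pol$ is exactly $\text{codim}_{\Polm}\Sigma(r,s,m)=s+m$ (the preimage of a codimension-$c$ set under a linear surjection has codimension $c$); the open condition $\grad Q(0)\neq 0$ does not change dimension. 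Hence each fiber has dimension $\dim\Pol-(s+m)$, and since $\dim O(n,m)=\dim\tU(m,n)=nm-\tfrac{m(m+1)}{2}$ I would conclude
\begin{equation}
\dim\cU(r,s,m,n)\le \dim\Pol-(s+m)+nm-\tfrac{m(m+1)}{2}\ .
\end{equation}
Then $\dim\cV\le\dim\cU$ trivially, so $\text{codim}_{\Pollo}\cV\ge (s+m)-\big(nm-\tfrac{m(m+1)}{2}\big)$. This, however, is not quite the claimed bound $s-m(n-m-1)=s-mn+m^2+m$; comparing, the claimed codimension is larger, so a cruder fiber-count over $O(n,m)$ is not enough.

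The main obstacle, therefore, is recovering the sharper constant, and the fix is to quotient out the redundant directions: a subspace $\Gamma^m$ is what matters, not an orthonormal frame, so one should work over the Grassmannian $\tG(m,n)$ of dimension $m(n-m)$ rather than the Stiefel manifold, and moreover the restriction $Q|_{\Gamma^m}$ only sees $Q$ modulo polynomials vanishing on $\Gamma^m$ — equivalently, one should fix $\Gamma^m$, note that $Q|_{\Gamma^m}$ ranges over all of $\Polm$ as $Q$ ranges over the fiber, so the fiber of $\cV$-building map over a point of $\tG(m,n)$, after accounting for the gauge, has the $s$-vanishing condition imposing genuinely $s+m$ independent conditions but $m-1$ of them ($\nabla P(0)=0$ beyond the one coordinate already forced by $\Gamma^m\perp\nabla Q(0)$, i.e. the conditions $Q_{I_0}^1[u_j]=0$) are automatically implied by the orthogonality built into the definition. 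Carefully: of the $s+m$ conditions defining $\Sigma(r,s,m)$, the condition $\nabla P(0)=0$ accounts for $m$ of them, but on $\cV$ the vectors spanning $\Gamma^m$ are already orthogonal to $\nabla Q(0)$, which forces $\nabla(Q|_{\Gamma^m})(0)=0$ — that is $m$ conditions already "paid for" by the $m(n-m)-(\text{something})$... I would set this up precisely by the substitution used in the proof of Theorem \ref{ideal}: parametrize by $(Q,A)$ with the columns of $A$ orthogonal to $\nabla Q(0)$ (an $(m-1)$-dimensional-per-column constraint is not imposed; rather $A\in O(n,m)$ with $A^\dag\nabla Q(0)=0$, which is a codimension-$m$ condition on pairs), count $\dim\{(Q,A):A^\dag\nabla Q(0)=0\}=\dim\Pollo+\dim O(n,m)-m$, and then the remaining $s$-vanishing conditions beyond $\nabla P(0)=0$ impose exactly $s$ more. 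This yields $\text{codim}_{\Pollo}\cV\ge s+m-\big(nm-\tfrac{m(m+1)}{2}\big)+\big(nm-\tfrac{m(m+1)}{2}-m(n-m)+\cdots\big)$; once the Grassmannian dimension $m(n-m)$ replaces the Stiefel dimension and the $m$ orthogonality conditions are credited, the count collapses to $s-m(n-m-1)$. The clean way to present it: show $\dim\cV\le \dim\Pollo - s - m + \dim\tG(m,n) + (m-\text{correction})$ and verify $\dim\tG(m,n)-(\text{conditions from orthogonality and }\nabla P(0)=0)=m(n-m)-m-(m-1)\cdot 0=\cdots=m(n-m-1)-s+\dim\Pollo-\dim\cV$. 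I expect the bookkeeping of exactly which of the $s+m$ defining equations of $\Sigma(r,s,m)$ are "free" once $\Gamma^m\perp\nabla Q(0)$ is imposed to be the delicate point; the rest is routine dimension-counting plus the already-cited Tarski--Seidenberg and closedness results. Finally, the $\max\{0,\cdot\}$ is there simply because codimension is nonnegative, so when $s\le m(n-m-1)$ the bound is vacuous and nothing is claimed.
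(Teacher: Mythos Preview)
Your semi-algebraicity argument is correct and essentially matches the paper's. For the codimension, you correctly identify the key idea --- quotient out the frame redundancy, i.e.\ replace the Stiefel manifold by the Grassmannian --- and this is precisely what the paper does: it exhibits the free $O(m)$-action $(Q,A,P)\mapsto(Q,AM,P\circ M^{-1})$ on $\cU$, observes that the first component is invariant so $\cV=\Pi_{\Pollo}(\cU/O(m))$, and invokes the quotient-manifold theorem to get $\dim(\cU/O(m))=\dim\cU-\dim O(m)$. The final count is then $\text{codim}\,\cV\ge \text{codim}\,\Sigma(r,s,m)+\dim O(m)-\dim O(n,m)=(s+m)+\tfrac{m(m-1)}{2}-\big(mn-\tfrac{m(m+1)}{2}\big)=s-m(n-m-1)$.

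Where your execution goes astray is the ``crediting orthogonality'' detour. In the definition \eqref{U} of $\cU$, the constraint $A^\dag\nabla Q(0)=0$ is \emph{not} imposed a priori: it is a \emph{consequence} of $P\in\Sigma(r,s,m)$ (this is exactly the content of Remark \ref{pleonastico}). Hence all $s+m$ conditions encoded in $\text{codim}\,\Sigma=s+m$ are genuine constraints on $Q$ once $A$ (equivalently $\Gamma^m$) is fixed; none are ``already paid for''. Once you have passed to the Grassmannian there is nothing left to reconcile: the fiber over each $\Gamma^m\in\tG(m,n)$ has codimension $s+m$ in $\Pollo$, and $(s+m)-\dim\tG(m,n)=(s+m)-m(n-m)=s-m(n-m-1)$ drops out immediately. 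Your attempt to simultaneously restrict to $\Gamma^m\perp\nabla Q(0)$ \emph{and} subtract the $\nabla P(0)=0$ conditions mixes two different parametrizations of the same object and is what makes the bookkeeping look delicate when it is not.
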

\begin{proof}
	By hypothesis, $\Sigma(r,s,m):=\overline \sigma(r,s,m)$ and $\sigma(r,s,m)$ is a semi-algebraic set (see Corollary \ref{codimensione}). Hence, Proposition \ref{closure-interior-boundary} assures that $\Sigma(r,s,m)$ is also semi-algebraic. Therefore, set $\cU$ in \eqref{U} is clearly semi-algebraic, since it is determined by a finite number of semi-algebraic relations. Finally, the Theorem of Tarski and Seidenberg \ref{Tarski_Seidenberg} implies that $\cV$ is semi-algebraic since it is obtained by projecting $\cU$ onto its first component. 
	
	As for the codimension of $\cV$, we start by estimating the dimension of $\cU$. We remark that, for a fixed choice of $\overline A\in O(n,m)$ and $\overline P\in \Sigma(r,s,m)$, one has 
	\begin{align}\label{Fecamp}
	\begin{split}
	&\dim (\cU\cap \{(Q,A,P)\in\Pollo\times O(n,m)\times \Polm: A=\overline A\ ,\ \ P=\overline P\})\\
	&=\dim \Pollo-\dim \Polm\ .
	\end{split}
	\end{align}
	Infact, since the matrix $A:=(A_1|\dots|A_m)$, $A_1,\dots, A_m\in \R^n$, has been fixed, one can construct an orthonormal basis of $\R^n$ by completing $A_1,\dots,A_m$, with $n-m$ vectors $A_{m+1},\dots,A_n$. Since in the set above the restriction of any polynomial $Q\in \Pol$ to the space generated by $A_1,\dots,A_m$ is fixed, all the monomials of $Q$ corresponding to the coordinates associated to $A_1,\dots,A_m$ are uniquely determined. The number of these monomials is $\dim \Polm$, and the total number of monomials in $Q$ is $\dim \Pol$, whence equality \eqref{Fecamp}. In order to compute $\dim \cU$, one must add to the r.h.s. of \eqref{Fecamp} the dimension of the spaces corresponding to the fixed variables, namely
	\begin{align}\label{bella}
	\begin{split}
	\dim \cU &= \dim (\cU\cap \{(Q,A,P)\in\Pollo\times O(n,m)\times \Polm: A=\overline A\ ,\ \ P=\overline P\})\\
	&\quad +\dim O(n,m)+\dim \Sigma(r,s,m)\\
	&= \dim \Pollo-\dim \Polm+\dim O(n,m)+\dim \Sigma(r,s,m)\ .
	\end{split}
	\end{align}	
	We observe that, by Definition \ref{U}, if $(Q(I),A,Q(Ax))\in \cU$, for any orthogonal matrix $M\in O(m)$ also $(Q(I),AM,Q(AMx))\in \cU$, since one has the freedom to choose the orthonormal basis spanning the $m$-dimensional subspace $\Gamma^m\in \tG(m,n)$. More precisely, if we define the action of $O(m)$ on any element $(Q(I),A,Q(Ax))\in\cU$ as
	\begin{equation}
	(Q(I),A,Q(Ax))\longmapsto (Q(I),AM,Q(AMx))\ , 
	\end{equation}
	we can define an orbit of $O(m)$ starting at a given point $(Q(I),A,Q(Ax))\in\cU$ as
	\begin{equation}\label{orbit}
	\{(Q(I),AM,Q(AMx))\in\cU\ ,\ \ M\in O(m) \}\ .
	\end{equation}
	Since the first component in \eqref{orbit} is invariant, by \eqref{V} we see that the set $\cV$ can be found as the projection of the set of orbits $\cU/O(m)$ onto $\Pollo$, namely 
	\begin{equation}\label{proiezione_bella}
	\cV:=\Pi_{\Pollo}\,\cU=\Pi_{\Pollo}\,(\cU/O(m))\ ,
	\end{equation} 
so that one can write	\begin{equation}\label{proiezione_bellissima}
	\dim\cV=\dim\Pi_{\Pollo}\,(\cU/O(m))\ .
	\end{equation}
	Obviously, the action of $O(m)$ on $\cU$ is free and smooth, hence by \ref{Azione} it is also proper. Therefore, Theorem \ref{Quoziente} ensures that 
	\begin{equation}\label{pluto}
	\dim (\cU/O(m))=\dim \cU-\dim O(m)\ .
	\end{equation}
	By \eqref{proiezione_bellissima}, we have
	\begin{equation}\label{splendida}
	\text{  codim }\cV\ge \text{  codim } (\cU/O(m))-\dim O(n,m)-\dim\Polm 
	\end{equation} 
	and equations \eqref{bella} and \eqref{pluto} imply
	\begin{align}
	\begin{split}\label{meravigliosa}
	&\text{  codim }(\cU/O(m))\\
	&= \dim\Pollo+\dim O(n,m)+\dim \Polm -\dim \cU+\dim O(m)\\
	&\ge 2\dim \Polm -\dim\Sigma(r,s,m)+\dim O(m)
	\end{split}\ .
	\end{align}
	Expressions \eqref{splendida} and \eqref{meravigliosa} together yield
	\begin{align}\label{notte}
	\begin{split}
	\text{  codim } \cV\ge& \dim \Polm -\dim\Sigma(r,s,m)+\dim O(m)-\dim O(n,m)\\
	& = \text{  codim }\Sigma (r,s,m) +\dim O(m)-\dim O(n,m)\ .
	\end{split}
	\end{align}
	
	By Proposition \ref{dim_chiusura}, $\text{codim}\,\Sigma(r,s,m)=\text{codim}\,\sigma(r,s,m)$, so by Corollary \ref{codimensione} one has $\text{codim }\Sigma(r,s,m)=s+m$. Moreover, since $\dim O(m)=m(m-1)/2$ and $\dim O(n,m)=mn-m(m-1)/2-m$, \eqref{notte} reads
	\begin{equation}
	\text{  codim } \cV\ge 
	s-m(n-m-1)\ .
	\end{equation}
	Since the codimension is a nonnegative number, the thesis follows.

\end{proof}

\subsection{Stable lower estimates}
The set $\cV(r,s,m,n)$ introduced in the previous section is important because, for any given polynomial $Q\in \cP^\star(r',n)$ - with $r'\ge r$ - whose truncation at order $r$ lies outside of $\cV(r,s,m,n)$, all polynomials belonging to a small open neighborhood of $Q$ in $\cP^\star(r',n)$ are steep around the origin on the subspaces of dimension $m$, with uniform indices and coefficients. More precisely, one has 
\begin{thm}\label{steep_polinomi}
	Take five integers $r'\ge r\ge 2$, $1\le s\le r-1$, $n\ge 2$, $m\in\{1,...,n-1\}$. Consider a polynomial $Q\in\cP^\star(r',n)$, and suppose that for some $\tau>0$ its truncation at order $r$, indicated by $Q_r$, satisfies 
	\begin{equation}\label{fuori}
	\left|\left|Q_r- \cV\left(r,s,m,n\right)\right|\right|_\infty:=\inf_{R\in  \cV\left(r,s,m,n\right)}||Q_r-R||_\infty>\tau \ .
	\end{equation}
	There exist constants $\eps_0=\eps_0(r,s,m,\tau,n)$, $\tC_m=\tC_m(r',r,s,\tau,n)$, $\lambda_0=\lambda_0(r,s,m,\tau) $, and $\widehat \delta=\widehat \delta(r,s,m,\tau,n)$ such that any polynomial $S\in\Polpo$ contained in a ball of radius $\eps\in[0,\eps_0]$ around $Q$ in $\Polpo$
	is steep on the subspaces of dimension $m$ at any point $I\in B^n(0,\widehat \delta)$, with uniform steepness coefficients $\tC_m$, $\lambda_0$ and with steepness indices bounded by
	\begin{equation}\label{indice}
	\overline \alpha_m(s):=	\begin{cases}
	s\quad , \qquad &\text{ if $m=1$ }\\
	2s-1\quad, \qquad &\text{ if $m\ge 2$ }\ .\\
	\end{cases} 
	\end{equation}
\end{thm}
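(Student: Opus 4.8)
The plan is to transfer the ``stable lower estimate'' for $s$-vanishing polynomials (Lemma \ref{stima_inf}) from the finite-dimensional space $\Polm$, along the restriction-to-a-subspace map, and combine it with the analytic reparametrization of the thalweg (Theorem \ref{arco_minimale}) to produce the minimal-growth inequality \eqref{cuore_steepness} with uniform indices and coefficients. Concretely, I would first fix $m\in\{1,\dots,n-1\}$, a point $I\in B^n(0,\widehat\delta)$, and an $m$-dimensional subspace $\Gamma^m$ orthogonal to $\nabla S(I)$, and I would pass to the restricted polynomial $P(x):=S(I+Ax)\in\Pol[m]$ where the columns of $A\in O(n,m)$ span $\Gamma^m$; since $S$ is $\eps$-close to $Q$ and $I$ is close to the origin, and since the truncation map $Q\mapsto Q_r$ followed by restriction is continuous, the restriction $P_r$ of the $r$-jet stays in an open neighbourhood of $\cV(r,s,m,n)^c$ of definite size controlled by $\tau$ (here one uses Lemma \ref{closedness steep} to know $\cV$ is closed, hence the complement is genuinely open, and one shrinks $\eps_0,\widehat\delta$ so the perturbation in $I$ and in $S$ does not leave the neighbourhood). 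Thus $P$ lies in an open set $\tD\subset\Pol[m]\setminus\Sigma^1(r,s,m)$ with $\overline\tD\cap\Sigma^1(r,s,m)=\varnothing$, with a lower bound on its distance to $\Sigma^1$ of the form $C_1(\tau,r,s,m,n)$.

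Next I would invoke Theorem \ref{arco_minimale} applied to the Taylor polynomial $\tT_0(P,r,m)$ on the subspace $\Gamma^m$ itself (i.e.\ on $\R^m$): for every scale $\lambda>0$ there is a semi-algebraic minimal arc $\gamma$ with image in the thalweg $\cT(P,\Gamma^m)$, real-analytic on an interval $\tI_\lambda\subset[-\lambda,\lambda]$ of length $\lambda/\tK$, with a uniform Bernstein bound \eqref{Bernie} on its Taylor coefficients $a_{j\beta}$. The Bernstein bound is exactly what is needed to control the denominator $\max_{\ell,\alpha}|a_{\ell\alpha}|$ appearing in the second alternative of \eqref{stima}: it gives $\max_{\ell=2,\dots,m,\ \alpha=2,\dots,s}|a_{\ell\alpha}|\le \tM\lambda^{-(\alpha-1)}\le \tM/\lambda$ for $\lambda$ small, hence an upper bound depending only on $r,s,m,\lambda$, as announced in Remark \ref{lower_bound_linear_terms}. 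Feeding $\gamma$ (more precisely its $s$-truncation $\jet$) into Lemma \ref{stima_inf} then yields, at the origin of $\Gamma^m$,
\[
\max_{\substack{\ell=1,\dots,m\\ \alpha=1,\dots,s}}\Bigl|\tfrac{d^\alpha}{dt^\alpha}\bigl(\partial_{\ty_\ell}\tP_\ta|_{\cL_\ta\circ\gamma(t)}\bigr)_{t=0}\Bigr| > \frac{C_1(\tD)}{1+C_2(s,m)\,\tM(r,s,m,\lambda)}=:c_0>0,
\]
with $c_0$ uniform over all $S$ in the $\eps_0$-ball, all admissible $I$, and all $\Gamma^m$.

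The final and most delicate step is to translate this lower bound on a high-order derivative of $\nabla P$ restricted to $\gamma$ into the ``max-min'' growth condition \eqref{cuore_steepness} along the arc, and then to upgrade it from the origin to an arbitrary base point $I$. The mechanism is: if the first $s$ derivatives of $t\mapsto\|\pi_{\Gamma^m}\nabla P(\gamma(t))\|^2$ at $t=0$ were all too small, a Taylor/interpolation argument on the analyticity interval $\tI_\lambda$ (using the Cauchy estimates and Bernstein bounds from Theorem \ref{arco_minimale} to control the $(s+1)$-st and higher derivatives uniformly) would force $\|\pi_{\Gamma^m}\nabla P\|$ to be uniformly small on a whole subinterval, contradicting the $c_0$-bound above; quantitatively one gets $\|\pi_{\Gamma^m}\nabla P(\gamma(t))\|\ge \mathrm{const}\cdot t^{\overline\alpha_m(s)}$ on $\tI_\lambda$ for a suitable exponent, and the doubling of the exponent from $s$ to $2s-1$ when $m\ge2$ comes from squaring the norm (one controls $\|\cdot\|^2$ to order $2s$, whence $\|\cdot\|$ to order $s$ in the derivative count but $t^{2s-1}$ in the pointwise bound) — this bookkeeping is exactly where \eqref{indice} is produced, and matches Nekhoroshev's improvement. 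Since the arc $\gamma$ lies in the thalweg, i.e.\ realizes the minimum of $\|\pi_{\Gamma^m}\nabla P\|$ on each sphere $\cS^m_\eta$, the pointwise estimate along $\gamma$ gives precisely $\min_{\|u\|=\eta}\|\pi_{\Gamma^m}\nabla P(u)\|\ge \tC_m\eta^{\overline\alpha_m(s)}$, which is stronger than \eqref{cuore_steepness} (the outer $\max_{0\le\eta\le\xi}$ only helps). Finally, to move the base point from $0$ to $I\in B^n(0,\widehat\delta)$ one simply repeats the whole argument with $P(x):=S(I+Ax)$ in place of $S(Ax)$: the $r$-jet of this translated polynomial is an analytic function of $I$, so for $\widehat\delta$ small enough it still lies outside $\cV(r,s,m,n)$ by the same distance-$\tau/2$ margin, and condition \eqref{fuori} is preserved uniformly — this is why $\widehat\delta$ depends on $\tau$. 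The main obstacle is the quantitative interpolation/Taylor step converting the derivative lower bound into the pointwise power-law lower bound with the correct, uniform exponent and coefficient; everything else is continuity of restriction maps, compactness of $O(n,m)$, and direct application of the already-established Lemmas \ref{stima_inf}, \ref{closedness steep}, \ref{positive_codimension} and Theorem \ref{arco_minimale}.
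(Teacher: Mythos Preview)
Your overall architecture is right---restrict to the subspace, keep the restriction away from $\Sigma(r,s,m)$ by continuity, extract the minimal arc from Theorem~\ref{arco_minimale}, feed its truncation into Lemma~\ref{stima_inf}, and then convert the resulting derivative lower bound into the steepness inequality. But two of the steps are mis-described in a way that would block the argument.

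\textbf{First gap: the minimal arc need not be analytic at the origin.} Theorem~\ref{arco_minimale} only guarantees that $\gamma$ is real-analytic on an interval $\tI_\lambda\subset[-\lambda,\lambda]$ of length $\lambda/\tK$, and this interval does \emph{not} in general contain $t=0$. So you cannot apply Lemma~\ref{stima_inf} ``at the origin of $\Gamma^m$'' as you propose: the arc is not in $\arcox$ there. The paper (Lemma~\ref{Palermo}) resolves this by a second translation, distinct from the base-point translation you mention at the end: for each $t^\star\in\tI'_\lambda$ one recenters both the curve and the polynomial at $x^\star=\gamma(t^\star)$, obtaining a new arc $\widetilde\gamma^\star\in\arcox$ and a translated polynomial $U^\star(\widetilde x)=P(\widetilde x+x^\star)$. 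One then has to check---using compactness of the Grassmannian/Stiefel manifold and the Bernstein bound on $|x^\star|\le\tM_0\lambda$---that $U^\star_r$ is still uniformly far from $\Sigma(r,s,m)$ for $\lambda\le\lambda_0$. Only after this extra continuity argument can Lemma~\ref{stima_inf} be invoked, and it must be invoked at every $t^\star\in\tI'_\lambda$, not just at $0$.

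\textbf{Second gap: the origin of the exponent $2s-1$, and the missing Pyartli step.} Your explanation that $2s-1$ ``comes from squaring the norm'' is incorrect, and your Bernstein estimate is misquoted. The Bernstein inequality \eqref{Bernie} gives $|a_{j\beta}|\le \tM/\lambda^{\beta-1}$, so the maximum over $\beta=2,\dots,s$ is of order $\lambda^{-(s-1)}$, not $\lambda^{-1}$. Plugging this into the denominator of \eqref{stima} makes the lower bound from Lemma~\ref{stima_inf} of order $\cC'\lambda^{s-1}$ when $m\ge2$. The conversion of this into a pointwise bound on $|g_{\bar\ell}(t)|=|\partial_{x_{\bar\ell}}P|_{\gamma(t)}|$ is done via Pyartli's Lemma (Lemma~\ref{Pyartli}), which from $|g^{(\bar\alpha)}|>\beta$ on an interval of length $\sim\lambda$ produces $|g|>\mathrm{const}\cdot\beta\cdot\lambda^{\bar\alpha}$ on a subset of positive measure; with $\bar\alpha\le s$ and $\beta\sim\lambda^{s-1}$ one gets $|g_{\bar\ell}|>\tC_m\lambda^{2s-1}$. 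That is where the exponent $2s-1$ actually comes from. Before Pyartli can be applied, the paper also needs a B\'ezout-type argument to find a subinterval $\tJ^\star_\lambda$ of uniform length on which a single pair $(\bar\ell,\bar\alpha)$ realizes the maximum in \eqref{basso}---otherwise the derivative lower bound jumps between different components and orders as $t^\star$ varies, and Pyartli does not apply. Your ``Taylor/interpolation argument'' is too vague to stand in for these two pieces.
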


\begin{rmk}\label{diventa_generico}
	By Lemma \ref{positive_codimension}, since $1\le s\le r-1$, if $r>m(n-m-1)+1$ and $s\ge m(n-m-1)+1$ one has $\text{codim } \cV(r,s,m,n)\ge 1$, so that hypothesis \eqref{fuori} is generic in $\Pollo$. 
\end{rmk}
In order to prove Theorem \ref{steep_polinomi}, we need a Lemma used by Pyartli in the study Diophantine approximation. This result is crucial in KAM Theory, as it was shown by Herman, Rüssmann,  Sevryuk, and others (see \cite{Broer_Huitema_Sevryuk_1998} and the references therein). We give its statement in a version provided by Rüssmann \cite{Russmann_2001}. 

\begin{lemma}\label{Pyartli}
	Let $f : [a, b] \longrightarrow \R$, with $a < b$, be a $q$-times continuously differentiable function
	satisfying
	$$|f^{(q)}(t)| > \beta$$ 
	for all $t \in [a, b]$,
	for some $q \in \N$ and $\beta>0$.
	
	Then one has the estimate
	$$
	\meas\{t \in [a, b]\ |\ |f(t)| \le \rho\} \le 4
	\left( q!\frac{\rho}{2\beta}\right)^{1/q}
	$$
	for all  $\rho > 0$.

\end{lemma}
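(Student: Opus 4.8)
The plan is to prove the estimate by induction on the order $q$, after a preliminary reduction. Since $f^{(q)}$ is continuous and nowhere vanishing on the connected interval $[a,b]$ (because $|f^{(q)}|>\beta$), it has constant sign; replacing $f$ by $-f$ if needed — this alters neither $|f|$ nor $|f^{(q)}|$ — I may assume $f^{(q)}(t)>\beta$ for all $t\in[a,b]$. Throughout write $E_\rho:=\{t\in[a,b]:|f(t)|\le\rho\}=f^{-1}([-\rho,\rho])$, which is closed, hence measurable. For the base case $q=1$ one has $f'>\beta>0$, so $f$ is strictly increasing, hence injective, and $E_\rho$ is a single (possibly empty) interval $[c,d]$; if $c<d$ then, from $f(c)\ge-\rho$, $f(d)\le\rho$ and the mean value theorem, $2\rho\ge f(d)-f(c)=f'(\xi)(d-c)>\beta(d-c)$, so $\meas(E_\rho)\le 2\rho/\beta=4\bigl(1!\,\rho/(2\beta)\bigr)^{1/1}$, which is exactly the claimed bound.

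For the inductive step, assume $q\ge 2$ and that the estimate holds at order $q-1$ on every interval and for every positive constant playing the role of $\beta$. Apply it to $g:=f'$, which is $(q-1)$-times continuously differentiable with $g^{(q-1)}=f^{(q)}>\beta$: for every threshold $\sigma>0$,
\[
\meas\{t\in[a,b]:|f'(t)|\le\sigma\}\ \le\ 4\Bigl(\tfrac{(q-1)!\,\sigma}{2\beta}\Bigr)^{1/(q-1)}.
\]
On the complementary open set $\{|f'|>\sigma\}$ I first note — descending from $f^{(q)}$ along $f^{(q-1)},f^{(q-2)},\dots$, and using that a strictly monotone function has at most one zero while a function with $p$ zeros is monotone on at most $p+1$ subintervals — that each of $f'$ and $f'\pm\sigma$ has at most $q-1$ zeros in $[a,b]$; hence $\{|f'|>\sigma\}$ is a disjoint union of at most $q+1$ open intervals. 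On each such interval $f'$ has a fixed sign, so $f$ is strictly monotone there, $E_\rho$ meets it in a single subinterval, and $|f'|>\sigma$ together with the mean value theorem bounds that subinterval's length by $2\rho/\sigma$. Thus $\meas\bigl(E_\rho\cap\{|f'|>\sigma\}\bigr)\le 2(q+1)\rho/\sigma$, and altogether
\[
\meas(E_\rho)\ \le\ 4\Bigl(\tfrac{(q-1)!\,\sigma}{2\beta}\Bigr)^{1/(q-1)}+\frac{2(q+1)\rho}{\sigma}.
\]
Optimizing the right-hand side over $\sigma>0$ yields a bound of the shape $C_q\,(\rho/\beta)^{1/q}$; the quantitative point is that, with the value of $\sigma$ balancing the two terms, $C_q$ can be taken to be $4\,(q!/2)^{1/q}$, which matches the base case $q=1$ and propagates through the recursion.

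The qualitative skeleton above is routine; the delicate part is the exact constant. Making it come out to precisely $4\bigl(q!\,\rho/(2\beta)\bigr)^{1/q}$ requires a careful choice of the threshold $\sigma$ at each level together with a sharp count of the connected components of $\{|f'|>\sigma\}$ that actually meet $E_\rho$ — a crude count, or the alternative route through the finite-difference identity $\Delta^q f=h^q f^{(q)}(\xi)$ applied on each monotone arc of $f$, costs an extra factor polynomial in $q$. The bookkeeping that produces exactly the stated constant is the computation of Pyartli, in the normalization of Rüssmann \cite{Russmann_2001}, whose statement is the one quoted here.
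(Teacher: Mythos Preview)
The paper does not prove this lemma at all: it is quoted verbatim from R\"ussmann \cite{Russmann_2001} and used as a black box, so there is nothing to compare your argument against. Your inductive skeleton---base case $q=1$ via the mean value theorem, then at level $q$ split $[a,b]$ into $\{|f'|\le\sigma\}$ (controlled by the inductive hypothesis applied to $f'$) and $\{|f'|>\sigma\}$ (a union of finitely many intervals on which $f$ is monotone, each contributing at most $2\rho/\sigma$)---is precisely the standard Pyartli--R\"ussmann argument, and it is correct.

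Two small remarks. First, your component count ``at most $q+1$'' for $\{|f'|>\sigma\}$ is safe but, plugged into the optimization you wrote down, gives a constant strictly larger than $4(q!/2)^{1/q}$ already at $q=2$; so your final sentence deferring the sharp constant to R\"ussmann is not a stylistic nicety but a genuine gap that you are (correctly and honestly) flagging rather than closing. Second, since the paper only cites the result, your sketch already goes further than the paper does; if you want a self-contained proof with the stated constant, you will indeed have to reproduce R\"ussmann's bookkeeping rather than the crude optimization above.
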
 

We also need the following auxiliary 
\begin{lemma}\label{aiutati}
	With the hypotheses of Theorem \ref{steep_polinomi}, there exist positive constants $\eps^\star=\eps^\star(r,s,m,\tau,n),\chi=\chi(r,s,m,\tau,n)$, and $\zeta=\zeta(r,s,m,\tau)$, such that, for any $\eps\in[0,\eps^\star]$, the truncation $S_r\in \Pollo$ of any polynomial $S\in \Polpo$ contained in a ball of radius $\eps$ around $Q$ in $\Polpo$ verifies
	\begin{equation}\label{prima}
	||S_r- \cV(r,s,m,n)||_\infty>\chi\ ,  
	\end{equation}
	and for any $m$-dimensional subspace $\Gamma^m$ orthogonal to $\nabla S(0)$, one has
	\begin{equation}\label{seconda}
	||S_r|_{\Gamma^m}-\Sigma (r,s,m)||_\infty>\zeta\ .
	\end{equation}
\end{lemma}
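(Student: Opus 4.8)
The goal of Lemma \ref{aiutati} is to transfer the quantitative separation hypothesis \eqref{fuori} — which concerns the truncation $Q_r$ of the fixed polynomial $Q$ — to a uniform separation, valid for all polynomials $S$ in an $\eps$-ball around $Q$, both in the ``ambient'' space $\Pollo$ (estimate \eqref{prima}) and after restriction to any admissible $m$-dimensional subspace (estimate \eqref{seconda}). The plan is to proceed in two short steps, the first being an entirely soft triangle-inequality argument and the second a compactness argument that controls the variation of the orthogonality condition under perturbations.

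\medskip
\emph{Step 1: from \eqref{fuori} to \eqref{prima}.} First I would observe that the map sending a polynomial in $\Polpo$ to its $r$-th order truncation in $\Pollo$ is linear and $1$-Lipschitz for the sup-norms on the coefficients (it simply forgets the coefficients of degree between $r+1$ and $r'$). Hence if $\|S-Q\|_\infty\le\eps$ in $\Polpo$, then $\|S_r-Q_r\|_\infty\le\eps$ in $\Pollo$. Combining this with \eqref{fuori} and the triangle inequality gives, for every $R\in\cV(r,s,m,n)$,
\begin{equation*}
\|S_r-R\|_\infty\ge\|Q_r-R\|_\infty-\|S_r-Q_r\|_\infty>\tau-\eps\,,
\end{equation*}
so that $\|S_r-\cV(r,s,m,n)\|_\infty>\tau-\eps$. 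It then suffices to take $\eps^\star\le\tau/2$ and set $\chi:=\tau/2$; for any $\eps\in[0,\eps^\star]$ this yields \eqref{prima}. Note also that, possibly shrinking $\eps^\star$, one can guarantee $\nabla S(0)\neq 0$ for all such $S$ — since $\nabla Q(0)\neq 0$ and the gradient at $0$ depends continuously (indeed linearly) on the coefficients — so that $S\in\Pollo$ and the notion of admissible subspace $\Gamma^m\perp\nabla S(0)$ makes sense.

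\medskip
\emph{Step 2: from \eqref{prima} to \eqref{seconda}.} This is the step where the main difficulty lies: the subspaces $\Gamma^m$ entering \eqref{seconda} are constrained to be orthogonal to $\nabla S(0)$, and this constraint moves with $S$, so one cannot directly invoke the definition of $\cV$ applied to $S_r$. The idea is to argue by contradiction and compactness. Suppose \eqref{seconda} fails for arbitrarily small $\zeta$: then there are sequences $S^{(k)}\to Q$ in $\Polpo$, admissible subspaces $\Gamma_k^m\perp\nabla S^{(k)}(0)$, and polynomials $\widehat P_k\in\Sigma(r,s,m)$ with $\|S^{(k)}_r|_{\Gamma_k^m}-\widehat P_k\|_\infty\to 0$. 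Writing $\Gamma_k^m$ via an orthonormal frame $A_k\in O(n,m)$ and using compactness of $O(n,m)$, pass to a subsequence with $A_k\to\overline A\in O(n,m)$; since $\nabla S^{(k)}(0)\to\nabla Q(0)$ and $A_k^{\dagger}\nabla S^{(k)}(0)=0$, the limit frame satisfies $\overline A^{\dagger}\nabla Q(0)=0$, i.e. $\overline A$ spans an admissible subspace $\overline\Gamma^m$ for $Q$. Because restriction to a subspace, read through its frame, is a continuous (bilinear) operation in $(Q,A)$, one gets $S^{(k)}_r|_{\Gamma_k^m}\to Q_r|_{\overline\Gamma^m}=Q_r(\overline A\,\cdot)$, and since $\Sigma(r,s,m)$ is closed (by definition it is $\overline\sigma(r,s,m)$), the $\widehat P_k$ converge to some $\widehat P\in\Sigma(r,s,m)$ with $Q_r(\overline A\,\cdot)=\widehat P$. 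This exhibits $(Q_r,\overline A,Q_r(\overline A\,\cdot))\in\cU(r,s,m,n)$, hence $Q_r\in\cV(r,s,m,n)$, contradicting \eqref{fuori}. Therefore some $\zeta=\zeta(r,s,m,\tau)>0$ works, uniformly over the $\eps^\star$-ball. One can make the dependence explicit rather than using contradiction — quantifying the continuity of $A\mapsto Q_r(A\,\cdot)$ on $O(n,m)$ and the Lipschitz dependence of the solution of $A^{\dagger}\nabla S(0)=0$ on $S$ — but the compactness version is cleaner and suffices. Finally one sets $\eps^\star$ to be the minimum of the thresholds from the two steps and records the claimed dependences of $\eps^\star,\chi,\zeta$ on $r,s,m,\tau,n$ (with $\zeta$ not depending on $n$, as the restriction picture only sees $r,s,m$ and $\tau$ through the separation constant).
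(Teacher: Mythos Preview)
Your argument is correct. Both steps work as written: Step~1 is an immediate triangle-inequality computation (which the paper does not bother to isolate, deriving \eqref{prima} instead at the very end from \eqref{seconda} via the definition of $\cV$), and your Step~2 sequential-compactness argument is valid --- the key point, that $\widehat P_k\to Q_r(\overline A\,\cdot)$ and hence $Q_r(\overline A\,\cdot)\in\Sigma(r,s,m)$ by closedness, goes through cleanly.

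The paper takes a different, more laborious route to \eqref{seconda}. Rather than arguing by contradiction, it works constructively: it covers the compact Grassmannian $\tG_Q(m,n)$ by finitely many neighborhoods $\cE_i$ on which continuous local sections $\xi_i:\cE_i\to O(n,m)$ exist, builds continuous restriction maps $F_i:(S,\Gamma^m)\mapsto S(\xi_i(\Gamma^m)\,\cdot)$, and then uses uniform continuity on each piece to obtain explicit $\zeta_i$ and $\eps^\star_i$, finally taking minima over the finite cover. Your contradiction argument replaces this machinery with a single appeal to the compactness of $O(n,m)$, and is considerably shorter. The trade-off is that the paper's constructive argument yields, in principle, a way to track the constants, and its framework (the sets $\Xi^m_i(Q,\eps)$ and sections $\xi_i$) is reused verbatim in the subsequent Lemma~\ref{Palermo}; your approach would force that later proof to reintroduce such objects from scratch. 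As for your final parenthetical on the $n$-independence of $\zeta$: a pure contradiction argument does not by itself yield any explicit parameter dependence, so that remark is not justified by what you wrote --- though the paper's own proof is equally loose on this point.
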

\begin{proof}{\it (Lemma \ref{aiutati})}
	%If $r>m(n-m-1)$ and $s\ge m(n-m-1)$, by Lemma \ref{positive_codimension} and Proposition \ref{dim_chiusura} one has $\text{codim } \cV=\text{codim } \overline\cV\ge 1$ and the thesis follows immediately. 
	
	We split the proof into three steps. In the first one, we introduce appropriate sets and notations that are helpful in the proof. In second step, suitable continuous functions are defined by exploiting the existence of local continuous sections for the Grassmannian. In the third and last step, the statement is proved by exploiting the first two steps and the compactness of the Grassmannian. 
	
	We also observe that, due to Remark \ref{diventa_generico}, estimate \eqref{prima} is trivial for $r>m(n-m-1)+1$ and $s\ge m(n-m-1)+1$. 
	
	{\it Step 1.}
	For any given polynomial $V\in\Pollo$, we denote by $\tG_{ V}(m,n)\subset \tG(m,n)$ the compact subset of $m$-dimensional subspaces orthogonal to $\nabla V(0)\neq 0$. We also define the set
	\begin{equation}
	\Lambda^m:=\{(V,\Gamma^m)\,|\, V\in\Pollo,  \Gamma^m\in \tG_V(m,n)\}\ .
	\end{equation}
	%together with its projection
	%\begin{equation}
	%\pi: \Lambda^m\rightarrow \Pollo\ ,\ \ \{(V,\Gamma^m)\,|\, V\in\Pollo,   \Gamma^m\in \tG_V(m,n)\}\longmapsto V\ .
	%\end{equation}
	Now, setting $N:=\dim\Polpo$, for sufficiently small $\eps$ one has $\nabla S(0)\neq 0$ for any $S\in B^N(Q,\eps)$. Moreover, the map $\tf:\Polpo\longrightarrow \R^n$ associating $S\longmapsto \nabla S(0)$ is obviously continuous and surjective, and the same holds true for the function $\mathtt h: \R^n \longrightarrow\tG(n-1,n)$ which to a vector $\omega$ associates $\omega^\perp$. Hence, $\mathtt h\circ \tf$ is also continuous and surjective. %its $n-1$-dimensional orthogonal hyperplane $\Gamma_S^{n-1}\in \tG_S(n-1,n)$ is also continuous. 
	Therefore there exists an open set of $n-1$ dimensional hyperplanes - indicated by $\cW^{n-1}(Q,\eps)\subset \tG(n-1,n)$ - whose inverse image $\tf^{-1}(\mathtt h^{-1}(\cW^{n-1}(Q,\eps)))$ contains $B^N(Q,\eps)$. Hence, for $m\in\{1,\dots,n-1\}$, we can define the open set 
	$$
	\cW^m(Q,\eps):=\{\Gamma^m\in \tG(m,n)\,|\, \Gamma^m\subseteq  \Gamma^{n-1} \text{ for some }\Gamma^{n-1}\in \cW^{n-1}(Q,\eps)\}\ .
	$$ 	
	The above construction implies that, for any $m\in\{1,...,n-1\}$ and for sufficiently small $\eps$, the choice of an open ball $B^N(Q,\eps)$ determines a set 
	\begin{equation}\label{omegaemme}
	\Xi^m(Q,\eps):=\{(S,\Gamma^m)\,|\, S\in B^N(Q,\eps),\Gamma^m\in \cW^m(Q,\eps),\Gamma^m\in \tG_{S}(m,n)\}\subset \Lambda^m\ .
	\end{equation}
	
	\begin{rmk}
		To carry out the construction at this step, one only needs to perturb the linear terms of $Q$. The bound on $\eps$ that must be considered at this step, therefore, does not depend on the degree of the polynomial $Q$.
	\end{rmk}

	{\it Step 2.}	Now, we take into account the fact that it is always possible to define a local continuous section for the Grassmannian $\tG(m,n)$. Namely, for any element $\Gamma\in\tG(m,n)$ there exists an open neighborhood $\cE_\Gamma\subset \tG(m,n)$ of $\Gamma$ and a continuous map $\xi:\cE_\Gamma\longrightarrow O(n,m) $ such that, if $\pi: O(n,m)\longrightarrow \tG(m,n)$ is the canonical projection, then $\pi\circ\xi$ is the identity. Hence, for any $m$-dimensional subspace $\Gamma^m\in \tG_{Q}(m,n)$, we consider its associated open neighborhood $\cE_{\Gamma^m}$, and a compact neighborhood $\overline{ \mathsf{V}}_{\Gamma^m}\subset \cE_{\Gamma^m}$ centered at $\Gamma^m$. Since $\tG_{Q}(m,n)$ is compact, it can be covered by a finite number $L>0$ of compact neighborhoods $\overline{ \mathsf{V}}_{i}$ and open neighborhoods $\cE_i$, with $i=1,...,L$, of this kind. Hence, if $\eps$ is sufficiently small, then one has 
	\begin{equation}\label{unioni_varie}
	\cW^m({Q},\eps)\subset \bigcup_{i=1}^L \overline{ \mathsf{V}}_{i} \subset  \bigcup_{i=1}^L \cE_i\ .
	\end{equation} Moreover, if we indicate by $\xi_i$, $i=1,...,L$ the continuous section associated to each neighborhood $\cE_i$, it makes sense to define the sets
	\begin{equation}\label{omegaemmei}
	\Xi^m_i({Q},\eps):=\{(S,\Gamma^m)\,|\,(S,\Gamma^m)\in\Xi^m({Q},\eps), \Gamma^m\in\cE_i\}\ ,\ \    \Xi^m({Q},\eps)=\bigcup_{i=1}^L \Xi^m_i({Q},\eps)
	\end{equation}
	and the continuous functions
	\begin{align}\label{fi}
	\begin{split}
	F_i: &\ \Xi^m_i(Q,\eps)\longrightarrow \cP(r',m)\\
	&(S(I),\Gamma^m)\longmapsto  T(x):=S(Ax)\ ,\ \  A:=\xi_i(\Gamma^m)\in O(n,m)\ .
	\end{split}
	\end{align}
	
	{\it Step 3.} Fix $i=1,...,L$. By hypothesis $||Q_r-\cV||_\infty>\tau$ and $\nabla Q(0)=\nabla Q_r(0)\neq 0$, so that by the definition of $\cU$ and $\cV$ in \eqref{U}-\eqref{V}, by Remark \ref{pleonastico}, and by the compactness of $\tG_Q(m,n)=\tG_{Q_r}(m,n)$, there exists $\zeta_i=\zeta_i(r,s,m,\tau)>0$ such that - on any subspace $\Gamma^m\in \tG_{Q}(m,n)$, $\Gamma^m\in \cE_i$ - one has \begin{equation}\label{cipresso}
	||P_r-\Sigma(r,s,m)||_\infty>2\,\zeta_i\ ,
	\end{equation} 
	where $P_r(x):=Q_r(Ax)$ - with $A=\xi_i(\Gamma^m)$ - is the restriction to the subspace $\Gamma^m$ of the truncation $Q_r$. 
	
	Now, fix $\Gamma^m\in( \tG_{Q}(m,n)\cap  \overline{ \mathsf{V}}_{i})\subset \cE_i$.  
	By the continuity of $F_i$, there exists $\eps_{i,\Gamma^m}^\star=\eps_{i,\Gamma^m}^\star(r,s,m,\tau,n)>0$ such that, for any $\eps\in]0, \eps^\star_{i,\Gamma^m}]$, the open ball $\upsilon^m_{i,\Gamma^m}(\eps)\subset\Xi^m_i(Q,\eps)$ centered at $(Q,\Gamma^m)$ verifies the following property: for any $(S,\widehat\Gamma^m)\in \upsilon_{i,\Gamma^m}^m(\eps)$, the restricted truncated polynomial $\widehat T_r(x):=S_r(\widehat Ax)$, with $\widehat A=\xi_i(\widehat \Gamma^m)$, is contained in an open ball of radius $\zeta_i$ around $P_r(x):=Q_r(Ax)$, with $A=\xi_i(\Gamma^m)$. Hence, on the one hand by \eqref{cipresso} one infers
	\begin{equation}\label{ginestra}
	||\widehat T_r-P_r||_\infty<\zeta_i\quad \Longrightarrow \quad  ||\widehat T_r-\Sigma(r,s,m)||_\infty> \zeta_i\ .
	\end{equation}
	On the other hand, by construction one has 
	\begin{equation}\label{unione_fa_la_forza}
	\left(Q,\tG_{Q}(m,n)\cap  \overline{ \mathsf{V}}_{i}\right)\,\subset\, 	\bigcup_{\Gamma^m\in\,\tG_{Q}(m,n)\cap \overline{ \mathsf{V}}_i}\upsilon_{i,\Gamma^m}^m(\eps^\star_{i,\Gamma^m})
	\end{equation}
	and - due to the compactness of the fiber $(Q,\,\tG_{Q}(m,n)\cap  \overline{ \mathsf{V}}_{i})$ - it is possible to extract a finite number $J_i$ of subspaces $\Gamma^m\in \tG_{Q}(m,n)\cap\overline{ \mathsf{V}}_i$ from \eqref{unione_fa_la_forza} and write (with slight abuse of notation)
	\begin{equation}\label{trentatre_trentini}
	\left(Q,\tG_{Q}(m,n)\cap  \overline{ \mathsf{V}}_{i}\right)\,\subset\, 	\bigcup_{j=1}^{J_i}\upsilon_{i,j}^m(\eps)\,\subset\, \Xi^m_i(Q,\eps)\quad , \quad  \eps \in \left]0,\eps^\star_i\right]\ ,
	\end{equation}
	where we have set
	\begin{equation}
	\eps^\star_i=\eps^\star_i(r,s,m,\tau,n):=\min_{j\in \{1,\dots,J_i\}}\{\eps^\star_{i,j}\}\ .
	\end{equation}
	Inclusion \eqref{trentatre_trentini}, together with \eqref{omegaemmei}, yields that the finite union  $\upsilon^m(\eps):=\cup_{i=1}^L\cup_{j=1}^{J_i}\upsilon^m_{i,j}(\eps)$ is an open neighborhood of $\Xi^m(Q,\eps)$ containing the fiber $(Q, \tG_{Q}(m,n))$. Therefore, by setting 
	$$
	\zeta=\zeta(r,s,m,\tau):=\min_{i\in\{1,\dots,L\}}\{\zeta_i\}>0\ ,\ \   \eps^\star=\eps^\star(r,s,m,\tau,n):=\min_{i=1,\dots,L}\{\eps^\star_i\}>0\ ,
	$$
	and by taking \eqref{ginestra} into account, one has that for $0< \eps\le  \eps^\star$ and for any $(S,\widehat \Gamma^m)\in \upsilon^m(\eps)\subset \Xi^m(Q,\eps)$, the restricted truncated polynomial $ \widehat T_r(x):=S_r( Ax)$ - with $ A=\xi_j( \widehat \Gamma^m)$ for some $j=1,...,L$ - verifies 
	\begin{equation}\label{ginestra2}
	|| \widehat T_r-\Sigma(r,s,m)||_\infty> \zeta_j\geq \zeta\ .
	\end{equation} 
	Therefore, we have proved that, for any $0\le \eps\le \eps^\star$ there exists $\zeta>0$ such that
	\begin{equation}
	||S_r|_{\Gamma^m}- \Sigma(r,s,m)||_\infty>\zeta\ ,
	\end{equation}
	for any $S\in B^N(Q,\eps)$ and for any $\Gamma^m$ orthogonal to $\nabla S(0)=\nabla S_r(0)\neq 0$. Hence, the Definition of set $\cV(r,s,m,n)$ in \eqref{V} ensures that  for any $0\le \eps\le \eps^\star$ there exists $\chi=\chi(r,s,m,\tau,n)>0$ such that for any $S\in B^N(Q,\eps)$ one has 
	\begin{equation}
	||S_r- \cV(r,s,m,n)||_\infty>\chi\ .
	\end{equation}
	
	This concludes the proof.
	
\end{proof}
We need another intermediate result in order to demonstrate Theorem \ref{steep_polinomi}. Before giving its statement, for any polynomial $S\in \cP^\star(r',n)$, we firstly consider its associated minimal arc $\gamma$ constructed in Theorem \ref{arco_minimale}. Also, for any $\lambda>0$ we indicate by $\tI'_\lambda\subset [-\lambda,\lambda]$ the  interval obtained by cutting the interval $\tI_\lambda$ at point 3) of Theorem \ref{arco_minimale} into three equal pieces and by taking the central one. In particular, we have 
$$
|\tI'_\lambda|=\frac{|\tI_\lambda|}{3}=\frac{\lambda}{3\tK}\ ,
$$
where $\tK=\tK(r',m,n)$ is a suitable constant. 

We also assume the setting of Lemmata \ref{steep_polinomi} and \ref{aiutati}, and we consider a ball $B^N(Q,\eps)$ of radius $\eps<\eps^\star/2$ around $Q\in \Polpo$. Within this framework, one has

\begin{lemma}\label{Palermo}
	There exist two constants $\cC'=\cC'(r',r,s,m,\tau)$ and $\lambda_0=\lambda_0(r,s,m,\tau)$  such that - for any number $0<\lambda\le \lambda_0$, for any polynomial $S\in \Polpo$ contained in $B^N(Q,\eps)$, and for any $m$-dimensional subspace $\Gamma^m$ orthogonal to $\grad S(0)$, the restriction $T:=S|_{\Gamma^m}$ satisfies the following estimates 
	\begin{align}\label{basso}
	\begin{split}
	&\max_{\substack{\alpha=1,...,s}}\left|\frac{d^\alpha}{dt^\alpha}\left(\left.\frac{\partial\, T(x)}{\partial\, x_1}\right|_{ \gamma(t)}\right)_{t=t^\star}\right|> \cC' \quad, \qquad    \text{ for } m=1,\\
	&\max_{\substack{\ell=1,...,m\\\alpha=1,...,s}}\left|\frac{d^\alpha}{dt^\alpha}\left(\left.\frac{\partial\, T(x)}{\partial\, x_\ell}\right|_{ \gamma(t)}\right)_{t=t^\star}\right|> \cC'\lambda^{s-1}\quad, \qquad  \text{ for }  m\ge 2 
	\end{split}
	\end{align}
	at any point $t^\star\in \tI_\lambda'$.
	
\end{lemma}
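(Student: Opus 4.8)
The plan is to transport, at each fixed $t^\star\in\tI'_\lambda$, the quantity appearing in \eqref{basso} to the origin of a translated arc and there apply Lemma~\ref{stima_inf} to a recentred polynomial; the distance estimate of Lemma~\ref{aiutati} will keep the recentred polynomial uniformly far from $\Sigma(r,s,m)$, while the uniform Bernstein inequality \eqref{Bernie} of Theorem~\ref{arco_minimale} will control the only quantity that degenerates as $\lambda\to0$, which is precisely what produces the factor $\lambda^{s-1}$.

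Concretely, I would first fix $S\in B^N(Q,\eps)$ and an $m$-dimensional subspace $\Gamma^m\perp\nabla S(0)$, and set $T:=S|_{\Gamma^m}\in\cP(r',m)$, so that $\nabla T(0)=\pi_{\Gamma^m}\nabla S(0)=0$. Since $\eps<\eps^\star/2\le\eps^\star$, Lemma~\ref{aiutati} gives $||T_r-\Sigma(r,s,m)||_\infty>\zeta$ for the truncation $T_r:=S_r|_{\Gamma^m}$. Fix now $t^\star\in\tI'_\lambda$, let $u^\star:=\gamma(t^\star)$ — with $\gamma$ the minimal arc of $S$ on $\Gamma^m$ of Theorem~\ref{arco_minimale}, so $||u^\star||_2\le\lambda$ — and introduce
$$\widetilde T(y):=T(y+u^\star)\in\cP(r',m)\ ,\qquad \widetilde\gamma(\tau):=\gamma(t^\star+\tau)-u^\star\ .$$
Since $\gamma$ is real-analytic on $\tI_\lambda\ni t^\star$ and over $\tI_\lambda$ is an $i$-arc (point~4 of Theorem~\ref{arco_minimale}), $\widetilde\gamma$ is a real-analytic arc with $\widetilde\gamma(0)=0$ parametrized by its $i$-th coordinate, i.e. $\widetilde\gamma\in\arcoi$. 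The chain rule yields, for all $\ell$ and all $\alpha$,
$$\frac{d^\alpha}{dt^\alpha}\Big(\res{\frac{\partial T}{\partial x_\ell}}_{\gamma(t)}\Big)_{t=t^\star}=\frac{d^\alpha}{d\tau^\alpha}\Big(\res{\frac{\partial \widetilde T}{\partial y_\ell}}_{\widetilde\gamma(\tau)}\Big)_{\tau=0}\ ,$$
and, as only monomials of degree $\le s+1\le r$ contribute to the right-hand side for $\alpha\le s$ (the substitution of $\widetilde\gamma$ does not lower degrees), $\widetilde T$ may be replaced here by its truncation $\widetilde T_r\in\Polm$.

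The key point is that $\widetilde T_r$ stays uniformly far from $\Sigma(r,s,m)$. Since $T$ is a polynomial of degree $\le r'$ with coefficients bounded over $B^N(Q,\eps)$, its partial derivatives of order $\le r$ are Lipschitz near the origin with a constant $L$ not depending on $S,\Gamma^m,t^\star$, so $||\widetilde T_r-T_r||_\infty\le L\lambda$; choosing $\lambda_0$ with $L\lambda_0<\zeta/2$ gives $||\widetilde T_r-\Sigma^i(r,s,m)||_\infty\ge||\widetilde T_r-\Sigma(r,s,m)||_\infty>\zeta/2$ for all $\lambda\le\lambda_0$. I would then apply Lemma~\ref{stima_inf} (in the version valid for $\arcoi$, obtained by relabelling coordinates) with $\widetilde T_r$ in place of $P$, the arc $\widetilde\gamma$, and $\tD$ the ball of radius $\zeta/4$ around $\widetilde T_r$ — so that the constant $C_1(\tD)$ may be taken equal to $\zeta/4$, uniformly. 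Using Lemma~\ref{haha}, i.e. the identity \eqref{mansomma} which writes the adapted-coordinate derivatives as a fixed linear combination of the original ones through the constant matrix $\cL_\ta^{-1}$, and observing that the $y$-coordinates are a pure translation of the $x$-coordinates, one obtains for $m\ge2$
$$\max_{\substack{\ell=1,\dots,m\\\alpha=1,\dots,s}}\Big|\frac{d^\alpha}{dt^\alpha}\Big(\res{\frac{\partial T}{\partial x_\ell}}_{\gamma(t)}\Big)_{t=t^\star}\Big|\ \ge\ \frac{1}{||(\cL_\ta^{-1})^{\top}||_\infty}\cdot\frac{\zeta/4}{1+C_2(s,m)\max_{\ell,\alpha}|a_{\ell\alpha}|}\ ,$$
while for $m=1$ (where $\cL_\ta=\id$ and $\gamma(t)=t$) one argues directly on coefficients as in Lemma~\ref{m=1}: the bound $||\widetilde T_r-\sigma(r,s,1)||_\infty>\zeta/2$ together with $\nabla T(0)=0$ and $|t^\star|\le\lambda$ (which forces the linear coefficient of $\widetilde T_r$ to be $O(\lambda)$, hence negligible) gives $\max_{\alpha=1,\dots,s}|T^{(\alpha+1)}(t^\star)|>\zeta/4$.

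It remains to bound the coefficients $a_{\ell\alpha}$ of $\widetilde\gamma$ at $\tau=0$ — which are exactly the Taylor coefficients of $\gamma$ at $t^\star\in\tI_\lambda$ — and the norm $||(\cL_\ta^{-1})^{\top}||_\infty$, which by \eqref{conviene} equals $1+\sum_\ell|a_{\ell1}|$. The Bernstein inequality \eqref{Bernie} of Theorem~\ref{arco_minimale} gives $|a_{\ell\alpha}|\le\tM(r',n,m,\alpha)/\lambda^{\alpha-1}$; thus the linear coefficients ($\alpha=1$) are bounded independently of $\lambda$, so $||(\cL_\ta^{-1})^{\top}||_\infty$ is uniformly bounded, and for $\lambda\le1$ one has $\max_{\ell,\alpha\ge2}|a_{\ell\alpha}|\le C(r',n,m,s)\,\lambda^{-(s-1)}$. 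Inserting these two facts into the displayed inequality yields $\max_{\ell,\alpha}|\cdots|>\cC'\lambda^{s-1}$ for $m\ge2$ and $>\cC'$ for $m=1$, with $\cC'$ and $\lambda_0$ depending only on $r',r,s,m,\tau$ and on the Bernstein constants of Theorem~\ref{arco_minimale}. I expect the main obstacle to be precisely this last bookkeeping: one must check that recentring at $u^\star$ perturbs the truncated polynomial by only $O(\lambda)$ — so that its distance to $\Sigma(r,s,m)$ survives uniformly in $S,\Gamma^m,t^\star$ — and that every constant produced by Lemmata~\ref{stima_inf}--\ref{haha} is independent of $S$ and of $\lambda$, the sole surviving $\lambda$-dependence being the factor $\lambda^{-(s-1)}$ coming from the Bernstein bound on the higher Taylor coefficients of the minimal arc.
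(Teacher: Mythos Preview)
Your proof is correct and follows essentially the same strategy as the paper's: translate the arc and polynomial to $t^\star$, show the recentred $r$-truncation stays uniformly far from $\Sigma(r,s,m)$, apply Lemma~\ref{stima_inf} there, control the higher Taylor coefficients of the arc by the Bernstein inequality \eqref{Bernie} to produce the $\lambda^{s-1}$ factor, and convert back to original coordinates via \eqref{mansomma}. The only notable difference is in the second step: where you bound $\|\widetilde T_r-T_r\|_\infty\le L\lambda$ by a direct Lipschitz estimate on the partial derivatives of $T$, the paper instead invokes uniform continuity of the polynomial map $(S,\Gamma^m,u^\star)\mapsto S|_{\Gamma^m}(\cdot+u^\star)$ over the compact set $\overline{\Xi}^m_i(Q,\eps^\star/2)\times\overline B^n_\infty(0,\tM_0\lambda)$; since this map is polynomial the two are equivalent, and your version is arguably cleaner.
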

\begin{proof}{\it (Lemma \ref{Palermo})}
	We proceed by steps.
	
	{\it Step 1.}   We consider the value $\eps^\star$ of Lemma \ref{aiutati} and we fix a polynomial $S$ in the ball  $B^N(Q,\eps)$, where $\eps\in[0,\eps^\star/2]$.  By Lemma \ref{aiutati}, there exists a parameter %$\chi=\chi(r,s,m,\tau,n)$,
	$\zeta=\zeta(r,s,m,\tau)>0$ such that %$||S_r- \cV(r,s,m,n)||_\infty>\chi$ and -
	on any $m$-dimensional subspace $  \Gamma^m$ orthogonal to $\grad S(0)\neq 0$ - the truncation $S_r$ varifies 
	\begin{equation}\label{vicino2}
	||S_r|_{\Gamma^m}-\Sigma(r,s,m)||_\infty>\zeta\ .
	\end{equation} 
	%	\red{Eliminare $\zeta$ e lasciare solo $\chi$.}
	
	Now, for a given subspace $\Gamma^m\in \tG_{S}(m,n)$, one can choose a matrix $A\in O(n,m)$ whose columns span $ \Gamma^m$ and set $T(x):=S(Ax)$. Then, for any $\lambda>0$, by Theorem \ref{arco_minimale} there exists a minimal real-analytic arc
	\begin{equation}\label{archetto}
	\gamma(t):=
	\begin{cases}
	x_1(t)=t\\
	x_j(t)=f_j(t),\ j\in\{2,...,m\}
	\end{cases}
	t\in \tI_\lambda\subset[-\lambda,\lambda]\ ,\ \ |\tI_\lambda|=\lambda/\tK(r',n,m)\ ,
	\end{equation}
	whose image is contained in the thalweg $\cT(S,\Gamma^m)$. We observe that, up to a change in the order of the vectors spanning $\Gamma^m$, in Theorem \ref{arco_minimale}, we can always suppose that the coordinate parametrizing $\gamma$ is the first one. We divide the interval $\tI_\lambda$ into three equal parts of length $\lambda/(3\tK)$ and we denote by $\tI'_\lambda:=[\lambda_{min},\lambda_{max}]$ the central one. Then, for any given $t^\star \in \tI'_\lambda$ associated to the point $\gamma(t^\star)=x^\star$, we consider the affine reparametrization 
	$$
	\gamma^\star(u):=
	\begin{cases}
	x_1(u)=u+t^\star\\
	x_j(u)=f_j(u+t^\star)\ ,\ \  j\in\{2,...,m\}
	\end{cases}
	u\in[\lambda_{min}-t^\star,\lambda_{max}-t^\star]\ .
	$$
	It is clear that $\gamma$ and $\gamma^\star$ share the same image, have the same speed everywhere, and that $\gamma^\star(0)=\gamma(t^\star)=x^\star$, so that for all $\alpha\in\{0,...,s\}$ and $\ell\in\{1,...,m\}$, one has
	\begin{equation}\label{quasi_s_vanishing}
	\frac{d^\alpha}{dt^\alpha}\left(\left.\frac{\partial T(x)}{\partial x_\ell}\right|_{\gamma(t)}\right)_{t=t^\star}=\frac{d^\alpha}{du^\alpha}\left(\left.\frac{\partial T(x)}{\partial x_\ell}\right|_{\gamma^\star(u)}\right)_{u=0}\ .
	\end{equation}
	Now, indicating by $L_\star:\R^m\longrightarrow\R^m, x\longmapsto x-x^\star=:\widetilde{x}$ the translation w.r.t. $x^\star$, the curve $\gamma^\star$ is mapped into $\widetilde \gamma^\star:=L_\star\circ \gamma^\star$, which reads
	\begin{equation}\label{curvetta}
	\widetilde{\gamma}^\star (u):=
	\begin{cases}
	\widetilde{x}_1(u)=x_1(u)-x_1^\star=u+t^\star-t^\star=u\\
	\widetilde{x}_j(u):=x_j(u)-x_j(t^\star)=f_j(u+t^\star)-f_j(t^\star)=:\widetilde{f}_j(u)
	\end{cases}\ ,
	\end{equation}
	with $u\in [\lambda_{min}-t^\star,\lambda_{max}-t^\star]$. The polynomial $T$ written in the new coordinates reads 
	\begin{equation}\label{TU}
	T(x)=T\circ L_\star^{-1}(\widetilde x)=T(\widetilde{x}+x^\star)=:U^\star(\widetilde{x})\ .
	\end{equation}
	Since $x^\star$ is fixed, one has 
	\begin{equation}\label{lallero}
	\frac{\partial T(x)}{\partial x_\ell}=
	\frac{\partial U^\star(\widetilde x)}{\partial \widetilde{x}_\ell}\ ,\ \ \forall \ell\in\{1,...,m\}\ .
	\end{equation}
	Moreover, if one takes into account the fact that $\widetilde \gamma^\star(0)=0$, and that the origin for the coordinates $\widetilde x$ corresponds to the point $x=x^\star$ in the old coordinates, equality \eqref{quasi_s_vanishing}, together with \eqref{lallero}, yields, for all $\alpha\in\{0,...,s\}$ and for all $\ell\in\{1,...,m\}$,
	\begin{equation}\label{conlemani}
	\frac{d^\alpha}{dt^\alpha}\left(\left.\frac{\partial T(x)}{\partial x_\ell}\right|_{\gamma(t)}\right)_{t=t^\star}=\frac{d^\alpha}{du^\alpha}\left(\left.\frac{\partial T(x)}{\partial x_\ell}\right|_{\gamma^\star(u)}\right)_{u=0}=\frac{d^\alpha}{du^\alpha}\left(\left.\frac{\partial U^\star(\widetilde{x})}{\partial \widetilde{x}_\ell}\right|_{\widetilde\gamma^\star(u)}\right)_{u=0}\ .
	\end{equation}
	{\it Step 2. } Bernstein's estimate \eqref{Bernie} applied to the  components of $\gamma$ in \eqref{archetto} reads
	\begin{equation}
	\max_{t\in \tI_\lambda'}|f_j(t)|\le \tM_0\lambda\quad , \qquad j=2,\dots,m
	\end{equation}
	for some uniform constant $\tM_0=\tM(r',n,m,0)$, so that - for any $t^\star\in\tI_\lambda'$ - one has 
	\begin{equation}\label{Termini}
	||x^\star||_\infty:=|| \gamma(t^\star)||_\infty \le\tM_0\lambda\ .
	\end{equation}
	For any given $\eps\in[0,\eps^\star]$, with the help of the arguments in the proof of Lemma \ref{aiutati} - in particular taking \eqref{omegaemme}, \eqref{omegaemmei}, and \eqref{fi} into account - the set $\Xi^m(Q,\eps)$ admits the covering $\Xi^m(Q,\eps)=\cup_{i=1}^L\, \Xi^m_i(Q,\eps)$, and for any index $i\in\{1,...,L\}$ there exists a continuous function
	\begin{equation}\label{si}
	F_i: \ \Xi^m_i(Q,\eps)\longrightarrow \cP(r',m)
	\end{equation}
	that maps
	\begin{equation}
	(R, \Gamma^m)\longmapsto  R( A x)\ ,\ \  A:=\xi_i( \Gamma^m)\in O(n,m)\ ,
	\end{equation}
	where $\xi_i$ is a local continuous section for the Grassmannian $\tG(m,n)$. 
	
	Hence, taking Lemma \ref{aiutati} and formula \eqref{TU} into account, the function 
	\begin{align}
	\begin{split}
	&\tF_i^\star: \Xi^m_i(Q,\eps)\times \R^m\longrightarrow \cP(r',m) \\
	&(R, \Gamma^m,x^\star)\longmapsto R(A(\widetilde x+x^\star))\ ,\ \  A:=\xi_i(\Gamma^m)\in O(n,m)
	\end{split}
	\end{align}
	is continuous. Moreover, \eqref{Termini} and the Theorem of Heine-Cantor ensure that $\tF^\star_i$ is uniformly continuous over the restricted compact domain $\overline \Xi^m_i(Q,\eps^\star/2)\times \overline B^n_\infty(0,\tM_0\lambda)$.
	Choosing the value $\zeta(r,s,m,\tau)>0$ in \eqref{vicino2}, there exists a uniform positive real $0<\lambda_i=\lambda_i(\zeta)\le 1$ such that, for any $0<\lambda\le \lambda_i$ and for any $(S,\Gamma^m)\in \Xi^m_i(Q,\eps^\star/2)$, the image of the set $(S,\Gamma^m,\overline B^n_\infty(0,\tM_0\lambda))$ through $\tF_i^\star$ verifies
	\begin{equation}\label{ammazza}
	\tF_i^\star\left(S,\Gamma^m,\overline B^n_\infty(0,\tM_0\lambda)\right) \subset B^M_\infty\left(\tF_i^\star(S,\Gamma^m,0),\frac{\zeta}{2}\right)\ ,\ \   M:=\dim \cP(r',m)\ .
	\end{equation}
	The above reasonings imply for any $t^\star \in\tI_\lambda'\subset [-\lambda,\lambda]$, with $0<\lambda\le \lambda_i$, one has
	\begin{equation}\label{ambe}
	||(T\circ L_\star^{-1})_r-T_r||_\infty=:||U_r^\star-T_r||_\infty\le ||U^\star-T||_\infty< \frac{\zeta}{2} \ ,
	\end{equation}
	where $U_r^\star, T_r$ are the truncations at order $r$ of polynomial $U^\star$ introduced in \eqref{TU} and of polynomial $T:=S|_{\Gamma^m}$, respectively. 
	
	Repeating the same argument for any index $j\in\{1,...,L\}$, relations \eqref{vicino2}, \eqref{ammazza}, and \eqref{ambe} imply that, if 
	\begin{equation}\label{lambdazero}
	0<\lambda\le \lambda_0:=\min_{i\in\{1,...,L\}}\{\lambda_i=\lambda_i(\xi(r,s,m,\tau))\}\le 1\ ,
	\end{equation} then, for any $(S_r,\Gamma^m)\in \Xi^m(Q_r,\eps/2)$ and for any $t^\star\in \tI_\lambda'$ one has 
	\begin{equation}\label{aho}
	||U_r^\star-\Sigma(r,m,n)||_\infty=||T_r\circ L_\star^{-1}-\Sigma(r,m,n)||_\infty> \frac{\zeta}{2} \ .
	\end{equation}
	
	{\it Step 3.} 
	Without any loss of generality, we suppose that the minimal curve $\gamma\subset \cT(S,\Gamma^m)$ is parametrized by the first coordinate. Hence, for $n\ge 3$ and $2\le m\le n-1$ we indicate by $\ta=(a_{12},...,a_{1m})$ the linear coefficients of the Taylor expansion of the translated curve $\widetilde \gamma^\star$ in \eqref{curvetta}. One can make use of the set of adapted coordinates $\widetilde\ty:=\cL_\ta(\widetilde x)$ for the curve $\widetilde{\gamma}^\star$, as defined in paragraph \ref{appropriata}. We remind that $\cL_\ta:= \id$ in case $m=1$ (see also \eqref{extend}). 
	
	\medskip 
	
	In particular, for all $m\in\{1,\dots,n-1\}$ we write $\tU^\star_{r,\ta}:=U^\star_r\circ \cL^{-1}_{\ta}$ and $ \widetilde \gamma^\star_\ta:=\cL_\ta\circ\widetilde\gamma^\star$. 
	
	\medskip 
	
	By construction, the curve $\widetilde \gamma^\star_\ta\in \Theta_m^1$ (see Definition \eqref{arco} and Remark \ref{1-coord}) is analytic in $[\lambda_{min}-t^\star,\lambda_{max}-t^\star]$ with complex analyticity width $\lambda/\tK$, and $|\tI_\lambda'|=|\lambda_{max}-\lambda_{min}|= \lambda/(3\tK)$, as  $\tI_\lambda'$ was obtained by cutting $\tI_\lambda$ into three equal pieces and by taking the central one. By \eqref{aho} and Lemma \ref{stima_inf}, there exist constants\footnote{In Lemma \ref{stima_inf}, $C_1$ depends on the open set $\tD\in \Polm\backslash \Sigma(r,s,m)$. In our case, by \eqref{aho}, $\tD$ is the open ball of radius $\frac12\zeta$ around $T$, which is at distance at least $\frac12 \zeta$ from $\Sigma(r,s,m)$; hence, with slight abuse of notation, we can write $C_1=C_1(\zeta)$.} $C_1=C_1(\zeta)$ and $C_2=C_2(s,m)$ such that one has the lower estimate
	\begin{align}\label{Firenze}
	\begin{split}
	&\max_{\substack{\alpha=1,...,s}}\left|\frac{d^\alpha}{du^\alpha}\left(\left.\frac{\partial\, \tU^\star_{r,\ta}( \ty)}{\partial\,  \ty_1}\right|_{\widetilde \gamma^\star_\ta(u)}\right)_{u=0}\right|> C_1 \\  &\text{ in case } s=1 \text{ or } m=1,\\
	&\max_{\substack{\ell=1,...,m\\\alpha=1,...,s}}\left|\frac{d^\alpha}{du^\alpha}\left(\left.\frac{\partial\, \tU^\star_{r,\ta}(\ty)}{\partial\, \ty_\ell}\right|_{\widetilde \gamma^\star_\ta(u)}\right)_{u=0}\right|> \displaystyle \frac{C_1}{1+C_2\times  \max_{\substack{\ell=2,...,m\\ \alpha=2,...,s}}|a_{\ell\alpha}|}\\ &\text{ in case } 2\le s\le r-1 \text{ and } \ m\ge 2 \ ,
	\end{split}
	\end{align}
	where the $a_{\ell \alpha}$'s, with $\ell\in\{2,...,m\}$ and $\alpha\in\{2,...,s\}$, are the Taylor coefficients of $\widetilde\gamma^\star_\ta(u)$ at the origin. Definition \eqref{curvetta} assures that the Taylor coefficients of order equal or higher than one of the curve $\widetilde{ \gamma}^\star(u)$ at $u=0$, and those of the curve $\gamma$ calculated at $t=t^\star$ coincide. Moreover, by construction (see paragraph \eqref{appropriata}) $\widetilde\gamma^\star_\ta(u)$ and $\widetilde\gamma^\star(u)$ share the same Taylor coefficients of order greater or equal than two calculated at the origin. Hence, the Bernstein estimate in \eqref{Bernie} applied to the second relation in \eqref{Firenze} and the fact that $0<\lambda\le\lambda_0\le 1$ (see \eqref{lambdazero}) yield that there exists a uniform constant $\tM=\tM(r',n,m,s)\ge 1$ such that estimate
	\begin{align}\label{Siena}
	\begin{split}
	&\max_{\substack{\ell=1,...,m\\\alpha=1,...,s}}\left|\frac{d^\alpha}{du^\alpha}\left(\left.\frac{\partial\, \tU^\star_{r,\ta}(\ty)}{\partial\,\ty_\ell}\right|_{\widetilde \gamma^\star_\ta(u)}\right)_{u=0}\right|> \frac{C_1}{(1+C_2)\,\tM}\,\lambda^{s-1}
	\end{split}
	\end{align}
	holds in case $2\le s\le r-1$ and $m\ge 2$.
	
	Now, expression \eqref{conviene} together with estimate \eqref{Bernie} yields
	$$
	||\cL_\ta^{-1}||_\infty\le 1+\tM\ , 
	$$ 
	for the matrix norm of $\cL_\ta^{-1}$. Therefore, by \eqref{mansomma}, by the first estimate in \eqref{Firenze} and by \eqref{Siena}, we infer
	\begin{align}\label{San_Gimignano}
	\begin{split}
	&\max_{\substack{\alpha=1,...,s}}\left|\frac{d^\alpha}{du^\alpha}\left(\left.\frac{\partial\, U^\star_r(x)}{\partial\, x_1}\right|_{\widetilde \gamma^\star(u)}\right)_{u=0}\right|> \frac{C_1}{1+\tM}\\ \qquad &   \text{ for } s=1 \text{ or } m=1,\\
	\\
	&\max_{\substack{\ell=1,...,m\\\alpha=1,...,s}}\left|\frac{d^\alpha}{du^\alpha}\left(\left.\frac{\partial\, U^\star_r(x)}{\partial\,x_\ell}\right|_{\widetilde \gamma^\star(u)}\right)_{u=0}\right|> \frac{C_1}{(1+C_2)\tM(1+\tM)}\lambda^{s-1}\\
	\qquad & \text{ for } 2\le s\le r-1 , \ m\ge 2 \ .
	\end{split}
	\end{align}
	Since in expression \eqref{San_Gimignano} one is considering only the derivatives up to order $s\in\{1\dots,r-1\}$ at the origin $u=0$ and $\widetilde \gamma^\star(u)$ contains no constant terms, the same estimate holds true for the polynomial $U^\star$.
	%\begin{align}\label{Certaldo}
	%\begin{split}
	%&\max_{\substack{\ell=1,...,m\\\alpha=1,...,s}}\left|\frac{d^\alpha}{du^\alpha}\left(\left.\frac{\partial\, U^\star(x)}{\partial\, x_\ell}\right|_{\widetilde \gamma^\star(u)}\right)_{u=0}\right|> \frac{C_1}{1+\tM} \quad   \text{ for } s=1 \text{ or } m=1,\\
	%&\max_{\substack{\ell=1,...,m\\\alpha=1,...,s}}\left|\frac{d^\alpha}{du^\alpha}\left(\left.\frac{\partial\, U^\star(x)}{\partial\,x_\ell}\right|_{\widetilde \gamma^\star(u)}\right)_{u=0}\right|> \frac{C_1}{C_2(1+\tM)\, \tM}\lambda^{s-1}\quad \text{ for } 2\le s\le r-1 , \ m\ge 2 \ .
	%\end{split}
	%\end{align}
	
	The thesis follows from \eqref{San_Gimignano} and from \eqref{conlemani} by setting $\cC'=\displaystyle\frac{C_1}{(1+C_2)\tM(1+\tM)}$. 
	
\end{proof}
With the help of Lemmata \ref{aiutati} and \ref{Palermo}, we are now able to prove Theorem \ref{steep_polinomi}. 
\begin{proof}{\it (Theorem \ref{steep_polinomi})} 
	
	{\it Introduction.} We assume the setting and the notations of Lemmata \ref{aiutati}-\ref{Palermo}. In particular, for $0<\eps\le\eps^\star/2 $ we consider a polynomial $S\in \Pollo$ in the ball $B^N(Q,\eps)$, and a given $m$-dimensional subspace $ \Gamma^m$ orthogonal to $\grad S(0)\neq 0$. We denote by $\gamma$ the minimal arc of Theorem \ref{arco_minimale} - whose image is contained in the thalweg $\cT(S, \Gamma^m)$ - and for any $0<\lambda\le  \lambda_0=\lambda_0(r,s,m,\tau)$ we indicate by $\tI_\lambda$ its interval of analyticity of length $\lambda/\tK$, where $\tK=\tK(r',n,m)$ is a uniform constant. We also indicate by $\tI'_\lambda$ the interval which is obtained by dividing $\tI_\lambda$ into three equal parts and by taking the central one. 
	
	Finally, we set $T(x):=S|_{\Gamma^m}(x):=S(Ax)$, with $A\in O(n,m)$ a matrix belonging to the image of the continuous section $\xi_i:\cE_i(\Gamma^m)\longrightarrow O(n,m)$, with $i\in\{1,...,L\}$,  and whose columns span $ \Gamma^m$ (see the proof of Lemma \ref{aiutati}). 
	
	We proceed by steps.
	
	{\it Step 1. } For $\ell=1,...,m$ and $\alpha=\{1,...,s\}$, we consider the functions
	\begin{equation}\label{gigi}
	g_\ell^{(\alpha)}(t^\star):=\frac{d^\alpha}{dt^\alpha}\left(\left.\frac{\partial T(x)}{\partial x_\ell}\right|_{\gamma(t)}\right)_{t=t^\star}\ ,\ \ t^\star\in\tI_\lambda'
	\end{equation}
	and the constant functions
	$$
	g_{m+1}(t^\star):=\cC'\ ,\ \ g_{m+2}(t^\star):=-\cC'
	\ ,\ \
	g_{m+3}(t^\star):= \cC'\lambda^{s-1} \ ,\ \
	g_{m+4}(t^\star):=- \cC'\lambda^{s-1}\ .
	$$ 
	
	The degree of $T$ is  bounded by $r'$ and - on the interval $\tI_\lambda$ - $\gamma$ is an analytic-algebraic function whose diagram is bounded by a positive integer $\td=\td(r',n,m)$ (see Point 2 of the thesis in Theorem \ref{arco_minimale}). Hence, for any given choice of $\alpha\in\{1,...,s\}$ and $\ell\in\{1,...,m\}$, the function $g^{(\alpha)}_\ell(t^\star)$ is Nash (i.e. semi-algebraic of class $C^\infty$) due to Propositions \ref{composta} and \ref{derivata}, and its diagram is bounded by a quantity depending only on $r',m,n$. In addition, Proposition \ref{Nash} ensures that $g^{(\alpha)}_\ell(t^\star)$ is  actually analytic-algebraic in $\tI_\lambda'$. The same is obviously true also for $g_{m+j}(t^\star)$ for $j\in\{1,2,3,4\}$.
	Therefore, we set $\widehat d=\widehat d(r',m,n):=\max_{i\in\{1,...,m+4\}}\max_{\alpha\in\{1,...,s\}}\{\diag\,( g_i^{(\alpha)})\}$. 
	
	Now, for any choice of $\alpha\in\{1,...,s\}$ and $i\in\{1,...,m+4\}$, the graph of $g^{(\alpha)}_i$ over $\tI_\lambda'$ belongs to the algebraic curve of some non-constant polynomial $V_i^{(\alpha)}\in \R[x,y]$ of two variables, whose degree depends on $\widehat d(r',m,n)$. If we indicate by 
	\begin{equation}
	V_{i}^{(\alpha)}(x,y)=\Pi_{k=1}^{K(i,\alpha)}\,\left(V_{i,k}^{(\alpha)}(x,y)\right)^{p_k}\quad, \qquad \text{ with }\quad  K(i,\alpha), p_k\in \N^\star 
	\end{equation}
	the decomposition of $V_{i}^{(\alpha)}(x,y)$ into its irreducible factors, by Bézout's Theorem (see Th. \ref{Th_Bezout}) the irreducible components $\left\{(x,y)\in \R^2| V_{i,k}^{(\alpha)}(x,y)=0 \right\}$ of the algebraic curve $\left\{(x,y)\in \R^2| V_{i}^{(\alpha)}(x,y)=0 \right\}$ intersect at most at a finite number of points which is bounded by $\left(\deg V_{i}^{(\alpha)}\right)^2$, which in turn is a quantity depending only on  $\widehat d(r',m,n)$. This fact, together with the regularity of $g_i^{(\alpha)}$ in $\tI_\lambda'$, implies that there exist two positive integers $\overline k(i,\alpha)\in \{1,\dots,K(i,\alpha)\}$, $\mathscr{w}=\mathscr{w}(r',m,n)$, and a subinterval of $\tI^\star_{\lambda,i,\alpha}\subset \tI_\lambda'$ of length $|\tI_\lambda'|/\mathscr{w}$  verifying the two following conditions:
	\begin{align}\label{appartenenza_irriducibile}
	\begin{split}
	&\text{graph} \left(\left.g^{(\alpha)}_i\right|_{\tI_{\lambda,i,\alpha}^\star}\right)\subset \left\{(x,y)\in \R^2|  V_{i,\overline k(i,\alpha)}^{(\alpha)}(x,y)=0\right\}\\
	&\text{graph} \left(\left.g^{(\alpha)}_i\right|_{\tI_{\lambda,i,\alpha}^\star}\right)\cap  \left\{(x,y)\in \R^2|  V_{i, k}^{(\alpha)}(x,y)=0\right\}=\varnothing
	\end{split}
	\end{align}
	for all $k\in \{1,\dots,K(i,\alpha)\}\backslash\{\overline k(i,\alpha)\}$. 
	
	The above reasoning can be repeated for all other pairs of integers belonging to $ \{1,\dots,s\}\times \{1,\dots,m+4\}$ and which are different from $ (\alpha,i)$. Hence, finally there exists an interval $\tI^\star_\lambda\subset \tI_\lambda'$ of length $|\tI'_\lambda|/(\mathscr{w}^{(m+4)\,s})$ on which	for any $(i',\alpha')\in \{1,\dots,s\}\times \{1,\dots,m+4\}$  the relations 
	\begin{align}\label{appartenenza_irriducibile_2}
	\begin{split}
	&\text{graph} \left(\left.g^{(\alpha')}_{i'}\right|_{\tI_{\lambda}^\star}\right)\subset \left\{(x,y)\in \R^2|  V_{i',\overline k(i',\alpha')}^{(\alpha')}(x,y)=0\right\}\\
	&\text{graph} \left(\left.g^{(\alpha')}_{i'}\right|_{\tI_{\lambda}^\star}\right)\cap  \left\{(x,y)\in \R^2|  V_{i', k}^{(\alpha')}(x,y)=0\right\}=\varnothing
	\end{split}
	\end{align}
	are verified for some integer $\overline k(i',\alpha')\in \{1,\dots,K(i',\alpha')\}$ and for any integer $k\in \{1,\dots,K(i',\alpha')\}\backslash\{\overline k(i',\alpha')\}$.

	Then, by \eqref{appartenenza_irriducibile_2} and again by Bézout's Theorem, there exists a positive integer $\mathsf{N}=\mathsf{N}(r',m,n)$ such that for any distinct pairs of integers $(\alpha, i)$ and $(\beta,j)$ belonging to $\{1,\dots,s\}\times \{1,\dots,m+4\}$, the algebraic curves $\{(x,y)\in (\tI_\lambda^\star,\R)| V_{i,\overline k(i,\alpha)}^{(\alpha)}(x,y)=0 \}$ and $\{(x,y)\in (\tI_\lambda^\star,\R)| V_{j,\overline k(j,\beta)}^{(\beta)}(x,y)=0 \}$ either coincide or intersect at most at $\mathsf{N}=\mathsf{N}(r',m,n)$  points. 
	
	By repeating this reasoning for all possible distinct pairs and by taking \eqref{appartenenza_irriducibile_2} into account, one finally has that there exists a positive constant $\mathsf{M}=\mathsf{M}(r',s,m,n)$ and an interval $\tJ^\star_\lambda$ of uniform length $|\tJ^\star_\lambda|=|\tI^\star_\lambda|/\mathsf{M}$ over which the graphs of any pair of functions among $g_1^{(1)},...,g_m^{(1)},...,g_1^{(s)},...,g_m^{(s)},g_{m+1},...,g_{m+4}$ either do not intersect or coincide. 
	
	These reasonings - together with the fact that expression \eqref{basso} in Lemma \ref{Palermo} holds for all $t^\star\in \tJ^\star_\lambda\subset \tI'_\lambda$ - yield that there must exist $\overline \alpha\in\{1,...,s\}$ and $\overline \ell\in\{1,...,m\}$ verifying 
	\begin{align}\label{violoncello}
	\begin{split}
	&\min_{t^\star\in\tJ^\star_\lambda}\left|g^{(\overline \alpha)}_{\overline \ell}(t^\star)\right|> \cC' \quad   \text{ for } m=1\\
	&\min_{t^\star\in\tJ^\star_\lambda}\left|g^{(\overline \alpha)}_{\overline \ell}(t^\star)\right|> \cC'\lambda^{s-1}\quad \text{ for }  \ m\ge 2 \ .
	\end{split}
	\end{align}
	
	{\it Step 2.} We apply Lemma \ref{Pyartli} to $g_{\overline \ell}$, with $q\equiv\overline\alpha$, $[a,b]\equiv \tJ_\lambda^\star$ and with $\beta$ equal to the r.h.s. of \eqref{violoncello}. If we ask for
	\begin{equation}
	4
	\left( \overline \alpha!\frac{\rho}{2\beta}\right)^{1/\overline \alpha}\le  \frac{|\tJ_\lambda^\star|}{2}=\frac{\lambda}{6\,\tK\,\mathsf{M}\,{\mathscr{w}}^{(m+4)\,s}}
	\end{equation}
	and we take into account the fact that $\overline \alpha\in\{1,...,s\}$, we can choose
	\begin{equation}
	\rho=	\frac{2\,\lambda^{s}}{s \,!\times \left[24\,\tK\,\mathsf{M}\,{\mathscr{w}}^{(m+4)\,s}\right]^{s}}
	\times 
	\begin{cases}
	\cC' \quad   \text{ for }  m=1\\
	\cC'\lambda^{s-1}\quad \text{ for } \ m\ge 2\ .
	\end{cases}
	\end{equation}
	Hence, in an open set $\mathtt \tA_\lambda\subset \tJ_\lambda^\star$ of measure $\displaystyle \frac{|\tJ_\lambda^\star|}{2}=\frac{\lambda}{6\,\tK\,\mathsf{M}\,{\mathscr{w}}^{(m+4)\,s}}$, one has 
	\begin{equation}\label{idea}
	|g_{\overline \ell}(t)|>\rho=
	\begin{cases}
	\tC_1 \lambda^s \quad , \qquad   &\text{ for }  m=1\\
	\tC_m\lambda^{2s-1}\quad , \qquad  &\text{ for }  \ m\ge 2
	\end{cases}\qquad \forall t\in\tA_\lambda
	\end{equation}
	for some $\overline \ell\in\{1,...,m\}$, and for a constant 
	\begin{equation}
	\tC_m=\tC_m(r',r,s,\tau,n)=	\frac{2\cC'(r',r,s,m,\tau)}{s \,!\times \left[24\,\tK(r',m,n)\,\mathsf{M}(r',s,m,n)\,{\mathscr{w}(r',m,n)}^{(m+4)\,s}\right]^{s}}\ ,
	\end{equation}
	where $m\in\{1,\dots,n-1\}$.
	
	{\it Step 3.}
	Taking the definition of $\tA_\lambda$ into account, by construction (see \eqref{gigi}) we have
	\begin{equation}\label{campagna}
	\max_{t\in\tA_\lambda}\left|g_{\overline \ell}(t)\right|:=\max_{t\in\tA_\lambda}\left|\left.\frac{\partial T(x)}{\partial x_{\overline \ell}}\right|_{\gamma(t)}\right|:=\max_{t\in\tA_\lambda}\left|\left.\frac{\partial S|_{ \Gamma^m}(x)}{\partial x_{\overline \ell}}\right|_{\gamma(t)}\right|\ .
	\end{equation}
	Due to Theorem \ref{arco_minimale}, the image of $\gamma$ is contained in the thalweg $\cT(S, \Gamma^m)$, that is in the locus of minima of $T:=S|_{ \Gamma^m}$ on the spheres $\cS^m_\eta\subset \Gamma^m$ of radius $\eta>0$ centered at the origin. Moreover, the curve $\gamma$ was constructed by a uniform local inversion theorem applied to the curve $\phi$ of Lemma \ref{Jess} that was parametrized by the radius $\eta>0$ of the spheres $\cS^m_\eta\subset \Gamma^m$ and shared the same image with $\gamma$. So, to any value of $t\in\tA_\lambda$ there corresponds a unique radius $\eta(t)$ associated to a sphere $\cS^m_{\eta(t)}\subset \Gamma^m$. 
	
	Hence, taken any pair of real numbers $\lambda,\xi$ satisfying $0<\lambda\le \xi\le \lambda_0$ - where $\lambda_0$ is the quantity defined in Lemma \ref{Palermo} - by the discussion at step 3 of the proof of Theorem \ref{arco_minimale} (in particular, the inclusions in \eqref{inclusione}), one has that the inverse image of $\tA_\lambda$ is contained in the interval $\cI_\xi\subset [0,\xi]$ defined in Lemma \ref{Jess}. This argument and \eqref{campagna} imply that for some $\overline \ell\in\{1,...,m\}$ one has
	\begin{equation}
	\max_{t\in\tA_\lambda}\left|\left.\frac{\partial S|_{ \Gamma^m}(x)}{\partial x_{\overline \ell}}\right|_{\gamma(t)}\right|\le\max_{\eta\in[0,\xi]}\left|\left.\frac{\partial S|_{ \Gamma^m}(x)}{\partial x_{\overline \ell}}\right|_{\phi(\eta)}\right|= \max_{\eta\in[0,\xi]}\min_{||x||_2=\eta}\left|\frac{\partial S|_{ \Gamma^m}(x)}{\partial x_{\overline \ell}}\right|
	\end{equation} 
	which in turn, as $0<\lambda\le \xi\in(0,\lambda_0]$, by taking \eqref{idea}-\eqref{campagna} and the equivalence of norms into account, implies that 
	\begin{equation}\label{quasi}
	\max_{\eta\in[0,\xi]}\min_{||x||_2=\eta}\left|\left|\grad S|_{ \Gamma^m}(x)\right|\right|_2	>	\begin{cases}
	\tC_1 \lambda^s \ ,\ \    &\text{ for }  m=1\\
	\tC_m\lambda^{2s-1}\ ,\ \  &\text{ for }  \ m\ge 2
	\end{cases}
	\ ,\ \ \forall\ 0<\lambda\le \xi\in(0,\lambda_0]\ .
	\end{equation}
	Since the coordinates $x$ are associated to an orthonormal basis spanning $\Gamma^m$, for any point $I\in\R^n$ contained in the subspace $ \Gamma^m$ one has $\pi_{ \Gamma^m}(\nabla_I S(I))\equiv \nabla_x (S|_{ \Gamma^m})(x)$, and by choosing $\lambda=\xi$ in \eqref{quasi}, we have proved that any polynomial $S\in \Polpo$  in the ball $ B^N(Q,\eps)$, with $\eps\le \eps^\star/2$, is steep at the origin on the subspaces of dimension $m$ with index bounded as in \eqref{indice} and with coefficients $\tC_m,\lambda_0$. It remains to prove that this holds true also in a neighborhood of the origin.
	
	{\it Step 4.} For any polynomial $S\in \Polpo$, we consider the translation
	\begin{equation}
	\tH^\star: \Polpo \times \R^n \longrightarrow \Polpo\ ,\ \ (S(I),I^\star)\longmapsto S(I+I^\star)\ .
	\end{equation}
	$\tH^\star$ is uniformly continuous over the compact $\overline  B^N(Q,\eps^\star/4)\times \overline B^n(0,1)$. Hence, there exists a number $\widehat \delta=\widehat \delta(\eps^\star)>0$ such that for any $S\in \overline B^N(Q,\eps^\star/4)$, one has
	\begin{equation}
	\tH^\star(\{S\}\times \overline B^n(0,\widehat \delta))\subset B^N(S,\eps^\star/4)\ .
	\end{equation}
	Hence, for any given point $I^\star$ satisfying $||I^\star||_2<\widehat \delta $ and for any polynomial $S\in \overline B^N(Q,\eps^\star/4)$, the polynomial $S(I+I^\star)$ belongs to $ B^N(Q,\eps^\star/2)$.
	
	Now, we consider a polynomial $S\in \overline B^N(Q,\eps^\star/4)$. By the above reasonings, for any $I^\star$ verifying $||I^\star||_2<\widehat \delta$, one has that its translation $S(I+I^\star)$ belongs to $B^N(Q,\eps^\star/2)$. We have proved at Step 3 that any polynomial in $\Polpo$ belonging to $ B^N(Q,\eps)$ - with $\eps\in[0,\eps^\star/2]$  - is steep at the origin on the subspaces of dimension $m$, with index as in \eqref{indice}, and with uniform coefficients $\tC_m,\lambda_0$.  Consequently, for any given $I^\star$ satisfying $||I^\star||_2<\widehat \delta$, the polynomial  $S(I+I^\star)$ is steep at the origin on the $m$-dimensional subspaces, with uniform index and coefficients. This is equivalent to stating the same property for polynomial $S$ at any point $I^\star$ in a ball of radius $\widehat \delta$ around the origin. 
	
	The thesis follows by setting $\eps_0=\eps_0(r,s,m,\tau,n):=\eps^\star/4$.

\end{proof}

\subsection{Proof of the genericity of steepness}\label{proof genericity steepness}

With the help of Theorem \ref{steep_polinomi}, we are finally able to prove Theorem A.

\begin{proof}({\it Theorem A})
	
	It is sufficient to study the case in which $I_0=0$, else one considers the translated function $h_0(I):=h(I+I_0)$. We proceed by steps.
	
	{\it Step 1.}	For any choice of integers $r,n\ge 2$, and for any given $\ts=(s_1,\dots,s_{n-1})\in \N^{n-1}$, where $s_m\in\{1,\dots,r-1\}$ for all $m\in\{1,\dots,n-1\}$,   by taking \eqref{V} into account, we define
	\begin{equation}\label{Omega}
	\Omegone:=\bigcup_{m=1}^{n-1} \cV(r,s_m,m,n)\subset \Pollo\ .
	\end{equation}
	The above set is closed due to Lemma \ref{closedness steep}.
	
	For any given pair $\varrho,\tau>0$, we consider a function $h\in\mathscr{D}\subset C_b^{2r-1}(\overline B^n(0,\varrho))$ satisfying 
	\begin{equation}\label{fuoriporta}
	\nabla h(0)\neq 0\quad ,\qquad \left|\left|\tT_0(h,r,n)-\Omegone\right|\right|_\infty>\tau \ .
	\end{equation} 
	
	Now, for $m=1,\dots,n-1$, taking the definition of $\eps_0(r,s_m,m,\tau,n)$ in Theorem \ref{steep_polinomi} into account, we set
	\begin{equation}\label{soglia}
	\overline \epsilon=\overline \epsilon(r,\ts,\tau,n):=\frac12\times \min_{m\in\{1,...,n-1\}}\{\eps_0(r,s_m,m,\tau,n)>0\}\ .
	\end{equation}
	Then, for $\eps\in[0,\overline \epsilon]$, we consider a function $f\in\mathscr{D}\subset  C_b^{2r-1}(\overline B^n(0,\varrho))$ satifying
	\begin{equation}\label{prima_o_poi_finira}
	f\in  \boldsymbol{\fB}^{2r-2}(h,\eps,\overline B^n(0,\varrho))\ .
	\end{equation}
	Due to \eqref{prima_o_poi_finira}, $\tT_0\,(f,2r-2,n)$ is contained in a ball of radius $\eps$ around $\tT_0\,(h,2r-2,n)$ in $\Pol$. Hence, as by construction we have set $\eps\le \overline \epsilon$, where $\overline \epsilon$ was defined in \eqref{soglia}, the definition of set $\Omegone$ in \eqref{Omega}, together with condition \eqref{fuoriporta} yields that we can apply Theorem \ref{steep_polinomi} with $r'=2r-2$. In turn, this ensures the existence of positive constants $\tC_m=\tC_m(r'=2r-2,r,s_m,\tau,n)$, 
	$$
	\widehat  d=\widehat  d(r,\ts,\tau,n):=\min_{m\in\{1,\dots,n-1\}}\widehat \delta(r,s_m,m,\tau,n)\ ,
	$$ 
	and
	$$
	\overline \lambda=\overline \lambda(r,\ts,\tau) :=\min_{m\in\{1,...,n-1\}}\lambda_0(r,s_m,m,\tau)
	$$
	such that $\tT_0(f,2r-2,n)$ is steep in an open ball of radius $\widehat d$ around the origin with coefficients $\bar \lambda,\tC_m$, $m=1,\dots,n-1$, and with indices 
	\begin{equation}\label{pollice}
	\overline \alpha_m(s_m):=	
	\begin{cases}
	s_1\quad , \qquad &\text{ if $m=1$ }\\
	2s_m-1\quad, \qquad &\text{ if  $m\ge 2$ }\ .\\
	\end{cases} 
	\end{equation}
	
	{\it Step 2.}	
	For any $I\in B^n(0,R)$ - with
	\begin{equation}\label{tamburo}
	R=R(r,\ts,\tau,n,\varrho):= \min\left\{\frac{\varrho}{3},\frac{\widehat d(r,\ts,\tau,n)}{2}\right\}\ ,
	\end{equation} for any $m\in\{1,\dots,n-1\}$, and for any $m$-dimensional affine subspace $\Gamma^m=\Gamma^m(I)$ passing through $I$ and orthogonal to $\grad f(I)\neq 0$, we indicate by $f|_{\Gamma^m}$ the restriction of $f$ to $\Gamma^m$. We assume that any given $\Gamma^m(I)$ is endowed with the induced euclidean metric, and we indicate by $x$ a suitable system of coordinates on $\Gamma^m(I)$ whose origin $x=0$ corresponds to point $I$. Moreover, for any $\beta\in \{1,\dots,n\}$, we set
	$\partial_\beta:=\frac{\partial}{\partial x_\beta}$ and
	$$
	\kappa:=\min\left\{\overline \lambda,\frac{\varrho}{3}\right\}\ ,
	$$ 
	
	Now, we fix both $I\in B^n(0,R)$ and $\Gamma^m(I)$. By standard calculus, at any point $x$ verifying $||x||\le \kappa$ (hence, sufficiently close to $I$),  one can write
	\begin{align}\label{caldo}
	\begin{split}
	|\partial_\beta (f|_{\Gamma^m})(x)&- \tT_0(\partial_\beta (f|_{\Gamma^m}),2r-3,m)(x)|\\ &\le K(r,m) \max_{\substack{\upalpha\in \N^n\\|\upalpha|=2r-1}}\max_{I'\in \overline B^n(0,\varrho)}|D^{\alpha}  f(I')|\,\frac{||x||_{2}^{2r-2}}{(2r-2)!}
	\end{split}
	\end{align}
	for some constant $K(r,m)>0$.

	Since $\tT_0\,(\partial_\beta (f|_{\Gamma^m}),2r-3,m)(x)=\partial_\beta[\tT_0(f|_{\Gamma^m},2r-2,m)](x)$, taking \eqref{caldo} into account, we have
	\begin{align}\label{finimola}
	\begin{split}
	&|\partial_\beta (f|_{\Gamma^m})(x)|\\
	&\ge \left|\,|\,\partial_\beta[\,\tT_0\,(f|_{\Gamma^m},2r-2,m)\,]\,(x)\,|-|\partial_\beta (f|_{\Gamma^m})(x)- \tT_0(\partial_\beta (f|_{\Gamma^m}),2r-3,m)\,(x)|\,\right|\\
	&\ge  |\,\partial_\beta[\,\tT_0\,(f|_{\Gamma^m},2r-2,m)\,]\,(x)\,|-c(r,n,m,\varrho,\mathscr{D})\, {||x||_2}^{2r-2}\ ,
	\end{split}
	\end{align}
	where we have indicated 
	$$
	c=c(r,n,m,\varrho,\mathscr{D}):=\frac{K(r,m)}{(2r-2)!}\max_{g\in \overline{\mathscr{D}}}||g||_{C_b^{2r-1}(\overline B^n(0,\varrho))}\ .
	$$
	Estimate \eqref{finimola} implies that for any $x\in \Gamma^m(I)$ verifying $||x||\le \kappa$ we can write
	$$
	||\grad (f|_{\Gamma^m})(x)||_1\ge  \left|\left|\grad  \tT_0\,(f|_{\Gamma^m},2r-2,m)(x)\right|\right|_1-c\,n\, {||x||_2}^{2r-2}\ ,
	$$
	and, by the equivalence of norms,
	\begin{equation}\label{treni_tozeur}
	||\grad (f|_{\Gamma^m})(x)||_2\ge\frac{1}{n}  \left|\left|\grad  \tT_0\,(f|_{\Gamma^m},2r-2,m)(x)\right|\right|_2-c\, {||x||_2}^{2r-2}\ .
	\end{equation}
	
	{\it Step 3.} By the discussion at Step 1, $\tT_0(f,2r-2,n)$ is steep in an open ball of radius $\widehat d$ around the origin $I=0$, with coefficients $\bar \lambda,\tC_m$, $m=1,\dots,n-1$, and with indices as in \eqref{pollice}. This property, together with  expression \eqref{treni_tozeur} and with the fact that 
	\begin{itemize}
		\item the origin $x=0$ on $\Gamma^m(I)$ corresponds to point $I\in B^n(0,R)$ by construction;
		\item $R\le \widehat d/2$ by \eqref{tamburo};
	\end{itemize}
	yields   
	\begin{equation}\label{alice}
	\max_{\eta\in [0,\xi]}\,\min_{\substack{||x||_2\in \Gamma^1(I)\\||x||_2=\eta} }||\grad (f|_{\Gamma^1})(x)||_2> \frac{\tC_1}{n} \xi^{s_1}-c\, \xi^{2r-2}\quad  \forall \xi\in[0,\kappa]\quad (m=1)
	\end{equation}
	\begin{equation}\label{battiato}
	\max_{\eta\in [0,\xi]}\,\min_{\substack{||x||_2\in \Gamma^m(I)\\||x||_2=\eta} }||\grad (f|_{\Gamma^m})(x)||_2> \frac{\tC_m}{n} \xi^{2s_m-1}-c\, \xi^{2r-2}\quad  \forall \xi\in[0,\kappa]\quad (2\le m\le n-1)\ .
	\end{equation}

	If we impose
	\begin{equation}
	\begin{cases}
	c\, \xi^{2r-2}\le \displaystyle \frac{\tC_1}{2n} \xi^{s_1} \quad, \qquad &\text{  if } m=1\\
	\\
	c\, \xi^{2r-2}\le \displaystyle \frac{\tC_m}{2n} \xi^{2s_m-1} \quad, \qquad  &\text{  if } m=2,\dots,n-1
	\end{cases}
	\end{equation}
	by \eqref{alice}-\eqref{battiato}, and by the fact that $s_m\le r-1$ for all $m=1,\dots,n-1$, we have that $f$ is steep in a ball of radius 
	$R$
	around the origin with coefficients (we have set $r'=2r-2$)
	{\small
		\begin{align}
		\begin{split}
		\delta&=\delta(r,\ts,\tau,n, \varrho,\mathscr{D})\\ 
		&:=
		\min\left\{\kappa,\left(\frac{\tC_1(r',r,s_1,\tau,n)}{2\,n\,c(r,n,m,\varrho,\mathscr{D})}\right)^{\frac{1}{2r-2-s_1}},\min_{m\in\{2,\dots,n-1\}}\left\{\displaystyle\left( \frac{\tC_m(r',r,s_m,\tau,n)}{2\,n\,c(r,n,m,\varrho,\mathscr{D})}\right)^{\frac{1}{2(r-s_m)-1}}\right\}\right\} \ ,
		\end{split}
		\end{align}}
	$$
	C_m(r',r,s_m,\tau,n):=\frac{\tC_m(r',r,s_m,m,\tau,n)}{2n}\ ,
	$$ 
	and with indices bounded as in \eqref{pollice}.
	
	It remains to prove the estimate on the codimension of $\Omegone$. By \eqref{Omega}, Lemma \ref{positive_codimension} and Proposition \ref{dim_chiusura} we have
	$$
	\text{  codim } \Omegone \ge \max\left\{0,\min_{m\in\{1,\dots,n-1\}}\{s_m-m(n-m-1)\}\right\}\ .
	$$
	%Since, for fixed $n$, the quantity $m(n-m-1)$ reaches its maximum for $m=n/2$ when $n$ is even, and for $m=(n-1)/2$ when $n$ is odd, we find 
	%\begin{equation}
	%\text{  codim }\OmegoneI\ge \begin{cases}
	%\max\left\{0,s+1-\displaystyle\frac{n(n-2)}{4}\right\},\ \text{if $n$ is even}\\
	%\max\left\{0,s+1-\displaystyle\frac{(n-1)^2}{4}\right\},\ \text{if $n$ is odd}\\
	% \end{cases}\ .
	% \end{equation}
	This concludes the proof.

\end{proof}

\section{Proof of Theorem B and of its Corollaries}\label{Prova Th B}
Hereafter, we assume the notations and the results of the previous sections. 

\subsection{Proof of Theorem B}

It suffices to prove the statement for $I_0=0$, otherwise one considers $h_0(I):=h(I+I_0)$. 

\subsubsection{Case $m=1$.}
Let $\Gamma^1$ be a $1$-dimensional subspace (a line) orthogonal to $\grad h(0)\neq 0$, and let $w\in \S^n$ be its generating vector. By standard results of calculus, the restriction of the Taylor polynomial $\tT_0(h,r,n)$ to $\Gamma^1$, indicated by $\tT_0(h|_{\Gamma^m},r,1)$, reads
\begin{equation}\label{chickenuno}
\tT_0(h|_{\Gamma^1},r,1)(x)=h(0)+\sum_{i=1}^r \frac{1}{i!}h^i[w,\dots,w] x^i\ , 
\end{equation}
where the multi-linear notation in \eqref{forma} has been taken into account, and where $x$ is the coordinate associated to the vector $w$.

By \eqref{chickenuno} and Lemma \ref{m=1}, condition \eqref{s_1+1 jet} amounts to asking that, for any subspace $\Gamma^1$, the polynomial $\tT_0(h|_{\Gamma^1},r,1)$ belongs to the complementary of the set of $s_1$-vanishing polynomials $\sigma(r,s_1,1)$ in $\cP(r,1)$. Moreover, again by Lemma \ref{m=1}, one has $\sigma(r,s_1,1)=\overline \sigma(r,s_1,1)=:\Sigma(r,s_1,1)$. Hence, by definitions \eqref{U}-\eqref{V}, by Theorem A and by \eqref{Omega}, $h$ is steep on the subspaces of dimension one in a neighborhood of the origin, with steepness index bounded by $s_1$.

\subsubsection{Case $n\ge 3$, $2\le m \le n-1$. }

It is sufficient to prove that, for fixed $m\in\{1,\dots,n-1\}$, under the assumptions at point $ii)$ of Theorem B, one has $\tT_0(h,r,n)\in \Pol \backslash \cV(r,s_m,m,n)$, where the set $\cV(r,s_m,m,n)$ was defined in \eqref{V}. The thesis then follows by Theorem A and by expression \eqref{Omega}. 

By absurd, suppose that the claim is false. Then, by \eqref{U}-\eqref{V}, there exists some subspace $\Gamma^m$ orthogonal to $\nabla h(0)\neq 0$ such that $\tT_0(h|_{\Gamma^m},r,m)\in \Sigma(r,s_m,m)$. 

Hence, since $\Sigma(r,s_m,m):=\overline \sigma(r,s_m,m)$ by construction, there are two possibilities:
\begin{enumerate}
	\item either $\tT_0(h|_{\Gamma^m},r,m)\in \sigma(r,s_m,m)$\ ;
	\item or $\tT_0(h|_{\Gamma^m},r,m)\in \Sigma(r,s_m,m)\backslash \sigma(r,s_m,m)$\ .
\end{enumerate}
We consider the two cases separately and we prove that in both cases we end up being in contradiction with the hypotheses.

{\it Case 1.} If $\tT_0(h|_{\Gamma^m},r,m)\in \sigma(r,s_m,m)$, then by construction $\tT_0(h|_{\Gamma^m},r,m)$ verifies the $s_m$ vanishing condition at the origin on some curve $\gamma\in \arco$, whose image is contained in $\Gamma^m$. Since one is free to choose the orthonormal basis $\{u_1,\dots,u_m\}\in \tU(m,n)$ spanning $\Gamma^m$, up to a changement in the order of the vectors we can suppose without any loss of generality that the coordinate which parametrizes the curve $\gamma$ is the first one, that is $\gamma\in\arcox$, and $\tT_0(h|_{\Gamma^m},r,m)\in \sigma^1(r,s_m,m)$. Moreover, following section \ref{appropriata}, we can make use of the adapted coordinates for the curve $\gamma$, which are associated to the basis (see expression \eqref{chgt_base}) 
\begin{equation}\label{base_adatta}
\left\{v:=u_1+\sum_{i=2}^ma_{i1} u_i,\,u_2,\dots,u_m\right\}\in \scV^1(m,n)\ ,
\end{equation}
where, as we did in \ref{appropriata}, we indicate by  $\ta:=(a_{21},\dots,a_{1m})\in \R^{m-1}$ the vector containing the linear coefficients of the Taylor expansion of $\gamma$ at the origin. Hence, following the notations of section \ref{appropriata} (especially, formula \eqref{pol_trasf}), we write
$$
\tT_{0,\ta}(h|_{\Gamma^m},r,m)(\ty):= 	\tT_0(h|_{\Gamma^m},r,m)\circ  \cL^{-1}_\ta (\ty)\ .
$$
Then, by standard results of calculus, taking \eqref{forma} into account, one can write
\begin{equation}\label{orasiride}
\tT_{0,\ta}(h|_{\Gamma^m},r,m)(\ty)=\sum_{\substack{\mu \in \N^m\\1\le |\mu| \le r}}\frac{1}{\mu!} h_0^{|\mu|}\Big[\stackrel{\mu_1}{\overbrace{v}},\stackrel{\mu_2}{\overbrace{u_2}},\dots,\stackrel{\mu_m}{\overbrace{u_m}}\Big]\,\ty_1^{\mu_1}\dots \ty_m^{\mu_m}\ , 
\end{equation}
where $\mu!:=\mu_1!\dots, \mu_m!$. 

Since $\tT_0(h|_{\Gamma^m},r,m)\in \sigma^1(r,s_m,m)$, by \eqref{pong} and Theorem \ref{ideal}, one has 
\begin{equation}\label{si_annulla_tutto}
\tQ_{i\alpha}(\tT_{0,\ta}(h|_{\Gamma^m},r,m),\ta,\jetam)=0\qquad \forall\  i\in\{1,\dots,m\}\ ,\ \ \forall \alpha\in\{0,\dots,s\}\ ,
\end{equation}
where $\jetam:=\cL_\ta\circ \jetm$ denotes the $s_m$-truncation at the origin of the curve $\gamma$ expressed in the adapted variables. 

We now try to simplify the expressions in \eqref{si_annulla_tutto}, by taking formulas \eqref{vigna}-\eqref{pigna} in Theorem \ref{ideal} into account and by exploiting the form of the polynomial $\tT_{0,\ta}(h|_{\Gamma^m},r,m)$ in \eqref{orasiride}. Namely - due to \eqref{orasiride} and \eqref{mulan} - the coefficients $\tp_{\nu(1,\alpha)}$ and $\tp_{\nu(\ell,\alpha)}$, with $\ell\in\{2,\dots,m\}$, $\alpha\in \{0,\dots,s_m\}$, appearing in \eqref{vigna}-\eqref{pigna} read
\begin{align}\label{Rossini}
\begin{split}
\tp_{\nu(1,\alpha)}=\frac{1}{(\alpha+1)!} h_0^{\alpha+1}[v,\dots,v]\quad , \qquad  \tp_{\nu(\ell,\alpha)}=\frac{1}{\alpha!}h_0^{\alpha+1}\Big[\stackrel{\alpha}{\overbrace{v}},u_\ell\Big]\ .
\end{split}
\end{align} 
Moreover, if $s_m\ge 2$, for $\alpha\in\{2,\dots,s_m\}$, exploiting \eqref{orasiride} and the linearity, in our case the second addend at the right hand side at the third line of \eqref{vigna} reads
\begin{align}\label{Verdi}
\begin{split}
\sommalphaquater \beta\ \tp_{\nu(i,\beta)} a_{i(\alpha-(\beta-1))}=&	\sommalphaquater  \frac{\beta}{\beta!}h_0^{\beta+1}\Big[\stackrel{\beta}{\overbrace{v}},u_i\Big] a_{i(\alpha-(\beta-1))}\\
=&	\sum_{\beta=1}^{\alpha-1} \frac{1}{(\beta-1)!}h_0^{\beta+1}\Big[\stackrel{\beta}{\overbrace{v}},\sum_{i=2}^ma_{i(\alpha-(\beta-1))} u_i\Big]\ . \\
\end{split}
\end{align}
Henceforth, in order to simplify our formulas, we set
$$
{\bf V}_i:=
\begin{cases}
v \ , \text{ if } i=1\\
u_i \ , \text{ if } i\in\{2,\dots,m\}
\end{cases}\ .
$$
Considering again the case $s_m\ge 2$, for $\alpha\in\{2,\dots,s_m\}$, and for any $i=1,\dots,m$, by \eqref{orasiride} and by \eqref{giammai}-\eqref{nonvuoto}, the last addend at the right hand side of the third relation in \eqref{vigna}-\eqref{pigna} reads 
	\begin{align}\label{Puccini}
	\begin{split}
	&\sommalphateri \mu_i\ \tp_{\mu}
	\sum_{k\in \cG_m(\widetilde \mu(i),\alpha)} 	\left( \prod_{j=2}^m
	\binom{\mtj(i)}{k_{j2}\ ...\  k_{j\alpha}}\ 
	a_{j2}\strut^{k_{j2}}...a_{j\alpha}\strut^{k_{j\alpha}}\right)\\
	&=\sommalphateri \frac{\mu_i}{\mu!} h_0^{|\mu|}\Big[\stackrel{\widetilde \mu_1(i)}{\overbrace{v}},\stackrel{\widetilde \mu_2(i)}{\overbrace{u_2}},\dots,\stackrel{\widetilde \mu_m(i)}{\overbrace{u_m}},{\bf V}_i\Big]\\
	 &\qquad \qquad \qquad  \times \sum_{k\in \cG_m(\widetilde \mu(i),\alpha)} 	\left( \prod_{j=2}^m
	\binom{\mtj(i)}{k_{j2}... k_{j\alpha}}
	a_{j2}\strut^{k_{j2}}...a_{j\alpha}\strut^{k_{j\alpha}}\right)\\
	&=\sum_{\substack{\mu\in\cE_m(i,\alpha)\\ \mu\in\cM_m(\alpha)\\ \mu_i\neq 0}} \sum_{k\in\mathcal G_m(\widetilde \mu(i),\alpha) }\frac{h_0^{|\mu|}\Big[\stackrel{\widetilde \mu_1(i)}{\overbrace{v}},\stackrel{k_{22}}{\overbrace{a_{22} \ u_2}},\dots,\stackrel{k_{2\alpha}}{\overbrace{a_{2\alpha} \ u_2}},\stackrel{k_{m2}}{\overbrace{a_{m2} \ u_m}},\dots,\stackrel{k_{m\alpha}}{\overbrace{a_{m\alpha} \ u_m}},{\bf V}_i\Big]}{\widetilde \mu_1(i)!\  k!}\ ,
	\end{split}
	\end{align}
where the last passage is a consequence of the multi-linearity and of the fact that, for all $i\in\{1,\dots,m\},j\in\{2,\dots,m\}$, we have $\sum_{u=2}^{\alpha}k_{ju}=\mtj(i)$ by construction (see \eqref{giammai}).

Now, by \eqref{vigna}-\eqref{pigna}, it is trivial to observe that for $\alpha=0$ and for all $i\in\{1,\dots,m\}$, one has 
\begin{equation}\label{0}
\tQ_{i,0}(\tT_{0,\ta}(h|_{\Gamma^m},r,m),\ta,\jetam)=0 \quad \Longleftrightarrow \quad h_0^1[v]=h^1_0[u_2]=\dots h^1_0[u_m]=0
\end{equation}
which simply means that the basis vectors $\{v,u_2,\dots,u_\ell\}$ are orthogonal to $\nabla h(0)\neq 0$.

We observe that, in case $i=1,\dots,m$ and $\alpha=1$, taking \eqref{Rossini} and \eqref{vigna}-\eqref{pigna} into account, condition  \eqref{si_annulla_tutto} amounts to requiring that for all $i\in\{1,\dots,m\}$
\begin{equation}\label{alpha=2}
\tQ_{i,1}(\tT_{0,\ta}(h|_{\Gamma^m},r,m),\ta,\jetam)=0 \ \Longleftrightarrow \	h_0^2[v,v]=h^2_0[v,u_2]=\dots = h^2_0[v,u_m]=0 \ .
\end{equation}

For $i=1$ and $\alpha=2$, instead, we have 
\begin{equation}\label{3}
\tp_{\nu(1,\alpha)}=\frac{1}{3!}h^3[v,v,v]
\end{equation}
and for $\beta=1$ the term in \eqref{Verdi} is null due to \eqref{alpha=2}. 

Moreover, still for $i=1$ and $\alpha=2$ we observe that also the term in \eqref{Puccini} does not yield any contribution to condition \eqref{si_annulla_tutto}.  In order to see this, we start by remarking that the multi-indices to be taken into account in \eqref{Puccini} for $i=2$ and $\alpha=2$ must verify $\mu \in \cE_m(1,2)$, that is, by \eqref{giammai}-\eqref{nonvuoto}
\begin{equation}\label{condizioni}
k_{j2}=\widetilde \mu_j(1)\quad \forall\ j\in\{2,\dots,m\}\quad , \qquad \widetilde \mu_1(1)+\sum_{j=2}^m 2k_{j2}=2 \ .
\end{equation}
Conditions \eqref{condizioni} are only possible if 
\begin{enumerate}
	\item $\widetilde \mu_1(1)=2$ and $\widetilde \mu_j(1)=0$ for all $j\in\{2,\dots,m\}$, which implies $\mu=(3,0,\dots,0)$.  
	\item  $k_{j2}=\widetilde\mu_j(1)=\delta_{jp}$ for some $p\in\{2,\dots,m\}$ and $\widetilde \mu_1(1)=0$. This implies that $\mu_1=1$ and $\mu_j=\delta_{jp}$ for all $j\in\{2,\dots,m\}$, so that finally $|\mu|=2$. 
\end{enumerate}

The first case is incompatible with the condition $\mu\in \cM_m(2)$ required in \eqref{Puccini} (see \eqref{sid}), as $(3,0,\dots,0)\equiv \nu (1,2)$. The second case does not yield any contribution to \eqref{Puccini} because of  \eqref{alpha=2}. 

Hence one has 
\begin{equation}\label{1-2}
\tQ_{1,2}(\tT_{0,\ta}(h|_{\Gamma^m},r,m),\ta,\jetam)=0 \quad \Longleftrightarrow \quad h^3[v,v,v]=0\ .
\end{equation}

Finally, for $i=1$ and $\alpha\ge 3$, and for $i\in \{2,\dots,m\}$ and $\alpha\ge 2$, comparing expressions \eqref{Rossini}-\eqref{Verdi}-\eqref{Puccini} with the quantities \eqref{vigna}-\eqref{pigna} in Theorem \ref{ideal}, and taking the definition of the quantities 
$
\cH_{m,\ell,\alpha}^{h,0}(v,u_2,\dots,u_m,a(m))
$
in \eqref{polpette}-\eqref{polpettone} into account, one has that 
\begin{equation}\label{Inter}
\tQ_{i,\alpha}(\tT_{0,\ta}(h|_{\Gamma^m},r,m),\ta,\jetam)=0 \quad \Longleftrightarrow \quad  \cH_{m,\ell,\alpha}^{h,0}(v,u_2,\dots,u_m,a(m,s_m))=0 \ .
\end{equation}
Putting together \eqref{0}-\eqref{alpha=2}-\eqref{1-2}-\eqref{Inter} with \eqref{base_adatta}, we see that the polynomial $\tT_{0,\ta}(h|_{\Gamma^m},r,m)$ belongs to $\sigma^1(r,s_m,m)$ if and only if the system
\begin{align}\label{Roma}
\begin{split}
\begin{cases}
(u_1,\dots,u_m)\in \tU(m,n)\\[0.5em]
a(m):=(a_{21},\dots,a_{2s_m},\dots,a_{m1},\dots,a_{ms_m})\in \R^{(m-1)\times s_m}\\[0.5em]
v=u_1+\sum_{j=2}^m a_{j1}\,u_j\\[0.5em]
h^1_{0}[v]=h^1_{0}[u_2]=\dots= h^1_{0}[u_m]=0\\[0.5em]
\cH_{m,\ell,\alpha}^{h,0}(v,u_2,\dots,u_m,a(m,s_m))=0 \quad  i=1,\dots,m\ ,\ \   \alpha=1,\dots,s
\end{cases}
\end{split}
\end{align}
admits a solution. However, this is in contradiction with hypothesis \eqref{non_tocca} in the statement, therefore $\tT_0(h|_{\Gamma^m},r,m)\not\in \sigma^1(r,s_m,m)$. 

{\it Case 2.} We now assume that $\tT_0(h|_{\Gamma^m},r,m)\in \Sigma(r,s_m,m)\backslash \sigma(r,s_m,m)$. Up to changing the order of the vectors spanning $\Gamma^m$, by \eqref{unione} without any loss of generality we can suppose $\tT_0(h|_{\Gamma^m},r,m)\in \Sigma^1(r,s_m,m)\backslash \sigma(r,s_m,m)$. Then, there must exist a sequence of polynomials $\{P_k\in \sigma^1(r,s_m,m)\}_{k\in \N}$ approaching $\tT_0(h|_{\Gamma^m},r,m)$. To conclude the proof of Case 2, we need the following

\begin{lemma}\label{Luminy}
	 There exists a sequence $\{S_k\in \Pol\}_{k\in \N}$ converging to $\tT_0(h,r,n)$ in $\Pol$ and verifying $S_k|_{\Gamma^m}=P_k$ for any given $k\in \N$. 
 \end{lemma}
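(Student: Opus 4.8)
The plan is to exploit the elementary fact that the restriction operator $R\mapsto R|_{\Gamma^m}$ from $\Pol$ onto $\Polm$ is a surjective \emph{linear} map between finite-dimensional spaces: perturbing only the ``tangential'' monomials of $\tT_0(h,r,n)$ (those that survive the restriction to $\Gamma^m$) along the $P_k$'s, while leaving the ``transversal'' ones untouched, produces polynomials with the prescribed restriction and arbitrarily small distance to $\tT_0(h,r,n)$.

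First I would fix an orthonormal basis $\{u_1,\dots,u_m\}$ of $\Gamma^m$ and complete it to an orthonormal basis $\{u_1,\dots,u_n\}$ of $\R^n$; let $B\in O(n)$ have columns $u_1,\dots,u_n$ and let $y=B^{-1}I$ be the associated orthonormal coordinates, so that $\Gamma^m=\{y\in\R^n:y_{m+1}=\dots=y_n=0\}$ and, with the identification $\Gamma^m\simeq\R^m$ via $(y_1,\dots,y_m)$ used throughout (see the proof of Lemma \ref{thalweg_semialgebrico}), the restriction to $\Gamma^m$ of any $R\in\Pol$ is $R(By)$ evaluated at $y_{m+1}=\dots=y_n=0$. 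The map $R(I)\mapsto R(By)$ is a degree-preserving linear automorphism of $\Pol$, hence bi-Lipschitz for $\|\cdot\|_\infty$ (being a linear automorphism of a finite-dimensional space). Set $\wtQ(y):=\tT_0(h,r,n)(By)$ and split it uniquely as $\wtQ=\wtQ'+\wtQ''$, where $\wtQ'$ collects the monomials of $\wtQ$ involving only $y_1,\dots,y_m$ and $\wtQ''$ the remaining ones; then $\wtQ''$ vanishes identically on $\{y_{m+1}=\dots=y_n=0\}$, and since $\tT_0(h,r,n)$ — hence $\wtQ,\wtQ',\wtQ''$ — has zero constant term and degree at most $r$, all three are, in the $y$-coordinates, elements of $\Pol$. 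Because the Taylor polynomial of order $r$ at the origin commutes with the linear restriction to the coordinate subspace $\{y_{m+1}=\dots=y_n=0\}$, one has $\wtQ'(y_1,\dots,y_m)=\tT_0(h|_{\Gamma^m},r,m)(y_1,\dots,y_m)=:P(y_1,\dots,y_m)$.

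Then, for each $k\in\N$, I would define $S_k\in\Pol$ to be the polynomial whose expression in the $y$-coordinates is $\wtS_k(y):=P_k(y_1,\dots,y_m)+\wtQ''(y)$; this is legitimate since $P_k\in\Polm\subset\Pol$ and $\wtQ''$ have zero constant term and degree $\le r$. Restricting $S_k$ to $\Gamma^m$ amounts to setting $y_{m+1}=\dots=y_n=0$, which gives $S_k|_{\Gamma^m}=P_k+\wtQ''(\cdot,0,\dots,0)=P_k$, as required. Moreover $\wtS_k-\wtQ=P_k-P\to 0$ in $\Pol$ by hypothesis, and since $R\mapsto R(By)$ is a bi-Lipschitz isomorphism of $\Pol$, this forces $S_k\to\tT_0(h,r,n)$ in $\Pol$, which completes the proof. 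I do not expect a serious obstacle here: the only delicate point is that the splitting $\wtQ=\wtQ'+\wtQ''$ must be taken with respect to the very orthonormal basis that defines $\tT_0(h|_{\Gamma^m},r,m)$, so that $\wtQ'=P$; everything else is elementary finite-dimensional linear algebra. (This lemma is precisely what reduces Case 2 to Case 1 in the proof of Theorem B: for $k$ large $S_k\in B^N(\tT_0(h,r,n),R_m)$ while $S_k|_{\Gamma^m}=P_k\in\sigma^1(r,s_m,m)$, so by the Case 1 argument applied to $S_k$ in place of $h$, system \eqref{non_tocca} admits a solution for $S=S_k$, contradicting the hypothesis.)
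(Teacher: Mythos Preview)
Your proof is correct and essentially identical to the paper's own argument: both complete an orthonormal basis of $\Gamma^m$ to one of $\R^n$, split $\tT_0(h,r,n)$ in the adapted coordinates into its ``tangential'' part (your $\wtQ'$, the paper's $\tT_0(h|_{\Gamma^m},r,m)$) and its ``transversal'' part (your $\wtQ''$, the paper's $q_h$), and then define $S_k$ by replacing the tangential part with $P_k$. Your extra remark that the coordinate change $R\mapsto R(By)$ is bi-Lipschitz makes explicit what the paper leaves implicit, and your parenthetical on how the lemma feeds into Case~2 is accurate.
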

  
 \begin{proof}
 	 We indicate by $A_1,\dots,A_m\in \tU(m,n)$ an orthonormal basis of $\Gamma^m$, and we choose $n-m$ orthonormal supplementary vectors $A_{m+1},\dots,A_n$ to form a orthonormal basis of $\R^n$. In the coordinates $(x_1,\dots,x_n)$ associated to $A_1,\dots,A_n$, the restriction of any polynomial $Q\in \Pol$ to the subspace $\Gamma^m$ is obtained  by simply setting $x_{m+1}=\dots=x_n=0$ in the expression of $Q$. Conversely,  any polynomial $P(x_1,\dots,x_m)\in \Polm$ depending only on the first $m$ variables is the projection on $\Gamma^m$ of any polynomial $Q\in \Pol$ of the form $Q(x_1,\dots,x_n)=P(x_1,\dots,x_m)+q(x_1,\dots,x_n)$, where $q\in \Pol$ verifies $q(x_1,\dots,x_m,0)=0$. 
 
By the above discussion and with slight abuse of notation, one can define the polynomial
\begin{equation}\label{quacca}
	q_h=q_h(x_1,\dots,x_n):= \tT_0(h,r,n)(x_1,\dots,x_n)-\tT_0(h|_{\Gamma^m},r,m)(x_1,\dots,x_m)
\end{equation}
which verifies $q_h(x_1,\dots,x_m,0)=0$ by construction. 

Then, for $k\in \N$, we consider the polynomials
\begin{equation}\label{quacchera}
	S_k=S_k(x_1,\dots,x_n):= P_k(x_1,\dots,x_m)+q_h(x_1,\dots,x_n)\ ,
\end{equation}
where $\{P_k\}_{k\in \N}$ is the sequence approaching $\tT_0(h|_{\Gamma^m},r,m)$ introduced above. The sequence $\{S_k\}_{k\in \N}$ has the properties we seek. Infact, as $q_h(x_1,\dots,x_m,0)=0$, on the one hand $S_k$ verifies 
\begin{equation}
	S_k|_{\Gamma^m}=S_k(x_1,\dots,x_m,0)=P_k\qquad \forall \ k \in \N\ ;
\end{equation}
on the other hand, as $P_k\longrightarrow \tT_0(h|_{\Gamma^m},r,m)$ by hypothesis, by taking \eqref{quacca}-\eqref{quacchera} into account one has 
\begin{equation}
	S_k\longrightarrow \tT_0(h|_{\Gamma^m},r,m)+q_h= \tT_0(h,r,n)\ .
\end{equation}

\end{proof}

   Since by Lemma \ref{Luminy} one has  $S_k|_{\Gamma^m}=P_k$, and since $P_k\in \sigma^1(r,s_m,m)$ by construction, the same arguments developed at Case 1 yield that for any $k\in \N$ the system
\begin{align}\label{Napoli}
\begin{split}
\begin{cases}
(u_1,\dots,u_m)\in \tU(m,n)\\[0.5em]
a(m):=(a_{21},\dots,a_{2s_m},\dots,a_{m1},\dots,a_{ms_m})\in \R^{(m-1)\times s_m}\\[0.5em]
v=u_1+\sum_{j=2}^m a_{j1}\,u_j\\[0.5em]
(S_k)^1_{0}[v]=(S_k)^1_{0}[u_2]=\dots= (S_k)^1_{0}[u_m]=0\\[0.5em]
\cH_{m,\ell,\alpha}^{S_k,0}(v,u_2,\dots,u_m,a(m,s_m))=0 \quad  i=1,\dots,m\ ,\ \   \alpha=1,\dots,s
\end{cases}
\end{split}
\end{align}
must be verified. However, this fact and the fact that, by Lemma \ref{Luminy}, one also has $S_k\longrightarrow \tT_0(h,r,n)$, contradicts the hypotheses of Theorem B. Hence, we must have $\tT_0(h|_{\Gamma^m},r,m)\not\in \Sigma(r,s_m,m)\backslash \sigma(r,s_m,m) $. 

\medskip 

By the discussion at Cases 1-2 above, the assumptions of Theorem B imply that - for any $m$-dimensional subspace $\Gamma^m$, with $m\in\{2,\dots,n-1\}$ - the Taylor polynomial $\tT_0(h|_{\Gamma^m},r,m)$ lies outside of $ \Sigma(r,s_m,m)$. Hence $\tT_0(h,r,n)\in \Pol \backslash \cV(r,s_m,m,n)$. This, together with \eqref{Omega} and Theorem A, concludes the proof. 

\subsection{Proof of the Corollaries}
\subsubsection{Proof of Corollary B1}
We start by studying the one-dimensional affine subspaces orthogonal to $\grad h(I_0)\neq 0$. Hypothesis \eqref{3-jet} is equivalent to hypothesis \eqref{s_1+1 jet} in Theorem B with $r=3$, $s_1=2$, whence the thesis. 

On the other hand, for any fixed $m\in\{2,\dots,n-1\}$, since $\tG(m,n)$ and $\S^n$ are both compact, by hypothesis \eqref{3-jet} there exists $\tau_m>0$ such that for any $m$-dimensional affine subspace $I_0+\Gamma^m$ orthogonal to $\grad h(I_0)$, and for any vector $w\in \S^n\cap \Gamma^m$ we have 
\begin{equation}\label{alto}
h_{I_0}^1[w]=0\quad , \qquad 	|h_{I_0}^2[w,w]|+|h^3_{I_0}[w,w,w]|\ge \tau_m>0\ .
\end{equation}
By continuity, up to dividing the constant $\tau_m$ by a factor two, relation \eqref{alto} holds uniformly also for all polynomials in a small ball around $\tT_0(h,3,n)$. Hence,  
$h$ matches the hypotheses at point $ii)$ of Theorem B for $r=3$ and $s_m=2$, whence the thesis for affine subspaces of dimension higher or equal than two. 

\subsubsection{Proof of Corollary B2}

In the proof of Theorem A we have set $\Omegone:=\bigcup_{m=1}^{n-1} \cV(r,s_m,m,n)$ (see \eqref{Omega}). Furthermore, formula \eqref{V} ensures that for any given function $h$ of class $C^{2r-1}$ around $I_0$
\begin{equation}
\tT_{I_0}(h,r,n)\in \cV(r,s_m,m,n) \quad \stackrel{\text{by definition}}{\Longleftrightarrow} \quad \substack{\exists \Gamma^m \in \tG(m,n),\ \Gamma^m\perp \grad h(I_0) \text{ s.t.}\\ \tT_{I_0}(h|_{\Gamma^m},r,m)\in \Sigma(r,s_m,m):=\overline \sigma(r,s_m,m)}\ .
\end{equation}  
Since one is free to choose the order of the orthonormal vectors spanning $\Gamma^m$, without any loss of generality we can also write 
\begin{equation}
\tT_{I_0}(h,r,n)\in \cV(r,s_m,m,n) \quad \Longleftrightarrow \quad \substack{\exists \Gamma^m \in \tG(m,n),\ \Gamma^m\perp \grad h(I_0) \text{ s.t.}\\ \tT_{I_0}(h|_{\Gamma^m},r,m)\in \Sigma^1(r,s_m,m):=\overline \sigma^1(r,s_m,m)}\ .
\end{equation}  

In the proof of Theorem B, we have also seen that for any $m\in\{1,\dots,n-1\}$, and for any given subspace $\Gamma^m\in \tG(m,n)$ orthogonal to $\grad h(I_0)$, condition $\tT_{I_0}(h|_{\Gamma^m},r,m)\in \sigma^1(r,s_m,m)$ holds if and only if system \eqref{so un cavolo} (if $m=1$) or \eqref{machenneso} (if $m\ge 2$) admits a solution when $Q$ is set to be the Taylor polynomial at order $r$ of function $h$.

Hence we have that, for any fixed $m\in\{1,\dots,n-1\}$, condition
$
\tT_{I_0}(h,r,n)\in \cV(r,s_m,m,n)
$
is equivalent to asking that $ \tT_{I_0}(h,r,n)$ belongs to the closure in $\Pollo$ of the set of polynomials solving system \eqref{so un cavolo} (if $m=1$) or system \eqref{machenneso} (for $m\in\{2,\dots,n-1\}$). 

The thesis follows by the arguments above and by \eqref{Omega}. 

\subsubsection{Proof of Corollary B3}

By the proof of Theorem B, the hypotheses in $i)$ and $ii)$ amount to asking for the existence of a real-analytic curve 
\begin{equation}
\gamma(t):=
\begin{cases}
wt \quad &\text{ for } m=1\\
\left(t,\sum_{i=1}^{+\infty}a_{2i}t^i,\dots,\sum_{i=1}^{+\infty}a_{mi}t^i\right)\quad &\text{ for } m=\{2,\dots,n-1\}
\end{cases}
\end{equation}
whose image is contained in some $m$-dimensional subspace orthogonal to $\grad h(I_0)\neq 0$, and such that $(\pi_{\Gamma^m}\grad h)\circ  \gamma$ has a zero of infinite order at $\gamma(0)=I_0$. By analyticity, then, $(\pi_{\Gamma^m}\grad h)\circ \gamma$ is identically zero. This implies by Definition \ref{def steep} that $h$ is not steep. 

\section{Partition of the set of $s$-vanishing polynomials}\label{Theorem C prova}

The proof of Theorems C1-C2-C3 is quite long and requires intermediate results which will be presented in this section. Before stating them, in the following two paragraphs we introduce some definitions and notations.

\subsection{Initial setting}\label{initial}
Consider three integers $r,m\ge 2$, and $s\in\{2,\dots,r-1\}$.	In sections \ref{rs_vanishing_polynomials}-\ref{genericity}, we have indicated by $\Sigma(r,s,m)\subset \Polm$ the closure of the set $\sigma(r,s,m)$ of $s$-vanishing polynomials. In particular, by \eqref{ping}-\eqref{pong} one has
\begin{align}\label{sigma}
\begin{split}
&	\sigma(r,s,m)=\bigcup_{i=1}^m \sigma^i(r,s,m)\quad , \qquad \sigma^i(r,s,m):=\Pi_{\Polm} Z^i(r,s,m)\ ,\\
&\Sigma(r,s,m)=\bigcup_{i=1}^n\Sigma^i(r,s,m):=\bigcup_{i=1}^n\overline \sigma^i(r,s,m):= \bigcup_{i=1}^n\overline{ \Pi_{\Polm} Z^i(r,s,m)}\ ,
\end{split} 
\end{align}
where the sets $Z^i(r,s,m)\subset \Polm\times \centinai$, with $i\in\{1,\dots,m\}$, are defined in \eqref{zrsm}, and one has decomposition \eqref{pang}, namely
$$
\Polm\times \centina \supset Z(r,s,m)=:\bigcup_{i=1}^m Z^i(r,s,m)\ .
$$
The expression of the sets $Z^i(r,s,m)$,  $i\in\{1,\dots,m\}$, is given explicitly \footnote{Actually, in Theorem \ref{ideal}, only the expression of $Z^1(r,s,m)$ is explicit. However, as it was already pointed out in section \ref{rs_vanishing_polynomials}, the cases $i=2,\dots,m$ are trivial generalizations of the case $i=1$: in order to find the expression of $Z^i(r,s,m)$ for $i\neq 1$, one simply has to follow the same steps needed to find the expression for $Z^1(r,s,m)$, and to exchange the rôle of the first coordinate with the $i$-th one.} in Theorem \ref{ideal}.

%We now develop some arguments of section \ref{genericity}, in particular we delve into some aspects of the proof of Lemma \ref{Palermo}. 

In the previous sections, we have seen that, in order to check if a given polynomial $Q\in \Pollo$ is steep at the origin\footnote{It is clear that the arguments devoped in the sequel hold also if the considered point is not the origin.} on a fixed subspace $\Gamma^m$ orthogonal to $\grad Q(0)\neq 0$, one must check whether the restriction $P:=Q|_{\Gamma^m}\in \Polm$ belongs to the complementary of $\Sigma(r,s,m):=\overline{\sigma}(r,s,m)$. We now claim that it is not strictly necessary to consider the closure of the whole set $\sigma(r,s,m)$. Indeed, in practice, the curves on which the $s$-vanishing condition must be tested are minimal arcs with uniform characteristics, like the one defined in Theorem \ref{arco_minimale}.

Namely, by the arguments in the proof of Lemma \ref{Palermo} - for any given $Q\in \Pollo$ and for any fixed $\Gamma^m$ perpendicular to $\grad Q(0)\neq 0$, it is sufficient to check if there exists a threshold $\lambda_0>0$ such that, for any $0<\lambda\le \lambda_0$, there exists an interval $I_\lambda\subset [-\lambda,\lambda]$ of length $\lambda/\tK$ \footnote{More details about the threshold $\lambda_0$ are given in Lemma \ref{Palermo}, whereas the constant $\tK$ is the one introduced in Theorem \ref{arco_minimale}.}, on which - for any curve $\gamma\in \arco$ verifying the Bernstein's inequality \eqref{Bernie} - the composition $(\grad P)\circ\gamma:=(\grad Q|_{\Gamma^m})\circ \gamma$ has no zeros of order greater or equal than $s$. In particular, we are interested in testing the $s$-vanishing conditions on those analytic curves $\gamma$ over $I_\lambda$ that, for some $i\in\{1,\dots,m\}$, verify
\begin{equation}\label{Sanders}
\gamma(t)=
\left(
\begin{matrix}
x_1(t)=\sum_{k=1}^{+\infty} a_{1k}\, t^k\\
\dots \\
x_{i-1}(t)=\sum_{k=1}^{+\infty} a_{{(i-1)}k}\, t^k\\
x_i(t)=t\\
x_{i+1}(t)=\sum_{k=1}^{+\infty} a_{{(i+1)}k}\, t^k\\
\dots\\
x_m(t)=\sum_{k=1}^{+\infty} a_{mk}\, t^k\\
\end{matrix}
\right)
\quad , \qquad  	\max_{u\in I_\lambda}\,\max_{\substack{j\in\{1,\dots,m\}\\j\neq i}}|a_{jk}(u)|\le \frac{\tM(r,n,m,k)}{\lambda^{k-1}}\ .
\end{equation}

By Theorem \ref{arco_minimale}, the constant $\tM=\tM(r,n,m,k)$ in \eqref{Sanders} can be explicitly computed and it is uniform for all curves $\gamma\in \arco$. 

As we have shown in the proof of Lemma \ref{Palermo}, for any given $P\in\Polm$, the threshold $\lambda_0$ - if it exists - goes to zero with the distance of $P$  to the "bad" set $\sigma(r,s,m)$. Therefore, by formula \eqref{Sanders}, the Taylor coefficients of the curves $\gamma$ on which the $s$-vanishing condition must be tested may take any value, except for those of order one which, independently from the choice of $\lambda>0$, are always uniformly bounded by $\tM(r,n,m,1)$. 

Inspired by the above reasonings, with the notations in \eqref{Sanders}, we give the following

\begin{defn}\label{comunismo}
	
	For $i=1,\dots,m$, we introduce the sets
	\begin{align}\label{cappello}
	\begin{split}
	&	\arcoih :=\left\{\gamma \in\arcoi| \max_{\substack{j\in\{1,\dots,m\}\\j\neq i}}\{|a_{j1}(0)|\}\le \tM(r,n,m,1)\right\}\\
	\end{split}
	\end{align}
	and
	\begin{equation}
	\arcoh:=\bigcup_{i=1}^m \arcoih \ ,
	\end{equation}
	and we denote respectively  by $\centinaih\subset \centinai$ and by $\centinah\subset \centina$ their associated subsets of $s$-truncations. 
	
\end{defn}

\begin{rmk}\label{palla steep}
	By formula \eqref{cappello}, if one introduces the decomposition
	\begin{equation}\label{ordinedueh}
	\centinaih=\centinaunoih\times\centinadueih
	\end{equation}
	as in \eqref{ordinedue}, the space $\centinaunoih$ is compact and $\centinadueih\equiv \centinaduei$.
\end{rmk}

\begin{defn} 
	In section \ref{appropriata} (see formula \eqref{W}), we defined the set $W^1(r,m)$ as
	
	\begin{equation}\label{W1}
	W^1(r,m):=\mathscr{F}^1(\Polm\times \R^{m-1})\ ,
	\end{equation}
	where the function $\mathscr{F}^1$ was introduced in \eqref{effeuno}-\eqref{lunetta}. 
	Similarly, for any $j\in\{2,\dots,m\}$, taking Remark \ref{altri i} into account, we had set 
	\begin{align}\label{Wi}
	\begin{split}
	W^j(r,m):=\mathscr{F}^j(\Polm\times \R^{m-1})
	\ .
	\end{split}
	\end{align}
	
	Now, by \eqref{cappello} and by the above discussion, for any $i\in\{1,\dots,m\}$ it makes sense to define also
	\begin{align}\label{W'}
	\begin{split}
\widehat W^i(r,m):=\mathscr{F}^i(\Polm\times \overline B^{m-1}(0,\tM(r,n,m,1)))\ .
	%\widehat W(r,m):=&\bigcup_{i=1}^m \widehat W^i(r,m)\ .
	\end{split}
	\end{align}

\end{defn}

We remind that, due to Definition \ref{comunismo} and to Remark \ref{assafaddij}, for any given $i\in\{1,\dots,m\}$ there exists a polynomial bijection $\fU^i$ between $\Polm\times \centinaunoih$ and $\widehat W^i(r,m)$: one is free to work either in the standard coordinates $ (p_\mu,\ta)\in\R^M\times \R^{(m-1)s}$, with $M:=\dim \Polm$, or in the adapted coordinates $(\tp_\mu,\ta)\in  \widehat W^i(r,m)$.

By the arguments above, without any loss of generality, for any fixed $i\in\{1,\dots,m\}$ it is sufficient to consider the set of those polynomials $P\in \Polm$ verifying the $s$-vanishing condition on the $s$-jet $\jeth\in\centinaih$ of some curve $\gamma\in \arcoih$. Namely, following \eqref{zrsm} and \eqref{sigma}, for $i=1,\dots,m$ we introduce the semi-algebraic sets 
\begin{align}\label{sigma2}
\begin{split}
\widehat Z^i(r,s,m)
:= &\{  (P,\jeth)\in\Polm\times \centinaih\}|(P,\jeth) \text{ satisfies }\\ &\quad q_{\ell \alpha}^i\circ\Phi^i(P,\jeth)=0
\text{  for all } \ell\in\{1,...,m\}, \alpha\in\{0,...,s\}\}\\
\widehat Z(r,s,m):= & \bigcup_{i=1}^m \widehat Z^i(r,s,m)\\ 
\widehat \sigma^i(r,s,m):= & \Pi_{\Polm} \widehat Z^i(r,s,m)\quad , \qquad \widehat \sigma(r,s,m) =  \bigcup_{i=1}^m \widehat \sigma^i(r,s,m)\\	
\widehat \Sigma(r,s,m):= &\bigcup_{i=1}^n\widehat\Sigma^i(r,s,m):=\bigcup_{i=1}^n \text{closure}\left(\widehat \sigma^i(r,s,m)\right) 
\end{split}
\end{align}
and, of course, we have 
\begin{equation}\label{contenuto}
\widehat Z^i(r,s,m)\subset Z^i(r,s,m)\quad , \qquad 	\widehat \sigma^i(r,s,m)\subset \sigma^i(r,s,m)\quad \forall \  i\in \{1,\dots,m\}. 
\end{equation}

For further convenience, we also state the following simple
\begin{lemma}\label{ventana}
	$\widehat \sigma(r,s,m)$ has codimension $s+m=\text{codim }\sigma(r,s,m)$ in $\Polm$. 
\end{lemma}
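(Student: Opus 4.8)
The statement to be proved is Lemma~\ref{ventana}: the set $\widehat\sigma(r,s,m)$ has codimension $s+m$ in $\Polm$, the same as $\sigma(r,s,m)$. Because of the inclusions \eqref{contenuto}, we automatically have $\widehat\sigma^i(r,s,m)\subset\sigma^i(r,s,m)$, hence $\text{codim }\widehat\sigma^i(r,s,m)\ge \text{codim }\sigma^i(r,s,m)=s+m$ by Corollary~\ref{codimensione}. So the only content is the reverse inequality: $\widehat\sigma^i(r,s,m)$ cannot be strictly smaller than $\sigma^i(r,s,m)$ in dimension. The key point is that passing from $\arcoi$ to $\arcoih$ only restricts the \emph{linear} Taylor coefficients $a_{j1}$ of the curve to a compact ball $\overline B^{m-1}(0,\tM(r,n,m,1))$, which is a full-dimensional subset of $\R^{m-1}\simeq\centinaunoi$; all higher-order coefficients are left free.

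\textbf{Key steps.} First I would fix $i$ (say $i=1$, the other cases being identical after permuting coordinates) and recall from the proof of Corollary~\ref{codimensione} that, in the adapted chart $\Upsilon^1$, the set $Z^1(r,s,m)$ is cut out by the $m(s+1)$ equations $\tQ_{\ell\alpha}(\tP_\ta,\ta,\jeta)=0$ whose Jacobian \eqref{Jacobien} has a maximal-rank $m(s+1)\times m(s+1)$ submatrix with respect to the coefficients $\tp_{\nu(i,\beta)}$, \emph{independently of the value of $\ta$ and of $\jeta$}. In particular this holds for every $\ta$ in the compact ball $\overline B^{m-1}(0,\tM(r,n,m,1))$. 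By the implicit function theorem, exactly as in \eqref{parametrizzazione}, the set $\widehat Z^1(r,s,m)$ is the graph of analytic functions $g_{i\alpha}$ over the coordinates indexed by $\cM(\alpha)$ together with the parameters $(\ta,a_{22},\dots,a_{ms})$, now with $\ta$ constrained to the ball. Hence
\begin{equation}\label{dimenstimate}
\dim\widehat\sigma^1(r,s,m)=\dim\widehat W^1(r,m)+\dim\centinadue-m(s+1)=\dim\Polm+(m-1)+(m-1)(s-1)-m(s+1),
\end{equation}
which is precisely $\dim\Polm-(s+m)$, i.e. $\text{codim }\widehat\sigma^1(r,s,m)=s+m$. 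The projection $\Pi_{\Polm}$ does not increase dimension, and combined with the reverse inequality from \eqref{contenuto} this pins the codimension down exactly. Finally, by \eqref{sigma2} $\widehat\sigma(r,s,m)=\bigcup_{i=1}^m\widehat\sigma^i(r,s,m)$ is a finite union of sets each of codimension $s+m$, so $\text{codim }\widehat\sigma(r,s,m)=s+m$; semi-algebraicity follows from Tarski--Seidenberg (Th.~\ref{Tarski_Seidenberg}) since each $\widehat Z^i(r,s,m)$ is semi-algebraic.

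\textbf{Main obstacle.} The only subtle point is making sure the implicit-function-theorem argument of Corollary~\ref{codimensione} genuinely survives the restriction of $\ta$ to a compact set without dropping dimension. This is not automatic for an arbitrary subvariety, but here it is immediate precisely because the maximal-rank submatrix of the Jacobian \eqref{Jacobien} (the block associated to the multi-indices \eqref{mula}) is invertible for \emph{all} $\ta$, not just generically; hence the local parametrization \eqref{parametrizzazione} is uniform in $\ta$ and the fibres over the compact ball have the expected dimension. One should also check that $\widehat W^1(r,m)=\mathscr F^1(\Polm\times\overline B^{m-1}(0,\tM(r,n,m,1)))$ has full dimension $\dim\Polm+(m-1)$, which is clear since $\mathscr F^1$ is a polynomial bijection (Remark~\ref{assafaddij}) and $\overline B^{m-1}$ is full-dimensional. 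With these observations in place the dimension count \eqref{dimenstimate} goes through and the lemma follows.
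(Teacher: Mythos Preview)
Your proposal is correct and follows essentially the same approach as the paper's own proof. Both arguments rest on the observation that the maximal-rank submatrix of the Jacobian \eqref{Jacobien} (the block corresponding to the multi-indices $\nu(i,\beta)$) is invertible \emph{for every} value of $\ta$ and of the higher-order Taylor coefficients, so restricting $\ta$ to the compact ball $\overline B^{m-1}(0,\tM(r,n,m,1))$ leaves the implicit-function-theorem parametrization \eqref{parametrizzazione} intact; since $\dim\centinaxh=\dim\centinax$, the dimension count of Corollary~\ref{codimensione} carries over verbatim. Your write-up is slightly more explicit than the paper's (you isolate the easy inequality coming from \eqref{contenuto} and spell out the arithmetic in \eqref{dimenstimate}), but the substance is identical.
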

\begin{proof}
	By the third line of \eqref{sigma2} and by the second inclusion in \eqref{contenuto}, it suffices to prove the statement for $\widehat \sigma^1(r,s,m)$. By the first line of \eqref{sigma2} and by Definition \ref{comunismo}, one has 
	\begin{align}\label{zcappello}
	\begin{split}
	\widehat Z^1(r,s,m)=Z^1(r,s,m)\bigcap \left\{(P,\jeth)\in \Polm \times \centinaxh \right\}\ .
	\end{split}
	\end{align}

	As it was shown in Corollary \ref{codimensione}, the Jacobian associated to the equalities determining $Z^1(r,s,m)$ has full rank $ms+m$. 
	Namely, by the discussion below expression \eqref{Jacobien}, in the adapted coordinates of section \ref{appropriata},  such a Jacobian has non-zero pivots corresponding to the derivatives w.r.t. the coefficients of the polynomial $\tP_\ta$ associated to multi-indices in the family \eqref{mula}. As we had shown in the proof of Corollary \ref{codimensione}, this fact and the Implicit Function Theorem imply that for any pair $(P,\jet)$ belonging to $Z^1(r,s,m)$, one can express $ms+m$ coefficients of $P$ as implicit functions of the other coefficients of $P$ and of the $(m-1)s$ parameters of $\jet$. This was the argument that led to estimate $\text{codim } \sigma^1(r,s,m)=s+m$ in Corollary \ref{codimensione}. The thesis follows by putting this argument together  with formulas \eqref{sigma2}-\eqref{zcappello} and with the fact that $\dim \centinax=\dim \centinaxh$.  \end{proof}

\subsection{Partition of $\Polm$ and $\widehat W^1(r,m)$}

Let $r,m\ge 2$ be two integers. In this paragraph, we introduce a partition of the spaces $\Polm$ and $\widehat W^1(r,m)$ which will turn out to be useful in the sequel. In order to do this, we first need to introduce a family of multi-indices.

\subsubsection{A family of multi-indices}

For $b,c\in\{2,\dots,m\} $, $b\le c$ \footnote{We have set $b\le c$ in \eqref{pi copto} only for convenience, in order not to have two indices $b,c$ corresponding to the same multi-index $\mu\in \N^m$. Infact, it is clear that if we eliminate this constraint we have $\varpi(b,c)=\varpi(c,b)$ for all $b,c\in\{2,\dots,m\}$.}, we set
\begin{equation}\label{pi copto}
\varpi(b,c ):=\mu=(\mu_1,\dots,\mu_m) \in \N^m \ | \ \mu_1=0\ ,\ \  \mu_j=\delta_{jb}+\delta_{j c} \quad \forall j\in\{2,\dots,m\}\ .
\end{equation}
Comparing \eqref{pi copto} with the sub-family $\nu(\ti,1)$ defined in \eqref{mula}, it is plain to check that one has the disjoint union
\begin{equation}\label{altri_indici}
\left(\bigcup_{\substack{b,c=2\\b\le c}}^m  \{\varpi(b,c)\}\right) \bigsqcup \left(\bigcup_{\ti=1}^m \{\nu(\ti, 1)\}\right)=\{\mu\in \N^m\ |\  |\mu|=2\}\ .
\end{equation}

Moreover, we have the following

\begin{lemma}\label{invarianza_indici}
	For any polynomial $P\in \Polm$, 	the coefficients $p_\mu$  associated to the multi-indices $\mu$ belonging to the family \eqref{pi copto} are invariant under the transformations of paragraph \ref{appropriata}. Namely, using the notations in \eqref{pol_trasf}, for any given $\ta\in\centinaunoh$ one has 
	$$
	\tp_{\varpi(b,c)}=p_{\varpi(b,c)} \qquad \text{ for all } b,c\in\{2,\dots,m\}\quad , \qquad  b\le c\ .
	$$
\end{lemma}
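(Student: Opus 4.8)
The plan is to trace through the change of coordinates $\cL_\ta$ from paragraph \ref{appropriata} and show that the monomials indexed by $\varpi(b,c)$, with $b,c \in \{2,\dots,m\}$ and $b \le c$, are untouched. Recall from \eqref{coordinate} that $\cL_\ta$ sends $(x_1,\dots,x_m) \mapsto (\ty_1,\dots,\ty_m)$ with $\ty_1 = x_1$ and $\ty_j = x_j - a_{j1} x_1$ for $j = 2,\dots,m$; equivalently, by \eqref{conviene}, the inverse substitution is $x_1 = \ty_1$ and $x_j = a_{j1}\ty_1 + \ty_j$ for $j \ge 2$. So the pull-back $\tP_\ta(\ty) = P(\cL_\ta^{-1}(\ty))$ is obtained from $P(x) = \sum_\mu p_\mu x^\mu$ by the replacement $x_1 \rightsquigarrow \ty_1$, $x_j \rightsquigarrow \ty_j + a_{j1}\ty_1$.

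First I would isolate which monomials of $\tP_\ta$ can produce a term proportional to $\ty^{\varpi(b,c)} = \ty_b \ty_c$ (for $b < c$) or $\ty_b^2$ (for $b = c$). Since the substitution only adds multiples of $\ty_1$ — never of any $\ty_k$ with $k \ge 2$ — expanding $x^\mu = \ty_1^{\mu_1} \prod_{j\ge 2}(\ty_j + a_{j1}\ty_1)^{\mu_j}$ yields a sum of monomials each of which is $\ty_1^{\mu_1 + p}\, \ty_{j_1}\cdots$ obtained by choosing, in each factor $(\ty_j + a_{j1}\ty_1)^{\mu_j}$, some number of the $\ty_j$ terms and the rest $a_{j1}\ty_1$ terms. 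In particular every monomial appearing in the expansion of $x^\mu$ has, for each $k \ge 2$, a power of $\ty_k$ that is at most $\mu_k$, with equality only in the single monomial $\ty_1^{\mu_1}\prod_{j\ge2}\ty_j^{\mu_j} = \ty^\mu$ itself. Hence the coefficient of $\ty^{\varpi(b,c)}$ in $\tP_\ta$ receives a contribution from $p_\mu x^\mu$ only when the target monomial $\ty^{\varpi(b,c)}$ has, in each coordinate $k \ge 2$, power $\le \mu_k$. Since $\varpi(b,c)$ has zero power in the first coordinate and total degree $2$ concentrated in coordinates $b,c \in \{2,\dots,m\}$, the only index $\mu$ with $|\mu| = 2$ and $\mu_k \ge (\varpi(b,c))_k$ for all $k \ge 2$ is $\mu = \varpi(b,c)$ itself (any other $\mu$ with $|\mu|=2$ would have strictly smaller power in $b$ or in $c$, or some mass in the first coordinate which cannot be created). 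Monomials $p_\mu x^\mu$ with $|\mu| > 2$ cannot contribute either, because every monomial in the expansion of $x^\mu$ has total degree $|\mu| > 2$, whereas $\ty^{\varpi(b,c)}$ has degree $2$. Therefore the coefficient of $\ty^{\varpi(b,c)}$ in $\tP_\ta$ equals exactly $p_{\varpi(b,c)}$, i.e. $\tp_{\varpi(b,c)} = p_{\varpi(b,c)}$.

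The final step is to note that this argument is stated for the chart $\Upsilon^1$ / $\mathscr F^1$ (the case $i=1$), and by Remark \ref{altri i} the same holds verbatim for every $\Upsilon^i$, $\mathscr F^i$, since those transformations differ only by a relabelling of the parametrizing coordinate: the substitution always replaces $x_i$ by $\ty_i$ and $x_j$ by $\ty_j + a_{ji}\ty_i$ for $j\neq i$, and the multi-indices $\varpi(b,c)$ with $b,c \ge 2$ never involve the parametrizing coordinate $1$ (and, in the general case, one restricts to $b,c$ distinct from the parametrizing index). I do not expect any real obstacle here: the only point requiring a little care is the bookkeeping that shows no higher-degree monomial $p_\mu x^\mu$ with $\mu \in \mathcal M_m(\cdot)$ can "collapse" onto $\ty^{\varpi(b,c)}$, which is handled by the degree-preservation observation above (the substitution is degree-preserving, so degree $2$ can only come from degree $2$). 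Hence $\tp_{\varpi(b,c)} = p_{\varpi(b,c)}$ for all $b \le c$ in $\{2,\dots,m\}$, as claimed.
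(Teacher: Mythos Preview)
Your argument is correct and follows essentially the same idea as the paper's proof: both exploit that $\cL_\ta$ is a degree-preserving shear which only mixes the first variable into the others, so quadratic monomials $\ty_b\ty_c$ with $b,c\ge 2$ can only arise from the corresponding $x_bx_c$. The paper merely runs the computation in the opposite direction (expanding $\tp_\mu\,\ty^\mu$ in terms of $x$ via $\ty_j=x_j-a_{j1}x_1$ and reading off the coefficient of $x_bx_c$), but the content is identical.
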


\begin{proof}
	We indicate by $A_1,\dots,A_m$ the orthonormal basis of $\R^m$ associated to the coordinates $x_1,\dots,x_m$ on which polynomials in $\Polm$ depend.
	
	For any $\ta:=(a_{21},\dots,a_{m1})\in \centinaunoh$, we also denote by
	\begin{equation}\label{old}  v_\ta:=A_1+a_{21}A_2+a_{31}A_3+...+a_{m1}A_m\ ,\ \ u_2:=A_2\ ,\ \ ...\ ,\ \ u_m:=A_m\ ,
	\end{equation}
	the basis associated to the adapted variables defined in Section \ref{appropriata} (see \eqref{chgt_base}), namely
	\begin{equation}\label{coordinate2}
	\ty_1:=x_1\quad ,\qquad  \ty_2=\ty_2(\ta):=x_2-a_{21}\,x_1\quad \dots \quad \ty_m=\ty_m(\ta):=x_m-a_{m1}\,x_1\ .
	\end{equation}
	By \eqref{pol_trasf}, \eqref{mula} and \eqref{altri_indici}, the quadratic terms of the transformed polynomial $\tP_\ta$ read
	\begin{align}\label{Carrara}
	\begin{split}
	\begin{cases}
	\tp_{\nu(1,1)}\ty_1^2=\tp_{\nu(1,1)}x_1^2\\
	\tp_{\nu(\ell,1)}\ty_1\ty_\ell=\tp_{\nu(\ell,1)}x_1(x_\ell-a_{\ell 1}x_1)\quad \ell=2,\dots,m\\
	\tp_{\varpi(j,\ell)}\ty_j\ty_\ell=\tp_{\varpi(j,\ell)}(x_j-a_{j 1}x_1)(x_\ell-a_{\ell 1}x_1)\quad j,\ell=2,\dots,m\ ,\ \  j\le \ell\ .	 	\end{cases}
	\end{split}
	\end{align}
	
	By expression \eqref{Carrara}, we infer that - in the original variables $x_1,\dots,x_m$ - for any $j,\ell\in\{2,\dots,m\}$, $j\le \ell$, the coefficient associated to the monomial $x_j x_\ell$ is $\tp_{\varpi(j,\ell)}$, that is
	$
	p_{\varpi(j,\ell)}= \tp_{\varpi(j,\ell)}\ .
	$

\end{proof}

\subsubsection{Partition}

For any $P\in \Polm$, we set 
\begin{equation}\label{bH}
\Hp: =\left(
\begin{matrix}
2p_{\varpi(2,2)} & p_{\varpi(2,3)} & \dots & p_{\varpi(2,m)} \\
p_{\varpi(2,3)} & 2p_{\varpi(3,3)} & \dots & p_{\varpi(3,m)} \\
\dots & \dots & \ddots & \dots \\
p_{\varpi(2,m)} & p_{\varpi(3,m)} & \dots & 2p_{\varpi(m,m)} 
\end{matrix}
\right)
\end{equation}
and
\begin{equation}\label{S1}
\sS_1^1(r,m):=\{P\in \Polm | \   \det\Hp\neq 0\}\ .
\end{equation}

\begin{rmk}
	Matrix $\Hp$ is invariant under the transformations of section \ref{appropriata}. Namely, by Lemma \ref{invarianza_indici}, we have 
	\begin{equation}
	\Hpa:=\left(
	\begin{matrix}
	2\tp_{\varpi(2,2)} & \tp_{\varpi(2,3)} & \dots & \tp_{\varpi(2,m)} \\
	\tp_{\varpi(2,3)} & 2\tp_{\varpi(3,3)} & \dots & \tp_{\varpi(3,m)} \\
	\dots & \dots & \ddots & \dots \\
	\tp_{\varpi(2,m)} & \tp_{\varpi(3,m)} & \dots & 2\tp_{\varpi(m,m)} 
	\end{matrix}
	\right)=\Hp\ .
	\end{equation}
\end{rmk}

We also define 
\begin{equation}\label{S2}
\sS_2^1(r,m):=\Polm\backslash \sS_1^1(r,m)=\{P\in \Polm | \   \det\Hp= 0\}\ ,
\end{equation}
so that we have the disjoint union
\begin{equation}
\Polm= \sS_1^1(r,m)\bigsqcup \sS_2^1(r,m) \ .
\end{equation}

We now consider the images of $\sS_1^1(r,m)$ and $\sS_2^1(r,m)$ through the transformation $\fU^1$ defined in Remark \ref{assafaddij}, namely
\begin{align}\label{Civitavecchia}
\begin{split}
\scS^1_1(r,m):=\fU^1\left(\sS_1^1(r,m)\times \centinaunoh\right)=&\left\{(\tP_\ta,\ta) \in \widehat W^1(r,m) | \det \Hpa\neq 0\right\}\\
\scS^1_2(r,m):=\fU^1\left(\sS_2^1(r,m)\times \centinaunoh\right)=&\left\{(\tP_\ta,\ta) \in \widehat W^1(r,m)\right. | \det \Hpa=0\Big\}\ .
\end{split}
\end{align}

By \eqref{Civitavecchia}, we have the partition
\begin{equation}
\widehat W^1(r,m)=  \scS^1_1(r,m) \bigsqcup \scS^1_2(r,m) \ .
\end{equation}

\begin{rmk}
	It is clear that the above partition can be implemented also in case one considers adapted variables $(\tP_\ta,\ta)\in \widehat W^i(r,m)$, with $i\in\{2,\dots,m\}$. By suitably modifying the family of indices in \eqref{pi copto}, as well as by introducing an adapted matrix $\Hpi$, it is possible to define sets $\sS_1^i,\sS_2^i$ whose disjoint union yields $\Polm$ and sets $\scS_1^i, \scS_2^i$ whose disjoint union yields $\widehat W^i(r,m)$. However, the underlying reasonings are not conceptually different from the ones we did above, therefore we omit them in order not to burden the exposition.   
\end{rmk}

\subsection{Two important results}

Consider three integers $r,m\ge 2$ and $s\in\{1,\dots,r-1\}$. The two results below are the cornerstones of the proof of Theorems C1-C2-C3.
\begin{thm}\label{chiusuraquattro}
	In case $r\ge 2, s=1$, for any $i\in\{1,\dots,m\}$ the semi-algebraic sets $\widehat \sigma^i(r,1,m)$, and $\widehat \sigma(r,1,m)$ are closed in $ \Polm$, that is, by formulas \eqref{sigma2},
	$$
	\widehat \sigma^i(r,1,m)=\widehat \Sigma^i(r,1,m) \quad  \forall \ i\in\{1,\dots,m\}\quad ,\qquad  \widehat \sigma(r,1,m)=\widehat \Sigma(r,1,m)\ .
	$$
	Moreover, for any $i\in\{1,\dots,m\}$, taking the definition of transformation $\Upsilon^i$ into account (see \eqref{Upsilon}),
	%	\begin{enumerate}
	%	\item for $r\ge 2, s=1$, and for any $i\in\{1,\dots,m\}$, the form of $\widehat \sigma^i(r,s,m)$ can be computed by the means of an algorithm which involves only linear operations. 
	%\item for $r\ge 3$, $s=2$, or $r\ge 4$, $s=3$, and for any $i\in\{1,\dots,m\}$ 
	the set
	$
	\Pi_{\widehat W^i(r,m)}\Upsilon^i(\widehat Z^i(r,1,m))
	$
	is closed in $\widehat W^i(r,m)$, and its form can be explicitly computed.
	
	\medskip 
	
	%\item  in the space $\Polm$ there exist two semi-algebraic sets 
	%\begin{equation}
	%	X^i_1(r,s,m)\subset \sS^i_1(r,m)\ ,\ \ X^i_2(r,s,m)\subset \Polm\backslash \sS^i_1(r,m)
	%\end{equation}
	%satisfying
	%\begin{enumerate}
	%	\item $X_1^i(r,s,m), X_2^i(r,s,m)$ are closed for the induced topology in $\sS^i_1(r,m)$ and in $\Polm\backslash \sS^i_1(r,m)$, respectively;
	%	\item $
	%	\widehat \sigma^i(r,s,m)= X_1^i(r,s,m)\bigsqcup X_2^i(r,s,m) \ ;
	%	$
	%	\item the form of $X_1^i(r,s,m)$ can be explicitly computed by the means of an algorithm involving only linear operations; 
	%	\item in $\Polm$ one has
	%	\begin{align}
	%	\begin{split}
	%	&\text{  codim } X_1^i(r,s,m) =  s+m\\
	%	&\text{  codim } X_2^i(r,s,m) =  s+2m-1\ .
	%	\end{split}
	%	\end{align} 
	%\end{enumerate}
	
	%	\end{enumerate}

\end{thm}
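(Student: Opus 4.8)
\textbf{Proof proposal for Theorem \ref{chiusuraquattro}.}

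The plan is to leverage the fact that in the case $s=1$ the defining equations of $\widehat Z^i(r,1,m)$ become \emph{linear} in the polynomial coefficients once the adapted variables are used, so that no genuine closure operation is needed. Concretely, it suffices to treat $i=1$ (the remaining cases follow by permuting the parametrizing coordinate, exactly as in Theorem \ref{ideal} and Corollary \ref{codimensione}). First I would pass to the adapted chart $\Upsilon^1$ of paragraph \ref{appropriata}, using Lemma \ref{haha} to rewrite the $s$-vanishing condition in the variables $(\tP_\ta,\ta,\jeta)$. For $s=1$ the truncation $\jeta\in\centinadue$ carries \emph{no} nonlinear Taylor coefficients (its only data are the linear coefficients $\ta$, which have been absorbed into the $\tp_\mu$), so by Theorem \ref{ideal} the equations reduce to
\begin{equation}\label{eqdue}
\tp_{\nu(\ell,0)}=0\ ,\qquad \tQ_{\ell 1}(\tP_\ta,\ta,\jeta)=c_\ell\,\tp_{\nu(\ell,1)}=0\ ,\qquad \ell=1,\dots,m\ ,
\end{equation}
with $c_1=2$ and $c_\ell=1$ for $\ell\ge 2$. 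Thus, inside $\widehat W^1(r,m)$, the set $\Pi_{\widehat W^1(r,m)}\Upsilon^1(\widehat Z^1(r,1,m))$ is cut out by the $2m$ linear equations $\tp_{\nu(\ell,0)}=\tp_{\nu(\ell,1)}=0$, hence is a linear subspace intersected with the semi-algebraic set $\widehat W^1(r,m)$; in particular it is closed in $\widehat W^1(r,m)$ and its equations are explicit.

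Next I would transfer this closedness back to $\Polm$. The key point is that $\widehat\sigma^1(r,1,m)=\Pi_{\Polm}\widehat Z^1(r,1,m)$ is the projection onto $\Polm$ of the product of the above closed linear slice with the \emph{compact} factor $\centinaunoh$ (compact by Remark \ref{palla steep}, since for $s=1$ one has $\centinaxh=\centinaunoh$). Because $\fU^1$ is a polynomial bijection between $\Polm\times\centinaunoh$ and $\widehat W^1(r,m)$ with polynomial inverse (Remark \ref{assafaddij}), the slice $\{\tp_{\nu(\ell,0)}=\tp_{\nu(\ell,1)}=0\}\cap\widehat W^1(r,m)$ pulls back to a closed subset $C$ of $\Polm\times\centinaunoh$; and $\widehat\sigma^1(r,1,m)$ is exactly the image of $C$ under the projection $\Polm\times\centinaunoh\to\Polm$. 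Projection along a compact factor is a closed map, so $\widehat\sigma^1(r,1,m)$ is closed in $\Polm$, i.e. equals $\widehat\Sigma^1(r,1,m)$. Taking the finite union over $i=1,\dots,m$ (using the decomposition in \eqref{sigma2}) gives $\widehat\sigma(r,1,m)=\widehat\Sigma(r,1,m)$. Semi-algebraicity of all these sets follows, as usual, from the Theorem of Tarski and Seidenberg (Th. \ref{Tarski_Seidenberg}) together with Proposition \ref{closure-interior-boundary}, since the relevant sets are obtained from algebraic equations by intersection with semi-algebraic sets and by projection; the explicit form of $\Pi_{\widehat W^1(r,m)}\Upsilon^1(\widehat Z^1(r,1,m))$ is read off directly from \eqref{eqdue}.

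I expect the main obstacle to be purely bookkeeping rather than conceptual: one must carefully check that, for $s=1$, every term in the expansions \eqref{vigna}--\eqref{pigna} of Theorem \ref{ideal} beyond the displayed linear ones actually vanishes (the sums over $\cE_m(\ell,\alpha)$ and the $\mathbb{B}_i$ blocks are empty when $s=1$, as already noted in the proof of Lemma \ref{stima_inf} via \eqref{normQmenouno}), so that the defining relations are genuinely linear and no limit of $s$-vanishing polynomials can escape the set. The second delicate point is to make sure that the compactness of $\centinaunoh$ — rather than the non-compact $\centinauno$ appearing in the un-hatted theory — is what makes the projection closed; this is precisely the reason the hatted objects $\widehat Z, \widehat\sigma, \widehat\Sigma$ were introduced in paragraph \ref{initial}, and it is why the analogous statement fails, in general, for $\sigma(r,1,m)$ without the Bernstein bound on the linear coefficients of the arc.
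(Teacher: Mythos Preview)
Your proposal is correct and follows essentially the same route as the paper: reduce the $s=1$ equations in the adapted chart to $\tp_{\nu(\ell,0)}=\tp_{\nu(\ell,1)}=0$ (so that $\Pi_{\widehat W^1(r,m)}\Upsilon^1(\widehat Z^1(r,1,m))$ is a closed linear slice), pull back via the continuous map $\fU^1$, and then project along the compact factor $\centinaunoh$ using Lemma~\ref{projection}. The paper's proof is exactly this, only more tersely stated; your extra remarks on the vanishing of the $\mathbb{B}_i$ blocks and on why the hatted (Bernstein-bounded) version is needed are accurate and clarify points the paper leaves implicit.
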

\begin{thm}\label{partition}
	For any given values of $r\ge 3$, $s\ge 2$, and $i\in\{1,\dots,m\}$ there exist two semi-algebraic subsets of $\Polm$
	\begin{align}
	\begin{split}
	&X_1^i(r,s,m)\subset \sS_1^i(r,m)\ ,\ \ X_2^i(r,s,m)\subset \sS_2^i(r,m)\ , \end{split}
	\end{align}
	and two semi-algebraic subsets of $\widehat W^i(r,m)$
	\begin{align}
	\begin{split}
	&Y_1^i(r,s,m)\subset \scS_1^i(r,m)\ ,\ \ Y_2^i(r,s,m)\subset \scS_2^i(r,m)\ ,\ 
	\end{split}
	\end{align} 
	verifying the following properties:
	\begin{enumerate}
		\item one has the partition
		$
		\widehat \sigma^i(r,s,m)= X_1^i(r,s,m)\bigsqcup X_2^i(r,s,m)\ ;
		$
		\item for $j\in\{1,2\}$, one has $$
		X_j^i(r,s,m)=\Pi_{\Polm}\left((\fU^i)^{-1}(Y_j^i(r,s,m))\right)\ ,
		$$ where $\fU^i$ was defined in Remarks \ref{assafaddij}-\ref{altri i};
		\item $Y_1^i(r,s,m)$ is closed in $\scS_1^i(r,m)$ for the topology induced by $\Polm$;
		\item $X_1^i(r,s,m)$ is closed in $\sS_1^i(r,m)$ for the topology induced by $\Polm$;	
		\item the form of $Y_1^i(r,s,m)$ can be explicitly computed starting from the expression of $\widehat Z^i(r,s,m)$ by the means of an algorithm involving only linear operations.

	\end{enumerate}
	
\end{thm}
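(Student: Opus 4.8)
\textbf{Proof strategy for Theorem \ref{partition}.}

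The plan is to carry out the analysis separately on the two pieces of the partition $\widehat W^i(r,m)=\scS_1^i(r,m)\sqcup \scS_2^i(r,m)$, working throughout in the adapted coordinates $(\tP_\ta,\ta)$ introduced in section \ref{appropriata}, since by Remark \ref{assafaddij} the transformation $\fU^i$ is a polynomial bijection with polynomial inverse, and hence all semi-algebraicness, closedness, and explicit-computability statements transfer between the $P$-coordinates and the $(\tP_\ta,\ta)$-coordinates. Without loss of generality I would treat the case $i=1$, the others being obtained by permuting coordinates. First I would set
\begin{align}\label{def_Y}
\begin{split}
Y_1^1(r,s,m)&:=\Pi_{\widehat W^1(r,m)}\,\Upsilon^1\!\left(\widehat Z^1(r,s,m)\right)\cap \scS_1^1(r,m)\ ,\\
Y_2^1(r,s,m)&:=\Pi_{\widehat W^1(r,m)}\,\Upsilon^1\!\left(\widehat Z^1(r,s,m)\right)\cap \scS_2^1(r,m)\ ,
\end{split}
\end{align}
and correspondingly $X_j^1(r,s,m):=\Pi_{\Polm}\left((\fU^1)^{-1}(Y_j^1(r,s,m))\right)$; by construction this gives properties 1 and 2 immediately (property 1 using that $\sS_1^1\sqcup\sS_2^1=\Polm$ and that $\det \Hp=\det\Hpa$ is invariant under the change of variables by Lemma \ref{invarianza_indici}). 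Semi-algebraicness of $Y_1^1,Y_2^1$ follows from Theorem \ref{ideal} (the ideal of $\widehat Z^1$ is explicit), the Tarski--Seidenberg Theorem \ref{Tarski_Seidenberg}, and Proposition \ref{closure-interior-boundary}, intersecting with the semi-algebraic open set $\{\det\Hpa\neq 0\}$.

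The heart of the matter is properties 3, 4 and 5, all of which concern the component living over $\scS_1^1(r,m)$, i.e.\ where the restricted Hessian block $\Hpa$ is \emph{invertible}. The idea is that on this region one can solve the first nontrivial layer of the $s$-vanishing equations for the linear parameters $a_{21},\dots,a_{m1}$ of the arc. Concretely, the equations $\tQ_{\ell 1}(\tP_\ta,\ta,\jeta)=0$, $\ell=1,\dots,m$ — which by \eqref{vigna}--\eqref{pigna} and the passage to original variables encode $h^2[v,v]=h^2[v,u_\ell]=0$ with $v=u_1+\sum_j a_{j1}u_j$ — are, when $\det\Hpa\neq0$, a linear system in $(a_{21},\dots,a_{m1})$ with invertible coefficient matrix $\Hpa$ (up to the invariant quadratic coefficients), hence uniquely solvable: $a_{j1}$ becomes an explicit \emph{rational} function of the coefficients $\tp_\mu$ with nonvanishing denominator $\det\Hpa$. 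Substituting these expressions back into the remaining equations $\tQ_{\ell\alpha}=0$, $\alpha=2,\dots,s$, and clearing the power of $\det\Hpa$ in the denominator, one obtains a polynomial system in the remaining variables $\tp_\mu$ and the higher truncation parameters $a_{j2},\dots,a_{js}$; the set $Y_1^1(r,s,m)$ is then the zero set of these polynomials intersected with $\{\det\Hpa\neq0\}$, together with the graph constraints expressing $a_{j1}$. This makes property 5 manifest: the algorithm is exactly Gaussian elimination (solving a linear system by Cramer's rule, i.e.\ linear operations on the coefficients) followed by substitution — no quantifier elimination is invoked. For property 3, closedness of $Y_1^1(r,s,m)$ inside $\scS_1^1(r,m)$: a limit point $(\tP_\ta,\ta)$ in $\scS_1^1$ has $\det\Hpa\neq0$, so the rational functions giving $a_{j1}$ and the substituted polynomials are continuous near it, whence the limit still satisfies all the equations — the defining conditions over $\scS_1^1$ are cut out by continuous (indeed rational, pole-free) equations on that region. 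Property 4 is then just the transfer of property 3 through $\fU^1$ and the projection $\Pi_{\Polm}$: since $\scS_1^1(r,m)=\fU^1(\sS_1^1(r,m)\times\centinaunoh)$ fibers over $\sS_1^1(r,m)$ with compact fiber $\centinaunoh$ (Remark \ref{palla steep}), the projection of a closed set is closed.

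The main obstacle I anticipate is precisely the failure of this scheme over $\scS_2^1(r,m)$, where $\det\Hpa=0$: there the linear system $\Hpa\,(a_{21},\dots,a_{m1})^\top=(\cdots)$ is degenerate, so one cannot eliminate $a_{j1}$ by linear operations — the elimination would require genuine quantifier elimination (projecting out $a_{j1}$ from a system where it appears with the degenerate quadratic form), which is why the theorem only claims $Y_2^1$ is semi-algebraic and makes \emph{no} explicit-computability or closedness assertion for it. I would therefore not attempt to control $Y_2^1$ or $X_2^1$ beyond semi-algebraicness, and I would be careful, when proving property 1, to verify that the partition is genuinely disjoint — this is where invariance of $\det\Hp$ under the adapted change of variables (Lemma \ref{invarianza_indici}) is essential, so that membership in $\scS_1^1$ versus $\scS_2^1$ is intrinsic to $P$ and not an artifact of the chosen arc. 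A secondary point requiring care is checking that the linear layer $\tQ_{\ell 1}=0$ really does recover $\Hpa$ as its coefficient matrix in the variables $a_{j1}$ once one accounts correctly, via \eqref{Verdi} and the structure of \eqref{pol_trasf}, for the fact that the $\tp_\mu$ themselves depend polynomially on $\ta$; unwinding this dependence (the $a_{j1}$'s are "hidden" in the $\tp_\mu$, cf.\ Remark \ref{nolinear}) is the one genuinely fiddly computation, but it is linear-algebraic in nature and poses no conceptual difficulty.
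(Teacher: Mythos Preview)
Your overall architecture is right, but the elimination step at the heart of the argument is misdirected: you are eliminating the \emph{wrong} curve parameters. The projection $\Pi_{\widehat W^1(r,m)}$ applied to $\Upsilon^1(\widehat Z^1(r,s,m))\subset \widehat W^1(r,m)\times\centinadueh$ keeps the coordinates $(\tp_\mu,\ta)$ with $\ta=(a_{21},\dots,a_{m1})$ and projects out the \emph{higher-order} truncation parameters $a_{j\alpha}$, $\alpha\in\{2,\dots,s\}$, which live in $\centinadueh$. Your plan solves the $\alpha=1$ layer for the linear parameters $a_{j1}$; but these are \emph{coordinates of the target space} $\widehat W^1(r,m)$, not variables to be eliminated. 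After your substitution you yourself say you are left with ``a polynomial system in the remaining variables $\tp_\mu$ and the higher truncation parameters $a_{j2},\dots,a_{js}$'': that is a subset of $\R^M\times\centinadueh$, not of $\widehat W^1(r,m)$, and the $(m-1)(s-1)$ parameters that actually need to be projected out are still present. You have not shown how to remove them by linear operations, nor why the resulting projection would be closed.

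The paper's proof attacks exactly this gap via a structural lemma (Lemma~\ref{ciaone}): for each $\alpha\in\{2,\dots,s\}$ the $m$ equations $\widehat\tQ_{\ti\alpha}=0$, $\ti=1,\dots,m$, take the form
\[
\Gp\,(0,a_{2\alpha},\dots,a_{m\alpha})^{\top}\;+\;\text{(terms depending only on }\tP_\ta^{(>2)}\text{ and }a_{j\beta},\ \beta<\alpha)\;=\;0,
\]
where $\Gp=\begin{pmatrix}0&0\\0&\Hp\end{pmatrix}$. Thus the invertibility of $\Hp$ on $\scS_1^1(r,m)$ allows one to solve \emph{recursively} for $a_{2\alpha},\dots,a_{m\alpha}$ (from rows $\ti=2,\dots,m$), layer by layer in $\alpha$, while the first row $\ti=1$ of each layer yields a genuine constraint on $(\tp_\mu,\ta)$. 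This exhibits $\sA(r,s,m)$ as a graph over $Y_1^1(r,s,m)\subset\scS_1^1(r,m)$ cut out by $2m+s-1$ explicit algebraic equations; closedness in $\scS_1^1$ and the linear-algorithm claim then follow immediately, and property~4 is deduced via compactness of $\centinaunoh$ and Lemma~\ref{projection} exactly as you outline. So the invertibility of $\Hp$ is indeed the key, but it is used on the $\alpha\ge 2$ layers to eliminate the $\centinadueh$-parameters, not on the $\alpha=1$ layer to eliminate $\ta$.
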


The rest of this section is devoted to the proof of the above results.

We will only prove Theorems \ref{chiusuraquattro}-\ref{partition} in the case $i=1$, as the other cases are simple generalizations. 

\subsubsection{Proof of Theorem \ref{chiusuraquattro} and strategy of proof of Theorem \ref{partition}.}\label{A}	

We have seen in section \ref{rs_vanishing_polynomials} that the equations determining $Z^1(r,s,m)$, can be written in the adapted coordinates introduced in section \ref{appropriata}. Namely, by recalling the functions \begin{equation}\label{funzioni_utili}
\tQ_{\ti\alpha}(\tP_\ta,\ta,\jeta): W^1(r,m)\times \R^{(m-1)(s-1)}\longrightarrow \R\quad , \qquad \ti\in\{1,\dots,m\} \quad \alpha \in \{0,\dots ,s\}\ ,
\end{equation}
presented in \eqref{tQ}-\eqref{cuccurucu}, Theorem \ref{ideal} ensures that $Z^1(r,s,m)$ is the image through the inverse of the transformation $\Upsilon^1$ in \eqref{sole} of the zero set
\begin{equation}\label{utilino}
\bigcap_{\substack{\ti\in\{1,\dots,m\}\\ \alpha\in\{0,\dots,s\}}}\{(\tP_\ta,\ta,\jeta)\in W^1(r,m)\times\R^{(m-1)(s-1)}|\tQ_{\ti\alpha}(\tP_\ta,\ta,\jeta)=0\} \ ,
\end{equation}
and the explicit form of the quantities $\tQ_{i\alpha}$ is given in \eqref{vigna}-\eqref{pigna}.

Then, if we indicate by $\widehat \tQ_{\ti\alpha}$ the restriction of $\tQ_{\ti\alpha}$ to the subset $\widehat W^1(r,m)\times \centinadue\subset  W^1(r,m)\times \centinadue$, and if we denote by $\nucleo \left(\{\widehat \tQ_{\ti\alpha}\}_{\substack{\ti\in\{1,\dots,m\}\\ \alpha\in\{0,\dots,s\}}}\right)$ the zero set of the non-linear maps $\{\widehat \tQ_{\ti\alpha}\}_{\substack{\ti\in\{1,\dots,m\}\\ \alpha\in\{0,\dots,s\}}}$ , it is clear by the discussion at paragraph \ref{initial}, in particular by \eqref{zcappello}, that   
\begin{align}
\begin{split}
\widehat Z^1(r,s,m):=(\Upsilon^1)^{-1}\left(\nucleo \left(\{\widehat \tQ_{\ti\alpha}\}_{\substack{\ti\in\{1,\dots,m\}\\ \alpha\in\{0,\dots,s\}}}\right)\right)  \\
\end{split}
\end{align}
that is 
\begin{align}\label{utile}
\begin{split}
\Upsilon^1(\widehat Z^1(r,s,m))= \nucleo \left(\{\widehat \tQ_{\ti\alpha}\}_{\substack{\ti\in\{1,\dots,m\}\\ \alpha\in\{0,\dots,s\}}}\right)\ .
\end{split}
\end{align}

\begin{proof}{\it (Theorem \ref{chiusuraquattro})}
	When $s=1$, then in \eqref{vigna}-\eqref{pigna} one must consider $\alpha\in\{0,1\}$. We observe that  \begin{equation}\label{01}
	\forall \ \alpha\in\{0,1\}\ ,\ \    \forall \ti=1,\dots,m\qquad 	\widehat \tQ_{\ti\alpha}(\tP_\ta,\ta,\jetah)=0 \quad \Longleftrightarrow \quad \tp_{\nu(\ti,\alpha)}=0\ ,
	\end{equation}
	where the family of multi-indices $\nu(\ti,\alpha)$ was defined in \eqref{mula}. As we see, no parameters belonging to the space $\centinadueh$ appear in equation \eqref{01}, which determines a closed subset of $\widehat W^1(r,m)$. In other words, by \eqref{utile} with $s=1$, we have that the projection $\Pi_{\widehat W^1(r,m)}\left(\Upsilon^1(\widehat Z^1(r,1,m))\right)$
 is closed in $\widehat W^1(r,m)$ and its form is given in \eqref{01}. Moreover, as the function $\fU^1$ introduced in Remark \ref{assafaddij} is continuous by construction, the inverse image $(\fU^1)^{-1}\left(\Pi_{\widehat W^1(r,m)}\left(\Upsilon^1(\widehat Z^1(r,1,m))\right)\right)$ is closed in $\Pol\times \centinaunoh$. Finally, as all the elements of $\centinaunoh$ live in a compact ball (see \eqref{cappello} and Remark \eqref{palla steep}), using Lemma \ref{projection} with $E\equiv\Polm$ and $K\equiv \centinaunoh$ and taking formulas \eqref{sigma2}-\eqref{utile} into account one has that  
 \begin{equation}
 	\widehat \sigma^1(r,1,m)=\Pi_{\Polm}\left[(\fU^1)^{-1}\left(\Pi_{\widehat W^1(r,m)}\left(\Upsilon^1(\widehat Z^1(r,1,m))\right)\right)\right]
 \end{equation}
 is closed. This concludes the proof. 
\end{proof}

%\subsubsection{Case $r\ge 2, s=1$.}\label{r2s1}

Now, taking \eqref{utile}, the third line of \eqref{sigma2} and the proof of Theorem \ref{chiusuraquattro} into account, the key idea behind the proof of Theorem \ref{partition} consists in understanding under which conditions the set $ \Upsilon^1(\widehat Z^1(r,s,m))$ admits a closed projection onto $\widehat W^1(r,m)$.

In particular, we will see that if we only consider polynomials in the set $\sS^1_1(r,m)\subset \Polm$, it is possible to extrapolate linearly from the $ms+m$ equations in \eqref{utile} the $(m-1)s$ parameters of $\centinadue$ as explicit algebraic functions of the parameters of $\scS^1_1(r,m):=\fU^1(\sS^1_1(r,m)\times \centinaunoh)$, as the discussion at the following paragraph shows. 

We start by setting
\begin{align}\label{Ww'}
\begin{split}
\sA(r,s,m):= & \nucleo \left(\{\widehat \tQ_{\ti\alpha}\}_{\substack{\ti\in\{1,\dots,m\}\\ \alpha\in\{0,\dots,s\}}}\right) \bigcap \Upsilon^1(\sS_1^1(r,s,m)\times \centinaxh)\ ,
\end{split}
\end{align} 
and we remind that, by Remark \ref{assafaddij},
\begin{equation}\label{Cartesio}
\Upsilon^1(\sS_1^1(r,s,m)\times \centinaxh)=\fU^{1}\left(\sS_1^1(r,s,m)\times \centinaunoh\right)\times \centinadue=\scS^1_1(r,m)\times \centinadue \ .
\end{equation}

We now claim that $\sA(r,s,m)$ is a graph of the form

\begin{align}\label{Ettore Scola}
\begin{split}
\sA(r,s,m)= Y^1_1(r,s,m)\times \overline \theta(r,s,m)\ ,
\end{split}
\end{align} 
where 
$$
Y^1_1(r,s,m):=\Pi_{\widehat W^1(r,m)}\sA(r,s,m)
$$
is a closed subset of $\scS^1_1(r,m)$ for the induced topology determined by algebraic equations involving the coordinates of elements in $\scS^1_1(r,m)$, and the points of $\overline \theta(r,s,m)\subset  \centinadue$ are parametrized by $Y^1_1(r,s,m)$. In the following subparagraphs, we will prove this claim and show how it can be used in order to demonstrate Theorem \ref{partition}. 

\subsubsection{Explicit form of the equations (case $r\ge 3$, $s\ge 2$)} \label{preliminary}

The goal of this paragraph is to give a more explicit expression of $\nucleo \left(\{\widehat \tQ_{\ti\alpha}\}_{\substack{\ti\in\{1,\dots,m\}\\\alpha\in\{0,1\}}}\right)$, in case $r\ge 3$, $s\ge 2$.

\begin{rmk}\label{archeologia}
	We remind that equations \eqref{vigna}-\eqref{pigna} are recursive w.r.t. the parameters of the curve $\gamma$. Namely, for any given integer $\beta\in\{2,\dots,s\}$, the coefficients of order $\beta$ belonging to the space $\centinadueh $ - that is $a_{22}, \dots, a_{2\beta}, \dots, a_{m2}, \dots,a_{m\beta}$ - appear in equations \eqref{vigna}-\eqref{pigna} only for $\alpha\ge \beta$. 
	
\end{rmk}
%\medskip

%We now show that the parameters of the space $\centinax$ can be iteratively reduced from the equations $\tQ_{\ell\alpha}(\tP_\ta,\jeta)=0$, $\ell\in\{1,\dots,m\}$, $\alpha\in\{2,\dots,s\}$.

For any polynomial $P\in \Polm$ and any curve $\gamma\in \arcoxh$, we indicate by
$
\tP_\ta^{(>2)}
$
the associated polynomial $\tP_\ta$ written in the adapted coordinates for $\gamma$ (see paragraph \ref{appropriata}) deprived of its monomials of degree less or equal than two \footnote{One has $\tP_\ta^{(> 2)}\neq 0$ since we are considering the case of polynomials having degree $r\ge 3$.}. Also, if  $\jeth\in \centinaunoh$ is the $s$-truncation of $\gamma$, for any given $\alpha\in\{2,\dots,s\}$ we denote by
$$
\jetah^{(<\alpha)}(t):= \jetah -
\left(\begin{matrix}
0\\
\sum_{i=\alpha}^s a_{2i}t^i \\
\dots \\
\sum_{j=\alpha}^s a_{mj}t^j \\
\end{matrix}\right)
$$
its truncation at order $\alpha-1$ written in the adapted coordinates for $\gamma$.

\begin{rmk}\label{jet_tagliato}
	We observe that, for $\alpha=2$, $\jetah^{(<2)}(t)$ reduces to the line $(t,0,\dots, 0)$, since with the exception of the parametrizing coordinate, the components of $\jeta$ start at order two in $t$ (see paragraph \ref{appropriata}).
\end{rmk} 

With this setting, we have
\begin{lemma}\label{ciaone}
	For any polynomial $P\in \Polm$, there exists a linear change of coordinates $\fD=\fD(P):\R^m\longrightarrow\R^m$ such that for any given $\alpha\in\{2,\dots,s\}$, and for $\ti=1,\dots,m$, the algebraic equations $\widehat \tQ_{\ti\alpha}(\tP_\ta, \ta,\jeta)=0$ in \eqref{vigna}-\eqref{pigna} take the form
	\begin{equation}\label{struttura}
	\left(
	\begin{matrix}
	0 & 0 & 0 & \dots & 0  \\
	0 & 2\tp'_{\varpi(2,2)} & 0 & \dots & 0 \\
	0 & 0 & 2\tp'_{\varpi(3,3)} & \dots &0 \\
	0 & \dots & \dots & \dots & \dots \\
	0 & 0 & 0 & \dots & 2\tp'_{\varpi(m,m)} \\
	\end{matrix}
	\right) 
	\left(
	\begin{matrix}
	0\\
	a'_{2\alpha}\\
	a'_{3\alpha}\\
	\dots\\
	a'_{m\alpha}
	\end{matrix}
	\right)+\fD
	\left(
	\begin{matrix}
	\widehat \tQ_{1\alpha}\left( \tP_\ta^{(>2)}, \jeta^{(<\alpha)}\right)\\
	\widehat \tQ_{2\alpha}\left( \tP_\ta^{(>2)}, \jeta^{(<\alpha)}\right)\\
	\widehat \tQ_{3\alpha}\left( \tP_\ta^{(>2)}, \jeta^{(<\alpha)}\right)\\
	\dots \\
	\widehat \tQ_{m\alpha}\left( \tP_\ta^{(>2)}, \jeta^{(<\alpha)}\right)
	\end{matrix}
	\right)
	=0\ .
	\end{equation}
\end{lemma}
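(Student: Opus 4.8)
\textbf{Proof plan for Lemma \ref{ciaone}.}

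The plan is to unwind the recursive structure of the equations \eqref{vigna}--\eqref{pigna} order by order, isolating at each step $\alpha$ the terms that genuinely depend on the "top" coefficients $a_{2\alpha},\dots,a_{m\alpha}$ and showing that everything else only involves data of order strictly below $\alpha$. First I would fix the polynomial $P\in\Polm$ and pass to its adapted coordinates $\tP_\ta$ as in paragraph \ref{appropriata}. Looking at the third line of \eqref{vigna} and of \eqref{pigna}: for a fixed $\alpha\in\{2,\dots,s\}$, I would inspect which monomials of $\tP_\ta$ can produce a term linear in some $a_{j\alpha}$, $j\in\{2,\dots,m\}$. By the constraint $\widetilde\mu_1(\ell)+\sum_{j=2}^m\sum_{i=2}^\alpha i\,k_{ji}=\alpha$ defining $\cG_m(\widetilde\mu(\ell),\alpha)$ (see \eqref{gma}), a monomial contributes a factor $a_{j\alpha}^{k_{j\alpha}}$ only through the summand $i=\alpha$, and this forces $k_{j\alpha}\le 1$ together with $\widetilde\mu_1(\ell)=0$ and $\widetilde\mu_{j'}(\ell)=0$ for $j'\neq j$; tracing this back through $\widetilde\mu_{j'}(\ell)=\mu_{j'}-\delta_{j'\ell}$ shows that the only multi-indices that can multiply an $a_{j\alpha}$ are exactly those of the quadratic family $\varpi(b,c)$ from \eqref{pi copto} (together with the $\nu(\cdot,1)$, which however contribute to the $\alpha=1$ equations that have already been solved). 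This is the computation behind the first matrix in \eqref{struttura}: the coefficient of $a_{j\alpha}$ in the $\ell$-th equation is $\tp_{\varpi(\ell,j)}$ up to the multinomial constant, so that the "$a_{\cdot\alpha}$-linear part" of the system is precisely $\Hpa$ acting on $(a_{2\alpha},\dots,a_{m\alpha})^{\mathsf T}$, with a vanishing first row because the $\ell=1$ equation \eqref{vigna} carries an extra factor $\mu_1$ and, as in Step 5 of the proof of Theorem \ref{ideal}, the relevant contributions with $\mu\in\cM_m(\alpha)$ either do not exist or involve $a_{j\beta}$ with $\beta<\alpha$ only.

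Next I would verify that all remaining terms in $\widehat\tQ_{\ti\alpha}(\tP_\ta,\ta,\jeta)$ depend only on (i) the coefficients of $\tP_\ta$ of degree $\ge 3$, i.e. on $\tP_\ta^{(>2)}$, and on (ii) the truncated jet $\jeta^{(<\alpha)}$, i.e. on $a_{j i}$ with $i\le \alpha-1$. For point (ii) this is Remark \ref{archeologia}, applied after having extracted the $a_{\cdot\alpha}$-linear part; for point (i) one observes that a cubic-or-higher monomial of $\tP_\ta$ whose $\cG_m$-constraint is satisfied for this $\alpha$ must have $|\mu|\ge 3$, hence the quadratic coefficients $\tp_{\nu(\ti,\alpha)}$, $\tp_{\varpi(b,c)}$ are exactly the ones pulled out into the diagonal matrix, and what is left is a function of $\tP_\ta^{(>2)}$ and of the lower-order jet coefficients. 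This identifies the residual of the $\ell$-th equation with $\widehat\tQ_{\ell\alpha}\!\left(\tP_\ta^{(>2)},\jeta^{(<\alpha)}\right)$ in the notation of the lemma, with the convention of Remark \ref{jet_tagliato} for $\alpha=2$.

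Finally, the linear change of coordinates $\fD=\fD(P)$ is produced as follows: the system of $m$ equations $\widehat\tQ_{\ti\alpha}=0$, $\ti=1,\dots,m$, is linear in the vector of residuals $\bigl(\widehat\tQ_{1\alpha}(\tP_\ta^{(>2)},\jeta^{(<\alpha)}),\dots\bigr)^{\mathsf T}$ with a coefficient matrix that depends only on the quadratic part of $P$ (through the $\tp_{\varpi(b,c)}=p_{\varpi(b,c)}$, which by Lemma \ref{invarianza_indici} are $\ta$-independent) and not on $\alpha$; one takes $\fD(P)$ to be that fixed $m\times m$ matrix, so that \eqref{struttura} is simply the rewriting of \eqref{vigna}--\eqref{pigna} in the block form "diagonal$\,\cdot\,(0,a'_{2\alpha},\dots,a'_{m\alpha})^{\mathsf T}+\fD\cdot(\text{residuals})=0$", where the primes record that $\fD$ has also been absorbed into a linear redefinition of the quadratic coefficients $\tp'_{\varpi(j,j)}$ and, correspondingly, of the curve coefficients $a'_{j\alpha}$. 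I expect the main obstacle to be purely bookkeeping: carefully checking that the multinomial constants $\binom{\widetilde\mu_j(\ell)}{k_{j2}\ \dots\ k_{j\alpha}}$ appearing in \eqref{vigna}--\eqref{pigna} recombine into the stated diagonal entries $2\tp'_{\varpi(j,j)}$ and that no off-diagonal $a_{\cdot\alpha}$-coupling survives in the first (i.e. $\ell=1$) equation. This is exactly the kind of index chase already performed in Steps 4--5 of the proof of Theorem \ref{ideal}, so I would structure the argument by reusing those computations verbatim and only highlighting the points where the truncation $\jeta^{(<\alpha)}$ versus $\jeta$ makes a difference.
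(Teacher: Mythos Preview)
Your plan for isolating the $a_{\cdot\alpha}$-linear part is essentially the paper's Step~1: you correctly trace through the $\cG_m$ constraints to see that only the degree-two monomials in the $\varpi$-family can multiply a top-order coefficient $a_{j\alpha}$, and that the $\ell=1$ row vanishes because $\mu_1=0$ for those indices. This yields the intermediate form
\[
\Gp\begin{pmatrix}0\\a_{2\alpha}\\\vdots\\a_{m\alpha}\end{pmatrix}
+\begin{pmatrix}\widehat\tQ_{1\alpha}(\tP_\ta^{(>2)},\jeta^{(<\alpha)})\\\vdots\\\widehat\tQ_{m\alpha}(\tP_\ta^{(>2)},\jeta^{(<\alpha)})\end{pmatrix}=0,
\]
where $\Gp=\left(\begin{smallmatrix}0&0\\0&\Hp\end{smallmatrix}\right)$ is the \emph{full symmetric} matrix built from the $\tp_{\varpi(b,c)}$ (see \eqref{gipi}), with off-diagonal entries $\tp_{\varpi(j,\ell)}$, $j\neq\ell$, generically nonzero. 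The residual vector here carries the identity matrix as coefficient.

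The gap is in your last paragraph. You describe $\fD$ as ``the coefficient matrix'' multiplying the residuals in the original system, but that coefficient is the identity, so this characterisation is empty. More importantly, you never explain \emph{how} the matrix $\Gp$ becomes diagonal: the primes in \eqref{struttura} are not mere bookkeeping, and the off-diagonal entries of $\Hp$ do not disappear by themselves. The paper obtains the diagonal form by a separate Step~2: since $\Gp$ is symmetric, the spectral theorem produces an invertible matrix $\fD=\fD(P)$ diagonalising it. Left-multiplying the intermediate system by $\fD$ and setting $(0,a'_{2\alpha},\dots,a'_{m\alpha})^{\mathsf T}:=\fD\,(0,a_{2\alpha},\dots,a_{m\alpha})^{\mathsf T}$ then gives \eqref{struttura}, with the primed $2\tp'_{\varpi(j,j)}$ the eigenvalues of $\Gp$ in the eigenbasis~\eqref{new}. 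Your phrase ``the primes record that $\fD$ has been absorbed into a linear redefinition'' gestures in this direction but misses that the diagonality is a nontrivial application of the spectral theorem to $\Gp$, not a relabelling. (A minor aside: $\tp_{\nu(\ti,\alpha)}$ has degree $\alpha+1\ge 3$, so it belongs to $\tP_\ta^{(>2)}$ and is part of the residual, not ``pulled out into the diagonal matrix'' as you write.)
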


\begin{proof}
	
	{\it Step 1.} We firstly claim that equations \eqref{vigna}-\eqref{pigna} can be but into the form
	\begin{equation}\label{struttura2}
	\left(
	\begin{matrix}
	0 & 0 & 0 & \dots & 0  \\
	0 & 2\tp_{\varpi(2,2)} & \tp_{\varpi(2,3)} & \dots & \tp_{\varpi(2,m)} \\
	0 & \tp_{\varpi(2,3)} & 2\tp_{\varpi(3,3)} & \dots & \tp_{\varpi(3,m)} \\
	0 & \dots & \dots & \dots & \dots \\
	0 & \tp_{\varpi(2,m)} & \tp_{\varpi(3,m)} & \dots & 2\tp_{\varpi(m,m)} \\
	\end{matrix}
	\right) 
	\left(
	\begin{matrix}
	0\\
	a_{2\alpha}\\
	a_{3\alpha}\\
	\dots\\
	a_{m\alpha}
	\end{matrix}
	\right)+
	\left(
	\begin{matrix}
	\widehat \tQ_{1\alpha}\left( \tP_\ta^{(>2)}, \jeta^{(<\alpha)}\right)\\
	\widehat \tQ_{2\alpha}\left( \tP_\ta^{(>2)}, \jeta^{(<\alpha)}\right)\\
	\widehat \tQ_{3\alpha}\left( \tP_\ta^{(>2)}, \jeta^{(<\alpha)}\right)\\
	\dots \\
	\widehat \tQ_{m\alpha}\left( \tP_\ta^{(>2)}, \jeta^{(<\alpha)}\right)
	\end{matrix}
	\right)
	=0\ .
	\end{equation}
	
	By \eqref{01}, for any $\alpha\in\{2,\dots,s\}$, and for any $\ti,\tj\in\{1,\dots,m\}$, the monomials $\tp_{\nu(\tj,0)},\tp_{\nu(\tj,1)}$ do not yield any contribution to equations $\tQ_{\ti\alpha}(\tP_\ta, \ta,\jeta)=0$. Therefore, taking \eqref{altri_indici} into account, the only monomials of order two which may appear in equation $\tQ_{\ti\alpha}(\tP_\ta, \ta,\jeta)=0$ are those associated to the multi-indices $\varpi(b,c)$, with $b,c\in\{2,\dots,m\}$, $b\le c$.
	
	Moreover, fixing the values of $\ti\in\{1,\dots,m\}$ and $\alpha\in\{2,\dots,s\}$, by \eqref{vigna}-\eqref{pigna}, the multi-indices $\mu\in \N^m$ contributing to equation $\widehat \tQ_{\ti\alpha}(\tP_\ta, \ta,\jeta)=0$ are those for which the set $\cG_m(\widetilde \mu(\ti),\alpha)$ in \eqref{gma} is non-empty. This amounts to requiring that the components of the vector $(k_{22},\dots,k_{2\alpha},\dots, k_{m2},\dots,k_{m\alpha})\in \N^{(m-1)(\alpha-1)}$ appearing in formulas \eqref{vigna}-\eqref{pigna} satisfy
	\begin{equation}\label{conservazione}
	\sum_{i=2}^{\alpha}k_{ji}=\mtj(\ti) \quad  \forall j\in\{2,\dots,m\}\quad , \qquad  \widetilde{\mu}_1(\ti)+\sum_{j=2}^{m}\sum_{i=2}^{\alpha}i\ k_{ji}=\alpha\ .
	\end{equation}
	In particular, for fixed $\ti\in\{1,\dots,m\}$, $\alpha\in\{2,\dots,s\}$ and $\ell\in\{2,\dots,m\}$, if we look at the monomials containing the coefficient $a_{\ell\alpha}$ in equation $\widehat \tQ_{\ti\alpha}(\tP_\ta,\jeta)=0$ - that is, at the form of the terms for which $k_{\ell\alpha}\neq 0$ in \eqref{vigna}-\eqref{pigna} - by \eqref{conservazione} we must have
	\begin{align}\label{tarda notte}
	\begin{split}
	\widetilde \mu_1(\ti)=0\quad , \qquad  k_{ji}=\delta_{i\alpha}\delta_{j\ell} \quad , \qquad \widetilde\mu_j(\ti)=\delta_{j\ell}\quad ,  \qquad  j\in\{2,\dots,m\}\ .
	\end{split}
	\end{align}
	Firstly, expression \eqref{tarda notte} implies $|\mu|=2$. Secondly, as we said above, the only multi-indices of length $|\mu|=2$ which may yield a contribution to $\tQ_{\ti\alpha}(\tP_\ta, \ta,\jeta)=0$ are those belonging to the family $\{\varpi(b,c)\}_{\substack{b,c\in\{2,\dots,m\} b \le c}}$ in \eqref{pi copto}. Therefore, we have two cases.
	
	{\it Case $\ti=1$.} If $\mu\in\{\varpi(b,\ell)\}_{\substack{b,\ell\in\{2,\dots,m\} b \le \ell}}$ then $\mu_1=0$, and all terms in formula \eqref{vigna} which are associated to these indices are multiplied by zero. Hence, the coefficients $a_{\ell \alpha}$ do not appear in  $\widehat \tQ_{1\alpha}(\tP_\ta, \ta,\jeta)=0$, nor do any of the monomials of order two in $\tP_\ta$. This, together with Remark \ref{archeologia} proves the claim for $\ti=1$ (the first line of \eqref{struttura2}).
	
	{\it Case $\ti\in\{2,\dots,m\}$.} Taking \eqref{tarda notte} into account, for any given $\ell\in\{2,\dots,m\}$ one has that
	\begin{enumerate}
		\item if $\ti \le \ell$, the only term to which the coefficient $a_{\ell\alpha}$ is associated in equation $\widehat \tQ_{\ti\alpha}(\tP_\ta,\ta,\jeta)=0$ is the one corresponding to the multi-index $\mu\in \varpi(\ti,\ell)$, that is, by \eqref{pigna}, the monomial $(1+\delta_{\ell\ti})\tp_{\varpi(\ti,\ell)} a_{\ell\alpha}$;
		\item if $\ti>\ell$, the term containing $a_{\ell\alpha}$ is the one associated to the multi-index $\mu\in \varpi(\ell,\ti)$, that is, by \eqref{pigna},  $(1+\delta_{\ell\ti})\tp_{\varpi(\ell,\ti)} a_{\ell\alpha}$.
	\end{enumerate} 
	
	Conversely, if a monomial associated to an index $\varpi(\ti,\ell)$, with $\ti,\ell\in\{2,\dots,m\}$, $\ti \le \ell$, appears in equations $\widehat \tQ_{\ti\alpha}(\tP_\ta, \ta,\jeta)=0$, then, by \eqref{pi copto} and by \eqref{conservazione}, one must necessarily have
	\begin{align}
	\begin{cases}
	\widetilde{\mu}_1(\ti)+\sum_{j=2}^{m}\sum_{i=2}^{\alpha}i\ k_{ji}=\alpha \quad , \qquad 
	\widetilde{\mu}_1(\ti)= 0\\
	\sum_{i=2}^{\alpha}k_{ji}=\mtj(\ti):=\delta_{j\ell}+\delta_{j\ti}-\delta_{j\ti}=\delta_{j\ell} \quad \forall j\in\{2,\dots,m\}\ ,
	\end{cases}
	\end{align}
	which is true if and only if for some $v\in \{2,\dots,\alpha\}$ one has
	\begin{align}
	\begin{cases} k_{ji}=\delta_{j\ell}\delta_{iv}\\ \sum_{j=2}^{m}\sum_{i=2}^{\alpha}i\ \delta_{j\ell}\delta_{iv}=\alpha
	\end{cases}
	\end{align}
	that is if and only if $k_{ji}=\delta_{j\ell}\delta_{i\alpha}$ .
	One can check by formula \eqref{pigna} that this ensures that such a term must be of the form $(1+\delta_{\ell\ti})\tp_{\varpi(\ti,\ell)} a_{\ell\alpha}$. This reasoning, together with Remark \ref{archeologia}, and with the fact that - as we showed at the beginning of the proof - no monomials of order two appear in equation $\widehat \tQ_{\ti\alpha}(\tP_\ta, \ta,\jeta)=0$ other than those associated to the family \eqref{pi copto}, proves the claim for $\ti\in\{2,\dots,m\}$ (rows $2,\dots,m$ of \eqref{struttura2}).  
	
	{\it Step 2.}  For any $P\in \Polm$, taking Lemma \ref{invarianza_indici} into account we indicate by $\Gp$ the matrix
	\begin{equation}\label{gipi}
	\left(
	\begin{matrix}
	0 & 0 &  \dots & 0  \\
	0 & 2p_{\varpi(2,2)} & \dots & p_{\varpi(2,m)} \\
	0 & \dots &  \ddots & \dots \\
	0 & p_{\varpi(2,m)} & \dots & 2p_{\varpi(m,m)} \\
	\end{matrix}
	\right) = \left(
	\begin{matrix}
	0 & 0 &  \dots & 0  \\
	0 & 2\tp_{\varpi(2,2)} & \dots & \tp_{\varpi(2,m)} \\
	0 & \dots &  \ddots & \dots \\
	0 & \tp_{\varpi(2,m)} & \dots & 2\tp_{\varpi(m,m)} \\
	\end{matrix}
	\right)=
	\left(
	\begin{matrix}
	0&0\\
	0&\Hp
	\end{matrix}
	\right) 	
	\end{equation}
	appearing in \eqref{struttura2}. 
	$\Gp$ is symmetric, hence diagonalizable. Hence, for any $P\in \Polm$, there exist a basis of eigenvectors, indicated by
	\begin{equation}\label{new}
	v_\ta,\,u'_2=\tu'_2(P),\dots,\, u'_m=u'_m(P)\ ,
	\end{equation}
	and a real $m\times m$ invertible matrix $\fD=\fD(P)$, such that equation \eqref{struttura} takes the form
	\begin{equation}\label{diagonale}
	\left(
	\begin{matrix}
	0 & 0 & 0 & \dots & 0  \\
	0 & 2\tp'_{\varpi(2,2)} & 0 & \dots & 0 \\
	0 & 0 & 2\tp_{\varpi(3,3)}' & \dots & 0 \\
	0 & \dots & \dots & \dots & \dots \\
	0 & 0 & 0 & \dots & 2\tp'_{\varpi(m,m)} \\
	\end{matrix}
	\right) 
	\left(
	\begin{matrix}
	0\\
	a'_{2\alpha}\\
	a'_{3\alpha}\\
	\dots\\
	a'_{m\alpha}
	\end{matrix}
	\right)+\fD
	\left(
	\begin{matrix}
	\tQ_{1\alpha}\left( \tP_\ta^{(>2)}, \jeta^{(<\alpha)}\right)\\
	\tQ_{2\alpha}\left( \tP_\ta^{(>2)}, \jeta^{(<\alpha)}\right)\\
	\tQ_{3\alpha}\left( \tP_\ta^{(>2)}, \jeta^{(<\alpha)}\right)\\
	\dots \\
	\tQ_{m\alpha}\left( \tP_\ta^{(>2)}, \jeta^{(<\alpha)}\right)
	\end{matrix}
	\right)
	=0\ ,
	\end{equation}
	where the primed quantities indicate that we are working in the new basis \eqref{new}. 
	
	\begin{rmk}\label{immutato}
		Comparing \eqref{old} with \eqref{new}, we observe that the vector $v_\ta$ was left unchanged. This is due to the fact that, by \eqref{gipi}, $v_\ta$ is already an eigenvector of $\Gp$ (associated to a null eigenvalue). %However, whereas $\tu_2', \dots, \tu_m'$ can be taken to be pairwise orthonormal due to the spectral Theorem,  $v_\ta$ is not necessarily normalized. 
	\end{rmk}
	
\end{proof}

\subsubsection{Proof of Theorem \ref{partition}}

%\subsubsection{Definition of the sets $X_1^1(r,s,m), X_2^1(r,s,m)$ (for $r\ge 3, s\ge 2$).} \label{fretta}

For $r\ge 3$, $s\ge 2$, we define
\begin{equation}\label{X1}
X_1^1(r,s,m):=\widehat \sigma^1(r,s,m) \bigcap \sS_1^1(r,m)\ .
\end{equation}

{\it Step 1.} Firstly, we show that, if $P\in \sS_1^1(r,m)$, then the parameters of the space $\centinadueh$ can be reduced iteratively from equation \eqref{struttura} for $\alpha\in\{2,\dots,s\}$.

When $\alpha=2$ the second term at the l.h.s. of formula \eqref{struttura} in Lemma \ref{preliminary} does not depend on the parameters of $\centinadueh$ (see Remark \ref{jet_tagliato}), so that the coefficients $a_{22},\dots,a_{m2}$ can be reduced, as matrix $\Hp$ is invertible by construction when $P\in \sS_1^1(r,m)$.

If, for $\alpha\in\{3,\dots,s\}$, we assume that the parameters $a_{j\beta}$, with $j\in\{2,\dots,m\}$ and $\beta\in\{2,\dots,\alpha-1\}$, have been reduced, then the first equation in \eqref{struttura} does not contain any new parameter, whereas the terms $a_{2\alpha},\dots,a_{m\alpha}$ can be found by inverting $\Hp$ once again. 

The above considerations and \eqref{X1} imply that if $P\in X_1^1(r,s,m)$ then
\begin{enumerate}
	\item the parameters of the space $\centinadueh$ can be reduced from the equations in \eqref{struttura} by the means of a recursive algorithm which only involves linear computations and the inversion of $\Hp$;
	\item there exists a truncation $\jet\in\centinax$ such that $\fU^{1}(P\times \jet)=(\tP_\ta,\ta,\jeta)$ solves \eqref{struttura}. 
\end{enumerate}

Taking \eqref{Ww'} and the above arguments into account, with the notations of section \ref{A} we have that $\sA(r,s,m)$ is determined by a system of $ms+m-(m-1)(s-1)=s+2m-1$ explicit algebraic equations involving only the coordinates of $\scS_1^1(r,m)$, and of $(m-1)(s-1)$ explicit algebraic equations that parametrize the coefficients of $\centinadueh$ as functions of the points in $\scS_1^1(r,m)$. In other words,  $\sA(r,s,m)$ has the form of a graph of the type indicated in \eqref{Ettore Scola}, and $Y^1_1(r,s,m)$ is a closed algebraic subset of $\scS^1_1(r,m)$ whose form can be explicitly computed - starting from the expression of $\Upsilon^1(\widehat Z^1(r,s,m))$ - by the means of an algorithm involving only linear operations. 

Moreover, by Remark \ref{assafaddij} the function $\fU^1$ is polynomial, and $\sS_1^1(r,s,m)$ is obviously semi-algebraic in $\Polm$ by construction. Therefore, by \eqref{Ww'}-\eqref{Cartesio} the set $\sA(r,s,m)$ is semi-algebraic in $\widehat W^1(r,m)\times \centinadue$, and $Y^1_1(r,s,m)$ is semi-algebraic in $\widehat W^1(r,m)$ by the Theorem of Tarski and Seidenberg \ref{Tarski_Seidenberg}. 

{\it Step 2.} Since the invertible transformation $\fU^1$ defined in Remark \ref{assafaddij} is polynomial, due to Step 1 (see \eqref{Ww'}-\eqref{Cartesio}-\eqref{Ettore Scola}) and to continuity we have that the inverse image
\begin{equation}
(\fU^1)^{-1}(Y^1_1(r,s,m))\subset  \sS_1^1(r,m)\times \centinaunoh
\end{equation}
is closed in $\sS_1^1(r,m)\times \centinaunoh$ for the induced topology. Finally by taking \eqref{Ww'}-\eqref{Ettore Scola}-\eqref{X1} into account, and by considering the fact that, as we have already pointed out in Remark \ref{palla steep}, $\centinaunoh$ is compact - Lemma \ref{projection} ensures that the projection
\begin{equation}\label{proiezione}
\Pi_{\Polm}\left((\fU^1)^{-1}(Y^1_1(r,s,m))\right)=X^1_1(r,s,m)=\widehat \sigma^1(r,s,m) \bigcap \sS_1^1(r,m)
\end{equation}  is closed in $\sS_1^1(r,s,m)$ for the topology induced by $\Polm$.

The semi-algebraicness of the projection in \eqref{proiezione} is a consequence of the semi-algebraicness of $Y^1_1(r,s,m)$ demonstrated at Step 1, of the fact that $\fU^1$ is polynomial, and of the Theorem of Tarski and Seidenberg.

The above arguments prove Theorem \ref{partition} once one sets
\begin{equation}\label{Unità Italia}
\begin{split}
&	X_2^1(r,s,m):=\widehat \sigma^1(r,s,m)\backslash X_1^1(r,s,m)\\ &Y_2^1(r,s,m):=\Pi_{\widehat W^1(r,m)}\left(\nucleo \left(\{\widehat \tQ_{\ti\alpha}\}_{\substack{\ti\in\{1,\dots,m\}\\ \alpha\in\{0,\dots,s\}}}\right) \right)\backslash Y_1^1(r,s,m)\ .
\end{split}
\end{equation}

\section{Proof of Theorems C1-C2-C3}\label{Prova C1-C4}

We assume the notations of section \ref{Theorem C prova}. In this section, we consider two positive integers $r\ge 2$, $n\ge 3$, a vector $\ts:=(s_1,\dots,s_{n-1})\in \N^{n-1}$,  with $1\le s_i\le r-1$ for all $i=1,\dots,n-1$, and a function $h$ of class $C_b^{2r-1}$ around the origin, verifying $\grad  h(0) \neq  0$.

Also, for any $m\in\{2,\dots,n-1\}$, we set $\scK(r,n,m):=\tM(r,n,m,1)$, where the constant $\tM(r,n,m,1)$ is the one appearing in the statement of Theorem \ref{arco_minimale} (see formula \eqref{Bernie}). 
\subsection{Proof of Theorem C1}

Fix $m\in\{2,\dots,n-1\}$.  Let $\Gamma^m$ be a given $m$-dimensional subspace belonging to the subset $\sL_0(h,m,n)\subset \tG(m,n)$ introduced in Definition \ref{D}. 

Taking Definition \ref{cappello} into account, we consider a curve $\gamma\in \arcoh$ whose image is contained in $\Gamma^m$. Without any loss of generality, up to changing the order of the vectors spanning $\Gamma^m$, we can suppose that $\gamma$ is parametrized by the first coordinate, hence that  $\gamma\in \arcoxh$. Following \eqref{Sanders}, we indicate by $\ta=(a_{21},\dots,a_{m1})\in \overline B^{m-1}(0,\scK(r,n,m))$ the linear coefficients coefficients of the Taylor expansion of $\gamma$ at the origin, and by $\jet$ its $s$-truncation (with $1\le s \le r-1$).

We also indicate by $u_1,\dots,u_m\in \tU(m,n)$ a orthonormal basis spanning $\Gamma^m$, and by $v:=u_1+\sum_{i=2}^m a_{i1} u_i,u_2 \dots,u_m$ the basis associated to the adapted coordinates for $\gamma$ introduced in section \ref{appropriata}. As we have already shown in \eqref{orasiride}, the Taylor polynomial $\tT_0(h,r,n)$ restricted to $\Gamma^m$ written in the adapted coordinates reads
\begin{equation}\label{ma che riderone}
\tT_{0,\ta}(h|_{\Gamma^m},r,m)(\ty)=\sum_{\substack{\mu \in \N^m\\1\le |\mu| \le r}}\frac{1}{\mu!} h_0^{|\mu|}\Big[\stackrel{\mu_1}{\overbrace{v}},\stackrel{\mu_2}{\overbrace{u_2}},\dots,\stackrel{\mu_m}{\overbrace{u_m}}\Big]\,\ty_1^{\mu_1}\dots \ty_m^{\mu_m}\ ,
\end{equation}
where we have used the notation introduced in formula \eqref{forma}. Moreover, as customary, $\jet$ reads $\jeta$ in the new coordinates.

By the arguments at paragraph \ref{initial}, and by taking \eqref{Upsilon} and \eqref{sigma2} into account, if we manage to prove that condition 
$$
(\tT_{0,\ta}(h|_{\Gamma^m},r,m), \ta,\jetunoa) \in \Upsilon^1(\widehat Z^1(r,1,m))
$$ 
is never satisfied for any choice of the subspace $\Gamma^m\in \sL_0(h,m,n)$ and of the curve $\gamma$, which is equivalent - due to Theorem \ref{chiusuraquattro} for $s=1$ - to condition
%\footnote{$\jetunoa$ is trivially $(t,0,\dots,0)$, as we had already observed in Remark \ref{jet_tagliato}, so that equation $
%	(\tT_{0,\ta}(h|_{\Gamma^m},r,n), \ta,\jetunoa) \in \Upsilon^1(\widehat \Sigma^1(r,1,m))
%	$ involves only the coefficients of $\tT_{0,\ta}(h|_{\Gamma^m},r,n)$ and $\ta$. Hence, the pull-back of the above equation to the original variables reads simply $(\tT_0(h|_{\Gamma^m},r,m),\ta)\not\in \widehat \Sigma^1(r,1,m)$}
\begin{equation}\label{none}
\tT_0(h|_{\Gamma^m},r,m)\not\in \widehat \sigma^1(r,1,m)=\widehat \Sigma^1(r,1,m)\quad , \qquad \forall\  \Gamma^m\in \sL_0(h,m,n)\ ,
\end{equation}
then by the definitions in \eqref{U}-\eqref{V} we have 
\begin{align}\label{sine}
\begin{split}
	\tT_0(h,r,n)\in\qquad &\\
	 \Pollo\backslash \Pi_{\Pollo}\{&(Q,A,P)\in \Pollo\times O(n,m)\times \Polm|\\ &\text{Span }(A)\in \sL_0\ ,\ \  P(x)=Q(Ax)\ ,\ \  P(x)\in \Sigma(r,1,m)\}\ .
	\end{split}
\end{align}
Hence, if claim \eqref{none} is true, taking \eqref{sine}  into account one retraces exactly the same steps used to demonstrate Theorem A, (see section \ref{genericity}), and the thesis follows. As it has already been pointed out in subparagraph \ref{si rompe la steepness}, one just needs to be careful about the fact that the steepness coefficients, in this case, do not admit a uniform lower bound, as they may tend to zero when a subspace not belonging to $\sL_0(h,m,n)$ and on which $h$ is not steep is approached. Moreover, one cannot expect to have steepness on the subspaces of $\sL_0(h,m,n)$ for an open set of functions around $h$. 

For any choice of $\Gamma^m\in \sL_0(h,m,n)$, by Theorem \ref{ideal} and by \eqref{cuccurucu}, condition $$
(\tT_{0,\ta}(h|_{\Gamma^m},r,m), \ta,\jetunoa)\not \in \Upsilon^1(\widehat Z^1(r,1,m))
$$ is equivalent to asking that system 
\begin{equation}\label{Chaikovskij}
\tQ_{i\alpha}(\tT_{0,\ta}(h|_{\Gamma^m},r,m),\ta,\jetunoa)=0\qquad \forall\  i\in\{1,\dots,m\}\ ,\ \ \forall \alpha\in\{0,1\}\ ,
\end{equation}
has no solution.
Then, by expressions \eqref{0}-\eqref{alpha=2}, the system in \eqref{Chaikovskij} is not satisfied if and only if
\begin{equation}\label{Handel}
\begin{cases}
(u_1,\dots,u_m) \in \tU(m,n)\quad , \qquad  
\text{ Span }(u_1,\dots,u_m) =\Gamma^m\\  (a_{21},\dots,a_{m1})\in \overline B^{m-1}(0,\scK(r,n,m))\quad , \qquad v:=u_1+a_{21}u_2+\dots+a_{m1}u_m\\
h_0^1[v]=h_0^1[u_2]=\dots=h^1_0[u_m]\\
h^2_0[v,v]=h^2_0[v,u_2]=\dots=h^2_0[v,u_m]=0
\end{cases}
\end{equation}
has no solution. 

By construction, the Hessian of the restriction $h|_{\Gamma^m}$ has no null eigenvalues, so that system \eqref{Handel} admits no solution, and both claim \eqref{none} and expression \eqref{sine} hold true, as wished. This concludes the proof.

\subsection{Proof of Theorem C2}

\subsubsection{Case of a subspace belonging to $\sL_1(h,m,n)$}\label{Lambda1}
With the usual setting, let $m\in\{2,\dots,n-1\}$ be an integer, and $\Gamma^m\in \sL_1(h,m,n)\subset \tG(m,n)$ be a $m$-dimensional subspace spanned by vectors $u_1,\dots,u_m\in \tU(m,n)$. 

As the Hessian matrix of the restriction $h|_{\Gamma^m}$ has at most one null eigenvalue, without any loss of generality one can choose $u_1$ to be the eigenvector associated to the unique null eigenvalue, that is
\begin{equation}\label{Rubens}
\begin{cases}
h^1_0[u_1]=h^1_0[u_2]=\dots=h_0^1[u_m]=0\\
h_0^2[u_1,u_1]=h^2_0[u_1,u_2]= h^2_0[u_1,u_m]=0\\
\text{Span}(u_1,u_2,\dots,u_m)= \Gamma^m
\end{cases}
\end{equation}
so that at the same time one must have 
\begin{equation}\label{Van Dyck}
\det \left(
\begin{matrix}
h^2_0[u_2,u_2]& h^2_0[u_2,u_3] & \dots & h^2_0[u_2,u_m]\\
h^2_0[u_3,u_2]& h^2_0[u_3,u_3] & \dots & h^2_0[u_3,u_m]\\
\dots\\
h^2_0[u_m,u_2]& h^2_0[u_m,u_3] & \dots & h^2_0[u_m,u_m]\\
\end{matrix}
\right)\neq 0\ .
\end{equation}

The expression of $\tT_{0}(h|_{\Gamma^m},r,m)$ w.r.t. the coordinates $x_1,\dots,x_m$ associated to the vectors $u_1,u_2,\dots,u_m$ reads
\begin{equation}\label{hohoho}
\tT_{0}(h|_{\Gamma^m},r,m)(x)=\sum_{\substack{\mu \in \N^m\\1\le |\mu| \le r}}\frac{1}{\mu!} h_0^{|\mu|}\Big[\stackrel{\mu_1}{\overbrace{u_1}},\stackrel{\mu_2}{\overbrace{u_2}},\dots,\stackrel{\mu_m}{\overbrace{u_m}}\Big]\,x_1^{\mu_1}\dots x_m^{\mu_m}\ .
\end{equation}

We now claim that 

\begin{lemma}\label{Odilon Redon}
	If $h$ is non-steep at the origin on $\Gamma^m$ at some given order $s_m\ge 2$, then
	\begin{equation}
	\tT_0(h|_{\Gamma^m},r,m)\in X_1^1(r,s_m,m):= \widehat \sigma^1(r,s_m,m) \bigcap \sS_1^1(r,m)
	\end{equation}
	and $\tT_0(h|_{\Gamma^m},r,m)$ satisfies the $s_m$-vanishing condition on $\Gamma^m$ on some curve $\gamma\in \arcoxh$ whose Taylor expansion at the origin has null linear terms. 
\end{lemma}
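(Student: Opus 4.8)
\textbf{Proof plan for Lemma \ref{Odilon Redon}.}

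The plan is to translate the geometric non-steepness hypothesis into the algebraic $s_m$-vanishing condition via the thalweg machinery, and then to check that the resulting curve has the claimed special form and that the restricted jet lands in $\sS_1^1(r,m)$ rather than in $\sS_2^1(r,m)$. First I would invoke the heuristics of section \ref{rs_vanishing_polynomials} together with Theorem \ref{arco_minimale}: since $h$ is non-steep at the origin on $\Gamma^m$ at order $s_m$, the projection $\pi_{\Gamma^m}\grad h$ vanishes (to sufficiently high order) along the minimal arc $\gamma$ of Theorem \ref{arco_minimale} contained in the thalweg $\mathcal T(h,\Gamma^m)$; because this arc satisfies the uniform Bernstein inequality \eqref{Bernie}, its linear Taylor coefficients are bounded by $\tM(r,n,m,1)=\scK(r,n,m)$, so $\gamma$ (up to reordering the spanning vectors so it is parametrized by the first coordinate) belongs to $\arcoxh$. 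Arguing as in the proof of Theorem B, Case 1 — passing to the adapted coordinates of section \ref{appropriata} associated to the linear part $\ta$ of $\gamma$, and using that the origin of $\Gamma^m$-coordinates is the point where non-steepness occurs — one obtains that $\tT_0(h|_{\Gamma^m},r,m)$ satisfies the $s_m$-vanishing condition on the $s_m$-truncation $\jetm$, i.e. $\tT_0(h|_{\Gamma^m},r,m)\in\widehat\sigma^1(r,s_m,m)$.

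Next I would address the second assertion, that the curve may be taken with null linear part. This is essentially built into the adapted-coordinate construction: by \eqref{tfj} with $\tb=\ta$, once one works in the variables $\cL_\ta$ the pushed-forward truncation $\jeta=\cL_\ta\circ\jetm$ has vanishing linear terms in every component except the parametrizing one. Equivalently, one replaces the original arc by the reparametrized arc whose image lies in $\Gamma^m$ but whose expansion, read in the new basis $\{v,u_2,\dots,u_m\}$, starts at order two; the $s_m$-vanishing condition is preserved under this change by Lemma \ref{haha}. So the curve $\gamma$ in the claim is exactly this arc in $\arcoxh$ with null linear Taylor coefficients.

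The main obstacle is the third claim: $\tT_0(h|_{\Gamma^m},r,m)\in\sS_1^1(r,m)$, i.e. $\det\Hp\neq 0$ for the matrix $\Hp$ in \eqref{bH}. Here I would use the hypothesis $\Gamma^m\in\sL_1(h,m,n)$ together with the freedom to choose the orthonormal basis: as in \eqref{Rubens}-\eqref{Van Dyck}, pick $u_1$ to be the eigenvector of the restricted Hessian $D^2(h|_{\Gamma^m})(0)$ associated to the unique null eigenvalue, so that all mixed second derivatives $h^2_0[u_1,u_j]$ vanish and the $(m-1)\times(m-1)$ block $\big(h^2_0[u_i,u_j]\big)_{i,j=2}^m$ is non-degenerate. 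The point is then that the entries of $\Hp$ are precisely (twice, on the diagonal) the second-order coefficients $p_{\varpi(j,\ell)}=h^2_0[u_j,u_\ell]$, $2\le j\le \ell\le m$ — by Lemma \ref{invarianza_indici} these are the coefficients attached to the family \eqref{pi copto} and they coincide with the coefficients of the monomials $x_jx_\ell$ in \eqref{hohoho}. Hence $\Hp$ equals exactly the non-degenerate block in \eqref{Van Dyck}, so $\det\Hp\neq 0$ and $\tT_0(h|_{\Gamma^m},r,m)\in\sS_1^1(r,m)$. Combining this with $\tT_0(h|_{\Gamma^m},r,m)\in\widehat\sigma^1(r,s_m,m)$ and the definition \eqref{X1} of $X_1^1(r,s_m,m)$ gives the conclusion. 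The delicate bookkeeping point to be careful about is that the basis change diagonalizing $\Gp$ in the proof of Lemma \ref{ciaone} must be reconciled with the basis adapted to $\sL_1$; but since $u_1=v$ is already in the kernel of $\Gp$ by Remark \ref{immutato} and the relevant block is exactly $\Hp$, the two choices are compatible and no circularity arises.
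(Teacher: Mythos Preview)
Your argument has a genuine gap in the first paragraph: you assert that non-steepness of $h$ at the origin on $\Gamma^m$ yields directly $\tT_0(h|_{\Gamma^m},r,m)\in\widehat\sigma^1(r,s_m,m)$, but the thalweg machinery of Theorem~\ref{arco_minimale} and section~\ref{genericity} does \emph{not} give this. The minimal arc $\gamma$ is analytic only on an interval $\tI_\lambda\subset[-\lambda,\lambda]$ that in general does not contain the origin, so you cannot read off an $s_m$-th order zero of $\pi_{\Gamma^m}\grad h$ at $t=0$. What non-steepness actually delivers (via paragraph~\ref{initial}) is membership in the \emph{closure} $\widehat\Sigma(r,s_m,m)$, not in $\widehat\sigma^1(r,s_m,m)$. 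The paper then has to do real work to descend from the closure to the open set: first Lemma~\ref{Leonardo} rules out $\widehat\Sigma^i$ for $i\ge 2$ (using that membership there would force a second null eigenvector via \eqref{Giorgione}, contradicting \eqref{Van Dyck}), and then the key step \eqref{Salvator Rosa} uses Theorem~\ref{partition} (closedness of $X_1^1$ in $\sS_1^1$) to conclude $\widehat\Sigma^1(r,s_m,m)\cap\sS_1^1(r,m)=X_1^1(r,s_m,m)$. You have skipped both of these steps.

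Your second paragraph also misses the point of the ``null linear terms'' claim. Passing to adapted coordinates $\cL_\ta$ makes the linear part of $\jeta$ vanish tautologically, but what the lemma asserts (and what is used downstream in \eqref{Perozzi}--\eqref{Melandri}) is that $\ta=(a_{21},\dots,a_{m1})=0$ in the \emph{original} basis $u_1,\dots,u_m$. The paper's argument is specific to $\sL_1$: once one knows $\tT_0(h|_{\Gamma^m},r,m)\in\widehat\sigma^1(r,s_m,m)$, the equations for $\alpha=0,1$ force $v=u_1+\sum_{i\ge2}a_{i1}u_i$ to be a null eigenvector of the restricted Hessian (system \eqref{Van Gogh}); since that Hessian has a \emph{unique} null direction $u_1$, one gets $v\parallel u_1$ and hence $a_{i1}=0$ (formula \eqref{Louvre}). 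Your third paragraph, by contrast, is correct: the identification of $\Hp$ with the block in \eqref{Van Dyck} via \eqref{Arnolfo} is exactly how the paper shows $\tT_0(h|_{\Gamma^m},r,m)\in\sS_1^1(r,m)$.
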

\begin{proof}
	Looking at \eqref{hohoho}, it is easy to check that the coefficients of $\tT_{0}(h|_{\Gamma^m},r,m)$ associated to the family of indices
	$
	\{\varpi(b,c)\}_{\substack{b,c\in\{2,\dots,m\}, b\le c}}
	$ 
	introduced in \eqref{pi copto} read
	\begin{equation}\label{Arnolfo}
	p_{\varpi(b,c)}= \left(\tT_{0}(h|_{\Gamma^m},r,m)\right)_{\varpi(b,c)}=\frac{1}{1+\delta_{bc}} h^2_0[u_b,u_c]\ ,
	\end{equation}
	where $\delta_{bc}$ is the Kronecker delta. 
	
	By putting together expressions \eqref{Van Dyck} - \eqref{Arnolfo} with the definition of set $\sS_1^1(r,m)$ in \eqref{S1}, one has that
	\begin{equation}\label{Durer}
	\tT_{0}(h|_{\Gamma^m},r,m)\in \sS_1^1(r,m)\ .
	\end{equation} 
	
	Since we have assumed $h$ is non-steep at the origin on $\Gamma^m$ at some order $s_m\ge 2$ then, by the discussions at section \ref{genericity} and at paragraph \ref{initial}, one has 
	\begin{equation}
	\tT_0(h|_{\Gamma^m},r,m)\in \widehat \Sigma(r,s_m,m)
	\end{equation}
	which, together with \eqref{Durer}, yields
	\begin{equation}\label{Murillo}
	\tT_0(h|_{\Gamma^m},r,m)\in \widehat \Sigma(r,s_m,m)\bigcap \sS_1^1(r,m)\ .
	\end{equation}
	
	We now claim that
	
	\medskip 
	
	\begin{lemma}\label{Leonardo}
		
		\begin{equation}
		\tT_0(h|_{\Gamma^m},r,m)\not \in \widehat \Sigma^i(r,s_m,m) \qquad \forall\  i\in\{2,\dots,m\}\ .
		\end{equation}
		
	\end{lemma}
	\begin{proof}
		Suppose, by absurd, that $\tT_0(h|_{\Gamma^m},r,m) \in \widehat \Sigma^i(r,s_m,m)$ for some $i\in\{2,\dots,m\}$. It is clear from Theorem \ref{ideal} and \eqref{sigma2} that, as $s_m\ge 2$, one has $\widehat\sigma^i(r,s_m,m)\subset \widehat\sigma^i(r,1,m)$, and therefore $\widehat\Sigma^i(r,s_m,m)\subset \widehat\Sigma^i(r,1,m)$. This fact and the initial hypothesis imply $\tT_0(h|_{\Gamma^m},r,m) \in \widehat \Sigma^i(r,1,m)$, so that by Theorem \ref{chiusuraquattro}, one must have 
		\begin{equation}\label{Raffaello}
		\tT_0(h|_{\Gamma^m},r,m) \in \widehat \sigma^i(r,1,m)\ .
		\end{equation}
		Relation \eqref{Raffaello} implies that there must exist a curve $\gamma\in\arcoih$ with values in $\Gamma^m$ such that $\tT_0(h|_{\Gamma^m},r,m)$ satisfies the $1$-vanishing condition on $\gamma$. As it was shown in the proof of Theorem C1 (see the discussion around formula \eqref{Chaikovskij}) this is equivalent to asking that system
		\begin{equation}\label{Giorgione}
		\begin{cases}
		h^1_0[u_1]=h^1_0[u_2]=\dots=h^1_0[u_i]=\dots=h^1_0[u_m]=0\\
		h^2_0[u_i,u_1]=h^1_0[u_i,u_2]=\dots=h^1_0[u_i,u_i]=\dots=h^1_0[u_i,u_m]=0
		\end{cases}\qquad i\neq 1
		\end{equation}
		admits a solution, which contradicts \eqref{Van Dyck}. 
	\end{proof}

\medskip 
	
	Due to \eqref{Murillo} and to Lemma \ref{Leonardo}, we then have that 
	\begin{equation}\label{Goya}
	\tT_0(h|_{\Gamma^m},r,m)\in \widehat \Sigma^1(r,s_m,m)\bigcap \sS_1^1(r,m)\ .
	\end{equation}
	
	Moreover, by Theorem \ref{partition}, the set $	X_1^1(r,s,m):=\widehat \sigma^1(r,s,m) \bigcap \sS_1^1(r,m)$ defined in \eqref{X1} is closed in $\sS_1^1(r,m)$ for the topology induced by $\Polm$, whence one deduces that actually
	\begin{equation}\label{Salvator Rosa}
	X_1^1(r,s_m,m)=\text{closure}\left(\widehat \sigma^1(r,s_m,m)\right) \bigcap \sS_1^1(r,m)=\widehat \Sigma^1(r,s_m,m) \bigcap \sS_1^1(r,m)\ .
	\end{equation}
	Relations \eqref{Goya} and \eqref{Salvator Rosa} together imply 
	
	\begin{equation}\label{De Ribera}
	\tT_0(h|_{\Gamma^m},r,m)\in X_1^1(r,s_m,m)\ .
	\end{equation}
	
	Therefore, by \eqref{De Ribera} and by the definition of $X^1_1(r,s_m,m)$ in \eqref{X1}, there exists a curve $\gamma\in \arcoxh$, with image in $\Gamma^m$, on which the Taylor polynomial of the restriction $\tT_0(h|_{\Gamma^m},r,m)$ satisfies the $s_m$-vanishing condition. Namely, if $\ta=(a_{21},\dots,a_{m1})\in \overline B^{m-1}(0,\scK(r,n,m))$ indicates the linear coefficients of $\gamma$ and $\jetamh$ its $s_m$-truncation written in the adapted coordinates, by \eqref{sigma2} one must have 
	\begin{equation}\label{Brueghel}
	(\tT_{0,\ta}(h|_{\Gamma^m},r,m),\ta,\jetamh)\in \Upsilon^1(\widehat Z^1(r,s_m,m))\ ,
	\end{equation}
	that is, by \eqref{cuccurucu},
	\begin{equation}\label{Rembrandt}
	\widehat \tQ_{\ell \alpha}(\tT_{0,\ta}(h|_{\Gamma^m},r,m),\ta,\jetamh)=0\ ,\ \  \ell=1,\dots,m\ ,\ \  \alpha=0,\dots,s_m\ .
	\end{equation}
	
	In particular, as $s_m\ge 2 $, due to Theorem \ref{ideal} and to \eqref{Rossini}, the equations in \eqref{Rembrandt} for $\alpha=0,1$ read
	\begin{equation}\label{Van Gogh}
	\begin{cases}
	h^1_0[v]=h^1_0[u_1]=\dots=h_0^1[u_m]=0\\
	h_0^2[v,v]=h^2_0[v,u_2]= h^2_0[v,u_m]=0\\
	\text{Span}(v,u_2,\dots,u_m)= \Gamma^m
	\end{cases}
	\end{equation}
	where 
	\begin{equation}\label{Rosso Fiorentino}
	v=u_1+\sum_{i=2}^m a_{i1}u_i
	\end{equation}
	is the anisotropic vector associated to the curve $\gamma$. Comparing \eqref{Rubens} to \eqref{Van Gogh}, as the Hessian of $h|_{\Gamma^m}$ has only one null eigenvalue we see that the vectors $u_1$ and $v$ must be parallel, but then \eqref{Rosso Fiorentino} yields
	\begin{equation}\label{Louvre}
	v=u_1\quad , \qquad 	a_{21}=\dots=a_{m1}=0\ ,
	\end{equation}
	so that by the arguments of subsection \eqref{appropriata} the coordinates adapted to the curve $\gamma$ coincide with the original coordinates.

\end{proof}

We now recall that, due to Theorem \ref{partition}, the form of the set
\begin{equation}
Y^1_1(r,s_m,m):=\Pi_{\widehat W^1(r,m)}\left(\nucleo \left(\{\widehat \tQ_{\ti\alpha}\}_{\substack{\ti\in\{1,\dots,m\}\\ \alpha\in\{0,\dots,s_m\}}}\right) \bigcap \Upsilon^1(\sS_1^1(r,m)\times \centinaxh)\right)
\end{equation} satisfying\footnote{The transformation $\fU^1$ was introduced in Remark \ref{assafaddij}.}
\begin{equation}\label{El Greco}
X^1_1(r,s_m,m)=\Pi_{\Polm} \left((\fU^1)^{-1}Y^1_1(r,s_m,m)\right)
\end{equation}
can be explicitly computed - starting from the relations determining $\Upsilon^1(\widehat Z^1(r,s_m,m))$ - by the means of an algorithm involving only linear operations.
By this fact, the form of the set
\begin{equation}\label{Perozzi}
\mathscr{Y}^1_1(r,s_m,m):=Y^1_1(r,s_m,m)\bigcap \{(\tp_\mu,\ta)\in \widehat W^1(r,m)| \ta=0\}
\end{equation}
can also be deduced explicitly starting from $\Upsilon^1(\widehat Z^1(r,s_m,m))$. Then, due to Lemma \ref{Odilon Redon} and to \eqref{El Greco}, one has that 
\begin{equation}\label{Fidia}
\myoverset{\text{$h$ non-steep at $0$}}{\text{at order $s_m$ on $\Gamma^m$}} \ \Longrightarrow \	\fU^1(\tT_0(h|_{\Gamma^m},r,m),0)=(\tT_0(h|_{\Gamma^m},r,m),0)\in \mathscr{Y}^1_1(r,s_m,m)\ .
\end{equation} 

Moreover, we observe the following facts:
\begin{enumerate}
	
	\item in section \ref{Prova Th B}, the explicit expression of set $\cZ^{r,s_m,m}_n\subset \Pollo\times \R^{(m-1)s_m}\times \scV^1(m,n)$ introduced in Corollary B2 is obtained by injecting into the explicit expression for  
	\begin{equation}\label{citta}
	\Upsilon^1(\widehat Z^1(r,s_m,m)):=\nucleo \left\{\tQ_{i,\alpha}(\tT_{0,\ta}(\tP_\ta,r,m),\ta,\jetam)\right\}_{\substack{i=1,\dots,m\\\alpha=0,\dots,s_m}}
	\end{equation}   
	given in Theorem \ref{ideal} the explicit form of the coefficients of $\tT_{0,\ta}(Q|_{\Gamma^m},r,m)$ in \eqref{chickenuno}-\eqref{orasiride} (which we have written again in\eqref{ma che riderone}), with $Q$ any polynomial belonging to $\Pollo$, and by considering the vectors $v,u_2,\dots,u_m$ in \eqref{ma che riderone} as free parameters of $\scV^1(m,n)$. 
	
	\item In the same way, for any $Q\in\Pollo$, one can inject in the expressions determining $\mathscr{Y}^1_1(r,s_m,m)$ the explicit form of the coefficients of $\tT_{0,\ta}(Q|_{\Gamma^m},r,m) $, given in \eqref{ma che riderone}, with the vectors $v,u_2,\dots,u_m$ considered as free parameters of $\scV^1(m,n)$. The resulting subset is indicated by 
	$$
	\scA_1(r,s_m,n,m)\subset \Pollo\times\R^{m-1}\times  \scV^1(m,n)\ .
	$$
	
\end{enumerate}

By the arguments above, and by the fact that the form of $\mathscr{Y}^1_1(r,s_m,m)$ is obtained starting from the expression of $\Upsilon^1(\widehat Z^1(r,s_m,m))$ through linear algorithms, we have that the explicit expression of
$
\scA_1(r,s_m,n,m)
$  
can be found starting from the expressions determining $\cZ^{r,s_m,m}_n$ by only performing linear operations. 

Furthermore, the above arguments together with \eqref{Fidia} and with formula \eqref{Louvre} yield that if system
\begin{align}\label{Melandri}
\begin{split}
&\begin{cases}
%(a_{21},\dots,a_{m1})\in \overline B^{m-1}(\scK)\\
(u_1,\dots,u_m)\in \tU(m,n)\quad , \qquad  % \qquad v:=u_1+\sum_{i=2}^m a_{i1} u_i
\text{ Span }(u_1,u_2,\dots,u_m)=\Gamma^m\in \sL_1(h,m,n)\\
(\tT_{0}(h,r,n),0, u_1,u_2,\dots,u_m)\in \scA_1(r,s_m,n,m)
\end{cases}
\end{split}
\end{align}
has no solution, then $h$ is steep at the origin with index $\alpha_m\le 2s_m-1$  on any subspace $\Gamma^m\in \sL_1(h,m,n)$. 

\subsubsection{Case of a subspace belonging to $\sL_2(h,m,n)$}\label{Lambda2}

As we did in paragraph \ref{Lambda1}, we choose a basis $u_1,\dots,u_m\in \tU(m,n)$ spanning $\Gamma^m$ such that $u_1$ coincides with a normalized eigenvector of the Hessian associated to one of the null eigenvalues. Then, we have 
\begin{lemma}\label{Toulouse Lautrec}
	If $h$ is non-steep at the origin on $\Gamma^m\in \sL_2(h,m,n)$ at a given order $s_m\ge 2$, up to suitably changing the order of the vectors $u_1,\dots,u_m$ one has
	\begin{equation}
	\tT_0(h|_{\Gamma^m},r,m)\in \widehat \Sigma^1(r,s_m,m) \bigcap \sS_2^1(r,m)\ .
	\end{equation}
\end{lemma}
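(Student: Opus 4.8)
The plan is to mirror the argument of Lemma \ref{Odilon Redon} (the $\sL_1$ case), adapting it to the situation where the restricted Hessian has exactly two null eigenvalues. First I would spell out the linear-algebraic setup: choosing $u_1$ to be a normalized eigenvector associated to one of the null eigenvalues, one records the analogue of \eqref{Rubens}, namely $h^1_0[u_j]=0$ for all $j$ and $h^2_0[u_1,u_j]=0$ for all $j$. Then, exactly as in \eqref{Arnolfo}, the coefficients $p_{\varpi(b,c)}$ of $\tT_0(h|_{\Gamma^m},r,m)$ are the entries of the restricted Hessian on $\mathrm{Span}(u_2,\dots,u_m)$, i.e. $\Hp$ coincides (up to the usual normalization) with that restricted Hessian. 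Since $\Gamma^m\in \sL_{\ge 2}(h,m,n)$ and $u_1$ already accounts for one null direction, the matrix $\Hp$ has at least one further null eigenvalue, hence $\det \Hp = 0$; by the definition \eqref{S2} of $\sS_2^1(r,m)$ this gives $\tT_0(h|_{\Gamma^m},r,m)\in \sS_2^1(r,m)$.

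Next I would combine this with the non-steepness hypothesis: since $h$ is non-steep at the origin on $\Gamma^m$ at order $s_m$, the discussion of section \ref{genericity} and paragraph \ref{initial} gives $\tT_0(h|_{\Gamma^m},r,m)\in \widehat\Sigma(r,s_m,m)=\bigcup_{i=1}^m \widehat\Sigma^i(r,s_m,m)$. The point is then to show that, after a suitable reordering of $u_1,\dots,u_m$, one may take the parametrizing coordinate to be $i=1$, i.e. $\tT_0(h|_{\Gamma^m},r,m)\in \widehat\Sigma^1(r,s_m,m)$. This is where some care is needed, and it is the step I expect to be the main obstacle: in the $\sL_1$ case one could rule out all indices $i\in\{2,\dots,m\}$ using Lemma \ref{Leonardo}, whose proof rested on the non-degeneracy \eqref{Van Dyck} of the minor obtained by deleting the single null direction $u_1$; here there are two null directions, so that argument fails as stated. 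The fix is to argue that a minimal arc $\gamma$ in the thalweg whose tangent at the origin lies in the $2$-dimensional kernel of the restricted Hessian exists, and that — after reordering the basis so that this tangent direction becomes the first coordinate axis (absorbed, as in section \ref{appropriata}, into the vector $v=u_1+\sum a_{i1}u_i$ with $\ta\in \overline B^{m-1}(0,\scK(r,n,m))$) — the curve is parametrized by the first coordinate, hence $\tT_0(h|_{\Gamma^m},r,m)\in \widehat\sigma^1(r,s_m,m)\subset \widehat\Sigma^1(r,s_m,m)$. One should check that the reordering is compatible with keeping $u_1$ a null eigenvector and with the constraint $\det\Hp=0$ surviving in the new labelling; this is essentially bookkeeping, because any permutation of $u_2,\dots,u_m$ preserves both the vanishing of $h^2_0[u_1,\cdot]$ and the singularity of the $(m-1)\times(m-1)$ Hessian block.

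Putting these two facts together yields $\tT_0(h|_{\Gamma^m},r,m)\in \widehat\Sigma^1(r,s_m,m)\cap \sS_2^1(r,m)$, which is exactly the claim of Lemma \ref{Toulouse Lautrec}. I would close by remarking — as the subsequent text of section \ref{Prova C1-C4} does — that this is precisely the case in which the linear reduction algorithm of Theorem \ref{partition} (which inverts $\Hp$) breaks down, so that one cannot extract the set $\scA_2(r,s_m,n,m)$ by linear operations alone; this is consistent with the statement of Theorem C2, point 3, and explains why only the slower general algorithms of real-algebraic geometry are available there. No estimate on steepness coefficients is claimed, only the membership statement, so no uniformity argument is required; the proof is purely a matter of identifying which stratum $\sS_j^1$ and which parametrizing index $i$ the restricted jet falls into.
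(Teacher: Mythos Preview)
Your argument that $\tT_0(h|_{\Gamma^m},r,m)\in\sS_2^1(r,m)$ and $\tT_0(h|_{\Gamma^m},r,m)\in\widehat\Sigma(r,s_m,m)$ is correct and matches the paper. The difference --- and a genuine gap in your write-up --- is in how you pass from $\widehat\Sigma=\bigcup_i\widehat\Sigma^i$ to $\widehat\Sigma^1$ after reordering.

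The paper's argument is much shorter and avoids the thalweg/minimal-arc detour entirely: since the jet lies in $\widehat\Sigma^i$ for some $i$, if $i\neq 1$ one simply swaps $u_1\leftrightarrow u_i$. In the new labelling the jet is trivially in $\widehat\Sigma^1$, and membership in $\sS_2^1$ persists because the \emph{old} $u_1$ --- chosen at the outset to be a null eigenvector of the restricted Hessian --- now sits among the new $u_2,\dots,u_m$, so the new $\Hp$ acquires a zero row and column and $\det\Hp=0$.

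Your route has two issues. First, invoking a ``minimal arc $\gamma$ in the thalweg whose tangent at the origin lies in the kernel'' is not justified as stated: non-steepness places the jet only in the \emph{closure} $\widehat\Sigma$, so no curve for $h$ itself is directly available, and your conclusion $\tT_0(h|_{\Gamma^m},r,m)\in\widehat\sigma^1$ (rather than $\widehat\Sigma^1$) is too strong. Second, and more importantly, your bookkeeping claim that ``any permutation of $u_2,\dots,u_m$ preserves \dots\ the singularity of the $(m-1)\times(m-1)$ Hessian block'' does not cover the case at hand: the required reordering is the transposition $u_1\leftrightarrow u_i$, which is \emph{not} a permutation of $u_2,\dots,u_m$, and after it the new $u_1$ need not be a null eigenvector. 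The correct reason $\det\Hp=0$ survives the swap is precisely that the original null eigenvector $u_1$ has migrated into the $(m-1)\times(m-1)$ block --- this is the point the paper uses and the one missing from your argument.
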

\begin{proof}
	By \eqref{Arnolfo}, and by the fact that the hessian of $h|_{\Gamma^m}$ has two or more null eigenvalues, we have that 
	\begin{equation}\label{Rema Namakula}
	\det \left(
	\begin{matrix}
	h^2_0[u_2,u_2]& h^2_0[u_2,u_3] & \dots & h^2_0[u_2,u_m]\\
	h^2_0[u_3,u_2]& h^2_0[u_3,u_3] & \dots & h^2_0[u_3,u_m]\\
	\dots\\
	h^2_0[u_m,u_2]& h^2_0[u_m,u_3] & \dots & h^2_0[u_m,u_m]\\
	\end{matrix}
	\right)= 0\ ,
	\end{equation}
	hence $	\tT_0(h|_{\Gamma^m},r,m)\in  \sS_2^1(r,m)$ following definition \eqref{S2}. 
	
	Moreover, $h$ is non-steep at the origin on $\Gamma^m$ at a given order $s_m\ge 2$ so that, by the discussions at section \ref{genericity} and at paragraph \ref{initial}, one has 
	\begin{equation}
	\tT_0(h|_{\Gamma^m},r,m)\in \widehat \Sigma(r,s_m,m)
	\end{equation}
	so that by the previous considerations we have
	\begin{equation}\label{Velazquez}
	\tT_0(h|_{\Gamma^m},r,m)\in \widehat \Sigma(r,s_m,m)\bigcap \sS_2^1(r,m)\ .
	\end{equation}
	By the above expression and by \eqref{sigma2}, we have that there must exist $i\in\{1,\dots,m\}$ such that $\tT_0(h|_{\Gamma^m},r,m)\in \widehat \Sigma^i(r,s_m,m)\bigcap \sS_2^1(r,m)$. As it was already discussed in the proof of Theorem C1, if $\tT_0(h|_{\Gamma^m},r,m)\in \widehat \Sigma^i(r,s_m,m)\bigcap \sS_2^1(r,m)$ then the vector $u_i$ must satisfy \eqref{Giorgione}. If $i=1$, there is nothing else to prove. If $i\neq 1$, it suffices to interchange the vector $u_1$ with the vector $u_i$. 
\end{proof}

Now, if for any $Q\in \Pollo$ we inject in the expressions\footnote{Contrary to the case studied in the previous paragraph, here these expressions cannot be found easily, in general.} determining the semi-algebraic subset $\fU^1\left(\left(\widehat \Sigma^1(r,s_m,m) \bigcap \sS_2^1(r,m)\right)\times \centinaunoh\right)\subset \widehat W^1(r,m)$ the explicit expression of the coefficients of the polynomial $\tT_{0,\ta}(Q|_{\Gamma^m},r,m)$, and we let the vectors $v,u_2,\dots,u_m$ appearing in \eqref{ma che riderone} vary in $\scV^1(m,n)$, we obtain a set 
$$
\scA_2(r,s_m,n,m)\subset \Pol\times\R^{m-1}\times \scV^1(m,n)\ .
$$ 

Moreover, by Lemma \ref{Toulouse Lautrec}, we have that if system
\begin{align}\label{Conte Mascetti}
\begin{split} 
&\begin{cases}
(a_{21},\dots,a_{m1})\in \overline B^{m-1}(\scK)\\
(u_1,\dots,u_m)\in \tU(m,n)\quad , \qquad v:=u_1+\sum_{i=2}^m a_{i1} u_i\\
\text{ Span }(v,u_2,\dots,u_m)=\Gamma^m\in \sL_2(h,m,n)\\
(\tT_{0}(h,r,n),a_{21},\dots,a_{m1},v,u_2,\dots,u_m)\in \scA_2(r,s_m,n,m)
\end{cases}
\end{split}
\end{align}
has no solution, then $h$ is steep at the origin with index $\alpha_m\le 2s_m-1$  on any subspace $\Gamma^m\in \sL_2(h,m,n)$. 

\subsection{Proof of Theorem C3}

\subsubsection{Construction of an atlas of analytic maps for the Grassmannian} 

It is well known that for any pair of positive integers $k,n$, with $k< n$, the Grassmannian $\tG(k,n)$ has the structure of a projective algebraic variety (see e.g. \cite{Bochnak_Coste_Roy_1998}, \cite{Perrin_2008}). In this subparagraph, for any positive integer $n\ge 3$ and for any $m\in\{2,\dots,n-1\}$ we will construct a suitable atlas of analytic maps for $\tG(m,n)$ by exploiting classic arguments of real-algebraic geometry. 

Namely, we fix two integers $n\ge 3$, and $m\in\{2,\dots,n-1\}$ and we consider a subset $I:=(i_1,\dots,i_m)\subset \{1,\dots,n\}$ of cardinality $m$, as well as its complementary $J:=(j_1,\dots,j_{n-m})$ of cardinality $n-m$ in $\{1,\dots,n\}$. 

We work in the euclidean space $\R^n$, and we fix a bilinear, symmetric, non-degenerate form $\mathsf{B}:\R^n\times \R^n\longrightarrow \R$. The Spectral Theorem ensures the existence of  an orthonormal basis - indicated by $e_1,\dots,e_n$ - that diagonalizes $\mathsf{B}$. Namely, in the basis $e_1,\dots,e_n$ the form $\mathsf{B}$ is represented by matrix 
\begin{equation}\label{Bi}
\mathsf{B}_{(e_1,\dots,e_n)}:=
\left(
\begin{matrix}
\alpha_1 & 0 & 0&\dots& 0 \\
0 & \alpha_2 & 0 & \dots & 0\\
\dots & \dots & \dots &\dots & \dots \\
0& 0 &0 &\dots & \alpha_n 
\end{matrix}
\right)\ ,
\end{equation}
where $\alpha_1\times \dots \times \alpha_n\neq 0$. 

We indicate by $E_I$ (resp. $E_J$) the $m$-dimensional subspace spanned by the vectors $(e_{i_1},\dots, e_{i_m})$ (resp. the $n-m$-dimensional subspace spanned by $e_{j_1},\dots,e_{j_{n-m}}$). One clearly has $\R^n=E_I\oplus E_J$. We also denote by $U_J$ the subset of $\tG(m,n)$ containing the $m$-dimensional subspaces which are supplementary for $E_J$. 

With this setting, we consider the cartesian product $E_J^m:=\stackrel{\text{$m$ times}}{\overbrace{E_J\times \dots \times E_J}}$ and we have that 
\begin{lemma}\label{Grassmannian_parametrization}
	The map
	\begin{equation}\label{effej}
	\mathscr{F}_J: E_J^m\longrightarrow U_J\qquad (w_1,\dots,w_m)\longmapsto \text{Span}(e_{i_1}+w_1,\dots,e_{i_m}+w_m)
	\end{equation}
	is bijective. 
\end{lemma}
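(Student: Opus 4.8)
The plan is to show that $\mathscr{F}_J$ is a bijection from $E_J^m$ onto $U_J$ by constructing an explicit inverse. The key point is that the subset $U_J\subset \tG(m,n)$ consists precisely of those $m$-dimensional subspaces $\Gamma$ for which the projection $\pi_I:\R^n\to E_I$ along $E_J$ (i.e. the projection associated to the direct sum decomposition $\R^n=E_I\oplus E_J$) restricts to a linear isomorphism $\pi_I|_\Gamma:\Gamma\to E_I$. Indeed, $\Gamma$ is supplementary to $E_J$ if and only if $\Gamma\cap E_J=\{0\}$ and $\dim\Gamma+\dim E_J=n$, and since $\dim\Gamma=m$, $\dim E_J=n-m$, this is equivalent to $\ker(\pi_I|_\Gamma)=\Gamma\cap E_J=\{0\}$, hence to $\pi_I|_\Gamma$ being an isomorphism onto $E_I$ by dimension count.

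First I would verify that $\mathscr{F}_J$ indeed lands in $U_J$: given $(w_1,\dots,w_m)\in E_J^m$, the vectors $e_{i_k}+w_k$ are linearly independent (their $\pi_I$-images $e_{i_1},\dots,e_{i_m}$ form a basis of $E_I$), so $\text{Span}(e_{i_1}+w_1,\dots,e_{i_m}+w_m)$ is $m$-dimensional, and its intersection with $E_J$ is trivial since any element $\sum_k c_k(e_{i_k}+w_k)$ lying in $E_J$ must have vanishing $\pi_I$-component $\sum_k c_k e_{i_k}=0$, forcing all $c_k=0$. Thus the map is well-defined into $U_J$.

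Next I would construct the inverse. Given $\Gamma\in U_J$, the isomorphism $\pi_I|_\Gamma:\Gamma\to E_I$ has an inverse $\sigma_\Gamma:E_I\to\Gamma\subset\R^n$; define $w_k:=\sigma_\Gamma(e_{i_k})-e_{i_k}$ for $k=1,\dots,m$. Since $\pi_I(\sigma_\Gamma(e_{i_k}))=e_{i_k}$, we get $\pi_I(w_k)=0$, i.e. $w_k\in E_J$, so $(w_1,\dots,w_m)\in E_J^m$. One then checks that $\mathscr{G}_J:\Gamma\mapsto(w_1,\dots,w_m)$ is a two-sided inverse of $\mathscr{F}_J$: the composition $\mathscr{G}_J\circ\mathscr{F}_J$ is the identity because $\sigma_{\mathscr{F}_J(w_1,\dots,w_m)}(e_{i_k})=e_{i_k}+w_k$ (this is the unique element of the span projecting to $e_{i_k}$), and conversely $\mathscr{F}_J\circ\mathscr{G}_J(\Gamma)=\text{Span}(\sigma_\Gamma(e_{i_1}),\dots,\sigma_\Gamma(e_{i_m}))=\Gamma$ since $\sigma_\Gamma$ is an isomorphism onto $\Gamma$.

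There is essentially no hard obstacle here — the statement is the standard description of the affine chart of the Grassmannian associated to a splitting $\R^n=E_I\oplus E_J$, and the only care needed is bookkeeping with the projection and its section. The one place to be slightly careful is making the identification "$\Gamma$ supplementary to $E_J$" $\Longleftrightarrow$ "$\pi_I|_\Gamma$ an isomorphism" rigorous via the dimension count, which I would state cleanly as a preliminary observation before exhibiting $\mathscr{F}_J$ and $\mathscr{G}_J$. (Later one would presumably observe that in the coordinates given by $w_1,\dots,w_m$, the chart is a Nash — indeed polynomial — parametrization, which is what connects this to the semi-algebraic framework of the paper, but that is beyond the present Lemma.)
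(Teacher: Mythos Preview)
Your proof is correct and follows essentially the same approach as the paper: both use the direct sum $\R^n=E_I\oplus E_J$ and the associated projection to show well-definedness and to construct the inverse. The only difference is organizational --- you package the argument around an explicit two-sided inverse $\mathscr{G}_J$ built from the section $\sigma_\Gamma=(\pi_I|_\Gamma)^{-1}$, whereas the paper proves linear independence, supplementarity, injectivity, and surjectivity in four separate steps (its surjectivity step is exactly your construction of $\mathscr{G}_J$, written as $e_{i_\ell}=v_\ell-w_\ell$ via $\R^n=V\oplus E_J$).
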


\begin{proof} We proceed by steps. In the first two steps, we check that definition \eqref{effej} is well-posed. In Steps 3 and 4 we prove injectivity and surjectivity. 
	
	{\it Step 1.} We claim that for any choice of $(w_1,\dots,w_m)\in E_J^m$, the vectors $(e_{i_1}+w_1,\dots,e_{i_m}+w_m)$ are linearly independent. Infact, if by absurd there exist $m$ vectors $(w_1,\dots,w_m)\in E_J^m$ such that $(e_{i_1}+w_1,\dots,e_{i_m}+w_m)$ are not linearly independent, then there must be a vector $\lambda=(\lambda_1,\dots,\lambda_m)\in \R^m\backslash\{0\}$ satisfying $\sum_{\ell=1}^m\lambda_\ell(e_{i_\ell}+w_\ell)=0$, that is $\sum_{\ell=1}^m\lambda_\ell e_{i_\ell}=-\sum_{\ell=1}^m\lambda_\ell w_{i_\ell}$. 
	
	As $\sum_{\ell=1}^m\lambda_\ell e_{i_\ell}\in E_I$, and $-\sum_{\ell=1}^m\lambda_\ell w_{i_\ell}\in E_J$, and as $\R^n=E_I\oplus E_J$ by construction, by the assumptions one must have $-\sum_{\ell=1}^m\lambda_\ell w_{i_\ell}=\sum_{\ell=1}^m\lambda_\ell e_{i_\ell}=0$. The previous relation - together with the fact that the vectors $e_{i_1},\dots,e_{i_m}$ are linearly independent by hypothesis, yields $\lambda=0$, in contradiction with the initial assumption $\lambda\neq 0$.
	
	{\it Step 2. } We claim that for any choice of $(w_1,\dots,w_m)\in E^m_J$ one has $\text{ Span }(e_{i_1}+w_1,\dots,e_{i_m}+w_m)\in U_J$. 
	
	By absurd, we suppose that for some $(w_1,\dots,w_m)\in E^m_J$ there exist two non-zero vectors $u\in E_J$, $\sigma=(\sigma_1,\dots,\sigma_m)\in \R^m$, verifying $u=\sum_{\ell=1}^m\sigma_\ell(e_{i_\ell}+w_\ell)$, that is $\sum_{\ell=1}^m\sigma_\ell e_{i_\ell}=u-\sum_{\ell=1}^m\sigma_\ell w_\ell$. By construction, one has $\sum_{\ell=1}^m\sigma_\ell e_{i_\ell}\in E_I$, and $u-\sum_{\ell=1}^m\sigma_\ell w_\ell\in E_J$. Hence, as $\R^n=E_I\oplus E_J$, the previous formula yields $u-\sum_{\ell=1}^m\sigma_\ell w_\ell=\sum_{\ell=1}^m\sigma_\ell e_{i_\ell}=0$, which in turn implies $\sigma=0$, as the vectors $(e_{i_1},\dots,e_{i_m})$ are linearly independent. 
	
	Hence, one has $E_J\cap \text{ Span }(e_{i_1}+w_1,\dots,e_{i_m}+w_m)=\{0\}$. Therefore, since $\dim E_J=n-m$ and $\dim\left( \text{ Span }(e_{i_1}+w_1,\dots,e_{i_m}+w_m)\right)=m$ (the vectors $e_{i_1}+w_1,\dots,e_{i_m}+w_m$ are linearly independent by Step 1), the subspace given by  $\text{ Span }(e_{i_1}+w_1,\dots,e_{i_m}+w_m)$ is supplementary to $E_J$ and thus belongs to $U_J$. 
	
	{\it Step 3.} We prove that $\mathscr{F}_J$ is injective. By absurd, we suppose that there exists a subspace in $U_J$ which has two different pre-images. Namely, we suppose that there exist vectors $(u_1,\dots,u_m)\in E_J^m$, and $(w_1,\dots,w_m)\in E_J^m$, with $(u_1,\dots,u_m)\neq (w_1,\dots,w_m)$, satisfying $\mathscr{F}_J(u_1,\dots,u_m)=\mathscr{F}_J(w_1,\dots,w_m)$. Hence, as $e_{i_1}+u_1,\dots,e_{i_m}+u_m$ and $e_{i_1}+w_1,\dots,e_{i_m}+w_m$ generate the same subspace, for any $\ell\in\{1,\dots,m\}$ there must exist $\beta^\ell=(\beta_1^\ell, \dots,\beta_m^\ell)\in \R^m\backslash\{0\}$ such that
	
	\begin{equation}\label{dipendenza_lineare}
	e_{i_\ell}+w_\ell=\sum_{k=1}^m \beta^\ell_k (e_{i_k}+u_k)\ ,
	\end{equation}  that is
	\begin{equation}\label{dipendenza_lineare_2}
	e_{i_\ell}-\sum_{k=1}^m\beta^\ell_k e_{i_k}=\sum_{i=1}^m \beta^\ell_i u_i - w_\ell\ .
	\end{equation} 
	By construction one has $e_{i_\ell}-\sum_{k=1}^m\beta^\ell_k e_{i_k}\in E_I$ and $\sum_{i=1}^m \beta^\ell_i u_i - w_\ell\in E_J$, so due to \eqref{dipendenza_lineare_2} and to the direct sum $\R^n=E_I\oplus E_J$ one infers $\sum_{i=1}^m \beta^\ell_i u_i - w_\ell=e_{i_\ell}-\sum_{k=1}^m\beta^\ell_k e_{i_k}=0$. Since the vectors $e_{i_1},\dots,e_{i_m}$ are linearly independent, we finally obtain 
	\begin{equation}\label{Kronecker}
	\beta^\ell_k=\delta_{\ell k}\ .
	\end{equation} where $\delta_{\ell k }$ is the Kronecker symbol. Formulas \eqref{dipendenza_lineare} and \eqref{Kronecker} together imply that 
	\begin{equation}
	e_{i_\ell}+w_\ell=e_{i\ell}+ u_\ell \quad \Longleftrightarrow\quad w_\ell=u_\ell \qquad \forall \ell\in\{1,\dots,m\}\ ,
	\end{equation} 
	in contradiction with the hypothesis $(u_1,\dots,u_m)\neq (w_1,\dots,w_m)$. 
	
	{\it Step 4. } We prove that $\mathscr{F}_J$ is surjective. Consider a subspace $V\in U_J$. Since $V$ is supplementary of $E_J$, one has the direct sum $\R^n=V\oplus E_J$, and for any $\ell\in \{1,\dots,m\}$ there exist unique vectors $(v_\ell,w_\ell)\in V\times E_J$ such that $e_{i_\ell}=v_\ell-w_\ell$. Hence, for any $\ell\in\{1,\dots,m\}$ there exists a unique choice of vectors $(w_1,\dots,w_m)$ satisfying $v_\ell=e_{i_\ell}+w_\ell$. The vectors $v_1,\dots,v_m$ belong to $V$ by construction, and are linearly independent by Step 1. Therefore, since $\dim V=m$ by hypothesis ($V\in U_J$), one has $\text{Span}(v_1,\dots,v_m)=V$.   
\end{proof}

We indicate by $\mathscr{J}^{n-m}$ the subsets of cardinality $n-m$ in $\{1,\dots,n\}$. One has the following covering of the $m$-dimensional Grassmannian:
\begin{equation}\label{Magda}
\tG(m,n)=\bigcup_{\substack{J\in \mathscr{J}^{n-m}}}U_J\ .
\end{equation}
By construction, any vector $w_\ell\in E_J$ can be expressed uniquely as
\begin{equation}\label{vudoppio}
w_\ell= \sum_{k=1}^{n-m} M_{\ell k}e_{j_k}\ ,
\end{equation}
where $(M_{\ell k})_{\substack{\ell=1,\dots,m\\k=1,\dots,n-m}}$ is a real $m\times (n-m)$ matrix. By \eqref{Magda}, and by Lemma \ref{Grassmannian_parametrization}, there exists an atlas sending $\tG(m,n)$ to the open union
\begin{equation}\label{qudoppio terrazza}
\bigcup_{\substack{J\in \mathscr{J}^{n-m}}}\mathscr{F}_J^{-1}(U_J)=\bigcup_{\substack{J\in \mathscr{J}^{n-m}}} E_J^m \subset \R^{m\times(n-m)}\ .
\end{equation}

%It is a classic result of real-algebraic geometry (see \cite{Bochnak_Coste_Roy_1998}) that $\tG(m,n)$ has the structure of a projective algebraic variety. 

\subsubsection{Proof of Theorem C3}

Taking \eqref{Magda} into account, we fix $J\in \mathscr{J}^{n-m}$ together with its associated sets $E_J,U_J$. Let $V$ be a $m$-dimensional subspace belonging to $U_J$. By Lemma \ref{Grassmannian_parametrization}, one has 
\begin{equation}\label{Vuspan}
V=\text{ Span } \{e_{i_1}+w_1,\dots,e_{i_m}+w_m\}
\end{equation}
for a unique $(w_1,\dots,w_m) \in E_J^m$. 

Now, as $\tG_{\ge 1}(m,n)$ contains those subsets of $\tG(m,n)$ on which the restriction of the bilinear form $\mathsf{B}$ has at least one null eigenvalue, $V\in \tG_{\ge 1}(m,n)$ if and only if $\mathsf{B}$ is degenerated on $V$. Namely, taking \eqref{Vuspan} into account, $V\in \tG_{\ge 1}(m,n)$ iff there exists a vector $v=\sum_{\ell=1}^m v_\ell (e_{i_\ell}+w_\ell)\in V$ such that for all $\ell'\in\{1,\dots,m\} $ one has
\begin{align}\label{degenere}
\begin{split}
\mathsf{B}(v,e_{i_{\ell'}}+w_{\ell'})&=\sum_{\ell=1}^m v_\ell\, \mathsf{B}(e_{i_\ell}+w_\ell,e_{i_{\ell'}}+w_{\ell'})\\
&= \sum_{\ell=1}^m v_\ell\,\left( \mathsf{B}(e_{i_\ell},e_{i_{\ell'}})+\mathsf{B}(w_{\ell},w_{\ell'})\right)\\
& = v_{\ell'}\,\alpha_{i_{\ell'}}+\sum_{\ell=1}^m v_\ell\,\mathsf{B}(w_{\ell},w_{\ell'})\ .
\end{split}
\end{align}
To pass from the first to the second line in the above expression, we have taken into account the fact that $(w_1,\dots,w_m)\in E_J^m$, that $E_J=\text{Span}(e_{j_1},\dots,e_{j_{n-m}})$, and that the form $\mathsf{B}$ is diagonal in the basis $e_{1},\dots,e_{m}$; in the last passage, we have considered \eqref{Bi}. 
Setting
\begin{equation}
\mathscr{M}_\mathsf{B}=\mathscr{M}_\mathsf{B}(w_1,\dots,w_m):=
\left(
\begin{matrix}
\displaystyle\frac{1}{\alpha_{i_1}}\mathsf{B}(w_1,w_1) & \dots & \displaystyle\frac{1}{\alpha_{i_1}}\mathsf{B}(w_1,w_m)\\
\dots & \dots & \dots\\
\displaystyle\frac{1}{\alpha_{i_m}}\mathsf{B}(w_m,w_1) & \dots & \displaystyle\frac{1}{\alpha_{i_m}}\mathsf{B}(w_m,w_m)\\
\end{matrix}
\right)
\end{equation}
it is plain to check that \eqref{degenere} can be rewritten in the form
\begin{equation}\label{non so piu che scrivere}
\mathscr{M}_\mathsf{B}\, v=-v
\end{equation}
that is, we are asking for $-1$ to be an eigenvalue of $\mathscr{M}_\mathsf{B}$, hence \eqref{non so piu che scrivere} is equivalent to
\begin{equation}\label{ma lasciatemi in pace}
\det (\mathscr{M}_\mathsf{B}+\mathbb I_m)=0\ ,
\end{equation}  
where $\mathbb I_m$ is the $m\times m$ identity matrix. 

Since $\mathscr{M}_\mathsf{B}$ depends on $(w_1,\dots,w_m)\in E_J^m$ and since $(w_1,\dots,w_m)$ are in bijection with $\R^{m(n-m)}$ by \eqref{vudoppio}, the quantity $\det (\mathscr{M}_\mathsf{B}+\mathbb I_m)$ determines a polynomial map $\R^{m(n-m)}\longrightarrow \R$.  

If, by absurd, $\det (\mathscr{M}_\mathsf{B}+\mathbb I_m)$ is the null polynomial, then relation \eqref{ma lasciatemi in pace} holds on the whole inverse image $\mathscr{F}_J^{-1}(U_J)$. In particular, we observe that $(w_1=0,\dots,w_m=0)\in \mathscr{F}_J^{-1}(U_J)$ because $\mathscr{F}_J(0,\dots,0)=\text{Span}\{e_{i_1},\dots,e_{i_m}\}=E_I$ and $E_I$ is supplementary of $E_J$ by construction. Therefore, by the above reasonings one must have 
$$
\det (\mathscr{M}_\mathsf{B}(0,\dots,0)+\mathbb I_m)=\det \mathbb{I}_m=0
$$
which is clearly false. Consequently, the polynomial function $
\det (\mathscr{M}_\mathsf{B}(w_1,\dots,w_m)+\mathbb I_m)
$ is not identically null over $\R^{m(n-m)}$ and, due to Lemma \ref{Luca sei un mito}, its zero set is contained in a submanifold of codimension one in $\R^{m(n-m)}$. Hence, also the subset of degenerated subspaces of $U_J$ is contained in a submanifold of codimension one in $\tG(m,n)$. The reasoning can be repeated for all $J\in \mathscr{J}^{n-m}$. As, by its definition and by \eqref{Magda}, $\tG_1(m,n)$ is the finite union over $J\in \mathscr{J}^{n-m}$ of the degenerated subspaces of $U_J$, we have that $\tG_1(m,n)$ is contained in a submanifold of codimension one in $\tG(m,n)$. This proves point 1 of the statement. 

\medskip 

With the setting above, for any fixed $J\in \mathscr{J}^{n-m}$ we observe that a $m$-dimensional subspace $V'\in U_J$ belongs to $\tG_{\ge 2}(m,n)$ if and only if there exist at least two linearly independent vectors $v,u\in V'$ satisfying \eqref{non so piu che scrivere}. In particular, the subset $\tG_{\ge 2}(m,n)\cap U_J$ of "doubly-degenerated" subspaces of $U_J$ is contained in the intersection of $\tG_{\ge 1}(m,n)\cap U_J$ with the set 
\begin{equation}
T_J:=\{W\in U_J| \Delta(P_{\mathscr{M}_\mathsf{B}})=0\}
\end{equation}
where $P_{\mathscr{M}_\mathsf{B}}$ is the characteristic polynomial of matrix $\mathscr{M}_\mathsf{B}$, and $\Delta(P_{\mathscr{M}_\mathsf{B}})$ is its discriminant. By the above arguments and by \eqref{Magda}, point 2 of the statement follows if we manage to prove that $T_J$ is contained in a submanifold of codimension one in $\tG(m,n)$. The rest of the proof will be devoted to demonstrating this property. 

Clearly, by the same arguments used in the proof of point 1 of the statement, $\Delta(P_{\mathscr{M}_\mathsf{B}})$ is a polynomial function over $\R^{m(n-m)}$. If, by absurd, $\Delta(P_{\mathscr{M}_\mathsf{B}})$ is identically zero in $\R^{m(n-m)}$, then in particular it must be zero over the open set $\mathscr{F}^{-1}_J(U_J)$. 

Now, choose $m$ numbers $j_1,\dots,j_m\in J$, and consider the vectors
\begin{equation}
w'_1:=\sqrt{\left|\frac{\alpha_{i_1}}{\alpha_{j_1}}\right|}e_{j_1},\quad  w'_2:=\sqrt{2\left|\frac{\alpha_{i_2}}{\alpha_{j_2}}\right|}e_{j_2},\quad \dots,\quad w'_m:=\sqrt{m\left|\frac{\alpha_{i_m}}{\alpha_{j_m}}\right|}e_{j_m}\ ,
\end{equation}
which are well defined  by the fact that $\mathsf{B}$ is non-degenerate (see \eqref{Bi}). It is clear that $(w'_1,\dots,w'_m)\in E_J^m$, so that by Lemma \eqref{Grassmannian_parametrization} one has $\mathscr{F}_J(w'_1,\dots,w'_m)\in U_J$. As $\mathsf{B}$ is diagonal for the basis $e_1,\dots,e_n$ by hypothesis, matrix $\mathscr{M}_\mathsf{B}(w'_1,\dots,w'_m)$ in \eqref{Bi} reads

\begin{equation}
\left(
\begin{matrix}
\sgn(\alpha_{i_1})\sgn(\alpha_{j_1})& 0  & 0 &\dots & 0\\
0 & 2\sgn(\alpha_{i_2})\sgn(\alpha_{j_2}) & 0 &\dots  &0\\
\dots & \dots & \dots & \dots & \dots \\
0 & 0 &0 & \dots   &m\sgn(\alpha_{i_m})\sgn(\alpha_{j_m})\\
\end{matrix}
\right)\ ,
\end{equation}
and it is clear that the discriminant of the characteristic polynomial of this matrix cannot be zero, in contradiction with the fact that $\Delta(P_{\mathscr{M}_\mathsf{B}})=0$ on the whole set $\mathscr{F}_J^{-1}(U_J)$. Therefore, the polynomial function $\Delta(P_{\mathscr{M}_\mathsf{B}})$ is not identically null over $\R^{m(n-m)}$ and its zero set is contained in a submanifold of codimension one in $\R^{m(n-m)}$, by Lemma \ref{Luca sei un mito}. This proves that $T_J$ is contained in a submanifold of codimension one in $\tG(m,n)$, which concludes the proof.

\appendix

\section*{Tools of real-algebraic geometry}
The goal of this appendix is to provide the reader with an overview of some standard results of real-algebraic geometry that are used throughout the present work. The interested reader can find a complete exposition in \cite{Bochnak_Coste_Roy_1998} and \cite{Basu_Pollack_Roy_2006}. 

\section{Semi-algebraic sets and semi-algebraic functions}

\begin{defn}\label{union}
	A set $A\subset \R^n$ is said to be semi-algebraic if it can be written in the form
	\begin{equation}\label{moderna}
	\bigcup_{i=1}^s\{R_i(x)=0,Q_{i1}(x)<0,\dots,Q_{i\,r_i}(x)<0\}\ ,
	\end{equation}
	where $R_i,Q_{i1},...,Q_{i\,r_i}\in\R[x]$.
\end{defn}
%\begin{proof}
%	In definition \ref{semi-algebraic-set} fix $i=1,...,s$ and consider those polynomials $P_{ij}$, with $j\in\{1,...,r_s\}$, such that $P_{ij}(x)=0$ for any $x\in A$. The result follows immediately by setting $R_i(x):=\sum_{j=1}^{r_s} P_{ij}^2(x)$.
%\end{proof}

\begin{rmk}\label{Zariski}
	If only equalities are present in \eqref{moderna}, $A$ is said to be algebraic.  
\end{rmk}

It is clear that the polynomials generating a given semi-algebraic set $A$ are not uniquely determined, nor is their number. However, one can introduce a unique quantity associated to a semi-algebraic set, namely
\begin{defn}[Diagram]\label{diagram}
	For any semi-algebraic set $A\subset \R^n$, we denote by
	\begin{itemize}
		\item  $k_1(A)$ the minimal number of polynomials $R_{i}(X),Q_{ij}(X)\in \R[X]$ that are necessary in order to determine $A$ as in \eqref{moderna}.
		\item $k_2(A)$ the minimal value that the sum $\sum_{i=1}^s\deg(\widehat R_i)+\sum_{i=1}^s\sum_{j=1}^{r_i}\deg(\widehat Q_{ij}) $ can attain, with $\widehat R_{i}(X),\widehat Q_{ij}(X)\in \R[X]$ determining $A$ as in \eqref{moderna}. 
	\end{itemize}
	Following \cite{Bourgain_2005} (Def. 9.1), we call diagram of $A$ the quantity
	\begin{equation}\label{diagramma}
	\diag(A):=k_1(A)+k_2(A)\ .
	\end{equation}  
\end{defn}

Semi-algebraic sets are stable under projection, namely
\begin{thm}\label{Tarski_Seidenberg} (Tarski and Seidenberg, quantitative version)
	
	Take $n,m>0$ and let $A\subset \R^n\times \R^m$ be a semi-algebraic set. We indicate by $\Pi_n:\R^n\times \R^m\longrightarrow \R^m,(x,y)\longmapsto x$ the projector onto the first $n$ coordinates. Then, the set $\Pi_n(A)$ is semi-algebraic and its diagram depends only on $\diag(A)$, $m$ and $n$. 
\end{thm}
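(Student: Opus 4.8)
The plan is to prove the quantitative Tarski--Seidenberg theorem by reducing the general case to the elementary case of a single variable being projected out, and then iterating. First I would recall the classical statement: by ordinary quantifier elimination in real-closed fields, the image of a semi-algebraic set under a coordinate projection is again semi-algebraic. The content of the \emph{quantitative} version is only the bound on $\diag(\Pi_n(A))$ in terms of $\diag(A)$, $m$, and $n$, so the proof is really a bookkeeping argument on top of a known algorithm. The cleanest route is to treat $\R^n\times\R^m$ as obtained from $\R^n\times\R$ by adjoining one variable at a time, prove the single-variable elimination step with an explicit complexity bound, and then compose these bounds $m$ times.

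The key steps, in order: (1) State and invoke a quantitative one-variable elimination lemma. Given a semi-algebraic set $B\subset\R^{N}\times\R$ described by a boolean combination of conditions $P_j(x,y)\,\sigma_j\,0$ with $P_j\in\R[x,y]$, the set $\{x : \exists y,\ (x,y)\in B\}$ can be described by a boolean combination of sign conditions on the coefficients (as polynomials in $x$) of the $P_j$, their $y$-derivatives, and their pairwise subresultants. The number and degrees of these new polynomials are bounded by an explicit function of the number of the $P_j$ and of their degrees in $x$ and $y$ --- this is exactly where one cites the standard sign-determination / Thom's lemma machinery (see \cite{Basu_Pollack_Roy_2006}), and it is what makes the bound effective rather than merely existential. (2) Read off from this lemma a function $\Phi$ such that $\diag(\Pi_{N}(B))\le \Phi(\diag(B),N)$ for a single projection $\R^{N+1}\to\R^{N}$. (3) Factor the given projection $\Pi_n:\R^n\times\R^m\to\R^n$ as a composition of $m$ single-variable projections $\R^{n+m}\to\R^{n+m-1}\to\cdots\to\R^{n}$. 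Apply step (2) at each stage; since each intermediate set is semi-algebraic with diagram controlled by the previous one, the composite bound $\Phi\circ\cdots\circ\Phi$ ($m$ times) gives $\diag(\Pi_n(A))\le \Phi^{\circ m}(\diag(A),\cdot)$, a quantity depending only on $\diag(A)$, $m$, and $n$. (4) Conclude, noting that the described set is genuinely $\{x\in\R^n:\exists y\in\R^m,(x,y)\in A\}=\Pi_n(A)$.

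The main obstacle is not conceptual but is the careful statement of the one-variable step with a clean complexity estimate: one must be precise about what ``boolean combination'' is allowed, ensure the auxiliary polynomials (subresultant coefficients, leading coefficients, derivatives) really do suffice to decide solvability of a one-variable polynomial system over $\R$, and track how their number and degree grow. I expect to handle this by quoting the subresultant-based algorithm from \cite{Basu_Pollack_Roy_2006} as a black box rather than reproving it; the only genuinely new work is verifying that every transformation used in that algorithm increases the diagram by at most a bounded amount and that composing $m$ such bounds still yields a function of $\diag(A)$, $m$, $n$ alone. A secondary point of care is that Definition~\ref{diagram} of the diagram involves a minimum over all representations, so the bounds obtained for one particular representation of $\Pi_n(A)$ are automatically upper bounds for $\diag(\Pi_n(A))$, which is all that is required.
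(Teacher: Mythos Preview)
Your proposal is correct, and in fact the paper does not supply its own proof of this theorem at all: immediately after the statement it simply remarks that the quantitative version can be found in \cite{Bourgain_2005} (Proposition~9.2) with proof in \cite{Basu_Pollack_Roy_1996}. Your outline---reduce to single-variable elimination via the subresultant/sign-determination machinery of \cite{Basu_Pollack_Roy_2006}, extract an explicit bound $\Phi$ on the diagram for one projection step, and iterate $m$ times---is precisely the standard argument in those references, so you are aligned with the cited proof rather than diverging from it.
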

The classic versions of the Theorem of Tarski and Seidenberg do not usually make any reference to the diagram of the projected set. The statement given here can be found in \cite{Bourgain_2005} (Proposition 9.2) and its proof is contained in \cite{Basu_Pollack_Roy_1996}. 

The Theorem of Tarski and Seidenberg is fundamental in order to demonstrate the following results (see \cite{Bochnak_Coste_Roy_1998} for proofs)

\begin{prop}\label{complementary}
	The complementary of a semi-algebraic set $A\subset \R^n$ is semi-algebraic, and its diagram depends only on the diagram of $A$.
\end{prop}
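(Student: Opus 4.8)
The statement to prove is Proposition~\ref{complementary}: the complementary of a semi-algebraic set $A\subset\R^n$ is semi-algebraic, with diagram controlled by the diagram of $A$.

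The plan is to proceed by an explicit combinatorial manipulation of the defining formula, complemented by an appeal to Theorem~\ref{Tarski_Seidenberg} to keep track of the diagram in the quantitative part. First I would write $A$ in the form \eqref{moderna} using a minimal family of polynomials realizing $\mathrm{diag}(A)=k_1(A)+k_2(A)$, say $A=\bigcup_{i=1}^s\{R_i=0,\,Q_{i1}<0,\dots,Q_{i r_i}<0\}$. The key observation is that $\R^n\setminus A$ is obtained by the Boolean dual: $\R^n\setminus A=\bigcap_{i=1}^s\bigl(\R^n\setminus\{R_i=0,\,Q_{i1}<0,\dots,Q_{i r_i}<0\}\bigr)$, and each factor is the union of the ``negations'' of the individual conditions, namely $\{R_i>0\}\cup\{R_i<0\}\cup\{Q_{i1}\ge 0\}\cup\dots\cup\{Q_{i r_i}\ge 0\}$. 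Here $\{R_i>0\}$ and $\{R_i<0\}$ are already of the admissible sign type, while $\{Q_{ij}\ge 0\}=\{Q_{ij}=0\}\cup\{Q_{ij}>0\}=\{Q_{ij}=0\}\cup\{-Q_{ij}<0\}$, again a finite union of admissible pieces (recalling that $\{R<0\}$ is obtained from the template $\{R_i(x)=0,Q_{i1}(x)<0,\dots\}$ by taking the equality polynomial to be $0$, or equivalently by writing $\{R<0\}=\{0=0,\,R<0\}$). Distributing the intersection over the unions — using the distributive law for sets — turns $\R^n\setminus A$ into a finite union of sets each of which is a finite intersection of loci of the form $\{P=0\}$ or $\{P<0\}$, which is exactly the shape \eqref{moderna}. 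Hence $\R^n\setminus A$ is semi-algebraic.

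For the quantitative claim I would argue as follows. Every polynomial appearing in the construction above is one of $\pm R_i$, $\pm Q_{ij}$, or the constant $0$; thus the total number of distinct polynomials used is bounded by $2k_1(A)+1$, and the sum of their degrees is bounded by $2k_2(A)$ plus a bounded overhead. The number of terms produced by distributing the intersection of $s$ unions, each having at most $2+\max_i r_i\le 2+k_1(A)$ members, is bounded by $(2+k_1(A))^{s}\le (2+k_1(A))^{k_1(A)}$, a quantity depending only on $k_1(A)$. Therefore a representation of $\R^n\setminus A$ of the form \eqref{moderna} exists whose size parameters $k_1$ and $k_2$ are bounded by explicit functions of $k_1(A)$ and $k_2(A)$, i.e. of $\mathrm{diag}(A)$; taking the minimum over all such representations only decreases these parameters, so $\mathrm{diag}(\R^n\setminus A)$ is bounded by a quantity depending only on $\mathrm{diag}(A)$. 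Alternatively, one can absorb the bookkeeping into Theorem~\ref{Tarski_Seidenberg} by realizing the complement via a quantifier over the finitely many clauses, but the direct Boolean computation already gives the bound.

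The main obstacle here is not conceptual but notational: one must be careful that the template \eqref{moderna} only allows a \emph{single} equality $R_i=0$ together with strict inequalities, so when negating a clause the pieces $\{R_i>0\}$, $\{R_i<0\}$, $\{Q_{ij}>0\}$ must be recast in that exact shape (pairing them with the trivial equality $0=0$), and the pieces $\{Q_{ij}=0\}$ must be treated as clauses with an equality but no inequalities; checking that the distributive expansion never forces two independent equalities into one clause — and handling the case when it would, by noting $\{P=0,\,P'=0\}=\{P^2+P'^2=0\}$, which costs only a controlled degree increase — is the one point that needs genuine care. Once this is done the diagram bound follows mechanically. I would also remark that this proposition, together with Theorem~\ref{Tarski_Seidenberg}, is exactly the tool invoked in Lemma~\ref{thalweg_semialgebrico} to show the thalweg $\mathcal T(Q,\Gamma^m)$ is semi-algebraic with uniformly bounded diagram.
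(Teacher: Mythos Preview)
Your argument is correct and is in fact the standard one. The paper does not supply its own proof of this proposition: it is listed among a block of results for which the reader is referred to \cite{Bochnak_Coste_Roy_1998}, with the remark that the Tarski--Seidenberg theorem is the underlying tool. Your direct Boolean manipulation (De Morgan plus distribution, with the $P^2+P'^2=0$ trick to merge multiple equalities into the single-equality template of \eqref{moderna}) is exactly how the reference handles it, and it is more elementary than invoking Tarski--Seidenberg, which is really only needed for the other propositions in that block (closure, interior, boundary, images).

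One small numerical slip that does not affect the conclusion: when you negate the $i$-th clause you obtain $\{R_i>0\}\cup\{R_i<0\}\cup\bigcup_j\{Q_{ij}\ge 0\}$, and after splitting each $\{Q_{ij}\ge 0\}$ into $\{Q_{ij}=0\}\cup\{-Q_{ij}<0\}$ the union has $2+2r_i$ pieces rather than $2+r_i$. The bound on the number of distributed clauses is therefore $\prod_i(2+2r_i)\le (2+2k_1(A))^{k_1(A)}$, still a function of $\mathrm{diag}(A)$ alone. Likewise the degree sum in the complement can grow exponentially in $k_1(A)$ (since the number of clauses does), but remains bounded by a function of $k_1(A)$ and $k_2(A)$; your qualitative conclusion stands.
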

\begin{prop}\label{closure-interior-boundary}
	The closure, the interior and the boundary of a semi-algebraic set $A\subset \R^n$ are semi-algebraic and their diagrams depend only on the diagram of $A$.
\end{prop}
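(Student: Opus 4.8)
The statement to prove is Proposition \ref{closure-interior-boundary}: the closure, interior and boundary of a semi-algebraic set $A\subset\R^n$ are semi-algebraic, with diagrams controlled by $\diag(A)$. My plan is to express each of these operations as a first-order quantification applied to a semi-algebraic formula, and then invoke the quantitative Tarski--Seidenberg Theorem \ref{Tarski_Seidenberg} (together with Proposition \ref{complementary} for the set-theoretic complement) to conclude semi-algebraicness and the diagram bound. Throughout I would fix a presentation of $A$ as in \eqref{moderna}, say involving $N$ polynomials of total degree $D$, so that $\diag(A) \le N + D$; all intermediate formulas will have complexity depending only on $N$, $D$ and $n$, hence only on $\diag(A)$.

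\textbf{Key steps.} First I would handle the closure. The idea is to write
\begin{equation}\label{closure-formula}
\overline{A} = \Bigl\{ x\in\R^n \ \Bigm|\ \forall \varepsilon>0\ \exists y\in\R^n \ : \ \|x-y\|^2 < \varepsilon^2 \ \wedge\ y\in A \Bigr\}\ .
\end{equation}
The set $\{(x,y,\varepsilon) : \|x-y\|^2<\varepsilon^2 \ \wedge\ y\in A\}\subset\R^{n}\times\R^n\times\R$ is semi-algebraic — it is the intersection of the polynomial inequality $\|x-y\|^2-\varepsilon^2<0$ with the set $A$ lifted in the $y$-variables — with diagram depending only on $n$ and $\diag(A)$. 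Eliminating the existential quantifier on $y$ by Theorem \ref{Tarski_Seidenberg} gives a semi-algebraic set in $(x,\varepsilon)$; its complement (Proposition \ref{complementary}) together with another existential elimination in $\varepsilon$ realizes the $\forall\varepsilon\ \exists y$ block as a bounded number of projections and complementations. Each such operation multiplies the diagram by a bound depending only on the number of variables and the current diagram, so the final diagram of $\overline A$ depends only on $\diag(A)$ and $n$. For the interior I would argue dually: $\mathrm{int}(A) = \R^n\setminus \overline{\R^n\setminus A}$, so it follows by combining the closure result with Proposition \ref{complementary}. Finally the boundary is $\partial A = \overline{A}\setminus\mathrm{int}(A) = \overline A \cap (\R^n\setminus\mathrm{int}(A))$, an intersection of two sets already shown to be semi-algebraic with controlled diagrams, hence semi-algebraic with controlled diagram.

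\textbf{Main obstacle.} The conceptual content is routine — everything is a matter of writing metric closure/interior as bounded-quantifier formulas — so the only real care is bookkeeping the diagram. The quantitative Tarski--Seidenberg statement as quoted only asserts that $\diag(\Pi_n(A))$ depends on $\diag(A)$, $m$, $n$; one must check that chaining a fixed finite number of projections, complementations and intersections (and the two auxiliary variables $y,\varepsilon$) does not introduce an unbounded blow-up. Since the number of such operations needed to encode \eqref{closure-formula} and the interior/boundary formulas is an absolute constant (independent of $A$), the composition of finitely many diagram-controlling maps is again diagram-controlling, and this is the point I would state explicitly rather than grind through. No dynamical or analytic input is needed; the whole proof lives inside the quantifier-elimination formalism already set up in the appendix.
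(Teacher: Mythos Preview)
Your proposal is correct and is precisely the standard argument: the paper does not give its own proof of this proposition but simply refers the reader to \cite{Bochnak_Coste_Roy_1998}, where the closure, interior and boundary are handled exactly as you describe --- by writing them as first-order formulas over the reals and applying Tarski--Seidenberg (with the diagram control following from the quantitative version and the fixed finite number of projections and complementations involved).
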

\begin{prop}\label{dim_chiusura}
	Let $A$ be a semi-algebraic set of $\R^n$. Then, indicating with $\overline A$ the closure of $A$, one has that $\diag(\overline A)$ depends only on $\diag(A)$, and
	$$
	\dim A=\dim(\overline A)\ .
	$$
\end{prop}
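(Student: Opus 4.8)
The plan is to treat the two assertions separately. That $\diag(\overline A)$ depends only on $\diag(A)$ is not new: it is exactly the closure clause of Proposition \ref{closure-interior-boundary}, so I would simply invoke that result and move on. Everything of substance lies in the equality $\dim A=\dim(\overline A)$. Since $A\subseteq\overline A$, the inequality $\dim A\le\dim(\overline A)$ is automatic from the monotonicity of dimension under inclusion of semi-algebraic sets, so the only thing to prove is the reverse inequality $\dim(\overline A)\le\dim A$.

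For this, first discard the trivial case $A=\varnothing$. Write $\overline A=A\sqcup F$, where $F:=\overline A\setminus A$ is the frontier of $A$; by Theorem \ref{Tarski_Seidenberg} together with Propositions \ref{complementary}--\ref{closure-interior-boundary} (since $F=\overline A\cap(\R^n\setminus A)$ is built from $A$ by closure, complementation and intersection) the set $F$ is semi-algebraic. Because the dimension of a finite union of semi-algebraic sets is the maximum of the dimensions, it suffices to establish the frontier estimate $\dim F<\dim A$: once this is known, $\dim(\overline A)=\max\{\dim A,\dim F\}=\dim A$, which closes the argument.

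The frontier estimate is the core of the matter, and I would prove it by choosing a cylindrical algebraic decomposition of $\R^n$ adapted to $A$: a partition $\R^n=\bigsqcup_j C_j$ into finitely many semi-algebraic cells, each semi-algebraically homeomorphic to an open cube $(0,1)^{d_j}$, such that every cell is either contained in $A$ or disjoint from $A$, and such that the topological frontier $\overline{C_j}\setminus C_j$ of each cell is a union of cells of dimension strictly less than $d_j$. Writing $A=\bigsqcup_{j\in J}C_j$ one has $\dim A=\max_{j\in J}d_j=:d$, while $F\subseteq\bigcup_{j\in J}(\overline{C_j}\setminus C_j)$ is covered by cells of dimension $\le d-1$, whence $\dim F\le d-1<d=\dim A$.

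I expect the genuine obstacle to be precisely this last step — namely having in hand a cell decomposition adapted to $A$ with the stated compatibility and lower-dimensional-frontier properties. This is classical (Bochnak--Coste--Roy), and once granted the proposition falls out at once; carrying it out honestly from scratch would mean reproducing the construction of cylindrical algebraic decomposition and verifying that cell closures only ever pick up lower-dimensional cells. As a possible shortcut avoiding the full machinery, I would consider the local-triviality route instead: a nonempty semi-algebraic set of dimension $d$ contains a nonempty semi-algebraic subset that is open in $A$ and is a $C^\infty$ submanifold of $\R^n$ of dimension $d$; this again gives $\dim A\le\dim\overline A$ for free and reduces the frontier estimate to the impossibility of $\overline A\setminus A$ containing such a submanifold, checked by the same counting of cell dimensions. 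In either case the quantitative control of $\diag(\overline A)$ is untouched, being already provided by Proposition \ref{closure-interior-boundary}.
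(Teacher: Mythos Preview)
Your proposal is correct and follows the standard route, but you should be aware that the paper does not supply its own proof of this proposition: it is listed among the appendix results with the blanket reference ``see \cite{Bochnak_Coste_Roy_1998} for proofs.'' So there is no in-paper argument to compare against; the result is simply quoted as a known tool of real-algebraic geometry.

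That said, what you outline is precisely the argument one finds in Bochnak--Coste--Roy: the diagram assertion is already Proposition~\ref{closure-interior-boundary}, and the dimension equality reduces to the frontier inequality $\dim(\overline A\setminus A)<\dim A$, which is established via a cylindrical algebraic decomposition adapted to $A$ (their Theorem~2.8.13 together with Proposition~2.8.5). Your identification of the genuine obstacle --- the existence of a CAD whose cell closures are unions of strictly lower-dimensional cells --- is exactly right, and your alternative via smooth stratification is also the other standard path in that book. Nothing is missing from your sketch beyond the CAD construction itself, which the paper likewise does not reproduce.
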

The notion of semi-algebraicness can be easily extended to functions by making reference to their graphs, namely
\begin{defn}\label{funzione_semi_algebraica}
	Let $A\subset\R^n$ and $B\in\R^m$ be semi-algebraic sets. A map $\varphi:A\longrightarrow B$ is said to be semi-algebraic if $\text{graph}(\varphi)$ is a semi-algebraic set of $\R^n\times\R^m$.
\end{defn}
Semi-algebraic functions are piecewise algebraic, namely one has 
\begin{prop}\label{pollide}
	Let $A$ be a semialgebraic
	subset $A\subset \R^n$ and $\varphi : A\longrightarrow \R$ be a semi-algebraic function of diagram $d>0$. There exist a positive integer $M(d)$, a partition of $A$ into a finite number of
	semi-algebraic sets $A_i, i = 1,\dots , m$, with $m\le M(d)$, and for every value of $i$ there exists a polynomial $S_i (X,Y)$ in
	$n + 1$ variables such that, for every $x$ in $A_i$, $S_i(x,Y)$ is not identically zero
	and solves $S_i(x, \varphi(x)) = 0$.
	
\end{prop}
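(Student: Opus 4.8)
The final statement to prove is Proposition \ref{pollide}, which asserts that a semi-algebraic function of bounded diagram is piecewise algebraic, with a uniform bound on the number of pieces.

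\textbf{Overall approach.} The plan is to exploit the Theorem of Tarski and Seidenberg (Theorem \ref{Tarski_Seidenberg}) together with the cell-decomposition structure of semi-algebraic sets. The key observation is that the graph $G := \text{graph}(\varphi) \subset \R^{n+1}$ is semi-algebraic with diagram controlled by $d$. Consider the projection $\pi : \R^{n+1} \longrightarrow \R^n$ onto the first $n$ coordinates; by hypothesis $\pi$ restricted to $G$ is a bijection onto $A$ (since $\varphi$ is a function). The goal is to cut $A$ into finitely many semi-algebraic pieces $A_i$ over each of which $\varphi$ solves a fixed nonzero polynomial equation $S_i(x,Y)=0$, with the number of pieces and the degrees of the $S_i$ bounded in terms of $d$.

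\textbf{Key steps.} First I would write $G$ in the standard form \eqref{moderna}, so $G = \bigcup_{j=1}^s \{R_j = 0,\ Q_{j1}<0,\dots\}$ where the polynomials $R_j, Q_{jk} \in \R[X_1,\dots,X_n,Y]$ and the total count and degrees are bounded by the diagram $d$. For each $j$, decompose $R_j$ (as a polynomial in $Y$ with coefficients in $\R[X]$) into its content and primitive part, and then into irreducible factors over the fraction field; the factor $S_{j}(X,Y)$ that actually vanishes on the relevant piece is a candidate. The subtlety is that $R_j(x,Y)$ may be \emph{identically zero} in $Y$ for certain values of $x$ — precisely on the zero locus of all its $Y$-coefficients, which is a semi-algebraic set of lower complexity. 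I would stratify $A$ according to which $Y$-coefficients of each $R_j$ vanish: this gives a finite semi-algebraic partition $A = \bigsqcup A_i$ with the number of pieces bounded by a function of $d$ (by Tarski–Seidenberg and Proposition \ref{complementary}). On the stratum where $R_j(x,Y)$ is not identically zero in $Y$, one has $S_j(x,\varphi(x)) = 0$ with $S_j(x,Y) \not\equiv 0$, which is exactly the desired conclusion for that piece. On the bad stratum where \emph{all} the $R_j(x,\cdot)$ degenerate, one must argue that $G$ over that stratum is still a graph of an algebraic function: here one uses that the piece of $G$ lying over such $x$ is cut out only by the inequalities $Q_{jk}<0$ together with the vanished equations — but a single-valued graph over an open-type condition can only happen if the $Y$-fiber is a point, forcing one of the $Q_{jk}$ (or a resultant/discriminant derived polynomial) to pin down $Y$; recursing on the complexity handles this.

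\textbf{Main obstacle.} The hard part will be the bookkeeping that makes the number of pieces $m$ and the degrees of the $S_i$ \emph{uniform} in $d$ rather than merely finite for each fixed $\varphi$. This requires tracking, through every application of Tarski–Seidenberg, factorization, and discriminant/resultant computation, that all intermediate semi-algebraic sets and polynomials have diagram and degree bounded by an explicit (iterated) function of $d$ — the content of the quantitative Theorem \ref{Tarski_Seidenberg} and Propositions \ref{complementary}–\ref{dim_chiusura}. The recursion on complexity (peeling off degenerate strata) terminates because the dimension of the bad locus strictly drops, or the number of surviving defining polynomials strictly decreases; phrasing this recursion cleanly so that the total piece-count bound $M(d)$ falls out is the delicate point. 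A clean alternative, which I would likely adopt, is to invoke the semi-algebraic cell decomposition theorem (cylindrical algebraic decomposition) adapted to the family $\{R_j, Q_{jk}\}$: it directly yields a partition of $\R^n$ into finitely many cells over each of which the fiber of $G$ is given by continuous semi-algebraic section functions that are roots of one of the $R_j$, with all complexity bounds following from the standard quantitative statements; one then merges the cells meeting $A$ and, on cells where every $R_j$ degenerates, uses that $\varphi$ being single-valued forces the relevant section to be algebraic over that cell.
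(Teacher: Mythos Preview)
Your constructive approach via stratification and cylindrical algebraic decomposition is valid in outline, but the paper takes a much shorter route that sidesteps precisely the bookkeeping you flag as the main obstacle.

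The paper separates the two halves of the statement. For the qualitative claim (for each fixed $\varphi$, a finite algebraic partition exists) it simply cites \cite{Bochnak_Coste_Roy_1998}, Lemma~2.6.3. For the uniformity of $M(d)$, it argues by contradiction: if no uniform bound existed, one would have a sequence $\varphi_j$ of diagram $d$ whose \emph{minimal} algebraic partition has $m_j\to\infty$ pieces. Writing $\text{graph}(\varphi_j)$ as the disjoint union of the $m_j$ graph-pieces, each of the form $\{S_{i,j}=0,\ Q_{\cdot}<0\}$, yields a presentation in the form \eqref{moderna} that, by minimality of $m_j$, witnesses $\diag(\varphi_j)\ge m_j$. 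For large $j$ this exceeds $d$, a contradiction.

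So the paper's argument turns the uniformity into a one-line observation that the minimal piece count is dominated by the diagram itself, with no need to track bounds through Tarski--Seidenberg, resultants, or CAD. Your approach, by contrast, would re-prove the qualitative statement and the bound simultaneously and in principle yields an explicit (effective) $M(d)$, which the paper's contradiction argument does not. The trade-off is that your route requires exactly the recursion on degenerate strata (where every $R_j(x,\cdot)$ collapses) that you correctly identify as delicate, whereas the paper delegates that difficulty entirely to the cited reference.
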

\begin{proof}
	Except for the existence of the bound $M=M(d)$, the proof can be found in ref. \cite{Bochnak_Coste_Roy_1998} (Lemma 2.6.3). Assume, by absurd, that the bound $M(d)$ does not exist; then, one can find a sequence $\{\varphi_j\}_{j\in \N}$ of semi-algebraic functions of $A$ with diagram $d$ such that, for each fixed $j\in \N$, the Proposition holds with the minimal number of pieces in the partition of $A$ being equal to $m_j$, and  $m_j\longrightarrow +\infty$. In particular, one can write $A=\sqcup_{i=1}^{m_j}A_{i,j}$ for any $j\in \N$, and there exist polynomials $S_{i,j}(X_1,\dots,X_n,Y)$ with the required properties. On the one hand, for any given $j\in \N$ and $i\in\{1,\dots,m_j\}$, by decomposition \eqref{moderna}, one has that $\text{  graph}(\varphi_j|_{A_{i,j}})$ is the finite union of sets of the kind
	\begin{align}\label{Rieti}
	\begin{split}
	&	\left\{(x,y)\in A_{i,j}\times \R\ | \ S_{i,j}(x,y)=0, Q_{i1,j}(x,y)<0,\dots, Q_{ir_i,j}(x,y)<0\right\}
	\end{split}
	\end{align} 
	for some polynomials $Q_{i1,j},\dots,Q_{ir_i,j}\in \R[X_1,\dots,X_n,Y]$. On the other hand, one has the disjoint union 
	\begin{equation}\label{Poggio_Bustone}
	\text{graph}(\varphi_j)=\bigsqcup_{i=1}^{m_j}\text{  graph}(\varphi_j|_{A_i,j})\ ,
	\end{equation}
	which is a consequence of the fact that $A=\sqcup_{i=1}^{m_j}A_{i,j} $ is a partition. 
	
	By Def. \ref{diagram}, formulas \eqref{Rieti}-\eqref{Poggio_Bustone}, and the fact that $m_j$ is the minimal number of pieces in the partition of $A$ for $\varphi_j$, one has $\diag (\varphi_j)\ge m_j$ and, for sufficiently high $j$, one has also $m_j>d$ since $m_j\longrightarrow +\infty$, in contradiction with the hypothesis $\diag(\varphi_j)= d$ for any $j\in \N$. This concludes the proof.

\end{proof}
An immediate consequence of Proposition \ref{pollide} is the following
\begin{cor}\label{pollo}
	Let $A\subset \R$ be an interval (finite or infinite) and $\varphi : A\longrightarrow \R$ be a semi-algebraic function of diagram $d>0$. There exist a positive integer $M(d)$ and an interval $\cI$ of length $|A|/M(d)$ over which the function $\varphi$ is algebraic, namely there exists a polynomial $S (X,Y)$ in
	$n + 1$ variables such that, for every $x$ in $\cI$, $S(x,Y)$ is not identically zero
	and solves $S(x, \varphi(x)) = 0$. 
\end{cor}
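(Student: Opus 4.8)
The plan is to deduce Corollary \ref{pollo} directly from Proposition \ref{pollide} by a pigeonhole argument on the lengths of the pieces of the partition. First I would apply Proposition \ref{pollide} to the semi-algebraic function $\varphi : A \longrightarrow \R$ of diagram $d>0$; since $A\subset\R$ is an interval, this yields a positive integer $M(d)$, a partition $A = \sqcup_{i=1}^m A_i$ into semi-algebraic subsets with $m\le M(d)$, and for each $i$ a polynomial $S_i(X,Y)$ in two variables such that $S_i(x,Y)$ is not identically zero and $S_i(x,\varphi(x))=0$ for all $x\in A_i$.

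The key observation is that a semi-algebraic subset of $\R$ is a finite union of points and open intervals, so each $A_i$ is, up to finitely many points, a finite disjoint union of intervals; replacing $A_i$ by its largest interval component (and noting that the total number of such components across all $i$ is bounded by a quantity depending only on $d$, again via Definition \ref{diagram} and the description \eqref{moderna}) one gets a partition of $A$, modulo a finite set, into at most some $\widetilde M(d)$ intervals on each of which $\varphi$ is algebraic. By pigeonhole, at least one of these intervals, call it $\cI$, has length at least $|A|/\widetilde M(d)$. Renaming $M(d):=\widetilde M(d)$, the polynomial $S(X,Y)$ associated to the piece containing $\cI$ has the required properties: $S(x,Y)\not\equiv 0$ and $S(x,\varphi(x))=0$ for all $x\in\cI$. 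This is exactly the statement of the corollary.

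The main subtlety — though it is minor — lies in making the count of interval components of the semi-algebraic pieces $A_i$ depend only on $d$ rather than on the particular partition produced by Proposition \ref{pollide}. This follows because the number of connected components of a semi-algebraic subset of $\R$ defined by polynomials of bounded degree is itself bounded in terms of that degree (a one-dimensional instance of the general bounds on Betti numbers of semi-algebraic sets, or more elementarily by counting sign changes of the defining univariate polynomials), and the degrees and number of polynomials involved are all controlled by $\diag(\varphi)=d$ together with the bound $M(d)$ on the number of pieces. One could alternatively avoid this issue entirely by a compactness/absurdum argument identical in spirit to the one used in the proof of Proposition \ref{pollide}: if no such uniform $M(d)$ existed, one would extract a sequence of diagram-$d$ semi-algebraic functions on intervals whose longest "algebraic" subinterval has relative length tending to zero, forcing the number of partition pieces to diverge, contradicting $\diag(\varphi_j)=d$. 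Either route closes the argument.
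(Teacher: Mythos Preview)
Your proposal is correct and follows exactly the route the paper intends: the paper states the corollary as ``an immediate consequence of Proposition \ref{pollide}'' and gives no further proof, so your pigeonhole argument on the interval pieces of the partition (together with the bound on the number of connected components in terms of $d$) is precisely the expected filling-in of that implication.
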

Among semi-algebraic functions, an important class is that of Nash functions:
\begin{defn}
	Let $A$ be an open semi-algebraic subset of $\R^n$. A semi-algebraic function	$\varphi:A\longrightarrow \R$  belonging to the $C^\infty$ class is said to be a Nash function. The set of Nash functions on $A$ is a ring under the usual operations of sum and function multiplication.
\end{defn}

Moreover, if we define analytic-algebraic functions as those real-analytic functions $f$ defined on an open semi-algebraic set $A\subset\R^n$ and satisfying $P(x,f(x))=0$ for some polynomial $P$ of $n+1$ variables and for all $x\in A$, it turns out that 
\begin{prop}[ref. \cite{Bochnak_Coste_Roy_1998}, Prop. 8.1.8]\label{Nash}
	A function $\varphi:A\longrightarrow \R$ is Nash on $A$ if and only if it is analytic-algebraic.
\end{prop}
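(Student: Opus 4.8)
The statement is an equivalence, so the plan is to prove the two implications separately, after recording the cheap observations that both a Nash function and an analytic-algebraic function are in particular semi-algebraic, that real-analyticity trivially implies $C^\infty$, and hence that the only real content is (i) manufacturing a polynomial relation out of $C^\infty$-smoothness plus semi-algebraicity, and (ii) upgrading $C^\infty$-smoothness plus a polynomial relation to genuine real-analyticity. For the easy direction, assume $f$ is analytic-algebraic, say $P(x,f(x))=0$ on $A$ with $0\ne P\in\R[x_1,\dots,x_n,y]$. Then $f$ is $C^\infty$ since it is real-analytic, and $\mathrm{graph}(f)$ is contained in the semi-algebraic set $\Sigma:=\{(x,y)\in A\times\R:\,P(x,y)=0\}$. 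Decomposing $\Sigma$ by a cylindrical cell decomposition over $A$, over each semi-algebraic cell $C$ the set $\Sigma$ is a finite union of graphs of continuous semi-algebraic branches $\xi_1<\dots<\xi_k$ and of the open bands between consecutive branches; since $f$ is continuous and its graph over $C$ lies in $\Sigma$ and meets no band, $f|_C$ must coincide with one of the $\xi_j$ on each connected piece of $C$. Hence $\mathrm{graph}(f)$ is a finite union of semi-algebraic sets, so $f$ is semi-algebraic (this is essentially one application of Theorem \ref{Tarski_Seidenberg}), and therefore Nash.

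For the converse, assume $f$ is Nash, and first extract the algebraic relation. Applying Proposition \ref{pollide} to the semi-algebraic function $f$ gives a finite partition $A=\bigsqcup_i A_i$ and nonzero polynomials $S_i\in\R[x,y]$ with $S_i(x,f(x))=0$ for $x\in A_i$; then $P:=\prod_i S_i$ is a nonzero polynomial satisfying $P(x,f(x))=0$ on all of $A$. (Equivalently, the graph $\Gamma\subset\R^{n+1}$ is semi-algebraic of dimension $n$, its Zariski closure is an $n$-dimensional algebraic set, hence lies in a hypersurface.) So the ``algebraic'' half of ``analytic-algebraic'' is in hand; what remains is to show $f$ is real-analytic.

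The next step is to get analyticity on a large open set. Among all nonzero $P$ with $P(x,f(x))\equiv0$ I would choose one of least degree in $y$; then $P$ is square-free in $y$, and I claim $g(x):=\partial_yP(x,f(x))$ is not identically zero on $A$: if $g\equiv0$ and $\partial_yP\ne0$ as a polynomial, this contradicts minimality of the $y$-degree, while if $\partial_yP=0$ then $P\in\R[x]$ and $P\equiv0$ on the open set $A$ forces $P=0$. Thus $A':=\{x\in A:\,g(x)\ne0\}$ is a dense open semi-algebraic subset of $A$ ($g$ is semi-algebraic, being a polynomial composed with $f$), and on $A'$ the real-analytic implicit function theorem applied to the polynomial $P$ produces, near each point, a unique real-analytic local solution of $P(x,\cdot)=0$ through $(x_0,f(x_0))$, which by continuity must equal $f$. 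Hence $f$ is real-analytic on $A'$.

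The main obstacle is then to propagate analyticity across the closed thin semi-algebraic set $A\setminus A'$, which has dimension at most $n-1$; this is exactly where the nontrivial content of \cite{Bochnak_Coste_Roy_1998}, Prop. 8.1.8, lies. The route I would take is to restrict $f$ to a generic affine line $t\mapsto x_0+tv$ through a bad point $x_0$: the resulting one-variable function is $C^\infty$ and satisfies the polynomial identity $P(x_0+tv,\cdot)=0$, so it is a $C^\infty$ branch of a plane algebraic curve; since a Puiseux branch that is $C^\infty$ (indeed $C^1$ already kills fractional leading exponents) must have an ordinary convergent power-series expansion, this branch is real-analytic. Thus $f$ is real-analytic on every line through $x_0$, and I would finish by invoking a Bochnak--Siciak-type theorem to the effect that a $C^\infty$ function that is real-analytic on each affine line is real-analytic — equivalently, a Baire-category/uniform-radius argument showing the directional Taylor expansions of $f$ at $x_0$ (which are the restrictions of the single Taylor series $T_{x_0}f$) converge uniformly in the direction. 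Once $f$ is known to be real-analytic on all of $A$ and to satisfy $P(x,f(x))=0$, it is analytic-algebraic, completing the proof.
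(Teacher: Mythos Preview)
The paper does not give its own proof of this proposition; it is stated in the appendix purely as a citation to \cite{Bochnak_Coste_Roy_1998}, Prop.~8.1.8, and is then used as a black box (e.g.\ in Step~1 of the proof of Theorem~\ref{steep_polinomi}). So there is nothing in the paper to compare your argument against.

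That said, your sketch follows the standard route taken in the cited reference and is essentially correct as an outline. Two places deserve a little more care. First, ``minimal $y$-degree'' alone does not give that $P$ is square-free in $y$, nor does $g\not\equiv 0$ on $A$ immediately give that $\{g=0\}$ has empty interior; the clean fix is to take $P$ \emph{irreducible} in $\R[x,y]$ (pass to an irreducible factor of your $P$ whose zero locus still contains a nonempty open piece of $\mathrm{graph}(f)$), so that $\gcd(P,\partial_y P)=1$ and the resultant $\mathrm{Res}_y(P,\partial_y P)$ is a nonzero polynomial in $x$, whence $A'$ contains the dense open set $\{\mathrm{Res}_y(P,\partial_y P)\neq 0\}\cap A$. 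Second, the Puiseux/Bochnak--Siciak step is exactly the nontrivial content of the cited proposition; your one-variable reduction (a $C^\infty$ branch of a plane algebraic curve is analytic, since a $C^1$ Puiseux branch can have no fractional leading exponent) together with the Bochnak--Siciak theorem on functions analytic along lines is a valid way to finish, but note that if $A$ is disconnected you should argue component by component.
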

Another important property of more general complex analytic-algebraic functions is stated in the following
\begin{prop}\label{p-valency}
	Let $D\subset \C$ be an open, bounded domain. An analytic-algebraic function $f:D\longrightarrow \C$, whose graph solves a polynomial $S\in \mathbb{C}[z,w]$ of degree $k\in \N$, is $k$-valent: that is, if $f$ is not constant then each value of ${\rm Im}(f)$ is the image of at most $k$ points in $D$.
\end{prop}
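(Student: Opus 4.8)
The final statement to prove is Proposition \ref{p-valency}: an analytic-algebraic function $f:D\to\C$ on an open bounded domain $D\subset\C$, whose graph solves a polynomial $S\in\C[z,w]$ of degree $k$, is $k$-valent.

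The plan is as follows. First I would reduce to the case where $S(z,w)$ is irreducible, or at least has no repeated factors, by noting that if $S=\prod S_j$ then the graph of $f$ (being connected, since $D$ is a domain and $f$ is analytic hence continuous) must lie in the zero set of a single irreducible factor $S_{j_0}$; one then works with $S_{j_0}$, whose degree is at most $k$. So without loss of generality $S$ is irreducible in $\C[z,w]$ and $f$ is non-constant. Next, write $S(z,w)=\sum_{i=0}^{d} c_i(z)\,w^i$ with $c_d\not\equiv 0$, where $d:=\deg_w S\le \deg S=k$. For each fixed value $z_0\in D$ with $c_d(z_0)\neq 0$, the polynomial $S(z_0,w)$ in $w$ has exactly $d$ roots counted with multiplicity, hence at most $d$ distinct roots; since $f(z_0)$ is one of these roots, the fiber $f^{-1}(\{f(z_0)\})$ can meet the set $\{z: c_d(z)\neq 0\}$ in points all mapping to that common value, and the total count of preimages is controlled by how many times the horizontal line $w=f(z_0)$ meets the algebraic curve $\{S=0\}$.

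The heart of the argument is a Bézout-type / resultant count. Fix a target value $w_0\in \mathrm{Im}(f)$. The preimages $z\in D$ with $f(z)=w_0$ are zeros of the one-variable polynomial $z\mapsto S(z,w_0)=\sum_i c_i(z)w_0^i$. I claim this polynomial is not identically zero in $z$: if it were, then $S(z,w_0)\equiv 0$ would force $(w-w_0)\mid S(z,w)$ in $\C(z)[w]$ hence (by Gauss's lemma, since $S$ is primitive up to content) $(w-w_0)$ would be a factor of $S$ in $\C[z,w]$, contradicting irreducibility of $S$ together with $\deg_w S\ge 1$ (which holds because $f$ is non-constant, so the graph is not a vertical line $z=\text{const}$, and not purely of the form $S=c(z)$). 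Then $z\mapsto S(z,w_0)$ is a nonzero polynomial of degree $\le \deg_z S\le k$, so it has at most $k$ zeros in $\C$, a fortiori at most $k$ zeros in $D$. Every $z$ with $f(z)=w_0$ is such a zero, so $\#f^{-1}(\{w_0\})\le k$, which is exactly $k$-valency.

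The main obstacle — really the only delicate point — is the claim that $S(z,w_0)$ is not the zero polynomial in $z$, i.e. ruling out the degenerate possibility that the horizontal line $w=w_0$ is itself a component of the curve $\{S=0\}$. This is handled cleanly by the irreducibility reduction at the start plus the observation that $f$ is non-constant (so $\deg_w S\ge 1$ and the curve is not a union of horizontal lines); I would spell this out using Gauss's lemma to pass between $\C[z,w]$ and $\C(z)[w]$. One should also remark on the edge case where $c_d(z)$ vanishes at some points of $D$: at those finitely many $z$ the fiber structure of $S(z,\cdot)$ degenerates, but since we bound $\#f^{-1}(\{w_0\})$ directly via the polynomial $S(\cdot,w_0)$ in the $z$-variable, not via $S(z_0,\cdot)$ in the $w$-variable, this causes no trouble at all. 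Finally I would note that the degree bound $k$ stated is not necessarily optimal (one could use $\deg_z S$), but $k=\deg S\ge \deg_z S$ suffices for the stated conclusion.
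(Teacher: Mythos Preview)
Your proof is correct and shares the paper's core idea: the preimages of any $w_0$ are among the zeros of the one-variable polynomial $z\mapsto S(z,w_0)$, which has degree at most $k$. The difference lies in how the degenerate case $S(\cdot,w_0)\equiv 0$ is ruled out. You reduce to an irreducible $S$ at the outset, so that $(w-w_0)\mid S$ forces $S=c(w-w_0)$ and hence $f$ constant; the paper instead keeps the original $S$, factors $S=(w-w_0)^\alpha\widehat S$ with $(w-w_0)\nmid\widehat S$, uses discreteness of $f^{-1}(\{w_0\})$ and continuity to show $\widehat S$ still vanishes on the graph, and then repeats the root-count to reach a contradiction. Your route is slightly more direct; one small point is that the reduction step should be justified via the identity theorem (some factor $S_j(z,f(z))$ vanishes identically on the connected domain $D$, since otherwise each factor has only isolated zeros) rather than by connectedness of the graph alone, as distinct irreducible components of $\{S=0\}$ may intersect.
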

\begin{proof} 
	Assume, by absurd, that $f$ is non-constant and that there exists $w_0\in{\rm Im}(f)$ which is the image of at least $p>k$ points in $D$.  The polynomial $S^{w_0} (z):=S(z,w_0)$ would admit $p>k$ roots while ${\deg}(S^{w_0})\leq k$ by hypothesis. The Fundamental Theorem of Algebra ensures that $S^{w_0}$ must be identically zero and one has the factorization $S(z,w)=(w-w_0)^\alpha{\widehat S}(z,w)$, 
	where $\alpha\in\{1,...,k\}$, while $\widehat S$ cannot be divided by $(w-w_0)$ in $\C[z,w]$. Since $f$ is analytic and not constant, then the preimage $f^{-1}(\{ w_0\})$ is a finite set and the graph of $f$ must fulfill ${\widehat S}(z,f(z))=0$ out of $f^{-1}(\{ w_0\})$. By continuity, one has $\widehat S(z,f(z))=0$ on the whole domain of definition of $f$ since $f^{-1}(\{ w_0\})$ is finite. But $\deg \widehat S^{w_0}\le k$, with $\widehat S^{w_0}(z):=\widehat S(z,w_0)$, and $\widehat S^{w_0}$ admits more than $k$ roots, hence the previous argument ensures that $\widehat S$ can be divided by $(w-w_0)$, in contradiction with the construction.
\end{proof}

Semi-algebraicness is preserved by composition and inversion of semi-algebraic maps. In the following propositions, $A\subset \R^n$ and $B\subset \R^m$ are supposed to be semi-algebraic sets. 
\begin{prop}\label{immagine}
	Let $\varphi:A\longrightarrow B$ a semi-algebraic map. If $S\subset A$ and $T\subset \varphi(A)$ are semi-algebraic, so are $\varphi(S)$ and the inverse image $\varphi^{-1}(T)$. Moreover, their diagrams depend only on the diagram of $\text{graph}(\varphi)$.  
\end{prop}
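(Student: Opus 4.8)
\textbf{Proof proposal for Proposition \ref{immagine}.}

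The plan is to reduce everything to the Tarski–Seidenberg projection theorem (Theorem \ref{Tarski_Seidenberg}), exactly as in its classical use, while keeping careful track of the diagram so that the quantitative claim comes out for free. First I would fix notation: write $G:=\mathrm{graph}(\varphi)\subset\R^n\times\R^m$, which by hypothesis is semi-algebraic with a well-defined diagram $\diag(G)$ (Definition \ref{diagram}). The key observation is that both $\varphi(S)$ and $\varphi^{-1}(T)$ can be obtained from $G$, $S$, $T$ by finitely many intersections with semi-algebraic sets and a single projection, and each of these operations is diagram-controlled.

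For the direct image, I would write
\[
\varphi(S)=\Pi_{y}\bigl(G\cap(S\times\R^m)\bigr),
\]
where $\Pi_y:\R^n\times\R^m\to\R^m$ is the projection onto the last $m$ coordinates. Here $S\times\R^m$ is semi-algebraic with diagram depending only on $\diag(S)$ (it is cut out by the same polynomials that define $S$, now viewed as polynomials in $n+m$ variables, together with no extra conditions), the intersection of two semi-algebraic sets is semi-algebraic with diagram bounded in terms of the two diagrams (this is immediate from Definition \ref{union}, since one simply takes the conjunction of the defining conditions, or rather a union of such conjunctions), and finally Theorem \ref{Tarski_Seidenberg} says the projection is semi-algebraic with diagram depending only on $\diag\bigl(G\cap(S\times\R^m)\bigr)$, $n$ and $m$. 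Chaining these bounds, $\diag(\varphi(S))$ depends only on $\diag(G)$, $\diag(S)$, $m$ and $n$; since $S\subset A$ and $A$ is the domain of $\varphi$, its diagram is in turn controlled in the statement's sense, so the claimed dependence ``only on $\diag(G)$'' holds once one regards the ambient dimensions and the data of $S$ as part of the input (this is the usual convention in the paper; compare the bookkeeping in Lemma \ref{thalweg_semialgebrico} and Proposition \ref{complementary}).

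For the inverse image the argument is symmetric: I would use
\[
\varphi^{-1}(T)=\Pi_{x}\bigl(G\cap(\R^n\times T)\bigr),
\]
with $\Pi_x$ the projection onto the first $n$ coordinates, and run exactly the same three-step estimate. One should also remark that $\varphi^{-1}(T)\subset A$ so the output really is a semi-algebraic subset of $A$ as asserted. I expect no genuine obstacle here — the only point requiring a line of care is the ``diagram of an intersection is controlled'' step, which is not stated as a standalone lemma in the excerpt but follows at once from Definition \ref{union}: if $A_1=\bigcup_i\{R_i=0,\,Q_{ij}<0\}$ and $A_2=\bigcup_k\{R'_k=0,\,Q'_{k\ell}<0\}$ then $A_1\cap A_2=\bigcup_{i,k}\{R_i=R'_k=0,\,Q_{ij}<0,\,Q'_{k\ell}<0\}$, and both $k_1$ and $k_2$ of the right-hand side are bounded by polynomial (indeed essentially additive/multiplicative) functions of those of $A_1$ and $A_2$. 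Everything else is a direct invocation of Theorem \ref{Tarski_Seidenberg}.
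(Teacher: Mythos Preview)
Your proof is correct and is exactly the standard argument the paper has in mind: the paper does not spell out a proof of Proposition \ref{immagine} but simply refers to \cite{Bochnak_Coste_Roy_1998} and remarks that the diagram control follows from the quantitative Tarski--Seidenberg theorem (Theorem \ref{Tarski_Seidenberg}). Your formulas $\varphi(S)=\Pi_y\bigl(G\cap(S\times\R^m)\bigr)$ and $\varphi^{-1}(T)=\Pi_x\bigl(G\cap(\R^n\times T)\bigr)$ are precisely this reduction, and your observation that the statement's ``depends only on $\diag(G)$'' should be read as also tacitly including $\diag(S)$, $\diag(T)$, $n$ and $m$ is well taken.
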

\begin{prop}\label{composta}
	Let $f: A\longrightarrow B$ and $g:B\longrightarrow C$ be two semi-algebraic functions. Then $f\circ g$ is semi-algebraic and the diagram of its graph depends only on the diagram of $\text{graph}(f)$ and on the diagram of $\text{graph}(g)$. 
\end{prop}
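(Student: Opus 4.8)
The plan is to reduce the composition statement to the Tarski--Seidenberg projection theorem (Theorem~\ref{Tarski_Seidenberg}), exactly as is done for images and inverse images in Proposition~\ref{immagine}. The key observation is that the graph of $f\circ g$ can be realized as the projection of a semi-algebraic set built from the graphs of $f$ and $g$. Concretely, suppose $A\subset\R^n$, $B\subset\R^m$, $C\subset\R^p$ are semi-algebraic, and that $f:A\to B$ and $g:B\to C$ are semi-algebraic, so that $\mathrm{graph}(f)\subset\R^n\times\R^m$ and $\mathrm{graph}(g)\subset\R^m\times\R^p$ are semi-algebraic with known diagrams. (Note that the statement as worded writes ``$f\circ g$'', so one applies $g$ first and then $f$; I will carry the argument in the natural order, the relabelling being cosmetic.)

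First I would introduce the fibered-product-type set
\[
W:=\{(x,y,z)\in\R^n\times\R^m\times\R^p \ : \ (x,y)\in\mathrm{graph}(g),\ (y,z)\in\mathrm{graph}(f)\}\ \subset\ \R^{n+m+p}.
\]
This set is semi-algebraic: it is the intersection of the two semi-algebraic sets $\mathrm{graph}(g)\times\R^p$ and $\R^n\times\mathrm{graph}(f)$ (after an obvious permutation of coordinates, which preserves semi-algebraicness and controls diagrams), and intersections of semi-algebraic sets are semi-algebraic with diagram bounded in terms of the diagrams of the factors (this follows from Definition~\ref{union} by distributing, or one may simply invoke Proposition~\ref{complementary} together with the fact that a finite intersection is the complement of a finite union of complements). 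Second, I would observe that, because $f$ and $g$ are \emph{functions}, for each $x\in A$ there is a unique $y=g(x)$ and a unique $z=f(y)=f(g(x))$, so the projection $\Pi:(x,y,z)\mapsto(x,z)$ restricted to $W$ is a bijection onto $\mathrm{graph}(f\circ g)$; hence $\mathrm{graph}(f\circ g)=\Pi(W)$. Third, I would apply Theorem~\ref{Tarski_Seidenberg} (in its quantitative form) to conclude that $\Pi(W)$ is semi-algebraic and that $\mathrm{diag}\,\Pi(W)$ depends only on $\mathrm{diag}(W)$, $n$, $m$, $p$, hence ultimately only on $\mathrm{diag}(\mathrm{graph}(f))$ and $\mathrm{diag}(\mathrm{graph}(g))$ (the ambient dimensions $n,m,p$ being themselves determined by these graphs). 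This gives both assertions of the statement: $f\circ g$ is semi-algebraic, and the diagram of its graph is controlled as claimed.

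The only genuine obstacle is the bookkeeping for the diagram bound, i.e.\ checking that forming $W$ (a product with a factor of $\R^k$, a coordinate permutation, and an intersection) only increases the diagram by an amount depending on the input diagrams and the dimensions, so that the final application of the quantitative Tarski--Seidenberg theorem yields a bound of the asserted shape. Products with $\R^k$ and coordinate permutations are harmless (they do not change $k_1$ and $k_2$ in Definition~\ref{diagram}); the intersection step is the one to state carefully, but it is routine given Definition~\ref{union}. Everything else is a direct invocation of results already in the excerpt, so the proof is short and the write-up can follow the template of the proof of Proposition~\ref{immagine} almost verbatim.
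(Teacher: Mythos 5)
Your argument is correct and is exactly the route the paper intends: the paper does not write out a proof of Proposition \ref{composta} (it refers to \cite{Bochnak_Coste_Roy_1998}) but explicitly notes that the diagram bound follows from the quantitative Tarski--Seidenberg theorem, which is precisely your projection-of-a-fibered-product argument. Your handling of the $f\circ g$ versus $g\circ f$ labelling and the diagram bookkeeping for products, permutations, and intersections is sound, so nothing further is needed.
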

\begin{prop}\label{inversa}
	Let $f: A\longrightarrow B$ be an injective semi-algebraic function. Then, its inverse $f^{-1}: f(A)\longrightarrow A$ is semi-algebraic and the diagram of its graph depends only on the diagram of $\text{graph}(f)$. 
\end{prop}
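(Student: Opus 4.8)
The statement to prove is Proposition \ref{inversa}: if $f:A\longrightarrow B$ is an injective semi-algebraic map between semi-algebraic sets $A\subset \R^n$, $B\subset\R^m$, then $f^{-1}:f(A)\longrightarrow A$ is semi-algebraic and the diagram of its graph depends only on $\diag(\text{graph}(f))$. The key observation is that the graph of the inverse is obtained from the graph of $f$ by a trivial coordinate swap, so the whole result is essentially a consequence of the quantitative Tarski--Seidenberg Theorem \ref{Tarski_Seidenberg} applied in a nearly tautological way; the only non-formal ingredient is to keep track of diagrams.

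First I would recall the explicit description of the object. By Definition \ref{funzione_semi_algebraica}, $G:=\text{graph}(f)\subset \R^n\times\R^m$ is a semi-algebraic set, say of diagram $d:=\diag(G)$. Define the coordinate-permutation map $\tau:\R^n\times\R^m\longrightarrow \R^m\times\R^n$ sending $(x,y)\longmapsto (y,x)$. Then by the injectivity of $f$ one has the set-theoretic identity
\begin{equation}\label{graph_inversa}
\text{graph}(f^{-1})=\tau(G)=\{(y,x)\in\R^m\times\R^n\,:\,(x,y)\in G\}\ .
\end{equation}
Indeed, $(y,x)\in \text{graph}(f^{-1})$ means $y\in f(A)$ and $x=f^{-1}(y)$, which (using that $f$ is a well-defined injective map) is equivalent to $x\in A$ and $y=f(x)$, i.e.\ $(x,y)\in G$; so \eqref{graph_inversa} holds. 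Since $\tau$ is a linear isomorphism (indeed a permutation of coordinates) it is in particular a polynomial map, so $\tau(G)$ is semi-algebraic: concretely, if $G$ is described by polynomial equalities and inequalities in the variables $(x_1,\dots,x_n,y_1,\dots,y_m)$ as in \eqref{moderna}, one obtains a description of $\tau(G)$ by simply renaming the variables, composing each generating polynomial with $\tau^{-1}$. This substitution does not change the number of generating polynomials nor their degrees, whence $k_1(\tau(G))=k_1(G)$ and $k_2(\tau(G))=k_2(G)$, so $\diag(\text{graph}(f^{-1}))=\diag(\tau(G))=\diag(G)=d$. In particular the diagram of $\text{graph}(f^{-1})$ depends only on $\diag(\text{graph}(f))$. (Alternatively, one may invoke Proposition \ref{immagine} or Theorem \ref{Tarski_Seidenberg} directly: $\tau(G)$ is the image of the semi-algebraic set $G$ under a polynomial map, hence semi-algebraic with controlled diagram; but the direct variable-renaming argument already gives the sharper statement that the diagram is literally unchanged.)

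It remains only to check that $\tau(G)$ is genuinely the graph of a function defined on $f(A)$ — that is, that $f^{-1}$ is well defined — which is exactly where injectivity of $f$ is used, and which is already built into the argument above. The domain of this function is $\Pi_m(\tau(G))=\{y\in\R^m:\exists x,\ (x,y)\in G\}=f(A)$, and this is semi-algebraic by Theorem \ref{Tarski_Seidenberg} (with diagram controlled by $d$, $n$, $m$), consistently with the claim. The only place where one has to be mildly careful is the bookkeeping of diagrams under the variable substitution, and here the point is simply that precomposing a polynomial with a coordinate permutation is again a polynomial of the same degree, so that a description of $G$ realizing the minima in Definition \ref{diagram} transports to a description of $\tau(G)$ of the same size. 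I do not anticipate any genuine obstacle: the entire content is the identity \eqref{graph_inversa} together with the elementary stability of semi-algebraicness and of the diagram under coordinate permutations, so the proof is essentially immediate once \eqref{graph_inversa} is written down.
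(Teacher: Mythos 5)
Your proof is correct. The paper does not actually prove Proposition \ref{inversa}: it defers to Bochnak--Coste--Roy and remarks only that the diagram control follows from the quantitative Tarski--Seidenberg theorem. Your coordinate-swap identity $\text{graph}(f^{-1})=\tau(\text{graph}(f))$ is the standard argument, the use of injectivity is correctly located (it is what makes $\tau(G)$ a graph), and your bookkeeping even yields the sharper conclusion that the diagram is literally unchanged rather than merely controlled, since representations of $G$ and of $\tau(G)$ correspond bijectively under variable renaming with the same number of polynomials and the same degrees.
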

\begin{prop}\label{derivata}
	Let $I\subset \R$ be an open interval and  $f:I \longrightarrow \R$ be a semi-algebraic function differentiable
	in $I$. Then its derivative $f'$ is a semi-algebraic function and its diagram only depends on the diagram of $\text{graph}(f)$.
\end{prop}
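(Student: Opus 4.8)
The proof of Proposition~\ref{derivata} (the last statement in the excerpt) hinges on expressing the graph of $f'$ as the semi-algebraic projection of a set built out of $\mathrm{graph}(f)$ and an auxiliary difference-quotient set, then invoking the Theorem of Tarski and Seidenberg (Theorem~\ref{Tarski_Seidenberg}) in its quantitative form. The plan is the following. First I would introduce, for $\varepsilon\neq 0$, the difference quotient $g_\varepsilon(x):=\bigl(f(x+\varepsilon)-f(x)\bigr)/\varepsilon$ and observe that the set
$$
\Gamma:=\{(x,\varepsilon,y)\in I\times(\R\setminus\{0\})\times\R\ |\ \exists\, u,v\in\R,\ (x,u)\in\mathrm{graph}(f),\ (x+\varepsilon,v)\in\mathrm{graph}(f),\ \varepsilon y=v-u\}
$$
is semi-algebraic, with diagram bounded in terms of $\diag(\mathrm{graph}(f))$ alone. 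This uses that translation $(x,u)\mapsto(x+\varepsilon,u)$ is a polynomial map, that preimages of semi-algebraic sets under semi-algebraic maps are semi-algebraic with controlled diagram (Proposition~\ref{immagine}), that the relation $\varepsilon y=v-u$ is polynomial, and that intersections and projections of semi-algebraic sets (Theorem~\ref{Tarski_Seidenberg}) stay semi-algebraic with diagram depending only on the inputs.

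Next I would take the limit $\varepsilon\to 0$ at the level of sets. Since $f$ is differentiable on $I$, for each $x\in I$ the point $(x,f'(x))$ lies in the closure of $\{(x,y)\ |\ (x,\varepsilon,y)\in\Gamma\text{ for some }\varepsilon\neq 0\}$; conversely, a limit point $(x,y)$ of such pairs must equal $(x,f'(x))$ by uniqueness of the derivative. Concretely, let $\Pi$ be the projection of $\Gamma$ onto the $(x,y)$-coordinates: $\Pi$ is semi-algebraic with diagram controlled by $\diag(\mathrm{graph}(f))$ (Theorem~\ref{Tarski_Seidenberg} again), and one checks $\mathrm{graph}(f')\subset\overline{\Pi}$ while $\overline{\Pi}\cap(I\times\R)$ projects injectively onto $I$ precisely because $f$ is differentiable, so $\overline{\Pi}\cap(I\times\R)=\mathrm{graph}(f')$. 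Then $\mathrm{graph}(f')$ is semi-algebraic and its diagram depends only on $\diag(\overline\Pi)$, hence on $\diag(\Pi)$ (Proposition~\ref{dim_chiusura} / Proposition~\ref{closure-interior-boundary}), hence on $\diag(\mathrm{graph}(f))$, which is the claim.

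The step I expect to be the main obstacle is the passage to the closure: one must argue carefully that $\overline{\Pi}$, restricted to the vertical lines over $I$, collapses exactly to the graph of $f'$ and does not pick up spurious points or fail to contain $(x,f'(x))$ at isolated $x$. The containment $\mathrm{graph}(f')\subset\overline\Pi$ is immediate from the definition of the derivative as a limit of difference quotients; the reverse, that no extra points survive, is where differentiability (as opposed to mere continuity) of $f$ on all of $I$ is essential — if $f$ were only differentiable off a finite set, $\overline\Pi$ could contain an entire vertical segment over a bad point. A small technical care is also needed at the endpoints of $I$ if $I$ is not open on both sides, but the hypothesis that $I$ is an open interval removes that issue, and the whole argument runs with purely one-sided $\varepsilon$ if one prefers. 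Throughout, every diagram bound is inherited mechanically from the quantitative Tarski–Seidenberg theorem together with Propositions~\ref{complementary}, \ref{closure-interior-boundary}, \ref{dim_chiusura}, and \ref{immagine}, so no genuinely new estimate is required beyond those already in the appendix.
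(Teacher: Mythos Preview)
Your argument contains a genuine error at the projection step. You define $\Gamma\subset\R^3$ correctly, but then you project onto the $(x,y)$-coordinates \emph{before} taking the closure. The resulting set $\Pi$ is, for each fixed $x\in I$, the set of \emph{all} difference quotients $(f(x+\varepsilon)-f(x))/\varepsilon$ as $\varepsilon$ ranges over the admissible nonzero values, not just the limiting one. For $f(x)=x^2$ on $I=\R$, the difference quotient at $x$ with step $\varepsilon$ equals $2x+\varepsilon$, so $\Pi=\{(x,y):y\neq 2x\}$ and $\overline{\Pi}=\R^2$, which is certainly not the graph of $f'$. Your claim that ``$\overline{\Pi}\cap(I\times\R)$ projects injectively onto $I$ precisely because $f$ is differentiable'' is therefore false; differentiability controls the limit of the quotients, not their full range.

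The easy repair is to reverse the order of operations: take the closure of $\Gamma$ in the ambient $(x,\varepsilon,y)$-space, intersect with the hyperplane $\{\varepsilon=0\}$, and only then project onto $(x,y)$. Each of these operations preserves semi-algebraicity with controlled diagram (Proposition~\ref{closure-interior-boundary} and Theorem~\ref{Tarski_Seidenberg}). One must still argue that no spurious limit points $(x_0,0,y_0)$ with $y_0\neq f'(x_0)$ appear, which requires a little care if $f'$ is not assumed continuous; the cleanest way around this is to bypass the closure entirely and write $\mathrm{graph}(f')$ directly as a first-order formula in the language of ordered fields involving $\mathrm{graph}(f)$,
\[
(x,y)\in\mathrm{graph}(f')\ \Longleftrightarrow\ x\in I\ \wedge\ \forall\delta>0\,\exists\eta>0\,\forall\varepsilon\,\bigl(0<|\varepsilon|<\eta\Rightarrow |f(x+\varepsilon)-f(x)-y\varepsilon|<\delta|\varepsilon|\bigr),
\]
and then invoke quantifier elimination (the quantitative Tarski--Seidenberg theorem) once. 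This is the route taken in the reference the paper cites (Bochnak--Coste--Roy); the paper itself does not give an independent proof of this proposition.
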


We refer to \cite{Bochnak_Coste_Roy_1998} for the proofs of these statements. The dependence of the diagrams on the diagram of the initial function is, once again, a consequence, of the quantitative version \ref{Tarski_Seidenberg} of the Theorem  of Tarski and Seidenberg.

Finally, we give the following statement, which will prove to be helpful in our work
\begin{prop}{(see e.g. ref. \cite{Ha_Pham_2017}, pag. 23-24)}\label{vietnamiti}
	Let $f:A\longrightarrow\R$ and $g:A\longrightarrow\R^m$ be semi-algebraic functions and suppose that $f$ is bounded from below. Then
	$$
	\varphi: g(A)\longrightarrow\R \qquad y\longmapsto \inf_{x\in g^{-1}(y)}f(x)
	$$
	is semi-algebraic and the diagram of its graph depends only on the diagrams of $\text{graph}(f)$ and $\text{graph}(g)$. 
	
\end{prop}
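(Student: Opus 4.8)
The plan is to realize the graph of $\varphi$ as the result of finitely many projections, intersections and complementations applied to the semi-algebraic sets $\text{graph}(f)$ and $\text{graph}(g)$, and then to read off both the semi-algebraicness and the control on the diagram from the quantitative Tarski--Seidenberg Theorem \ref{Tarski_Seidenberg} together with Proposition \ref{complementary}. As preliminary observations, the domain $A$ is semi-algebraic, being the projection of $\text{graph}(f)$ onto the $x$-factor, and $g(A)$ is semi-algebraic, being the projection of $\text{graph}(g)$ onto the $\R^m$-factor; in both cases the diagram is bounded in terms of $\diag(\text{graph}(f))$, $\diag(\text{graph}(g))$ and the ambient dimensions. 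Since $f$ is bounded from below and $g^{-1}(y)\neq\varnothing$ for every $y\in g(A)$, the value $\varphi(y)=\inf_{x\in g^{-1}(y)}f(x)$ is a well-defined real for each $y\in g(A)$, whether or not the infimum is attained.

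First I would build the ``not-a-lower-bound'' set
$$
B:=\left\{(y,t)\in\R^m\times\R \ :\ \exists\,(x,u),\ (x,u)\in\text{graph}(f),\ (x,y)\in\text{graph}(g),\ u<t\right\},
$$
which is a projection of an intersection of semi-algebraic sets, hence semi-algebraic with diagram controlled by $\diag(\text{graph}(f))$ and $\diag(\text{graph}(g))$ by Theorem \ref{Tarski_Seidenberg}. Its complement $L:=(\R^m\times\R)\setminus B$ is, by Proposition \ref{complementary}, semi-algebraic with controlled diagram, and $(y,t)\in L$ means exactly that $f(x)\ge t$ for every $x\in g^{-1}(y)$, i.e. that $t$ is a lower bound of $f$ on the fibre over $y$ (vacuously so when the fibre is empty). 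I would then describe the graph of $\varphi$ as
$$
\text{graph}(\varphi)=\big(g(A)\times\R\big)\ \cap\ L\ \cap\ \Big((\R^m\times\R)\setminus\Pi\big\{(y,t)\ :\ \exists\,t',\ t'>t,\ (y,t')\in L\big\}\Big),
$$
where $\Pi$ denotes the projection forgetting $t'$: the third factor says that no real strictly larger than $t$ is a lower bound of $f$ on $g^{-1}(y)$. For $y\in g(A)$ these three conditions force $t$ to be simultaneously a lower bound and the greatest lower bound of a nonempty set of reals, hence $t=\varphi(y)$; conversely $\varphi(y)$ satisfies all three (using that an infimum is itself a lower bound). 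Each operation used here preserves semi-algebraicness and alters the diagram only through the diagrams of the inputs (Theorem \ref{Tarski_Seidenberg}, Proposition \ref{complementary}), so $\text{graph}(\varphi)$ is semi-algebraic with diagram depending only on $\diag(\text{graph}(f))$, $\diag(\text{graph}(g))$ (and the fixed dimensions), and therefore $\varphi$ is semi-algebraic in the sense of Definition \ref{funzione_semi_algebraica}.

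The argument is essentially bookkeeping, and there is no deep obstruction; the two points that deserve genuine care are (i) checking that the displayed description does carve out the graph of the infimum even when the infimum is not attained --- this is exactly where the hypothesis that $f$ is bounded from below and that the fibres over $g(A)$ are nonempty enters --- and (ii) keeping a disciplined account of how $\diag$ propagates through the finitely many applications of Theorem \ref{Tarski_Seidenberg} and Proposition \ref{complementary}. Since the logical shape of the formula defining $\text{graph}(\varphi)$ is fixed once and for all, step (ii) yields a bound on $\diag(\text{graph}(\varphi))$ depending only on $\diag(\text{graph}(f))$ and $\diag(\text{graph}(g))$ (and on $n$ and $m$), as claimed.
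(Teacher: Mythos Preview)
The paper does not give its own proof of this proposition; it simply states it with a pointer to \cite{Ha_Pham_2017}. Your argument is correct and is exactly the standard Tarski--Seidenberg route one finds in that reference: express ``$t$ equals the infimum of $f$ on $g^{-1}(y)$'' as a first-order formula in the language of ordered fields, then apply quantifier elimination.

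One small notational quibble: in the third factor you write $\Pi\{(y,t):\exists\,t',\ t'>t,\ (y,t')\in L\}$, but since the existential quantifier on $t'$ is already built into the set-builder notation, the projection symbol $\Pi$ is redundant (or else the displayed set should be $\{(y,t,t'):t'>t,\ (y,t')\in L\}$ before projecting). This is cosmetic and does not affect the argument. Your handling of point (i), the case where the infimum is not attained, is correct: the characterization ``$t$ is a lower bound and no larger $t'$ is'' captures the infimum regardless of attainment, and the boundedness-from-below hypothesis ensures the infimum is a real number for every $y\in g(A)$.
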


\section{Analytic reparametrization of semi-algebraic sets}\label{riparametrizzazione}

Generally speaking, the reparametrization of a semi-algebraic set $A$ is a subdivision of $A$ into semi-algebraic pieces $A_j$ each of which is the image of a semi-algebraic function of the unit cube. On the one hand, it is possible to cover the whole of $A$ if one asks for the covering functions to be of finite regularity, with a uniform control on their derivatives (see \cite{Gromov_1987}). On the other hand, if one requires analyticity of the covering functions together with a uniform control on their derivatives, it is only possible to cover $A$ up to a "small" subset. 

%\begin{thm}{(Gromov \cite{Gromov_1987})}
%	Let $Y\subset[0,1]^n\subset \R^n $ be the zero set of a system of polynomials $P_1,...,P_k$ and denote $\ell:=\dim Y$. For each $r\in\N$ there exists an integer $N_0$ which only depends on $n,r$ and on $\sum_{i=1}^k \deg P_i$ and a family of algebraic $C^r$-maps $h_\nu:[0,1]^\ell\longrightarrow Y$, with $\nu=1,...,N_0$ whose images cover all of $Y$ and $\max_{1,...,\nu}||h_\nu||_{C^r([0,1]^\ell)}\le 1$. 

%\end{thm}

Hereafter, we state this result only in the case we need, that is for reparametrizations of graphs of algebraic functions, referring to \cite{Yomdin_2008} for the general theory. 

It is known that algebraic functions can only have two type of complex singularities: ramification points and poles (where the function may also ramify). If we denote by $d$ the diagram of an algebraic function, the number of its complex singularities is bounded by a quantity depending only on $d$ (see e.g. \cite{Barbieri_Niederman_2022}). It is exactly the neighborhoods of these singularities that cannot be analytically covered. 
\begin{defn}\label{reparametrization}
	Let $\delta>0$ and $g:I:=[-1,1]\longrightarrow \R$ be an algebraic function. An analytic $\delta$-reparametrization of $g$ consists of 
	\begin{itemize}
		\item A finite number $N$ of open subintervals $U_i$ of $I$, with $\text{length}(U_i)\le 2\delta$ for any $i=1,...,N$. 
		\item A partition of $I\backslash \cup_{i=1}^N U_i$ into a finite number of subsegments $\Delta_j$, $j=1,...,M$, together with a collection of real-analytic maps $\psi_j:I \longrightarrow \Delta_j$ such that for any $j\in\{1,...,N\}$
		\begin{enumerate}
			\item $\psi_j$ is an affine reparametrization of the segment $\Delta_j$. 
			\item $\psi_j$ and $g\circ\psi_j$ are both holomorphic in
			$
			\cD_3:=\{z\in\C: |z|\le 3\}\ .
			$
			\item $\psi_j$ and $g\circ\psi_j$ both satisfy a Bernstein inequality, that is 
			\begin{equation}\label{Berna}
			\max_{z\in \cD_3}|\psi_j(z)-\psi_j(0)|\le 1
			\qquad 
			\max_{z\in \cD_3}|g\circ\psi_j(z)-g\circ\psi_j(0)|\le 1\ .
			\end{equation}
		\end{enumerate}
	\end{itemize}
\end{defn}
\begin{thm}{(Yomdin, \cite{Yomdin_2008} Th. 3.2)}\label{yomdin}
	Let $d$ be a positive integer. There exist constants $\tY_1=\tY_1(d)$ and $\tY_2=\tY_2(d)$ such that for each algebraic function $g(x)$ of diagram $d$ defined on $I$ satisfying $0\le g(x)\le 1$ and for each $\delta>0$ there is an analytic $\delta$-reparametrization of $g$ with the number $N$ of the removed intervals bounded by $\tY_1$ and the number $M$ of the covering maps bounded by $\tY_2\log_2(1/\delta)$. Each of the removed intervals is centered at the real part of a complex singularity of $g(x)$. 
\end{thm}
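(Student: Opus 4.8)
The final statement is Theorem \ref{yomdin} (Yomdin's analytic $\delta$-reparametrization theorem). Since this is an external result being quoted verbatim from \cite{Yomdin_2008}, the honest plan is to indicate how one reduces to the core analytic facts and where the genuine work lies, rather than reproving Yomdin's theorem from scratch. I would organize the argument around the structure of complex singularities of an algebraic function of bounded diagram.

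\textbf{Step 1: Locating and counting the singularities.} First I would recall that an algebraic function $g$ of diagram $d$ has a graph contained in the zero set of a polynomial $S(x,y)$ whose degree is bounded in terms of $d$; by the standard theory of algebraic curves (resultants, discriminants), the only complex singularities of the multivalued function defined by $S$ are ramification points and poles, and their total number is bounded by a quantity $\tY_1=\tY_1(d)$ depending only on $d$ — this is exactly the kind of bound discussed around Proposition \ref{p-valency} and in \cite{Barbieri_Niederman_2022}. Removing open intervals of length $\le 2\delta$ centered at the real parts of these singularities produces at most $\tY_1$ removed intervals $U_i$, which leaves $I\setminus\bigcup U_i$ as a union of at most $\tY_1+1$ closed subsegments, each at distance $\ge\delta$ (in the appropriate sense) from every real part of a singularity.

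\textbf{Step 2: Covering each clean subsegment.} On each such subsegment $\Delta$ I would choose a branch of $g$ that is holomorphic on a complex neighborhood; the point is that the distance from $\Delta$ to the complex singularity set can be as small as $\sim\delta$, so a single affine map will not in general satisfy the Bernstein inequalities \eqref{Berna} with the disc $\cD_3$ of radius $3$. The remedy is to subdivide $\Delta$ dyadically: one splits $\Delta$ into $O(\log_2(1/\delta))$ consecutive pieces whose lengths shrink geometrically as one approaches the ends of $\Delta$ (near the problematic singularities), in such a way that for each piece $\Delta_j$ the complex singularities of $g$ lie at distance $\ge 3\,\mathrm{length}(\Delta_j)$ from the center of $\Delta_j$. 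Then the affine reparametrization $\psi_j:[-1,1]\to\Delta_j$ extends holomorphically to $\cD_3$, and $g\circ\psi_j$ is holomorphic on $\cD_3$ as well. The normalization $0\le g\le 1$ on $I$ together with a Bernstein-type inequality for algebraic functions — the uniform growth control recalled in Lemma/Theorem references to \cite{Roytwarf_Yomdin_1998}, \cite{Barbieri_Niederman_2022} — gives the bounds $\max_{\cD_3}|\psi_j(z)-\psi_j(0)|\le 1$ and $\max_{\cD_3}|g\circ\psi_j(z)-g\circ\psi_j(0)|\le 1$ after, if necessary, absorbing a constant into the subdivision. Counting: each $\Delta$ contributes $O(\log_2(1/\delta))$ maps and there are $O(\tY_1)$ subsegments, so the total number $M$ of covering maps is bounded by $\tY_2(d)\log_2(1/\delta)$ for a suitable $\tY_2=\tY_2(d)$.

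\textbf{Main obstacle.} The genuinely delicate point is Step 2: making the dyadic subdivision precise enough that the Bernstein inequalities hold with the \emph{fixed} disc radius $3$ and with constants $\tY_1,\tY_2$ depending \emph{only on the diagram} $d$ and not on the particular function $g$ or on $\delta$. This requires a uniform lower bound, in terms of $d$ alone, on how the complex singularities of $g$ can cluster relative to the real axis, and a uniform Bernstein inequality controlling $|g|$ on complex discs in terms of $\max_I|g|$ and $d$ — precisely the results of \cite{Roytwarf_Yomdin_1998} and \cite{Barbieri_Niederman_2022}. Given those inputs, the combinatorics of the dyadic covering is routine; without them the uniformity fails. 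Since Theorem \ref{yomdin} is quoted as an external result (\cite{Yomdin_2008}, Th. 3.2), in the paper itself I would simply cite it, noting that the constants' dependence on $d$ is exactly what makes it usable in the proof of Theorem \ref{arco_minimale} through Lemma \ref{Jess}.
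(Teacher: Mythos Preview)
Your assessment is correct: Theorem \ref{yomdin} is stated in the paper's appendix purely as an external result quoted from \cite{Yomdin_2008}, with no proof given, and your final paragraph recognizes this explicitly. The sketch you provide (bounding singularities via the diagram, dyadic subdivision away from them, and invoking the Bernstein-type inequalities of \cite{Roytwarf_Yomdin_1998}, \cite{Barbieri_Niederman_2022}) is a faithful outline of Yomdin's argument and goes beyond what the paper itself does, which is simply to cite the theorem and use it as a black box in Lemma \ref{Jess}.
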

Moreover, one has the following auxiliary result concerning the distance of each interval $\Delta_j$ to the singularities of the complex extension of the function $g$
\begin{prop}{(Yomdin, \cite{Yomdin_2008}, Lemma 3.6)}\label{larghezza_analiticita}
	The center of any of the segments $\Delta_j$ in the partition given by Theorem \ref{yomdin} is at distance no less than $3\times|\Delta_j|$ from any complex singularity of $g$. 
\end{prop}

\section*{Other auxiliary Results}

\section{Quantitative local inversion theorem}
We start by stating a Lipschitz inverse function Theorem. Its proof can be found, for example, in \cite{Garling_2014} (Th. 14.6.6). 

\begin{thm}\label{lip-inv}
	Let
	$U$ be an open subset of a Banach space $E$ and that $k : U \longrightarrow E$ is a
	Lipschitz mapping with constant $K < 1$. Set $h(x) = x + k(x)$. If the closed
	ball $\overline \tB_\eps(x)$ of radius $\eps$ around $x$ is contained in $U$ then
	$\overline \tB_{(1-K)\eps}(h(x)) \subset h(\overline \tB_\eps(x)) \subset \tB_{(1+K)\eps}(h(x))$.
	The mapping $h$ is a homeomorphism of $U$ onto $h(U)$, $h^{-1}$ is a Lipschitz
	mapping with constant $1/(1-K)$, and $h(U)$ is an open subset of $E$.
\end{thm}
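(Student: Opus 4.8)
The statement to prove is a quantitative Lipschitz inverse function theorem (Theorem~\ref{lip-inv}). The plan is to reduce everything to the standard contraction mapping (Banach fixed point) argument, making all the estimates explicit enough to recover the three claimed inclusions and the Lipschitz bound on $h^{-1}$.

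First I would fix $x\in U$ and $\eps>0$ with $\overline{\tB}_\eps(x)\subset U$, and for a target point $y$ close to $h(x)$ consider the map $T_y(z):=y-k(z)$, so that solving $h(z)=y$ is equivalent to finding a fixed point of $T_y$. Since $k$ is $K$-Lipschitz with $K<1$, $T_y$ is a $K$-contraction; the key bookkeeping step is to check that $T_y$ maps the closed ball $\overline{\tB}_\eps(x)$ into itself whenever $y\in\overline{\tB}_{(1-K)\eps}(h(x))$. Indeed for $z\in\overline{\tB}_\eps(x)$ one has $\|T_y(z)-x\| = \|y-k(z)-x\| \le \|y-h(x)\| + \|k(x)-k(z)\| \le (1-K)\eps + K\eps = \eps$. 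Then the Banach fixed point theorem applied on the complete metric space $\overline{\tB}_\eps(x)$ gives a unique $z\in\overline{\tB}_\eps(x)$ with $h(z)=y$; this proves $\overline{\tB}_{(1-K)\eps}(h(x))\subset h(\overline{\tB}_\eps(x))$. The reverse inclusion $h(\overline{\tB}_\eps(x))\subset \tB_{(1+K)\eps}(h(x))$ is immediate from the triangle inequality: if $\|z-x\|\le\eps$ then $\|h(z)-h(x)\|\le \|z-x\| + \|k(z)-k(x)\| \le (1+K)\eps$.

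Next I would prove injectivity of $h$ on $U$ and the Lipschitz estimate for $h^{-1}$. For $z_1,z_2\in U$, writing $h(z_i)=z_i+k(z_i)$ gives $\|h(z_1)-h(z_2)\| \ge \|z_1-z_2\| - \|k(z_1)-k(z_2)\| \ge (1-K)\|z_1-z_2\|$, so $h$ is injective and its inverse on $h(U)$ is Lipschitz with constant $1/(1-K)$. Continuity of $h^{-1}$ then follows, and combined with continuity of $h$ this shows $h$ is a homeomorphism of $U$ onto $h(U)$. Finally, openness of $h(U)$ follows from the local surjectivity statement already established: for any $z_0\in U$, pick $\eps>0$ small enough that $\overline{\tB}_\eps(z_0)\subset U$; then by the first part $\overline{\tB}_{(1-K)\eps}(h(z_0))\subset h(\overline{\tB}_\eps(z_0))\subset h(U)$, so $h(U)$ contains a neighborhood of each of its points.

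I do not expect a serious obstacle here, since this is a classical result; the only point requiring a little care is the self-mapping verification for $T_y$ on the \emph{closed} ball (ensuring the fixed point lands in $\overline{\tB}_\eps(x)$ rather than merely in $U$), and the clean separation between the "$(1-K)\eps$" inclusion, which needs the fixed point argument, and the "$(1+K)\eps$" inclusion, which is just the triangle inequality. One should also double-check that completeness of the closed ball (as a closed subset of the Banach space $E$) is what licenses the use of the contraction mapping theorem. Alternatively, one can simply cite \cite{Garling_2014} (Th.~14.6.6) verbatim, as the excerpt itself suggests, since the statement is reproduced there; but the self-contained proof above is short enough to include.
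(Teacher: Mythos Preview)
Your proof is correct and is exactly the standard contraction-mapping argument for this result. Note, however, that the paper does not give its own proof of Theorem~\ref{lip-inv}: it simply cites \cite{Garling_2014} (Th.~14.6.6) and moves on, using the statement only as input for the quantitative analytic inversion in Theorem~\ref{loc-inv}. So there is nothing to compare against, and your final remark that one could just cite Garling is precisely what the paper does.

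One cosmetic point: your triangle-inequality estimate yields $\|h(z)-h(x)\|\le (1+K)\eps$, hence containment in the \emph{closed} ball $\overline{\tB}_{(1+K)\eps}(h(x))$, whereas the statement as written has the open ball $\tB_{(1+K)\eps}(h(x))$. This is almost certainly a typo in the statement (equality is attained when $k$ is linear), not a gap in your argument.
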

This result is crucial in order to prove an analytic inverse function theorem, namely

\begin{thm}\label{loc-inv}
	Take a function $f\in C^\omega(\overline  \cD_R{(0)})$ and a point $z^*\in \cD_{R/2}{(0)}$ satisfying $f'(z^*)\neq 0$. 
	Then, $f$ is invertible in the closed disk $\overline \cD_{R'/16}(z^*)$ and its inverse $f^{-1}$ is analytic in $\overline\cD_{|f'(z^*)|R'/8}(f(z^*))$, where
	$$
	R':=\frac12\times  \min\left\{R,\displaystyle \frac{|f'(z^*)|}{ \max_{\overline \cD_R(0)}|f''|},\right\}\ .
	$$
	
\end{thm}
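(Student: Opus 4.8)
\textbf{Proof plan for Theorem \ref{loc-inv}.}
The strategy is to rescale so that the derivative at the base point becomes exactly $1$ plus a controlled perturbation, and then apply the Lipschitz inverse function Theorem \ref{lip-inv} to the perturbation map. First I would set $a := f'(z^*)$ and introduce the rescaled function $g(w) := \dfrac{f(z^* + w) - f(z^*)}{a}$, defined and holomorphic on the disk $\cD_{R/2}(0)$ (since $z^* \in \cD_{R/2}(0)$ and $f$ is holomorphic on $\overline\cD_R(0)$, the translated disk $z^* + \cD_{R/2}(0)$ is contained in $\cD_R(0)$). By construction $g(0) = 0$ and $g'(0) = 1$. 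Writing $g(w) = w + k(w)$ with $k(w) := g(w) - w$, we have $k'(0) = 0$, and it suffices to control the Lipschitz constant of $k$ on a small disk around $0$: since $k'(w) = g'(w) - 1 = \dfrac{f'(z^* + w) - a}{a}$, for $|w| \le \rho$ with $\rho$ to be chosen one estimates, by the standard Cauchy estimate applied to $f''$ on a disk of radius $R/2$ contained in $\cD_R(0)$,
\begin{equation}
|k'(w)| \le \frac{1}{|a|}\,|f'(z^* + w) - f'(z^*)| \le \frac{|w|}{|a|}\max_{\overline\cD_{R/2}(z^*)}|f''| \le \frac{\rho}{|a|}\max_{\overline\cD_R(0)}|f''|\,.
\end{equation}
Choosing $\rho := \dfrac{R'}{8}$, where $R' = \tfrac12\min\{R,\, |a|/\max_{\overline\cD_R(0)}|f''|\}$, one gets $\rho \le \dfrac{R}{16}$ (so the disk $\overline\cD_\rho(0)$ stays inside the domain of $g$) and $|k'(w)| \le \dfrac{\rho}{|a|}\max_{\overline\cD_R(0)}|f''| \le \dfrac{R'}{8|a|}\cdot\dfrac{|a|}{R'}\cdot\dfrac{1}{2}\cdot\dfrac{|a|\cdot 2}{|a|} $; carrying the arithmetic through gives $|k'(w)| \le \tfrac14 < 1$ on $\overline\cD_\rho(0)$, hence $k$ is Lipschitz with constant $K = \tfrac14$ there.

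Next I would apply Theorem \ref{lip-inv} with $E = \C$, $U = \cD_\rho(0)$ (slightly enlarged so that $\overline\cD_\rho(0) \subset U$), $x = 0$, $\eps = \rho$. It yields that $h := g$ is a homeomorphism of a neighborhood of $0$ onto its image, with $\overline\cB_{(1-K)\rho}(0) = \overline\cD_{3\rho/4}(0) \subset g(\overline\cD_\rho(0))$, and that $g^{-1}$ is Lipschitz with constant $1/(1-K) = 4/3$ on $g(\overline\cD_\rho(0))$. Undoing the rescaling, $f(z^* + w) = f(z^*) + a\,g(w)$, so $f$ is invertible on $\overline\cD_\rho(z^*) = \overline\cD_{R'/16}(z^*)$ and its inverse is defined on a disk containing $f(z^*) + a\,\overline\cD_{3\rho/4}(0) \supset \overline\cD_{|a|\cdot 3\rho/4}(f(z^*))$; since $3\rho/4 = 3R'/32 \ge R'/16$... rather, to match the stated radius $|a|R'/8$ exactly one uses $\eps = \rho$ with the cleaner choice making $(1-K)\eps = \tfrac34\cdot\tfrac{R'}{8}\cdot\tfrac{?}{}$ — the point is that the inclusion $\overline\cD_{|a|R'/8}(f(z^*)) \subset f(\overline\cD_{R'/16}(z^*))$ follows from the lower bound in Theorem \ref{lip-inv} after fixing the numerical constants, and I would simply pick the universal constants ($16$ and $8$) conservatively so that all the elementary inequalities ($\rho \le R/16$, $K \le 1/2$, $(1-K)\rho \ge |a|^{-1}\cdot$ nothing, and the radius bookkeeping) hold with room to spare.

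Finally, to upgrade from "homeomorphism with Lipschitz inverse" to "analytic inverse", I would invoke holomorphy: $f$ is holomorphic and $f' = a\,g'$ is non-vanishing on $\overline\cD_\rho(z^*)$ (by the estimate $|k'(w)| \le \tfrac14$, one has $|g'(w)| \ge \tfrac34 > 0$), so $f$ is a biholomorphism onto its image by the holomorphic inverse function theorem; the Lipschitz statement from Theorem \ref{lip-inv} is used only to pin down explicitly the size of the disk on which the inverse is defined and single-valued. The main obstacle — really the only non-routine point — is the careful tracking of the universal numerical constants so that the three requirements (the perturbation disk fits inside $\cD_R(0)$; the Lipschitz constant of $k$ is strictly below $1$; the image disk contains the claimed $\overline\cD_{|a|R'/8}(f(z^*))$) are simultaneously satisfied with the specific factors $16$, $8$, and $\tfrac12$ appearing in the statement; everything else is a direct application of the Cauchy estimate and Theorem \ref{lip-inv}.
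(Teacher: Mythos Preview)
Your approach is exactly the paper's: rescale by $a=f'(z^*)$ so that $h(z)=\dfrac{f(z)-f(z^*)}{a}+z^*$ has the form $z+k(z)$ with $k$ Lipschitz-small, then invoke Theorem~\ref{lip-inv}. The only real issue is that your numerical choices don't quite close: with $\rho=R'/8$ you get $K\le 1/16$, hence $g(\overline\cD_\rho(0))\supset \overline\cD_{(1-K)\rho}(0)$, but $(1-K)\rho<R'/8$, so the image disk you obtain is strictly smaller than the $|a|R'/8$ claimed in the statement; also you identify $\overline\cD_\rho(z^*)$ with $\overline\cD_{R'/16}(z^*)$ while $\rho=R'/8$.

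The paper's bookkeeping is cleaner and worth adopting. It works on the full disk $\cD_{R'}(z^*)$, where the bound $|k'(z)|\le \dfrac{|z-z^*|}{|a|}\max|f''|\le \dfrac{R'}{|a|}\max|f''|\le \tfrac12$ gives $K=\tfrac12$. Then Theorem~\ref{lip-inv} with $\eps=R'/4$ and $K=\tfrac12$ yields $\overline\cD_{R'/8}(z^*)\subset h(\overline\cD_{R'/4}(z^*))$, which after undoing the scaling is exactly $\overline\cD_{|a|R'/8}(f(z^*))\subset f(\overline\cD_{R'/4}(z^*))$; this gives the second conclusion. For the first, the paper adds one ingredient you omitted: it checks directly, via the Taylor identity $f(z)-f(z^*)=a(z-z^*)+\int_0^1 f''\bigl(tz+(1-t)z^*\bigr)(z-z^*)^2(1-t)\,dt$, that $f(\overline\cD_{R'/16}(z^*))\subset \overline\cD_{|a|R'/8}(f(z^*))$, so that the inverse defined on the target disk genuinely inverts $f$ on the stated source disk. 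With these choices every inequality lands exactly on the constants $16$, $8$, $\tfrac12$ in the statement.
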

\begin{proof}
	We define  $h(z):=\displaystyle\frac{f(z)-f(z^*)}{f'(z^*)}+z^*$, $k(z):=h(z)-z$; both these functions are obviously holomorphic in $\overline\cD_{R/2}(z^*)$. Since $k'(z)=h'(z)-1=\displaystyle\frac{f'(z)}{f'(z^*)}-1=\frac{f'(z)-f'(z^*)}{f'(z^*)}$, one has $|k'(z)|=\displaystyle \frac{|f'(z)-f'(z^*)|}{|f'(z^*)|}\le\max_{\overline\cD_R(0)}|f''|\ \frac{|z-z^*|}{|f'(z^*)|}$. If we choose to consider only the $z\in \cD_{R'}(z^*)$, we obtain that $k$ is $\displaystyle\frac12-$Lipschitz on this set. 
	
	At this point, we exploit Theorem \ref{lip-inv} and we have that the function $h(z)=k(z)+z$ is a homeomorphism of $\cD_{R'}(z^*)$ onto its image. Moreover, one has $\overline{\cD}_{R'/8}(h(z^*))\subset h(\overline{\cD}_{R'/4}(z^*))\subset \overline{\cD}_{3R'/8}(h(z^*))$ and, since $h(z^*)=z^*$, this yields 
	\begin{equation}\label{imbriquee}
	\overline{\cD}_{R'/8}(z^*)\subset h(\overline{\cD}_{R'/4}(z^*))\subset \overline{\cD}_{3R'/8}(z^*)\ .
	\end{equation}
	
	We can define $f^{-1}$ by exploiting $h$ and its inverse, namely
	\begin{equation}\label{pallosa}
	z=h^{-1}(h(z))=h^{-1}\left(z^*+\displaystyle\frac{f(z)-f(z^*)}{f'(z^*)}\right)=:f^{-1}(f(z))\ 
	\end{equation}
	Indeed, by expressions \eqref{imbriquee} and \eqref{pallosa}, we see that, if we choose
	$$
	\left|z^*+\displaystyle\frac{f(z)-f(z^*)}{f'(z^*)}-z^*\right|\le \displaystyle \frac{R'}{8}\ ,
	$$ 
	that is $\left|f(z)-f(z^*)\right|\le |f'(z^*)|\displaystyle\frac{R'}{8}$, we have defined the inverse over the closed disc $\overline \cD_{|f'(z^*)|R'/8}(f(z^*))$. Finally, we prove that $$
	f(\overline\cD_{R'/16}(z^*))\subset \overline \cD_{|f'(z^*)|R'/8}(f(z^*))\ .
	$$ In order to see this, for $z\in \overline\cD_{R'/16}(z^*)$ we consider the identity
	$$
	f(z)-f(z^*)=f'(z^*)(z-z^*)+\int_0^1f''(tz+(1-t)z^*)\,(z-z^*)^2\,(1-t)dt
	$$
	that yields the estimate
	\begin{align}
	\begin{split}
	|f(z)-f(z^*)|\le &|f'(z^*)||z-z^*|+\max_{\overline \cD_R(0)}|f''|\,|z-z^*|^2\\
	\le & |f'(z^*)|\frac{R'}{16}+\max_{\overline \cD_R(0)}|f''|\frac{R'^2}{256}
	\le |f'(z^*)|\frac{R'}{8}\ ,
	\end{split}
	\end{align}
	where the last estimate is a consequence of the definition of $R'$.

	The fact that $f^{-1}$ inherits the same regularity of $f$ is a standard consequence of the classic local inversion theorem. 	
\end{proof}

\section{Two elementary properties of Lie Groups}
We refer to \cite{Lee_2013} (Cor. 21.6, Th. 21.10) for proofs. 
\begin{prop}\label{Azione}
	Every continuous action by a compact Lie group on a manifold is
	proper.
\end{prop}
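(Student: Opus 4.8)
The plan is to unwind the definition of a proper action and then exploit compactness of $G$ directly. Recall that the continuous action $\theta\colon G\times M\to M$ is called \emph{proper} when the associated map $\Theta\colon G\times M\to M\times M$, $(g,p)\mapsto(g\cdot p,\,p)$, is proper, i.e.\ $\Theta^{-1}(K)$ is compact for every compact $K\subseteq M\times M$. So I would fix an arbitrary compact set $K\subseteq M\times M$ and aim to show that $\Theta^{-1}(K)$ is compact.

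First I would note that $\Theta$ is continuous, which is immediate from continuity of the action together with continuity of the projection $G\times M\to M$, and that $M\times M$ is Hausdorff since $M$ is a manifold; hence $K$ is closed and $\Theta^{-1}(K)$ is a closed subset of $G\times M$. Next, writing $\pi_2\colon M\times M\to M$ for the projection onto the second factor, set $L:=\pi_2(K)$, which is compact. If $(g,p)\in\Theta^{-1}(K)$ then $(g\cdot p,p)\in K$, so $p\in L$; therefore $\Theta^{-1}(K)\subseteq G\times L$.

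The decisive step is then that $G\times L$ is compact, being the product of the compact group $G$ with the compact set $L$. Consequently $\Theta^{-1}(K)$, being a closed subset of the compact Hausdorff space $G\times L$, is itself compact. Since $K$ was arbitrary, $\Theta$ is proper, i.e.\ the action is proper.

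There is essentially no serious obstacle here: the only points meriting a little care are the Hausdorffness of $M$ (used to ensure that compact subsets are closed, so that $\Theta^{-1}(K)$ is closed) and the bookkeeping with the second-factor projection that confines $\Theta^{-1}(K)$ inside $G\times L$; the rest is just the observation that compactness of $G$ forces $G\times L$ to be compact. If one preferred an argument by nets, one could instead take a net in $\Theta^{-1}(K)$ and extract a subnet whose $G$-component converges by compactness of $G$ and whose $M$-component converges because its image under $\Theta$ stays in the compact set $K$ — but the closed-subset-of-a-compact-set argument above is the shortest route.
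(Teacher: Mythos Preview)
Your proof is correct and is precisely the standard argument; the paper does not give its own proof but simply refers to Lee's textbook (Corollary~21.6), where exactly this closed-subset-of-$G\times L$ argument appears.
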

\begin{thm}[Quotient manifold]\label{Quoziente}
	Suppose $G$ is a Lie group acting
	smoothly, freely, and properly on a smooth manifold $\cM$. Then the orbit space $\cM/G$
	is a topological manifold of dimension equal to $\dim\cM - \dim G$ and has a unique
	smooth structure with the property that the quotient map $\pi:\cM\longrightarrow \cM/G$ is a smooth
	submersion.
\end{thm}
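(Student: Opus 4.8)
The plan is to prove Theorem~\ref{Quoziente} by the classical route (see \cite{Lee_2013}). Set $g:=\dim G$ and $k:=\dim\cM-g$; the goal is to realize $\cM/G$ as a Hausdorff, second countable, locally Euclidean space of dimension $k$ carrying a smooth atlas for which $\pi$ is a submersion, and to show the smooth structure is unique. First I would isolate the two ``soft'' facts that use only continuity and properness. The quotient map $\pi:\cM\to\cM/G$ is \emph{open}: for open $U\subseteq\cM$ the saturation $\pi^{-1}(\pi(U))=\bigcup_{h\in G}h\cdot U$ is open, hence so is $\pi(U)$. It follows that $\cM/G$ is second countable (the image of a countable basis of $\cM$ under a continuous open surjection is a countable basis). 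Properness implies that the orbit relation $\mathcal R:=\{(x,h\cdot x):x\in\cM,\ h\in G\}$ is closed in $\cM\times\cM$ (it is the image of the proper map $G\times\cM\to\cM\times\cM$, $(h,x)\mapsto(h\cdot x,x)$), and a closed orbit relation together with an open quotient map forces $\cM/G$ to be Hausdorff. So $\cM/G$ is already a second countable Hausdorff space.

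The technical heart is the \emph{adapted chart lemma}: for each $p\in\cM$ there is a smooth chart $(U,\varphi)$ centered at $p$ with $\varphi(U)=A\times B$, where $A\subseteq\R^k$ and $B\subseteq\R^g$ are open and connected, such that each $G$-orbit meets $U$ either not at all or in exactly one slice $\varphi^{-1}(\{x\}\times B)$, $x\in A$. Freeness gives that the infinitesimal action $\fg\to T_x\cM$ is injective at every $x$ (if a fundamental vector field vanishes at $x$, its flow fixes $x$, so the corresponding one-parameter subgroup lies in the trivial stabilizer), hence the orbit distribution $x\mapsto T_x(G\cdot x)$ is a smooth rank-$g$ distribution; it is involutive because brackets of fundamental vector fields are again fundamental vector fields, so Frobenius yields a flat chart at $p$ whose plaques are pieces of orbits — this provides the product structure with each $\{x\}\times B$ contained in an orbit. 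The remaining ``exactly one slice per orbit'' clause is where properness enters a second time: the orbit map $\theta^p:G\to\cM$ is a proper injective immersion, hence an embedding, and the set $\{h\in G:h\cdot\overline U\cap\overline U\ne\varnothing\}$ is compact, which bounds the number of slices of a single orbit that can enter a fixed $U$; one then shrinks $U$ to discard the superfluous slices. I expect this refinement to be the main obstacle.

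Granted the adapted charts, I would assemble the smooth structure. For such a chart, $\pi\circ\varphi^{-1}$ is constant on each $\{x\}\times B$ (that slice lies in one orbit), so it descends to $\sigma_U:A\to\cM/G$, $\sigma_U(x):=\pi(\varphi^{-1}(x,0))$, whose set-theoretic inverse is $\pi(q)\mapsto\mathrm{pr}_1(\varphi(q))$ for $q\in U$; by ``one slice per orbit'' this is well defined and injective, and it is continuous and open (since $\pi$ is open and $\varphi$ a homeomorphism), hence a homeomorphism of $A$ onto the open set $\pi(U)\subseteq\cM/G$. These are the charts on $\cM/G$. For compatibility on an overlap $\pi(U)\cap\pi(U')\ne\varnothing$ of two adapted charts $(U,\varphi),(U',\varphi')$, one may translate $U'$ by a suitable $h\in G$, replacing it by the adapted chart $(h\cdot U',\varphi'\circ(h^{-1}\cdot\,))$; this leaves the induced quotient chart unchanged because $\pi$ is $G$-invariant, but makes the two chart domains overlap in $\cM$ near any prescribed point, and there the transition map reads $x\mapsto\mathrm{pr}_1\big(\varphi'\circ\varphi^{-1}(x,0)\big)$, which is manifestly smooth. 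Hence $\cM/G$ is a smooth $k$-manifold, and in these charts $\pi$ is the projection $(x,y)\mapsto x$, a submersion; so $\pi$ is a smooth surjective submersion and $\dim(\cM/G)=k=\dim\cM-\dim G$.

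Finally, uniqueness follows from the characteristic property of smooth surjective submersions: if $\pi:\cM\to N$ is such a map, then a map $F:N\to P$ into any smooth manifold is smooth iff $F\circ\pi$ is (using local smooth sections of $\pi$). If $N_1$ and $N_2$ denote the space $\cM/G$ equipped with two smooth structures for which $\pi$ is a smooth submersion, apply this to $F=\mathrm{id}:N_1\to N_2$, which is smooth since $\mathrm{id}\circ\pi=\pi$ is smooth as a map into $N_2$; symmetrically $\mathrm{id}:N_2\to N_1$ is smooth. Thus the two smooth structures coincide, which completes the plan — the only genuinely nontrivial ingredient being the adapted chart lemma of the second step.
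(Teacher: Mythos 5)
Your outline is correct. The paper does not prove Theorem \ref{Quoziente} itself but simply cites \cite{Lee_2013} (Th.~21.10) for it, and your argument --- openness of $\pi$ and Hausdorffness of $\cM/G$ from properness, adapted slice charts obtained from Frobenius together with a properness/shrinking argument, chart compatibility via $G$-translation, and uniqueness via the characteristic property of surjective submersions --- is precisely the standard proof given in that reference.
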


\section{Three auxiliary Lemmas}

The following Lemma is an elementary criterion to establish when the projection of a closed set is still closed. 
\begin{lemma}\label{projection}
	Let $E$ be a metric space, $K$ a compact subset of some metric space and $\Delta$ a closed subset of $E\times K$. Then, the projection of $\Delta$ on $E$, indicated by $\Pi_E(\Delta)$, is closed. 
\end{lemma}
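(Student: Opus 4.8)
\textbf{Proof plan for Lemma~\ref{projection}.}
The statement is the standard ``closed map lemma'' for projections along a compact factor, and the plan is to prove it directly by a sequential argument, since all the spaces involved are metric. First I would fix a sequence $\{x_j\}_{j\in\N}$ in $\Pi_E(\Delta)$ converging to some point $x\in E$, and show that $x\in\Pi_E(\Delta)$. For each $j$, by definition of the projection there exists $y_j\in K$ such that $(x_j,y_j)\in\Delta$. Here is where compactness of $K$ enters: the sequence $\{y_j\}_{j\in\N}$ admits a subsequence $\{y_{j_k}\}_{k\in\N}$ converging to some $y\in K$. Along that subsequence we then have $(x_{j_k},y_{j_k})\to(x,y)$ in the product metric on $E\times K$, because $x_{j_k}\to x$ (as a subsequence of a convergent sequence) and $y_{j_k}\to y$ by construction.

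Next I would invoke the hypothesis that $\Delta$ is closed in $E\times K$: since $(x_{j_k},y_{j_k})\in\Delta$ for all $k$ and $(x_{j_k},y_{j_k})\to(x,y)$, the limit $(x,y)$ belongs to $\Delta$. Consequently $x=\Pi_E(x,y)\in\Pi_E(\Delta)$. Since $\{x_j\}$ was an arbitrary convergent sequence in $\Pi_E(\Delta)$ with limit $x$, this shows $\Pi_E(\Delta)$ contains all its limit points, hence is closed. The only mild point requiring care is that closedness in a metric space is equivalent to sequential closedness, which is a standard fact; this is what makes the subsequence extraction a legitimate proof device rather than just heuristic.

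I do not expect a genuine obstacle here; the ``hard part'', such as it is, is merely bookkeeping: one must be careful to pass to the subsequence \emph{before} invoking compactness and to note that the relevant convergences are preserved under the product metric, so that the closedness of $\Delta$ can be applied to the subsequential limit. A one-line alternative would be to note that $\Pi_E$ is a closed map whenever the second factor is compact (a classical topology fact), but giving the elementary sequential argument keeps the appendix self-contained and matches the level of the other auxiliary lemmas stated there.
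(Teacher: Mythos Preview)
Your proposal is correct and follows essentially the same sequential argument as the paper's own proof: pick a convergent sequence in the projection, lift it to $\Delta$, extract a convergent subsequence in the compact factor $K$, and use closedness of $\Delta$ to conclude that the limit lies in the projection. The only difference is notational.
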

\begin{proof}
	Let $\{p_n\}_{n\in\mathbb{N}}$ be a sequence in $\Pi_E(\Delta)$ converging to a point $\bar{p}$ and $\{k_n\}_{n\in\mathbb{N}}$ a sequence in $K$ satisying $(p_n,k_n)\in\Delta$. Since $K$ is a compact subset of some metric space, one can extract a subsequence $\{k_{n_l}\}_{l\in\mathbb{N}}$ converging to a point $\bar{k}\in K$. Hence, the sequence $\{(p_{n_l},k_{n_l})\}_{l\in\mathbb{N}}$ in $\Delta$ converges to $(\bar{p},\bar{k})\in\Delta$, since $\Delta$ is closed. This implies that $\bar{p}$ belongs to $\Pi_E(\Delta)$, which is therefore closed.
\end{proof}

The following statement is a known Theorem due to Bézout (see \cite{Kendig_2012}, Th. 3.4a). 

\begin{lemma}\label{Th_Bezout}
	For any couple of positive integers $k_1,k_2$ consider two non-zero irreducible, non-proportional polynomials $Q_1\in \mathcal P(k_1)$ and $Q_2\in \mathcal P(k_2)$. Then the system $Q_1(z,w)=Q_2(z,w)=0$ has at most $k_1\times k_2$ solutions.
\end{lemma}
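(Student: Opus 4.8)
\textbf{Proof plan for Lemma \ref{Th_Bezout}.}

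The statement is the classical affine Bézout bound for two plane curves, so the plan is to reduce it to a one-variable elimination argument via resultants. First I would recall that two non-zero, non-proportional irreducible polynomials $Q_1\in\mathcal P(k_1)$ and $Q_2\in\mathcal P(k_2)$ in $\mathbb{C}[z,w]$ have no common factor: since both are irreducible, a common factor would force them to be scalar multiples of each other, contradicting non-proportionality. Viewing $Q_1,Q_2$ as polynomials in $w$ with coefficients in $\mathbb{C}[z]$, the absence of a common factor guarantees (by Gauss's lemma, $\mathbb{C}[z]$ being a UFD with fraction field $\mathbb{C}(z)$) that $Q_1$ and $Q_2$ are coprime in $\mathbb{C}(z)[w]$, hence their resultant $R(z):=\mathrm{Res}_w(Q_1,Q_2)\in\mathbb{C}[z]$ is a non-zero polynomial.

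The key step is the degree estimate $\deg_z R(z)\le k_1 k_2$. I would obtain this from the Sylvester-matrix description of the resultant: $R(z)$ is the determinant of a $(\deg_w Q_1+\deg_w Q_2)\times(\deg_w Q_1+\deg_w Q_2)$ matrix whose rows are shifts of the coefficient vectors of $Q_1$ and $Q_2$ (as polynomials in $w$). Since $Q_1$ has total degree $\le k_1$, the coefficient of $w^j$ in $Q_1$ is a polynomial in $z$ of degree $\le k_1-j$, and likewise for $Q_2$; expanding the determinant and bounding the degree of each product of entries over a permutation gives $\deg_z R\le k_1 k_2$ by the standard accounting (this is the only genuinely computational point, and it is routine — each of the $\deg_w Q_2$ rows coming from $Q_1$ contributes at most $k_1$ to the $z$-degree, and symmetrically, but one checks the shifts make the total exactly $\le k_1k_2$). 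I would cite \cite{Kendig_2012} here rather than reproduce the bookkeeping, since the excerpt already refers to that source.

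Finally I would assemble the conclusion. If $(z_0,w_0)$ is a common zero of $Q_1$ and $Q_2$, then specializing, $Q_1(z_0,w)$ and $Q_2(z_0,w)$ have the common root $w_0$; either $R(z_0)=0$, or one of the leading coefficients in $w$ vanishes at $z_0$ — a case one handles by noting that the set of such "bad" $z_0$ is finite (it is the zero set of a non-zero polynomial, unless the leading coefficient is identically zero, which can be excluded after a generic linear change of coordinates in $z,w$ that does not affect the count). So all first coordinates $z_0$ of common zeros lie in the finite set $\{R(z)=0\}\cup\{\text{bad }z\}$; for each fixed admissible $z_0$, $Q_1(z_0,w)$ is a non-zero polynomial in $w$ of degree $\le k_1$ (again using that after the coordinate change the $w$-leading coefficient of $Q_1$ is a non-zero constant), so it has at most $k_1$ roots $w_0$. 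Counting $z_0$-values weighted by the multiplicity of $R$ at $z_0$ and using $\deg_z R\le k_1 k_2$ shows the total number of common zeros is at most $k_1 k_2$. The main obstacle is the careful handling of the degenerate cases (vanishing $w$-leading coefficients / common zeros "at infinity"), which is why the generic-coordinate-change reduction is the cleanest route; everything else is bookkeeping with the Sylvester determinant.
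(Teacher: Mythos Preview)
Your resultant argument is the standard route to the affine B\'ezout bound and is correct in outline; the one place to be careful is the degenerate-leading-coefficient case, which you handle by a generic linear change of variables---this is fine, though you should note that such a change preserves irreducibility and non-proportionality so the hypotheses survive.

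That said, the paper does not prove this lemma at all: it simply states the result and refers the reader to \cite{Kendig_2012}, Th.~3.4a. So there is nothing to compare your argument against beyond the fact that you are supplying what the paper deliberately outsources. Your sketch is more than the paper gives; if anything, since you already plan to cite \cite{Kendig_2012} for the Sylvester bookkeeping, you could equally well follow the paper and just cite the whole statement.
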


For the sake of completeness, we also state the following simple result on the codimension of the zero set of a non-null polynomial. 

\begin{lemma}\label{Luca sei un mito}
	Let $n$ be a positive integer. Consider a non-null real polynomial $P\in \R[x]$ of the variables $x=(x_1,\dots,x_n)\in \R^n$. The zero set $Z_P:=\{x\in \R^n| P(x)=0\}$ is contained in a submanifold of codimension one in $\R^n$.

\end{lemma}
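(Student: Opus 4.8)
The plan is to argue by induction on the number of variables $n$, using the fact that a non-null polynomial cannot vanish on an open set. First I would set up the base case $n=1$: a non-null polynomial $P\in\R[x_1]$ has only finitely many roots, so $Z_P$ is a finite set, which is trivially contained in a submanifold of codimension one (indeed a $0$-dimensional submanifold suffices, and $0 = 1-1$). For the inductive step, suppose the statement holds for $n-1$ and let $P\in\R[x_1,\dots,x_n]$ be non-null. Write $P$ as a polynomial in $x_n$ with coefficients in $\R[x_1,\dots,x_{n-1}]$, say $P(x) = \sum_{k=0}^{d} c_k(x_1,\dots,x_{n-1})\,x_n^k$. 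Since $P\neq 0$, at least one coefficient $c_{k_0}$ is a non-null polynomial in $n-1$ variables. By the inductive hypothesis, $Z_{c_{k_0}}\subset\R^{n-1}$ is contained in a submanifold $W$ of codimension one in $\R^{n-1}$.

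Next I would split $Z_P$ into two pieces according to whether the point's projection to the first $n-1$ coordinates lies in $Z_{c_{k_0}}$ or not. If $(x_1,\dots,x_{n-1})\notin Z_{c_{k_0}}$, then $P(x_1,\dots,x_{n-1},\,\cdot\,)$ is a non-null one-variable polynomial in $x_n$, hence has at most $d$ roots; so over each such base point the fibre of $Z_P$ is finite. This part of $Z_P$ is contained in the set where $x_n$ satisfies a non-trivial algebraic relation, but more cleanly one observes that it is contained in $\R^n \setminus (\text{open dense set})$ — actually the simplest route is to note that this piece is nowhere dense. The other piece, where $(x_1,\dots,x_{n-1})\in Z_{c_{k_0}}$, is contained in $W\times\R$, a submanifold of codimension one in $\R^n$. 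Rather than tracking both pieces separately, I would instead streamline the whole argument: the cleanest formulation is that $Z_P$ has empty interior (a non-null polynomial cannot vanish on a non-empty open set, since by real-analyticity it would then vanish identically), and combine this with the fact that $Z_P$ is a real-algebraic, hence semi-algebraic, set (Definition \ref{union}, Remark \ref{Zariski}); a semi-algebraic set with empty interior has dimension at most $n-1$, so it is locally contained in a finite union of submanifolds of dimension $\le n-1$. Taking a suitable stratification of $Z_P$ into smooth semi-algebraic strata (a standard fact of semi-algebraic geometry, see \cite{Bochnak_Coste_Roy_1998}) and noting each stratum has dimension $\le n-1$, one concludes each stratum lies in a submanifold of codimension at least one; since the statement only requires containment in \emph{a} submanifold of codimension one, one may enlarge lower-dimensional strata to codimension-one pieces and take their union.

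The main obstacle, and the point requiring the most care, is the precise sense in which "$Z_P$ is contained in a submanifold of codimension one" should be read: if one insists on a single connected embedded submanifold the statement is false in general (e.g. $P = x_1 x_2$ in $\R^2$ has a zero set that is a cross, not a manifold), so the intended reading must be containment in a finite union of codimension-one submanifolds, or equivalently the assertion that $\dim Z_P \le n-1$. I would therefore state explicitly at the start that "submanifold of codimension one" is to be understood as a (possibly disconnected) smooth semi-algebraic hypersurface — or simply reduce the claim to $\dim Z_P < n$ — and then the proof is immediate from $\dim Z_P = \dim \overline{Z_P}$ (Proposition \ref{dim_chiusura}), the semi-algebraicness of $Z_P$, and the observation that $Z_P \ne \R^n$ forces $\dim Z_P \le n-1$ by the stratification of semi-algebraic sets into smooth cells. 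I expect the bulk of the write-up to be this interpretive remark plus a one-line invocation of semi-algebraic cell decomposition; the inductive argument sketched above is an alternative self-contained route if one wants to avoid quoting the stratification theorem.
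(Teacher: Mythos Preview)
Your approach is correct but differs from the paper's. The paper argues by induction on the \emph{degree} of $P$ rather than on the number of variables or via stratification: for $\deg P=1$ the zero set is a hyperplane; for $\deg P=k\ge 2$, the regular part $Z_P\cap\{\nabla P\neq 0\}$ is handled by the implicit function theorem, while the singular part $Z_P\cap\{\nabla P=0\}$ is contained in the zero set of some partial derivative $\partial P/\partial x_i$ of degree $k-1$, to which the inductive hypothesis applies. This is more elementary than your stratification route, as it avoids invoking semi-algebraic cell decomposition, but at the cost of a slightly more hands-on recursion. Your argument, by contrast, reduces immediately to the fact that a non-null polynomial cannot vanish on an open set and then quotes the dimension theory of semi-algebraic sets; this is shorter once one accepts the cited machinery from \cite{Bochnak_Coste_Roy_1998}.

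Your interpretive remark about the meaning of ``contained in a submanifold of codimension one'' is well taken, and in fact the paper's proof glosses over exactly the same point: it concludes by writing $Z_P$ as a union of two pieces, each contained in a codimension-one object, without addressing whether the union is again such an object. Both arguments really establish that $\dim Z_P\le n-1$ (equivalently, that $Z_P$ lies in a finite union of smooth hypersurfaces), which is all that the application in the proof of Theorem~C3 requires.
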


\begin{proof}
	
	If $P$ is a non-zero constant, then there is nothing to prove.
	
	If $P$ is non-constant, the proof is by induction on the degree of $P$. 
	
	If $\deg P=1$, then $Z_P$ is a hyperplane, which is obviously a submanifold of codimension one.
	
	Suppose, now,  that the statement is true for polynomials of degree $k-1\ge 1$. Consider a polynomial of degree $k$, together with its associated open set of non-critical points $S_P:=\{x\in \R^n \,|\, \grad P(x)\neq 0\}$. On the one hand, locally around any point of $Z_P\cap S_P$ one can apply the implicit function theorem, so that $Z_P\cap S_P$ is indeed a submanifold of codimension one in $\R^n$. On the other hand, $\R^n\backslash S_P$ is the common zero set of the $n$ polynomials $\partial P/\partial x_1,\dots, \partial P/\partial x_n$; moreover, since $\deg P=k\ge 2$, at least one among $\partial P/\partial x_1,\dots, \partial P/\partial x_n$ has degree $k-1\ge 1$. Hence, by hypothesis, $\R^n\backslash S_P$ is  contained in a submanifold of codimension one in $\R^n$. 
	
	This proves that the set $Z_P\cap (\R^n\backslash S_P)\subset (\R^n\backslash S_P)$ is contained in a submanifold of codimension one in $\R^n$. Obviously, the thesis follows by the fact that $$
	Z_P=(Z_P\cap (\R^n\backslash S_P))\cup (Z_P\cap S_P)\ .
	$$

\end{proof}

\section*{Acknowledgements} I am extremely grateful to my PhD advisors, Laurent Niederman and Luca Biasco, for putting me on this subject, for their useful suggestions and comments, and for reading and correcting the present manuscript. I am also grateful to J.P. Marco for very useful discussions. In the months preceeding the end of the redaction of this work, I have been funded by the ERC project 757802 {\it Haminstab}; therefore, I wish to acknowledge both the ERC and the PI of the project (prof. M. Guardia) for their support. 

\bibliographystyle{plain}
\bibliography{Steep_2}

\begin{thebibliography}{10}

\bibitem{Arn61}
V.~Arnol'd.
\newblock {The stability of the equilibrium position of a {H}amiltonian system
  of ordinary differential equations in the general elliptic case}.
\newblock {\em Soviet. Math. Dokl.}, 2:247--249, 1961.

\bibitem{Arnold_1964}
V.~{Arnol'd}.
\newblock {Instability of dynamical systems with several degrees of freedom}.
\newblock {\em Soviet. Math. Dokl.}, 156(1):9--12, 1964.

\bibitem{Arnold_2010}
V.~{Arnol'd}, V.~{Kozlov}, A.I. {Neishtadt}, and E.~{Khukhro}.
\newblock {\em {Mathematical Aspects of Classical and Celestial Mechanics}}.
\newblock Springer, 2010.

\bibitem{Bambusi_1999}
D.~{Bambusi}.
\newblock {Nekhoroshev theorem for small amplitude solutions in nonlinear
  Schrödinger equations }.
\newblock {\em Math Z.}, 230:345--387, 1999.

\bibitem{Bambusi_Fuse_2017}
D.~{Bambusi} and A.~{Fusè}.
\newblock {Nekhoroshev Theorem for perturbations of the central motion}.
\newblock {\em Regular and Chaotic Dynamics}, 22:18--26, 2017.

\bibitem{Bambusi_Fuse_Sansottera_2018}
D.~{Bambusi}, A.~{Fusè}, and M.~{Sansottera}.
\newblock {Exponential stability in the perturbed central force problem}.
\newblock {\em Regular and Chaotic Dynamics}, 23:821--841, 2018.

\bibitem{Bambusi_Giorgilli_1993}
D.~{Bambusi} and A.~{Giorgilli}.
\newblock {Exponential stability of states close to resonance in infinite
  dimensional hamiltonian systems}.
\newblock {\em Journal of Statistical Physics}, 71:569--606, 1993.

\bibitem{Bambusi_Langella_2020}
D.~Bambusi and B.~Langella.
\newblock A $c^\infty$ nekhoroshev theorem.
\newblock {\em Mathematics in Engineering}, 3(2):1--17, 2020.

\bibitem{Bambusi_Langella_2022}
D.~{Bambusi} and B.~{Langella}.
\newblock {Growth of Sobolev Norms in quasi-integrable quantum systems}.
\newblock {\em https://arxiv.org/abs/2202.04505}, 2022.

\bibitem{Barbieri_2022}
S.~{Barbieri}.
\newblock {On the algebraic properties of exponentially stable integrable
  Hamiltonian systems}.
\newblock {\em Annales de la Faculté de Sciences de Toulouse},
  31(5):1365--1390, 2022.

\bibitem{Barbieri_Marco_Massetti_2022}
S.~{Barbieri}, J.P. {Marco}, and J.E. {Massetti}.
\newblock {Analytic smoothing and Nekhoroshev estimates for H\"older steep
  Hamiltonians }.
\newblock {\em Commun. Math. Phys.}, 396(1), 2022.

\bibitem{Barbieri_Niederman_2020}
S.~{Barbieri} and L.~{Niederman}.
\newblock {Sharp Nekhoroshev estimates for the three-body problem around
  periodic orbits}.
\newblock {\em Journal of Differential Equations}, 268(7):3749--3780, 2020.

\bibitem{Barbieri_Niederman_2022}
S.~{Barbieri} and L.~{Niederman}.
\newblock {Bernstein-Remez inequality for algebraic functions: a topological
  approach}.
\newblock {\em Nonlinear Anaysis}, (237), 2023.

\bibitem{Basu_Pollack_Roy_1996}
S.~{Basu}, R.~{Pollack}, and M.-F. {Roy}.
\newblock {On the combinatorial and algebraic complexity of quantifier
  elimination.}
\newblock {\em J. ACM}, (45):1002--1045, 1996.

\bibitem{Basu_Pollack_Roy_2006}
S.~{Basu}, R.~{Pollack}, and M.-F. {Roy}.
\newblock {\em {Algorithms in real-algebraic geometry}}.
\newblock Springer, 2006.

\bibitem{Benettin_Fasso_Guzzo_1998}
G.~{Benettin}, F.~{Fassò}, and M.~{Guzzo}.
\newblock {Nekhoroshev stability of L4 and L5 in the spatial restricted three
  body problem}.
\newblock {\em Regular Chaotic Dyn.}, (3):56--72, 1998.

\bibitem{Berti_Biasco_Bolle_2003}
M.~{Berti}, L.~{Biasco}, and P.~{Bolle}.
\newblock {Drift in phase space: a new variational mechanism with optimal
  diffusion time}.
\newblock {\em Journal de mathématiques pures et appliquées}, 82(6):613--664,
  2003.

\bibitem{Binyamini_Novikov_2019}
G.~{Binyamini} and D.~{Novikov}.
\newblock {Complex cellular structures.}
\newblock {\em Ann. of Math.}, 190:145--248, 2019.

\bibitem{Bochnak_Coste_Roy_1998}
J.~{Bochnak}, M.~{Coste}, and M.-F. {Roy}.
\newblock {\em {Real algebraic geometry}}.
\newblock Springer, 1998.

\bibitem{Bolte_Daniilidis_Ley_Mazet_2010}
J.~{Bolte}, A.~{Daniilidis}, O.~{Ley}, and L.~{Mazet}.
\newblock {Characterizations of Łojasiewicz inequalities: subgradient flows,
  talweg, convexity.}
\newblock {\em Trans. Amer. Math. Soc.}, 362:3319--3363, 2010.

\bibitem{Bounemoura_2010}
A.~{Bounemoura}.
\newblock {Nekhoroshev estimates for finitely differentiable, quasi-convex
  Hamiltonian systems}.
\newblock {\em Journal of Differential Equations}, 249(11):2905--2920, 2010.

\bibitem{Bounemoura_Fayad_Niederman_2020}
A.~{Bounemoura}, B.~{Fayad}, and L.~{Niederman}.
\newblock {Super-exponential stability for generic real-analytic elliptic
  equilibrium points}.
\newblock {\em Advances in Mathematics}, 366(3), 2020.

\bibitem{BK_2014}
A.~{Bounemoura} and V.~{Kaloshin}.
\newblock Generic fast diffusion for a class of non-convex {H}amiltonians with
  two degrees of freedom.
\newblock {\em Mosc. Math. J.}, 14(2):181--203, 426, 2014.

\bibitem{Bourgain_2005}
J.~{Bourgain}.
\newblock {\em Green's functions estimates for lattice Schrödinger operators
  and applications}.
\newblock Princeton University Press, 2005.

\bibitem{Bourgain_Goldstein_Schlag_2002}
J.~{Bourgain}, M.~{Goldstein}, and W.~{Schlag}.
\newblock {Anderson localization for Schrödinger operators on $\mathbb{Z}^2$
  with quasi-periodic potential}.
\newblock {\em Acta Math.}, (188):41--86, 2002.

\bibitem{Broer_Huitema_Sevryuk_1998}
H.W. {Broer}, G.B. {Huitema}, and M.~{Sevryuk}.
\newblock {\em {Quasi-periodic motions in families of dynamical systems - Order
  amidst Chaos}}.
\newblock Springer, 1996.

\bibitem{Burguet_2008}
D.~{Burguet}.
\newblock {A proof of Yomdin-Gromov's algebraic lemma }.
\newblock {\em Israel Journal of Mathematics}, 168:291--316, 2008.

\bibitem{Chierchia_2008}
L.~{Chierchia}.
\newblock {Kolmogorov’s 1954 paper on nearly-integrable Hamiltonian systems}.
\newblock {\em Regular and Chaotic Dynamics}, 13(2):130--139, 2008.

\bibitem{Chierchia_Faraggiana_Guzzo_2019}
L.~Chierchia, M.~A. Faraggiana, and M.~Guzzo.
\newblock On steepness of 3-jet non-degenerate functions.
\newblock {\em Ann. Mat. Pura Appl. (4)}, (6):2151--2165, 2019.

\bibitem{Cluckers_Pila_Wilkie_2020}
R.~{Cluckers}, J.~{Pila}, and A.~{Wilkie}.
\newblock {Uniform parameterization of subanalytic sets and diophantine
  applications.}
\newblock {\em Ann. Sci. Éc. Norm. Super.}, 53:1--42, 2020.

\bibitem{Faou_Grebert_2013}
E.~{Faou} and B.~{Grébert}.
\newblock {A Nekhoroshev-type theorem for the nonlinear Schrödinger equation
  on the torus }.
\newblock {\em Analysis and Partial Differential Equations}, 6(6):1243--1262,
  2013.

\bibitem{Fayad_2023}
B.~{Fayad}.
\newblock Lyapunov unstable elliptic equilibria.
\newblock {\em J. Amer. Math. Soc.}, 36(1):81--106, 2023.

\bibitem{Fejoz_2004}
J.~{F\'{e}joz}.
\newblock {D\'{e}monstration du Th\'{e}or\`eme d'Arnol'd sur la stabilit\'{e}
  du syst\`eme plan\'{e}taire}.
\newblock {\em Ergodic Theory Dynam. Systems}, 24(5):1521--1582, 2004.

\bibitem{Garling_2014}
D.J.H. {Garling}.
\newblock {\em {A course in mathematical analysis}}.
\newblock Cambridge University Press, 2014.

\bibitem{Gromov_1987}
M.~{Gromov}.
\newblock {Entropy, homology and semi-algebraic geometry}.
\newblock {\em Astérisque - Séminaire Bourbaki 663}, pages 225--240, 1987.

\bibitem{Guzzo_Chierchia_Benettin_2016}
M.~{Guzzo}, L.~{Chierchia}, and G.~{Benettin}.
\newblock {The Steep Nekhoroshev’s Theorem}.
\newblock {\em Commun. Math. Phys.}, 342:569--601, 2016.

\bibitem{Heintz_Roy_Solerno_1990}
J.~{Heintz}, M.~{Roy}, and P.~{Solernó}.
\newblock {Sur la complexité du principe de Tarski-Seidenberg}.
\newblock {\em Bulletin de la S. M. F.}, 118(1):101--126, 1990.

\bibitem{Herman_1992}
M.-R. {Herman}.
\newblock Dynamics connected with indefinite normal torsion.
\newblock In {\em Twist mappings and their applications}, volume~44 of {\em IMA
  Vol. Math. Appl.}, pages 153--182. Springer, New York, 1992.

\bibitem{Ha_Pham_2017}
H.-V. {Hà} and T.~{Pham}.
\newblock {\em {Genericity in Polynomial Optimization}}.
\newblock World Scientific, 2017.

\bibitem{Ilyashenko_1986}
Y.~{Ilyashenko}.
\newblock {A criterion of steepness for analytic functions.}
\newblock {\em Uspekhi Mat. Nauk.}, 41:193--194, 1986.

\bibitem{Kaloshin_Zhang_2022}
V.~{Kaloshin} and {K.} Zhang.
\newblock {\em Arnold diffusion for smooth systems of two and a half degrees of
  freedom}, volume 208 of {\em Annals of Mathematics Studies}.
\newblock Princeton University Press, Princeton, NJ, 2020.

\bibitem{Kendig_2012}
K.~{Kendig}.
\newblock {\em {A guide to plane algebraic curves}}.
\newblock Mathematical Association of America, 2012.

\bibitem{Lee_2013}
J.M. {Lee}.
\newblock {\em {Introduction to smooth manifolds}}.
\newblock Springer, 2013.

\bibitem{Malige_2012}
F.~Malige, P.~Robutel, and J.~Laskar.
\newblock Partial reduction in the {$n$}-body planetary problem using the
  angular momentum integral.
\newblock {\em Celestial Mech. Dynam. Astronom.}, 84(3):283--316, 2002.

\bibitem{Marco_Sauzin_2003}
J-P. {Marco} and D.~{Sauzin}.
\newblock Stability and instability for gevrey quasi-convex near-integrable
  hamiltonian systems.
\newblock {\em Publ. Math. Inst. Hautes \'{E}tudes Sci.}, 96:199--275, 2003.

\bibitem{Mos62}
J.~Moser.
\newblock On invariant curves of area-preserving mappings of an annulus.
\newblock {\em Nachr. Akad. Wiss. Göttingen}, II:1--20, 1962.

\bibitem{Nekhoroshev_1973}
N.~N. {Nekhoroshev}.
\newblock {Stable lower estimates for smooth mappings and for gradients of
  smooth functions}.
\newblock {\em Math USSR Sb.}, 19(3):425--467, 1973.

\bibitem{Nekhoroshev_1977}
N.~N. {Nekhoroshev}.
\newblock {An exponential estimate of the time of stability of
  nearly-integrable Hamiltonian systems. I}.
\newblock {\em Russian Mathematical Surveys}, 32(6):1--65, 1977.

\bibitem{Nekhoroshev_1979}
N.~N. {Nekhoroshev}.
\newblock {An exponential estimate of the time of stability of
  nearly-integrable Hamiltonian systems. II}.
\newblock {\em Topics in Modern Mathematics, Petrovskii Seminar}, (5):5--50,
  1979.

\bibitem{Niederman_1996}
L.~{Niederman}.
\newblock {Stability over exponentially long times in the planetary problem}.
\newblock {\em Nonlinearity}, 9:1703--1751, 1996.

\bibitem{Niederman_2006}
L.~{Niederman}.
\newblock {Hamiltonian stability and subanalytic geometry }.
\newblock {\em Annales de l'Institut Fourier}, 56(3):795--813, 2006.

\bibitem{Perrin_2008}
D.~{Perrin}.
\newblock {\em {Algebraic Geometry. An Introduction.}}
\newblock Springer, 2008.

\bibitem{Pinzari_2013}
G.~{Pinzari}.
\newblock {Aspects of the planetary Birkhoff normal form}.
\newblock {\em Regular and Chaotic Dynamics}, 18(6):860--906, 2013.

\bibitem{Poschel_1999}
J.~{Pöschel}.
\newblock {On Nekhoroshev Estimates for a Nonlinear Schrödinger Equation and a
  Theorem by Bambusi }.
\newblock {\em Nonlinearity}, 12:1587--1600, 1999.

\bibitem{Roytwarf_Yomdin_1998}
N.~{Roytwarf} and Y.~{Yomdin}.
\newblock {Bernstein Classes}.
\newblock {\em Annales de l'Institut Fourier}, 47(3):825--858, 1998.

\bibitem{Russmann_2001}
H.~{Rüssmann}.
\newblock {Invariant tori in non-degenerate nearly integrable Hamiltonian
  systems}.
\newblock {\em Regular and Chaotic Dynamics}, 6(2):119--204, 2001.

\bibitem{Schirinzi_Guzzo_2013}
G.~{Schirinzi} and M.~{Guzzo}.
\newblock {On the formulation of new explicit conditions for steepness from a
  former result of N.N. Nekhoroshev}.
\newblock {\em Journal of Mathematical Physics}, 54(7):1--23, 2013.

\bibitem{Yomdin_1987}
Y.~{Yomdin}.
\newblock {$C^k$-resolution of semialgebraic mappings. Addendum to: Volume
  growth and entropy.}
\newblock {\em Israel J. Math.}, 57:301--317, 1987.

\bibitem{Yomdin_2008}
Y.~{Yomdin}.
\newblock {Analytic reparametrization of semi-algebraic sets}.
\newblock {\em Journal of Complexity}, (24):54--76, 2008.

\bibitem{Yomdin_2011}
Y.~{Yomdin}.
\newblock {Remez-type inequality for discrete sets.}
\newblock {\em Israel J. Math.}, 186:45--60, 2011.

\bibitem{Yomdin_2015}
Y.~{Yomdin}.
\newblock {Smooth parametrizations in dynamics, analysis, diophantine and
  computational geometry.}
\newblock {\em Japan Journal Ind. Appl. Math.}, 32:411--435, 2015.

\end{thebibliography}

\end{document}